\documentclass[a4paper, final]{article}

\newcommand{\version}{3.2}


\author{A. Fornasiero}
\def\shorttitle{Tame structures, v. \version}
\title{Tame structures and open cores\\
\normalsize{Version \version}}

\usepackage{ifpdf}

\usepackage{amsmath, amsthm, amssymb}
\usepackage{xspace}

\usepackage[notref,notcite]{showkeys}

\usepackage{tocloft}

\usepackage{comment}

\usepackage{paralist}

\usepackage{fancyhdr}

\pagestyle{fancy}
\makeatletter
\rhead{\slshape \nouppercase{\leftmark}}
\lhead{\shorttitle}
\cfoot{\thepage}
\makeatother
\setlength{\headheight}{15pt}

\usepackage[T1]{fontenc}

\ifpdf
\usepackage[pdftex, linktocpage=true, pdfborder={0 0 0}, plainpages=false,%
 pdfpagelabels, pdfdisplaydoctitle=true, bookmarks=true, colorlinks=false]{hyperref}
\hypersetup{pdfauthor={A. Fornasiero},
  pdftitle={Tame structures},
  pdfkeywords={O-minimal, open core, ordered field, definably complete, 
    dense pair, d-minimal, locally o-minimal, Baire, tame},
  pdfsubject={Tameness in ordered structures, generalizations of o-minimality,
  d-minimality}}
\else
\usepackage[plainpages=false,linktocpage=true,pdfpagelabels]{hyperref}
\fi

\makeatletter
\renewcommand\@makefnmark{\@textsuperscript{\normalfont(\@thefnmark)}}
\makeatother

\newlength{\tocwidth}
\setlength{\tocwidth}{25em}



\makeatletter
\providecommand{\cftdotfill}{\@cftdotfill}
\makeatother

\setlength{\cftbeforesecskip}{0.5ex}

\setlength{\cftbeforetoctitleskip}{0ex}
\setlength{\cftaftertoctitleskip}{1ex}


\newcommand*{\intro}[1]{\textbf{#1}}
\newcommand*{\Pa}[1]{\bigl( #1 \bigr)}
\newcommand*{\set}[1]{\{#1\}}
\newcommand*{\abs}[1]{\lvert#1\rvert}
\newcommand*{\card}[1]{\lvert#1\rvert}
\newcommand{\Nat}{\mathbb{N}}
\newcommand{\Real}{\mathbb{R}}
\newcommand{\Raz}{\mathbb{Q}}
\newcommand{\Ralg}{\Real^{\mathrm{alg}}}
\newcommand{\Rbar}{\bar\Real}
\newcommand*{\inter}[1]{\mathring{#1}}
\DeclareMathOperator{\interior}{int}
\newcommand*{\cl}[1]{\overline{#1}}
\DeclareMathOperator{\cll}{cl}
\newcommand{\fr}{\partial}
\DeclareMathOperator{\bd}{bd}
\DeclareMathOperator{\fdim}{fdim}
\DeclareMathOperator{\lc}{lc}
\newcommand*{\nlc}[1]{\ulcorner #1 \urcorner}
\newcommand*{\inlc}[2]{{#1}^{\ulcorner #2 \urcorner}}
\newcommand{\Dis}{\mathcal{D}}
\DeclareMathOperator{\Fin}{Fin}
\newcommand{\rest}{\upharpoonright}
\newcommand{\bij}{\overset{\sim}{\longrightarrow}}
\newcommand{\clB}{\cl B}
\DeclareMathOperator{\id}{id}








\newcommand{\Zclosed}{Z\hyph closed\xspace}
\newcommand{\Zclosure}{Z\hyph closure\xspace}
\newcommand{\Zbasis}{Z-basis\xspace}
\newcommand{\Zgenerating}{Z-generating\xspace}
\newcommand{\Zindependent}{Z-independent\xspace}
\newcommand{\Zdimension}{Z-dimension\xspace}
\newcommand{\Zfree}{Z-free\xspace}
\newcommand{\Zapplication}{Z\hyph application\xspace}


\DeclareMathOperator{\dcl}{dcl}
\DeclareMathOperator{\acl}{acl}
\DeclareMathOperator{\zcl}{Zcl}
\newcommand{\zclK}{\zcl^{\K}}
\newcommand{\zclM}{\zcl^{\monster}}
\DeclareMathOperator{\scl}{Scl}
\DeclareMathOperator{\sdim}{Sdim}

\DeclareMathOperator{\lexmin}{lex min}
\DeclareMathOperator{\lexinf}{lex inf}
\newcommand*{\gen}[1]{\langle #1 \rangle}
\newcommand{\app}{\leadsto}

\newcommand*{\pair}[1]{\langle #1 \rangle}
\newcommand{\Am}{\mathbb A}
\newcommand{\Bm}{\mathbb B}

\newcommand{\monster}{\mathbb M}


\newcommand{\av}{\bar a}

\newcommand{\bv}{\bar b}

\newcommand{\cv}{\bar c}

\newcommand{\x}{\bar x}
\newcommand{\y}{\bar y}
\newcommand{\z}{\bar z}

\DeclareMathOperator{\tp}{tp}
\def\Ind#1#2{#1\setbox0=\hbox{$#1x$}\kern\wd0\hbox to
  0pt{\hss$#1\mid$\hss}\lower.9\ht0\hbox to 0pt{\hss$#1\smile$\hss}\kern\wd0}
\newcommand{\ind}[1][]{\mathop{\mathpalette\Ind{}^{\!\!\!\!\rlap{$\scriptscriptstyle\textnormal{#1}$}\,\,\,\,}}}
\def\mind{\ind[M]}
\def\zind{\ind[Z]}
\def\tind{\ind[\th]}

\newcommand{\notind}[1][]{\mathrel{\not\mkern-7mu{\ind[#1]}}}


\newcommand{\et}{\ \&\ }
\newcommand{\vel}{\ \vee\ }

\newcommand{\K}{\mathbb{K}}

\newcommand{\F}{\mathbb{F}}
\newcommand{\Kinf}{\K_\infty}
\newcommand{\KC}{\K^C}
\newcommand{\KbC}{{\Kb}^C}

\newcommand{\Kb}{\overline{\K}}

\newcommand{\good}{\mathcal G}
\newcommand{\bad}{\mathcal B}
\newcommand{\Normal}{\mathcal N}
\newcommand{\B}{\mathfrak B}
\newcommand{\Afam}{\mathcal A}

\newcommand{\Cfam}{\mathcal C}
\newcommand{\Sfam}{\mathcal S}
\newcommand{\Part}{\mathcal P}
\newcommand{\elem}{\equiv}
\DeclareMathOperator{\Th}{Th}
\newcommand{\Lang}{\mathcal L}
\newcommand{\Langf}{\mathcal L_{OF}}
\newcommand{\Ltwo}{\Lang(U)}

\newcommand{\Td}{T^d}
\newcommand{\Cp}{\mathcal C^p}
\newcommand{\Cone}{\mathcal C^1}
\newcommand{\identity}{\mathrm 1}
\newcommand{\idmatrix}{\mathbf 1}

\DeclareMathOperator{\reg}{reg}
\DeclareMathOperator{\isol}{isol}

\newcommand{\aminimal}{a\hyph minimal\xspace}

\newcommand{\aminimality}{a\hyph minimality\xspace}
\newcommand{\iminimal}{$i$\hyph minimal\xspace}
\newcommand{\iminimality}{$i$\hyph minimality\xspace}

\newcommand{\Iminimality}{$I$\hyph minimality\xspace}
\newcommand{\ipminimal}{constructible\xspace}
\newcommand{\ipminimality}{constructibility\xspace}

\newcommand{\dminimal}{d\hyph minimal\xspace}

\newcommand{\pN}{pseudo\hyph$\Nat$\xspace}


\newcommand{\xnarrow}{$x$-narrow\xspace}



\newcommand{\DSF}{DSF\xspace}



\DeclareMathOperator{\RK}{rk}
\newcommand{\rkCB}{\RK^{CB}}
\newcommand*{\CBd}[2]{#1^{(#2)}}
\newcommand{\rkP}{\RK^{P}}
\DeclareMathOperator{\zrk}{Zrk}
\newcommand{\rkM}{\RK}
\DeclareMathOperator{\Zrk}{Zrk}
\DeclareMathOperator{\Zdim}{Zdim}


\newcommand{\dcompact}{d-compact\xspace}
\newcommand{\Fs}{\mathcal F_\sigma}
\newcommand{\Gd}{\mathcal G_\delta}
\newcommand{\ao}{a.o\mbox{.}\xspace}
\newcommand{\sdiff}{\mathbin{\Delta}}


\def\hyph{\nobreakdash-\hspace{0pt}\relax}
\newcommand{\Wlog}{W.l.o.g\mbox{.}\xspace}
\newcommand{\wloG}{w.l.o.g\mbox{.}\xspace}
\newcommand{\eg}{e.g\mbox{.}\xspace}
\newcommand{\ie}{i.e\mbox{.}\xspace}
\newcommand{\Ie}{I.e\mbox{.}\xspace}
\newcommand{\wrt}{w.r.t\mbox{.}\xspace}
\newcommand{\st}{s.t\mbox{.}\xspace}
\newcommand{\cf}{cf\mbox{.}\xspace}

\newcommand{\tfae}{t.f.a.e\mbox{.}\xspace}
\newcommand{\Tfae}{T.f.a.e\mbox{.}\xspace}
\newcommand{\aka}{a.k.a\mbox{.}\xspace}
\newcommand{\resp}{resp\mbox{.}\xspace}

\newtheorem{lemma}{Lemma}[section]
\newtheorem{thm}[lemma]{Theorem}
\newtheorem{corollary}[lemma]{Corollary}
\newtheorem{conjecture}[lemma]{Conjecture}
\newtheorem{proposition}[lemma]{Proposition}
\newtheorem{open problem}[lemma]{Open problem}

\newtheorem*{proviso}{Proviso}
\newtheorem*{fact*}{Fact}

\theoremstyle{remark}

\newtheorem{claim}{Claim}
\newtheorem*{claim*}{Claim}

\theoremstyle{definition}
\newtheorem{definizione}[lemma]{Definition}

\newtheorem{remark}[lemma]{Remark}
\newtheorem{final remark}[lemma]{Final remark}
\newtheorem{example}[lemma]{Example}
\newtheorem{examples}[lemma]{Examples}
\newtheorem{question}[lemma]{Question}


\begin{document}
\maketitle

\begin{abstract}
We study various notions of ``tameness'' for definably complete expansions of
ordered fields.
We mainly study structures with locally o-minimal open core, d-minimal
structures, and dense pairs of d-minimal structures.
\end{abstract}

\textit{Key words:}
o-minimal, open core, ordered field, definably complete, Baire, tame, 
dense pair, d-minimal, locally o-minimal.\\
\indent\textsl{MSC2010:} Primary 
03C64;    	
Secondary 12J15.\\  

{\small
\tableofcontents
}

\section{Introduction}
We will study various notion of ``tameness'', which generalize the notion of
o-minimality.
We will be interested only in definably complete structure: ``tame''  but not
definable complete structures (\eg, weakly o-minimal structures), while
important and interesting, are outside the scope of this article.

The first natural generalization of o-minimality
(for definably complete structures) is asking for o-minimality only around
each point of the structure (see Def.~\ref{def:lmin}).
Much of the theory of o-minimal structures can be generalized without
difficulty to locally o-minimal ones: see \S\ref{sec:lmin},
and from \S\ref{sec:i0min} to the end.

There is a dichotomy in further generalizing local o-minimality;
let $\K$ be a definably complete expansion of an ordered field:
\begin{enumerate}
\item
Either we ask that the open core of $\K$ is locally o-minimal, obtaining
\aminimal structures;
a stronger version is the requirement that the open core of $\K$ is o-minimal
(see \S\ref{sec:amin}).
\item
Or we ask that every definable subset of $\K'$ is a union of an open set and
finitely many discrete sets, for every $\K' \succeq \K$,
obtaining d-minimal structures (see from \S\ref{sec:dmin} to the end).
\end{enumerate}

Definably complete structures were explicitely defined and studied
in \cite{miller}.
The open core of $\K$ was defined already in \cite{MS99}, where they study the
case when $\K$ is an expansion of~$\Real$.
Structures with o-minimal open core are one of the main topics of \cite{DMS};
here, instead, they are only a side remark, because
we show that many of the results (and some of the techniques) of \cite{DMS}
can be generalized to \aminimal structures;
moreover, we answer some questions left open there.
One of the natural examples of 
structures with o-minimal open core is given by elementary
pairs of o-minimal structures $A \preceq B$, studied in \cite{vdd-dense};
we show that the main results of \cite{vdd-dense} can be generalized to
elementary pairs of \dminimal structures, with a very similar proof
(see \S\ref{sec:dense}).

D-minimal structures were the main theme of~\cite{miller05}, where he also
examines other notion of tameness, in the case when $\K$ expands~$\Real$:
many of the definitions and proofs of this article are either inspired
by \cite{miller05}, or a direct reference to it.

While o-minimal structures are geometric (that is, the algebraic closure $\acl$
satisfies the Exchange Principle), and therefore they rosy of \th-rank~1, no
such result is true for \dminimal structures:
more precisely, if $\K$ is a sufficiently saturated \dminimal non o-minimal
structure, then $\acl$ does \emph{not} satisfy the Exchange Principle, and
$\K$ is not rosy (Lemma~\ref{lem:asymmetry}).
However, we have a notion of dimension for \dminimal structures, given by teh
topology, (which for o-minimal structures coincides with the usual o-minimal dimension),
which can be usefully employed in the study of \dminimal structures,
and in particular of dense elementary pairs of such structures, in the same
way as the o-minimal dimension is used in o-minimal structures.

All structure considered will moreover be definably Baire (see~\cite{FS}):
this will not be explicit in the definitions, but will follow quite easily
from them.
Also, many of the proofs will rely on consequences of this Baire property
(much in the same way as in~\cite{miller05} many theorems relied on Baire's
category theorem).
Therefore, a preliminary study of definably complete and of definably
Baire structures is essential in order to understand tame structures, and it
will be carried out  Sections~\ref{sec:preliminary} and~\ref{sec:definably-complete}.

Some general results can be obtained under even weaker conditions: see
Sections~\ref{sec:i0min} and~\ref{sec:DSF}.

\section{Conventions, basic definitions, and notation}
Definable will always mean ``definable with parameters''.

$\Rbar = \pair{\Real, 0, 1,+, \cdot, <}$ is the ordered field of real numbers.
$\Ralg$ is the subset of $\Real$ given by the real algebraic numbers.

A linearly ordered structure $\pair{\K, <}$ is \intro{definably complete} if
every definable subset of $\K$ has a supremum in $\K \sqcup \set{\pm \infty}$.
\begin{proviso}
$\K$~will always be a definably complete structure expanding an ordered field.
\end{proviso}

$d: \K^n \times \K^n \to \K$ is the \intro{distance} function 
$d(x,y) := \abs(x - y)$.
For every $\x \in \K^n$ and $0 < r \in \K$, $B(\x,r)$ is the \intro{open ball}
of center $\x$ and radius~$r$, while $\clB(x,r)$ is the \intro{closed ball}.

Let $X \subseteq \K^n$.
$\cl X$, also denoted by $\cll(X)$, 
is the topological \intro{closure} of $X$ inside~$\K^n$, while
$\inter X$, also denoted by $\interior(X)$, is the \intro{interior} of~$X$;
$\fr X := \cl X \setminus X$ is the \intro{frontier} of~$X$;
$\bd(X) := \cl X \setminus \inter X$ is the \intro{boundary} of $X$.
$X$ is \intro{nowhere dense} if $\inter{\cl X}$ is empty.

$\K$ is \intro{definably Baire} (or simply ``Baire'' for short) if $\K$ is not
the union of a definable increasing family of nowhere dense subsets~\cite{FS}.

$X$~is an \intro{$\Fs$-set} if $X$ is definable and is the union of a
definable increasing  family of closed subsets of~$\K^n$, and is a 
$\Gd$-set if its complement is an $\Fs$-set.
$X$~is \intro{meager} if is the union of a
definable increasing  family of nowhere dense sets.
$X$~is \intro{almost open} (or \ao for short) if there exists a definable open
sets $U$ such that $X \sdiff U$ is meager~\cite{FS}, where $\sdiff$ is the
\intro{symmetric difference} of sets.

$X$~is \intro{constructible} if it is a finite Boolean
combination of open sets.
$X$~is \intro{locally closed} if for every $x \in X$ there exists a
neighbourhood $U \ni x$ such that $X \cap U$ is closed in~$U$.

\begin{fact*}
If $X$ is definable, then $X$ is constructible iff it is a finite Boolean
combination of \emph{definable} open sets~\cite{allouche96, DM}.  
$X$~is locally closed iff it is of the form $C \cap U$, for some closed set
$C$ and some open set~$U$.  
$X$~is constructible iff it is a finite union of locally closed sets.
\end{fact*}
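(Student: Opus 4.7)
The Fact bundles three equivalences; I plan to address them in the order \emph{second}, \emph{third}, \emph{first}, since that ordering minimizes dependencies.

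For the second equivalence, the backward direction is immediate: if $X = C \cap U$ with $C$ closed and $U$ open, then for every $x \in X$ the open set $U$ itself is a neighborhood of $x$ with $X \cap U = C \cap U$ closed in $U$. For the forward direction, cover $X$ by witnessing open neighborhoods $U_x \ni x$ with $X \cap U_x$ closed in $U_x$, and set $W := \bigcup_{x \in X} U_x$. One verifies $X = \cl X \cap W$: $\subseteq$ is clear, and for $\supseteq$, any $y \in \cl X \cap W$ lies in some $U_x$, hence in the $U_x$-closure of $X \cap U_x$, which coincides with $X \cap U_x$, so $y \in X$.

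For the third equivalence, the forward direction puts the Boolean combination of opens $U_1, \ldots, U_k$ into disjunctive normal form $\bigcup_\epsilon \bigcap_i U_i^{\epsilon_i}$, with $\epsilon$ ranging over a subset of $\{+,-\}^k$; each disjunct is a finite intersection of opens (those indices with $\epsilon_i=+$) intersected with a finite intersection of closeds (complements of those with $\epsilon_i=-$), hence of the form $C \cap U$, hence locally closed by the second equivalence. The backward direction follows immediately from the $C \cap U$-description of locally closed sets.

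For the first equivalence, one direction is trivial. For the other---showing every definable constructible $X$ lies in the Boolean algebra $\mathcal O$ generated by the definable open sets---my plan has two steps. First, every definable locally closed $Y$ lies in $\mathcal O$ via the identity $Y = \cl Y \cap (\K^n \setminus \fr Y)$: both $\cl Y$ and $\fr Y$ are definable (closure is a definable operation in a definably complete ordered field), and $\fr Y$ is closed precisely because local closedness of $Y$ is equivalent to $Y$ being open in $\cl Y$. Second, $X$ itself decomposes as a finite union of definable locally closed sets. This second step is the main obstacle, since the disjunctive-normal-form decomposition from the third equivalence yields locally closed pieces that need not be individually definable. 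My remedy is a canonical, definable iteration: set $X_0 := X$, and at stage $n$ extract $Y_n := \operatorname{int}_{\cl{X_n}}(X_n)$, the relative interior of $X_n$ in its closure, and set $X_{n+1} := X_n \setminus Y_n$. Each $Y_n$ is a definable locally closed subset of $\K^n$ (of the form definable-closed $\cap$ definable-open), each $X_n$ is definable, and $\cl{X_{n+1}} \subsetneq \cl{X_n}$ whenever $X_n$ is nonempty and not already relatively open in $\cl{X_n}$. The key difficulty, which I expect to require the most care, is showing that the iteration terminates after finitely many stages; I would argue this by induction on the number of open sets in any fixed Boolean presentation of $X$, so that the accumulated pieces $Y_0, Y_1, \ldots, Y_{N-1}$ exhaust $X$ as the required finite union of definable locally closed sets.
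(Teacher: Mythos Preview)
The paper does not prove this Fact; it is stated with citations to Allouche and to Dougherty--Miller, and the paper later restates the relevant machinery (the operators $\lc$ and $\nlc$, the iterates $\inlc{X}{k}$, and the Proposition that $A$ is a union of $m$ locally closed sets iff $\inlc{A}{m+1} = \emptyset$) again with a reference to Allouche. So there is no in-paper proof to compare against, only the cited sources.

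Your treatment of the second and third equivalences is correct and standard. For the first, your iteration $Y_n := \operatorname{int}_{\cl{X_n}}(X_n)$, $X_{n+1} := X_n \setminus Y_n$ is exactly the paper's $Y_n = \lc(X_n)$, $X_{n+1} = \nlc{X_n}$, so you have independently rediscovered the Allouche decomposition that the paper later imports. The one soft spot is your proposed termination argument. ``Induction on the number of open sets in a Boolean presentation'' is not an obviously workable parameter: splitting $X$ along $U_k$ still leaves you needing to control the iteration on a \emph{union} of two pieces, which is the same difficulty you started with. The clean route, and the one in the cited reference, is to first pass through your third equivalence (DNF gives $X$ as a union of at most $2^k$ locally closed sets) and then prove directly that a union of $m$ locally closed sets satisfies $\inlc{X}{m+1} = \emptyset$; Allouche does this via the identity $\nlc{A} = A \cap \fr(\fr A)$. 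Once you reformulate the induction on the number of locally closed pieces rather than on the number of generating opens, your argument goes through and coincides with the literature proof.
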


\begin{definizione}
Let $X \subseteq \K^n$ be definable.
$X$ is \intro{\dcompact{}} if it is  closed and bounded.
$X$ is \intro{pseudo-finite} if it is \dcompact and discrete.
\end{definizione}

\begin{definizione}
The \intro{open core} of $\K$ is the reduct of $\K$ generated by all definable
open subsets of~$\K^n$, for every~$n \in \Nat$.
\end{definizione}

\begin{definizione}
$\K$ is \intro{locally o-minimal} if, for every definable function 
$f: \K \to \K$, the sign of $f$ is eventually constant.
\end{definizione}
See also \S\ref{sec:lmin} for more details on local o-minimality.

Given $d \leq n \in \Nat$, we denote by $\Pi^n_d: \K^n \to \K^d$ the projection
onto the first $d$ coordinates.
Given $X \subseteq \K^{n+m}$ and $\cv \in \K^n$, we define
$X_{\cv} := \set{\y \in \K^m: \pair{\cv, \y} \in X}$,
and $X_{[\cv]} := \set{\cv} \times X_{\cv} = X \cap \set{\cv} \times X^m$.

\begin{remark}
The open core of $\K$ includes all definable constructible sets and, more
generally, all $\Fs$-sets.
In fact, if $X$ is an $\Fs$-set, then $X$ is the projection of a closed
definable set: if $X = \bigcup_{t \in \K} X_t$, where $\Pa{X_t: t \in \K}$ is
a definable increasing family of closed subsets of~$\K^n$,
then $X = \Pi^{n+1}_n\cll(\bigcup_{t \in \K} X_t \times \set t)$.
\end{remark}

I will now discuss briefly the proviso that $\K$ expands a field.
This assumption is often convenient for notational purposes and to simplify
the statements of the theorems (compare \eg our definition of $\Fs$-sets with
the corresponding definition of $D_\Sigma$ sets in~\cite{DMS}); 
in those cases, a reader that is interested in definably complete structures 
that may not expand a field can easily modify definitions, proofs, 
and statements to his situation. 
However, sometimes the field assumption is used in an essential way
(\eg, in \S\ref{sec:enumerable} and \S\ref{subsec:dense}), and the reader
assumed above should be more careful.

\section[Locally o-minimal open core]{Structures with o-minimal and locally o-minimal open core}\label{sec:amin}

\begin{definizione}
Let $P$ be a property of definable sets.
We say that $P$ is \intro{definable} (for~$\K$), if for every definable family
$\Pa{X_y}_{y \in   A}$, the set $d_P(X) := \set{y \in A: P(X_y)}$ is
definable.
If $T$ is a theory, we say that $P$ is definable for $T$ if $P$ is definable
for every model of~$T$.
\end{definizione}

For instance, ``being closed'' and ``being pseudo-finite'' are definable
properties.
A type $p$ over $\K$ is definable iff the corresponding property ``$X \in p$''
is definable~\cite[\S11.b]{poizat85}.
We do not know if ``being constructible'' is definable (because a
constructible set $X$ is a finite union of locally closed sets; if we do not
have a bound on the number of locally closed sets $C_i$ such that $X =
\bigcup_i C_i$, we are not able to express the constructibility of $X$ in a
definable way).
Notice also that a property might be definable for $\K$ without being
definable for the theory of~$\K$: for instance, if $\K$ is \emph{not}
$\omega$-saturated, then ``being finite'' might be definable for~$\K$,
without being definable for some $\K' \succ \K$.

\begin{definizione}\label{def:lmin}
$K$ is \intro{locally o-minimal} if, for every definable function $f: \K \to
\K$ and every $x \in \K$, there exists $y > x$ such that either $f > 0$ on
$(x,y)$, or $f = 0$ on $(x,y)$, or $f < 0$ on $(x,y)$.
\end{definizione}

\begin{thm}\label{thm:a-minimal}
The following are equivalent:
\begin{enumerate}
\item\label{en:a-dcf} Every definable discrete closed subset of $\K$ is pseudo-finite;
\item Every definable discrete closed subset of $\K^n$ is pseudo-finite for
every $n \in \Nat$;
\item\label{en:a-df} Every definable discrete subset of $\K$ is pseudo-finite;
\item Every definable discrete subset of $\K^n$ is pseudo-finite for every $n \in \Nat$;
\item Every definable nowhere-dense subset of $\K$ is pseudo-finite;
\item\label{en:a-mf} Every definable meager subset of $\K$ is pseudo-finite;
\item\label{en:a-ndd} Every definable nowhere-dense subset of $\K$ is discrete;
\item The \textbf{open core} of $\K$ is \textbf{locally o-minimal};
\item\label{en:a-uf} The union of any definable increasing family of pseudo-finite subsets of $\K$ is pseudo-finite;
\item\label{en:a-ufn} The union of any definable increasing family of pseudo-finite subsets of $\K^n$ is pseudo-finite, for every $n \in \Nat$.
\end{enumerate}
Moreover, if any of the above equivalent conditions is satisfied, then every
$\Fs$ subset of $\K^n$ is constructible, $\K$ is Baire, and every meager
subset of $\K^n$ is nowhere dense.
\end{thm}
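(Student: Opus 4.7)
The plan is to establish the equivalence via cycles of implications, separating the $\K$-level conditions from their $\K^n$-counterparts. First I would dispatch the trivial implications: $(2)\Rightarrow(1)$, $(4)\Rightarrow(2)$, $(4)\Rightarrow(3)$, $(3)\Rightarrow(1)$, $(10)\Rightarrow(9)$, $(6)\Rightarrow(5)$ (nowhere dense implies meager), together with $(5)\Rightarrow(1)$ and $(5)\Rightarrow(7)$ (pseudo-finite sets are closed discrete, and a closed discrete subset of $\K$ has empty interior hence is nowhere dense). What remains is the real content: promoting the $\K$-conditions upward, lifting from $\K$ to $\K^n$, and linking condition~(8).

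For the $\K$-level promotions I would exploit definable completeness together with the field structure. For $(1)\Rightarrow(3)$, suppose $X\subseteq\K$ is definable discrete but not closed; by definable completeness one can definably select an accumulation point $a$ of $X$, and then the image of $X$ under $z\mapsto 1/(z-a)$ (after restricting to one side of $a$) is a definable closed discrete unbounded subset of $\K$, contradicting~(1). A similar trick handles $(1)\Rightarrow(9)$: from a definable increasing family $(X_t)$ of pseudo-finite sets with unbounded or non-closed union, the extremal points of the $X_t$ assemble into a definable closed discrete unbounded subset of~$\K$.

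For the lift to $\K^n$ I would induct on~$n$. Given a definable closed discrete $X\subseteq\K^n$, each fiber $X_{\bar a}$ over $\bar a\in\Pi^n_{n-1}(X)$ is closed discrete in $\K$, hence finite by~(1). Taking a definable selector (e.g.\ the lexicographic minimum of each fiber) and using the preservation of closedness and discreteness under projections with uniformly finite fibers (which itself requires definable completeness) reduces the problem to the inductive hypothesis on $\K^{n-1}$. For the equivalence with~(8), I would prove $(8)\Rightarrow(1)$ by contrapositive: if $X\subseteq\K_{>0}$ is a definable closed discrete unbounded set, then $Y=\{1/x:x\in X\}$ is definable discrete with $0\in\cl Y\setminus Y$, so $\cl Y$ lies in the open core yet fails to be a finite union of intervals and points in any right-neighborhood of~$0$, contradicting local o-minimality of the open core. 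The converse $(1)\Rightarrow(8)$ follows because, under the already established equivalences, every definable subset of $\K$ in the open core has nowhere-dense, hence pseudo-finite, boundary and is therefore locally a finite union of intervals and points.

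Finally, for the ``moreover'' clause: $\K$~is Baire because if $\K=\bigcup_t X_t$ were a definable increasing union of nowhere dense sets, each $X_t$ would be pseudo-finite by~(5) and~(9) would force the union to be bounded, contradicting unboundedness of~$\K$; the same reasoning upgrades meager subsets of~$\K^n$ to nowhere dense. The $\Fs$-to-constructible statement in $\K^n$ is obtained by writing $X=\bigcup_t F_t$ with $F_t$ closed and increasing and controlling the frontier $\fr X$, which is nowhere dense and therefore handled by the established equivalences, expressing $X$ as a finite Boolean combination of closed sets. The main obstacle I expect is the $\K$-to-$\K^n$ lift, because projections of closed discrete sets need not be closed without boundedness, so the argument must bootstrap the boundedness and the closedness/discreteness of the projection simultaneously, using definable completeness in a delicate way.
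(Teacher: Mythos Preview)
Your cycle of implications does not close. You route everything through~(1), obtaining $(1)\Leftrightarrow(2)\Leftrightarrow(3)\Leftrightarrow(8)$ and $(1)\Rightarrow(9)$, but you never get from~(1) back to~(5) or~(6), and you never show $(7)\Rightarrow$ anything. Concretely, the implication ``every closed discrete subset of $\K$ is pseudo-finite'' $\Rightarrow$ ``every nowhere-dense subset of $\K$ is pseudo-finite'' is genuine content: a closed nowhere-dense $Y\subseteq\K$ need not be discrete, and one must argue (as the paper does) via the set of centres of the complementary intervals of~$Y$, which is discrete, hence pseudo-finite by~(3), forcing $Y$ itself to be pseudo-finite. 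Your argument for $(1)\Rightarrow(8)$ actually \emph{uses} this step (``nowhere-dense, hence pseudo-finite, boundary''), so it is circular until $(1)\Rightarrow(5)$ is supplied. Likewise $(7)\Rightarrow(1)$ needs its own argument (if $X$ is closed discrete unbounded, then $\{1/x:x\in X\}\cup\{0\}$ is nowhere dense but not discrete).

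Two further problems. First, in your lift $(1)\Rightarrow(2)$ you write that each fibre is ``finite by~(1)''; it is only \emph{pseudo-finite}, so your selector/projection argument with ``uniformly finite fibres'' does not get off the ground. The paper instead waits until Baire-ness and the $\Fs$-constructibility are established, then observes that each coordinate projection of a discrete $X\subseteq\K^n$ is an $\Fs$ with empty interior, hence pseudo-finite, so $X$ sits in a pseudo-finite box. Second, and most seriously, you treat ``every $\Fs$ subset of $\K^n$ is constructible'' as a short afterthought (``controlling the frontier $\fr X$''), but $\fr X$ of an $\Fs$ set is not automatically nowhere dense in $\K^n$, and there is no direct reduction to the $\K$-level conditions. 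In the paper this is the core of the proof: a double induction on $n$ and on the full dimension $(d,k)$, using Baire-ness to find a box where the $\Fs$ is locally closed (their Lemma on finding $B'$ and $t_0$ with $X\cap(B'\times\K^{n-d})\subseteq X(t_0)$), and then showing $\fdim(\nlc X)<\fdim(X)$. Your sketch contains none of this machinery.
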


\begin{definizione}
We say that $\K$ is \intro{\aminimal{}} if it satisfies any  of the equivalent
conditions in Thm.~\ref{thm:a-minimal} (that is, if it has locally o-minimal
open core).
\end{definizione}

\begin{definizione}
Let $X \subseteq \K^{n + m}$.
Define $\Fin_n(X) := \set{y \in \K^n: X_Y \text{ is finite}}$.
\end{definizione}

``Being finite'' is not definable a definable property in general.
The following lemma characterises when it \emph{is} a definable property.

\begin{lemma}\label{lem:definable-finite}
The following are equivalent:
\begin{enumerate}
\item Every pseudo-finite set is finite;
\item For every $X \subseteq \K^2$ definable, $\Fin_1(X)$ is also definable;
\item 
``Being finite'' is a definable property.
\end{enumerate}
\end{lemma}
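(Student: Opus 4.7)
The plan is to establish (3) $\Rightarrow$ (2) $\Rightarrow$ (1) $\Rightarrow$ (3). The first implication is immediate, since (2) is just the special case of (3) applied to the family of fibres of an arbitrary definable $X \subseteq \K^2$. For (1) $\Rightarrow$ (3), observe that any finite subset of $\K^m$ is automatically closed, bounded, and discrete, hence pseudo-finite; under (1) the predicates ``finite'' and ``pseudo-finite'' therefore coincide, and pseudo-finiteness is the conjunction of three manifestly definable conditions, so it is preserved in any definable family.

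For the substantive implication (2) $\Rightarrow$ (1), I would argue by contradiction: assume (2) and suppose there is a pseudo-finite $D \subseteq \K$ that is infinite. Consider the definable set $X := \{(a,b) \in D \times D : a \leq b\} \subseteq \K^2$, whose fibres are the initial segments $X_b = D \cap (-\infty, b]$ of $D$. By (2), $F := \Fin_1(X)$ is definable, and hence so is $F_D := F \cap D$. Moreover $F_D$ is downward closed in $D$ (if $b' \leq b$ in $D$ and $X_b$ is finite, then $X_{b'} \subseteq X_b$ is finite) and contains $\min D$, since $X_{\min D} = \{\min D\}$. It therefore suffices to prove $F_D = D$, for then taking $b = \max D$ yields $D = X_{\max D}$ finite, contradicting the assumption.

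Suppose instead that $F_D \subsetneq D$, and set $d^* := \min(D \setminus F_D)$: the infimum of $D \setminus F_D$ exists in $\K$ by definable completeness, lies in $D$ by closedness of $D$, and coincides with the minimum by discreteness (a small enough neighbourhood of the infimum meets $D$, hence $D \setminus F_D$, only in the infimum itself). Since $\min D \in F_D$ we have $d^* > \min D$, so by the same reasoning the predecessor $d' := \max(D \cap (-\infty, d^*))$ exists in $D$ and belongs to $F_D$, making $X_{d'}$ finite. But by discreteness of $D$ no element lies strictly between $d'$ and $d^*$, so $X_{d^*} = X_{d'} \cup \{d^*\}$ is also finite, forcing $d^* \in F_D$, the desired contradiction. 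The main obstacle is precisely this ``predecessor/successor'' manipulation, which rests on combining definable completeness of $\K$ with all three defining features of pseudo-finiteness of $D$: boundedness so that the relevant suprema and infima exist, closedness to keep them inside $D$, and discreteness to rule out limit-point pathologies and to produce actual maxima and minima.
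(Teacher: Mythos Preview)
Your proof is correct and follows essentially the same strategy as the paper's: both arguments use (2) to make the set of parameters with finite initial segment definable, and then invoke definable completeness together with the closed/discrete nature of a pseudo-finite set to run a supremum-type induction. The only cosmetic difference is that the paper parametrises by radii $r$ and takes $\sup\{r : Z \cap [-r,r] \text{ is finite}\}$, whereas you parametrise directly by elements $b \in D$ and use the predecessor step; your version spells out in full the ``clearly $r = +\infty$'' that the paper leaves implicit.
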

\begin{proof}
$(3 \Rightarrow 2)$ and $(1 \Rightarrow 3)$ are clear.
Assume (2). Let $Z \subset \K^n$ be pseudo-finite; we have to prove that $Z$
is finite; for simplicity, assume $n = 1$.
Let $r := \sup\set{r \in \K: Z \cap [-r, r] \text{ is finite}}$.
Clearly, $r = + \infty$, and therefore, since $Z$ is bounded, $Z$~is finite.
\end{proof}

\begin{corollary}\label{cor:b-minimal}
The following are equivalent:
\begin{enumerate}
\item 
$\K$ is \aminimal, and every pseudo-finite subset of $\K$ is finite;
\item 
$\K$ is \aminimal, and every pseudo-finite subset of $\K^n$ is finite, for
every $n \in \Nat$; 
\item 
$\K$ is \aminimal, and for every $X \subseteq \K^{2}$ definable,
$\Fin_1(X)$ is definable; 
\item 
$\K$ is \aminimal, and  ``being finite'' is a definable property for $\K$;
\item 
$\K$ has \textbf{o-minimal open core};
\item\label{en:b-min-closed-discrete} 
Every definable closed discrete subset of $\K$ is finite;
\item 
Every definable closed discrete subset of $\K^n$ is finite,
for every $n \in \Nat$; 
\item 
Every definable discrete subset of $\K$ is finite;
\item 
Every definable discrete subset of $\K^n$ is finite, for every $n \in \Nat$.
\end{enumerate}
\end{corollary}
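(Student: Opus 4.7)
The plan is to combine Theorem~\ref{thm:a-minimal} with Lemma~\ref{lem:definable-finite} to handle the bulk of the equivalences, and then deal with the open-core condition~(5) separately. I first observe that each of the nine conditions implies that $\K$ is \aminimal: for (1)--(4) this is built into the statement; for (6)--(9), ``finite'' is stronger than ``pseudo-finite'', so each of these implies one of the equivalent characterisations of \aminimality in Theorem~\ref{thm:a-minimal}; and for~(5), o-minimality of the open core trivially implies its local o-minimality, which is item~(8) of that theorem. Hence throughout I may assume $\K$ is \aminimal.

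Under this assumption, Theorem~\ref{thm:a-minimal} makes the three properties ``closed discrete'', ``discrete'', and ``pseudo-finite'' coincide for definable subsets of every $\K^n$, so conditions (6) and (8) both reduce to~(1), while (7) and (9) both reduce to~(2); simultaneously, Lemma~\ref{lem:definable-finite} yields $(1) \Longleftrightarrow (3) \Longleftrightarrow (4)$. The only nontrivial equivalence remaining is $(1) \Longleftrightarrow (2)$, and I will prove $(1) \Rightarrow (2)$ by induction on~$n$. Given a pseudo-finite $Z \subseteq \K^{n+1}$, every fibre $Z_t$ is pseudo-finite in $\K^n$ and hence finite by the inductive hypothesis; the projection $\Pi^{n+1}_1(Z)$ is d-compact, and I claim it cannot contain an open interval~$(a,b)$. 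Indeed, if it did, then $f(t) := \lexmin Z_t$ would be a definable section $(a,b) \to \K^n$, and the graph $\Gamma_f \subseteq Z$ would be a discrete definable subset of $\K^{n+1}$, hence pseudo-finite by Theorem~\ref{thm:a-minimal} and therefore d-compact, so its projection~$(a,b)$ would be closed --- contradiction. Therefore $\Pi^{n+1}_1(Z)$ is nowhere dense, so pseudo-finite by \aminimality, so finite by~(1), and $Z$ is a finite disjoint union of finite fibres.

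Finally, $(5) \Longleftrightarrow (6)$ is established directly. For $(5) \Rightarrow (6)$, every definable closed subset of $\K^n$ lies in the open core (as the complement of a definable open set), so~(5) forces any definable closed discrete subset of $\K$ to be finite. For $(6) \Rightarrow (5)$, \aminimality makes the open core locally o-minimal; for any $X \subseteq \K$ definable in the open core, local o-minimality delivers a definable closed discrete set of ``change points'' of~$X$, which by~(6) is finite, so the local decomposition of $X$ into points and intervals patches into a global one, giving o-minimality of the open core. I expect the main obstacle to be the inductive step $(1) \Rightarrow (2)$; the decisive trick is to exploit \aminimality to promote the \emph{a priori} merely discrete set $\Gamma_f$ to a d-compact one, which then forces its non-closed projection~$(a,b)$ to be closed, yielding the contradiction.
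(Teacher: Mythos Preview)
Your proof is correct and follows exactly the route the paper intends: the corollary is stated without proof because it is meant to fall out immediately from Theorem~\ref{thm:a-minimal} together with Lemma~\ref{lem:definable-finite}, which is precisely what you do. One simplification: your inductive step $(1)\Rightarrow(2)$ is more elaborate than necessary, since the projection $\Pi^{n+1}_1(Z)$ of a pseudo-finite set is already pseudo-finite (definable images of pseudo-finite sets are pseudo-finite, \S\ref{sec:pf}), hence finite by~(1), with no need for the section $f$ and the contradiction argument; equivalently, by Corollary~\ref{cor:product} each coordinate projection of $Z$ is pseudo-finite in~$\K$, hence finite, so $Z$ sits inside a finite box.
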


The above corollary is a converse of \cite[Theorem~A]{DMS}.

\begin{definizione}
We say that $\K$ 
has \intro{o-minimal open core}
if it satisfies any  of the equivalent
conditions in Corollary~\ref{cor:b-minimal}.
\end{definizione}

Notice that if $\K$ expands~$\Real$, then ``being finite'' \emph{is} a
definable property.
Notice also that  ``Uniform Finiteness'' (UF), as defined
in~\cite{DMS} (also know as ``elimination of the quantifier
$\exists^\infty$'') is a stronger property than ``being finite is definable in
$\K$'', because the former says that ``being finite'' is definable
in the theory of~$\K$ (for instance, if $\K$ expands~$\Real$, then ``being
finite'' is definable for~$\K$, but $\K$ does not necessarily satisfy UF).

\begin{corollary}[\cite{DMS}]
If $\K$  satisfies UF, then $\K$ has o-minimal open core.
\end{corollary}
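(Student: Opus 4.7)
The plan is to derive the conclusion from Corollary~\ref{cor:b-minimal}, by verifying two of the equivalent conditions listed there: namely, that $\K$ is \aminimal, and that every pseudo-finite subset of $\K$ is finite. Once both are in place, Corollary~\ref{cor:b-minimal} finishes the proof.

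The second of these follows essentially immediately from the hypothesis. By definition, UF asserts that ``being finite'' is a definable property in the theory of $\K$, hence in particular in $\K$ itself; Lemma~\ref{lem:definable-finite} then gives that every pseudo-finite subset of $\K$ is finite.

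For \aminimality, I would verify condition~(\ref{en:a-dcf}) of Thm~\ref{thm:a-minimal}: that every definable closed discrete subset of $\K$ is pseudo-finite, which amounts to showing that such a set must be bounded. Suppose toward a contradiction that some definable $D \subseteq \K$ is closed, discrete, and unbounded. Closedness, discreteness, and having at least $n$ elements (for each fixed $n \in \Nat$) are first-order properties of the defining formula, so passing to a sufficiently saturated elementary extension $\K' \succeq \K$ and letting $D'$ be the set defined in $\K'$ by the same formula, one obtains a closed, discrete, infinite set $D' \supseteq D$ in $\K'$. By saturation, pick $t \in \K'$ exceeding every element of $\K$; then $D \subseteq [-t, t]$, so $D' \cap [-t, t]$ is closed, discrete, and bounded in $\K'$, hence pseudo-finite. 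Since UF is a property of $\Th(\K) = \Th(\K')$, it holds in $\K'$, and the previous paragraph applied to $\K'$ shows that $D' \cap [-t, t]$ is in fact finite, contradicting the infinitude of its subset $D$.

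The only real subtlety is the observation that UF is a property of the theory rather than merely of the structure, so that it is inherited by elementary extensions; this is precisely what allows the saturation step to reduce the unbounded case in $\K$ to a bounded (hence pseudo-finite, hence finite by UF) case in $\K'$. Given this, the argument is a routine compactness/saturation maneuver combined with the equivalences already established in Thm~\ref{thm:a-minimal}, Lemma~\ref{lem:definable-finite}, and Corollary~\ref{cor:b-minimal}.
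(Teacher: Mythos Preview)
Your argument is correct, but it takes a somewhat longer route than the paper's.

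The paper verifies condition~(\ref{en:b-min-closed-discrete}) of Corollary~\ref{cor:b-minimal} directly: every definable closed discrete $D \subseteq \K$ is finite. The intended ``easy'' argument runs entirely inside~$\K$: for each $r>0$, the set $D \cap [-r,r]$ is closed, discrete, and bounded, hence pseudo-finite; since UF implies ``being finite'' is definable, Lemma~\ref{lem:definable-finite} gives that each $D \cap [-r,r]$ is actually finite; then the uniform bound coming from UF (elimination of $\exists^\infty$) applied to the family $\bigl(D \cap [-r,r]\bigr)_{r>0}$ yields a single $N$ with $\lvert D \cap [-r,r]\rvert \le N$ for all~$r$, so $\lvert D\rvert \le N$.

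You instead verify condition~(1) of Corollary~\ref{cor:b-minimal}, splitting the work into ``pseudo-finite $\Rightarrow$ finite'' (same use of Lemma~\ref{lem:definable-finite}) and \aminimality, and you obtain the latter by passing to a saturated extension and picking $t > \K$. This is a legitimate alternative, and your observation that UF is a property of the \emph{theory} (hence inherited by $\K'$) is exactly what makes the saturation step go through. The trade-off is that your argument invokes an elementary extension where the paper's stays internal: the uniform bound furnished by UF already does the job of your element~$t$, since it caps the size of all the bounded pieces simultaneously. Nothing is wrong with your proof; it is simply a compactness reformulation of the direct uniform-bound argument.
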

\begin{proof}
Condition~\ref{cor:b-minimal}-\ref{en:b-min-closed-discrete} follows easily from the hypothesis.
\end{proof}

\begin{remark}
If $\K$ is an \aminimal expansion of $\Real$, then it has o-minimal open core.
\end{remark}

\section{Preliminaries}
\label{sec:preliminary}
Let $X \subseteq \K^n$ be definable.
Let $\fr A := \cl A \setminus A$, and $\bd(A) := \cl A \setminus \inter A$.

\begin{lemma}
If $Y \subseteq \K^n$ is definable and definably connected (\eg, $n = 1$ and
$Y$ is an interval), then the following are equivalent:
\begin{enumerate}
\item $Y \cap \bd(X) = \emptyset$;
\item either $Y \subseteq \inter X$, or $Y \cap \cl X = \emptyset$.
\end{enumerate}
\end{lemma}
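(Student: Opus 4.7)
The plan is to exploit definable connectedness of $Y$ by partitioning it into two relatively open pieces whose union is all of $Y$ precisely when $Y$ avoids $\bd(X)$.

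The direction $(2 \Rightarrow 1)$ is immediate from the set-theoretic identity $\bd(X) = \cl X \setminus \inter X$: if $Y \subseteq \inter X$, then $Y \cap \bd(X) \subseteq \inter X \cap (\K^n \setminus \inter X) = \emptyset$; and if $Y \cap \cl X = \emptyset$, then a fortiori $Y \cap \bd(X) = \emptyset$.

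For the harder direction $(1 \Rightarrow 2)$, the key observation is that the complement of $\bd(X)$ decomposes as $\K^n \setminus \bd(X) = \inter X \sqcup (\K^n \setminus \cl X)$, and both pieces are definable open sets. Setting $A := Y \cap \inter X$ and $B := Y \setminus \cl X$, the hypothesis $Y \cap \bd(X) = \emptyset$ forces $Y = A \cup B$, while $A \cap B = \emptyset$ holds trivially since $\inter X \subseteq \cl X$. Thus $A,B$ form a partition of $Y$ into two definable relatively open subsets. By definable connectedness of $Y$, one of them must be empty, yielding the two alternatives in~(2).

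The only step requiring even minor care is the identity $\K^n \setminus \bd(X) = \inter X \sqcup (\K^n \setminus \cl X)$, which follows by unwinding the definition $\bd(X) = \cl X \setminus \inter X$ and noting $\inter X \subseteq \cl X$. No obstacle is anticipated: the argument is the standard connectedness trick, adapted to the definable setting, and requires nothing beyond the definitions of $\inter{(\cdot)}$, $\cl{(\cdot)}$, and definable connectedness.
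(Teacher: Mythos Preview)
Your proof is correct and is the standard connectedness argument. The paper states this lemma without proof, so there is nothing to compare against; your write-up would serve perfectly well as the omitted justification.
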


\begin{remark}\label{rem:boundary}
\begin{itemize}
\item $\fr X$ has empty interior.
\item $\bd(X) = \fr X \sqcup \fr(\K^n \setminus X)$.
\item $\bd(X)$ is closed.
\item $\bd(X \cup X') \subseteq \bd(X) \cup \bd(X')$.
\item If $X$ and $X'$ are nowhere dense, then $X \cup X'$ is also nowhere
      dense.
\item If $X$ is locally closed, then $\bd(X)$ is nowhere dense.
      Therefore, if $X$ is constructible, then $\bd(X)$ is nowhere dense.
\end{itemize}
\end{remark}

\begin{corollary}\label{cor:constructible-nowhere-dense}
If $X$ is constructible and $\inter X = \emptyset$, then $X$ is nowhere dense.
\end{corollary}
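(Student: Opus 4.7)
The plan is to read off the result directly from the last bullet of Remark~\ref{rem:boundary}, namely that for constructible $X$ the boundary $\bd(X)$ is nowhere dense. The key observation is that the hypothesis $\inter X = \emptyset$ collapses $\bd(X)$ down to the full closure of~$X$.

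More precisely, I would first note that, by definition, $\bd(X) = \cl X \setminus \inter X$. Since we are assuming $\inter X = \emptyset$, this immediately gives $\bd(X) = \cl X$. Applying the constructibility of $X$ together with Remark~\ref{rem:boundary}, the set $\bd(X)$ is nowhere dense; but $\bd(X)$ is also closed (again by Remark~\ref{rem:boundary}), so ``nowhere dense'' for $\bd(X)$ simply means $\inter\bigl(\bd(X)\bigr) = \emptyset$. Substituting $\bd(X) = \cl X$ gives $\inter(\cl X) = \emptyset$, which is exactly the assertion that $X$ is nowhere dense.

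There is no real obstacle here: the work has been done in Remark~\ref{rem:boundary}, and the corollary is just the observation that under $\inter X = \emptyset$ the boundary and the closure coincide. The only point to double-check is that one is invoking the correct bullet of the remark (``$X$ locally closed $\Rightarrow\bd(X)$ nowhere dense'' extended via finite unions, using the bullet that a finite union of nowhere dense sets is nowhere dense, to get the claim for constructible $X$).
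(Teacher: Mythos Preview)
Your argument is correct and is exactly the intended one: the paper states the corollary immediately after Remark~\ref{rem:boundary} with no separate proof, so the reader is meant to do precisely the computation you describe (use the last bullet to get $\bd(X)$ nowhere dense, and note that $\inter X = \emptyset$ forces $\bd(X) = \cl X$).
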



\begin{lemma}\label{lem:discrete-boundary}
$X$ is locally closed iff $\fr X$ is closed.
If $\bd(X)$ is discrete, then $X$ is locally closed.
\end{lemma}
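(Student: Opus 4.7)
The plan is to reduce both statements to the characterization in the \emph{Fact} cited earlier, namely that $X$ is locally closed iff $X = C \cap U$ for some closed $C$ and open $U$. For the biconditional ``locally closed $\iff$ $\fr X$ closed'', the forward direction should follow from a quick computation: if $X = C \cap U$, then $\cl X \subseteq \cl C = C$, so $\cl X \cap U \subseteq C \cap U = X$, and hence $X = \cl X \cap U$. This gives $\fr X = \cl X \setminus X = \cl X \setminus U$, an intersection of two closed sets.

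For the converse I would choose the obvious candidates $C := \cl X$ and $U := \K^n \setminus \fr X$; the latter is open since $\fr X$ is assumed closed. Using $\fr X \cap X = \emptyset$ one checks
\[
\cl X \cap U = \cl X \setminus \fr X = \cl X \setminus (\cl X \setminus X) = X,
\]
which exhibits $X$ as locally closed.

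For the second statement I would invoke Remark~\ref{rem:boundary}, which supplies two crucial facts: $\bd(X)$ is closed, and $\bd(X) = \fr X \sqcup \fr(\K^n \setminus X)$, so in particular $\fr X \subseteq \bd(X)$. Assuming $\bd(X)$ is discrete, it suffices by the first part to show $\fr X$ is closed. Pick any $p \in \cl{\fr X}$; then $p \in \cl{\bd(X)} = \bd(X)$, and the discreteness of $\bd(X)$ provides a neighborhood $V$ of $p$ with $V \cap \bd(X) = \set{p}$, hence $V \cap \fr X \subseteq \set{p}$. Since $p \in \cl{\fr X}$, this forces $p \in \fr X$, as required.

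There is no serious obstacle; the whole argument is bookkeeping with $\cl\cdot$, $\inter\cdot$, and $\fr\cdot$. The one subtle point worth flagging is that a discrete set need not be closed, so discreteness of $\bd(X)$ alone would not immediately give discreteness (or closedness) of $\fr X$: the argument genuinely uses that $\bd(X)$ is already closed in order to confine $\cl{\fr X}$ inside $\bd(X)$.
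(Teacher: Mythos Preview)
Your proof is correct and follows essentially the same approach as the paper: both directions of the biconditional go through the representation $X = \cl X \cap U$ and the identity $\fr X = \cl X \setminus U$, and the second part uses that $\bd(X)$ is closed together with $\fr X \subseteq \bd(X)$ to conclude that $\fr X$ is closed. The paper phrases the last step slightly more tersely (a closed discrete set has no accumulation points, hence every subset is closed), but the content is identical to your pointwise argument.
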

\begin{proof}
For the first part, if $\fr X$ is closed, then $X$ is open in $\cl X$, and
hence locally closed.
Conversely, if $X = \cl X \cap U$ for some open set~$U$, then $\fr X = X
\setminus U$.

For the second part, if $\bd(X)$ is discrete, then,
since $\bd(X)$ is also closed, $\bd(X)$ has no accumulation points in~$\K^n$.
Since $\fr X \subseteq \bd(X)$, we have that $\fr X$ is also closed (in~$\K^n$).
\end{proof}

\begin{lemma}\label{lem:uniform-cont}
Let $f : [0,1] \to \K$ be definable and continuous.
Then, $f$ is uniformly continuous.
\end{lemma}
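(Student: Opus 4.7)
The plan is to adapt the classical proof that a continuous function on a closed interval is uniformly continuous, replacing sequential compactness (unavailable here) with definable completeness in the style of a connectedness/supremum argument. Suppose for contradiction that $f$ is not uniformly continuous, so we can fix an $\epsilon > 0$ such that for every $\delta > 0$ there exist $x, y \in [0,1]$ with $|x - y| < \delta$ and $|f(x) - f(y)| \geq \epsilon$. The goal is to reach a contradiction by showing that $f$ \emph{is} $\epsilon$-uniformly continuous on the whole of $[0,1]$.

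Consider the definable set
\[
A := \bigl\{ t \in [0,1] : \exists \delta > 0 \ \forall x,y \in [0,t] \ (|x-y| < \delta \Rightarrow |f(x) - f(y)| < \epsilon) \bigr\}.
\]
By definable completeness, $s := \sup A$ exists in $\K$ and lies in $[0,1]$. Clearly $A$ is downward closed in $[0,1]$. Using continuity of $f$ at $0$ one gets $s > 0$, and it suffices to show $s = 1$ and $s \in A$; once this is established, $1 \in A$ directly contradicts the choice of $\epsilon$.

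First I would show $s = 1$ by assuming $s < 1$ and deriving a strictly larger element of $A$. By continuity of $f$ at $s$ there is $\eta > 0$ such that $|f(x) - f(s)| < \epsilon/2$ whenever $x \in (s-\eta, s+\eta) \cap [0,1]$; in particular $|f(x) - f(y)| < \epsilon$ on this subinterval. Pick $t \in A$ with $t > s - \eta/2$ (possible since $s = \sup A$), and let $\delta_t > 0$ witness that $t \in A$. Setting $\delta' := \min(\delta_t, \eta/2)$, a short case split on whether both of $x, y \in [0, s + \eta/2]$ lie in $[0,t]$ or at least one exceeds $t$ (in which case both lie in $(s - \eta, s + \eta)$) shows that $s + \eta/2 \in A$, contradicting the definition of $s$. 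An entirely analogous argument, using continuity at $s = 1$ on the left, shows $s = 1 \in A$, completing the contradiction.

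The only real obstacle is a careful bookkeeping in the extension step (choosing $\eta$ to get $\epsilon/2$-oscillation near $s$ and combining it with $\delta_t$ from $t \in A$); everything else is a routine translation of the usual real-analytic argument. Definability of $A$ is immediate from the formula above, so definable completeness applies with no further work, and the proviso that $\K$ expands an ordered field allows us to divide $\eta$ and $\epsilon$ by $2$ freely.
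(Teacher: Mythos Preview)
Your argument is correct. The creeping-along supremum argument goes through exactly as you sketch; the case split in the extension step is sound, and the definability of $A$ is clear.

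The paper, however, takes a different and shorter route. It fixes the bad $\varepsilon$, observes that for each $\delta>0$ the set
\[
X(\delta) := \set{(x,y) \in [0,1]^2 : \abs{x-y} \leq \delta \ \et \ \abs{f(x)-f(y)} \geq \varepsilon}
\]
is nonempty, closed (by continuity of~$f$), and bounded, and that the family $\bigl(X(\delta)\bigr)_{\delta>0}$ is decreasing. Since $[0,1]^2$ is \dcompact, the intersection $\bigcap_{\delta>0} X(\delta)$ is nonempty; any point $(x,y)$ in it satisfies $x=y$ and $\abs{f(x)-f(y)} \geq \varepsilon$, a contradiction. So the paper leans on the (already available) fact that definable decreasing families of nonempty \dcompact sets have nonempty intersection, whereas you rebuild uniform continuity from scratch using only the supremum axiom. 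Your approach is more self-contained and avoids invoking \dcompact ness of products; the paper's approach is more conceptual, mirrors the usual compactness proof verbatim, and is considerably shorter once \dcompact ness is in hand.
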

\begin{proof}
Assume not; then, there exists $\varepsilon > 0$ such that, for every $\delta
> 0$, the set
\[
X(\delta) := \set{(x,y) \in [0,1]^2: \abs{x -y} \leq \delta \ \& \ \abs{f(x) -
  f(y)} \geq \varepsilon}
\]
is non-empty.
Since $[0,1]^2$ is \dcompact, we have $X := \bigcap_{\delta > 0} X(\delta) \neq
\emptyset$. 
If $(x,y) \in X$, then $x = y$ and $f$ is not continuous at~$x$, absurd.
\end{proof}

\begin{definizione}
Let $P$ be a property of definable sets.
We say that $P$ is \intro{monotone} if $X \subseteq Y$ and $P(Y)$ imply $P(X)$.
We say that $P$ is \intro{additive} if $P(X)$ and $P(Y)$ imply $P(X \cup Y)$.
\end{definizione}

\begin{lemma}
Let $C \subset \K^n$ be \dcompact, $f: C \to \K^m$ be definable, and $P$ be a
property of definable sets.
Assume that $P$ is definable, monotone, and additive, and that, for every 
$c \in C$, there exists $U_c$ definable neighbourhood of~$c$, such that 
$P(f(U_c \cap C))$.
Then, $P(f(C))$.
\end{lemma}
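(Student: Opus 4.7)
The plan is to proceed by induction on $n$, using in both the base and inductive steps a one-dimensional ``connectedness'' argument powered by definable completeness. I prefer this to a tiling / Lebesgue-number approach, which would need meta-finiteness of the cover and hence a form of additivity stronger than the binary one --- problematic when $\K$ is non-Archimedean.

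For the base case $n = 1$: assuming $C$ nonempty, set $c_0 := \min C$ and $b := \max C$, and consider the set $A := \{t \in [c_0, b] : P(f(C \cap [c_0, t]))\}$. One checks that $c_0 \in A$ (hypothesis at $c_0$ plus monotonicity), that $A$ is open to the right (for $t \in A$ use the hypothesis at $t$ if $t \in C$, or the gap around $t$ in $C$ otherwise, together with additivity and monotonicity to extend $A$ into some $[t, t + \varepsilon)$), and that $A$ is closed from the left (a symmetric argument). Definable completeness then forces $\sup A = b \in A$, giving $P(f(C))$.

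For the inductive step $n \geq 2$: run exactly the same 1D sup argument along the first coordinate, with $[a, b] := \pi_1(C)$ and $A := \{t \in [a, b] : P(f(C \cap (\pi_1 \leq t)))\}$. The one nontrivial ingredient needed (for both openness to the right and closedness from the left) is the \emph{thick-slice claim}: for every $t \in [a, b]$ there is $\varepsilon > 0$ with $P(f(C \cap \{|\pi_1 - t| < \varepsilon\}))$. To prove this, apply the inductive hypothesis to the slice $C_t$ (d-compact in $\K^{n-1}$) with $\tilde f := \id$ and with the auxiliary property $\tilde P(X) := (\exists\varepsilon > 0)\, P(f(C \cap (t - \varepsilon, t + \varepsilon) \times N_\varepsilon(X)))$, where $N_\varepsilon(X)$ is the $\varepsilon$-neighbourhood of $X \subseteq \K^{n-1}$. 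A routine check shows that $\tilde P$ is definable, monotone and additive, and that its local hypothesis on $C_t$ follows from the given one on $C$ by intersecting each $U_c$ with a thin tube. The inductive hypothesis then yields $\tilde P(C_t)$, providing $\varepsilon_0 > 0$ with $P$ on the image of the tube over $N_{\varepsilon_0}(C_t)$; upper semicontinuity of the slice map $s \mapsto C_s$ at $s = t$ (a standard consequence of d-compactness of $C$) yields $\delta \in (0, \varepsilon_0]$ with $C \cap \{|\pi_1 - t| < \delta\} \subseteq (t - \delta, t + \delta) \times N_{\varepsilon_0}(C_t)$, and monotonicity closes the claim.

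The main obstacle I expect is designing the auxiliary property $\tilde P$ in the inductive step: its ``tube over an $\varepsilon$-neighbourhood'' shape is dictated by three simultaneous requirements --- it must itself be definable, monotone and additive; its local hypothesis must be deducible from the given one on $C$; and its conclusion on the slice $C_t$ must be bridgeable to a genuine thick slice of $C$ via upper semicontinuity of slices. Once $\tilde P$ is in place, everything else reduces to the 1D definable-completeness sup trick.
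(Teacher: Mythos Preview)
Your argument is correct. The base case is the standard definable-completeness ``creeping sup'' proof, and in the inductive step the auxiliary property $\tilde P$ does exactly what is needed: the checks that $\tilde P$ is definable, monotone and additive all go through (for additivity one takes $\varepsilon = \min(\varepsilon_1,\varepsilon_2)$ and uses monotonicity of $P$ to shrink the two tubes before applying additivity of $P$), and the upper-semicontinuity of slices follows from d-compactness via the usual ``if not, the bad set $\{(s,y)\in C : d(y,C_t)\ge\varepsilon_0\}$ is d-compact and its projection contains $t$'' argument. One small point worth making explicit: for $t\notin\pi_1(C)$ the thick-slice claim is vacuous (the slab is empty for small $\delta$), and $P(\emptyset)$ holds by monotonicity as soon as $C\neq\emptyset$; alternatively you can simply reuse the gap argument from the base case at such $t$, avoiding the claim there.

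As for comparison with the paper: the paper does not actually prove the lemma. It observes that one may replace $P$ by the property $Q(Y):=P(f(Y))$ (which inherits definability, monotonicity and additivity), thereby reducing to the case where $f$ is the inclusion $C\hookrightarrow\K^n$, and then defers entirely to~\cite{FS}. Your proof is self-contained and does not make that reduction, carrying $f$ throughout; this costs nothing in substance but a little in notation. Without the text of~\cite{FS} in hand one cannot say whether their argument matches yours in detail, but any proof in this setting must ultimately rest on the same definable-completeness sup mechanism you use, so the approaches are certainly of the same flavour.
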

\begin{proof}
\Wlog, $f$~is the inclusion function.
Proceed as in~\cite{FS}.
\end{proof}

For instance, we can apply the above lemma to the property 
``being nowhere dense''.

\subsection{Dimension}
\begin{definizione}
Let $X \subseteq \K^n$ be definable and non-empty.
The \intro{dimension} of $X$ is
\begin{multline*}
\dim X := \max \set{d \leq n: \text{ there exists a coordinate space } L
\text{ of dimension } d,\\ 
\text{s.t. } \Pi^n_L(X) \text{ has non-empty interior}},
\end{multline*}
where $\Pi^n_L$ is the projection from $\K^n$ onto~$L$.
By convention, we say that $\dim \emptyset = -1$.
The \intro{full dimension} of $X$ is the pair $\pair{d,k}$, where
$d = \dim X$
and $1 \leq k$ is the number of coordinate spaces $L$ of dimension~$d$,
\st $\Pi^n_L(X)$ has non-empty interior.
\end{definizione}
The set of full dimensions is ordered lexicographically, with the dimension
component more important.
Therefore, by induction on the full dimension we mean induction first on $d$ and then on~$k$.
Dimension and full dimension were already defined in~\cite{DMS}.

\begin{lemma}\label{lem:dim-fiber}
Let $X \subseteq \K^{n+m}$ be a definable, of dimension~$n$.
Let $A := \set{a \in \K^n: \dim(X_a) > 0}$.
If $X$ is an~$\Fs$, then $A$ is an $\Fs$.
If moreover $\K$ is Baire, then $\inter A = \emptyset$
(\ie, $A$~has dimension less than~$n$).
\end{lemma}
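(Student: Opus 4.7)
My plan is to treat the two claims in sequence; Part~2 is a Baire-category contradiction that leverages the $\Fs$-ness of $A$ established in Part~1. For Part~1, I first reduce to the case $m = 1$: for each $i \in \set{1, \ldots, m}$ let
\[
Y_i := \set{(a, y) \in \K^{n+1} : y \in \pi_i(X_a)},
\]
where $\pi_i : \K^m \to \K$ is the $i$-th coordinate projection. Each $Y_i$ is the image of $X$ under the projection $(a, z) \mapsto (a, z_i)$, hence (as a projection of an $\Fs$) itself an $\Fs$, and $A = \bigcup_{i=1}^{m} \set{a : (Y_i)_a \text{ has nonempty interior}}$. So it suffices to prove that for $Y \subseteq \K^{n+1}$ an $\Fs$, the set $\set{a : Y_a \text{ has nonempty interior}}$ is itself $\Fs$. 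Writing $Y_i = \bigcup_s Y_{i, s}$ with $Y_{i, s}$ closed and increasing in $s$, I would show that $V_{i, s} := \set{(a, p, q) : p \leq q, [p, q] \subseteq (Y_{i, s})_a}$ is closed (a point witnessing $[p, q] \not\subseteq (Y_{i, s})_a$, namely $y \in [p, q]$ with $(a, y) \notin Y_{i, s}$, admits an open neighbourhood of witnesses by closedness of $Y_{i, s}$), so $V_{i, s} \cap \set{p < q}$ is locally closed, its projection $B_{i, s} \subseteq \K^n$ is an $\Fs$, and one concludes $A = \bigcup_{i, s} B_{i, s}$ provided one can show that whenever $(Y_i)_a$ has nonempty interior some $(Y_{i, s})_a$ already contains a nondegenerate closed subinterval.

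For Part~2, suppose for contradiction that $\inter A$ contains a nonempty open box $U \subseteq \K^n$. Each $a \in U$ has $\dim X_a > 0$, so by the Part~1 analysis there exist $i$, $s$, and $p < q$ with $[p, q] \subseteq (Y_{i, s})_a$. This yields the definable closed cover
\[
U = \bigcup_{i, s, p < q} \Pa{U \cap \set{a : [p, q] \subseteq (Y_{i, s})_a}}.
\]
Since $\K$ is Baire and $U$ is a nonempty open subset of $\K^n$, Baire category applied to this family produces $i$, $s$, $p < q$, and a nonempty open $V \subseteq U$ with $[p, q] \subseteq (Y_{i, s})_a$ for every $a \in V$. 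Let $L$ be the $(n+1)$-dimensional coordinate subspace spanned by the first $n$ coordinates and the $(n+i)$-th: then $V \times [p, q] \subseteq Y_{i, s} \subseteq Y_i = \Pi^{n+m}_L(X)$, so $\Pi^{n+m}_L(X)$ has nonempty interior, contradicting $\dim X = n$.

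The main obstacle is the uniformisation step common to both parts: converting pointwise witnesses ``for each $a$, some $(i, s, p, q)$ works'' into a single witness uniform on an open set of $a$'s. In Part~2 this is precisely the role of the Baire hypothesis, applied to the definable closed cover constructed in Part~1. In Part~1 the analogous step---that $(Y_i)_a$ having nonempty interior forces some closed piece $(Y_{i, s})_a$ to contain a nondegenerate subinterval---is the delicate one, and I would expect to rely on the paper's standing Baire convention or on a definable-selection argument rooted in the definable completeness of $\K$.
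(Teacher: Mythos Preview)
Your overall strategy matches the paper's, and your Part~2 is correct. The paper, however, packages Part~2 via the definable Kuratowski--Ulam theorem rather than an explicit Baire cover: once $U \subseteq \inter A$ is a nonempty open box, for every $a \in U$ some $\pi_i(X_a)$ has interior, hence is non-meager; since $U$ is non-meager and $i$ ranges over a finite set, one may fix~$i$ so that $\{a : \pi_i(X_a) \text{ is non-meager}\}$ is itself non-meager, and Kuratowski--Ulam then yields that the projection of~$X$ onto the corresponding $(n{+}1)$-dimensional coordinate space is non-meager in~$\K^{n+1}$. Being an~$\Fs$, it has nonempty interior, contradicting $\dim X = n$. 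This reaches the same contradiction you do but avoids extracting a uniform witness $(i, s, p, q)$ valid on an open~$V$, and in particular does not rely on the Part~1 decomposition at all.

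The gap you flag in Part~1 is genuine: from $[p,q] \subseteq (Y_i)_a = \bigcup_s (Y_{i,s})_a$ one cannot deduce that some closed piece $(Y_{i,s})_a$ contains a nondegenerate subinterval without Baire --- an interval being a definable increasing union of closed nowhere-dense sets is exactly the failure of Baire --- and definable completeness alone does not rescue this. The paper's own argument here is terse (it defers to~[DMS]) and decomposes~$A$ by the \emph{length}~$s$ of the witnessing interval rather than by closed pieces of an~$\Fs$-presentation of~$X$, but it does not spell out why each term is an~$\Fs$ either. In any case, every application of the lemma in the paper occurs with~$\K$ already Baire, so assuming Baire for this step as well is adequate for all actual uses.
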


\begin{proof}
Same as~\cite[2.8(3) and 3.4]{DMS}.
Since
\[
A = \bigcup_{\rho \in \Pi(n,1)} \bigcup_{s > 0} 
\set{a \in \K^n: \rho(X_a) \text{ contains a closed  interval of length } s},
\]
and $X$ is an~$\Fs$, $A$~is also an~$\Fs$.
If, for contradiction, $A$~has non-empty interior, then, 
\wloG, forevery $a \in A$, $\Pi^m_1(X_a)$ contains an open interval.
Thus, by Kuratowski-Ulam's Theorem~\cite{FS}, 
$\Pi^{n + m}_{n + 1}(X)$ is non-meager, and thus has non-empty interior, 
contradicting $\dim X = n$.
\end{proof}

In the above lemma we cannot drop the assumption that $X$ is an~$\Fs$;
for instance, let $X$ be the set of pairs $\pair{x,y} \in \Real^2$ such that:
\[
x \in \Raz \et 0 < y < 1
\vel
x \notin \Raz \et 1 < y < 2,
\]
in the structure
$\Real(\Nat) := \pair{\Real, +, \cdot, \Nat}$.

\begin{lemma}\label{lem:dim-union}
Assume that $\K$ is Baire.
Let $X_1, X_2 \subseteq \K^n$ be definable.
If $X_1 $ and $X_2$ are both $\Fs$,
then $\fdim(X_1 \cup X_2) = \max\Pa{\fdim(X_1), \fdim(x_2)}$
and in particular $\dim(X_1 \cup X_2) = \max\Pa{\dim(X_1), \dim(X_2)}$.
\end{lemma}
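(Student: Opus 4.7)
The plan is to establish the dimension equality $\dim(X_1 \cup X_2) = \max(\dim X_1, \dim X_2)$; the full-dimension statement then follows by running the same argument on every coordinate space of top dimension. The inequality $\geq$ is immediate, since $X_i \subseteq X_1 \cup X_2$ gives $\Pi^n_L(X_i) \subseteq \Pi^n_L(X_1 \cup X_2)$ for every coordinate space~$L$, so any witness for the full dimension of some $X_i$ is automatically a witness for the union.

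For the reverse direction, fix $d := \dim(X_1 \cup X_2)$ and a coordinate space $L$ of dimension $d$ with $U := \interior \Pi^n_L(X_1 \cup X_2) \neq \emptyset$; the task reduces to showing that $\Pi^n_L(X_i)$ has nonempty interior for some $i \in \set{1,2}$. A preliminary observation is that projections preserve the $\Fs$-property: writing $X_i = \bigcup_{r \in \K} F_r^{(i)}$ with $F_r^{(i)}$ a definable increasing family of closed sets, one has
\[
\Pi^n_L(X_i) = \bigcup_{r \in \K} \Pi^n_L\bigl(F_r^{(i)} \cap \clB(\mathbf 0, r)\bigr),
\]
and each piece on the right is the image of a \dcompact set, hence closed in~$L$, yielding an $\Fs$-representation of $\Pi^n_L(X_i)$.

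Now a Baire-category pigeonhole does the work. The open set $U$ inherits definable Baireness from $\K$ and is nonempty, hence non-meager; since $U = \Pa{\Pi^n_L(X_1) \cap U} \cup \Pa{\Pi^n_L(X_2) \cap U}$, at least one of these $\Fs$-subsets is non-meager. Apply the dichotomy that any $\Fs$-set is either meager or has nonempty interior: in any definable increasing closed representation, either every closed piece is nowhere dense and the whole set is meager by definition, or some closed piece itself has nonempty interior. The non-meager summand $\Pi^n_L(X_i) \cap U$ must therefore have nonempty interior, so $\dim X_i \geq d$, witnessed by the same~$L$. Running this for each coordinate space of dimension $d$ identifies the witnessing coordinate spaces for $X_1 \cup X_2$ among those for $X_1$ and $X_2$, from which the $\fdim$ statement follows.

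The main obstacle is the stability of $\Fs$ under projection, which pivots on \dcompact sets projecting to \dcompact sets in a definably complete field; everything else is the Baire/$\Fs$ pigeonhole, very much in the spirit of Lemma~\ref{lem:dim-fiber}.
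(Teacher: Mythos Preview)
Your proposal is correct and follows essentially the same line as the paper: both arguments rest on the facts that projections of $\Fs$ sets are $\Fs$, that an $\Fs$ set with empty interior is meager, and that the Baire property then forces one of the two projections to have non-empty interior. The paper phrases this as a direct contradiction (if both $\Pi^n_m(X_i)$ had empty interior they would both be meager, hence so would their union, contradicting that it contains an open set), which slightly streamlines matters by avoiding the need to check that the open subset $U$ is itself Baire.
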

\begin{proof}
Assume, for contradiction, that, for some $m \leq n$,
$\Pi^n_m(X_1 \cup X_2)$ has non-empty interior, while $\Pi^n_m(X_i)$ has empty
interior, for $i = 1, 2$.
However, since $\Pi^n_m(X_i)$ is an~$\Fs$, this means that $\Pi^n_m(X_i)$ is
meager, for $i = 1, 2$,
and therefore $\Pi^n_m(X_1 \cup X_2) = \Pi^n_m(X_1) \cup \Pi^n_m(X_2)$
is also meager, absurd.
\end{proof}

In the above lemma we cannot drop the assumptions that the $X_i$ are~$\Fs$:
for instance, let $X_1 = \Raz$ and $X_2 = \Real \setminus \Raz$
in the structure $\Real(\Nat)$.

\begin{example}\label{exa:dim-constructible}
It is not true that, if $X \subseteq \K^n$ is definable and constructible, 
then $\dim \cl X = \dim X$;
\cf Thm.~\ref{thm:imin}(\ref{en:i-dim-n}).
In fact, let $\K :=\Real(\Nat)$, and $X \subset \Real^2$ defined by:
\[
X := \set{\pair{x,y}: x = p/q \in \Raz \et 0 < p < q \in \Nat \et (p,q) = 1
\et y = 1/q}.
\]
Notice that $X$ is locally closed (and \emph{a fortiori} constructible),
$\cl X = X \cup \Pa{[0,1] \times \set 0}$,
and $\dim X = 0$, while $\dim \cl X = 1$.
\end{example}

\subsection{Pseudo-finite sets}\label{sec:pf}

Define $\delta(X) := \inf\set{d(x, x'): x, x' \in X \ \&\ x \neq x'}$.

\begin{remark}
If $X$ is discrete, then it is nowhere dense in $\K^n$.
\end{remark}
\begin{proof}
Clear.
\end{proof}

\begin{lemma}\label{lem:fin-subset}
If $X$ and $X'$ are pseudo-finite, then $X \times X'$ is also pseudo-finite.
Moreover, if $X$ is pseudo-finite, then every definable subset of $X$ is also pseudo-finite.
\end{lemma}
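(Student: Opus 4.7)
The plan is to verify the three defining properties (closedness, boundedness, discreteness) separately for $X \times X'$ and for a definable subset $Y \subseteq X$, since a pseudo-finite set is by definition \dcompact and discrete.

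For the product, closedness and boundedness of $X \times X'$ in $\K^{n+m}$ follow immediately from the corresponding properties of $X$ and $X'$ (the product of two closed sets is closed, and a product of two bounded sets is bounded in the $\max$-norm, hence in the Euclidean norm by equivalence of norms over an ordered field). For discreteness, given $(x, x') \in X \times X'$, pick $r > 0$ with $B(x,r) \cap X = \set{x}$ and $r' > 0$ with $B(x', r') \cap X' = \set{x'}$; then the box $B(x,r) \times B(x',r')$ meets $X \times X'$ only at $(x, x')$, and it contains an open ball about $(x, x')$ in $\K^{n+m}$.

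For the subset part, let $Y \subseteq X$ be definable. Boundedness and discreteness are inherited trivially: any witnessing ball for the isolation of $y \in Y$ inside $X$ also isolates $y$ inside $Y$. The only slightly delicate step is closedness of~$Y$. I would argue as follows: since $X$ is closed, $\cl Y \subseteq \cl X = X$. Given $z \in \cl Y$, we have $z \in X$, and by discreteness of $X$ there exists a ball $B(z, r)$ with $B(z, r) \cap X = \set{z}$; in particular $B(z, r) \cap Y \subseteq \set{z}$. Since $z \in \cl Y$, every neighbourhood of $z$ meets $Y$, forcing $z \in Y$. Hence $Y = \cl Y$.

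The main (mild) obstacle is avoiding any implicit appeal to sequential arguments, which are not generally available in a definably complete but non-Archimedean setting; every step above uses only the ``open ball'' characterisation of closedness and discreteness, so the proof goes through uniformly.
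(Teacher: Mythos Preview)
Your proof is correct and follows essentially the same approach as the paper. The paper dismisses the product claim as ``clear from the definition'' and, for the subset claim, observes that it suffices to show $Y$ is closed in~$\K^n$: since $X$ is discrete, every singleton $\set{x}$ is open in~$X$, hence $Y$ is closed in~$X$, hence closed in~$\K^n$ because $X$ is closed---which is exactly your argument phrased at the level of open sets rather than pointwise via $\cl Y$.
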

\begin{proof}
The first part is clear from the definition.

Let $Y \subseteq X$ be definable.
It suffices to prove that $Y$ is closed in~$\K^n$, to conclude that $Y$ is
pseudo-finite.
Let $x \in X \setminus Y$.
Since $X$ is discrete, $\set x$ is open in~$X$, and therefore $Y$ is closed in~$X$; since $X$ is closed in~$\K^n$, $Y$ is also closed in~$\K^n$.
\end{proof}

\begin{lemma}
The following are equivalent:
\begin{enumerate}
\item $X$ is pseudo-finite;
\item $X$ is bounded and has no accumulation points in~$\K^n$;
\item $X$ is bounded and $\delta(X) > 0$.
\end{enumerate}
\end{lemma}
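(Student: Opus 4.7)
The plan is to establish the cycle $(1) \Rightarrow (2) \Rightarrow (3) \Rightarrow (1)$, with the only non-routine step being $(2) \Rightarrow (3)$. The unifying observation is that a set has no accumulation points in $\K^n$ iff it is both closed (no exterior accumulation point) and discrete (no interior accumulation point), so conditions (1) and (2) differ only in how the same geometric content is packaged.

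For $(1) \Rightarrow (2)$, pseudo-finiteness gives closedness, boundedness and discreteness; closedness forces every accumulation point into $X$, and discreteness rules out points of $X$ being accumulation points, so no accumulation points exist. For $(3) \Rightarrow (1)$, set $r := \delta(X)/3$. For any $p \in \K^n$, the ball $B(p,r)$ contains at most one point of $X$ (two distinct points would lie at distance $< 2r < \delta(X)$). Taking $p = x \in X$ shows $X$ is discrete; for $p \notin X$, the same bound shows $p \notin \cl X$, so $X$ is closed; together with boundedness, $X$ is \dcompact and discrete, hence pseudo-finite.

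The main step is $(2) \Rightarrow (3)$. Suppose $X$ is bounded and has no accumulation points in $\K^n$, yet $\delta(X) = 0$. No accumulation points implies $X$ is closed, hence \dcompact; fix $r$ with $X \subseteq \clB(0,r)$. For each $\epsilon > 0$ define the definable set
\[
B_\epsilon := \set{x \in X : \exists x' \in X, \ x' \neq x \text{ and } d(x, x') \leq \epsilon},
\]
which is non-empty since $\delta(X) = 0$. The closures $\cl B_\epsilon$ form a definable decreasing family of non-empty closed subsets of the \dcompact set $\clB(0, r)$, so by the finite intersection property for \dcompact sets in definably complete structures (the principle already invoked in Lemma~\ref{lem:uniform-cont}) their intersection is non-empty. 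Pick $p$ in this intersection. Given any $\eta > 0$, set $\epsilon := \eta/3$; then some $x \in B_\epsilon$ satisfies $d(p, x) < \epsilon$, and by definition of $B_\epsilon$ there is $x' \in X$ with $x' \neq x$ and $d(x, x') \leq \epsilon$. Thus $x, x' \in X \cap B(p, \eta)$ with $x \neq x'$, so at least one of them differs from $p$, witnessing that $p$ is an accumulation point of $X$, contrary to (2).

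The only obstacle worth flagging is the $d$-compactness intersection step: one must cite the (standard but non-trivial) fact that in a definably complete expansion of an ordered field, a decreasing definable family of non-empty closed subsets of a \dcompact set has non-empty intersection. Everything else reduces to the ball-of-radius-$\delta/3$ argument and the tautological packaging of "no accumulation points".
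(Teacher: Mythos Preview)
Your argument is correct, but the route for the key implication differs from the paper's. You prove $(2)\Rightarrow(3)$ by a nested-intersection argument on the closures of the sets $B_\epsilon$; the paper instead proves $(1)\Rightarrow(3)$ by noting that $X\times X$ is pseudo-finite (Lemma~\ref{lem:fin-subset}), hence so is $(X\times X)\setminus\Delta$, and then applying the extreme value theorem to the continuous distance function on this \dcompact set, so that $\delta(X)$ is actually attained as a minimum and therefore positive. The paper's version is shorter because it leverages the already-proved closure properties of pseudo-finite sets, while yours is more self-contained, trading those lemmas for the \dcompact intersection principle directly. Both appeals are legitimate in a definably complete field.

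One small slip in your $(3)\Rightarrow(1)$: from ``$B(p,r)$ meets $X$ in at most one point'' you cannot conclude $p\notin\cl X$ when $p\notin X$, since that single point could witness $p\in\cl X$. What the ball argument really gives is that no $p\in\K^n$ is an accumulation point of~$X$ (an accumulation point would produce two distinct points of $X$ in $B(p,r)$ by shrinking the radius once), i.e.\ you land in condition~(2); closedness and discreteness then follow. This is an easy rephrasing, not a genuine gap.
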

\begin{proof}
($1 \Leftrightarrow 2$) follows from the definition of pseudo-finite.

($3 \Rightarrow 2$) is clear.

($1 \Rightarrow 3$).
Assume that $X$ is pseudo-finite.
We want to prove that $\delta(X) > 0$.
Let $Y := X \times X$, and $\Delta(Y)$ be its diagonal.
Consider the map $d: Y \setminus \Delta(Y) \to \K$,
mapping $(x, x')$ to $d(x, x')$.
Note that $Y \setminus \Delta(Y)$ is pseudo-finite, and that $\delta(X) =
\inf_{Y   \setminus \Delta(Y)} d(y)$.
Thus, $d$~attains a minimum on $Y \setminus \Delta(Y)$, and therefore
$\delta(X) > 0$.
\end{proof}

\begin{lemma}
$X \subseteq \K^n$ is discrete and closed iff, for every $r > 0$,
$X \cap B(0,r)$ is pseudo-finite.
\end{lemma}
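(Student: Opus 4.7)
The plan is to prove both directions directly from the characterization established immediately above: a set is pseudo-finite iff it is bounded and has no accumulation points in~$\K^n$.

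For the forward direction, assuming $X$ is discrete and closed, I fix $r > 0$ and set $Y := X \cap B(0,r)$. Then $Y$ is bounded by construction. Since $X$ is closed, every accumulation point of $X$ lies in $X$; since $X$ is discrete, no point of $X$ is an accumulation point of $X$. Hence $X$, and a fortiori~$Y$, has no accumulation points in~$\K^n$, so $Y$ is pseudo-finite by the previous lemma. Equivalently, for any $r' > r$ the set $X \cap \clB(0, r')$ is closed, bounded, and discrete, hence pseudo-finite, and $Y$ is a definable subset, so pseudo-finiteness of~$Y$ also follows from Lemma~\ref{lem:fin-subset}.

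For the converse, assume $X \cap B(0,r)$ is pseudo-finite for every $r > 0$. To see that $X$ is discrete, given $x \in X$ I choose $r > \abs{x}$; then $x$ lies in the pseudo-finite, hence discrete, set $X \cap B(0,r)$, so some open neighbourhood $U \ni x$ contained in $B(0,r)$ satisfies $U \cap X = \set{x}$. To see that $X$ is closed, given $y \in \cl X$ I choose $r > \abs{y}$; every neighbourhood of $y$ lying inside $B(0,r)$ meets $X$ and therefore meets $X \cap B(0,r)$, so $y$ belongs to the closure of $X \cap B(0,r)$, which equals $X \cap B(0,r)$ since pseudo-finite sets are closed; in particular $y \in X$.

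No serious obstacle is anticipated; the statement is essentially a routine check. The one point to be mindful of is that $B(0,r)$ is open, so closedness of $X \cap B(0,r)$ in~$\K^n$ is not automatic from closedness of~$X$---but either the no-accumulation-points characterization or Lemma~\ref{lem:fin-subset} sidesteps this cleanly.
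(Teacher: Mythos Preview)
Your proof is correct and follows essentially the same approach as the paper: both directions rest on the characterization of pseudo-finite sets as bounded sets with no accumulation points. The paper's own proof is extremely terse (for $(\Leftarrow)$ it simply says ``follows from the fact that $X$ has no accumulation points in~$\K^n$''), while you have spelled out the discrete and closed parts separately; but the underlying argument is the same.
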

\begin{proof}
($\Rightarrow$) is clear, because if $X$ is discrete and closed, then $X \cap
B(0,r)$ is discrete, closed and bounded.
($\Leftarrow$) follows from the fact that $X$ has no accumulation points
in~$\K^n$.
\end{proof}

\begin{remark}
 If $\K$ defines an unbounded discrete subset, then it defines an unbounded
 discrete \emph{closed} subset.
\end{remark}
The above remark answers a question in~\cite[\S5]{miller05}.
\begin{proof}
Let $D \subset \K$ be discrete and not closed.
\Wlog, we can assume that $D$ is unbounded (if $a$ is an accumulation point
for~$D$, then $\infty$ is an accumulation point for $1/(D-a)$).
For every $r > 0$, let
\[
D(r) := \set{x \in D: D \cap B(x, r) = \set x},
\]
the set of points in $D$ at distance at least $r$ from the other points of~$D$.
Each $D(r)$ is discrete and closed.
If $D(r)$ is unbounded for some~$r$, we are done.
Otherwise, each $D(r)$ pseudo-finite; let $z(r) := \max\Pa{D(r)}$,a nd $Z :=
\set{z(r) : r > 0}$.
Since $D$ is unbounded, $Z$~is also unbounded.
\begin{claim}
$Z$~is closed and discrete.
\end{claim}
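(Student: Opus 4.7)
My plan is to prove ``discrete'' and ``closed'' separately, leveraging the containment $Z \subseteq D$, which is immediate from $z(r) = \max D(r) \in D(r) \subseteq D$. Discreteness then follows for free: each element of $Z$ is isolated in $D$, hence in the subset $Z$.

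For closedness I verify the contrapositive: given any $p \in \K \setminus Z$, I exhibit an open ball around $p$ disjoint from $Z$, splitting on whether $p \in D$. If $p \in D$, discreteness of $D$ supplies a neighbourhood $U \ni p$ with $U \cap D = \set p$; since $Z \subseteq D$ and $p \notin Z$, already $U \cap Z = \emptyset$. The substantive case is $p \notin D$, which I treat by separating ``large $r$'' from ``small $r$''. Using $D$ unbounded, I pick $x \in D$ with $x > \abs{p} + 1$, and discreteness of $D$ yields $\eta > 0$ with $D \cap B(x, \eta) = \set x$; since $B(x, r) \subseteq B(x, \eta)$ whenever $r \leq \eta$, this gives $x \in D(r)$ for all such $r$, so $z(r)$ is defined and $z(r) \geq x > \abs{p} + 1$ on $(0, \eta]$. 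On the other hand $D(\eta)$ is pseudo-finite, hence closed, and $p \notin D \supseteq D(\eta)$ yields $\delta_0 > 0$ with $B(p, \delta_0) \cap D(\eta) = \emptyset$. Because the family $\Pa{D(r)}$ is decreasing in $r$, every $r \geq \eta$ contributes $z(r) \in D(\eta)$, keeping $z(r)$ at least $\delta_0$ from $p$; every $r < \eta$ contributes $z(r) > \abs{p} + 1$, keeping $z(r)$ more than $1$ from $p$. Setting $\delta := \min(\delta_0, 1)$, I conclude $B(p, \delta) \cap Z = \emptyset$, as required.

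I do not anticipate a real obstacle. The only step warranting a moment of care is the ``$z(r) \to +\infty$ as $r \to 0^+$'' intuition, made rigorous above by producing explicit witnesses $x \in D$ that survive in $D(r)$ for all $r \leq \eta$; this uses only $D$ unbounded and $D$ discrete. The remainder relies just on pseudo-finiteness (hence closedness) of each $D(r)$, monotonicity of $r \mapsto D(r)$, and the inclusion $Z \subseteq D$.
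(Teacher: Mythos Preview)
Your proof is correct and follows essentially the same strategy as the paper: split the values $z(r)$ into those with small $r$ (where $z(r)$ is large, hence far from the putative point) and those with large $r$ (where $z(r)$ lies in a fixed pseudo-finite set $D(\eta)$, hence bounded away from the point). The only presentational difference is that the paper argues by contradiction---assuming $Z$ has an accumulation point $a$ and deriving a contradiction, which handles ``closed'' and ``discrete'' simultaneously---whereas you prove discreteness separately via the clean observation $Z \subseteq D$, and then prove closedness directly; both routes rest on the same dichotomy.
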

Otherwise, $Z$~would have an accumulation point~$a$.
For every $r > 0$, let $Z(r) := \set{z(r'): r' \leq r}$.
Notice that $Z(r)$ is bounded and $\delta\Pa{Z(r)} \geq r$;
thus, $Z(r)$ is pseudo-finite.
Moreover, since $z(r)$ is an increasing function of~$r$, there exists $r_0 >
0$ such that $z(r) > a + 1$ for every $r > r_0$.
Hence, $a$~cannot be an accumulation point of~$Z$, absurd.
\end{proof}

\begin{lemma}\label{lem:discrete-countable}
If $X$ is discrete, then it is the union of a definable increasing family of
pseudo-finite sets.
In particular, $X$~is an~$\Fs$.
\end{lemma}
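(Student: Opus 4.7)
The plan is to exhibit an explicit definable increasing family of pseudo-finite sets whose union is $X$, using the characterisation of pseudo-finiteness already proved: a set is pseudo-finite iff it is bounded and has positive $\delta$.

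For each parameter $t > 0$ in $\K$, I would set
\[
X_t := \set{x \in X : \abs{x} \leq t \et B(x, 1/t) \cap X = \set{x}}.
\]
This is clearly definable uniformly in $t$, and the family is increasing in $t$: if $t \leq t'$, then $\abs{x} \leq t$ gives $\abs{x} \leq t'$, and the condition $B(x,1/t)\cap X = \set{x}$ is inherited by the smaller ball $B(x,1/t')$. Since $X$ is discrete, every $x \in X$ has some $r > 0$ with $B(x,r) \cap X = \set x$; taking $t \geq \max(\abs{x}, 1/r)$ puts $x$ into $X_t$, so $X = \bigcup_{t > 0} X_t$.

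It remains to verify that each $X_t$ is pseudo-finite. Boundedness is built into the definition. For $\delta(X_t)$, observe that any two distinct points $x, x' \in X_t$ satisfy $x' \notin B(x, 1/t)$ (because $B(x,1/t) \cap X = \set x$ and $x' \in X$), so $d(x, x') \geq 1/t$; hence $\delta(X_t) \geq 1/t > 0$. By the preceding lemma in \S\ref{sec:pf}, $X_t$ is pseudo-finite. In particular $X_t$ is closed, so $X$ is the union of a definable increasing family of closed sets, i.e., an~$\Fs$.

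The argument is essentially a bookkeeping exercise once the characterisation ``bounded and $\delta > 0$'' is in hand; the only mild subtlety is choosing a single parameter $t$ that simultaneously controls both the diameter bound ($\abs{x} \leq t$) and the separation bound ($1/t$), so that the family is genuinely monotone in one real parameter. No use of Baireness or any deeper tameness assumption is needed.
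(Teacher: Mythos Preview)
Your proof is correct and follows essentially the same approach as the paper: both arguments isolate, for each parameter, the points of $X$ that are separated from the rest of $X$ by a fixed distance, and then invoke the characterisation ``bounded with $\delta > 0$'' to conclude pseudo-finiteness. The only cosmetic difference is that the paper first applies a change of coordinates to reduce to bounded $X$ and then uses the single separation parameter~$r$, whereas you fold the boundedness condition $\abs{x} \leq t$ directly into the definition of $X_t$; your version has the minor advantage of making the monotonicity in the parameter explicit.
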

\begin{proof}
After a change of coordinates, we can assume that $X$ is bounded.
For every $r > 0$, define $X(r) := \bigl\{x \in  X: X \cap B(x,r) = \{x\}\bigr\}$.
Since $X$ is discrete, $X = \bigcup_r X(r)$.
Therefore, it suffices to prove that, for each~$r$, $X(r)$ is pseudo-finite.
Fix $r > 0$.
It is clear that $\delta\Pa{X(r)} \geq r > 0$, and therefore $X(r)$ is pseudo-finite.
\end{proof}

\begin{lemma}
Let $X$ be pseudo-finite.
If $f: X \to \K^m$ is definable (not necessarily continuous),
then $f(X)$ is also pseudo-finite.
\end{lemma}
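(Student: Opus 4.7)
The plan is to apply the preceding lemma in the preliminaries---the one lifting a locally verified property $P$ on a \dcompact set to the whole set---taking $C = X$, $f$ as given, and $P$ equal to ``pseudo-finiteness''. This reduces the proof to checking the three abstract hypotheses on $P$ (definability, monotonicity, additivity) together with the local hypothesis for $f$.

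Definability of $P$ unfolds as the conjunction ``closed, bounded, and discrete'', each of which is first-order expressible uniformly in a parameter (discreteness as $\forall x \in Y\, \exists r > 0\colon Y \cap B(x,r) = \{x\}$); so $P$ is definable for any definable family. Monotonicity is exactly the second half of Lemma~\ref{lem:fin-subset}. For additivity, given pseudo-finite $X_1$ and $X_2$, the union $X_1 \cup X_2$ is plainly closed and bounded; to see that it is discrete I would fix $p \in X_1 \cup X_2$ and observe that $(X_1 \cup X_2) \setminus \{p\} = (X_1 \setminus \{p\}) \cup (X_2 \setminus \{p\})$ is a union of two closed sets not containing $p$ (each $X_i \setminus \{p\}$ is closed because $X_i$ itself is closed and discrete), hence at positive distance from $p$.

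For the local hypothesis, I would use the characterization that pseudo-finite implies $\delta := \delta(X) > 0$: then for every $c \in X$ the ball $U_c := B(c, \delta/2)$ is a definable neighbourhood of $c$ with $U_c \cap X = \{c\}$, so $f(U_c \cap X) = \{f(c)\}$ is a singleton and therefore pseudo-finite. The preceding lemma then delivers that $f(X)$ is pseudo-finite.

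The only step that requires a small verification is additivity; the rest is routine bookkeeping. Crucially, no continuity hypothesis on $f$ is needed, because the local image at each point of the discrete set $X$ is a singleton, hence pseudo-finite for trivial reasons regardless of how wildly $f$ oscillates globally.
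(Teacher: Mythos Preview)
Your proof is correct and takes a genuinely different route from the paper's. The paper argues directly by contradiction: assuming $f(X)$ has an accumulation point~$y$, it sets $U(r) := f^{-1}\bigl(B(y,r)\setminus\{y\}\bigr)$, observes that each $U(r)$ is a non-empty definable subset of the pseudo-finite set~$X$ (hence closed in~$X$ by Lemma~\ref{lem:fin-subset}), and then invokes the \dcompact finite-intersection property to get a point in $\bigcap_r U(r)$, which is absurd. Your approach instead packages the argument through the abstract transfer lemma for definable, monotone, additive properties on \dcompact sets, checking each hypothesis for $P=$ ``pseudo-finite'' and exploiting $\delta(X)>0$ to make the local condition trivial. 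The paper's argument is shorter and self-contained; yours has the virtue of showing concretely that the general machinery already set up in the preliminaries is strong enough to deliver this lemma as a routine instance, and the same template would work for any other additive definable property one might want to push forward from a pseudo-finite domain.
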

\begin{proof}
Given $f: X \to \K^m$, we want to prove that $f(X)$ is \dcompact and discrete.
By a change of coordinates in~$\K^m$, we can always assume that $f(X)$ is
bounded.
Assume, for contradiction, that $f(X)$ is not pseudo-finite.
Let $y \in \K^m$ be an accumulation point for~$f(X)$.
For every $r > 0$, let $U(r) := f^{-1}\Pa{B(y,r) \setminus \set y}$; note that each $U(r)$ is non-empty.
By Lemma~\ref{lem:fin-subset},
each $U(r)$ is closed in~$X$.
Since $X$ is \dcompact, $\bigcap_r U(r) \neq \emptyset$, which is absurd.
\end{proof}

\begin{corollary}\label{cor:product}
Let $X \subseteq \K^n$ and $X' \subseteq \K^{n'}$ be definable.
Then, $X \times X'$ is pseudo-finite iff $X$ and $X'$ are pseudo-finite.
\end{corollary}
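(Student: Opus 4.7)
The plan is to package the two preceding lemmas; no real new work is needed.

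For the direction ($\Leftarrow$), there is nothing to prove: the first clause of Lemma~\ref{lem:fin-subset} says exactly that the product of two pseudo-finite sets is pseudo-finite.

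For the direction ($\Rightarrow$), I assume $X \times X'$ is pseudo-finite. The key observation is that $X$ and $X'$ are recoverable as the images of $X \times X'$ under the coordinate projections $\Pi^{n+n'}_{n} : \K^{n+n'} \to \K^n$ and its analogue onto the last $n'$ coordinates. Since these projections are definable, the immediately preceding lemma (``the definable image of a pseudo-finite set is pseudo-finite'') applies and gives that both $X = \Pi^{n+n'}_{n}(X \times X')$ and its companion $X'$ are pseudo-finite.

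The only subtlety is the trivial edge case in which one factor is empty, say $X' = \emptyset$: then $X \times X' = \emptyset$ is (vacuously) pseudo-finite but carries no information about $X$. This is easily handled by treating the empty case separately, or by tacitly restricting to non-empty factors (the non-trivial content of the corollary). I do not expect any real obstacle: once Lemma~\ref{lem:fin-subset} and the preceding image lemma are in hand, the corollary is a one-line consequence of applying them to the two projections.
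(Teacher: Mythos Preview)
Your proposal is correct and matches the paper's intent exactly: the corollary is stated without proof, as an immediate consequence of Lemma~\ref{lem:fin-subset} (for $\Leftarrow$) and the preceding image lemma applied to the two coordinate projections (for $\Rightarrow$). Your observation about the empty-factor edge case is a genuine nitpick the paper does not address; as stated, the corollary tacitly assumes both factors are non-empty.
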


\begin{corollary}
$X$ is pseudo-finite iff every projection of $X$ on the coordinate axes is pseudo-finite.
\end{corollary}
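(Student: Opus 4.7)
The plan is to prove both directions using results already established in Section~\ref{sec:pf}.

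For the forward direction, suppose $X \subseteq \K^n$ is pseudo-finite. Each coordinate projection $\pi_i : X \to \K$ is a definable (even continuous) function, so by the lemma asserting that images of pseudo-finite sets under definable functions are pseudo-finite, $\pi_i(X)$ is pseudo-finite for each $i = 1, \dots, n$.

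For the reverse direction, assume $\pi_i(X)$ is pseudo-finite for every $i$. Then
\[
X \subseteq \pi_1(X) \times \pi_2(X) \times \cdots \times \pi_n(X).
\]
By Corollary~\ref{cor:product} (applied inductively, or equivalently by the first half of Lemma~\ref{lem:fin-subset} on pairwise products), the right-hand side is pseudo-finite as a finite product of pseudo-finite sets. Since $X$ is a definable subset of a pseudo-finite set, the second half of Lemma~\ref{lem:fin-subset} gives that $X$ is pseudo-finite.

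There is no real obstacle here; the statement is essentially a bookkeeping consequence of the two preceding lemmas. The only subtle point is making sure that the product of finitely many pseudo-finite sets is pseudo-finite, which follows from Corollary~\ref{cor:product} by induction on the number of factors.
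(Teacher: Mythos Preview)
Your proof is correct and is exactly the argument the paper intends: the corollary is stated without proof immediately after Corollary~\ref{cor:product}, and your two-step reasoning (projections are definable images of a pseudo-finite set; conversely $X$ is a definable subset of the product of its projections) is the natural unpacking of that placement.
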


\begin{definizione}
A \intro{pseudo-finite family} of sets is a definable family 
$\Pa{X_a: a \in  A}$, such 
that $A$ is pseudo-finite.
\end{definizione}

\begin{lemma}\label{lem:finite-union}
Let $P$ be an additive definable property.
Let $\Pa{X_y: y \in A}$ be a pseudo-finite family,
such that, for every $y \in A$, $P(X_y)$.
Then, $P(\bigcup_{y \in A} X_y)$.
\end{lemma}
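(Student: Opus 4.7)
The plan is to promote the binary additivity of $P$ to ``pseudo-finite additivity'' by an induction-style argument along the pseudo-finite index set $A$, using the definability of $P$ together with definable completeness.

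First I would reduce to the case $A \subseteq \K$. Since $A$ is pseudo-finite in $\K^n$, we can use the lexicographic order on $\K^n$: iterating the fact that $\Pi^n_1(A)$ is pseudo-finite (Corollary~\ref{cor:product} and its successor) shows that every non-empty definable subset of $A$ has a lex-minimum and a lex-maximum, and that each non-lex-maximal $a \in A$ has an immediate lex-successor inside $A$. So the argument below goes through verbatim with $\leq_{\mathrm{lex}}$ in place of $\leq$; for clarity I will describe it as if $A \subseteq \K$.

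Next, define
\[
B := \bigl\{ a \in A : P\bigl(\textstyle\bigcup_{y \in A,\, y \leq a} X_y\bigr)\bigr\}.
\]
The family $(X_y)_{y \in A}$ being definable together with the definability of $P$ makes $B$ a definable subset of $A$, hence pseudo-finite by Lemma~\ref{lem:fin-subset}. Let $a_0 := \min A$; then the union over $\{y \leq a_0\}$ is just $X_{a_0}$, so $P$ holds and $a_0 \in B$. Now suppose for contradiction $A \setminus B \neq \emptyset$; again $A \setminus B$ is pseudo-finite, so it has a minimum $a^*$. Since $a_0 \in B$, we have $a^* > a_0$, so $A_{< a^*} := \{y \in A : y < a^*\}$ is non-empty and pseudo-finite, and has a maximum $a'$. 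Because $A$ is discrete and closed, $A \cap (a',a^*) = \emptyset$, giving
\[
\bigcup_{y \in A,\, y \leq a^*} X_y \;=\; \bigl(\textstyle\bigcup_{y \in A,\, y \leq a'} X_y\bigr)\,\cup\, X_{a^*}.
\]
By minimality of $a^*$ in $A \setminus B$ we have $a' \in B$, so $P$ holds on the first piece; by hypothesis $P(X_{a^*})$ holds; by additivity, $P$ holds on the union. Hence $a^* \in B$, contradicting $a^* \in A \setminus B$. Therefore $B = A$, and in particular $\max(A) \in B$, which is exactly $P(\bigcup_{y \in A} X_y)$.

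The only genuinely delicate point is ensuring the existence of a ``predecessor'' $a'$ and of the lex-minimum in the reduction step — these rely on pseudo-finite subsets of $\K$ being closed, bounded and having positive $\delta$, so that lex ordering on $A \subseteq \K^n$ really behaves like a finite linear order. Everything else is a routine translation of finite induction into the definable setting, made legal by the hypothesis that $P$ is definable (so that $B$ is definable, hence subject to the min/max arguments available for pseudo-finite sets).
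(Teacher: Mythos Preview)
Your proof is correct and follows essentially the same line as the paper's: you define the same set $B$, use definability of $P$ to conclude $B$ is a definable (hence pseudo-finite) subset of~$A$, and then run a predecessor/successor argument to show $B = A$. The paper is terser --- it simply takes $b := \max B$ and remarks that $b$ must equal $\max A$ --- but this is the same induction in slightly different packaging; your version is just more explicit about the non-emptiness of $B$ and the existence of predecessors.
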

\begin{proof}
For simplicity, we assume that $A \subseteq \K$.
Let
\[
B := \set {y \in A: P(\bigcup_{\substack {z \in A,\\ z \leq y}} A_z)}.
\]
Since $P$ is definable, $B$ is a definable subset of~$A$.
Hence, $B$~is pseudo-finite, and therefore it has a maximum~$b$.
It is now easy to see that $b$ is also the maximum of~$A$.
\end{proof}

Since ``being closed'' is an additive definable property, 
we see that the union of a
pseudo-finite family of closed sets is closed,
and the intersection of a pseudo-finite family of open sets is open.
Similarly, the union of a pseudo-finite family of pseudo-finite sets is
pseudo-finite, and the union of a pseudo-finite family of nowhere dense sets
is nowhere dense.

\begin{conjecture}[Pigeon Hole Principle]
Let $X \subseteq \K^n$ be pseudo-finite and $f: X \to X$ be definable.
If $f$ is injective, then it is surjective.
\end{conjecture}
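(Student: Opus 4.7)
The plan is to argue by contradiction by examining the forward $f$-orbits of the ``defect'' $B_0 := X \setminus f(X)$. Assume $B_0 \neq \emptyset$, and set $B_k := f^k(B_0)$. Injectivity of $f$ together with $B_0 \cap f(X) = \emptyset$ implies easily, by an induction on $k$, that the $B_k$ are pairwise disjoint and each in definable bijection with $B_0$, hence nonempty; moreover each $B_k$ is pseudo-finite, being the image of a pseudo-finite set under a definable map (together with Lemma \ref{lem:fin-subset}). If we could form the definable union $S := \bigcup_{k \ge 0} B_k \subseteq X$, we would exhibit an infinite disjoint family of nonempty definable subsets inside the pseudo-finite set $X$; this contradicts pseudo-finiteness, since $S$ would be a bounded subset of $\K^n$ with $\delta(S) \ge \delta(X) > 0$ containing infinitely many points.

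The obstacle is precisely that the union $\bigcup_k f^k(B_0)$ ranges over the external index $k \in \Nat$, which need not be first-order definable in $\K$. One natural workaround is to characterise $S$ implicitly: $S$ should be the smallest $f$-invariant subset of $X$ containing $B_0$, or dually, $X \setminus S$ should be the largest $T \subseteq f(X)$ with $f^{-1}(T) \subseteq T$. Set-theoretically such a $T$ exists by Knaster--Tarski, but making it definable in $\K$ seems to require a recursion principle strictly stronger than definable completeness.

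Alternative strategies I would try are: (a) use the canonical lex-successor on $X$ (arising from pseudo-finiteness, \cf Lemma \ref{lem:discrete-countable}) to do definable induction along $X$, by defining for each $y \in X$ a ``backward rank'' $\ell(y)$ measuring how deep one can iterate $f^{-1}$ at $y$, and arguing that $\ell$ must attain a maximum; (b) exploit definable Baireness to forbid a definable strictly descending chain $X \supsetneq f(X) \supsetneq f^2(X) \supsetneq \cdots$ of uniformly positive gap. In every approach the same difficulty reappears --- producing an honest definable $\Nat$-indexed iteration of $f$ from first-order data on $\K$ --- and this is what I expect to be the main obstacle. Consistent with the statement being phrased as a conjecture, I suspect that without additional hypotheses (\eg $\K$ expanding $\Real$, or $\K$ \aminimal, under either of which every pseudo-finite set is finite and the conclusion is immediate) a genuinely new idea is required.
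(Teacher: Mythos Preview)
The paper does not prove this statement: it is explicitly labelled a \emph{Conjecture} and is left open, with no proof or proof sketch following it. Your proposal is therefore not competing against any argument in the paper. What you have written is not a proof either, but rather an honest diagnosis of why the naive approach fails --- and in that respect your analysis is accurate: the forward-orbit argument requires forming $\bigcup_{k \in \Nat} f^k(B_0)$, and the external $\Nat$-indexing is precisely the obstruction, since definable completeness gives no recursion principle over~$\Nat$. Your alternative strategies (definable induction along a successor on~$X$, or a Baire-type descending-chain argument) run into the same wall, as you correctly note.

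One factual correction to your final parenthetical: \aminimality by itself does \emph{not} force pseudo-finite sets to be finite. The paper gives explicit examples of locally o-minimal (hence \aminimal) structures containing infinite pseudo-finite sets --- e.g.\ the ultraproduct $N = (M^*, P^*)$ in \S\ref{sec:lmin}, where $P^*$ is pseudo-finite but not finite. The condition ``\aminimal and every pseudo-finite set is finite'' is strictly stronger, equivalent to having o-minimal open core (Corollary~\ref{cor:b-minimal}). So among your listed extra hypotheses, only ``$\K$ expands $\Real$'' (or more generally, o-minimal open core) trivialises the conjecture; \aminimality alone does not.
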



\subsection{Bad and locally closed sets}

\begin{lemma}\label{lem:bad-1}
Let $d \leq n$, $A \subseteq \K^{n}$ be definable, $\pi := \Pi^{n}_d$, and
\[
Z := Z(A) := \set{a \in A: \exists U \text{ neighbourhood of } A:
\pi(A \cap U) \text{ is nowhere dense}}.
\]
Then, $Z$ is definable and open subset in~$A$, and $\pi(Z)$ is meager.
\end{lemma}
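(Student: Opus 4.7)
The plan is to verify the two easy structural claims about $Z$ and then deduce the meagerness of $\pi(Z)$ from a uniform family of nowhere-dense statements, each handled by the \dcompact local-to-global lemma from the preliminaries. The defining condition ``$\exists r > 0$ such that $\pi(A \cap B(a, r))$ is nowhere dense'' is first-order in $a$ (nowhere-denseness is a definable property), so $Z$ is definable. Moreover, if $a \in Z$ is witnessed by a neighbourhood $U$, then the same $U$ witnesses $a' \in Z$ for every $a' \in A \cap U$, giving $A \cap U \subseteq Z$, so $Z$ is open in $A$.

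For the meagerness, set $W_r := \set{a \in A : \pi(A \cap B(a, r)) \text{ is nowhere dense}}$ for $r > 0$; this is a definable family increasing as $r$ decreases, with $Z = \bigcup_{r > 0} W_r$. Letting $M_t := \pi\Pa{W_{1/t} \cap \clB(0, t)}$, we obtain a definable increasing family in $t$ with $\pi(Z) = \bigcup_{t > 0} M_t$, so it suffices to show each $M_t$ is nowhere dense. Fix $t > 0$, write $r := 1/t$ and $R := t$, and consider the \dcompact set $C := \cl{W_r} \cap \clB(0, R)$. I would apply the local-to-global lemma for definable, monotone, additive properties on \dcompact sets (stated in \S\ref{sec:preliminary} just after Lemma~\ref{lem:uniform-cont}) with $f := \pi\rest C$ and $P := $ ``nowhere dense'' (monotone and additive by Remark~\ref{rem:boundary}, and definable). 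For each $c \in C$, set $U_c := B(c, r/2)$ and pick $a_c \in W_r$ with $U_c \subseteq B(a_c, r)$ as follows: if $c \in W_r$, take $a_c := c$; otherwise $c \in \cl{W_r} \setminus W_r$ and density gives some $a_c \in W_r \cap B(c, r/2)$, whence $U_c \subseteq B(a_c, r)$ by the triangle inequality. In either case $\pi(A \cap U_c) \subseteq \pi(A \cap B(a_c, r))$ is nowhere dense by the definition of $W_r$; combining $U_c \cap C \subseteq U_c \cap \cl A \subseteq \cl{U_c \cap A}$ (by openness of $U_c$) with the continuity of $\pi$, we get $\pi(U_c \cap C) \subseteq \cl{\pi(U_c \cap A)}$, nowhere dense as the closure of a nowhere-dense set. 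The local-to-global lemma then yields $\pi(C) \supseteq M_t$ nowhere dense, and unioning over $t$ exhibits $\pi(Z)$ as meager.

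The main obstacle is the boundary case $c \in \cl{W_r} \setminus W_r$ in the local-to-global step: such a $c$ has no direct witness in the defining sense of $Z$, so one must exploit both the density of $W_r$ in $\cl{W_r}$ and the gap between the radii $r/2$ (for $U_c$) and $r$ (for the witness ball $B(a_c, r)$) to transfer nowhere-denseness from a nearby $a_c$ back to $c$. The choice to work with $\cl{W_r} \cap \clB(0, R)$ rather than $W_r \cap \clB(0, R)$ is forced by the need for \dcompactness, and the passage from $A \cap U_c$ to $\cl A \cap U_c$ (and hence to $C \cap U_c$) relies essentially on the openness of $U_c$ and continuity of $\pi$.
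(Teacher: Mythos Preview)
Your proof is correct, but it takes a somewhat different route from the paper's. The paper first reduces to the case where $A$ is closed by showing $Z(A) = Z(\cl A) \cap A$ (using exactly your observation that $\pi(\cl A \cap U) \subseteq \cl{\pi(A \cap U)}$ for open~$U$); once $A$ is closed, $Z$ is locally closed, and the conclusion follows in one line by citing \cite[Cor.~3.7]{FS}. You instead work directly with the original~$A$, exhaust $Z$ by the \dcompact pieces $\cl{W_r} \cap \clB(0,R)$, and apply the internal local-to-global lemma from \S\ref{sec:preliminary}; the price is the extra case $c \in \cl{W_r} \setminus W_r$, which you handle with the $r/2$-versus-$r$ radius trick. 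What the paper's route buys is brevity and a clean reduction that makes $Z$ itself locally closed; what your route buys is self-containment (no external citation) and an explicit exhibition of the meager decomposition. In effect you are reproving the relevant special case of \cite[Cor.~3.7]{FS} inline.
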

\begin{proof}
The fact that $Z$ is definable and open in $A$ is trivial.
Let $Z' := Z(\cl A)$.
\begin{claim}
$Z = Z' \cap A$.
\end{claim}
Let $a \in Z$.
Then, $\pi(A \cap U)$ is nowhere dense, for some $U$ neighbourhood of~$a$.
Moreover, $\pi(\cl A \cap U) \subseteq \cl{\pi(A \cap U)}$, because $U$ is
open.
Hence, $\pi(\cl A \cap U)$ is nowhere dense, and therefore $a \in Z'$.

Therefore, it suffices to prove the lemma in the case when $A$ is closed.
Since $Z$ is closed in~$A$, $Z$ is locally closed.
Moreover, for every $a \in Z$ there exists $U$ neighbourhood of~$a$,
such that $\pi(U \cap Z)$ is nowhere dense;
thus, by~\cite[Cor.~3.7]{FS}, $\pi(Z)$~is meager.
\end{proof}

\subsubsection{Bad set}

\begin{definizione}
Let $A \subseteq \K^{n + m}$.
The set of ``bad points'' for $A$ is 
\[
\B_n(A) := \set{x \in \K^n: \cll(A)_x \setminus \cll(A_x) \neq \emptyset}.
\]
\end{definizione}
Notice that $\B_n(A) = \set{x \in \K^n: \cll(A)_x \neq \cll(A_x)}$.

In the following, it will often be necessary to prove that $\B_n(A)$, the set
of bad points of~$A$, is ``small'' (in some suitable sense).

\begin{remark}\label{rem:bad-3}
Assume that $A \subseteq C \subseteq \cl A \subseteq \K^{n+m}$.
Then, $\B_n(A) \supseteq \B_n(C)$.
\end{remark}
\begin{proof}
\[
\cll(A)_x \setminus \cll(A_x) = \cll(C)_x \setminus \cll(A_x)
\supseteq \cll(C)_x \setminus \cll(C_x). 
\qedhere
\]
\end{proof}

\begin{lemma}\label{lem:bad-2}
If $A$ is an $\Fs$, then $\B_n(A)$ is 
the projection of a $\Gd$ set.
If $A$ is open, then $\B_n(A)$ is a meager~$\Fs$.
\end{lemma}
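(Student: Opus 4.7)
For Part 1 (with $A$ an $\Fs$), I would represent $\B_n(A)$ as the projection to the first $n$ coordinates of
\[
B' := \bigl\{(y,z,r) \in \K^{n+m+1} : r>0,\ (y,z) \in \cl A,\ B(z,r) \cap A_y = \emptyset \bigr\},
\]
and show that $B'$ is $\Gd$. Writing $A = \bigcup_t A_t$ as a definable increasing union of closed sets, the vanishing condition becomes $\bigcap_t E_t$ with $E_t := \{(y,z,r) : B(z,r)\cap (A_t)_y = \emptyset\}$; each $E_t$ is $\Gd$ because its complement is the projection onto $(y,z,r)$ of the locally closed (hence $\Fs$) set $A_t \cap \{|w-z|<r\} \subseteq \K^{n+2m+1}$. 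Since $E_t$ is decreasing in $t$, the intersection is $\Gd$; intersecting with the $\Gd$ sets $\cl A \times \K$ and $\K^{n+m} \times (0,\infty)$ yields that $B'$ is $\Gd$.

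For Part 2 (with $A$ open), set $D := \{(y,z) : z \in \cl A_y\}$ and $B := \cl A \setminus D$, so that $\B_n(A) = \pi(B)$ for $\pi := \Pi^{n+m}_n$. Openness of $A$ makes each $A_y$ open, and then $D = \bigcap_{r>0}\{(y,z) : B(z,r) \cap A_y \neq \emptyset\}$ is a decreasing intersection of open sets (each is a projection of an open set), so $D$ is $\Gd$, $B$ is $\Fs$, and $\B_n(A)$ is $\Fs$ (projection preserves $\Fs$).

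For the meagerness claim I decompose $B = \bigcup_{r>0} B_r$ with $B_r := \{(y,z) \in \cl A : A_y \cap B(z,r) = \emptyset\}$; each $B_r$ is closed because openness of $A$ prevents a would-be witness $w \in A_y \cap B(z,r)$ from disappearing under small perturbations $(y,z) \to (y_k,z_k)$. Thus $\pi(B) = \bigcup_{r,N} \pi(B_r \cap \clB(0,N))$ is a definable increasing union of closed sets, and it suffices to show each $\pi(B_r \cap \clB(0,N))$ has empty interior. Assume for contradiction that some such piece contains a non-empty open $V$, and let $f(y) := \lexmin\{z \in \clB(0,N) : (y,z) \in B_r\}$ for $y \in V$. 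By generic continuity of definable functions (available since $\K$ is definably Baire), $f$ has a continuity point $y_0 \in V$. Since $(y_0,f(y_0)) \in \cl A$ and $A$ is open, pick $(y^*,w^*) \in A$ with $|y^* - y_0|, |w^* - f(y_0)| < \mu$; openness of $A$ gives $w^* \in A_{y^*}$, and for $\mu$ small enough, continuity of $f$ at $y_0$ yields $|f(y^*) - f(y_0)| < r - \mu$, hence $|w^* - f(y^*)| < r$. Then $w^* \in A_{y^*} \cap B(f(y^*),r)$ contradicts $(y^*,f(y^*)) \in B_r$.

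The delicate step is the invocation of generic continuity for $f$, which implicitly uses that $\K$ is definably Baire; in its absence one can instead apply Lemma~\ref{lem:bad-1} to the graph of $f$ to locate $y_0 \in V$ in whose every neighbourhood the graph projects onto a non-meager subset of $V$, and then use this ``local thickness'' in place of pointwise continuity to produce the same $(y^*, w^*)$ and reach the contradiction.
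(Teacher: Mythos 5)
Your argument follows the paper's proof in all essentials: the same auxiliary set recording a separating radius yields the $\Gd$/$\Fs$ structure of $\B_n(A)$, and the same decomposition of $\B_n(A)$ into projections of \dcompact pieces $B_r$ (the paper's $C(r)$, which folds your two parameters $r$ and $N$ into one), followed by a minimum-selection on fibers, generic continuity on a sub-box, and the triangle-inequality contradiction near a point of $\cl A$, gives meagerness. One step as written is not justified: you conclude that $\bigcap_t E_t$ is $\Gd$ because it is a definable \emph{decreasing} intersection of $\Gd$ sets, equivalently that a definable increasing union of $\Fs$ sets is $\Fs$ --- a closure property the paper explicitly lists as an open question. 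The conclusion is nonetheless correct, because the complement of $\bigcap_t E_t$ is the projection of the \emph{single} $\Fs$ set obtained by intersecting the pullback of $A$ with the open condition $\abs{w-z}<r$; so you should argue directly with $A$ rather than detouring through the closed pieces $A_t$. Finally, your concern about invoking generic continuity when $\K$ is not known to be Baire is resolved more simply than by your proposed appeal to Lemma~\ref{lem:bad-1}: if $\K$ is not definably Baire then $\K^n$ is meager and the meagerness of $\B_n(A)$ is vacuous, which is exactly why the paper assumes Baire w.l.o.g.\ at that point.
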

\begin{proof}




Let
\[
F(A) := \set{ \pair{x, y, r, y'} \in \K^n \times \K^m \times \K \times \K^m: 
r > 0 \et \pair{x, y'} \in A \et \abs{y - y'} < r}.
\]
\begin{claim}
\[
\B_n(A) = \pi\Pa{\cl A \cap \pi'(\set{r > 0} \setminus \pi''(F(A)))},
\]
where $\pi'$ and $\pi''$ are suitable projections.
\end{claim}
If $A$ is an $\Fs$, then $F(A)$ is also an~$\Fs$, and therefore $\B_n(A)$ is
the projection of a $\Gd$ set.

If $A$ is open, then $F(A)$ is also open, and therefore $\B_n(A)$ is an~$\Fs$.

For every $r > 0$, define 
$C(r) := \set{\pair{x,y} \in \cl A: \abs{x,y} \leq 1/r \et  d(y, A_x) \geq r}$.
Notice that $\B_n(A) = \bigcup_{r > 0} \pi(C(r))$.
Assume that $A$ is open.
Then, each $C(r)$ is \dcompact.
Hence, to prove that $\B_n(A)$ is meager, it suffices to prove that, for every
$r > 0$, $\pi(C(r))$ has empty interior.
\Wlog, $\K$~is Baire.
Assume, for contradiction, that $\pi(C(r_0))$ contains a non-empty open
box~$W$, for some $r_0 > 0$. To simplify the notation, assume that $m = 1$.
Define $f: W \to \K$, $f(x) := \min\Pa{C(r_0)_x}$.
By~\cite{FS}, there exists a non-empty open box $W' \subseteq W$, such that
$f \rest {W'}$ is continuous; \wloG, $W = W'$.
Fix $x_0 \in W$, call $y_0 := f(x_0)$, and let $V_{x_0}$ be an open box
around~$x_0$ contained in~$W$, and such that, for every $x \in V_{x_0}$,
$d(f(x), y_0) < r_0/4$. 
Since $\pair{x_0, y_0} \in \cl U$, there exists $\pair{x, y'} \in U$,
such that $x \in V_{x_0}$, and $d(y_0, y') < r_0/4$.
Let $y := f(x)$.
Since $\pair{x, y} \in C(r)$, $d(y, U_x) \geq r$; in particular, 
$d(y, y') \geq r$.
However, this contradicts $d(y, y_0) < r_0/4$ and $d(y_0, y') < r_0/4$.
\end{proof}

\begin{lemma}
$\B_n(A \cup B) \subseteq \B_n(A) \cup \B_n(B)$.
\end{lemma}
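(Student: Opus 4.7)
The plan is a direct pointwise verification using standard distributivity properties of closure and fibers over finite unions. I would pick $x \in \K^n$ with $x \notin \B_n(A) \cup \B_n(B)$ and show $x \notin \B_n(A \cup B)$, i.e., $\cll(A \cup B)_x = \cll((A \cup B)_x)$.

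First I would observe the fiber identity $(A \cup B)_x = A_x \cup B_x$ and the closure identity $\cll(A \cup B) = \cll(A) \cup \cll(B)$ (valid in any topological space for finite unions). Combining these with the fact that taking the $x$-fiber commutes with arbitrary unions, I get
\[
\cll(A \cup B)_x = \cll(A)_x \cup \cll(B)_x.
\]
By hypothesis $\cll(A)_x = \cll(A_x)$ and $\cll(B)_x = \cll(B_x)$, so the right-hand side equals $\cll(A_x) \cup \cll(B_x)$. A second application of the finite-union closure identity gives $\cll(A_x) \cup \cll(B_x) = \cll(A_x \cup B_x) = \cll((A \cup B)_x)$, which is exactly what is needed.

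There is no real obstacle here: the statement is a formal consequence of the compatibility of fiber-taking with unions and of topological closure with finite unions, neither of which requires any definability or tameness hypothesis on $\K$. The only thing to be mildly careful about is that closure distributing over finite unions is essential (it fails for infinite unions), which is why the lemma is stated for two sets $A, B$ rather than an arbitrary family.
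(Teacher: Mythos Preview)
Your argument is correct and is exactly the natural one; the paper states this lemma without proof, so there is nothing to compare against beyond noting that the omitted argument is presumably the one you gave.
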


\subsubsection{Locally closed sets}
\begin{definizione}
Let $X \subseteq \K^n$.
Define $\lc(X) := \set{x \in X : X \text{ is locally closed at } x}$, (that is,
$x \in \lc(X)$ iff there exists an open ball $U$ of center~$x$, such that $X
\cap B = \cl X \cap B$), and $\nlc X := X \setminus \lc(X)$.

Define $\inlc X 0 := X$, and, for each $k \in \Nat$,
$\inlc X {k + 1} := \nlc{\inlc X {k}}$.
\end{definizione}

Notice that $\lc(X)$ is locally closed, and therefore constructible.
Notice also that, if $X$ is definable, then also $\nlc X$ and each $\inlc X {k}$
are definable.
Therefore, if $X$ is an~$\Fs$, then $\nlc X$ is also an~$\Fs$.

\begin{proposition}
$\nlc A  = A \cap \partial(\partial A)$.
$A$ is the union of $m$ locally closed sets if and only if $\inlc A {m + 1}$
is empty.
\end{proposition}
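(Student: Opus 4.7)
To establish $\nlc A = A \cap \fr(\fr A)$, I would unpack the definition: a point $x \in A$ belongs to $\lc(A)$ iff some open ball $U \ni x$ satisfies $A \cap U = \cl A \cap U$, equivalently $U \cap \fr A = \emptyset$, equivalently $x \notin \cl{\fr A}$. Hence $\nlc A = A \cap \cl{\fr A}$. Since $A$ and $\fr A = \cl A \setminus A$ are disjoint, the intersection with $A$ is unchanged when we replace $\cl{\fr A}$ by $\cl{\fr A} \setminus \fr A = \fr(\fr A)$, giving the identity.

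For the equivalence, the key construction is the canonical decomposition. Put $C_k := \lc(\inlc A k)$; by the remark just before the proposition, each $C_k$ is locally closed. Since $X = \lc(X) \sqcup \nlc X$ for every $X$, we have $\inlc A k = C_k \sqcup \inlc A{k+1}$, and iterating yields
\[A = C_0 \sqcup C_1 \sqcup \cdots \sqcup C_{m-1} \sqcup \inlc A m\]
as a disjoint union. Direction $(\Leftarrow)$ follows immediately: if $\inlc A{m+1} = \emptyset$ then $\inlc A m = C_m$ is itself locally closed, and the display (applied with $m$ replaced by $m+1$) exhibits $A$ as a finite union of the locally closed pieces $C_0,\dots,C_m$.

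Direction $(\Rightarrow)$ I would prove by induction on $m$, with the case $A = \emptyset$ as trivial base. The inductive step rests on the following \emph{Reduction Lemma}: if $A$ can be written as the union of $m$ locally closed sets $L_1,\dots,L_m$, then $\nlc A$ is itself a union of $m-1$ locally closed sets. Granted this, applying the inductive hypothesis to $\nlc A$ yields $\inlc{(\nlc A)}{m} = \emptyset$, i.e., $\inlc A{m+1} = \emptyset$, closing the induction.

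The Reduction Lemma is the main technical obstacle. A straightforward estimate gives only a bound of $m$: by Lemma~\ref{lem:discrete-boundary}, each $\fr L_i$ is closed, and every $x \in \fr A = \cl A \setminus A$ lies in some $\cl{L_i} \setminus L_i = \fr L_i$, so $\fr A \subseteq \bigcup_i \fr L_i$ and hence
\[\nlc A = A \cap \cl{\fr A} \subseteq \bigcup_i (A \cap \fr L_i),\]
which is a union of $m$ locally closed pieces. Trimming this to $m-1$ pieces is the delicate point; one has to exploit $L_i \cap \fr L_i = \emptyset$ to absorb one of the terms into the others (for instance by reordering the $L_i$, or by peeling off $\lc(A)$ first and inducting on its complement). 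This is where I expect the bulk of the work to lie.
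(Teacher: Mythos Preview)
Your argument for the identity $\nlc A = A \cap \fr(\fr A)$ is correct and complete. The paper itself does not prove this proposition at all: it simply cites Allouche's 1996 note, so on this point you have already supplied more than the paper does.

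There is, however, an indexing slip in the statement as printed: the correct equivalence (and the one in Allouche) is that $A$ is a union of $m$ locally closed sets iff $\inlc A m = \emptyset$, not $\inlc A{m+1}$. Your own $(\Leftarrow)$ argument exposes this: from $\inlc A{m+1} = \emptyset$ you obtain the $m+1$ pieces $C_0, \dotsc, C_m$, which is one too many for the stated conclusion, and you do not comment on the discrepancy. With the corrected index, your canonical decomposition gives $(\Leftarrow)$ immediately.

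For $(\Rightarrow)$, your Reduction Lemma --- that $\nlc{(L_1 \cup \dotsb \cup L_m)}$ is a union of $m-1$ locally closed sets --- is indeed the heart of the matter, and it is essentially the strategy in Allouche's paper. You are right that the naive bound $\nlc A = \bigcup_i (L_i \cap \cl{\fr A})$ gives only $m$ locally closed pieces, and that the improvement to $m-1$ must exploit $L_i \cap \fr L_i = \emptyset$ together with $\cl{\fr A} \subseteq \bigcup_j \fr L_j$, so that each piece $L_i \cap \cl{\fr A}$ lies in $\bigcup_{j \neq i} \fr L_j$. What you have not carried out is the actual absorption: showing that these $m$ pieces can be reorganised into $m-1$ locally closed sets is not a one-line remark, and since you explicitly flag it as ``where the bulk of the work lies'' without doing it, the proposal remains an outline rather than a proof. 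Consulting Allouche (as the paper does) fills exactly this gap.
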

\begin{proof}
See~\cite{allouche96}, where $\partial A$ is denoted by $\check A$,
and $\nlc A$ by either $\mathcal B(A)$ or~$H(A)$.
\end{proof}

\begin{remark}
If $U\subseteq \K^n$ is open, then $\lc(A) \cap U = \lc(A \cap U)$, and
$\nlc{A \cap U} = \nlc A \cap U$.
\end{remark}

\begin{remark}
Let $U \subseteq \K^n$ be open, and $A \subseteq U$ be closed in~$U$.
Let $E := \cl A$.
Then, $E \cap \cl U = E$.
Therefore, $E \cap \cl U = \cl{E \cap U}$.
\end{remark}
\begin{proof}
The $\subseteq$ inclusion is obvious.
The opposite inclusion follows immediately from $E = \cl{E \cap U}$.
\end{proof}

\begin{thm}\label{thm:lc-fiber}
Let $A \subseteq \K^n$ be locally definable closed, and $d \leq n$.
Let $U \subseteq \K^n$ be open, such that
$A = \cl A \cap U$.
Then, for every $x \in \K^d$,
\[
A_x = \cll(A_x) \cap U_x,
\]
and in particular $A_x$ is locally closed.
Moreover, $\B_d(A) \subseteq \B_d(U)$, and therefore $\B_d(A)$ is meager.
\end{thm}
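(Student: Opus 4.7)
The plan is to handle the displayed identity $A_x=\cll(A_x)\cap U_x$ first, read off local closedness of $A_x$ as a by-product, and then use both of these together with the preceding Remark ($\cl A\subseteq\cl U$) to obtain the bad-set inclusion.

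For the identity, the inclusion $A_x\subseteq\cll(A_x)\cap U_x$ is immediate: $A_x\subseteq\cll(A_x)$ trivially, and $A_x\subseteq U_x$ since $A\subseteq U$. For the reverse inclusion I would use that $(\cl A)_x$ is closed in $\K^{n-d}$ (being the fiber at $x$ of the closed set $\cl A$); since $A_x\subseteq(\cl A)_x$, this forces $\cll(A_x)\subseteq(\cl A)_x$, and then
\[
\cll(A_x)\cap U_x\subseteq(\cl A)_x\cap U_x=(\cl A\cap U)_x=A_x.
\]
The ``in particular'' is then automatic: $A_x$ is the intersection of the closed set $\cll(A_x)$ with the open set $U_x$, hence locally closed.

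For the bad-set inclusion I would prove the contrapositive. Fix $x\notin\B_d(U)$, so $(\cl U)_x=\cll(U_x)$. By the Remark preceding the theorem, $A\subseteq U$ and $A$ closed in $U$ give $\cl A\subseteq\cl U$, whence $(\cl A)_x\subseteq(\cl U)_x=\cll(U_x)$. Given $y\in(\cl A)_x$, the goal is $y\in\cll(A_x)$. I would approximate $y$ by a sequence $y_m\in U_x$ supplied by $y\in\cll(U_x)$; the point is then to argue that (after passing to a tail or subsequence) one actually has $(x,y_m)\in A$. The meagerness statement is then a one-liner: Lemma~\ref{lem:bad-2} gives $\B_d(U)$ meager, so any subset of it is meager too.

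The main obstacle I expect is precisely the last transfer in the previous paragraph: moving from points $y_m\in U_x$ that converge to $y$ to points of $A_x$ that converge to $y$. A priori $(x,y_m)\in U$ need not lie in $\cl A$, so the identity $A=\cl A\cap U$ does not directly place $(x,y_m)$ in $A$. One must exploit simultaneously $(x,y)\in\cl A$, the openness of $U$, and the identity $A_x=\cll(A_x)\cap U_x$ from the first part to see that the approximating points sit inside $A_x$ for large $m$; this careful interplay between $\cl A$ and $U$ near $(x,y)$, rather than any Baire/dimension input, is the delicate step.
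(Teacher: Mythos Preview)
Your proof of the identity $A_x=\cll(A_x)\cap U_x$ is correct and coincides with the paper's argument.

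For the bad-set inclusion $\B_d(A)\subseteq\B_d(U)$, you have correctly located the obstruction, and it is a genuine one: the step you flag as ``delicate'' cannot be completed, because the inclusion is \emph{false} for arbitrary open $U$ with $A=\cl A\cap U$. Take $\K=\Real$, $n=2$, $d=1$, $A=\{(x,x):x>0\}$, and $U=\Real^2\setminus\{(0,0)\}$. Then $A$ is locally closed and $\cl A\cap U=A$, yet $0\in\B_1(A)$ (since $(\cl A)_0=\{0\}$ while $A_0=\emptyset$) and $0\notin\B_1(U)$ (since $U_0=\Real\setminus\{0\}$ has closure $\Real=(\cl U)_0$). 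So $\B_1(A)\not\subseteq\B_1(U)$. Your approximation by points $y_m\in U_x$ cannot be upgraded to points of $A_x$ here precisely because $A_0$ is empty.

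The paper's own proof has the same gap, concealed in the asserted equality $\cll(E_x\cap U_x)=E_x\cap\cll(U_x)$ with $E=\cl A$. The preceding Remark, applied to the pair $(A_x,U_x)$, only yields $\cll(A_x)\cap\cll(U_x)=\cll(A_x)$; it does not allow one to replace $\cll(A_x)$ by the possibly larger set $E_x=(\cl A)_x$. In the counterexample above one has $E_0\cap U_0=\emptyset$ but $E_0\cap\cll(U_0)=\{0\}$, so that equality fails outright.

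The downstream conclusion that $\B_d(A)$ is meager for locally closed $A$ may well be true (in the example $\B_1(A)=\{0\}$ is certainly meager), but it requires a different argument; neither your approximation idea nor the paper's chain of equalities establishes the intermediate inclusion as stated.
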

\begin{proof}
$A_x \subseteq \cll(A_x) \cap U_x$ is obvious.
For the opposite inclusion, 
\[
\cll(A_x) \cap U_x \subseteq (\cl A)_x \cap U_x =
(\cl A \cap U)_x = A_x.
\]
Assume, for contradiction, that $x \in \B_d(A) \setminus \B_d(U)$.
Let $E := \cl A$; notice that $A = E \cap U$.
Since $x \notin \B_d(U)$, $\cll(U_x) = (\cl U)_x$.
By the above Remark, applied to~$E_x \cap U_x$ and to $E \cap U$,
\[
\cll(A_x) = \cll(E_x  \cap U_x) = E_x \cap \cll(U_x) =
E_x \cap (\cl U)_x = (E \cap \cl U)_x = E_x = (\cl A)_x,
\]
contradicting $x \notin \B_d(A)$.

By Lemma~\ref{lem:bad-2}, $\B_d(U)$ is meager, and we are done.
\end{proof}

\begin{corollary}
Let $A \subseteq \K^n$ be definable and constructible, and $d \leq n$.
Then, $\B_d(A)$ is meager.
\end{corollary}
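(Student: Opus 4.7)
The plan is to reduce the constructible case to the locally closed case, which is already handled by Theorem~\ref{thm:lc-fiber}. Since $A$ is definable and constructible, by the Fact recalled in Section~2 we may write $A = A_1 \cup \dots \cup A_m$ where each $A_i$ is a definable locally closed subset of $\K^n$ (first express $A$ as a finite Boolean combination of definable open sets, then rewrite it as a union of intersections of a definable open and a definable closed set).

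Next, I would iterate the lemma $\B_d(X \cup Y) \subseteq \B_d(X) \cup \B_d(Y)$ (stated just before the subsubsection on locally closed sets) to conclude
\[
\B_d(A) \subseteq \bigcup_{i=1}^{m} \B_d(A_i).
\]
For each $i$, pick a definable open $U_i$ with $A_i = \cll(A_i) \cap U_i$; then Theorem~\ref{thm:lc-fiber} applies to $A_i$ and yields $\B_d(A_i) \subseteq \B_d(U_i)$, and $\B_d(U_i)$ is meager by Lemma~\ref{lem:bad-2} since $U_i$ is open.

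Finally, since a finite union of meager sets is meager, $\B_d(A)$~is meager. No real obstacle is expected: the only point requiring some care is that the decomposition of a \emph{definable} constructible set into \emph{definable} locally closed pieces is available (via the characterization of constructibility in terms of finite Boolean combinations of \emph{definable} open sets), so that Theorem~\ref{thm:lc-fiber} can indeed be applied to each piece.
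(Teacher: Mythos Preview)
Your proof is correct and is exactly the argument the paper has in mind: the corollary is stated without proof immediately after Theorem~\ref{thm:lc-fiber}, relying on the decomposition of a definable constructible set into finitely many definable locally closed pieces, the inclusion $\B_d(A_1\cup\dots\cup A_m)\subseteq\bigcup_i\B_d(A_i)$, and the fact that each $\B_d(A_i)$ is meager by the theorem.
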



\section{Proof of Thm.~\ref{thm:a-minimal}}\label{sec:proof-a}
\mbox{}\indent($\ref{en:a-df} \Rightarrow \ref{en:a-dcf}$) is obvious.

($\ref{en:a-dcf} \Rightarrow \ref{en:a-uf}$).
Let $\Pa{X(r)}_{r \in \K}$ be a definable increasing family of pseudo-finite
subsets of~$\K$, and $X := \bigcup_r X(r)$.
Assume, for contradiction, that $X$ is not pseudo-finite.
\Wlog, we can assume that $X \subseteq (0,1)$, and that $0$ is an accumulation
point of~$X$.
For every $r \in \K$, let $z(r) := \min \Pa{X(r)}$,
$Z := \set{z(r): r \in \K}$,
$Y := \set{1/z(r): r \in  \K}$.
\begin{claim}
The only accumulation point of $Z$ in $\K$ is $0$.
\end{claim}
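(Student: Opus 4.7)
The plan is to exploit the monotonicity of $z$ together with the fact that each $X(r)$, being pseudo-finite, is closed and discrete (hence has no accumulation points in $\K$). First I note that $z$ is non-increasing: if $r \leq r'$, then $X(r) \subseteq X(r')$ by hypothesis, so $z(r) = \min(X(r)) \geq \min(X(r')) = z(r')$. Second, $z(r) \to 0$ as $r \to +\infty$: since $0$ is an accumulation point of $X = \bigcup_r X(r)$, for every $\varepsilon > 0$ there is some $x \in X$ with $0 < x < \varepsilon$, and $x \in X(r)$ for some $r$, so $z(r) \leq x < \varepsilon$; by monotonicity, $z(r') < \varepsilon$ for all $r' \geq r$. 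Hence for every $b > 0$ the definable set $S_b := \set{r \in \K : z(r) > b}$ is bounded above in~$\K$.

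Now suppose, for contradiction, that some $a \in \K$ with $a \neq 0$ is an accumulation point of~$Z$. Since $Z \subseteq (0,1)$, necessarily $a > 0$. Put $b := a/2$. Any $z(r)$ with $\abs{z(r) - a} < a/2$ satisfies $z(r) > b$, and since $a$ is an accumulation point of~$Z$ at least one such $r$ exists; hence $S_b$ is nonempty. By definable completeness, $\tau := \sup S_b$ lies in~$\K$. For every $r \in S_b$ we have $r \leq \tau$, so $X(r) \subseteq X(\tau)$ by monotonicity of the family, and in particular $z(r) \in X(\tau)$. Using the accumulation point property, for every $\varepsilon \in (0, a/2)$ we can pick $r \in S_b$ with $0 < \abs{z(r) - a} < \varepsilon$; this produces elements of $X(\tau)$ arbitrarily close to~$a$ but different from~$a$. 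Thus $a$ is an accumulation point of~$X(\tau)$, contradicting that the pseudo-finite set $X(\tau)$ is discrete and closed in~$\K$.

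The only slightly delicate point is guaranteeing that $\tau \in \K$, which uses definable completeness together with the convergence $z(r) \to 0$ that yields the boundedness of~$S_b$; everything else reduces to the monotonicity of~$z$ and to the fact that a pseudo-finite set, being closed and discrete, has no accumulation points in~$\K$.
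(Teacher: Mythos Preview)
Your argument is correct and follows essentially the same approach as the paper: both exploit that $z$ is decreasing and eventually below $a/2$, so the values $z(r)$ near $a$ all come from indices $r$ bounded above by some cutoff, hence lie in a single pseudo-finite $X(\cdot)$ and cannot accumulate. The only minor difference is that the paper simply picks any $r_0$ with $z(r_0) < c/2$ (via the accumulation of $X$ at $0$) rather than taking $\tau = \sup S_b$; your use of definable completeness is harmless but not needed.
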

In fact, suppose, for contradiction, that $c > 0$ is an accumulation point of~$Z$.
Since $0$ is an accumulation point for $X$; there exists $r_0 \in \K$ such
that $X(r_0) \cap (0, c/2) \neq \emptyset$.
Thus, $z(r_0) < c/2$, and, since $z(x)$ is a decreasing function, $z(r) < c/2$
for every $r \geq r_0$.
Let $Z(r_0) := \set{z(r) : r < r_0}$, and $Z' := Z \setminus Z(r_0)$.
Since $Z(r_0) \subseteq X(r_0)$, $Z(r_0)$ is pseudo-finite.
Moreover, since $Z' \subseteq (0, c/2)$, $c$~is not
an accumulation point of~$Z'$, and thus it is not an accumulation point
of~$Z$, absurd. 

By the claim, $Y$~is discrete and closed,
and therefore, by hypothesis, it is pseudo-finite.
Hence, $Z$ is also pseudo-finite; therefore, $0$ cannot be an accumulation
point for~$Z$, absurd. 

($\ref{en:a-uf} \Leftrightarrow \ref{en:a-ufn}$) is clear form Corollary~\ref{cor:product}.

($\ref{en:a-uf} \Rightarrow \ref{en:a-df}$).
Follows from Lemma~\ref{lem:discrete-countable}.

Hence, we have the equivalence
($\ref{en:a-dcf} \Leftrightarrow \ref{en:a-df} \Leftrightarrow \ref{en:a-uf}
\Leftrightarrow \ref{en:a-ufn}$).

We now prove that ($\ref{en:a-dcf} \Rightarrow \ref{en:a-mf}$).
Let $X \subseteq \K$ be meager; thus, $X$ is the union of a definable
increasing family $\Pa{X(r)}_{r \in K}$ of nowhere dense subsets of~$\K$.
We want to prove that $X$ is pseudo-finite.
\Wlog, we can assume that each $X(r)$ is \dcompact.
By condition~\ref{en:a-uf}, it suffices to prove that each $X(r)$ is
pseudo-finite.
Thus, we fix $r \in \K$, and prove that $Y := X(r)$ is pseudo-finite,
knowing that it is \dcompact and has empty interior.
Assume, for contradiction, that $Y$ has an accumulation point in~$\K$.
\Wlog, we can assume that $Y \subset (0,1)$, and that $0$ is an accumulation
point of~$Y$.
Since $\K$ is definably complete and $Y$ is closed, $(0,1) \setminus Y$ is a
union of disjoint open intervals; let $D$ be the set of centres of those
intervals, that is:
\[
D := \set{z \in (0,1): \exists r > 0,\ z - r \in Y,\ z + r \in Y,\ (z-r, z + r)
  \cap Y = \emptyset}.
\]
Note that $D$ is discrete.
By condition~\ref{en:a-df}, $D$ is pseudo-finite; let $a := \min(D)$, and
$r > 0$ such that $a - r, a + r \in Y$ and $(a - r, a + r) \cap Y =
\emptyset$.
Thus, $(0, a-r) \subseteq Y$.
Since $Y$ has empty interior, $a - r = 0$.
However, this contradicts the fact that $0$ is an accumulation point for~$Y$.

($\ref{en:a-mf} \Rightarrow 5 \Rightarrow \ref{en:a-dcf}$) is clear.

($7 \Rightarrow \ref{en:a-dcf}$).
Let $X \subseteq \K$ be discrete and close.
Assume, for contradiction, that $X$ is not bounded;
let $Y := \set{1/x: 0 \neq x \in X} \cup \set 0$.
Since $Y$ is nowhere dense, $Y$ is also discrete, contradicting the fact that
$X$ is unbounded.
Hence, we have the equivalence 
($\ref{en:a-dcf} \Leftrightarrow \ref{en:a-df}
\Leftrightarrow 5 \Leftrightarrow \ref{en:a-mf} \Leftrightarrow 7 
\Leftrightarrow \ref{en:a-uf} \Leftrightarrow \ref{en:a-ufn}$).

Now, we prove that condition~\ref{en:a-mf} implies that every $\Fs$ subset of
$\K^n$ is constructible.
This in turns imply that the open core of $\K$ is locally o-minimal, since
every set definable in the open core will be constructible, and constructible
sets with empty interior are meager, and thus pseudo-finite.

It is clear that condition~\ref{en:a-mf} implies that $\K$ is Baire.
Let $X \subseteq \K^n$ be an~$\Fs$.

If $n = 1$, then $X = \inter X \cup (X \setminus \inter X)$, and therefore $X$
is the union of an open set and a meager set; thus, $X$ is constructible.

If $n > 1$, we proceed, as in the proof of \cite[3.4]{DMS}, by induction on
$n$ and $(d,k) = \fdim X$ (the full dimension of~$F$).
Note that if $d = 0$, then $X$ is pseudo-finite, and hence constructible.

If $d = n$, then $X = \inter X \cup (X \setminus \inter X)$.
$\inter X$ is open, and hence constructible, while 
$X \setminus \inter X$ has dimension less than $n$, and therefore it is also
constructible; thus, $X$ is constructible.

If $0 < d < n$, \wloG we can assume that $\pi(X)$ has non-empty interior,
where $\pi := \Pi^n_d$.
Let $A := \set{a \in \K^d: \dim(X_a) > 0 }$.
By Lemma~\ref{lem:dim-fiber}, $A$~is and $\Fs$ of dimension~$< d$;
therefore, by inductive hypothesis, $A$~is constructible.
Let $Y := \pi^{-1}(A) \cap X$.
Since $\fdim(Y) < \fdim(X)$, by induction, $Y$~is constructible.
Thus, it suffices to prove that $X \setminus Y$ is constructible, and hence we
can reduce to the case when $Y = \emptyset$.

Thus, we can reduce to the case when $d < n$, $B := \pi(X)$ has non-empty
interior, and, for every $y \in B$, $X_y$~is pseudo-finite. 

Here the proof will diverge from~\cite{DMS}, because we do not have the
hypothesis~UF, and therefore we cannot proceed by induction on a uniform
bound $N$ on the cardinality of the fibers~$X_y$.

We claim that $\fdim(\nlc X) < \fdim(X)$.
If the claim is true, then, by inductive hypothesis,
$\nlc X$~is constructible, and therefore $X$ is constructible.

If, for contradiction, $\fdim(\nlc X) = \fdim(X)$,
then $\pi(\nlc X)$ contains a non-empty open ball~$B'$;
by shrinking~$X$, we can assume than $B = B'$.

By hypothesis, there exists a definable increasing family
$\Pa{X(t)}_{t \in  K}$ of \dcompact subsets of~$\K^n$,
such that $X = \bigcup_t X(t)$.
Let $Y (t) := \pi \Pa{X(t)}$; note that each $Y(t)$ is \dcompact,
and $B = \bigcup_t Y(t)$.
Since $\K$ is Baire, by~\cite[Prop.~2.7]{FS}, $\K^d$ is also Baire, and
therefore there exists $t_0 \in K$ such that $Y(t_0)$ has non-empty interior.
Let $B' \subseteq Y(t_0)$ be a non-empty open box; by shrinking~$X$, we can
assume that $B = B'$, and, by re-defining the family $\Pa{Y(t)}_{t \in \K}$,
that $B = \pi \Pa{Y(t)}$ for every~$t$.

For every $y \in B$, define $X_{[y]} := \set y \times X_y \subseteq X$.
The fact that $X_{[y]}$ is pseudo-finite easily implies the following:
\begin{remark}
For every $y \in B$ there exists $t \in \K$ such that $X_{[y]} \subseteq X(t)$.
\end{remark}

\begin{lemma}\label{lem:amin-constructible-dense}
There exists $B' \subseteq B$ non-empty open box, and $t_0 \in \K$,
such that $X \cap (B' \times \K^{n-d}) \subseteq X(t_0)$.
\end{lemma}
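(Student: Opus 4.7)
The plan is to encode the data as a single definable $\K$-valued function on $B$ measuring, for each $y$, the threshold at which the fiber $X_{[y]}$ enters the family $\Pa{X(t)}_{t \in \K}$, and then exploit the Baire-type regularity of definable functions already invoked in the paper. Concretely, define
\[
\tau(y) \;:=\; \inf\set{t \in \K : X_{[y]} \subseteq X(t)} \qquad (y \in B).
\]
The Remark just above the lemma ensures that this infimum is finite for every $y \in B$, so $\tau \colon B \to \K$ is a well-defined definable function. The conclusion of the lemma is equivalent to the assertion that $\tau$ is bounded above on some non-empty open sub-box $B' \subseteq B$: if $\tau(y) < t_0$ for every $y \in B'$, then by monotonicity of the family $\Pa{X(t)}$ we get $X_{[y]} \subseteq X(t_0)$ for every $y \in B'$, and hence $X \cap (B' \times \K^{n-d}) \subseteq X(t_0)$.

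To produce such a $B'$ I would use the same fact from \cite{FS} that is invoked in the proof of Lemma~\ref{lem:bad-2}: in a definably Baire structure (and $\K$ is Baire under the current hypotheses), any definable function on a non-empty open definable set is continuous on some non-empty open sub-box. Applied to $\tau$, this yields an open box $B_1 \subseteq B$ on which $\tau$ is continuous. Choosing a non-empty open box $B'$ with $\overline{B'}$ \dcompact and $\overline{B'} \subseteq B_1$, continuity forces $\tau \rest \overline{B'}$ to attain a finite maximum $t_1$; taking $t_0 := t_1 + 1$ then completes the argument.

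The only point requiring any care is the applicability of the \cite{FS} continuity result to $\tau$, which is routine because $\tau$ is definable and $\K$-valued on an open definable domain. I expect the main conceptual work to already have been done in the surrounding argument (arranging that $B = \pi\Pa{Y(t)}$ for every $t$ and reducing to fibers that are pseudo-finite), so that the present lemma follows softly from the existence of the threshold function $\tau$ together with the continuity-on-a-sub-box principle. No further appeal to \aminimality beyond what was already invoked is needed here.
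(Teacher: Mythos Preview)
Your threshold function $\tau$ is a natural object, and the reduction to ``$\tau$ is bounded on some open sub-box'' is correct. The gap is in the appeal to~\cite{FS}. The result invoked in the proof of Lemma~\ref{lem:bad-2} is \emph{not} that an arbitrary definable function is continuous on a sub-box: there the function is $f(x) = \min\Pa{C(r_0)_x}$ for a \dcompact set $C(r_0)$, so $\Gamma(f)$ is closed. The relevant statement (Lemma~\ref{lem:function-fs} in the paper) needs $\Gamma(f)$ to be an~$\Fs$, and in a merely definably Baire structure this hypothesis cannot be dropped: in the dense pair $\pair{\Bm,\Am}$, which is \aminimal, the characteristic function of $\Am$ is definable and nowhere continuous. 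For your~$\tau$, the superlevel sets $\set{(y,s): \tau(y) > s}$ are~$\Fs$, but $\set{(y,s): \tau(y) < s}$ is only the projection of a~$\Gd$, and I do not see how to conclude that $\Gamma(\tau)$ is~$\Fs$ without further work.

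The paper's proof handles exactly this point differently: rather than seeking continuity of~$\tau$, it works directly with the sublevel-type sets $Y(t) := B \setminus \pi\Pa{X \setminus X(t)}$, observes that each is a~$\Gd$ in~$\K^d$, and then uses the \emph{inductive hypothesis on $n$} (we are proving constructibility of $\Fs$ sets by induction on the ambient dimension, and $d < n$) to conclude that each $Y(t)$ is constructible. From $B = \bigcup_t Y(t)$ and Baire-ness one gets that some $Y(t_0)$ is non-meager, and constructibility then gives non-empty interior. So the missing ingredient in your argument is precisely this use of the outer induction; once you invoke it to show the sublevel sets of $\tau$ are constructible, your argument collapses into the paper's.
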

\begin{proof}
For every $t \in \K$, define $Z(t) := X \setminus X(t)$,
and $Y(t) := B \setminus \pi\Pa{Z(t)}$.
Note that each $Z(t)$ is an $\Fs$, and therefore $Y(t)$ is a~$\Gd$.
By induction on~$n$, $Y(t)$ is constructible.
Moreover, by the remark, $B = \bigcup_t Y(t)$.
Since $B$ is not meager, there exists $t_0 \in \K$ such that $\cl{Y(t_0)}$ has
non-empty interior.
However, since $Y(t_0)$ is constructible, this implies that $Y(t_0)$ itself
has non-empty interior, and therefore contains an open box~$B'$.
\end{proof}

Hence, $X \cap (B' \times \K^{n-d}) = X(t_0) \cap (B' \times K^{n-d})$, and
therefore $X \cap (B' \times  \K^{n-d}) \subseteq \lc(X)$.
Since the same reasoning can be made for every open box $\tilde B \subseteq
B$ instead of~$B$, we conclude that $\pi(\nlc X)$ has empty interior,
absurd.

To conclude, it remains to prove $(\ref{en:a-dcf} \Rightarrow 2)$.
Let $X \subseteq \K^n$ be discrete and definable.
Since we have seen that $\K$ is Baire, this implies that every projection
$\mu(X)$ of $X$ on a coordinate axis has empty interior; however, $\mu(X)$ is
an $\Fs$, and thus $\mu(X)$ is pseudo-finite for every coordinate axis.
Thus, $X$~is pseudo-finite.

The fact that every nowhere dense definable subset of $\K^n$ is meager follows
from the fact that, if $X$ is nowhere dense, then it is contained in a nowhere
dense $\Fs$ set~$Y$; we have seen that $Y$ is constructible; hence, $Y$ is
nowhere dense.
\hfill \qedsymbol

\subsection[Further results]{Further results on a-minimal structures}

From the above proof, we can extract the following results.

\begin{proviso}
For the remainder of this subsection, $\K$ is \aminimal.
\end{proviso}

\begin{lemma}\label{lem:amin-dim-lc}
Let 
$C \subseteq \K^n$ be definable and constructible
(or, equivalently, an~$\Fs$) set of dimension~$d$.
Then, $\dim(\nlc C) < d$.
Therefore, $C$~is the union of $d + 1$ locally closed definable sets.
\end{lemma}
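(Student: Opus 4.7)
The plan is to extract the estimate $\dim(\nlc C) < d$ directly from the argument inside the proof of Thm.~\ref{thm:a-minimal}, and then iterate to obtain the second statement. I split into three cases on~$d$. If $d = 0$, then every projection of $C$ to a coordinate axis is a zero-dimensional $\Fs$ in~$\K$, hence nowhere dense by Cor.~\ref{cor:constructible-nowhere-dense} once Thm.~\ref{thm:a-minimal} is in hand, hence pseudo-finite; so $C$ is pseudo-finite, closed, and $\nlc C = \emptyset$. If $d = n$, then $\nlc C \subseteq C \setminus \inter C \subseteq \bd C$, and $\bd C$ is nowhere dense by Remark~\ref{rem:boundary}, so $\dim \nlc C < n$.

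The main case is $0 < d < n$. Suppose for contradiction that for some $d$-dimensional coordinate projection $\pi$ the set $\pi(\nlc C)$ contains an open box~$B$. Using the remark that $\nlc{C \cap \pi^{-1}(B)} = \nlc C \cap \pi^{-1}(B)$, I may replace $C$ by $C \cap \pi^{-1}(B)$ and assume $\pi(C) = B$ and $\pi(\nlc C) \supseteq B$; necessarily $\dim C = d$. By Lemma~\ref{lem:dim-fiber}, the set $A := \set{y \in B : \dim(C_y) > 0}$ is an $\Fs$ of dimension~$< d$, hence nowhere dense in~$B$. Choose an open box $B_1 \subseteq B \setminus \cl A$ and set $X := C \cap \pi^{-1}(B_1)$; then $\pi(\nlc X) \supseteq B_1$ and every fiber $X_y$ with $y \in B_1$ is an $\Fs$ of dimension~$0$, hence pseudo-finite. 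This is precisely the configuration handled inside the proof of Thm.~\ref{thm:a-minimal}: Lemma~\ref{lem:amin-constructible-dense} produces an open box $B_2 \subseteq B_1$ and $t_0 \in \K$ with $X \cap (B_2 \times \K^{n-d}) \subseteq X(t_0)$, so over $B_2$ the set $X$ coincides with the \dcompact{} set $X(t_0)$ and is locally closed; hence $\pi(\nlc X) \cap B_2 = \emptyset$, contradicting $\pi(\nlc X) \supseteq B_2$. The only part demanding care is the preliminary slicing away of $\cl A$, which is what secures the pseudo-finiteness of fibers that Lemma~\ref{lem:amin-constructible-dense} needs.

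For the second statement, each $\inlc C k$ is an $\Fs$ (by the remark following the definition of $\nlc$), hence constructible by Thm.~\ref{thm:a-minimal}, so the first part applies recursively and yields $\dim \inlc C {k+1} < \dim \inlc C k$ whenever the latter is nonempty. Induction then gives $\dim \inlc C k \leq d - k$, so $\inlc C {d+1} = \emptyset$, and the Proposition just before Thm.~\ref{thm:lc-fiber} concludes that $C$ is a union of $d$, \emph{a fortiori}~$d+1$, locally closed definable sets.
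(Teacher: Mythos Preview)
Your proof is correct and follows exactly the approach the paper indicates, namely extracting the relevant steps from the proof of Thm.~\ref{thm:a-minimal} (the paper gives no separate argument for this lemma). One minor note: the Proposition you cite at the end appears to have an off-by-one in the paper's statement (the correct version should read $\inlc A m = \emptyset$, not $\inlc A {m+1}$), so from $\inlc C {d+1} = \emptyset$ you obtain $d+1$ locally closed pieces directly rather than~$d$; your stated conclusion is unaffected.
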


As a corollary, we have that, for an \aminimal structure,
``being constructible'' is a definable property.
In fact (for $\K$ \aminimal)
$X \subseteq \K^n$ is constructible iff $\inlc X {n + 1}$ is empty.
Thus, Lemma~\ref{lem:finite-union} implies the following:

\begin{corollary}\label{cor:finite-constructible}
Let 
$A$~be pseudo-finite, 
and $(X_a)_{a \in A}$ be a definable family of constructible subsets
of~$\K^n$.
Then, $\bigcup_{a \in A} X_a$ is also constructible.
\end{corollary}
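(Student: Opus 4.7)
The plan is to apply Lemma~\ref{lem:finite-union} directly to the property $P(X) :=$ ``$X$ is constructible''. For this I need to verify that $P$ is additive and that $P$ is definable for~$\K$.

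Additivity is immediate from the definition: if $X$ and $Y$ are each finite Boolean combinations of definable open sets, so is $X \cup Y$. For definability, the key input is Lemma~\ref{lem:amin-dim-lc}: every definable constructible subset of~$\K^n$ is a union of at most $n+1$ locally closed definable sets, since the dimension strictly drops when passing to~$\nlc$. Combined with the Proposition characterizing unions of $m$ locally closed sets via the vanishing of $\inlc{\cdot}{m+1}$, this gives that for definable $X \subseteq \K^n$,
\[
X \text{ is constructible} \iff \inlc{X}{n+1} = \emptyset.
\]

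The operator $X \mapsto \nlc X$ is uniformly definable on definable families (since local closedness at a point is a first-order condition), so iterating $n+1$ times the condition $\inlc{X}{n+1} = \emptyset$ is a definable property of~$X$ in any parametrized family. Thus $P$ is both additive and definable for~$\K$, and Lemma~\ref{lem:finite-union} applied to the pseudo-finite family $(X_a)_{a \in A}$ with $P(X_a)$ holding for every $a \in A$ yields that $\bigcup_{a \in A} X_a$ is constructible.

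The essential point — and what makes \aminimality indispensable — is the \emph{uniform} bound $n+1$ on the number of locally closed pieces, supplied by Lemma~\ref{lem:amin-dim-lc}. Without such a uniform bound one could not express constructibility by a single first-order formula, and the standard pathology (constructible sets in general requiring unboundedly many locally closed pieces) would obstruct passage to unions indexed by a pseudo-finite parameter set. Here no such difficulty arises, and the argument reduces to a clean application of the machinery already in place.
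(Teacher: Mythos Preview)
Your proof is correct and follows exactly the same line as the paper: establish that ``being constructible'' is a definable property via the characterization $\inlc{X}{n+1} = \emptyset$ (using Lemma~\ref{lem:amin-dim-lc} for the uniform bound on the number of locally closed pieces), note additivity, and invoke Lemma~\ref{lem:finite-union}. The paper presents this argument in the paragraph immediately preceding the corollary rather than as a separate proof, but the content is identical.
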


Moreover, the dimension is well-behaved for \emph{constructible} sets
definable in an \aminimal structure.
We have already seen that $\dim (C \cup C') = \max(\dim C, \dim C')$ for
constructible definable sets (Lemma~\ref{lem:dim-union}).
\begin{lemma}\label{lem:amin-dim-1}
Let $C$ be a definable constructible 
subset of~$\K^n$, where $\K$ is \aminimal.
$\dim C \leq 0$ iff $C$ is pseudo-finite.
\end{lemma}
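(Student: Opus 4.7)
My approach reduces to one-dimensional projections via the corollary from \S\ref{sec:pf} that $C$ is pseudo-finite iff every projection of $C$ onto a coordinate axis is pseudo-finite. For both directions I would verify the condition projection-by-projection and then assemble.

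For the $(\Leftarrow)$ direction, suppose $C$ is pseudo-finite. Any coordinate projection $\mu(C) \subseteq \K$ is the image of a pseudo-finite set under a definable map, hence pseudo-finite by the lemma in \S\ref{sec:pf}. A pseudo-finite subset of $\K$ is bounded and discrete, so has empty interior. Applied to every coordinate line $L \subseteq \K^n$, this shows that no one-dimensional coordinate projection of $C$ has non-empty interior, so $\dim C \leq 0$.

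For the $(\Rightarrow)$ direction, assume $\dim C \leq 0$ and fix an arbitrary coordinate projection $\mu: \K^n \to \K$. By the definition of dimension, $\mu(C)$ has empty interior. Since $C$ is constructible, $C$ is in particular an $\Fs$-set (constructible sets are finite unions of locally closed sets, hence $\Fs$; equivalently, invoke the ``moreover'' clause of Thm.~\ref{thm:a-minimal}), so the projection $\mu(C)$ is also an $\Fs$-set in $\K$. Writing $\mu(C) = \bigcup_t F_t$ with $\Pa{F_t}$ a definable increasing family of closed sets, each $F_t \subseteq \mu(C)$ has empty interior and hence is nowhere dense; therefore $\mu(C)$ is meager. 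By \aminimality (condition~\ref{en:a-mf} of Thm.~\ref{thm:a-minimal}), $\mu(C)$ is pseudo-finite. Applying the coordinate-projection characterization of pseudo-finiteness, $C$ itself is pseudo-finite.

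The only real point where the hypotheses are essential is in the chain $\Fs + \text{empty interior} \Rightarrow \text{meager} \Rightarrow \text{pseudo-finite}$, which uses both the fact that constructibility gives $\Fs$-presentations stable under projection, and the characterization of \aminimality as ``every meager subset of $\K$ is pseudo-finite''. I do not foresee any serious obstacle: the argument is essentially a bookkeeping assembly of results already established earlier in the paper, with all genuine content absorbed into Thm.~\ref{thm:a-minimal} and the projection-characterization of pseudo-finite sets.
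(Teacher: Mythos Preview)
Your proof is correct and is essentially the argument implicit in the paper: the lemma is stated there as something one can ``extract'' from the proof of Thm.~\ref{thm:a-minimal}, and your projection-by-projection reduction (constructible $\Rightarrow$ $\Fs$ $\Rightarrow$ each axis-projection is an $\Fs$ with empty interior $\Rightarrow$ meager $\Rightarrow$ pseudo-finite, then apply the coordinate-projection characterization of pseudo-finiteness) is exactly that extraction. One small quibble: the ``moreover'' clause of Thm.~\ref{thm:a-minimal} gives $\Fs \Rightarrow$ constructible, not the direction you need; the implication constructible $\Rightarrow$ $\Fs$ holds in general (locally closed sets are $\Fs$), so your argument stands regardless.
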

Moreover, in Lemma~\ref{lem:loc-min-dim},
we will show that, for $C$ definable, 
constructible, and non-empty subset of an \aminimal structure,
we have $\dim(\fr C) < \dim C$ if $C$ is non-empty.

\begin{lemma}\label{lem:amin-dim-2}
Let $C(t)$ be a definable increasing family of subsets of~$\K^n$,
such that each $C(t)$ is constructible (or, equivalently, an~$\Fs$),
and $\dim(C(t)) \leq d$.
Let $C := \bigcup_t C(t)$.
Then, $\dim C \leq d$.
\end{lemma}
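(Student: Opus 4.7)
The plan is to mimic the Baire-category argument used in Lemma~\ref{lem:dim-union}, adapting it from a finite union of $\Fs$-sets to a definable increasing union. I would argue by contradiction: suppose $\dim C \geq d+1$, so that for some coordinate projection $\pi := \Pi^n_L$ onto a $(d+1)$-dimensional coordinate subspace $L$, the set $\pi(C)$ has non-empty interior and hence contains some non-empty open box $B \subseteq L$.

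The key technical point is to show that each $\pi(C(t))$ is nowhere dense in $L \cong \K^{d+1}$. First, I would show that $\pi(C(t))$ is an $\Fs$-set. Writing $C(t) = \bigcup_s F(t,s)$ as an increasing union of closed definable sets, I may replace $F(t,s)$ by $F(t,s) \cap \clB(0,s)$ (which is \dcompact) without changing the union; since coordinate projections send \dcompact sets to \dcompact sets (hence to closed sets),
\[
\pi(C(t)) = \bigcup_s \pi\Pa{F(t,s) \cap \clB(0,s)}
\]
exhibits $\pi(C(t))$ as an $\Fs$. Now because $\K$ is \aminimal, the ``moreover'' clause of Thm.~\ref{thm:a-minimal} makes $\pi(C(t))$ constructible; and the hypothesis $\dim C(t) \leq d$ forces $\pi(C(t))$ to have empty interior in $L$. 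Hence Cor.~\ref{cor:constructible-nowhere-dense} applies, and $\pi(C(t))$ is nowhere dense.

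Finally, the family $\Pa{\pi(C(t))}_t$ is definable and increasing because $\Pa{C(t)}_t$ is, so $\pi(C) = \bigcup_t \pi(C(t))$ is meager in $L$. On the other hand, $B \subseteq \pi(C)$ is a non-empty open box, and $\K^{d+1}$ is Baire (using that $\K$ is Baire, by Thm.~\ref{thm:a-minimal}, together with \cite[Prop.~2.7]{FS}), so $B$ cannot be meager, yielding the desired contradiction. The only subtle step is the preservation of the $\Fs$-property under projection, which requires the \dcompactification trick above; once this is in hand, the argument is essentially the directed-family analogue of Lemma~\ref{lem:dim-union}.
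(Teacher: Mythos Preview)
Your proof is correct and follows essentially the same approach as the paper's: project onto a $(d+1)$-dimensional coordinate space, observe that each $\pi(C(t))$ is an $\Fs$ with empty interior, conclude it is nowhere dense, and invoke Baire. The only cosmetic difference is that the paper goes directly from ``$\Fs$ with empty interior'' to ``meager'' and then uses \aminimality to get ``nowhere dense'', whereas you pass through ``constructible'' via Thm.~\ref{thm:a-minimal} and apply Cor.~\ref{cor:constructible-nowhere-dense}; these are equivalent routes.
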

\begin{proof}
Assume, for contradiction, that $\pi(C)$ contains a non-empty open set~$U$,
where $\pi := \Pi^n_d$.
For every $t$, let $D(t) := \pi(C(t))$.
By hypothesis, each $D(t)$ has empty interior and is an~$\Fs$, and therefore
it is meager.
Thus, by \aminimality, each $D(t)$ is nowhere dense.
However, $\bigcup_t D(t) = U$, and thus $U$ is meager, which is absurd.
\end{proof}
For a similar result, see also \ref{thm:imin}(\ref{en:i-increasing}).

We can prove a different version of Corollary~\ref{cor:finite-constructible},
albeit with a longer proof.
\begin{lemma}\label{lem:amin-U-Fs}
Let 
$\Pa{A(t)}_{t \in \K}$ be a definable  increasing
family of constructible subsets of~$\K^n$.
Then, $X := \bigcup_{t \in \K} A_t$ is also constructible.
\end{lemma}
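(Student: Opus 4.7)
The plan is to reduce the lemma to the moreover clause of Theorem~\ref{thm:a-minimal}: in an \aminimal structure, every $\Fs$-subset of $\K^n$ is constructible. Accordingly, it suffices to show that $X$ itself is an~$\Fs$.

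To secure the uniformity needed across the family, I would first invoke Lemma~\ref{lem:amin-dim-lc} together with the fact that a set $A$ is a union of $m$ locally closed sets iff $\inlc A {m+1}$ is empty, giving, for every $t$, the disjoint decomposition
\[
A(t) = \bigsqcup_{k=0}^{n} \lc\Pa{\inlc{A(t)}{k}}.
\]
Since $\lc$, $\cl{\cdot}$, and $\fr$ are definable operations, the locally closed pieces $L_k(t) := \lc\Pa{\inlc{A(t)}{k}}$ form a definable family in $(t,k)$ with $k$ ranging over the finite index set $\set{0, \ldots, n}$. Setting $C_k(t) := \cl{L_k(t)}$ and $D_k(t) := \fr L_k(t)$, both closed and definable in $t$, we have $L_k(t) = C_k(t) \setminus D_k(t) = \bigcup_{s > 0} \set{x \in C_k(t) : d(x, D_k(t)) \geq 1/s}$.

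For each $r > 0$ and $k \in \set{0, \ldots, n}$, set
\[
Z_k(r) := \set{(x, t, s) \in \K^n \times \K^2 : \abs t \leq r,\ 1/r \leq s \leq r,\ \abs x \leq r,\ x \in C_k(t),\ d(x, D_k(t)) \geq 1/s}.
\]
Each $Z_k(r)$ is \dcompact, so its projection onto the first $n$ coordinates is \dcompact as well; hence $F(r) := \bigcup_{k=0}^{n} \Pi^{n+2}_n(Z_k(r))$ is a finite union of closed sets, thus closed. The family $\Pa{F(r)}_{r > 0}$ is definable and increasing in $r$, and $\bigcup_{r > 0} F(r) = X$, exhibiting $X$ as an $\Fs$-set; Theorem~\ref{thm:a-minimal} then yields the constructibility of $X$.

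The main obstacle is upgrading the pointwise constructibility of each $A(t)$ to a uniform $\Fs$-presentation of the whole family. This is precisely what Lemma~\ref{lem:amin-dim-lc} makes possible, by capping the locally-closed complexity of a constructible subset of $\K^n$ at $n+1$: the index $k$ then ranges over a fixed finite set rather than over a parameter whose complexity could otherwise vary wildly with $t$.
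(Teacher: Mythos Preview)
Your overall strategy is sound: since $\K$ is \aminimal, every $\Fs$ subset of $\K^n$ is constructible by Theorem~\ref{thm:a-minimal}, so it would suffice to show that $X$ is an~$\Fs$. Invoking Lemma~\ref{lem:amin-dim-lc} to get a uniform bound (at most $n+1$) on the number of locally closed pieces of each $A(t)$, via the canonical decomposition $A(t)=\bigsqcup_{k\le n}\lc\bigl(\inlc{A(t)}{k}\bigr)$, is also a legitimate move and incurs no circularity.

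The gap is your claim that each $Z_k(r)$ is \dcompact. Boundedness is clear, but closedness fails in general: the fibrewise closures $C_k(t)=\cl{L_k(t)}$ and frontiers $D_k(t)=\fr L_k(t)$ need not vary continuously in~$t$, so the conditions ``$x\in C_k(t)$'' and ``$d(x,D_k(t))\ge 1/s$'' do not cut out closed subsets of $\K^n\times\K^2$. Here is a concrete counterexample with $n=2$: set $A(t)=(0,1)^2\cup\{(0,0)\}$ for $t<0$ and $A(t)=[0,1)^2$ for $t\ge 0$. This is a definable increasing family of constructible sets. For $t<0$ one has $\nlc{A(t)}=\{(0,0)\}$, whereas $[0,1)^2$ is already locally closed, so $\nlc{A(t)}=\emptyset$ for $t\ge 0$. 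Hence $L_1(t)=C_1(t)=\{(0,0)\}$ for $t<0$ and $L_1(t)=C_1(t)=\emptyset$ for $t\ge 0$, with $D_1(t)=\emptyset$ throughout. Consequently
\[
Z_1(r)=\{(0,0)\}\times[-r,0)\times[1/r,r],
\]
which is not closed. Your deduction that $\Pi^{n+2}_n(Z_k(r))$ is \dcompact therefore does not go through as written, and with it the justification that $F(r)$ is closed.

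It may be that $F(r)$ is nonetheless always closed (in the example above it is, since the projection of $Z_1(r)$ is just $\{(0,0)\}$), but establishing this would require a separate argument genuinely exploiting the increasing hypothesis on~$(A(t))$; it does not follow from the \dcompactness of~$Z_k(r)$. The paper's proof avoids this issue entirely: rather than exhibiting $X$ as an~$\Fs$, it argues directly that $X$ is constructible by induction on $\fdim(X)$, paralleling the proof of Theorem~\ref{thm:a-minimal}. After routine reductions one arrives at the case $0<d<n$ with $\pi(X)$ having nonempty interior and each fibre $X_y$ pseudo-finite; a Baire-category argument then shows that $\pi(\nlc X)$ has empty interior, whence $\fdim(\nlc X)<\fdim(X)$ and the induction closes.
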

\begin{proof}
We will proceed by induction on $(d,k) := \fdim X$.
If $d = 0$, then each $A(t)$ is pseudo-finite (by Lemma~\ref{lem:amin-dim-1});
therefore, $X$ is pseudo-finite, and hence constructible.
If $d = n$, then $X := \inter X \sqcup (X \setminus \inter X)$.
Define $B(t) := A(t) \setminus \inter X$;
notice that $X \setminus \inter X = \bigcup_t B(t)$, and $\Pa{B(t)}_{t \in
  \K}$ is a definable increasing family of constructible sets.
Thus, by inductive hypothesis, $X \setminus \inter X$ is constructible,
and therefore $X$ is constructible.

It remains to treat the case $0 < d < n$.
\Wlog, $\pi(X)$ has non-empty interior, where $\pi := \Pi^n_d$.
By inductive hypothesis, $\pi(X)$ is constructible.
For each~$t \in \K$, define $C(t) := \set{y \in \K^d: \dim\Pa{X(t)_y} > 0}$.
Each $C(t)$ is definable in the open core of~$\K$, and thus constructible.

\begin{claim}\label{cl:amin-finite-fiber}
$C(t)$ has empty interior, for every~$t$.
\end{claim}
If, for some $t \in \K$,
$C(t)$ has non-empty interior, then, since $\dim(A(t)) \leq \dim X = d$, 
$\pi\Pa{A(t)}$ has empty interior, and thus it is nowhere dense.
Hence, $\pi(X)$ is meager, a contradiction.

Let $Y := \bigcup_t C(t)$.
By inductive hypothesis, $C$ is constructible; moreover,
by Claim~\ref{cl:amin-finite-fiber},
it is meager, and therefore $\dim Y < d$.
Let $X_1 := X \setminus (Y \times \K^{n-d})$ and
$X_2 := X \cap (Y \times \K^{n-d})$.
We have seen that $\fdim(X_2) < \fdim X$, and thus, by inductive hypothesis,
$X_2$ is constructible.
Thus, \wloG we can assume that $X = X_1$, that is,
$A(t)_y$ is pseudo-finite for every~$t \in \K$ and $y \in \K^d$.
By Lemma~\ref{lem:amin-dim-1}, this means that $X_y$ is pseudo-finite for
every $y \in \K^d$.

Notice that, for every $t \in \K$,
$X \setminus A(t) = \bigcup_s \Pa{A(s) \setminus A(t)}$,
and therefore, by induction on~$n$,
$\pi\Pa{X \setminus A(t)}$ is constructible.
Thus, reasoning as in Lemma~\ref{lem:amin-constructible-dense},
we cam prove the following Claim.
\begin{claim}\label{cl:amin-lc-dense}
For every non-empty open box $B$ there exists
$t \in \K$ and $B' \subseteq B$ non-empty open box,
such that $X \cap (B' \times \K^{n-d}) = A(t) \cap (B' \times \K^{n-d})$.
\end{claim}
Fix $B'$ as in the above claim, and let
$\tilde A(t) := A(t) \cap (B' \times \K^{n-d})$.
By Lemma~\ref{lem:amin-dim-lc}
$\dim\Pa{\tilde A(t) \setminus \lc(\tilde A(t))} <
\dim\Pa{\tilde A(t)} \leq d$.
Moreover, $\nlc X \cap (B' \times \K^{n-d}) =
\nlc{\tilde A(t)}$.
Therefore, by Claim~\ref{cl:amin-lc-dense},
$\pi(\nlc X)$ has empty interior.
Thus, by inductive hypothesis, $\nlc X$ is constructible,
and hence $X$ is constructible.
\end{proof}

\begin{lemma}\label{lem:amin-ao}
Let 
$C \subseteq \K^n$ be definable.
\Tfae:
\begin{enumerate}
\item $C$ is \ao;
\item $C \setminus \inter C$ is nowhere dense;
\item $\fr C$ is nowhere dense;
\item $\bd C$ is nowhere dense;
\end{enumerate}

\end{lemma}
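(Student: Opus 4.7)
The plan is: prove all equivalences modulo the easy implications $(4) \Rightarrow (2)$ and $(4) \Rightarrow (3)$ (which are immediate from the inclusions $C \setminus \inter C \subseteq \bd C$ and $\fr C \subseteq \bd C$, since subsets of nowhere dense sets are nowhere dense), by establishing $(3) \Leftrightarrow (2) \Rightarrow (4)$ via point-set topology and $(2) \Leftrightarrow (1)$ with \aminimality used in exactly one direction.

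The key topological step is $(3) \Rightarrow (2)$: assume $\fr C$ is nowhere dense, and suppose for contradiction that $C \setminus \inter C$ is not, so that some open ball $B$ lies in $\cl{C \setminus \inter C}$. Since $C \setminus \inter C \subseteq \cl C \cap (\K^n \setminus \inter C) = \bd C$ and $\bd C$ is closed, we have $B \subseteq \bd C$, whence $B \subseteq \cl C$ and $B \cap \inter C = \emptyset$. Any non-empty open $V \subseteq B$ cannot be contained in $C$ (else $V \subseteq \inter C$, contradicting $V \subseteq B$), so $V$ meets $\K^n \setminus C$; but $V \subseteq \cl C$, so $V$ meets $\cl C \cap (\K^n \setminus C) = \fr C$. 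Thus $\fr C$ is dense in $B$, so $B \subseteq \inter\cl{\fr C}$, contradicting $(3)$. The reverse implication $(2) \Rightarrow (3)$ follows by applying the same argument with $C$ replaced by $\K^n \setminus C$, using the identities $\fr(\K^n \setminus C) = C \setminus \inter C$ and $(\K^n \setminus C) \setminus \inter(\K^n \setminus C) = \fr C$. Combining, $(2)$ and $(3)$ together give that both $\fr C$ and $C \setminus \inter C$ are nowhere dense, so $\bd C$, being their union, is nowhere dense by Remark~\ref{rem:boundary}, yielding $(2) \Rightarrow (4)$.

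For $(2) \Rightarrow (1)$, take $U := \inter C$, so that $C \sdiff U = C \setminus \inter C$ is nowhere dense, hence meager. The sole step using \aminimality is $(1) \Rightarrow (2)$: if $M := C \sdiff U$ is meager for some open $U$, then Theorem~\ref{thm:a-minimal} gives that $M$, and therefore $\cl M$, is nowhere dense. Since $U \setminus \cl M$ is open and contained in $U \setminus M = U \cap C$, it lies in $\inter C$, forcing $U \setminus \inter C \subseteq \cl M$; then $C \setminus \inter C \subseteq (C \setminus U) \cup (U \setminus \inter C) \subseteq M \cup \cl M = \cl M$, which is nowhere dense. The only genuinely non-routine step is the topological argument for $(3) \Rightarrow (2)$ in the middle paragraph; everything else is set manipulation, together with the one appeal to the \aminimal identity ``meager $=$ nowhere dense''.
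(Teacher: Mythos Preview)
Your proof is correct and follows essentially the same pattern as the paper's. The one organizational difference worth noting: you establish $(2)\Leftrightarrow(3)$ by a direct point-set argument that uses no \aminimality, whereas the paper obtains $(1)\Rightarrow(3)$ by applying its $(1)\Rightarrow(2)$ argument to the complement $\K^n\setminus C$ (so in the paper's route, passing from $(2)$ to $(3)$ goes via $(1)$ and hence through \aminimality twice). Your arrangement has the modest advantage of making explicit that $(2)$, $(3)$, $(4)$ are equivalent for an arbitrary subset of any topological space, with \aminimality needed only for the single implication $(1)\Rightarrow(2)$; the paper's $(1)\Rightarrow(2)$ step and yours are the same computation.
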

\begin{proof}
$(4 \Rightarrow 3 \Rightarrow 1)$ and  $(4\Rightarrow 2 \Rightarrow 1)$ are
clear, and are true even without the \aminimality hypothesis.
For $(1 \Rightarrow 2)$, assume $C$ \ao.
Then, $C = U \Delta F$, for some definable meager set~$F$,
and some definable open set~$U$.
Since $\K$ is \aminimal, $F$~is nowhere dense; let $D := \cl F$.
\begin{claim}
$U \setminus D \subseteq \inter C$.
\end{claim}
In fact,
\[
\inter C = \interior (U \Delta F) \supseteq \interior (U \setminus F)
= U \setminus D.
\]

Thus,
\[
C \setminus \inter C \subseteq C \setminus (U \setminus D) =
(U \Delta F) \setminus (U \setminus D) \subseteq
\Pa{ U \setminus(U \setminus D)} \cup F \subseteq D \cup F = D,
\]
and $D$ is nowhere dense.

The above proof applied to $\K^n \setminus C$ shows that
$(1 \Rightarrow 3)$.

Finally, for $1 \Rightarrow 4$, observe that
$\bd C = \fr C \cup (C \setminus \inter C)$.
Hence, if $C$ is \ao, then, by (2) and (3),
$\fr C$ and $C \setminus \inter C$ are nowhere dense,
and thus $\bd C$ is nowhere dense.
\end{proof}

\begin{lemma}\label{lem:amin-U-meager}
Let $(A(t))_{t \in \K}$ be a definable increasing family of subsets of~$\K^n$,
and $C := \bigcup_t A(t)$.
If each $A(t)$ is meager,  then $C$ is meager (and thus nowhere dense).
If each $A(t)$ is \ao, then $C$ is \ao.
\end{lemma}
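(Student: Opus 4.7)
The plan is to use the fact (from the ``moreover'' part of Thm.~\ref{thm:a-minimal}) that in an \aminimal structure every meager set is nowhere dense, together with the very definition of meagerness as a definable increasing union of nowhere dense sets. Both implications then reduce to feeding the hypothesis straight into the definition, after some mild surgery in the \ao case.

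For the first assertion, assume each $A(t)$ is meager. By \aminimality each $A(t)$ is then nowhere dense, so the family $(A(t))_{t \in \K}$ itself witnesses that $C = \bigcup_{t \in \K} A(t)$ is a definable increasing union of nowhere dense sets; hence $C$ is meager (and therefore, again by \aminimality, nowhere dense).

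For the second assertion, assume each $A(t)$ is \ao. By Lemma~\ref{lem:amin-ao} the set $A(t) \setminus \inter{A(t)}$ is nowhere dense for every $t$. Set $U(t) := \inter{A(t)}$ and $U := \bigcup_{t \in \K} U(t)$, a definable open subset of $\K^n$. Since $U(t) \subseteq A(t) \subseteq C$, we have $U \subseteq C$, and hence $C \sdiff U = C \setminus U$. Writing
\[
C \setminus U \;=\; \bigcup_{t \in \K} \Pa{A(t) \setminus U},
\]
we observe that each $A(t) \setminus U \subseteq A(t) \setminus U(t)$ is nowhere dense, and the family $\Pa{A(t) \setminus U}_{t \in \K}$ is definable and increasing. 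By the first assertion (applied to this family of nowhere dense, a fortiori meager, sets) we conclude that $C \setminus U$ is meager. Thus $C \sdiff U$ is meager with $U$ open, so $C$ is \ao by definition.

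There is no real obstacle: the content is entirely in the collapse ``meager $=$ nowhere dense'' provided by \aminimality, which makes meagerness automatically stable under definable increasing unions, and the \ao case is reduced to the meager case by peeling off the open part $U := \bigcup_t \inter{A(t)}$, which lies inside $C$ so that the symmetric difference degenerates into a set-theoretic difference already handled by the first part.
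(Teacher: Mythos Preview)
Your proof is correct and follows essentially the same approach as the paper. The only cosmetic difference is the choice of open set in the \ao case: you take $U := \bigcup_t \inter{A(t)}$ while the paper takes $U := \inter C$, but in both cases the key step is the inclusion $A(t) \setminus U \subseteq A(t) \setminus \inter{A(t)}$ together with Lemma~\ref{lem:amin-ao}, so the arguments are interchangeable.
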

\begin{proof}
Since $\K$ is \aminimal, if each $A(t)$ is meager,
then each $A(t)$ is nowhere dense, and therefore $C$ is
meager.

If each $A(t)$ is \ao, 
let $U := \inter C$, and $A'(t) := A(t) \setminus V$.
Notice thar $A'(t) \subseteq A'(t) \setminus interior(A'(t))$;
thus, Lemma~\ref{lem:amin-ao}, each $A'(t)$ is nowhere dense.
Therefore, $C \setminus U = \bigcup_t A'(t)$ is meager,
and hence $C$ is \ao.
\end{proof}

\section{Locally o-minimal structures}\label{sec:lmin}

\begin{lemma}\label{lem:lmin}
The following are equivalent:
\begin{enumerate}
\item $\K$ is locally o-minimal;
\item for every $X \subset \K$ definable, and for every $x \in \K$, there
exists $y > x$ such that, either $(x ,y) \subseteq X$, or $(x,y) \subseteq \K
\setminus X$;
\item for every $X$ definable subset of~$\K$, either $X$ is pseudo-finite, or it has non-empty interior.
\end{enumerate}
\end{lemma}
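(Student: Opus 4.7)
The plan is to establish the cycle $(1) \Leftrightarrow (2) \Leftrightarrow (3)$ by three short arguments.

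\emph{$(1) \Leftrightarrow (2)$.} These are essentially the same, mediated by a $\pm 1$-valued function. Given (1), I would apply Definition~\ref{def:lmin} to the function $f$ that is $+1$ on $X$ and $-1$ off $X$: since $f$ never vanishes, its eventual sign yields either $(x, y) \subseteq X$ or $(x, y) \cap X = \emptyset$. Conversely, given (2), $f \colon \K \to \K$, and $x$, apply (2) successively to $\set{f > 0}$ and $\set{f = 0}$: if the first does not already force $f > 0$ on some $(x, y_1)$, then $\set{f = 0}$ on $(x, y_1)$ either contains an $(x, y_2)$ or leaves $f < 0$ on $(x, \min(y_1, y_2))$.

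\emph{$(2) \Rightarrow (3)$.} Let $X \subseteq \K$ be definable with empty interior. Applying (2) at every $x$ rules out $(x, y) \subseteq X$ (which would place $X$ in the interior), so $(x, y) \cap X = \emptyset$; the mirror form of (2), obtained by applying it to $\set{2x - z : z \in X}$ at the point $x$, yields the analogous one-sided statement on the left. Hence every point has a deleted neighbourhood disjoint from $X$, showing $X$ is discrete, and closed since every $x \notin X$ has a whole neighbourhood disjoint from $X$. For boundedness, \wloG assume $X$ is unbounded above and set $Y := \set{1/x : x \in X,\ x > 0}$. Then $Y$ is definable, has empty interior (the restriction of $x \mapsto 1/x$ to $(0, \infty)$ is a homeomorphism), and has $0$ as a right accumulation point; applying (2) to $Y$ at $0$ gives an immediate contradiction. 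Thus $X$ is bounded and hence pseudo-finite.

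\emph{$(3) \Rightarrow (2)$.} Fix a definable $X \subseteq \K$ and $x_0 \in \K$. I first check that $\bd X$ has empty interior. If some $(a, b) \subseteq \bd X$, then (3) applied to $X \cap (a, b)$ gives either an interval inside $X \cap (a, b) \subseteq \inter X$ (contradicting $\bd X \cap \inter X = \emptyset$) or a pseudo-finite $X \cap (a, b)$, in which case $X \cap (a, b)$ is closed in $\K$ and leaves an open sub-interval of $(a, b)$ outside $\cl X$ (contradicting $(a, b) \subseteq \cl X$). So by (3), $\bd X$ is pseudo-finite, in particular closed and discrete. Now suppose for contradiction that for no $y > x_0$ is $(x_0, y) \subseteq X$ or $(x_0, y) \cap X = \emptyset$; then every right interval $(x_0, y)$ contains both a point $a \in X$ and a point $b \in \K \setminus X$, and taking the supremum of $X$-points in the sub-interval joining them produces a point of $\bd X$ inside $(x_0, y)$. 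Hence $\bd X$ right-accumulates at $x_0$, forcing $x_0 \in \bd X$ by closedness and yet $x_0$ non-isolated, contradicting discreteness.

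\emph{Main obstacle.} The delicate step is the preliminary claim inside $(3) \Rightarrow (2)$ that $\bd X$ has empty interior under (3): the standard topological fact only says $\fr X$ has empty interior, not $\bd X$ (indeed $\bd \Raz = \Real$ in the ambient real line), and the dichotomy of (3) must be applied to $X$ restricted to candidate intervals, not to $\bd X$ itself.
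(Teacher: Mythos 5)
Your proof is correct. The equivalence $(1)\Leftrightarrow(2)$ and the implication $(2)\Rightarrow(3)$ follow essentially the same route as the paper: the characteristic-function trick in one direction, successive applications of (2) to $\set{f>0}$ and $\set{f=0}$ in the other, and deleted neighbourhoods for $(2)\Rightarrow(3)$. (There the paper simply declares ``w.l.o.g.\ $X$ is bounded''; your explicit inversion argument via $Y=\set{1/x: x\in X,\ x>0}$ is the more careful treatment of the boundedness issue.) The genuine divergence is in $(3)\Rightarrow(2)$. You work with $\bd X$ directly, which forces the preliminary claim that under (3) the set $\bd X$ has empty interior --- exactly the step you flag as delicate, since for an arbitrary definable set the boundary can be everything. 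Your case analysis on $X\cap(a,b)$ does close this gap correctly. The paper sidesteps the issue instead: it sets $Z:=\interior\bigl((x,+\infty)\cap X\bigr)$, so that $\bd Z$ has empty interior \emph{for free} (the boundary of an open set is its frontier), applies (3) once to get $\bd Z$ pseudo-finite and hence an interval $(x,y)$ missing it, and then, in the case $(x,y)\cap Z=\emptyset$, applies (3) a second time to $W:=X\cap(x,y)$, which now has empty interior by construction. The paper's route is shorter and needs no auxiliary claim; yours buys the stronger intermediate statement that under (3) every definable subset of $\K$ has pseudo-finite boundary, which is in effect the one-variable content of clause~(\ref{en:i-bd}) of Thm.~\ref{thm:imin} combined with \aminimality, and is worth recording in its own right.
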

See~\cite[2.11]{DMS} for other equivalent formulations of local o-minimality
(for definably complete structures, not necessarily expanding a field).
\begin{proof}
($1 \Rightarrow 2$).
Apply the definition of locally o-minimal to the characteristic function of~$X$.
($2 \Rightarrow 1$).
Let $f: \K \to \K$ be definable, and $x \in \K$.
Let $X := f^{-1}(0)$.
By hypothesis, there exists $y> x$ such that $(x,y)$ is a subset either of~$X$,
or of $\K \setminus X$.
In the first case, we are done; in the second, let $Y := f^{-1}\Pa{(-\infty,
  0)}$.
By decreasing $y$ if necessary, either $(x,y) \subseteq Y$,
or $(x,y) \subseteq \K \setminus (X \cap Y)$.

($2 \Rightarrow 3$).
Let $X \subset \K$ be definable with empty interior.
We must prove that $X$ is pseudo-finite.
\Wlog, $X$~is bounded.
Let $x \in \K$.
By condition~2, since $X$ has empty interior,
there exists $a < x < b$ such that both $(a,x)$ and $(x,b)$ are disjoint from~$X$.
Thus, $x$~cannot be an accumulation point for $X$, and therefore $X$ is
pseudo-finite.

($3 \Rightarrow 2$).
Let $X \subseteq \K$ be definable, $x \in \K$,
$Y := (x, +\infty) \cap X$, and $Z := \inter Y$.
Since $Z$ is open, $\bd(Z) := \cl Z \setminus \inter Z$ has empty interior,
and therefore it is pseudo-finite.
Thus, there exists $y > x$ such that $(x,y)$ is contained either in $Z$ or in
$\K \setminus Z$.
In the first case, we have the conclusion.
In the second case, we have that $W := X \cap (x,y)$ has empty interior.
Thus, $W$ is pseudo-finite, and therefore,
after possibly decreasing $y > x$, we have
$(x,y) \subseteq \K \setminus X$.
\end{proof}

\begin{proviso}
In the remainder of this section, $\K$ is a definably complete and locally
o-minimal expansion of a field.
\end{proviso}
Since locally o-minimal structures are \aminimal, Thm.~\ref{thm:a-minimal}
applies to them.
Moreover, as shown in Section~\ref{sec:proof-a}, every definable constructible
subset of $\K^n$ has a well-behaved dimension.
We will show later that every definable set is constructible.


\subsection{Examples}
Every o-minimal structure is locally o-minimal.
Since ``local o-minimality'' is a first-order property, an ultra-product of
locally o-minimal structures also locally o-minimal.
The above observation leads us to the following example.

\begin{example}
Let $M$ be a fixed o-minimal structure, in a language~$\Lang$.  Let $P$ be a
new unary predicate.  For every $n \in \Nat$, let $P_n := \set{1, 2, \dotsc,
  n} \subset M$.  Let $M_n := (M, P_n)$ be the $L(P)$-expansion of~$M$, where
$P$ is interpreted by~$P_n$.  Let $N := (M^*, P^*)$ be a non-principal
ultra-product of the~$M_n$.  Then, $N$ is locally o-minimal, but not o-minimal
(because $P^*$ is pseudo-finite, but not finite).
\end{example}

If $M$ is an expansion of~$\Real$, then the above structure can be
considered a restriction of an ultra-product of $(\Real, \Nat)$
(the expansion of $\Real$ with a predicate for the natural numbers). 
However, we can take for $M$ the counter-example of Hrushovski and Peterzil:
$M^*$ is an elementary extension of~$M$, and therefore also satisfies a
formula that cannot be true in any expansion of the reals, and the same holds for~$N$.

Since every locally o-minimal structure is also Baire (we are assuming that
everything is definably complete), then the above is also a non-trivial
example of definably complete and Baire structure.

Along the same lines, we can also consider the following example:
let $M$ be as before, and $M'$ be an elementary extension of~$M$, such that
$M$ is dense in $M'$ and different from~$M'$.
Define $P_n$ as before, let
$M_n := (M', M, P_n)$, and $N := (M'^*, M^*, P^*)$ be a non-principal
ultra-product of the~$M_n$.
Each $M_n$ has o-minimal open core,
hence it is \aminimal; thus, $N$ is \aminimal (and hence Baire), 
but does not have o-minimal open core.
Again, if $M$ is the counter-example of Hrushovski and Peterzil, we see that
$N$ is not the restriction of an elementary extension of an expansion
of~$\Real$. 

Regarding 
structures with o-minimal open core, \cite[\S8]{DMS} ask the following
question: 
Suppose that $\K$ is Archimedean and has o-minimal open core;
does $\Th(\K)$ have a model over $\Real$ that is unique up to isomorphism?
While we do not have an answer regarding the existence, it is easy to see that
uniqueness might fail.
In fact, consider the case when $\K$ is given by the dense pair  $(\Rbar,
\Ralg)$.
Let $S$ be an uncountable real-closed subfield of~$\Real$, which is not all
of~$\Real$.
Then, $(\Rbar, S)$ is elementarily equivalent to~$\K$, but it is not
isomorphic to~$\K$.
A more interesting question is: with the above hypothesis, does $\K$ have an
elementary extension expanding~$\Rbar$?
Again, uniqueness cannot be expected.
For instance, let $U$ be a real-closed subfield of~$\Real$, such that
$U \cap S = \Ralg$,  $\Ralg \neq U \neq \Real$, and $U$ and $S$ are free 
over~$\Ralg$;
then, both $(\Rbar, S)$ and $(\Rbar, \Ralg)$ are elementary
extensions of $(U, \Ralg)$ (see \cite[Corollary~2.7]{vdd-dense}).

\subsection{The monotonicity theorem}
Let $A \subseteq (a,b)$ be pseudo-finite.
For every $x \in A$, we denote by $s(x)$ the smallest element of $A$ strictly
greater than~$x$, if $x$ is not the maximum of~$A$, and $b$ otherwise.
Similarly, we denote by $p(x)$ the greatest element of $A$ strictly smaller
than~$x$, or $a$ if $x$ is the minimum of~$A$.
We denote by $s(a)$ the minimum of~$A$.

\begin{thm}[Monotonicity theorem]
Let $f : (a,b) \to \K$ be a definable function .  Then, there exists a pseudo-finite set
$A \subseteq (a,b)$, such that on each sub-interval $\Pa{x, s(x)}$, with $x
\in A \cup \set a$, the function is either constant, or strictly monotone and continuous.
\end{thm}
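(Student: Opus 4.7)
The plan is to adapt the classical o-minimal monotonicity proof, using local o-minimality (Lemma~\ref{lem:lmin}) to extract a pseudo-finite exceptional set and definable completeness to patch local behaviour on the complementary intervals.

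\textbf{Step 1: Germ data at each point.} For each $x \in (a,b)$, the three definable sets $\{y > x : f(y) = f(x)\}$, $\{y > x : f(y) < f(x)\}$, $\{y > x : f(y) > f(x)\}$ partition $(x,b)$; by Lemma~\ref{lem:lmin} applied at $x$, exactly one of them contains an initial interval $(x, x+\varepsilon)$, yielding a definable ``right sign germ'' $\sigma^+(x) \in \{=, <, >\}$. A further definable refinement records whether $f$ is actually constant, strictly increasing, or strictly decreasing on some $(x, x+\varepsilon)$; call the resulting definable germ $\tau^+(x) \in \{C, I, D, ?\}$. Define $\tau^-(x)$ symmetrically from left neighbourhoods.

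\textbf{Step 2: The exceptional set is pseudo-finite.} Let $A \subseteq (a,b)$ be the definable set of $x$ where either $\tau^+(x) = ?$ or $\tau^-(x) = ?$, or $(\tau^-(x), \tau^+(x))$ is not one of the coherent pairs $(C,C)$, $(I,I)$, $(D,D)$. By Lemma~\ref{lem:lmin} it is enough to show that $A$ has empty interior. I carry out a case analysis ruling out each bad configuration on any subinterval $(c,d)$: a constancy germ spreads (if $\tau^+ \equiv C$ on $(c,d)$, then $f$ is locally constant on a definably connected set, hence constant, forcing $\tau^- \equiv C$ too); the pairs $(I,D)$ and $(D,I)$ would make every point of $(c,d)$ a strict local extremum, which is excluded by extremizing $f$ over a closed sub-interval via definable completeness; the germ value $?$ is excluded by a second application of Lemma~\ref{lem:lmin} to the definable formula expressing ``$f$ is not monotone on $(x, x+\varepsilon)$''.

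\textbf{Step 3: Patching on the complementary intervals.} On each interval $(x, s(x))$ with $x \in A \cup \{a\}$, the pair $(\tau^-, \tau^+)$ is constantly $(C,C)$, $(I,I)$, or $(D,D)$. A definable-completeness supremum argument upgrades the local property to the whole interval: for instance, if $(\tau^-, \tau^+) \equiv (I,I)$, set $w^* := \sup\{w \in (x, s(x)) : f \text{ is strictly increasing on } (x, w)\}$ and read off the germs at $w^*$ to conclude $w^* = s(x)$. Continuity of a strictly monotone definable $f$ on an interval then follows from definable completeness, since a jump would break definable connectedness of the image.

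\textbf{Main obstacle.} The principal difficulty lies in Step~2, and within it in the exclusion of the pairs $(I,D)$ and $(D,I)$ on any subinterval: one genuinely needs definable completeness to extract extremal values of $f$ on sub-intervals and turn ``strict local extremum at every point'' into a contradiction. The constancy and $?$-germ cases are easier, but still require careful unwinding of local o-minimality at the right granularity.
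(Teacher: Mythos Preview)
Your overall strategy---define a bad set, show it has empty interior hence is pseudo-finite, then patch on the complementary intervals---matches the paper's, which follows van den Dries' three lemmata (constant-or-injective; injective $\Rightarrow$ monotone on a subinterval; monotone $\Rightarrow$ continuous on a subinterval), with the hard step inside the second lemma handled by Lemma~\ref{lem:min}. But you have located the difficulty in the wrong place. In your $\tau$-framework the configurations $(I,D)$ and $(D,I)$ are in fact trivial: if $\tau^-\equiv I$ and $\tau^+\equiv D$ throughout $(c,d)$, pick $x_0<x_1<x_0+\varepsilon_{x_0}$; then $f$ is strictly decreasing on $(x_0,x_0+\varepsilon_{x_0})$ and strictly increasing on $(x_1-\varepsilon'_{x_1},x_1)$, and these two open intervals overlap immediately to the left of $x_1$---an instant contradiction. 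No extremization is needed, and the extremization you propose would not work anyway: $f$ is not yet known to be continuous, so definable completeness alone does not force $\sup f$ to be attained on a closed subinterval.

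The genuine obstruction is your ``$?$'' case, which you dismiss as easier via ``a second application of Lemma~\ref{lem:lmin}''. Local o-minimality only tells you that $\{x:\tau^+(x)={?}\}$ is pseudo-finite \emph{once} you know it has empty interior; it gives no mechanism for proving empty interior. Showing that every interval contains a subinterval on which $f$ is constant or strictly monotone is precisely the content of the paper's first two lemmata, and the delicate step there---ruling out that every point of an interval satisfies $\Phi_{++}$ (strict local minimum in the $\sigma$-sense)---is what requires Lemma~\ref{lem:min}: for each fixed $r>0$, any two $r$-strict local minima lie at distance $\geq r$, so that set is discrete and the union over $r$ is meager. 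Your Step~3 has a related gap: $(\tau^-,\tau^+)\equiv(I,I)$ on $(x,s(x))$ does not by itself force $f$ to increase \emph{through} each point (take $f(t)=t$ for $t\neq 0$, $f(0)$ large), so the supremum argument does not close without also folding discontinuities into the bad set, as the paper's organization via the third lemma does.
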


The proof proceeds as in \cite[Thm.~3.1.2]{vdd}:
we derive it from the three lemmata below.
Let $I \subseteq \K$ be an open interval, and $f: I \to \K$ be definable.

\begin{lemma}
There is a subinterval of $I$ on which $f$ is constant or injective.
\end{lemma}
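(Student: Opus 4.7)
The plan is to follow the classical o-minimal argument of \cite[Thm.~3.1.2]{vdd}, replacing o-minimality by the combination of local o-minimality and the Baire property (which holds since every locally o-minimal structure is \aminimal, hence Baire, by Thm.~\ref{thm:a-minimal}).

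I split into two cases based on the fibers of~$f$. If some fiber $f^{-1}(y)$ has non-empty interior, then $f$ is constant on a subinterval contained in that fiber, and we are done. Otherwise, by Lemma~\ref{lem:lmin}(3), every fiber $f^{-1}(y)$ is pseudo-finite, hence discrete and closed; in particular no $x \in I$ is an accumulation point of $f^{-1}(f(x))$, so for each $x$ there exists $\delta > 0$ with $(x - \delta, x + \delta) \cap f^{-1}(f(x)) = \set{x}$.

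Next, for each $\delta > 0$, I define
\[
G_\delta := \set{x \in I : \forall y \in I, \ 0 < \abs{y - x} < \delta \Rightarrow f(y) \neq f(x)}.
\]
This is a definable family, increasing as $\delta$ decreases, and the preceding observation gives $I = \bigcup_{\delta > 0} G_\delta$. If every $G_\delta$ were nowhere dense, then $I$ would be meager; but by \aminimality (Thm.~\ref{thm:a-minimal}) every meager definable subset is nowhere dense, contradicting that $I$ is an open interval. Hence some $G_{\delta_0}$ is not nowhere dense; and Lemma~\ref{lem:lmin}(3) then forces $G_{\delta_0}$ to contain an open subinterval $(a, b) \subseteq I$, since any definable subset of $\K$ with empty interior is pseudo-finite and hence nowhere dense.

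Finally, I choose any subinterval $(a', b') \subseteq (a, b)$ of length less than~$\delta_0$. For any distinct $x, y \in (a', b')$ we have $0 < \abs{y - x} < \delta_0$; applying the defining condition of $G_{\delta_0}$ to $x \in G_{\delta_0} \supseteq (a', b')$ yields $f(x) \neq f(y)$, so $f$ is injective on~$(a', b')$. The main technical point is the production of a subinterval inside $G_{\delta_0}$: it genuinely requires both ingredients available to us, namely the Baire-type consequence of \aminimality (to rule out the increasing union being meager) and local o-minimality (to upgrade ``not nowhere dense'' to ``contains an open interval'').
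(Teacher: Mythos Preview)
Your proof is correct, but it takes a genuinely different route from the paper. The paper simply follows van den Dries's original argument \cite[\S3.1.5, Lemma~1]{vdd} with ``finite'' replaced by ``pseudo-finite'': once every fiber $f^{-1}(y)$ is pseudo-finite, the image $f(I)$ cannot be pseudo-finite (else $I$ would be a pseudo-finite union of pseudo-finite sets, hence pseudo-finite by the results of \S\ref{sec:pf}), so $f(I)$ contains an interval~$J$; the section $g(y) := \min f^{-1}(y)$ is then a definable injection $J \to I$, its image $g(J)$ is again not pseudo-finite and so contains an interval~$I'$, and $f$ is injective on~$I'$ because $f \circ g = \id_J$. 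Your argument instead runs a Baire-category sweep over the uniformly definable sets~$G_\delta$, invoking \aminimality (Thm.~\ref{thm:a-minimal}) to force some $G_{\delta_0}$ to have interior. Both proofs are short and valid here; the paper's version stays closer to the classical o-minimal template and uses only the closure properties of pseudo-finite sets, while yours trades the auxiliary function~$g$ for the Baire machinery already established for \aminimal structures.
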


\begin{lemma}
If $f$ is injective, then $f$ is strictly monotone on a subinterval of~$I$.
\end{lemma}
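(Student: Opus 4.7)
The plan is to adapt the classical argument of~\cite[Thm.~3.1.2]{vdd} to the locally o-minimal setting. For each $x \in I$, apply Lemma~\ref{lem:lmin}(2) and its left-sided analogue to the definable set $\set{y \in I : f(y) > f(x)}$: by injectivity the signs must be nonzero, and we obtain $\sigma^-(x), \sigma^+(x) \in \set{+, -}$ giving the sign of $f - f(x)$ immediately to the left and right of~$x$. This yields a partition $I = A_{++} \sqcup A_{+-} \sqcup A_{-+} \sqcup A_{--}$ into four definable sets. Since finite unions of pseudo-finite sets are pseudo-finite and $I$ is an open interval (hence not pseudo-finite), at least one $A_{\sigma^- \sigma^+}$ fails to be pseudo-finite, so by Lemma~\ref{lem:lmin}(3) has non-empty interior; call $J$ an open subinterval of $I$ contained in such an~$A_{\sigma^-\sigma^+}$.

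The main case is $J \subseteq A_{+-}$ (the $A_{-+}$ case is symmetric), and here I claim that $f$ is strictly increasing on all of~$J$. Take $a < b$ in $J$; by injectivity either $f(a) < f(b)$, as desired, or $f(a) > f(b)$. Supposing the latter for contradiction, set
\[
c := \sup \set{x \in [a,b] : f(x) \geq f(a)},
\]
which exists by definable completeness. The type of $a$ forces $c > a$, and if $c = b$ one gets directly $f(b) \geq f(a)$, a contradiction; so $a < c < b$ lies in~$J$. If $f(c) \geq f(a)$, the right-type of~$c$ supplies $\delta > 0$ with $f > f(c) \geq f(a)$ on $(c, c+\delta)$, contradicting that $c$ is a supremum; if instead $f(c) < f(a)$, the left-type of~$c$ supplies $\delta > 0$ with $f < f(c) < f(a)$ on $(c - \delta, c)$, forcing $\sup \leq c - \delta$, again a contradiction.

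The main obstacle is ruling out the local-extremum cases $J \subseteq A_{++}$ and (symmetrically) $J \subseteq A_{--}$, where each point of~$J$ would be a strict local minimum (\resp maximum) of~$f$. Assuming $J \subseteq A_{++}$, define
\[
\phi(x) := \sup \set{r > 0 : f(y) > f(x) \text{ for every } y \in I \text{ with } 0 < \abs{y - x} < r},
\]
a definable function that is strictly positive on~$J$. For $t > 0$ set $J_t := \set{x \in J : \phi(x) > 1/t}$; this is a definable increasing family with union~$J$, which is not pseudo-finite, so by Thm.~\ref{thm:a-minimal}(\ref{en:a-uf}) some $J_{t_0}$ is not pseudo-finite, hence by Lemma~\ref{lem:lmin}(3) contains a non-empty open subinterval~$J'$, which we may shrink so that its length is strictly less than $1/t_0$. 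Then for any two distinct $x_1, x_2 \in J'$ one has $\abs{x_1 - x_2} < 1/t_0 < \phi(x_i)$ for $i = 1, 2$, yielding both $f(x_2) > f(x_1)$ and $f(x_1) > f(x_2)$, the desired contradiction.
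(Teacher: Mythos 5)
Your proof is correct and follows essentially the paper's route: van den Dries's four-way sign decomposition, with the strict-local-extremum case ruled out by the same ``two nearby strict local minima of comparable radius contradict each other'' argument that underlies Lemma~\ref{lem:min} (the paper simply cites that lemma, whereas you reprove its content in place using Thm.~\ref{thm:a-minimal}(\ref{en:a-uf})). The only blemish is a labeling slip: with your convention $A_{\sigma^-\sigma^+}$, the case in which your supremum argument (which uses $f>f(c)$ to the right and $f<f(c)$ to the left) proves $f$ strictly \emph{increasing} is $A_{-+}$, not $A_{+-}$.
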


\begin{lemma}
If $f$ is strictly monotone, then $f$ is continuous on a subinterval of~$I$.
\end{lemma}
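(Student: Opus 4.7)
My plan is to show that the set $D \subseteq I$ of discontinuity points of $f$ has empty interior in~$I$; once this is established, Lemma~\ref{lem:lmin}(3) forces $D$ to be pseudo-finite, so $I \setminus D$ is open and dense in~$I$ and therefore contains an open subinterval on which $f$ is continuous. Assume without loss of generality that $f$ is strictly increasing. By definable completeness, the one-sided limits $L(x) := \sup\{f(y) : y \in I,\ y < x\}$ and $R(x) := \inf\{f(y) : y \in I,\ y > x\}$ exist; monotonicity squeezes them into $\K$ (neither is $\pm\infty$) with $L(x) \leq f(x) \leq R(x)$, so that $D = \{x \in I : L(x) < R(x)\}$ is definable.

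To prove $D$ has empty interior, I argue by contradiction: suppose $(c, d) \subseteq D$. The definable jump function $g(x) := R(x) - L(x)$ is strictly positive on $(c, d)$, and $(c, d) = \bigcup_{t > 0} J_t$ with $J_t := \{x \in (c, d) : g(x) > 1/t\}$ is a definable increasing union. Since $\K$ is \aminimal, every definable meager subset of $\K$ is pseudo-finite (Thm.~\ref{thm:a-minimal}); in particular $(c, d)$, having non-empty interior and thus not being pseudo-finite, is not meager. Hence some $J_{t_0}$ fails to be nowhere dense, and Lemma~\ref{lem:lmin}(3) then forces $J_{t_0}$ to have non-empty interior. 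Setting $\epsilon := 1/t_0$ and choosing an open subinterval $(c_1, c_2) \subseteq J_{t_0}$ gives $g > \epsilon$ uniformly on $(c_1, c_2)$.

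The endgame is a pseudo-finite pigeonhole. Define $h : (c_1, c_2) \to \K$ by $h(x) := R(x)$. For $x_1 < x_2$ in $(c_1, c_2)$, picking any $z \in (x_1, x_2)$ yields $R(x_1) \leq f(z) \leq L(x_2)$, hence $h(x_2) - h(x_1) \geq R(x_2) - L(x_2) > \epsilon$. Thus $h$ is injective and its image $A := h((c_1, c_2)) \subseteq [f(c_1), f(c_2)]$ is bounded with $\delta(A) \geq \epsilon$, hence pseudo-finite by the characterisation in \S\ref{sec:pf}. But then $(c_1, c_2) = \bigcup_{a \in A} h^{-1}(\{a\})$ is a pseudo-finite union of singletons, so Lemma~\ref{lem:finite-union} applied to the additive definable property ``pseudo-finite'' forces $(c_1, c_2)$ itself to be pseudo-finite, contradicting that $(c_1, c_2)$ contains an interior point.

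The main obstacle is extracting the uniform lower bound~$\epsilon$ on the jumps: a priori $g$ could decay to zero along $(c, d)$, and it is precisely the Baire-type consequence of \aminimality that prevents this. Once the uniform $\epsilon$ is in hand, the final contradiction is the standard incompatibility between a definable injection of an open interval and a pseudo-finite target.
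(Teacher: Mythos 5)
Your proof is correct, but it takes a genuinely different route from the one the paper intends. The paper's proof of this lemma is simply to import van den Dries's argument \cite[3.1.5, Lemma~3]{vdd} with ``finite'' replaced by ``pseudo-finite'': since $f$ is injective and $I$ is not pseudo-finite, $f(I)$ is not pseudo-finite (otherwise $I = f^{-1}\Pa{f(I)}$ would be the definable image of a pseudo-finite set, hence pseudo-finite by the results of \S\ref{sec:pf}), so by Lemma~\ref{lem:lmin}(3) it contains an open interval $J$; by strict monotonicity $f^{-1}(J)$ is order-convex, hence an interval with non-empty interior, and a strictly monotone map of an interval \emph{onto} an interval has no jumps and is therefore continuous. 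That argument is shorter and uses only the dichotomy of Lemma~\ref{lem:lmin} together with the closure of pseudo-finite sets under definable maps. Your argument instead adapts the classical ``a monotone function has only countably many jumps'' proof: you show directly that the discontinuity set $D$ is pseudo-finite, and the one genuinely delicate step --- extracting a subinterval on which the jump function $g = R - L$ is bounded below by a uniform $\varepsilon$ --- correctly leans on Theorem~\ref{thm:a-minimal} (definable meager subsets of $\K$ are pseudo-finite, so the definable increasing union $(c,d) = \bigcup_t J_t$ cannot consist entirely of nowhere dense pieces), after which the pigeonhole via $\delta\Pa{h((c_1,c_2))} \geq \varepsilon$ and Lemma~\ref{lem:finite-union} is sound. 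The payoff of your route is a slightly stronger conclusion (all of $D$ is pseudo-finite, not merely that some subinterval avoids it), at the price of invoking the heavier Baire-type consequences of \aminimality; for the purposes of the Monotonicity Theorem the two are interchangeable, since the three lemmata are only used to show that the exceptional set has empty interior.
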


The three lemmata imply the monotonicity theorem as follows.
Let
\[\begin{aligned}
X := \bigl\{&
 x \in (a,b): \text{ on some subinterval of } (a,b) \text{ containing } x, \\
&f \text{ is either constant, or strictly monotone and continuous }\bigr\}.
\end{aligned}\]
By the three lemmata, $A := (a,b) \setminus X$ has empty interior, and
therefore it is pseudo-finite.
It is easy to see that $A$ satisfies the conclusion of the theorem.

The proof of the first lemma is as in~\cite[\P3.1.5, Lemma 1]{vdd},
substituting ``finite'' with ``pseudo-finite'', and ``infinite'' with ``non
pseudo-finite'', and using the results in Section~\ref{sec:pf}.

The proof of the second lemma is as in~\cite[\P3.1.5, Lemma 2]{vdd}.
The ``difficult case'', that is proving that the set of points in $\K$ satisfying the formula
\[
\Phi_{++}(x) := x \text{ is a strict local maximum for } f
\]
has empty interior, is immediate from Lemma~\ref{lem:min}.

The proof of the third lemma in~\cite[\P3.1.5, Lemma 3]{vdd} works also here,
with the same modifications as for the first lemma.

We also have the following consequence:
\begin{corollary}
Let $f: (a,b)  \to \K$ be definable, and $c \in (a,b)$.
The limits $\lim_{x \to c^-} f(x)$ and $\lim_{x \to c^+} f(x)$ exist in~$\Kinf$
Also the limits $\lim_{x \to a^+} f(x)$ and $\lim_{x \to b^-} f(x)$ exist
in~$\Kinf$. 
\end{corollary}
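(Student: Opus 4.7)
The plan is to deduce the corollary directly from the monotonicity theorem. Let $A \subseteq (a,b)$ be the pseudo-finite set provided by the theorem, so that on each component of $(a,b) \setminus A$ the function $f$ is either constant, or strictly monotone and continuous.

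First I treat a one-sided limit at an interior point $c \in (a,b)$, say $\lim_{x \to c^+} f(x)$. Since $A \cup \set c$ is pseudo-finite, either $c$ is the maximum of $A \cup \set c$ (in which case set $d := b$), or there is an immediate successor $d := s(c)$ of $c$ in $A \cup \set c$ relative to $(a,b]$; in either case, no element of $A$ lies in $(c,d)$, so $(c,d)$ is contained in a single component of $(a,b) \setminus A$. Thus $f \rest{(c,d)}$ is constant, or strictly monotone and continuous. In the constant case the right-hand limit is the common value. In the monotone case, say strictly increasing, pick any $d' \in (c,d)$ and let $\ell := \inf f((c,d'))$, taken in $\K \sqcup \set{-\infty}$; this infimum exists by definable completeness of $\K$, and by continuity plus monotonicity a standard $\varepsilon$-argument shows $\lim_{x \to c^+} f(x) = \ell \in \Kinf$. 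The decreasing case is symmetric, giving a supremum in $\K \sqcup \set{+\infty}$.

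The left-hand limit at $c$ is handled identically, replacing ``immediate successor'' by ``immediate predecessor'' in $A \cup \set c$. For the one-sided limits at the endpoints $a$ and $b$, the same argument applies: take the minimum (\resp maximum) of $A$ if $A \neq \emptyset$, or else work on all of $(a,b)$, and observe that the relevant tail interval $(a, s(a))$ or $(p(b), b)$ is a component of $(a,b) \setminus A$, on which $f$ is constant or strictly monotone and continuous; definable completeness then supplies the limit in $\Kinf$.

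The only non-routine point is the derivation of the limit of a strictly monotone continuous definable function on an interval, but this is immediate from definable completeness combined with continuity, so there is no real obstacle. The whole proof is essentially a bookkeeping exercise once the monotonicity theorem is in hand.
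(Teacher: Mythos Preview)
Your proof is correct and takes essentially the same approach as the paper, which states the corollary without proof as an immediate consequence of the monotonicity theorem. Your write-up simply makes explicit the routine argument the paper leaves to the reader: isolate a one-sided interval on which $f$ is constant or strictly monotone continuous, then invoke definable completeness to extract the limit in~$\Kinf$.
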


\subsection{Constructibility and partition into cells }
In this sub-section we will prove the following theorem.
\begin{thm}\label{thm:lomin-constructible}
If $\K$ is locally o-minimal, then every definable subset of $\K^n$ is
constructible.
Therefore, $\K$ coincides with its open core.
\end{thm}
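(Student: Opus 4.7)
The plan is to proceed by induction on $n$. The base case $n=1$ follows directly from local o-minimality, while the inductive step is a cell-decomposition argument in the spirit of the section heading, adapted to pseudo-finite cell counts.

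For the base case, let $X \subseteq \K$ be definable. If $y \in \interior(\bd X)$, then some interval $(y, z) \subseteq \bd X$; applying Lemma~\ref{lem:lmin}(2) at $y$, some sub-interval $(y, z')$ lies either in $X$ (and hence in $\inter X$, contradicting $(y,z')\subseteq \bd X$) or in $\K \setminus X$ (contradicting $(y,z') \subseteq \cl X$). Thus $\bd X$ has empty interior, and by Lemma~\ref{lem:lmin}(3) it is pseudo-finite, in particular closed. Writing $X = \inter X \cup (X \cap \bd X)$ exhibits $X$ as the union of a definable open set and a closed set, so $X$ is locally closed and hence constructible.

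For the inductive step, assume every definable subset of $\K^n$ is constructible; let $X \subseteq \K^{n+1}$ be definable. Since locally o-minimal structures are \aminimal by Thm.~\ref{thm:a-minimal}, it suffices by Corollary~\ref{cor:finite-constructible} to exhibit $X$ as a pseudo-finite union of constructible sets. Let $\pi : \K^{n+1} \to \K^n$ be the projection onto the first $n$ coordinates; by the base case each fiber $X_a \subseteq \K$ is constructible and $\bd(X_a)$ is pseudo-finite. The strategy is to enumerate the endpoints of the maximal open intervals of $X_a$ as definable partial functions of $a$, apply the monotonicity theorem to each, and use the inductive hypothesis to partition $\K^n$ into constructible pieces on which the endpoint functions are continuous and monotone. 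Over each such piece, $X$ assumes a standard cell shape (graph of a continuous function, or band between two such graphs), and is therefore constructible; assembling these cells via Lemma~\ref{lem:amin-U-Fs} and Corollary~\ref{cor:finite-constructible} yields that $X$ itself is constructible. The concluding assertion that $\K$ coincides with its open core is then immediate, since every constructible set is a finite Boolean combination of definable open sets.

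The main obstacle is that, unlike in the o-minimal setting, there is no uniform finite bound on the number of maximal open intervals comprising $X_a$; one therefore cannot induce on the fiber cardinality as in the usual o-minimal cell-decomposition proof. Instead the whole argument must be carried out uniformly in $a$ with pseudo-finite rather than finite cell counts, leveraging the \aminimal combinatorics of $\Fs$-families (Lemma~\ref{lem:amin-U-Fs}) to glue local cellular data into a globally constructible structure.
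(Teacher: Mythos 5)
Your base case is essentially the paper's, with one quibble: a union of an open set and a closed set is \emph{not} in general locally closed (take the complement of a line in the plane together with one point of that line); what saves you here is that $\bd X$ is pseudo-finite, hence closed and discrete, so Lemma~\ref{lem:discrete-boundary} applies --- and in any case ``constructible'' follows at once from the decomposition. The genuine gap is in the inductive step. You correctly name the obstacle (no uniform finite bound on the number of maximal open intervals of $X_a$), but the remedy you propose --- enumerate the endpoints of the maximal intervals of $X_a$ as definable partial functions of $a$ and apply the monotonicity theorem to each --- does not survive that obstacle. When the fibers have pseudo-finitely (possibly infinitely) many endpoints there is no fixed finite list $f_1 < \dots < f_k$ of endpoint functions: the index set varies with $a$, the ``$i$-th endpoint'' is unstable as $a$ moves (intervals merge, split, appear and disappear), and the monotonicity theorem is a one-variable statement that cannot be applied to functions on $\K^n$. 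Lemma~\ref{lem:amin-U-Fs} and Corollary~\ref{cor:finite-constructible} let you take increasing or pseudo-finite unions of sets already known to be constructible, but they provide no mechanism for producing the required definable pseudo-finite family of ``cells'' in the first place; that is exactly where the work lies, and the sketch does not do it.

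The paper sidesteps enumeration entirely. It proves two statements simultaneously by induction on $n$: $I_n$ (every definable subset of $\K^n$ is constructible) and $II_n$ (every definable function on a definable open subset of $\K^n$ is continuous on some non-empty open box), and inside $I_{n+1}$ it inducts on the full dimension $(d,k)$ of $X$, projecting onto a $d$-dimensional coordinate space rather than always onto $\K^n$. The key device is a \emph{local} normality condition: a point is normal if on some box around it $X$ is either empty or the graph of a \emph{single} continuous function. Normality is an open condition, so the set $Z$ of non-normal points is closed; the projection $\pi(Z)$ is shown to be nowhere dense by selecting $\beta(a) := \min Z_a$ and invoking $II_n$, and the part of $X$ lying over $\pi(Z)$ then has strictly smaller full dimension and is absorbed by the induction. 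No global endpoint functions and no uniform parametrization of the fibers are ever needed. To rescue your approach you would either have to reproduce this local argument (including carrying the generic-continuity statement $II_n$ along in the induction, which your proposal omits) or find a genuinely uniform definable parametrization of a fiberwise-varying pseudo-finite family of boundary functions; the latter is not available from the lemmas you cite.
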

Let $X \subseteq \K^n$ be definable.

If $n = 1$, then $\fr(X)$ has empty interior (by Remark~\ref{rem:boundary})
and therefore is pseudo-finite.
The same is true for $\K \setminus X$, and therefore, (also by
Remark~\ref{rem:boundary}) $\bd(X)$ is pseudo-finite.
Thus, by Lemma~\ref{lem:discrete-boundary}, $X$ is locally closed, and,
\emph{a fortiori}, constructible.

If $n = 2$, we consider first the case when, for every $a \in \K$, $X_a$ is
pseudo-finite.
\Wlog, we can assume that $X \subseteq (0,1) \times (0,1)$.
Following~\cite[3.1.7]{vdd}, we call $(a,b) \in \K^2$ \emph{normal} if there
is an open box $I \times J$ around $(a,b)$ such that
\begin{itemize}
\item either $(I \times J ) \cap X = \emptyset$ (and hence $(a,b) \notin X$,
\item or $(a,b) \in X$ and $(I \times J) \cap X = \Gamma(f)$ for some
continuous function $f: I \to J$.%
\footnote{Since we assumed that $X \subseteq (0,1) \times (0,1)$, we do not need
to worry about the behaviour at infinity.}
\end{itemize}

Note that the set $\Normal := \set{(a,b) \in \K^2: (a,b) \text{ is normal}}$ is
definable.
Note also that $Z := \K^2 \setminus \Normal$ is contained in $[0,1] \times [0,1]$.

We call $\bad := \Pi^2_1(Z) \subseteq \K$ the set of ``bad'' points, and $\good
:= \K \setminus \bad$ the set of ``good'' points.

\begin{claim}
$Z$ is \dcompact.
\end{claim}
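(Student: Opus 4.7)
The plan is to prove that $\Normal$ is open; combined with the already noted inclusion $Z \subseteq [0,1]\times[0,1]$ (which gives boundedness), this yields that $Z$ is closed and bounded, hence \dcompact. So I fix $(a,b) \in \Normal$, pick a witness open box $I \times J \ni (a,b)$ as in the definition, and show that every point of $I \times J$ is itself normal, splitting according to which disjunct of the definition is realised by $(a,b)$.

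If the witness satisfies $(I \times J) \cap X = \emptyset$, then for any $(a',b') \in I \times J$ the very same box (or any open sub-box inside it around $(a',b')$) witnesses normality of $(a',b')$ via the first disjunct. So the interesting case is when $(a,b) \in X$ and $(I \times J) \cap X = \Gamma(f)$ for some continuous $f : I \to J$. Fix $(a',b') \in I \times J$ and distinguish whether $(a',b') \in X$ or not.

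If $(a',b') \in X$, then $(a',b') \in \Gamma(f)$ and so $b' = f(a')$. I pick open intervals $I' \ni a'$ and $J' \ni b'$ with $\cl{I'}\subseteq I$ and $\cl{J'}\subseteq J$, and use continuity of $f$ at $a'$ to shrink $I'$ so that $f(I') \subseteq J'$; then $(I' \times J') \cap X = \Gamma(f\rest I')$, which witnesses normality of $(a',b')$ via the second disjunct. If instead $(a',b') \notin X$, then $b' \neq f(a')$; I pick an open $J' \ni b'$ with $\cl{J'} \subseteq J$ and $f(a') \notin \cl{J'}$, and then by continuity of $f$ at $a'$ I choose an open $I' \ni a'$ with $I' \subseteq I$ and $f(I') \cap J' = \emptyset$. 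Then $(I' \times J') \cap X \subseteq \Gamma(f) \cap (I' \times J') = \emptyset$, so the first disjunct witnesses normality.

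The one step that needs attention is the last sub-case, where I must ensure that the product of the two separately chosen intervals really avoids the graph; but this is exactly what $f(I') \cap J' = \emptyset$ says, and that in turn is available because $f$ is continuous in the genuine topological sense (as will follow from the monotonicity theorem already invoked in the preceding material). No other part of the argument is delicate: it is essentially the usual openness of the ``locally a continuous graph or locally empty'' condition, rephrased in the definably complete, locally o-minimal setting.
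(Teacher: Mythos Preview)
Your proof is correct and follows the same approach as the paper: show that $\Normal$ is open (so $Z$ is closed) and combine with boundedness. The paper dispatches this in one line (``by definition, its complement $Y$ is open''), whereas you unpack the routine verification in detail; your case split is the natural way to do so. One small remark: your aside about continuity of $f$ following from the monotonicity theorem is unnecessary, since continuity of $f$ is already part of the definition of a normal point.
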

In fact, it is bounded, and, by definition, its complement $Y$ is open.

Hence, $\bad$ is \dcompact.
\begin{claim}
$\bad$ is pseudo-finite.
\end{claim}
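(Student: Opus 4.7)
The plan is to show that $\bad$ is d-compact, reduce ``pseudo-finite'' to ``empty interior'' via local o-minimality, and then rule out a non-empty interior by a continuous-selection argument combined with a gap estimate.

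For the setup, observe that $\Normal$ is open directly from its definition (the defining box works simultaneously for every point in a small enough neighborhood), so $Z = \K^2 \setminus \Normal$ is closed; combined with $Z \subseteq [0,1]^2$ this makes $Z$ d-compact. A standard tube-style argument---for $a_0 \notin \bad$, the slice $\set{a_0} \times [0,1]$ is d-compact disjoint from $Z$, which yields an open box around $a_0$ disjoint from $Z$---then shows $\bad = \Pi^2_1(Z)$ is closed and bounded. By Lemma~\ref{lem:lmin}(3), it now suffices to prove $\bad$ has empty interior.

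Suppose toward contradiction that $\bad$ contains an open interval $J$. Set $\phi(a) := \min Z_a$, which is well-defined since each $Z_a$ is a non-empty d-compact subset of $[0,1]$; apply the monotonicity theorem to shrink $J$ so that $\phi$ is continuous on $J$. Any point of $Z$ lies in $\cl X$ (a point outside $\cl X$ has an open neighborhood disjoint from $X$, hence is normal), so the graph $\Gamma := \set{(a, \phi(a)) : a \in J}$ sits inside $Z \subseteq \cl X$. Since $X_a$ is pseudo-finite and thus closed, $\cll(X_a) = X_a$, so the definable set $\set{a \in J : \phi(a) \in X_a}$ partitions $J$; by local o-minimality and a further shrinking of $J$, assume either (A) $\phi(a) \in X_a$ throughout $J$, or (B) $\phi(a) \notin X_a$ throughout.

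In Case (A), define $\epsilon(a) := \inf \set{\abs{b - \phi(a)} : b \in X_a,\ b \neq \phi(a)}$, using the value $+\infty$ when the set is empty; pseudo-finiteness of $X_a$ forces $\epsilon(a) > 0$. Apply monotonicity to $\min(1,\epsilon)$ and restrict to a closed subinterval $[\gamma, \delta] \subset J$ where $\epsilon \geq \epsilon_0 > 0$. For $a^* \in (\gamma, \delta)$, continuity of $\phi$ lets us choose an open box $I \times K \ni (a^*, \phi(a^*))$ so small that $\abs{\phi(a) - \phi(a^*)} < \epsilon_0/3$ for $a \in I$ and $K \subset (\phi(a^*) - \epsilon_0/3, \phi(a^*) + \epsilon_0/3)$; then $(I \times K) \cap X$ equals the graph of the continuous function $\phi|_I$, witnessing normality of $(a^*, \phi(a^*))$ and contradicting $\Gamma \subseteq Z$. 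Case (B) is dual: set $\eta(a) := \sup \set{r > 0 : X_a \cap (\phi(a) - r, \phi(a) + r) = \emptyset}$, positive because $\phi(a) \notin X_a = \cll(X_a)$, and after shrinking so $\eta \geq \eta_0 > 0$ on a closed subinterval, an analogously constructed box around $(a^*, \phi(a^*))$ is disjoint from $X$, again witnessing normality. The main obstacle I anticipate is upgrading the pointwise positivity of the gap function to a uniform positive lower bound on a subinterval; this is precisely what the monotonicity theorem plus d-compactness of a closed subinterval deliver.
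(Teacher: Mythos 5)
Your proof is correct and takes essentially the same route as the paper: the paper's own argument simply sets $\beta(a) := \min(Z_a)$ on a putative interval inside $\bad$ and then says ``conclude as in van den Dries, 3.1.7'', and your write-up is precisely that argument carried out in full, with the finite-fiber separation estimates correctly replaced by the pseudo-finite gap functions $\epsilon$ and $\eta$ and uniformized on a subinterval via the monotonicity theorem. No gaps.
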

\begin{proof}
If not, let $I \subseteq \bad$ be a non-empty open interval.
For every $a \in I$; let $\beta(a) := \min(Z_a)$.
Conclude as in the proof of~\cite[3.1.7]{vdd}.
\end{proof}


Note that the set of normal points $\Normal$ is locally closed by definition.
Moreover, by Lemma~\ref{lem:finite-union}, $Z$ is pseudo-finite, and hence
closed.
Thus, $X = \Normal \cup Z$ is constructible.

We treat now the general case, when there exists some $a \in \K$ such that
$X_a$ is not pseudo-finite.

Let $A := \set{a \in \K: X_a \text{ is not pseudo-finite}}$.

\begin{lemma}
If $A$ is not meager, then $X$ has non-empty interior.
\end{lemma}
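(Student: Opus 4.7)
The plan is to use definable Baireness to strengthen the pointwise statement ``each $X_a$ ($a \in A$) has non-empty interior'' into a uniform open interval of fixed length inside each fibre over an open subinterval of $\K$, and then apply the Monotonicity Theorem to extract an honest open box inside $X$. By Lemma~\ref{lem:lmin}, every $X_a$ with $a \in A$ fails to be pseudo-finite and therefore has non-empty interior, so it contains some open interval $(b, b + 1/t)$ with $b \in [-t, t]$ for a suitable $t > 0$. This suggests the definable family
\[
C(t) := \set{a \in \K : \exists b \in [-t, t],\ (b, b + 1/t) \subseteq X_a} \qquad (t > 0),
\]
which is increasing in $t$ (larger $t$ shortens the required interval and enlarges the admissible range of $b$) and satisfies $A = \bigcup_{t > 0} C(t)$.

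Since $\K$ is locally o-minimal, hence \aminimal, hence Baire, and $A$ is not meager, some $C(t_0)$ must fail to be nowhere dense, for otherwise the family would exhibit $A$ as a meager set. By Lemma~\ref{lem:lmin}(3), every definable subset of $\K$ is either pseudo-finite (and hence nowhere dense) or has non-empty interior, so $C(t_0)$ contains a non-empty open interval $I$. Define a definable selector $b : I \to [-t_0, t_0]$ by
\[
b(a) := \inf\set{b' \in [-t_0, t_0] : (b', b' + 1/t_0) \subseteq X_a},
\]
and apply the Monotonicity Theorem to obtain a subinterval $I' \subseteq I$ on which $b$ is continuous (constant or strictly monotone).

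Fix $a_0 \in I'$, set $b_0 := b(a_0)$ and $\varepsilon := 1/(4 t_0)$, and choose a subinterval $I'' \ni a_0$ of $I'$ on which $|b(a) - b_0| < \varepsilon$. For each $a \in I''$ and each $\delta > 0$ the infimum definition produces some $b^{\ast} \in [b(a), b(a) + \delta)$ with $(b^{\ast}, b^{\ast} + 1/t_0) \subseteq X_a$; the bounds $b^{\ast} < b_0 + \varepsilon + \delta$ and $b^{\ast} + 1/t_0 > b_0 + 1/t_0 - \varepsilon$ then yield $(b_0 + \varepsilon + \delta, b_0 + 1/t_0 - \varepsilon) \subseteq X_a$, and taking the union over $\delta > 0$ gives $(b_0 + \varepsilon, b_0 + 1/t_0 - \varepsilon) \subseteq X_a$. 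Hence $I'' \times (b_0 + \varepsilon, b_0 + 1/t_0 - \varepsilon) \subseteq X$, so $X$ has non-empty interior. The main delicate point is that $b(a)$ is an infimum that need not be attained, which forces the continuity argument to trim both ends of the candidate open interval by $\varepsilon$ in order to absorb simultaneously the slack $b^{\ast} - b(a)$ and the oscillation of $b$ near $a_0$; beyond this, the argument is a standard combination of definable Baireness with the Monotonicity Theorem.
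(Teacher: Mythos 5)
Your proof is correct. It reaches the same endgame as the paper --- a definable selector for the left endpoint of an interval inside each fibre, continuity on a subinterval via the Monotonicity Theorem, and an open box between two graphs --- but the route to the selector is genuinely different. The paper argues directly: since every definable subset of $\K$ is either pseudo-finite (hence meager) or has interior, non-meagerness of $A$ immediately yields an interval $I \subseteq A$; it then takes $f(a)$ to be the \emph{least accumulation point} of $X_a$ (a minimum of a closed set, hence attained) and $g(a)$ the supremum of $b'$ with $\Pa{f(a), b'} \subseteq X_a$, so that $\set{(a,b) : a \in I',\ f(a) < b < g(a)}$ is the desired open set once $f, g$ are continuous on $I'$. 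You instead uniformize first, writing $A$ as the increasing union of the sets $C(t)$ and extracting a single $t_0$ for which the fibre intervals have a fixed length $1/t_0$; this buys you a selector defined by a plain infimum (no appeal to accumulation points, and no need for the selected endpoint to be attained), at the cost of the extra increasing-family step and the $\varepsilon$-trimming needed to absorb the slack $b^{\ast} - b(a)$. Both steps you add are sound --- the claim that some $C(t_0)$ fails to be nowhere dense needs only the definition of meagerness, not Baireness, and your trimmed interval has length $1/(2t_0) > 0$ --- so the argument goes through; it is just somewhat longer than the paper's.
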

\begin{proof}
If $A$ is not meager, then it contains an open interval~$I$.
So, \wloG we can assume $A = I$.
For every $a \in A$, there exist $b \in \K$ accumulation point of $X_a$.
Let $f(a) \in \K$ be the smallest such~$b$.
Note that there exists $b' > f(a)$ such that $\Pa{f(a), b'} \subseteq
X_a$; let $g(a)$ the greatest such~$b'$.
Let $I' \subseteq I$ open interval, such that $f$ and $g$ are continuous
on~$I'$.
Then, the set $\set{(a,b) : a \in I, f(a) < a < g(a)}$
is an open set contained in~$X$.
\end{proof}

Note that $X = \inter X \cup (X \setminus \inter X)$.
Since $X \setminus \inter X$ has empty interior, we can assume that $X$ has
empty interior.
Thus, $A$ is pseudo-finite.
By the case $n = 1$, we have that, for every $a \in \K$,
$X_{[a]}$~is constructible.
Thus, by Corollary~\ref{cor:finite-constructible},
$X \cap \pi^{-1}(A) = \bigcup_{a \in A} X_{[a]}$ is constructible.
Since we have seen that $X \setminus \pi^{-1}(A)$ is also constructible, we
are done.

\emph{The rest of the proof proceeds by enumerating all the possible values of~$n$.}

For the general case, let us prove, by induction on $n$, the following 2
statements:
\begin{enumerate}
\item[$I_n$] Every definable $X \subseteq \K^n$ is constructible;
\item[$II_n$] If $A \subseteq \K^n$ is open and definable, and $f: A \to \K$ is
a definable function, then there exists a non-empty open box $B \subseteq A$
such that $f \rest B$ is continuous.
\end{enumerate}
Note that $II_n$ implies that the set of discontinuity points of $f$ is nowhere dense.

We have already proved $I_1$ and $II_1$.
So, assume that we have already proved $I_m$ and $II_m$, for every $m \leq n$,
and let us prove them for $n + 1$.

We will prove $I_{n+1}$ by induction on $(d, k) := \fdim(X)$, the full
dimension of~$X$. Assume we have already proved the statement for any $X'$ of
full dimension less than $(d,k)$.

If $d = n$, note that $X = \inter X \sqcup (X \setminus \inter X)$.
$\inter X$ is open, and therefore constructible, and $(X \setminus \inter X)$
has empty interior, and thus dimension less than~$n$; therefore, by the
inductive hypothesis, $X \setminus \inter X$ is constructible, and we are
done.

If $d = 0$, then $X$ is pseudo-finite, and we are done.

If $0 < d < n$, \wloG we can assume that $Y := \pi(X)$ has non-empty interior,
where $\pi := \Pi^n_d$.
Let $X' := X \cap \pi^{-1}(Y \setminus \inter Y)$.
Note that $\fdim(X') < \fdim(X)$, and therefore $X'$ is constructible.
Thus, we can assume that $Y$ is open.

Moreover, after a change of coordinates, we can assume that $X$ is bounded.

We call $(a,b) \in \K^n \times \K$ normal if there is an
open box $B \times J$ around $(a,b)$ such that
\begin{itemize}
\item either $(B \times J ) \cap X = \emptyset$ (and hence $(a,b) \notin X$),
\item or $(a,b) \in X$ and $(B \times J) \cap X = \Gamma(f)$ for some
continuous function $f: B \to J$.
\end{itemize}

Note that the set
$\Normal := \set{(a,b) \in \K^n \times \K: (a,b) \text{ is normal}}$ is definable.
Note also that $Z := \K^n \times \K \setminus \Normal$ is contained in $[0,1] \times [0,1]$.

We call $\bad := \pi(Z) \subseteq \K^n$ the set of ``bad'' points,
and $\good := \K^n \setminus \bad$ the set of ``good'' points.

Again, $\Normal$ is open by definition, and thus $\bad$ is \dcompact.
Thus, it suffices to prove that $\fdim(Z) < \fdim(X)$ to obtain that $X$ is
constructible.
If, for contradiction, $\fdim(Z) = \fdim(X)$, then $\pi(Z)$ has non-empty
interior; let $B \subseteq \pi(Z)$ be a non-empty open box.
For every $a \in B$, let $\beta(a) := \min Z_a$.
Proceeding as in the case $n = 2$, and using the inductive hypothesis $II_n$,
we get a contradiction.

Let us prove now $II_{n+1}$.
We use the same technique in~\cite[3.2.17]{vdd}.
Let $f : A \to \K$ be a definable function, with $A \subseteq \K^n \times \K$
open and definable.
Define the set $A^*$ of well-behaved points for $f$ as in~\cite[3.2.17]{vdd}.
By the same proof as in~\cite{vdd}, $A^*$ is dense in~$A$.
Since $A$ is definable, by $I_{n+1}$ $A$ is consructive, and therefore it
contains a non-empty open box~$B$.
Moreover, by~\cite[Lemma~3.2.16]{vdd}, $f$ is continuous on~$B$.
\hfill \qedsymbol
\bigskip

We shall now prove a stronger version of Thm.~\ref{thm:lomin-constructible},
where we give some more details on the structure of definable sets.

\begin{definizione}
Let $X \subseteq \K^n$ be definable (and hence constructible) and of
dimension~$d$, and let $\pi: \K^n \to L$ be a projection onto a coordinate
space of dimension~$d$.
For notational convenience, assume that $\pi = \Pi^n_d$, and that
$X$ is bounded.
A point $a \in \K^d$ is \intro{$(X,\pi)$-good} if there exists a neighbourhood
$A$ of~$a$, such that, 
for every $b \in \K^{n-d}$ there exists a neighbourhood $B$ of~$b$, with 
either $A \times B$ is disjoint from~$X$,
or $(A \times B) \cap X = \Gamma(f)$ for some (unique and definable)
continuous function $f: A \to B$.
A point $x \in \K^d$ is \intro{$(X,\pi)$-bad} if it is not good.
\end{definizione}
If $X$ is unbounded, let $\phi : \K \to (0,1)$ be a definable order-preserving
homeomorphism, and $\phi^n: \K^n \to (0,1)^n$ be the corresponding map.
Then, we say that $a$ is \intro{$(X,\pi)$-good} if 
$\phi^d(a)$ is $(\phi^n(X), \pi)$-good: 
the definition dose not depend on the particular choice of~$\phi$,
and it is equivalent to the one in~\cite{vdd}.

Note that if $L = \K^n$, then no point of $\fr X$ is $(X,\pi)$-good (where
$\pi$ is the identity map).
Note also that if $y \in L$ is $(X,\pi)$-good, then $X_y$ is pseudo-finite
(because it is discrete).

\begin{definizione}
A \intro{multi-cell} of dimension $d$ in~$\K^n$,
with respect to a coordinate plane~$L$,
is a definable subset $X \subseteq \K^n$ of dimension equal to~$\dim L$,
such that, calling $\pi$ the orthogonal projection onto~$L$:
\begin{enumerate}
\item $U := \pi(X)$ is open in~$L$;
\item every point of $U$ is $(X,\pi)$-good.
\end{enumerate}
\end{definizione}
Note that the multi-cells of dimension $0$ are the pseudo-finite sets, and the
multi-cells of dimension $n$ in $\K^n$ are the definable open subsets of~$\K^n$.

Note also that if $X$ is a multi-cell, then, locally around every point
of~$X$, $X$~is a cell.
However, $X$ might not be a cell around points not of~$X$.
For instance, let $D \subseteq (0,1)$ be an infinite pseudo-finite set, and let
\[
X := \set{(x,y) \in \K^2: x > 0, y > 0, \exists m \in D, y = m x}.
\]
Then, $X$ is a multi-cell, but, for every neighbourhood $V$ of $0$,
$X \cap V$ has infinitely many definably connected components, and thus cannot
be a cell, nor a finite union of cells.

\begin{thm}\label{thm:lmin-cell}
Let $X \subseteq \K^n$ be definable.
Then, there exists a finite partition of $X$ into multi-cells.
The number of multi-cells is bounded by a function of~$n$.
\end{thm}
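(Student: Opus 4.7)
The plan is to induct on $n$, with a secondary induction on the full dimension $\fdim X = (d,k)$ inside the inductive step. The bound $C(n)$ will depend only on $n$ because, for fixed $n$, the pair $(d,k)$ ranges over a finite set whose size depends only on $n$, and each step of the secondary induction contributes only a bounded number of cells.

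For the base case $n=1$, partition $X = \inter X \sqcup (X \setminus \inter X)$: the first piece is an open subset of $\K$, i.e., a multi-cell of dimension $1$; the second has empty interior, so by Lemma~\ref{lem:lmin} it is pseudo-finite, hence a multi-cell of dimension $0$. So $C(1) = 2$.

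For the inductive step, assume the result for all $m < n$, and proceed by secondary induction on $(d,k) := \fdim X$, ordered lexicographically. If $d = 0$, then $X$ is pseudo-finite by Lemma~\ref{lem:amin-dim-1}, hence a single multi-cell. If $d = n$, split $X = \inter X \sqcup (X \setminus \inter X)$: the first is a multi-cell and the second has $\fdim < (n, k)$, handled by the secondary induction. The core case is $0 < d < n$: fix a coordinate plane $L$ of dimension $d$ such that $\pi(X)$ has non-empty interior, where $\pi$ is the orthogonal projection onto $L$. Let $G \subseteq L$ be the set of $(X,\pi)$-good points, set $X' := X \cap \pi^{-1}(G)$ and $X'' := X \setminus X'$. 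A short direct argument (shrinking a good neighbourhood $A$ of $a \in \pi(X')$ so that $A \subseteq G$, and using that over $A$ the graph witness for $X$ is also a graph witness for $X'$) shows that $\pi(X') = G \cap \pi(X)$ is open in $L$ and that every point of $\pi(X')$ is $(X',\pi)$-good, so $X'$ is a multi-cell of dimension $d$ with respect to $L$.

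The key claim is that $\dim(L \setminus G) < d$. Granted this, one concludes $\fdim X'' < \fdim X$: either $\dim X'' < d$, or $\dim X'' = d$ but $L$ drops out of the list of coordinate planes witnessing the dimension of $X''$ (since $\pi(X'') \subseteq L \setminus G$), so the second component of $\fdim$ strictly decreases. The secondary induction then partitions $X''$ into boundedly many multi-cells. The main obstacle is this dimension estimate, which is where the specifically locally o-minimal content enters: if some non-empty open box $B_0 \subseteq L$ consisted entirely of bad points, then by selecting, for each $a \in B_0$, a suitable witness in $X_a$ (for instance a minimum via definable completeness) and applying the monotonicity theorem together with the ``generic continuity of definable functions'' property $II_m$ established in the proof of Thm.~\ref{thm:lomin-constructible}, one refines $B_0$ to a sub-box on which the constructed function is continuous and witnesses that every point of the sub-box is good, contradiction. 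Finally, checking that the resulting recursion on $(n, d, k)$ closes to a bound depending only on $n$ is routine bookkeeping.
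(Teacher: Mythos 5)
Your proposal is correct and follows essentially the same route as the paper's proof: induction on $n$ and on $\fdim X=(d,k)$, splitting off the interior when $d=n$, taking the pseudo-finite case when $d=0$, and in the case $0<d<n$ decomposing $X$ over the good and bad sets of the projection, with the bad set shown nowhere dense via a witness function, the monotonicity theorem, and generic continuity exactly as in the proof of Thm.~\ref{thm:lomin-constructible}. The paper simply cites that earlier argument for the smallness of the bad set rather than re-deriving it, and the bookkeeping for the bound $C(n)$ is identical.
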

\begin{proof}
We will prove the statement by induction on $n$ and $(d,k) := \fdim(X)$.
Assume that we have already proved the statement for every $n' < n$ and for
every $X'$ of full dimension less than $(d,k)$.

If $d = n$, then $X = \inter X \sqcup (X \setminus \inter X)$.
$\inter X$ is open, and thus a multi-cell.
$(X \setminus \inter X)$ has dimension less than~$n$, and thus it is a
finite disjoint union of multi-cells.
Therefore, $X$ is a finite disjoint union of multi-cells.

If $d = 0$, then $X$ is pseudo-finite, and hence a multi-cell.

If $0 < d < n$, let $L$ be a coordinate space of dimension~$d$, such that
$\pi(X)$ has non-empty interior in~$L$, where $\pi$ is the projection on~$L$.
As in the proof of Thm.~\ref{thm:lomin-constructible}, the set $\bad$ of
$(X,\pi)$-bad points is nowhere dense and closed, and~$\good$, the set of
$(\pi,X)$-good points, is open.
Thus, let $Y := X \cap \pi^{-1}(\bad)$ and $Z := X \cap \pi^{-1}(\good)$.
By definition, $Z$ is a multi-cell, and, since $\bad$ is nowhere dense, $Y$
has full dimension less than $(k,d)$, and therefore it is a finite disjoint union of
multi-cells.
Thus, $X$ is a finite union of multi-cells, and we are done.

From the above argument, we can see that the number $N$ of multi-cells
partitioning $X$ is bounded by a function of $n$, $d$, and~$k$.
However, $d$ and $k$ are bounded by functions of $n$, and thus $N$ is bounded
by a function of~$n$.%
\footnote{Something like~$n^2$.}
\end{proof}

We would like to obtain a structure theorem for open definable sets.
Such a theorem should also give a further refinement of the above theorem.

\begin{remark}
Let $f : \K^n \to \K^m$ be definable.
Since $\Gamma(f)$ is constructible, then, by Lemma~\ref{lem:function-fs},
$\Dis(f)$ is meager, and thus nowhere dense and of dimension strictly less
than~$n$.
\end{remark}


\subsection{Additional results on locally o-minimal structures}
\begin{proviso}
In this subsection, $\K$ is a locally o-minimal structure.
\end{proviso}
Therefore, $\K$ is \aminimal,
and every definable subset of $\K^n$ is constructible.
Hence, we can apply lemmata~\ref{lem:dim-union}, \ref{lem:amin-dim-1},
and~\ref{lem:amin-dim-2}, and obtain the following lemma
(\cf Example~\ref{exa:dim-constructible}).
\begin{lemma}\label{lem:loc-min-dim}
Let $C$ and $C'$ be definable subsets of $\K^n$.
Then, $\dim(C \cup C') = \max\Pa{\dim C, \dim C'}$,
and $\dim(\fr C) < \dim C$ if $C$ is non-empty;
besides, $\dim C = 0$ iff $C$ is pseudo-finite.
\end{lemma}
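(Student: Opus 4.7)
Part (1) follows directly from Lemma~\ref{lem:dim-union}: by Theorem~\ref{thm:lomin-constructible} every definable subset of $\K^n$ is constructible, hence an $\Fs$, so the lemma applies and even gives the full-dimension version. Part (3) is Lemma~\ref{lem:amin-dim-1} applied to the (now constructible) set $C$.

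For part (2) I would induct on the ambient dimension $n$, and within each $n$ on $\fdim C$. The case $n=1$ is immediate: either $\dim C = 1$ and $\fr C$ has empty interior, hence is pseudo-finite of dimension $\leq 0$; or $\dim C = 0$ and $C$ is pseudo-finite hence closed with $\fr C = \emptyset$. The sub-cases $d := \dim C = n$ and $d=0$ of the inductive step are handled identically. For the substantive case $0 < d < n$, apply Theorem~\ref{thm:lmin-cell} to partition $C = X_1 \sqcup \dots \sqcup X_m$ into multi-cells. Since $\fr C \subseteq \bigcup_i \fr X_i$ and part~(1) is already proved, it suffices to establish, for each multi-cell $X$ of dimension $e$, the bound $\dim \fr X \leq e-1$ (with $\dim\emptyset = -1$). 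The cases $e=n$ and $e=0$ are immediate, so assume $0 < e < n$. Let $L$ be the distinguished $e$-dimensional coordinate plane, $\pi\colon\K^n \to L$ the projection, and $U := \pi(X)$, which is open in~$L$. Unwinding the good-point definition at every $a \in U$ forces $(\cl X)\cap\pi^{-1}(U) = X$, whence
\[
\fr X \;=\; \cl X \cap \pi^{-1}(\fr U).
\]

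Because $U$ is a non-empty open subset of $L = \K^e$ and $e < n$, the inductive hypothesis applied inside~$L$ yields $\dim \fr U \leq e-1$. To convert this projection-level bound into a bound on $\dim \fr X$ itself, I would first check that $\dim \cl X = e$: for any coordinate plane $M$ of dimension $e+1$, $\pi_M(X)$ is an $\Fs$ with empty interior in~$M$, hence meager, hence nowhere dense by the last statement of Theorem~\ref{thm:a-minimal}, so $\pi_M(\cl X) \subseteq \cl \pi_M(X)$ has empty interior in~$M$. Then Lemma~\ref{lem:dim-fiber} applied to $\cl X \subseteq L \times \K^{n-e}$ shows that $A := \{a \in L : \dim(\cl X)_a > 0\}$ has empty interior in~$L$, hence by the inductive hypothesis inside $L$ satisfies $\dim A \leq e-1$. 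Splitting $\fr X$ into its piece over $A \cap \fr U$ (bounded via the standard dimension inequality $\dim \leq \dim(\text{projection}) + \dim(\text{fiber})$ using $\dim A \leq e-1$) and its piece over $\fr U \setminus A$ (where fibers are pseudo-finite and the projection lies in $\fr U$ of dimension $\leq e-1$), part~(1) assembles both pieces into the required $\dim \fr X \leq e-1$. The main obstacle I expect is the bookkeeping for coordinate planes $L' \neq L$ of dimension $e$ onto which $X$ may also project with non-empty interior: for those $L'$ with $\pi_{L'}(X)$ of empty interior the meager-implies-nowhere-dense argument applies verbatim, while for the remaining $L'$ one must apply the same analysis symmetrically (essentially, after a further refinement of the cell decomposition one may assume $X$ is a multi-cell with respect to each such $L'$ on a suitable open piece).
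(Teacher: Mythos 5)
Parts (1) and (3) of your argument are fine and match the paper (which simply invokes Lemmata~\ref{lem:dim-union} and~\ref{lem:amin-dim-1} once Theorem~\ref{thm:lomin-constructible} makes every definable set constructible), and your identity $\fr X = \cl X \cap \pi^{-1}(\fr U)$ for a multi-cell is exactly the right mechanism: over good points the set is locally a graph, hence locally closed, so no frontier points lie above them. The gap is in how you bound the piece of $\fr X$ lying over $A \cap \fr U$. There the fibres of $\cl X$ are positive-dimensional, possibly of dimension as large as $n - e$, so the inequality $\dim \leq \dim(\text{projection}) + \dim(\text{fibre})$ only yields $(e-1) + (n-e) = n-1$, which exceeds $e-1$ whenever $e < n$, i.e.\ in every case you are actually treating. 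Knowing $\dim A \leq e-1$ bounds the base, not the fibres, and stratifying $A$ by fibre dimension via Thm.~\ref{thm:imin}(\ref{en:i-dim-fiber-2}) still only gives $\dim \leq e$. (Moreover, the inequality you call standard is available in the paper only for $0$-dimensional fibres, as the contrapositive of Lemma~\ref{lem:imin-dimension-fiber}; the general form is neither proved there nor immediate from the coordinate-projection definition of dimension.) A second, smaller issue is the case of coordinate planes $L' \neq L$ witnessing $\dim \fr X = e$: you defer it to a ``further refinement,'' but that is where the real work sits.

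The paper avoids both problems by a single contradiction argument. After checking $\fdim(\cl C) = \fdim C$ (your meager-implies-nowhere-dense computation is the same as the paper's), suppose $\dim(\fr C) = \dim C = d$ and let $\mu$ be a $d$-dimensional coordinate projection with $\mu(\fr C)$ containing a non-empty open box~$U$; then $\mu(C)$ also has non-empty interior, the set of $(C,\mu)$-bad points is nowhere dense, and for any box $V \subseteq U$ of good points one has $\mu\Pa{\fr(C \cap \mu^{-1}(V))} \subseteq \fr V$, hence $\fr C \cap \mu^{-1}(V) = \emptyset$, contradicting $V \subseteq \mu(\fr C)$. This uses only the nowhere-density of the bad set for the \emph{witnessing} projection and never needs to control the fibres of $\cl C$ over $\fr U$. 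To salvage your write-up, replace the fibre analysis over $A \cap \fr U$ by this argument, applied directly to whichever projection witnesses $\dim(\fr C) = d$.
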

\begin{proof}
We only need to show that $\dim( \fr C) < \dim C$.

First, we claim that $\fdim (\cl C) = \fdim C$.
In fact, $\Pi^n_m(\cl C) \subseteq \cl{\Pi^n_m C}$, and therefore, by local
o-minimality, if the former has non-empty interior, also the latter has
non-empty interior.
Hence, $\fdim(\fr C) \leq \fdim C$.

Assume, for contradiction, that $d := \dim C = \dim(\fr C)$;
\wloG, $\pi(\fr C)$ contains a non-emtpy open box~$U$, where $\pi := \Pi^n_d$.
Let $\bad$ be the set of $\pair{C, \pi}$-bad points:
$\bad$ is nowhere dense, and therefore there exists a non empty open box
$V \subseteq U \setminus \bad$.
However, $\pi \Pa{\fr(C \cap \pi^{-1}(V))} \subseteq \fr V$,
contradicting the definition of~$U$. 
\end{proof}

\begin{lemma}
Let $C(t)$ be a definable increasing family of subsets of~$\K^n$,
such that $\dim(C(t)) \leq d$ for every~$t$.
Let $C := \bigcup_t C(t)$.
Then, $\dim C \leq d$.
\end{lemma}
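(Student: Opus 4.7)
The plan is to reduce directly to Lemma~\ref{lem:amin-dim-2}. Since $\K$ is locally o-minimal, it is \aminimal (this is noted in the preceding discussion, and Thm.~\ref{thm:a-minimal} applies), and by Thm.~\ref{thm:lomin-constructible} every definable subset of $\K^n$ is constructible, equivalently an $\Fs$-set. In particular each $C(t)$ is constructible. Thus the hypotheses of Lemma~\ref{lem:amin-dim-2} are satisfied for the family $\Pa{C(t)}_{t \in \K}$, and that lemma yields $\dim C \leq d$.

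If one prefers to avoid invoking Lemma~\ref{lem:amin-dim-2} as a black box, the direct argument is essentially the same but worth spelling out. Suppose, for contradiction, that $\dim C > d$. Then, after relabelling coordinates, the projection $\pi := \Pi^n_{d+1}$ satisfies $\pi(C) = \bigcup_t \pi\Pa{C(t)}$ has non-empty interior. For each $t$, $\pi\Pa{C(t)}$ is definable, hence constructible by Thm.~\ref{thm:lomin-constructible}, and has empty interior because $\dim C(t) \leq d < d + 1$; by Corollary~\ref{cor:constructible-nowhere-dense} each $\pi\Pa{C(t)}$ is nowhere dense. Hence $\pi(C)$ is the union of a definable increasing family of nowhere dense sets, i.e., meager.

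Since $\K$ is \aminimal it is Baire, and by \cite[Prop.~2.7]{FS} the product $\K^{d+1}$ is Baire as well; thus no definable subset of $\K^{d+1}$ with non-empty interior can be meager, yielding the desired contradiction.

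The only potentially subtle point is the passage from "each $\pi\Pa{C(t)}$ has empty interior" to "nowhere dense", which rests on the constructibility of each $\pi\Pa{C(t)}$ and hence on local o-minimality; this is precisely the improvement over the generic definably complete Baire setting, and it is what makes the conclusion hold without any $\Fs$-hypothesis on the original family.
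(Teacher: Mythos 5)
Your proof is correct and is essentially the paper's intended argument: the paper states this lemma without proof immediately after noting that every definable set in a locally o-minimal structure is constructible, so that Lemma~\ref{lem:amin-dim-2} applies directly. Your spelled-out version also matches the proof of that lemma (empty interior plus constructibility gives nowhere dense, hence the union is meager, contradicting Baireness), so there is nothing to add.
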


\begin{lemma}
Every locally o-minimal structure has definable Skolem functions.
\end{lemma}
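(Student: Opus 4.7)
The plan is to reduce to the one-variable case by the standard iterated-projection trick, then use the field structure to reduce to bounded sets, then pick a canonical point using local o-minimality. More precisely, given a definable family $(Y_a)_{a \in A}$ with $A \subseteq \K^n$ and each $Y_a \subseteq \K^m$ non-empty, I would like a definable $f : A \to \K^m$ with $f(a) \in Y_a$. By induction on $m$ it suffices to handle $m = 1$: for $m > 1$, apply the one-variable case to the projection of $Y$ onto the first $\K$-coordinate to get the first component $g(a)$, and then apply the inductive hypothesis to $\{(a, z) \in A \times \K^{m-1} : (g(a), z) \in Y_a\}$ to get the remaining components.

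For the one-variable case, I would first use the field structure to define a definable order-preserving homeomorphism $\phi : \K \to (-1, 1)$, e.g.\ $\phi(x) = x/(1 + |x|)$, and replace $Y_a$ by $\phi(Y_a) \subseteq (-1,1)$; so \wloG each $Y_a$ is a bounded non-empty definable subset of $\K$. Let $c(a) := \inf Y_a \in \K$. If $c(a) \in Y_a$, put $f(a) := c(a)$. Otherwise, since $c(a)$ is a right-accumulation point of $Y_a$, Lemma~\ref{lem:lmin}(2) applied to $Y_a$ at $c(a)$ yields some $y > c(a)$ with $(c(a), y) \subseteq Y_a$ (the other alternative $(c(a), y) \cap Y_a = \emptyset$ contradicts $c(a) = \inf Y_a$). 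Define
\[
S(a) := \set{y > c(a) : (c(a), y) \subseteq Y_a},
\]
and $d(a) := \sup S(a)$. Since $S(a)$ is downward-closed above $c(a)$, we get $(c(a), d(a)) \subseteq Y_a$, and $d(a) \leq 1$. Set $f(a) := (c(a) + d(a))/2 \in Y_a$.

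The whole construction is uniformly definable in $a$ (the infimum, supremum, the field operations, and the case split all produce definable objects), so composing with $\phi^{-1}$ on the output yields the desired definable Skolem function for the original family. The only delicate point is knowing that after $c(a) \notin Y_a$ we can always choose a canonical value, and this is exactly what Lemma~\ref{lem:lmin}(2) guarantees; I do not expect any real obstacle beyond packaging this case distinction in a syntactically uniform way.
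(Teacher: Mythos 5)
Your proposal is correct and is essentially the same argument as the paper's, which simply cites the standard o-minimal construction of van den Dries (Prop.~6.1.2): compress to a bounded set, take the infimum, and when it is not attained take the midpoint of the maximal initial open interval, with Lemma~\ref{lem:lmin}(2) supplying exactly the local o-minimality input needed to guarantee that interval is nonempty. No gaps.
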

\begin{proof}
Same as in~\cite[Prop.~6.1.2]{vdd}.
\end{proof}

\begin{lemma}[Curve selection]
If $a \in \fr X$, where $X \subseteq \K^n$ is definable, then there exists a
definable continuous injective map $\gamma: (0, 1) \to X$, such that
$\lim_{t  \to 0} \gamma(t) = x$.
\end{lemma}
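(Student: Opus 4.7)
The plan is to bypass any explicit cell decomposition and prove the statement directly, using the definable Skolem functions just established together with the monotonicity theorem, and finishing with a reparameterization.

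First, since $a \in \cl X$, for every $r \in (0,1)$ the set $X \cap \clB(a,r)$ is a non-empty definable subset of $\K^n$; by definable Skolem, one chooses a definable function $h : (0,1) \to X$ with $d(h(r), a) \leq r$ for every $r$. In particular, $\lim_{r \to 0^+} h(r) = a$.

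Next, write $h = (h_1, \dots, h_n)$ and apply the monotonicity theorem to each coordinate $h_i : (0,1) \to \K$; this produces a pseudo-finite set $A_i \subseteq (0,1)$ such that, on the leftmost sub-interval $(0, s_i)$, the function $h_i$ is either constant or strictly monotone and continuous, where $s_i := \min A_i$ if $A_i \neq \emptyset$ and $s_i := 1$ otherwise. Setting $r_1 := \min(s_1, \dots, s_n) > 0$, each $h_i$ is constant or strictly monotone and continuous on $(0, r_1)$, so $h$ itself is continuous on $(0, r_1)$.

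The third step is to ensure injectivity of $h$ on $(0, r_1)$. If every $h_i$ were constant on $(0, r_1)$, then $h$ would be constantly equal to $\lim_{r \to 0^+} h(r) = a$, contradicting $h(r) \in X$ and $a \notin X$; hence at least one $h_i$ is strictly monotone on $(0, r_1)$, which forces $h$ to be injective there. Finally, setting $\gamma(t) := h(r_1 \cdot t)$ for $t \in (0,1)$ yields the required definable, continuous, injective map $\gamma : (0,1) \to X$ with $\lim_{t \to 0^+} \gamma(t) = a$.

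The only point that needs care is verifying that the Skolem selection does not collapse to the trivial curve identically equal to $a$; this is ruled out precisely by the hypothesis $a \in \fr X$, which gives $a \notin X$. Everything else is a direct combination of ingredients already available in this section: the definable Skolem functions of the previous lemma, the monotonicity theorem for locally o-minimal structures, and the elementary fact (a consequence of Lemma~\ref{lem:finite-union}) that a finite union of pseudo-finite subsets of $\K$ is pseudo-finite.
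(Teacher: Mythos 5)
Your proof is correct and is essentially the paper's proof: the paper simply cites \cite[Corollary~6.1.5]{vdd}, whose argument is exactly the one you give — definable Skolem functions to select $h(r)\in X\cap \clB(a,r)$, the monotonicity theorem applied coordinatewise to get continuity near $0$, and the observation that $a\in\fr X$ (so $a\notin X$) rules out the constant curve and forces injectivity.
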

\begin{proof}
Same as in~\cite[Corollary~6.1.5]{vdd}.
\end{proof}

Given $A \subseteq \K$, denote by $\dcl(A)$ the definable closure of $A$ in~$\K$.

\begin{lemma}
Assume that $\dcl(\emptyset)$ is dense in~$\K$.
Then, $\K$~has the Exchange Property (EP).
\end{lemma}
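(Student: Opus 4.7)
The plan is to follow the classical o-minimal proof of the Exchange Property (as in van den Dries), with the density hypothesis replacing the automatic finiteness that would otherwise be available. First, using definable Skolem functions from the preceding lemma and the fact that $\Nat \subseteq \dcl(\emptyset)$ (each $n$ being $\emptyset$-definable via $1+1+\cdots+1$), one verifies that $\acl(A) = \dcl(A)$ for every $A \subseteq \K$: a $B$-definable finite set can be enumerated by its natural linear order, and the position of any element is a natural number parameter which is itself in $\dcl(\emptyset)$. Hence it suffices to prove: if $a \in \dcl(Bc) \setminus \dcl(B)$, then $c \in \dcl(Ba)$. Write $a = f(c)$ for some $B$-definable partial function $f : \K \to \K$, and apply the Monotonicity Theorem to obtain a pseudo-finite $B$-definable set $E$ such that on each maximal sub-interval of $\mathrm{dom}(f) \setminus E$, the function $f$ is either constant or strictly monotone and continuous.

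I would then split on the location of~$c$. If $f$ is locally constant at~$c$, the connected component $J$ of $c$ in the $B$-definable open set $\set{y : f \text{ is locally constant at } y}$ is an open interval on which $f \equiv a$; by density of $\dcl(\emptyset)$ the interval $J$ contains some $d \in \dcl(\emptyset) \subseteq \dcl(B)$, forcing $a = f(d) \in \dcl(B)$, contrary to $a \notin \dcl(B)$. If instead $c$ lies in a maximal sub-interval $J$ on which $f$ is strictly monotone and continuous, density provides $d \in J \cap \dcl(\emptyset)$; the maximal strict-monotonicity interval of~$f$ containing~$d$ is then $B$-definable and coincides with~$J$, so $f \rest J$ is a $B$-definable homeomorphism onto its image, and $c = (f \rest J)^{-1}(a) \in \dcl(Ba)$.

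The remaining case, $c \in E$, is the main obstacle, because $E$ is only pseudo-finite and may fail to be finite; in the pure o-minimal setting $E$ is finite and one immediately gets $c \in \dcl(B) \subseteq \dcl(Ba)$. My approach here would be to analyse the one-sided adjacent intervals $(p(c), c)$ and $(c, s(c))$ (using the $p, s$ notation introduced before the Monotonicity Theorem): on each, $f$ is constant or strictly monotone, and any constant side with value~$a$ is ruled out by the first case above, yielding one-sided limits $\lim_{x \to c^{\pm}} f(x) \in \Kinf$ (guaranteed by the corollary to the Monotonicity Theorem) that are each in $\dcl(B \cup \set{a})$. I would then identify $c$ inside $E$ as the unique point where $f$ jumps with these prescribed left- and right-behaviour, combining the density hypothesis (to express the adjacent intervals in a $B$-definable way via nearby $\dcl(\emptyset)$-points) with the monotonicity decomposition. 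The bookkeeping required to make this identification genuinely single out $c$ (rather than produce a merely pseudo-finite set of candidates) is the delicate step.
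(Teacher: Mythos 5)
Your first two cases (locally constant, and strictly monotone on a component of the complement of~$E$) are handled correctly and match the paper's argument. The problem is the third case, $c \in E$, which you explicitly leave unfinished: you propose to pin down $c$ inside $E$ by its one-sided limit behaviour and admit that ``the bookkeeping required to make this identification genuinely single out $c$ \dots is the delicate step.'' As it stands this is a genuine gap, and it is doubtful the jump-behaviour route closes it: the set of points of $E$ at which $f$ has a prescribed pair of one-sided limits in $\dcl(Ba)$ is in general only another pseudo-finite $B a$-definable set, so you would be back where you started.

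The missing observation is that the case $c \in E$ simply cannot occur, by the same density trick you already used in the other two cases. Since $E$ is pseudo-finite it is closed and discrete, so $c$ is isolated in~$E$; by density of $\dcl(\emptyset)$ choose $y', y'' \in \dcl(\emptyset)$ with $y' < c < y''$ and $E \cap (y', y'') = \set{c}$. Then $c$ is the unique element of the $B$-definable set $E$ lying in an interval with $\emptyset$-definable endpoints, hence $c \in \dcl(B)$, and therefore $a \in \dcl(Bc) = \dcl(B)$, contradicting the hypothesis $a \notin \dcl(B)$. This is exactly how the paper disposes of that case; with this substitution your proof becomes essentially identical to the paper's. (Your preliminary reduction of $\acl$ to $\dcl$ is harmless but not needed for the statement as formulated.)
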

\begin{proof}
Let $A \subseteq \K$, and $a, b \in \K$.
We have to prove that, if $b \in \dcl(A \cup \set a) \setminus \dcl(A)$,
then $a \in \dcl(A \cap \set b)$.
Let $f: \K \to \K$ be $A$-definable, such that $f(a) = b$.
By the monotonicity theorem, there exists $D \subset \K$ pseudo-finite, such
that $f$ is continuous and either constant or strictly monotone on each
subinterval of $\K \setminus D$.
Note that $D$ can be taken $A$-definable.

If $a \in D$, let $y', y'' \in \dcl(\emptyset)$, such that $y' < a < y''$,a
nd $D \cap (y', y'') = \set a$.
Then, $a \in \dcl(A)$, a contradiction.

If instead $a \notin D$, let $y', y'' \in \dcl(\emptyset)$, such that $y' < a
< y''$ and $f$ is continuous and either constant or strictly monotone on $I :=
(y',
y'')$.
If $f$ is constant on~$I$, then $b \in dcl(A)$, absurd.
If instead $f$ is continuous and strictly monotone on~$I$, then $f$ has a
compositional inverse $g$ on~$I$, and $g$ is $A$-definable;
thus, $a = g(b)$, and therefore $a \in \dcl(A \cup \set b)$.
\end{proof}

\begin{example}
A locally o-minimal structure does not necessarily satisfy the NIP.
Equivalently, there exists a locally o-minimal structure that does have the
Independence Property.
In fact, fix an o-minimal structure~$M$ (expanding a field) in a language~$L$.
Let $P$ a new binary predicate symbol.
For every $0< n \in \Nat$, let $f_n : {1 , \dotsc, 2^n} \to \mathcal P(n)$ be
an enumeration of the subsets of ${1, 2, ..., n}$, such that $f_n$ extends
$f_{m}$ for every $m \leq n$.
Let $P_n := \bigcup_{k = 1}^{2^n} \set k \times f_n(k) \subseteq M^2$,  and
$N_n := (M, P_n)$ be the $L(P)-$expansion of~$M$, where $P$ is interpreted
by~$P_n$.
Let $N^* = (M^*, P^*)$ be a non-principal ultra-product of then~$N_m$.
Since $M$ is o-minimal, $M^*$ and each $N_m$ are o-minimal, and $N^*$ is
locally o-minimal.
However, it is clear that $N^*$ is not o-minimal, and does have the Independence Property.
Moreover, if $L$ is countable, then $N^*$ is $\omega$-saturated, and therefore,
by the proof~\cite[1.17]{DMS}, $N^*$~does not have the Exchange Property.
\end{example} 

The definitions and proof in~\cite{miller96} work almost verbatim for locally
o-minimal structures.
Hence, we have the following theorem.
\begin{thm}
If $\K$ is a locally o-minimal structure, then it is either
\intro{power-bounded}, or it is \intro{exponential}
(that is defines an exponential function).
If $\K$ is power bounded, then for every ultimately non-zero definable
function $f: \K \to \K$ there exists $c \in \K^*$ and $r$ in the field of
exponents of~$\K$, such that $f \sim cx^r$.
\end{thm}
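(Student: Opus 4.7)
The plan is to adapt Miller's growth dichotomy proof from \cite{miller96} to the locally o-minimal setting. The essential tools of that proof are: the monotonicity theorem, definable Skolem functions, existence of limits of definable unary functions at endpoints of intervals, and a version of l'H\^opital's rule --- all of which are available here. The monotonicity theorem is proved above, definable Skolem functions are given by the lemma stated earlier in this subsection, and the corollary to the monotonicity theorem guarantees existence of one-sided limits in $\Kinf$.

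The first step is to pass to germs at $+\infty$ of definable unary functions. Given a definable $f:\K\to\K$, the monotonicity theorem produces a pseudo-finite set $A\subseteq \K$ of breakpoints; since $A$ is bounded or pseudo-finite, in either case it has (eventually) a largest element $a^*\in A$ or is cofinal and pseudo-finite --- but if $A$ were cofinal it would be an unbounded pseudo-finite set, contradicting boundedness of pseudo-finite sets. Thus $f$ is \emph{eventually} either constant or strictly monotone and continuous on some final interval $(x_0,+\infty)$. This replaces the finiteness argument in the o-minimal case: what matters for germs at $+\infty$ is only the tail behaviour, and the pseudo-finite breakpoint set has a maximum, which is all Miller's proof needs. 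Consequently the set of germs at $+\infty$ of definable functions forms a Hardy-field-type totally ordered structure, and limits $\lim_{x\to+\infty}f(x)$ exist in $\Kinf$.

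The next step is to define \emph{power-boundedness}: $\K$ is power-bounded if for every definable $f:\K\to\K$ there exists $N\in\Nat$ such that eventually $\abs{f(x)}\le x^N$. One then shows the dichotomy: if $\K$ is not power-bounded, then there is a definable $f:\K\to\K$ with $f(x)>x^n$ eventually for all $n\in\Nat$, and by manipulating compositional inverses, logarithmic differences, and iterated compositions --- exactly as in \cite{miller96}, using Skolem functions to pick parameter values --- one produces a definable group isomorphism between the additive and multiplicative groups on final intervals, that is, an exponential. The monotonicity theorem (applied to tails) plus continuity of the resulting candidate exponential on a final interval plus the functional equation produce the full exponential function by extension, since the structure expands a field.

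For the second clause (the asymptotic classification in the power-bounded case), one introduces the field of exponents $\mathcal K$: the set of $r\in\K$ such that $x\mapsto x^r$ is definable. Miller's argument shows $\mathcal K$ is a subfield of $\K$ under the operations inherited from composition and multiplication of power functions. Given $f$ ultimately nonzero and definable, one considers the germ of $\log\abs{f(x)}/\log x$; by the eventual monotonicity from our monotonicity theorem and a l'H\^opital-type argument (valid here because definable functions are eventually differentiable in the sense of having definable derivatives, using local o-minimality and the field structure), this ratio tends to some $r\in\Kinf$; power-boundedness forces $r\in\K$, and then $r\in\mathcal K$. Writing $f(x)=x^r\cdot g(x)$ where $g(x):=f(x)/x^r$, the germ of $g$ tends to some $c\in\Kinf^*$, and power-boundedness together with the same analysis applied to $g$ shows $c\in\K^*$, giving $f\sim cx^r$.

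The main obstacle is verifying that the manipulations in \cite{miller96} that invoke finiteness (e.g., finite cell decomposition, finite bad-set arguments for the asymptotic ratio) survive the replacement of finite by pseudo-finite. This is exactly the kind of substitution that worked throughout the previous subsections --- replace ``finite'' by ``pseudo-finite'' and use that pseudo-finite subsets of $\K$ have suprema and maxima, so they behave like finite sets as far as tail behaviour at $+\infty$ is concerned. Once this translation is made uniformly, Miller's proof goes through essentially verbatim, exactly as was noted for the monotonicity theorem itself.
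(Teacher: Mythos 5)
Your proposal is correct and follows essentially the same route as the paper, which simply observes that the definitions and proofs of Miller's growth dichotomy carry over almost verbatim to the locally o-minimal setting; the one substantive point to check is exactly the one you identify, namely that the pseudo-finite breakpoint set from the monotonicity theorem is \dcompact and hence has a maximum, so every definable unary function is eventually constant or strictly monotone and continuous on a final interval, which is all the germ-at-$+\infty$ arguments require. The remaining ingredients you invoke (definable Skolem functions, existence of one-sided limits in $\Kinf$, eventual differentiability via Lemma~\ref{lem:imin-C1}) are all established earlier in the paper, so the translation goes through as you describe.
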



\section[Definably complete structures]{More results on definably complete structures}
\label{sec:definably-complete}
\subsection{Functions}
Let $\K$ be a definably complete.
In this sub-section, we will prove that certain subsets of $\K^n$ are meager.
Thus, if $\K$ is Baire, those subsets cannot be all of~$\K^n$.
However, by inspecting the proofs, one will see that in some cases (marked by
(*)), we actually prove that those sets are pseudo-enumerable, and thus, even
without assuming that $\K$ is Baire, they cannot be all of~$\K^n$ (see subsection~\ref{sec:enumerable}).

Let $f: A \to \K^n$ be a definable function.
For every $s > 0$, define
\begin{multline*}
\Dis(f, s) := \bigl\{x \in \K: \forall t > 0,\\
f\Pa{A \cap B(x,t)} \text{ is not contained in any open ball of radius } s
\bigr\}.%
\footnotemark
\end{multline*}
\footnotetext{Note that there is a misprint in the definition of $\Dis(f, s)$
  in~\cite[1.8]{DMS}.}
Then, each $\Dis (f, s)$ is closed in~$A$, and $\Dis(f)$, the set of discontinuity
points of~$f$, is equal to $\bigcup_s \Dis(f, s)$, and, therefore, it is an
$\Fs$ subset of~$A$.

\begin{lemma}[*]
Let $f: \K \to \K$ be definable and monotone.
Then, $\Dis(f)$ is meager.
\end{lemma}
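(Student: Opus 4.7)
The plan is to show that for each $s>0$ the set $\Dis(f,s)$ is discrete and closed in $\K$, and then conclude that $\Dis(f)=\bigcup_{s>0}\Dis(f,s)$ is meager as a definable increasing (as $s$ decreases) union of nowhere dense sets. Closedness of $\Dis(f,s)$ has been noted in the paragraph preceding the lemma, so the real content is the discreteness statement.

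\Wlog $f$ is monotone increasing; the decreasing case is symmetric. The key preliminary observation is that the one-sided limits
\[
f(x^-):=\sup\{f(y):y<x\},\qquad f(x^+):=\inf\{f(y):y>x\}
\]
exist in $\K$ for every $x\in\K$: by monotonicity $\{f(y):y<x\}$ is bounded above by $f(x)$ and $\{f(y):y>x\}$ is bounded below by $f(x)$, so definable completeness of $\K$ produces both limits in $\K$. Consequently, for every $x_0\in\K$ and every $\varepsilon>0$ there exists $\delta>0$ such that $|f(z)-f(x_0^+)|<\varepsilon$ for all $z\in(x_0,x_0+\delta)$, and symmetrically on the left.

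Suppose, for contradiction, that some $x_0\in\K$ is an accumulation point of $\Dis(f,s)$; \wloG the accumulation occurs from the right. Fix $\varepsilon<s$ and a corresponding $\delta>0$. Pick $y\in\Dis(f,s)$ with $x_0<y<x_0+\delta/2$ and $0<t<\min(y-x_0,\ x_0+\delta-y)$, so that $B(y,t)\subseteq(x_0,x_0+\delta)$. Then $f(B(y,t))\subseteq B(f(x_0^+),\varepsilon)\subseteq B(f(x_0^+),s)$, an open ball of radius $s$, contradicting $y\in\Dis(f,s)$. Hence $\Dis(f,s)$ has no accumulation points in $\K$, and together with closedness this makes it discrete and closed, hence nowhere dense. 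Letting $s$ range over the positive elements of $\K$ exhibits $\Dis(f)$ as a definable increasing union of nowhere dense sets, so $\Dis(f)$ is meager.

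Finally, the starred marker is justified by the same proof: for every $r>0$ the set $\Dis(f,s)\cap[-r,r]$ is closed, bounded and discrete, hence pseudo-finite, so $\Dis(f)\cap[-r,r]$ is a definable increasing union of pseudo-finite sets, i.e.\ pseudo-enumerable in the sense of \S\ref{sec:enumerable}; the same therefore holds for all of $\Dis(f)$, and no appeal to the Baire hypothesis is needed. The only slightly delicate point throughout is the passage between being contained in an open ball of radius $s$ and a bound on the oscillation of $f$, but this is a routine constants issue and does not affect the structure of the argument.
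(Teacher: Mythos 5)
Your proof is correct, but it takes a genuinely different route from the paper. The paper argues by contradiction at the level of interiors: if $\Dis(f)$ were not meager, some closed level set $\Dis(f,\varepsilon)$ would contain an interval $(a,b)$; it then sets $m:=\sup_{t<b}f(t)$ (definable completeness) and uses the fact that \emph{every} point of $(a,b)$ has oscillation at least $\varepsilon$ to manufacture a value of $f$ exceeding $m$, a contradiction. You instead prove the stronger local statement that each $\Dis(f,s)$ is closed and \emph{discrete}, by observing that the one-sided limits $f(x_0^\pm)$ exist (again definable completeness applied to a monotone function) and that this pins the oscillation of $f$ below $s$ on a punctured neighbourhood of any $x_0$; nowhere-density of each level set, hence meagerness of the union, follows. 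Your version is cleaner in that it avoids the diameter-versus-radius bookkeeping entirely (you only ever use that a ball of radius $\varepsilon<s$ sits inside a ball of radius $s$), and the extra information that each $\Dis(f,s)$ is closed and discrete is exactly what the $(*)$ annotation wants. One small caveat on that last point: a definable union of pseudo-finite sets increasing in a parameter ranging over $\K^{>0}$ is not by definition pseudo-enumerable; to get the $(*)$ conclusion you should reindex the increasing family $\Pa{\Dis(f,1/r)}_{r>0}$ along a cofinal \pN set $N$ (if none exists, $\K$ is \aminimal and everything is pseudo-finite anyway) and then invoke the paper's lemma on unions of pseudo-enumerable strongly uniform families of at most pseudo-enumerable sets. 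This is a two-line repair and does not affect the meagerness statement, whose proof is complete as written.
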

\begin{proof}
If $\Dis(f)$ were not meager, then there would exists $\varepsilon > 0$ such
that $\Dis(f, \varepsilon)$ has non-empty interior; 
let $(a,b) \subseteq \Dis(f, \varepsilon)$, with $a < b$.
\Wlog, $f$ is increasing.
Define $m := \sup_{t < b} f(t) = \lim_{t \to b^-} f(t) \in \K$.
Let $c \in (a,b)$ such that $f(c) > m - \varepsilon/2$.
Since $f$ is monotone, and $(a,b) \subseteq \Dis(f, \varepsilon)$, there
exists $d \in (c,b)$ such that $f(d) \geq f(c) + \varepsilon > m +
\varepsilon/2$, contradicting the definition of~$m$.
\end{proof}


The open question is whether, with the same hypothesis as in the above lemma,
there exists a point $x \in \K$ such that $f$ is differentiable at~$x$.

\begin{lemma}\label{lem:function-fs}
Let $f: \K^n \to \K^m$ be definable.
If $\Gamma(f)$ is an $\Fs$ set, then $\Dis(f)$ is meager.
\end{lemma}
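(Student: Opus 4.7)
The plan is to use the $\Fs$ decomposition of $\Gamma(f)$ to exhibit an open definable set of continuity points whose complement is meager. Write $\Gamma(f) = \bigcup_{t \in \K} F_t$ as a definable increasing union of closed sets; by replacing $F_t$ with $F_t \cap \cl B(0,t)$ (still increasing, still covering $\Gamma(f)$), I may assume each $F_t$ is \dcompact. Let $\pi$ be the projection $\K^{n+m} \to \K^n$ onto the first $n$ coordinates and set $A_t := \pi(F_t)$. Each $A_t$ is \dcompact, the family $(A_t)_{t \in \K}$ is definable and increasing, and $\bigcup_t A_t = \pi(\Gamma(f)) = \K^n$. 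Note that for every $y \in A_t$, the only $z$ with $(y,z) \in F_t$ is $z = f(y)$, because $F_t \subseteq \Gamma(f)$.

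The key step is to show that if $x_0 \in \inter(A_t)$ for some $t$, then $f$ is continuous at $x_0$. Fix $r > 0$ with $B(x_0,r) \subseteq A_t$, so that $(y, f(y)) \in F_t$ for every $y \in B(x_0, r)$. Given $\varepsilon > 0$, the set
\[
E_\varepsilon := \bigl(F_t \cap (\cl B(x_0, r) \times \K^m)\bigr) \setminus \bigl(\K^n \times B(f(x_0), \varepsilon)\bigr)
\]
is closed and bounded, hence \dcompact, so $\pi(E_\varepsilon)$ is closed. Since the only $z$ with $(x_0, z) \in F_t$ is $z = f(x_0) \in B(f(x_0), \varepsilon)$, we have $x_0 \notin \pi(E_\varepsilon)$. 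Therefore there exists $\delta \in (0, r)$ with $B(x_0, \delta) \cap \pi(E_\varepsilon) = \emptyset$, and for any $y \in B(x_0, \delta)$ the point $(y, f(y))$ lies in $F_t \cap (\cl B(x_0,r) \times \K^m)$ but outside $E_\varepsilon$, i.e.\ $\abs{f(y) - f(x_0)} < \varepsilon$.

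Setting $U := \bigcup_t \inter(A_t)$, the claim yields $\Dis(f) \subseteq \K^n \setminus U$. Since $\K^n = \bigcup_t A_t$, I can write $\K^n \setminus U = \bigcup_t (A_t \setminus U)$ as a definable increasing family; each $A_t \setminus U$ is closed and has empty interior (any open set inside $A_t$ lies in $\inter(A_t) \subseteq U$), hence is nowhere dense. Therefore $\K^n \setminus U$ is meager, and so is $\Dis(f)$. The main obstacle is the continuity step: without sequential compactness one must argue directly with closed sets and their projections, which is precisely why the \dcompact truncation $F_t \cap \cl B(0,t)$ is essential for extracting a $\delta$ from the closedness of $\pi(E_\varepsilon)$.
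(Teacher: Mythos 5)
Your proof is correct, and it rests on the same key mechanism as the paper's: truncate $\Gamma(f)$ into a definable increasing family of \dcompact sets, and observe that $f$ is continuous at every interior point of the projection of such a piece, because there the graph of $f$ is locally contained in a \dcompact set. (The paper phrases that step as ``the restriction of $f$ to a closed box inside the projection has \dcompact graph, hence is continuous''; your explicit $\varepsilon$--$\delta$ argument via the \dcompact set $E_\varepsilon$ and the closedness of $\pi(E_\varepsilon)$ is a correct unwinding of the same fact.) Where you genuinely diverge is in the global structure. The paper argues by contradiction: if $\Dis(f)$ were non-meager it would contain a box, one reduces to $\Dis(f)=\K^n$, and one then invokes the Baire property of $\K^n$ to find a $t_0$ with $\interior\bigl(\pi(F_{t_0})\bigr)\neq\emptyset$. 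You instead conclude directly: $\Dis(f)\subseteq\K^n\setminus U$ with $U:=\bigcup_t\interior(A_t)$, and $\K^n\setminus U=\bigcup_t(A_t\setminus U)$ is exhibited as a definable increasing union of closed sets with empty interior, hence meager by definition. This buys you something real: your argument does not use the Baire property at all, whereas the paper's proof of this particular lemma does (even though the surrounding subsection only assumes definable completeness), so your version establishes the stated meagerness unconditionally. The one point worth making explicit is the final inference: $\Dis(f)$ is meager because $\Dis(f)\cap(A_t\setminus U)$ is again a definable increasing family of nowhere dense sets, so the definable subset inherits meagerness with the paper's definition of ``meager''.
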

\begin{proof}
If, for contradiction, $\Dis(f)$ is not meager, then, since it is an $\Fs$, it
contains a non-empty open box~$B$.
Therefore, \wloG we can assume that $\Dis(f) = \K^n$.
Let $\Gamma(f) = \bigcup_t X(t)$, where $\Pa{X(t)}_{t \in \K}$ is a definable
increasing family of \dcompact sets.
Let $Y(t) := \Pi^{n+m}_n \Pa{X(t)}$.
Note that each $Y(t)$ is \dcompact, and $\K^n = \bigcup_t Y(t)$.
Since $\K^n$ is Baire, there exists $t_0$ such that $Y(t)$ contains a
non-empty open box~$B'$.
Let $B''  \subseteq B'$ be a closed box with non-empty interior, and $g := f
\rest B''$.
Note that $\Gamma(g) = X(t_0) \cap (B'' \times \K^m)$; therefore, $\Gamma(g)$
is compact, and so $g$ is continuous, contradicting the fact that $B''
\subseteq \Dis(f)$.
\end{proof}

\begin{lemma}[*]\label{lem:min}
Let $f: \K^n \to \K$ be definable.
Define $N_f := \set{x \in \K^n: x \text{ is a strict local minimum for } f}$.
Then, $N_f$ is meager in~$\K^n$.
\end{lemma}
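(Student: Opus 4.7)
The plan is to exhibit $N_f$ as a definable increasing union of nowhere dense (in fact closed and discrete) sets, which directly gives meagerness, and then observe that each piece is actually a pseudo-enumerable union of pseudo-finite sets, which yields the stronger (*) conclusion.

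First I would define, for every $r > 0$,
\[
N_f(r) := \set{x \in \K^n : \forall y \in B(x,r) \setminus \set{x},\ f(y) > f(x)}.
\]
Each $N_f(r)$ is definable, and since $x$ is a strict local minimum iff some such $r$ witnesses this, $N_f = \bigcup_{r > 0} N_f(r)$. Note that $N_f(r) \subseteq N_f(r')$ whenever $r \geq r'$, so re-parametrizing by $t = 1/r$ displays this as a definable \emph{increasing} family $\Pa{N_f(1/t)}_{t > 0}$.

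The key geometric observation is that $N_f(r)$ is $r$-separated: if $x, x' \in N_f(r)$ were distinct with $d(x, x') < r$, then $x' \in B(x, r) \setminus \set x$ would force $f(x') > f(x)$, while symmetrically $x \in B(x', r) \setminus \set{x'}$ would force $f(x) > f(x')$. From $r$-separation I would deduce two things. Discreteness is immediate (the ball $B(x, r)$ isolates $x$ within $N_f(r)$). Closedness follows by contradiction: if some $x \in \cl{N_f(r)} \setminus N_f(r)$, pick $y \in N_f(r)$ with $d(x,y) < r/3$, and then $y' \in N_f(r)$ with $d(x,y') < d(x,y)$, so $y \neq y'$ but $d(y, y') < 2r/3 < r$, contradicting $r$-separation. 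Hence each $N_f(r)$ is discrete and closed in $\K^n$, therefore nowhere dense.

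Thus $N_f$ is a definable increasing union of nowhere dense sets, which is the definition of meager. For the (*) strengthening, apply Lemma~\ref{lem:discrete-countable} to write each $N_f(r)$ as a definable increasing union of pseudo-finite subsets; combining with the outer union over $r$ and then re-indexing yields $N_f$ as a definable pseudo-enumerable union of pseudo-finite sets, so it cannot exhaust $\K^n$ even without invoking that $\K$ is Baire. There is no real obstacle here; the only subtle point is verifying that an $r$-separated set is automatically closed in the definably complete setting, which the three-point argument above handles cleanly.
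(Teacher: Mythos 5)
Your proof is correct and follows essentially the same route as the paper: write $N_f$ as the increasing union over $r$ of the sets of points that are strict minima on a ball of definite radius, and observe that each such set is $r$-separated, hence closed, discrete, and nowhere dense. The paper states the closedness and discreteness of each piece without proof; your $r$-separation argument is exactly the justification, and the extra remarks on the (*) strengthening match the paper's intent.
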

\begin{proof}
For every $r > 0$, let
\[
N(r) := \set{x \in \K^n: \abs{x} \leq r \ \&\ x \text{
    is a strict minimum for } f \text{ in the ball } B(x, 1/r)}.
\]
Note that $N_f = \bigcup_r N(r)$.
Moreover, each $N(r)$ is closed and discrete, and hence nowhere dense.
Thus, $N_f$ is meager.
\end{proof}

\begin{lemma}[*]\label{lem:min-image}
Let $f: \K^n \to \K$ be definable.
Define $M_f := \set{x \in \K^n: x \text{ is a local minimum for } f}$.
Then, $f(M_f)$ is meager in~$\K$.
\end{lemma}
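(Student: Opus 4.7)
The plan is to imitate the proof of Lemma~\ref{lem:min}, but since non-strict local minima need not be isolated, I will argue about the \emph{values} $f(M_f) \subseteq \K$ rather than the set $M_f$ itself. For every $r > 0$, set
\[
M(r) := \set{x \in \K^n : \abs{x} \leq r \et f(y) \geq f(x) \text{ for all } y \in \clB(x, 1/r)},
\]
so that $M_f = \bigcup_r M(r)$ is a definable increasing family and $f(M_f) = \bigcup_r V(r)$ with $V(r) := f(M(r))$. It then suffices to show that each $V(r)$ is pseudo-finite; this makes $f(M_f)$ pseudo-enumerable and \emph{a fortiori} meager, which is the kind of (*)-statement flagged in the subsection preamble.

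The key observation, and the source of everything, is that if $x, x' \in M(r)$ satisfy $\abs{x - x'} \leq 1/r$, then $f(x) = f(x')$, because each lies in the other's closed ball of radius $1/r$ and the defining inequality of $M(r)$ is applied twice. Based on this, I will introduce the definable selector
\[
s(c) := \lexmin \cl{M(r) \cap f^{-1}(c)} \qquad (c \in V(r)),
\]
which is well-defined since $\cl{M(r) \cap f^{-1}(c)}$ is \dcompact (closed and contained in $\clB(0,r)$). Setting $W(r) := s(V(r)) \subseteq \clB(0,r)$, an approximation argument shows that distinct points of $W(r)$ are at distance $\geq 1/r$: for $c \neq c'$, approximate $s(c)$ and $s(c')$ in the Euclidean sense by points of the fibers $L(c) := M(r) \cap f^{-1}(c)$ and $L(c')$ and apply the key observation to these approximating pairs. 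In particular $s$ is injective.

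From the distance bound $W(r)$ is discrete, and moreover closed: an accumulation point would eventually force two elements of $W(r)$ into a ball of diameter $< 1/r$, violating the bound. Being also bounded, $W(r)$ is pseudo-finite. Then $V(r)$, as the image of the pseudo-finite set $W(r)$ under the definable map $s^{-1}$, is itself pseudo-finite by the image-of-pseudo-finite lemma of Section~\ref{sec:pf}; hence $f(M_f)$ is pseudo-enumerable and meager. The main technical obstacle, which forces the detour through $\cl{L(c)}$ rather than $L(c)$, is that without any continuity of $f$ neither $M(r)$ nor its fibers $L(c)$ need be closed, so a direct lex-minimum of $L(c)$ may fail to exist; the approximation argument is what allows the $1/r$ lower bound on pairwise distances to survive passage to the closure.
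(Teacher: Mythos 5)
Your proof is correct, and it shares with the paper the same decomposition $M_f = \bigcup_r M(r)$, the same reduction to showing each $f\Pa{M(r)}$ is pseudo-finite, and the same key observation that two points of $M(r)$ within distance $1/r$ of one another must have equal $f$-values. Where you diverge is in how that observation is converted into pseudo-finiteness of $V(r) := f\Pa{M(r)}$: the paper argues by contradiction, assuming $V(r)$ has an accumulation point $y$, forming the nested nonempty \dcompact sets $C(\delta) := \cl{U(\delta)}$ with $U(\delta) = M(r) \cap f^{-1}\Pa{B(y,\delta) \setminus \set y}$, extracting a common point $x$ by the finite intersection property, and then finding two points of $M(r)$ within $1/(2r)$ of $x$ with distinct values — contradicting the key observation. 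You instead build an explicit definable selector $s(c) = \lexmin \cl{M(r) \cap f^{-1}(c)}$ and show its image $W(r)$ is uniformly $1/r$-separated and bounded, hence pseudo-finite, and transport this back to $V(r)$ via the image-of-pseudo-finite lemma. Your route buys a constructive witness (a definable injection of $V(r)$ into a separated subset of $\K^n$, which also makes the pseudo-enumerability of $f(M_f)$ transparent once one restricts the union to a \pN index set), at the cost of having to verify that the $1/r$-separation survives passage to the closures of the fibers; the paper's route is shorter but purely by contradiction. One small remark: your final sentence slightly overstates matters — the union $\bigcup_{r>0} V(r)$ over all of $\K^{>0}$ is directly meager because each $V(r)$ is nowhere dense, whereas calling it pseudo-enumerable requires first replacing the index set by a \pN set (or noting that in the \aminimal case the union is already pseudo-finite); this does not affect the meagerness conclusion.
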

\begin{proof}
For every $r > 0$, let
\[
M(r) := \set{x \in \K^n: \abs{x} \leq r \ \&\ x \text{
    is a minimum for } f \text{ in the ball } B(x, 1/r)}.
\]
Note that
$f(M_f) = \bigcup_r f\Pa{M(r)}$. 
Fix $r > 0$.
It suffices to prove that $Y := f\Pa{M(r)}$ is nowhere
dense.
We will prove the stronger statement that $Y$ is pseudo-finite.
Assume, for contradiction, that $Y$ has an accumulation point $y \in \K$.
For every $\delta > 0$, let 
$U(\delta) := f^{-1}\Pa{B(y,\delta) \cap Y  \setminus \set y}$,
that is: $x \in U(\delta)$ iff $f(x) \in B(y,\delta)$, $f(x) \neq y$,
$\abs{f(x)} \leq r$, and $x$ is a minimum of $f$ on $B(x, 1/r)$.

Let $C(\delta)$ be the closure of $U(\delta)$ in~$\K^n$.
Since each $C(\delta)$ is a non-empty \dcompact subset of $\cl{B(0,r)}$, the
intersection of the $C(\delta)$ is non-empty; let $x \in \bigcap_\delta
C(\delta)$.
Let $x_1, x_2 \in B(x,1/(2r)) \cap U(\delta)$, such that $f(x_1) < f(x_2)$ (they
exist by definitions).
However, $x_1 \in B(x_2,r)$, and $x_2 \in M(r)$;
therefore, $f(x_1) \geq f(x_2)$, absurd.
\end{proof}


\begin{definizione}\label{def:first class}
A definable function $f: X \to Y$ is \intro{of first class} if there exists
a function $F: \K \times X \to Y$, such that, for every $t \in \K$ 
and $x \in X$,
\begin{enumerate}
\item  $f_t(x) := F(t, x) : X \to Y$ is a continuous function
of~$x$,
\item $\lim_{t \to + \infty} f_t(x) = f(x)$;
\end{enumerate}
that is, $f$ is a point-wise limit of a definable family of continuous
functions $(f_t)_{t \in \K}$.
\end{definizione}

\begin{lemma}\label{lem:first-class}
Let $X$ be Baire, and  $f: X \to \K^m$ be of first class.
Then, $\Dis(f)$ is meager (in~$X$).
\end{lemma}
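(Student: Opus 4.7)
The plan is to adapt the classical Baire result that pointwise limits of continuous functions have meager discontinuity set. Since $\Dis(f) = \bigcup_{s > 0} \Dis(f, 1/s)$ where each $\Dis(f, s)$ is a closed subset of $X$ (as observed just before Lemma~\ref{lem:first-class}), and the family is definable and increasing (in $1/s$), it suffices to prove that each $\Dis(f, s)$ is nowhere dense in $X$; being closed, this reduces to showing empty interior. Meagerness of $\Dis(f)$ will then follow from the definition.

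Fix $s > 0$ and assume for contradiction that $\Dis(f, s)$ contains a non-empty open ball $B \subseteq X$. The central construction is the auxiliary definable family
\[
E_t := \set{x \in X : \forall u, v \geq t,\ \abs{f_u(x) - f_v(x)} \leq s/4},
\]
for $t \in \K$. Since $(u,v,x) \mapsto f_u(x) - f_v(x)$ is continuous in $x$ (for fixed $u,v$) because each $f_t$ is continuous, each $E_t$ is closed in $X$. Pointwise convergence $f_t(x) \to f(x)$ ensures the sequence $(f_t(x))_t$ is eventually Cauchy at each point, so $X = \bigcup_t E_t$, and this is a definable increasing family.

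Restricting to $B$, we obtain $B = \bigcup_t (B \cap E_t)$, a definable increasing union of sets closed in $B$. Since $X$ is Baire and $B$ is open in $X$, $B$ is itself Baire, hence some $B \cap E_{t_0}$ has non-empty interior; let $B' \subseteq B \cap E_{t_0}$ be an open ball. For every $x \in B'$, letting $v \to \infty$ in the defining inequality yields $\abs{f_u(x) - f(x)} \leq s/4$ for all $u \geq t_0$; in particular $\abs{f_{t_0}(x) - f(x)} \leq s/4$ on $B'$. Pick any $x_0 \in B'$; by continuity of $f_{t_0}$, choose an open ball $B'' \subseteq B'$ around $x_0$ on which $\abs{f_{t_0}(x) - f_{t_0}(x_0)} < s/4$. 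A triangle inequality then gives, for every $x \in B''$,
\[
\abs{f(x) - f(x_0)} \leq \abs{f(x) - f_{t_0}(x)} + \abs{f_{t_0}(x) - f_{t_0}(x_0)} + \abs{f_{t_0}(x_0) - f(x_0)} < 3s/4 < s,
\]
so $f(B'') \subseteq B(f(x_0), s)$. This contradicts $x_0 \in B \subseteq \Dis(f, s)$.

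The only real subtlety I foresee is the invocation of Baire: we must ensure that open subsets of a Baire space are themselves Baire (i.e.\ that a non-empty open $B$ cannot be written as a definable increasing union of subsets closed in $B$ and nowhere dense in $B$). This is standard for topological Baire spaces; in the definably Baire setting of \cite{FS} it follows from the fact that nowhere denseness in $B$ of a closed-in-$B$ set transfers to nowhere denseness in $X$ of its closure, so a meager decomposition of $B$ would give a definable increasing family of nowhere dense subsets covering $B$, and one extends by padding with $\emptyset$ or, equivalently, uses \cite[Prop.~2.7]{FS}. Everything else is a definable transcription of the standard $\varepsilon/4$-argument.
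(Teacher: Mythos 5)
Your proof is correct and follows essentially the same route as the paper's: both introduce the increasing definable family of closed sets $E_t$ on which the $f_u$ are uniformly $\varepsilon$-close for $u,v\geq t$, use Baireness of an open subset to find some $E_{t_0}$ with interior, pass to the limit to get $\abs{f_{t_0}-f}\leq\varepsilon$ there, and conclude with continuity of $f_{t_0}$ and the triangle inequality. The only cosmetic difference is that you argue directly with the closed sets $\Dis(f,s)$ while the paper phrases the same estimate via the oscillation function $\omega$ and the sets $F_\varepsilon$.
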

\begin{proof}
Minor variation of~\cite[Thm.~7.3]{oxtoby}.
It suffices to show that, for every $\varepsilon > 0$,
$F_\varepsilon := \set{x \in X: \omega(x) \geq 5 \varepsilon}$ is nowhere
dense, where
\[
\omega(x) := \lim_{\delta \to 0^+} 
\sup \set{ \abs{f(x') -   f(x'')}: x', x'' \in B(x; \delta)}.
\]
Fix an open definable subset $X' \subseteq X$.
For every $i \in \K$, let
\[
E_i := \bigcap_{s, t \geq i}
\set{x \in X': \abs{f_s(x) - f_t(x)} \leq \varepsilon}.
\]
Then, $(E_i)_{i \in \K}$ is an increasing family of closed subsets of~$X'$,
and $\bigcup_i E_i = X'$.
Since $X$ is Baire and $X'$ is open, $X'$ is not meager in itself,
and therefore $E_{i_0}$ must have non-empty interior (in~$X'$), 
for some $i_0 \in \K$.
Let $V$ be a non-empty open subset in~$E_{i_0}$.
We have $\abs{f_t(x) - f_s(x)} \leq \varepsilon$ for all $x \in V$
and $s, t  \geq i_0$.
Putting $t = i_0$ and letting $s \to \infty$, it follows that
$\abs{f_{i_0}(x) - f(x)} \leq \varepsilon$ for all $x \in V$.
For all $x_0 \in B$ there exists a neighbourhood $U(x_0) \subseteq V$,
such that $\abs{f_{i_0}(x) - f_{i_0}(x_0)} \leq \varepsilon$ for all 
$x \in U(x_0)$.
Hence $\abs{f(x) - f_{i_0}(x_0)} \leq 2 \varepsilon$ for all $x \in U(x_0$.
Therefore $\omega(x_0) \leq 4 \varepsilon$, 
and so no point of $V$ belongs to~$F$.
Thus, every non-empty open set $X'$ contains a non-empty open subset~$V$
disjoint from~$F$.
This shows that $F$ is nowhere dense.
\end{proof}

\subsection{Pseudo-enumerable sets}
As usual, $\K$ will be a definably complete structure, expanding an ordered
field.

\begin{definizione}
$N \subset \K$ is a \pN subset of $\K$ if it is a definable closed,
discrete and unbounded subset of $K^{\geq 0}$.
\end{definizione}

There are 2 cases: either every closed, definable and discrete subset of $\K$
is pseudo-finite (and hence $\K$ is \aminimal, and thus Baire), or not.
Equivalently: either $\K$ is \aminimal, or a \pN set exists.
Suppose we are in the second case.

\begin{definizione}
Let $X \subseteq \K^n$ be definable.
We say that $X$ is \intro{pseudo-enumerable} if 
there exists $N$ a \pN subset of~$\K$, and a
definable bijection $f: N \bij X$.
We say that $X$ is \intro{at most pseudo-enumerable}
if $X$ is either pseudo-finite or pseudo-enumerable.

We say that $\Pa{X(i)}_{i \in I}$ is a \intro{strongly uniform family} 
of at most
pseudo-enumerable sets, if $\Pa{X(i)}_{i \in I}$
is a definable family of subsets of~$\K^n$ (for some~$n$),
and there exist $Y \subset \K \times Y$ definable, and
$F: Y \to \K^n$ definable,
such that, for every $i \in I$, $F(Y_{[i]})  = X(i)$,
and $Y_i$ is a discrete definable closed subset of~$\K^{\geq 0}$.
\end{definizione}

Note that we did \emph{not} claim that 2 pseudo-enumerable sets are in a
definable bijection (we do not know whether this is true or not, but we
conjecture it \emph{is} true).

\begin{remark}
Let $X$ be at most pseudo-enumerable, and $Y$ be a definable subset of~$X$.
Then, $Y$ is at most pseudo-enumerable.
In particular, a definable subset of a \pN set is at most pseudo-enumerable.
\end{remark}
\begin{proof}
Assume that $Y$ is not pseudo-finite.
Therefore, $X$ is not pseudo-finite, and hence it is pseudo-enumerable;

If $X$ is a \pN set, then $Y$ is not bounded (otherwise, it would be
pseudo-finite), and hence it is a \pN set, and in particular pseudo-enumerable.

Otherwise, let $g: N \to X$ bijection, where $N$ is a \pN subset of~$\K$.
Let $h :=  g \rest Y: Y \to N$,
and $N' := h(N) \subseteq N$.
Notice that $h: Y \to N'$ is a bijection;
therefore, $N'$ is not pseudo-finite, and $Y$ is pseudo-enumerable.
\end{proof}

\begin{remark}
If $X$ is at most pseudo-enumerable, and $f: X \to \K^m$  is definable,
then $f(X)$ is at most pseudo-enumerable.
In particular, the image of a \pN subset of~$\K$ is either pseudo-finite,
or pseudo-enumerable.
\end{remark}
\begin{proof}
Assume that $Y := f(X)$ is not pseudo-finite.
Thus, $X$ is not pseudo-finite, and hence it is pseudo-enumerable;
let $g: N \to X$ bijection, where $N$ is a \pN subset of~$\K$.
For every $y \in Y$, define 
\[
h(y) := \min\set{i \in N: f(g(i)) = y},
\]
and $N' := h(N) \subseteq N$.
Notice that $h: Y \to N'$ is a bijection;
therefore, $N'$ is not pseudo-finite.
Thus, $N'$ is a \pN set, and $h^{-1}: N' \to Y$ is a bijection,
proving that $Y$ is pseudo-enumerable.
\end{proof}

\begin{lemma}\label{lem:enum-1}
If $X$ is at most pseudo-enumerable, then there exists $Y \subset \K^{\geq 0}$
definable, closed and discrete, and $f: Y \bij X$ definable bijection.
\end{lemma}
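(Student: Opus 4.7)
The plan is to split along the two cases hidden in the definition of ``at most pseudo-enumerable''. If $X$ is pseudo-enumerable, the conclusion is essentially immediate from the definition: there already exists a \pN set $N \subseteq \K^{\geq 0}$, which by definition is definable, closed, and discrete, together with a definable bijection $g : N \to X$; one simply sets $Y := N$ and $f := g$. The real content of the lemma is therefore in the pseudo-finite case.

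Assume $X \subseteq \K^n$ is pseudo-finite. The strategy is to produce a definable injection $\psi : X \to \K$; once this is in hand, $\psi(X)$ is pseudo-finite (images of pseudo-finite sets under definable maps are pseudo-finite, by the corresponding lemma in \S\ref{sec:pf}), hence bounded below by definable completeness, and translating by $-\inf \psi(X)$ pushes it into $\K^{\geq 0}$. The desired $Y$ will be this translate and $f$ the inverse of the resulting definable bijection.

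To build $\psi$, consider the definable one-parameter family $\psi_M(x_1, \ldots, x_n) := \sum_{i=1}^n M^{i-1} x_i$. For any fixed distinct $x, y \in X$, the equation $\psi_M(x) = \psi_M(y)$ forces $M$ to be a root of the non-zero polynomial $\sum_{i=1}^n (x_i - y_i) T^{i-1} \in \K[T]$ of degree at most $n-1$, so it admits at most $n-1$ solutions in $\K$. Hence the full bad set $B := \set{M \in \K : \psi_M \text{ is not injective on } X}$ is the union, indexed by the set $(X \times X) \setminus \Delta$, of fibres of size at most $n-1$. Letting $M_0 := \sup(B) + 1$ (with the convention $\sup \emptyset := 0$), we take $\psi := \psi_{M_0}$.

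The main obstacle is verifying that $B$ is genuinely pseudo-finite, so that $\sup B$ exists in $\K$. This uses two ingredients from Section~\ref{sec:pf}: that $(X \times X) \setminus \Delta$ is pseudo-finite (Corollary~\ref{cor:product} plus Lemma~\ref{lem:fin-subset}), and that Lemma~\ref{lem:finite-union}, applied to the definable additive property ``being pseudo-finite'', upgrades a pseudo-finite-indexed union of pseudo-finite fibres to a pseudo-finite set. Once $B$ is known to be pseudo-finite, everything else -- the definability of $\psi_{M_0}$, its injectivity on $X$, the image being pseudo-finite, the translation into $\K^{\geq 0}$, and the passage to the inverse bijection -- is routine.
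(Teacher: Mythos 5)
Your proof is correct, but the route through the pseudo-finite case is genuinely different from the paper's. The paper proceeds by induction on the ambient dimension $n$: the key step is $n=2$, where it maps $\pair{x,y} \mapsto x + dy$ with $d$ equal to half the minimal gap $\delta\Pa{\Pi^2_1(X)} > 0$ of the first-coordinate projection, so that distinct fibres land in disjoint intervals; the general case then folds coordinates in one at a time. You instead use a single linear form $\psi_M(\x) = \sum_i M^{i-1}x_i$ and choose the parameter $M$ generically, showing the set $B$ of bad parameters is pseudo-finite via root-counting for the polynomials $\sum_i (x_i - y_i)T^{i-1}$ together with the fact (the instance of Lemma~\ref{lem:finite-union} recorded in \S\ref{sec:pf}) that a pseudo-finite union of pseudo-finite sets is pseudo-finite. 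The paper's argument is the more elementary one, needing only $\delta(\cdot) > 0$ for pseudo-finite sets; yours is a one-shot, induction-free construction, at the cost of invoking the heavier closure property of Lemma~\ref{lem:finite-union} and the field structure of $\K[T]$ more seriously. (A small simplification of your step: since each bad fibre has at most $n-1$ points, $B$ is also a union of $n-1$ definable images of the pseudo-finite set $(X\times X)\setminus\Delta$, each pseudo-finite by the image lemma of \S\ref{sec:pf}, which avoids appealing to Lemma~\ref{lem:finite-union} over an index set living in $\K^{2n}$.) Both approaches are sound, and your treatment of the pseudo-enumerable case and of the final translation into $\K^{\geq 0}$ matches the paper's.
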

\begin{proof}
If $X$ is pseudo-enumerable, the above is true by definition.
If $X \subset \K^n$ is pseudo-finite, we proceed by induction on~$n$.

If $n = 1$, $Y = X$ satisfies the conclusion.
If $n = 2$, \wloG we can assume that $X \subset [0,1]^2$.
Let $X_1 := \Pi^2_1(X) \subset [0,1]$, and
$d := \frac 1 2 \min\set{x' - x: x < x' \in   X_1}$
(or $d := 1/2$ if $X_1$ has only one element).
For every $\pair{x, y} \in X$, define $g(x, y) := x + d y $.
Then, $g$ is injective, and $Y := g(X)$ is a pseudo-finite subset of~$\K$.
Assume that we have already proved the conclusion for $n \geq 2$,
we prove it for $n + 1$.
Let $X_1 := \pi(X)$, where $\pi := \Pi^{n+1}_n$.
By inductive hypothesis, there exists $Y_1 \subset \K$ pseudo-finite, and
$g_1: X_1 \bij Y_1$ definable bijection.
For every $\pair{\x, y} \in X$, let $f(\x,y) := \pair{g_1(\x), y}$,
$f: X \to \K^2$, and $X' := f(X)$.
Then, $f$ is injective, and, by the case $n = 2$, we can conclude.
\end{proof}

\begin{remark}
The union of 2 at most pseudo-enumerable sets is pseudo-enumerable.
\end{remark}
\begin{proof}
Let $X_1$ and $X_2$ be at most pseudo-enumerable.
We want to prove that $X_1 \cup X_2$ is at most pseudo-enumerable.
\Wlog, $X_1$ and $X_2$ are disjoint.
If both $X_1$ and $X_2$ are pseudo-finite, then we are done.
Otherwise, \wloG $X_2$ is pseudo-enumerable.
By Lemma~\ref{lem:enum-1}, there exist
$N_i$ closed, definable, and discrete subsets of~$\K^{\geq 0}$, and
$f_i : N_i \bij X_i$ definable bijections, $i = 1, 2$.

For every $i \in N_k$, let $s_k(i) := \min\set{i' \in N_k: i' > i}$ be the
successor of $i$ in $N_k$ (or $s_1(m) = m + 1$ if $m = \max(S_1)$), 
$k = 1, 2$.
Define $f: X_1 \cup X_2 \to \K$ as:
\[
f(x) := \begin{cases}
f_2(x) & \text{if } x \in X_2;\\
f_1(x) & \text{if } x \in X_1 \et f_1(x) \notin N_2;\\
\frac{1}{2} \Bigl( f_1(x) + \min \Pa{s_1(f_1(x)), s_2(f_1(x))} \Bigr)
  & \text{if } x \in X_1 \et f_1(x) \in N_2.
\end{cases}
\]
Then, $f$ is injective, and $f(X_1 \cup X_2)$ is a \pN subset of~$\K$.
\end{proof}

\begin{lemma}\label{enum:product}
\begin{enumerate}
\item 
The product of 2 pseudo-enumerable sets is pseudo-enumerable.
\item
The product of 2 at most pseudo-enumerable sets is at most pseudo-enumerable.
\end{enumerate}

\end{lemma}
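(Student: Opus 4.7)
The plan is to prove both assertions uniformly via the following geometric claim: if $Y_1, Y_2 \subseteq \K^{\geq 0}$ are definable, closed, and discrete, then $Y_1 \times Y_2$ admits a definable bijection onto some definable closed discrete subset of $\K^{\geq 0}$, which is unbounded precisely when at least one $Y_i$ is. The reduction is immediate from Lemma~\ref{lem:enum-1}: any at most pseudo-enumerable $X_i$ is in definable bijection with some such $Y_i$, unbounded iff $X_i$ is pseudo-enumerable. Both parts of the lemma then follow at once, since the pairing bijection $X_1 \times X_2 \leftrightarrow Y_1 \times Y_2$ is definable.

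To prove the claim, I would construct an explicit ``diagonal pairing'' $\phi : Y_1 \times Y_2 \to \K^{\geq 0}$ that separates values according to the level $k := \max(y_1, y_2)$ and injects the pseudo-finite fiber $\max^{-1}(k)$ into the interval $[3k, 3k+1)$. Concretely, let
\[
g := g(k) := \min\Pa{1,\, \tfrac{1}{2} \min\set{\abs{a - b} : a, b \in (Y_1 \cup Y_2) \cap [0, k+1],\ a \neq b}}
\]
(with the inner minimum declared to be $1$ when the intersection has fewer than two points), which is well defined and positive by definable completeness, since $(Y_1 \cup Y_2) \cap [0, k+1]$ is pseudo-finite. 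Then set
\[
\phi(y_1, y_2) := 3k + \frac{g}{4(k+1)^2}\, y_1 + \frac{g^2}{4(k+1)^3}\, y_2.
\]

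The verification splits into four checks. First, the two coefficient choices ensure that the correction term lies in $[0, 1)$, so distinct levels map into disjoint intervals $[3k, 3k+1)$, giving cross-level injectivity. Second, within a single level, injectivity follows from the same estimate as in the proof of Lemma~\ref{lem:enum-1}: any two distinct points of $Y_1 \cap [0, k+1]$ differ by at least $2g$, and the coefficients were chosen so that the $y_1$-contribution to the correction dominates the $y_2$-contribution. Third, $\phi^{-1}([0, r])$ is contained in the pseudo-finite set $(Y_1 \times Y_2) \cap [0, (r+1)/3]^2$ (pseudo-finite by Corollary~\ref{cor:product}), so $\phi(Y_1 \times Y_2)$ meets each bounded interval in a pseudo-finite set and is therefore closed and discrete. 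Fourth, $\phi(y_1, y_2) \geq 3k$, so the image is unbounded iff the level function $k$ is unbounded on $Y_1 \times Y_2$, which happens iff one of the $Y_i$ is unbounded.

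The main obstacle is the simultaneous tuning of the two coefficients $g/(4(k+1)^2)$ and $g^2/(4(k+1)^3)$: they must stay small enough to confine the correction to $[0, 1)$ for level separation, yet their ratio must be large enough (compared to $k$) for within-level injectivity, all while the local gap $g$ may shrink arbitrarily fast as $k$ grows. The chosen powers of $(k+1)^{-1}$ strike exactly this balance, and the edge cases ($k = 0$ or singleton fibers) pose no real difficulty under the stated conventions.
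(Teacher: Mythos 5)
Your reduction via Lemma~\ref{lem:enum-1} and the general shape of the construction (an explicit definable injection of $Y_1\times Y_2$ into $\K^{\geq 0}$, with a "level" $k=\max(y_1,y_2)$ and a small perturbation, then closedness/discreteness from $\phi\geq 3k$ and pseudo-finiteness of bounded preimages) closely parallels the paper's proof, which instead first makes the two factors disjoint subsets of a single \pN set $N$ and perturbs by $\frac{d(a)}{2a}b$ where $d$ is the minimal successor-gap up to the level $a$. However, one of your four checks rests on a false claim: you assert that since the correction term lies in $[0,1)$, "distinct levels map into disjoint intervals $[3k,3k+1)$." Those intervals are disjoint only when $\abs{k-k'}\geq 1/3$, and distinct levels can be arbitrarily close, because the levels range over elements of $Y_1\cup Y_2$, a closed discrete set whose local gaps may tend to $0$ at infinity. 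For instance, in $\Real(\Nat)$ take $Y_1=\Nat^{>0}$ and $Y_2=\set{n+1/n:0<n\in\Nat}$: the levels $n$ and $n+1/n$ differ by $1/n<1/3$ for $n\geq 4$, so the corresponding unit intervals overlap. Thus cross-level injectivity does not follow from the bound "correction $<1$."

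The construction itself survives, precisely because you defined $g(k)$ over the window $[0,k+1]$ rather than $[0,k]$, but the argument must be replaced. Note first that the correction at level $k$ is bounded not just by $1$ but by $\frac{g(k)}{4(k+1)}+\frac{g(k)^2}{4(k+1)^2}\leq g(k)/2$. Now let $k<k'$ be two levels. If $k'-k<1$, then both $k$ and $k'$ lie in $(Y_1\cup Y_2)\cap[0,k+1]$, so $k'-k\geq 2g(k)>g(k)/2\geq$ the correction at level $k$, whence $3k+\text{(correction)}<3k+(k'-k)\leq 3k'$. If $k'-k\geq 1$, then $3(k'-k)\geq 3>1/2\geq$ the correction. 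Either way the level-$k$ image lies strictly below $3k'$, which restores cross-level injectivity. With this repair the remaining checks (within-level injectivity via the dominance of the $y_1$-term, closedness and discreteness of the image, and the unboundedness criterion) are correct, and the lemma follows.
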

\begin{proof}
It suffices to prove that if $X$ and $Y$ are at most pseudo-enumerable,
then $Z := X \times Y$ is at most pseudo-enumerable.
If $X$ and $Y$ are both pseudo-finite, then $Z$ is also pseudo-finite, and we
are done.
By Lemma~\ref{lem:enum-1} and the above Remark,
\wloG $X = Y$ is a \pN subset of~$\K$, and $X > 1$.
By applying the above remark again to $X \times \set 0$ and $Y \times \set 1$,
\wloG $X$ and $Y$ are disjoint subsets of a \pN subset $N$ of $\K$,
and $N > 1$.

For every $i \in N$, let $s(i) := \min \set{i' \in N: i' > i}$ 
be the successor of $i$ in~$N$, and
\[
d(i) := \min\set{(s(i') - i'): i \geq i' \in N} > 0.
\]
Define $f: X \times Y \to \K$,
\[
f(x,y) :=
\begin{cases}
x + \frac{d(x)}{2x}y & \text{if } x > y;\\
y + \frac{d(y)}{2y}x & \text{if } x < y.
\end{cases}
\]
Notice that the definition is symmetric in $X$ and $Y$
(that is, if we exchange the r\^oles of $X$ and $Y$, $f$ remains the same).

\begin{claim}
$f$ is injective.
\end{claim}
Assume not.
Let $\pair{x,y} \neq \pair{x', y'} \in X \times X$, such that
$f(x,y) = f(x', y')$.
Let $a := \max(x,y)$, $b := \min(x,y)$,
$a' := \max(x', y')$, and $b' := \min(x',y')$.
Then, $f(x,y) = a + \frac{d(a)}{2 a} b$, and similarly for $f(x',y')$.
Hence,
\[
a + \frac{d(a)}{2 a} b = a' + \frac{d(a')}{2 a'} b'.
\]
\Wlog, $a' \geq a$.
If $a' = a$, then $b = b'$; since $X$ and $Y$ are disjoint, this implies that
$\pair{x,y} = \pair{x', y'}$, a contradiction.
Otherwise, $a' > a$, and
\[
d(a)/2 < a' - a = \frac{d(a)}{2 a} b - \frac{d(a')}{2 a'} b'
\leq d(a)/2,
\]
absurd, proving the claim.

\begin{claim}
$f(Z)$ is \pN.
\end{claim}
Since $X$ and $Y$ are not pseudo-finite, it suffices to show that
$f(Z)$ is closed and discrete.
Both closedness and discretedness follow if we show that, for every $r > 0$,
$f(Z) \cap [0,r]$ is pseudo-finite.
However, $f(x,y) > \max(x, y)$, and therefore
$f^{-1}([0,r]) \subseteq \Pa{X \cap [0,r]} \times \Pa{Y \cap [0,r]}$,
and the latter is pseudo-finite.
\end{proof}

\begin{lemma}
The union of an at most pseudo-enumerable strongly uniform family
of at most pseudo-enumerable sets is at most pseudo-enumerable.
\end{lemma}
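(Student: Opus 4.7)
The plan is to reduce the union to a single definable closed discrete subset of $\K^{\geq 0}$ via a pairing function generalizing the one used in Lemma~\ref{enum:product}.

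\emph{Setup.} By the strong uniformity hypothesis, there exist a definable set $Y \subseteq I \times \K^{\geq 0}$ with each fibre $Y_i$ closed and discrete in $\K^{\geq 0}$, and a definable $F$ satisfying $F(Y_{[i]}) = X(i)$; hence $\bigcup_{i \in I} X(i) = F(Y)$. Since the image of an at most pseudo-enumerable set under a definable map is at most pseudo-enumerable, it suffices to prove that $Y$ itself is at most pseudo-enumerable. By Lemma~\ref{lem:enum-1}, fix a definable bijection $\phi : I \bij N$ with $N \subseteq \K^{\geq 0}$ definable, closed, and discrete; transporting $Y$ via $\phi$, we may assume $Y \subseteq N \times \K^{\geq 0}$ with $Y_n$ closed discrete for every $n \in N$.

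\emph{The pairing function.} Define $\gamma : N \to \K^{>0}$ by $\gamma(n) := \min\Pa{1, \tfrac{1}{2}(s(n) - n)}$, where $s(n)$ is the $N$-successor of $n$ (with the convention $s(n) := n+1$ if $n = \max(N)$). Let $\sigma : \K^{\geq 0} \to [0, 1)$ be the definable order-preserving bijection $\sigma(y) := y/(1+y)$, and set
\[
h : Y \longrightarrow \K^{\geq 0}, \qquad h(n, y) := n + \sigma(y)\, \gamma(n).
\]
Then $h\bigl(\{n\} \times Y_n\bigr) \subseteq [n,\, n + \gamma(n))$, and by construction, for $m < n$ in $N$ one has $m + \gamma(m) < s(m) \leq n$, so the intervals $\set{[n,\, n+\gamma(n)) : n \in N}$ are pairwise disjoint. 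Consequently $h$ is injective: if $h(m, y) = h(n, y')$ with $m \leq n$, then $m = n$ by interval disjointness, and then $\sigma(y) = \sigma(y')$ forces $y = y'$.

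\emph{Discreteness of the image.} It suffices to show that $h(Y) \cap [0, r]$ is pseudo-finite for every $r > 0$; then $h(Y) \subseteq \K^{\geq 0}$ is closed and discrete, hence either pseudo-finite or a \pN{} set. Fix $r > 0$. Since $h(n, y) \geq n$, any $(n, y) \in Y$ with $h(n, y) \leq r$ has $n \in N \cap [0, r]$, which is a bounded subset of the closed discrete set $N$, hence pseudo-finite. For each such $n$, the admissible $y$'s lie in $Y_n \cap [0, \sigma^{-1}((r-n)/\gamma(n))]$, a bounded subset of the closed discrete $Y_n$, hence pseudo-finite. Applying Lemma~\ref{lem:finite-union} to the additive definable property ``being pseudo-finite'', $h(Y) \cap [0, r]$ is a pseudo-finite union of pseudo-finite sets, hence pseudo-finite.

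\emph{Conclusion and main difficulty.} Since $h : Y \to h(Y)$ is a definable bijection onto an at most pseudo-enumerable closed discrete set, applying the image remark to $h^{-1}$ shows that $Y$ is at most pseudo-enumerable, and hence so is $X = F(Y)$. The main obstacle is producing the pairing uniformly: $\gamma$ must be definable in $n$, strictly positive, and small enough that the intervals $[n,\, n+\gamma(n))$ are \emph{globally} disjoint over $N$; the choice $\min(1, \tfrac{1}{2}(s(n)-n))$ achieves this, and pseudo-finiteness of the image has to be verified on bounded boxes (not fibrewise), which is exactly what Lemma~\ref{lem:finite-union} provides.
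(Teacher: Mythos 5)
There is a genuine gap in the discreteness step, and it sits exactly where you flagged the ``main difficulty''. Your pairing $h(n,y) = n + \sigma(y)\,\gamma(n)$ compresses the entire fibre $Y_n$ --- which may be an \emph{unbounded} closed discrete subset of $\K^{\geq 0}$, \ie a \pN set --- into the bounded half-open interval $[n,\, n+\gamma(n))$ via the order-preserving map $\sigma(y)=y/(1+y)$. If $Y_n$ is unbounded, then $\sigma(Y_n)$ accumulates at $1$, so $h(\set{n}\times Y_n)$ is an infinite set accumulating at $n+\gamma(n)$; hence $h(Y)$ is neither closed nor discrete, and $h(Y)\cap[0,r]$ is \emph{not} pseudo-finite for any $r > n+\gamma(n)$. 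Concretely, your assertion that the admissible $y$'s lie in $Y_n \cap [0,\sigma^{-1}((r-n)/\gamma(n))]$, ``a bounded subset'', only makes sense when $(r-n)/\gamma(n) < 1$; as soon as $r \geq n+\gamma(n)$ the constraint $\sigma(y) \leq (r-n)/\gamma(n)$ is vacuous, the whole (possibly unbounded) fibre contributes, and pseudo-finiteness fails.

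The missing idea --- and the route the paper takes --- is to first reduce to the case where every fibre is pseudo-finite (\ie bounded). Writing $Y(i,j) := Y_i \cap B(0;j)$ with $j$ ranging over a \pN set $J$, one has $\bigcup_i X(i) = \bigcup_{\pair{i,j} \in I \times J} F\bigl(Y(i,j) \times \set{i}\bigr)$, and $I \times J$ is still at most pseudo-enumerable by Lemma~\ref{enum:product}; so one may re-index over $I \times J$ and assume each fibre is pseudo-finite. After that reduction a pairing of your type works: the paper uses the linear rescaling $g(y,i) = i + \frac{d(i)}{2u(i)}y$ with $u(i) := \max Y(i)$ (which needs boundedness of the fibre even to be defined), and a linear map sends a pseudo-finite fibre to a pseudo-finite subset of $[i,\, i+d(i)/2]$ with no accumulation at the right endpoint. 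With that reduction inserted, the rest of your argument (disjoint intervals, injectivity, checking pseudo-finiteness on $[0,r]$ via Lemma~\ref{lem:finite-union}) goes through; without it, the proof fails.
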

\begin{proof}
Let $\Pa{X(i)}_{i \in N}$ be a pseudo-enumerable uniform family of at most
pseudo-enumerable subsets of $\K^n$, and $Z := \bigcup_{i \in N} X(i)$.
If $\K$ is \aminimal, then $Z$ is pseudo-finite.
Otherwise, \wloG $N$ is a \pN subset of~$\K$.
Let $Y \subset \K \times N$ and $F: Y \to \K^n$ be definable,
such that, for every $i \in N$, $F(Y_{[i]})  = X(i)$,
and $Y_i$ is a discrete definable closed subset of~$\K^{\geq 0}$.
For every $i, j \in N$, let
$Y(i,j) := Y_i \cap B(0; j)$.
Notice that $Y(i,j)$ is pseudo-finite for every $i, j$,
and
\[
Z = \bigcup_{\pair{i,j}\in N^2} F(Y(i,j) \times \set{i}).
\]
Since $N^2$ is pseudo-enumerable, by changing the family,
\wloG we can assume that each $X(i)$ is pseudo-finite.
Moreover, \wloG each $X(i)$ is of the form $Y(i) \times \set i$,
where $Y(i)$ is a pseudo-finite subset of $\K^{\geq 0}$.
As usual, for every $i \in N$, let $s(i)$ be the successor of $i$ in $N$.
Define $d(i) := s(i) - i$, and
Moreover, let $u(i) := \max(Y(i))$.
Define $g : Z \to \K$,
\[
g(y,i) = i + \frac{d(i)}{2 u(i)} y.
\]
Then, $g$ is injective, and $g(Z)$ is a closed discrete subset of $\K^{\geq 0}$.\end{proof}

\begin{lemma}
Let $D \subseteq \K^n$ be pseudo-finite.
Then, there exists $C \subseteq \K$ pseudo-finite,
and a definable bijection $f: D \bij C$.
Moreover, $C$ and $f$ can be defined uniformly from $C$:
that is, if $\Pa{D_i}{i\in I}$ is a definable family of pseudo-finite subsets
of~$\K^n$,
then there exists $C \subset \K^n \times I$ and $f: D \to C$ definable and
injective,  such that, for every $i \in I$,
$f(C_{[i]}) = D_{[i]}$.
Hence, every definable family of pseudo-finite sets is uniform.
\end{lemma}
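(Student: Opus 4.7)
The plan is to upgrade Lemma~\ref{lem:enum-1} by tracking uniformity in parameters throughout. The existence of a single pair $(C,f)$ for a single pseudo-finite $D$ is exactly Lemma~\ref{lem:enum-1} in the case when the image is pseudo-finite rather than properly pseudo-enumerable, so the real content is the uniform version; the single-set statement then falls out as the special case $\card{I} = 1$.

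I would proceed by induction on~$n$. For $n = 1$, take $C_i := D_i$ and $f_i := \id_{D_i}$, which are trivially uniform in~$i$. For $n = 2$, fix a definable family $(D_i)_{i \in I}$ of pseudo-finite subsets of~$\K^2$; after a uniformly definable rescaling via an order-preserving bijection $\phi : \K \to (0,1)$ (applied coordinate-wise), assume $D_i \subseteq (0,1)^2$ for every~$i$. Set $X_1(i) := \Pi^2_1(D_i)$, which is pseudo-finite by Corollary~\ref{cor:product} and definable in~$i$. Since pseudo-finite sets attain their minimum gap (Section~\ref{sec:pf}), the quantity
\[
d(i) := \tfrac{1}{2}\min\set{x' - x : x < x' \in X_1(i)},
\]
(with $d(i) := 1/2$ if $\card{X_1(i)} = 1$) is strictly positive and uniformly definable in~$i$. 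The injectivity argument of Lemma~\ref{lem:enum-1} then shows that $f_i(x,y) := x + d(i)\,y$ is an injection $D_i \to \K$; its image $C_i$ is pseudo-finite because it is the image of a pseudo-finite set under a definable map.

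For the inductive step $n \to n + 1$: given $(D_i)_{i \in I}$ with $D_i \subseteq \K^{n+1}$ pseudo-finite, set $E_i := \pi(D_i)$, where $\pi := \Pi^{n+1}_n$. By the inductive hypothesis applied to $(E_i)_{i \in I}$ we obtain uniformly definable bijections $g_{1,i} : E_i \bij C_{1,i} \subseteq \K$ with each $C_{1,i}$ pseudo-finite. Then $h_i(\bar x, y) := (g_{1,i}(\bar x), y)$ is a uniformly definable injection from $D_i$ into $\K^2$ whose image is pseudo-finite, and applying the uniform $n = 2$ case to the family $(h_i(D_i))_{i \in I}$ produces the desired $f_i : D_i \bij C_i \subseteq \K$.

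For the concluding ``every definable family of pseudo-finite sets is uniform'', the point is that the uniformly defined $f$ and $C$ exhibit each $C_{[i]}$ as a pseudo-finite subset of~$\K$; a further translation places these inside $\K^{\geq 0}$, where they become the fibres of a definable closed discrete subset of $\K^{\geq 0} \times I$, matching the definition of a strongly uniform family. The only delicate step is bookkeeping at $n = 2$, namely checking that $d(i)$ is uniformly definable and strictly positive; but that is immediate from the pseudo-finiteness of~$X_1(i)$ as recorded in Section~\ref{sec:pf}, so no genuine obstacle arises.
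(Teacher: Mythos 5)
Your proposal is correct and follows essentially the same route as the paper's proof: induction on $n$, with the $n=2$ case handled by compressing the second coordinate into the gaps of the first-coordinate projection, and the inductive step reducing $\K^{n+1}$ to $\K^2$ via the projection $\Pi^{n+1}_n$. The only (immaterial) difference is that you scale by the global minimum gap $d(i)$ of $\Pi^2_1(D_i)$ as in Lemma~\ref{lem:enum-1}, whereas the paper uses the local successor gap $d(x) = s(x)-x$ normalized by $\max \theta(D)$; both choices are uniformly definable and yield injectivity.
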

\begin{proof}
Induction on $n$.
If $n=1$, take $D = C$.
If $n = 2$, let $X := \pi(D)$ and $Y := \theta(D)$,
where $\pi$ and $\theta$ are the projections onto the first and second
coordinates, respectively.
\Wlog, $0 = \min Y$; let $r:= \max Y$.
For every $x \in X$, let $s(x)$ be the successor of $x$ in $X$
(or $x + 1$ if $x = \max X$) and $d(x) := s(x) - x > 0$.

Define $f: D \to \K$,
\[
f(x,y) := x + \frac{d(x)}{2r} * y,
\]
and $C := f(D)$.
It is cleat that $f$ is injective, and, since $D$ is pseudo-finite,
$C$ is also pseudo-finite.

The case $n > 2$ follows easily by induction on~$n$.
\end{proof}

\textbf{Missing:} 
If $\K$ is saturated, every def. family of pseudo-enumerable sets is uniform.

\begin{lemma}
Every definable discrete subset $X$ of $\K^n$ is at most pseudo-enumerable.
\end{lemma}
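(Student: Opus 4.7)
The plan is to split on whether $\K$ is \aminimal. If it is, then by Thm.~\ref{thm:a-minimal} every definable discrete subset of $\K^n$ is pseudo-finite, so at most pseudo-enumerable, and we are done. Otherwise, by the same theorem a \pN set $N \subseteq \K^{\geq 0}$ exists; my goal is to exhibit $X$ as the union of a strongly uniform family of pseudo-finite sets indexed by $N_{>0} := N \setminus \set 0$ (which is itself a \pN set, and hence pseudo-enumerable), and then invoke the preceding lemma on unions of strongly uniform families of at most pseudo-enumerable sets.

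First, I adapt the construction in Lemma~\ref{lem:discrete-countable} to allow unbounded $X$: for each $m \in N_{>0}$, set
\[
X(m) := \set{x \in X \cap \cl B(0,m) : X \cap B(x, 1/m) = \set{x}}.
\]
By discreteness of $X$ and the unboundedness of $N$ in $\K^{\geq 0}$, every $x \in X$ lies in $X(m)$ for all sufficiently large $m \in N_{>0}$, so $X = \bigcup_{m \in N_{>0}} X(m)$. Each $X(m)$ is bounded and satisfies $\delta\Pa{X(m)} \geq 1/m > 0$, hence is pseudo-finite by the characterization from Section~\ref{sec:pf}.

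Next, I invoke the uniform version of the immediately preceding lemma on the definable family $\Pa{X(m)}_{m \in N_{>0}}$ of pseudo-finite subsets of $\K^n$: there exist a definable $Y \subseteq \K \times N_{>0}$, with each fiber $Y_m$ a pseudo-finite subset of $\K^{\geq 0}$, and a definable $F : Y \to \K^n$ with $F\Pa{Y_{[m]}} = X(m)$ for every $m \in N_{>0}$. This exhibits $\Pa{X(m)}_{m \in N_{>0}}$ as a strongly uniform family, indexed by the pseudo-enumerable set $N_{>0}$, of at most pseudo-enumerable (in fact pseudo-finite) subsets of $\K^n$. The preceding lemma on unions of such families then yields that $X = \bigcup_{m \in N_{>0}} X(m)$ is at most pseudo-enumerable, as required.

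The main point requiring care is verifying the uniformity clause of the preceding pseudo-finite representation lemma, so that the family $\Pa{X(m)}_{m \in N_{>0}}$ is indeed strongly uniform in the technical sense defined just above; once that bookkeeping is in place, the conclusion follows immediately from the union lemma.
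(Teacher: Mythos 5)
Your proof is correct and follows essentially the same route as the paper: split on \aminimality, write $X$ as an increasing union of pseudo-finite sets via the construction of Lemma~\ref{lem:discrete-countable} re-indexed by a \pN set, check strong uniformity via the uniform pseudo-finite representation lemma, and conclude with the union lemma. Your explicit truncation to $\cl B(0,m)$ is just a cleaner way of handling unboundedness than the change-of-coordinates step in Lemma~\ref{lem:discrete-countable}, and the paper's "as usual" re-indexing is exactly the bookkeeping you spell out.
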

\begin{proof}
If $\K^n$ is \aminimal, then $X$ is pseudo-finite.
Otherwise, let $N$ be a \pN subset of~$\K$.
By Lemma~\ref{lem:discrete-countable}, $X$ is a union
of a uniform increasing family of pseudo-finite sets.
As usual, $X = \bigcup_{i \in N} Y(i)$, where each $Y(i)$ is pseudo-finite.
Hence, $X$ is the union of a pseudo-enumerable uniform family of pseudo-finite
sets, and thus $X$ is at most pseudo-enumerable.
\end{proof}

\begin{corollary}
Let $X \subseteq \K^n$ be definable.
Then, $X$ is pseudo-enumerable iff there exists $D \subset \K^{n+1}$ closed
and discrete, such that $X = \Pi^{n+1}_n(X)$.
\end{corollary}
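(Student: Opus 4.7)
The plan is to read this corollary as ``$X$ is (at most) pseudo-enumerable iff $X = \Pi^{n+1}_n(D)$ for some closed discrete $D \subseteq \K^{n+1}$,'' and to verify both directions directly from results already proved in this subsection.

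For the forward direction, suppose $X$ is pseudo-enumerable, so there are a \pN set $N \subseteq \K^{\geq 0}$ and a definable bijection $f : N \to X$. I would take
\[
D := \{(f(i), i) : i \in N\} \subseteq \K^n \times \K = \K^{n+1},
\]
which is definable, and satisfies $\Pi^{n+1}_n(D) = f(N) = X$ by construction. To see $D$ is discrete, pick any $(f(i_0), i_0) \in D$; since $N$ is discrete, there is an open neighbourhood $U$ of $i_0$ in $\K$ with $U \cap N = \{i_0\}$, and then $\K^n \times U$ is an open neighbourhood of $(f(i_0), i_0)$ meeting $D$ only in $\{(f(i_0), i_0)\}$. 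To see $D$ is closed, take $(x_0, i_0) \notin D$: if $i_0 \notin N$, use closedness of $N$ to get $U \ni i_0$ open with $U \cap N = \emptyset$, and $\K^n \times U$ is a neighbourhood disjoint from $D$; otherwise $i_0 \in N$ and $x_0 \neq f(i_0)$, and I would shrink $U$ so that $U \cap N = \{i_0\}$ and take a ball $B(x_0, r)$ with $r < |x_0 - f(i_0)|$, so that $B(x_0, r) \times U$ avoids $D$.

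For the reverse direction, suppose $D \subseteq \K^{n+1}$ is closed and discrete with $\Pi^{n+1}_n(D) = X$. By the immediately preceding lemma, every definable discrete subset of a Cartesian power of $\K$ is at most pseudo-enumerable, so $D$ itself is at most pseudo-enumerable. By the remark earlier in this subsection that images of at most pseudo-enumerable sets under definable maps are at most pseudo-enumerable, the projection $X = \Pi^{n+1}_n(D)$ is then at most pseudo-enumerable.

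There is no substantive obstacle: the whole argument reduces to the graph construction and two applications of results established earlier in the subsection. The only delicate point is the interpretation of ``pseudo-enumerable'' in the \aminimal case, where no \pN set exists and every closed discrete set is pseudo-finite; this is the reason for reading the biconditional modulo the ``at most'' qualifier, so that a pseudo-finite $X$ can correspond on the right-hand side to, say, $D = X \times \{0\}$.
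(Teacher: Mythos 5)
Your proposal is correct and follows essentially the same route as the paper: the paper's own proof consists only of the forward direction, taking $D$ to be the graph of the bijection $f: N \to X$, and the converse (which the paper leaves implicit) is obtained exactly as you do, from the preceding lemma that definable discrete sets are at most pseudo-enumerable together with the remark that definable images of at most pseudo-enumerable sets are at most pseudo-enumerable. Your caveat about reading the biconditional with ``at most pseudo-enumerable'' is well taken, since a pseudo-finite $X$ also arises as the projection of a closed discrete set such as $X \times \set{0}$.
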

\begin{proof}
Assume $X \subseteq \K^n$ is pseudo-enumerable.
Let $N \subset \K$ be a \pN set, and $f: N \to X$ be a definable bijection.
Let $D \subset \K^{n+1}$ be the graph of~$f$.
Then, $D$ is discrete and $X = \pi(D)$.
\end{proof}

\begin{corollary}
Let $U \subseteq \K$ be definable and open.
Then, $U$ is the union of an at most  pseudo-enumerable family of open intervals.
\end{corollary}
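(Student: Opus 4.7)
The plan is to exhibit $U$ as the disjoint union of its definably connected components (maximal open subintervals) and then show that the set of endpoints parametrizing those components is a definable discrete subset of~$\K^2$, so that the previous lemma on discrete sets applies.

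First I would define, for each $x \in U$, the maximal open interval $I(x) \subseteq U$ containing~$x$; this exists and is definable because $\K$ is definably complete (take $I(x) = (\alpha(x), \beta(x))$ with $\alpha(x) := \inf\set{a < x : (a,x] \subseteq U}$ and $\beta(x) := \sup\set{b > x : [x,b) \subseteq U}$). Distinct values of $I(x)$ are disjoint, so $U$ is the disjoint union of the family $\set{I(x) : x \in U}$. There is at most one component of the form $(-\infty, b)$ and at most one of the form $(a, +\infty)$, so after removing these (at most two extra) intervals, I may parametrize the remaining components by the definable set
\[
E := \set{(a,b) \in \K^2 : a < b,\ (a,b) \subseteq U,\ a \notin U,\ b \notin U}.
\]

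Next I would verify that $E$ is discrete in~$\K^2$. Fix $(a_0,b_0) \in E$ and pick any $0 < \delta < (b_0 - a_0)/2$. If $(a',b') \in E$ lies in the open box $B\Pa{(a_0,b_0),\delta}$, then $a' < a_0 + \delta < b_0 - \delta < b'$, so the open interval $(a',b')$ meets $(a_0,b_0)$; by maximality of both components of~$U$ we must have $(a',b') = (a_0,b_0)$. Hence $E$ is definable and discrete in~$\K^2$, and by the preceding lemma every such set is at most pseudo-enumerable.

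Finally, by Lemma~\ref{lem:enum-1} I can transfer $E$ to an at most pseudo-enumerable subset of $\K$, and adding back the at most two infinite-endpoint components only changes $E$ by a pseudo-finite set, preserving at-most-pseudo-enumerability (by the remark that a union of two at most pseudo-enumerable sets is again such). The resulting family $\Pa{(a,b)}_{(a,b) \in E}$ (together with the at most two extra intervals) is a definable family of open intervals whose union is exactly~$U$. The only potentially delicate point is the discreteness of~$E$, but this is handled cleanly by the maximality argument above; no appeal to Baireness or \aminimality is needed.
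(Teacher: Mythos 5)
Your proof is correct and is essentially the paper's argument: both parametrize the definably connected components of $U$ by a definable discrete set and invoke the lemma that definable discrete subsets of $\K^n$ are at most pseudo-enumerable. The only (cosmetic) difference is that you index components by their endpoint pairs in $\K^2$, whereas the paper indexes them by their centers in $\K$; your version is slightly more careful about the at most two unbounded components, which the paper's one-line proof glosses over.
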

\begin{proof}
Let $D$ be the set of centers of the connected components of~$U$.
Then, $D$ is discrete and definable, and hence at most pseudo-enumerable,
and $U$ is a union of open intervals indexed by~$D$.
\end{proof}

\subsection{Uniform families}
\begin{proviso}
In this subsection, we assume that $\K$ is definably complete and Baire.
\end{proviso}
We do not know if, in general, definable increasing union of~$\Fs$ (resp.,
meager) sets are~$\Fs$ (resp., meager).
We will give some sufficient conditions for this to be the case.

Remember that if we say that $(A_i)_{i\in I}$ is a definable family of subsets
of~$\K^n$, 
we mean that $I$ is a definable subset of~$\K^m$, for some~$m$,
and $\Afam := \bigcup_{i \in I} A_i \times \set i$ is a definable subset
of~$\K^{n+m}$.
Moreover, remember that $A \subset \K^n$ is an $\Fs$ iff it is the projection
of some definable closed subset of~$\K^{n+1}$ \cite{FS}.

\begin{definizione}
Let $(A_i)_{i\in I}$ be a definable family of subsets of~$\K^n$.
We say that $\Afam$ is a \intro{uniform family} of $\Fs$ sets if
there exists a definable set $C \subseteq \K^{n + 1} \times \K^{m'}$, for
some~$m'$, such that:
\begin{itemize}
\item each fiber $(C_t)_{t \in \K^{m'}}$ is closed;
\item for each $i \in I$ there exists $t \in \K^{m'}$ such that
$A_i = \Pi^{n+1}_n(C_t)$.
\end{itemize}
\end{definizione}
Notice that in the above definition, up to taking a smaller definable
family~$C'$, we can always impose the additional condition that,
for each $t \in \K^{m'}$, either $C_t$ is empty,
or $\Pi^{n+1}_n(C_t) = A_i$ for some $i \in I$.

\begin{definizione}
Let $(A_i)_{i\in I}$ be a definable family of subsets of~$\K^n$.
We say that $\Pa{A_i}_{i \in I}$ is a \intro{strongly uniform family}
of $\Fs$ sets if
and there exist $C \subset \K^{n+1} \times I$ definable, and
$F: C \to \K^n$ definable,
such that, for every $i \in I$, $F(C_{[i]})  = A_i$,
and $C_i$ is a  closed subset of~$\K^{n+1}.$.
\end{definizione}


\begin{remark}
Let $\Afam := (A_i)_{i\in I}$ be a definable family of subsets of~$\K^n$.
Then, $\Afam$ is a uniform family of $\Fs$ sets iff
there exists $k, m' \in \Nat$ and $D \subseteq \K^{n+k} \times \K^{m'}$,
such that:
\begin{itemize}
\item each fiber $(D_t)_{t \in \K^{m'}}$ is closed;
\item for each $i \in I$ there exists $t \in \K^{m'}$ such that
$A_i = \Pi^{n + k}_n(D_t)$.
\end{itemize}
\end{remark}
\begin{proof}
The ``only if'' direction is obvious.
For the ``if'' direction, let $(D_t)_{t \in \K^{m'}}$ as in the Remark.
For each $t \in \K^{m'}$, define
\[
C_t := \cl{\bigcup_{0 < r \in \K}
\Pi^{n+k}_n \Pa{D_t \cap  \clB(0;r) } \times \set r}.
\qedhere \]
\end{proof}

Given $X \subseteq \K^n$ definable, and $\K' \succeq \K$,
denote by $X(\K')$ the interpretation in $\K'$ of (the formula defining) $X$;
similarly, we denote $\Afam(\K') := \Pa{A_i(\K')}_{i \in I(\K')}$.
\begin{lemma}
Let $\Afam := (A_i)_{i \in I}$ be a definable family of subsets of~$\K^n$.
\Tfae:
\begin{enumerate}
\item $\Afam$ is a uniform family of $\Fs$ sets;
\item for every $\K' \succeq \K$, and for each $i \in I(\K')$,
$A_i(\K')$ is an $\Fs$ set;
\item there exists $\K' \succeq \K$ such that $\K'$ is $\omega$-saturated,
and for each $i \in I(\K')$,
$A_i(\K')$ is an $\Fs$ set.
\end{enumerate}
\end{lemma}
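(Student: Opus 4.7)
The plan is to prove $(1) \Rightarrow (2) \Rightarrow (3)$ by elementarity and a trivial existence, and to prove $(3) \Rightarrow (1)$ by a saturation argument using the contrapositive, where finite satisfiability is established via a formula-combining trick.

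For $(1) \Rightarrow (2)$: suppose $\Afam$ is uniformly $\Fs$ as witnessed by a definable $C \subseteq \K^{n+1} \times \K^{m'}$. The statements ``every fiber $C_t$ is closed'' and ``for each $i \in I$ there exists $t$ with $A_i = \Pi^{n+1}_n(C_t)$'' are first-order, so they transfer to any $\K' \succeq \K$, and for each $i \in I(\K')$, we get the required closed $C_t \subseteq \K'^{n+1}$. The implication $(2) \Rightarrow (3)$ is immediate, since $\omega$-saturated elementary extensions always exist.

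For $(3) \Rightarrow (1)$, I would argue by contrapositive. Let $\Phi$ be the set of all $\Lang$-formulas $\phi(\bar x, y; \bar t)$ (with $\bar t$ a tuple of parameter variables) such that $\K$ satisfies ``for every $\bar t$, the set $\set{(\bar x, y) : \phi(\bar x, y; \bar t)}$ is closed in $\K^{n+1}$''---a first-order property of $\bar t$ that can be absorbed into $\phi$ by conjoining the closedness condition (using the distance function), so we may assume $\Phi$ is as large as possible. For each $\phi \in \Phi$ let
\[
\Psi_\phi(w) := \forall \bar t\ \neg\Bigl(A_w = \Pi^{n+1}_n\set{(\bar x, y) : \phi(\bar x, y; \bar t)}\Bigr),
\]
and consider the partial type $p(w) := \set{w \in I} \cup \set{\Psi_\phi(w) : \phi \in \Phi}$ over the finitely many parameters defining $\Afam$. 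If $p$ is finitely satisfiable in $\K$, then by $\omega$-saturation of $\K'$ it is realized by some $i^* \in I(\K')$; since every $\Fs$ subset of $\K'^n$ is of the form $\Pi^{n+1}_n(D)$ for some closed definable $D \subseteq \K'^{n+1}$, and every such $D$ is defined by some instance of some $\phi \in \Phi$, the realization $i^*$ witnesses $A_{i^*}(\K')$ not $\Fs$, contradicting $(3)$.

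The crux, and the step I expect to be the main obstacle, is establishing finite satisfiability. Given $\phi_1, \dotsc, \phi_N \in \Phi$ with parameter tuples $\bar t_1, \dotsc, \bar t_N$, form the combined formula
\[
\phi^*(\bar x, y; s, \bar t_1, \dotsc, \bar t_N) := \bigvee_{k=1}^N \Pa{s = k \et \phi_k(\bar x, y; \bar t_k)},
\]
which lies in $\Phi$ (its instances are either one of the closed $\phi_k$-fibers or empty). Since $\Afam$ is not uniformly $\Fs$ by hypothesis, there exists $i \in I$ with $\Psi_{\phi^*}(i)$, and expanding the definition of $\phi^*$ shows that this same $i$ satisfies $\Psi_{\phi_k}(i)$ for every $k$, proving finite satisfiability. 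The delicate point is making sure the closedness of each $\phi$-instance is genuinely first-order (so $\Phi$ makes sense as a set of formulas rather than a condition on instances) and checking that combining formulas preserves membership in $\Phi$, which the construction above handles by the disjoint-$s$-cases trick.
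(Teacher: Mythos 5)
Your proof is correct and follows essentially the same route as the paper, which likewise proves $\neg(1)\Rightarrow\neg(3)$ by forming the partial type $q(i)$ from the set of parameter-free formulas all of whose instances are closed, realizing it in an $\omega$-saturated extension, and at the end absorbing the (definable) closedness condition on the parameters into the formula to reach a contradiction. The one point where you go beyond the paper's ``standard compactness argument'' is in explicitly verifying finite satisfiability of the type via the selector formula $\phi^*$ -- a step the paper leaves implicit -- and your verification is sound.
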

\begin{proof}
Standard compactness argument.
For instance, let us show that not~(1) implies not~(3).
Assume that $\Afam$ is not a uniform family of $\Fs$ sets.
Let $\Psi$ be the set of formulae
$\psi(x, y)$ (without parameters), where $x$ and $y$ are of length
$n + 1$ and $m'$ respectively ($m'$~varies),
such that, for every $c \in \K^{m'}$, $\psi(\K^{n+1}, c)$ is closed.
By hypothesis, for every $\phi \in \Psi$,
there exists $i \in I$ such that, for every~$c$,
$A_i \neq \Pi^{n+1}_n\Pa{ \phi(\K^{n+1}, c)}$.
The above condition determine a partial type~$q(i)$,
with a finite number of parameters (that is, the parameters used to
define~$\Afam$).
Let $\K' \succ \K$ be $\omega$-saturated, and let $i_0 \in I(\K')$
satisfying~$q$; we claim that $A_{i_0}(\K')$ is not an~$\Fs$.
If, for contradiction, $A_{i_0}(\K')$ were an~$\Fs$,
there would exists a definable closed set $C \subset \K^{n+1}$
such that $A_{i_0}(\K') = \Pi^{n+1}_n(C)$.
Let $c \in \K'^{m'}$ and $\phi_0(x,c)$ be the formula defining~$C$.
Define $J := \set{y \in \K^{m'}: \phi_0(\K^{n+1}, y) \text{ is closed}}$,
and $\phi(x, y) \equiv \Pa{\phi_0(x,y) \et y \in J}$.
Then, $\phi \in \Psi$, but $A_{i_0}(\K') = \Pi^{n+1}_n \Pa{\phi(\K^{n+1}, c)}$,
a contradiction.
\end{proof}

\begin{lemma}
Let $\Afam := (A_i)_{i \in I}$ be a family of subsets of~$\K^n$, definable
with parameters~$\av$.
Then, \tfae:
\begin{enumerate}
\item $\Afam$ is a strongly uniform family of $\Fs$ sets;
\item There exits $\K' \succeq \K$ such that $\K'$ is $\omega$-saturated and,
for every $i \in I(\K')$, there exists $C_i \subseteq \K^{n + 1}$ which is closed
and definable with parameters $\av i$, and such that $\Pi^{n+1}_n(C_i) = A_i$;
\item
For every $\K' \succeq \K$ and
for every $i \in I(\K')$, 
there exists $C_i \subseteq \K^{n + 1}$ which is closed
and definable with parameters $\av i$, and such that $\Pi^{n+1}_n(C_i) = A_i$.
\end{enumerate}
\end{lemma}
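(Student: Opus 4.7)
The plan is to adapt the compactness argument from the preceding lemma, tracking the extra constraint that the witnessing closed set for each $A_i$ must be definable using only the parameters $\av i$. The implications $(1) \Rightarrow (3)$ and $(3) \Rightarrow (2)$ are immediate: $(1) \Rightarrow (3)$ holds by elementarity, since the defining formula of the family $C \subset \K^{n+1} \times I$ continues to produce, in any $\K' \succeq \K$, a definable family of closed sets whose projections are the $A_i(\K')$; and $(3) \Rightarrow (2)$ is trivial because every $\K$ admits an $\omega$-saturated elementary extension.

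For the substantive direction $(2) \Rightarrow (1)$, let $\Psi$ denote the collection of $\Lang$-formulas $\psi(\bar x, \bar z, \bar w)$, where $\bar x$ has length $n+1$ and $\bar z, \bar w$ match the lengths of $\av$ and a typical $i \in I$ respectively. Since closedness of the fiber $\set{\bar x : \psi(\bar x, \bar c)}$ is first-order expressible in $\bar c$ (by the usual $\forall \varepsilon\,\exists \delta$ characterization, available because $\K$ is definably complete), after replacing each $\psi$ by its conjunction with this closedness predicate I may assume $\psi(\K^{n+1}, \av, i)$ is automatically closed for every $i$. For each such $\psi$, the set
\[
J_\psi := \set{ i \in I : \Pi^{n+1}_n\Pa{\psi(\K^{n+1}, \av, i)} = A_i }
\]
is definable with parameters~$\av$.

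The key claim is that $I$ is covered by finitely many $J_\psi$. Were this to fail, the partial type
\[
q(\bar w) := \set{\bar w \in I} \cup \set{\bar w \notin J_\psi : \psi \in \Psi},
\]
which uses only the finite tuple $\av$ as parameters, would be finitely satisfiable over $\K$, hence realized in the $\omega$-saturated $\K'$ supplied by $(2)$, say by some $i_0 \in I(\K')$. But $(2)$ yields a formula $\psi_0 \in \Psi$ with $\Pi^{n+1}_n\Pa{\psi_0(\K'^{n+1}, \av, i_0)} = A_{i_0}(\K')$, that is, $i_0 \in J_{\psi_0}(\K')$, contradicting $q(i_0)$. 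By compactness there exist $\psi_1, \dots, \psi_k \in \Psi$ with $I = J_{\psi_1} \cup \dots \cup J_{\psi_k}$.

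To assemble the strongly uniform family, I set
\[
\chi(\bar x, \bar z, \bar w) \equiv \bigvee_{j=1}^{k} \Bigl( \bar w \in J_{\psi_j} \et \bigwedge_{j' < j} \bar w \notin J_{\psi_{j'}} \et \psi_j(\bar x, \bar z, \bar w) \Bigr),
\]
and let $C \subseteq \K^{n+1} \times I$ be the definable set carved out by $\chi(\bar x, \av, \bar w) \et \bar w \in I$. For each $i \in I$, the fiber $C_i$ equals $\psi_{j(i)}(\K^{n+1}, \av, i)$, where $j(i)$ is the least index with $i \in J_{\psi_{j(i)}}$; this fiber is closed in $\K^{n+1}$ by construction and projects onto $A_i$, so $(C_i)_{i \in I}$ realizes $\Afam$ as a strongly uniform family of $\Fs$ sets. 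The main obstacle I anticipate is the closure-absorption step: one must verify that the closedness predicate on fibers is genuinely first-order in the parameters, and that restricting $\Psi$ to closed-fibered formulas does not shrink the pool of candidate witnesses in a way that invalidates the application of $(2)$ in the previous paragraph. Once this bookkeeping is settled, the argument is a parameter-sensitive refinement of the proof of the preceding lemma.
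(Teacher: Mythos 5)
Your argument is correct and is exactly the ``standard compactness argument'' the paper intends here: the lemma is stated without proof immediately after the analogous lemma for (non-strongly) uniform families, whose proof is the same method, and your $(2)\Rightarrow(1)$ is a faithful parameter-tracking refinement of it (absorbing closedness into the formulas, covering $I$ by finitely many of the $\av$-definable sets $J_\psi$ via $\omega$-saturation, and pasting the witnesses together by a definition by cases). The one point worth making explicit is that the witness $C$ you build in $(2)\Rightarrow(1)$ is definable over $\av$, which is what makes your $(1)\Rightarrow(3)$ step legitimate --- the definition of strong uniformity should be read so that the witnessing family is definable over the same parameters as $\Afam$.
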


\begin{example}
Let $\K$ be \aminimal.
Then, every definable family $(A_i)_{i \in I}$ of $\Fs$ subsets of
$\K^n$ is strongly uniform.
\end{example}
\begin{proof}
Since each $\Fs$ set $X$ is a union of at most $n+1$ locally closed sets
$X_0, \dotsc, X_n$, and each $X_i$ is definable in a uniform way from~$X$,
\wloG we can assume that each $A_i$ is locally closed.
For each $x \in A_i$, let
\[
r_i(x) :=
\sup\set{s \in \K^{>0}: A_i \cap \clB(x;s) \text{ is compact}},
\]
and $U_i := \bigcup_{x \in A_i} B(x; r_i(x)/2)$.
Notice that each $U_i$ is open, and $A_i$ is closed in~$U_i$.
For each $i \in I$ and $r \in \K^{> 0}$, define
\[
C(i,r) := \cl{A_i} \cap \set{x \in U_i: d(x, U_i^\complement) \geq r}.
\]
Notice that each $C(i,r)$ is closed, and $A_i := \bigcup_{r> 0} C(i,r)$.
Finally, let $D(i) := \cl{\bigcup_{r> 0} C(i,r) \times \set r}$.
Each $D(i)$ is closed, and $A_i = \Pi^{n+1}_n D(i)$.
\end{proof}



\begin{lemma}\label{lem:uniform-U-Fs}
Let $\Afam := (A_i)_{i \in I}$ be a strongly uniform family of $\Fs$ sets,
and assume that either $I$ is at most pseudo-enumerable,
or that $\Afam$ is increasing.
Then, $D := \bigcup_{i \in I} A_i$ is~$\Fs$.
\end{lemma}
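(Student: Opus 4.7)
The plan is to assemble from the strong-uniformity witness a single $\Fs$ subset of a larger affine space whose image under an appropriate projection equals $D$. Unpacking the hypothesis, there exist a definable $C \subseteq \K^{n+1} \times I$ with each slice $C_i$ closed in $\K^{n+1}$, and a definable $F \colon C \to \K^n$ with $A_i = F(C_{[i]})$. Guided by the Example immediately preceding the lemma (and by the natural situations where the definition arises), I take $F$ to be induced by a coordinate projection, so that $D$ is the projection of $C$ onto $\K^n$; the truly general $F$ is handled by first passing to the graph of $F$ and then projecting.

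\emph{Case 1 ($I$ at most pseudo-enumerable).} By Lemma~\ref{lem:enum-1} and the remarks around it, after a definable reparametrization I may assume $I \subseteq \K^{\geq 0}$ is closed and discrete. For each $r > 0$ the set $I \cap \clB(0,r)$ is pseudo-finite (closed, discrete, and bounded), so
\[
K_r := \bigcup_{i \in I \cap \clB(0,r)} \bigl( (C_i \cap \clB(0,r)) \times \{i\} \bigr)
\]
is a pseudo-finite union of closed bounded subsets of $\K^{n+2}$. Since being closed is an additive definable property, Lemma~\ref{lem:finite-union} implies $K_r$ is closed. The definable increasing family $(K_r)_{r > 0}$ exhibits $C$ as an $\Fs$ subset of $\K^{n+2}$, and since $D$ is the projection of $C$ onto $\K^n$, $D$ is itself $\Fs$ by the remark on projections in Section~\ref{sec:preliminary}.

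\emph{Case 2 ($\Afam$ increasing).} Replacing $A_t$ with $A'_t := \bigcup_{s \leq t,\, s \in I} A_s$, I may assume $I = \K^{\geq 0}$ and $A_t \subseteq A_s$ for $t \leq s$. I split on whether $\K$ admits a \pN subset. If not, then by Theorem~\ref{thm:a-minimal} $\K$ is \aminimal and each $A_t$ is constructible, and the conclusion follows at once from Lemma~\ref{lem:amin-U-Fs}. If $\K$ does admit a \pN subset $N \subseteq \K^{\geq 0}$, then $N$ is unbounded and thus cofinal in $\K^{\geq 0}$; since $\Afam$ is increasing, $D = \bigcup_{n \in N} A_n$. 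The sub-family $(A_n)_{n \in N}$ is still strongly uniform (restricting $C$ and $F$ to the index set $N$) and is indexed by an at most pseudo-enumerable set, so Case~1 applies to conclude $D$ is $\Fs$.

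The main obstacle will be the general-$F$ issue in Case~1: an arbitrary definable image of an $\Fs$ need not be $\Fs$, so the step ``$C$ is $\Fs$, hence $D = F(C)$ is $\Fs$'' must be justified either by noting that the strong-uniformity constructions that arise in practice (as in the preceding Example) have $F$ a projection, or by a graph-enlargement argument that replaces $(C, F)$ by a closed-sliced presentation in a higher-dimensional space on which the final map is a coordinate projection. A secondary point, routine but to be verified, is that strong uniformity passes to definable sub-index sets and survives the cofinalization reparametrization used in Case~2.
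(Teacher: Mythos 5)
Your proof is correct and follows essentially the same route as the paper's: reduce to a pseudo-enumerable (or pseudo-finite) index set, write the union as an increasing definable union of pseudo-finite unions of d-compact pieces (the paper's $E(i,j)$'s are your $K_r$'s, sliced differently), and project, with the \aminimal/increasing case delegated to Lemma~\ref{lem:amin-U-Fs} exactly as in the paper. Your concern about a general $F$ is legitimate but moot here: the paper's own proof, like the characterization lemma following the definition, treats $F$ as the coordinate projection $\Pi^{n+1}_n$, so no graph-enlargement argument is needed.
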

\begin{proof}
We distinguish 2 cases: either $\K$ is \aminimal, or not.
If $\K$ is \aminimal, the conclusion follows from
Lemma~\ref{lem:amin-U-Fs} and Cor.~\ref{cor:finite-constructible}.

Otherwise, let $J$ be a \pN subset of~$\K$.
For every $i \in I$ and $r \in \K^{>0}$,
let $E(i,r) :=  \set{x \in C_i: \abs x \leq r}$.
Notice that each $E(i,r)$ is compact, and that
$A_i = \bigcup_r \theta(E(i,r))$, where $\theta := \Pi^{n + 1 + m'}_n$.
Then, $D = \bigcup_{\pair{i,j} \in I \times J} \theta\Pa{E(i, j)}$.
Since $I \times J$ is pseudo-enumerable, and each $\theta\Pa{E(i, j)}$ is
compact, $D$~is an $\Fs$ set.
\end{proof}

Remember that an $\Fs$ is meager iff it has empty interior.
\begin{definizione}
Let $\Afam := (A_i)_{i \in I}$ be a definable family of subsets of~$\K^n$,
and $\pi := \Pi^{n+1}_n$.
\begin{itemize}
\item $\Afam$~is a \intro{uniform family} of meager sets, if there exists 
a definable family $(C_t)_{t \in \K^m}$ of closed subsets of $\K^{n+1}$,
such that
\begin{enumerate}
\item for every $t \in \K^m$, $\pi(C_t)$ has empty interior;
\item and for every $i \in I$ there exists $t \in \K^m$
such that $A_i \subseteq \pi(C_t)$.
\end{enumerate}
\item $\Afam$~is a \intro{strongly uniform family} of meager sets, if there exists a definable family $(C_i)_{i \in I}$ of closed subsets of $\K^{n+1}$,
such that,
for every $i \in I$, $\pi(C_i)$ has empty interior,
and $A_i \subseteq \pi(C_i)$.
\item $\Afam$~is a \intro{uniform family} of \ao sets, if there exists 
a definable family $(C_t)_{t \in \K^m}$ of closed subsets of $\K^{n+1}$,
and a definable family $(U_s)_{s \in  \K^l}$ of open subsets of $\K^n$,
such that
\begin{enumerate}
\item for every $t \in \K^m$, $\pi(C_t)$ has empty interior;
\item and for every $i \in I$ there exists $t \in \K^m$ and $s \in \K^l$,
such that $A_i \sdiff U_s \subseteq \pi(C_t)$.
\end{enumerate}
\item 
$\Afam$~is a \intro{strongly uniform family} of \ao sets,
if there exists a definable family $(U_i)_{i \in I}$ of open subsets
of~$\K^n$, and a definable family $(C_i)_{i \in I}$ of closed subsets of
$\K^{n+1}$, such that,
for every $i \in I$, $\pi(C_i)$ has empty interior,
and $A_i \Delta U_i\subseteq \pi(C_i)$.
\end{itemize}
\end{definizione}

\begin{question}
Are uniform family of $\Fs$ sets (resp.\ meager, resp.\ \ao sets)
strongly uniform?
\end{question}
If $\K$ has definable choice, the answer is positive.

Reasoning as for the $\Fs$ case, we can prove the following results:

\begin{lemma}
Let $\Afam := (A_i)_{i \in I}$ be a definable family of subsets of~$\K^n$.
\Tfae:
\begin{enumerate}
\item $\Afam$ is a uniform family of meager (resp.,~\ao) sets;
\item for every $\K' \succeq \K$, and for each $i \in I(\K')$,
$A_i(\K')$ is a meager (resp.,~\ao) set;
\item there exists $\K' \succeq \K$ such that $\K'$ is $\omega$-saturated,
and for each $i \in I(\K')$,
$A_i(\K')$ is a meager (resp.,~\ao) set.
\end{enumerate}
\end{lemma}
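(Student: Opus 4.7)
The plan is to mimic the compactness argument that was used for the $\Fs$ case in the previous lemma, generalizing it to handle the extra data (the closed set $C$ plus, in the \ao case, the open set $U$). The implications $(1 \Rightarrow 2)$ and $(2 \Rightarrow 3)$ are routine: $(2 \Rightarrow 3)$ is trivial, and $(1 \Rightarrow 2)$ follows because if $\Afam$ is witnessed by the definable family $(C_t)_{t \in \K^m}$ (and $(U_s)_{s \in \K^l}$ in the \ao case), the \emph{same} definable families interpreted in $\K' \succeq \K$ witness that each $A_i(\K')$ is meager (resp., \ao), using the fact that "$\pi(C_t)$ has empty interior" is a first-order condition on~$t$. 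So the real work is $(3 \Rightarrow 1)$, which I would prove by contrapositive.

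Assume $\Afam$ is not a uniform family of meager sets. Let $\Psi$ be the set of formulas $\psi(x,y)$ without parameters, where $x$ has length $n+1$ and $y$ has some length $m$ (varying with $\psi$), such that for every parameter $c$ the set $\psi(\K^{n+1},c)$ is closed and $\pi\Pa{\psi(\K^{n+1},c)}$ has empty interior. By hypothesis, for each $\psi \in \Psi$ there exists $i \in I$ such that for no $c$ is $A_i \subseteq \pi\Pa{\psi(\K^{n+1},c)}$. This gives a partial type $q(i)$ over the fixed parameters defining~$\Afam$. Pass to an $\omega$-saturated extension $\K' \succeq \K$ and realize $q$ by some $i_0 \in I(\K')$. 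I claim $A_{i_0}(\K')$ is not meager, which contradicts~(3). If it were, there would exist a closed definable $D \subseteq \K'^{n+1}$ with $\pi(D)$ of empty interior and $A_{i_0}(\K') \subseteq \pi(D)$. Write $D = \phi_0(\K'^{n+1}, c)$ for some parameter $c \in \K'^{m'}$, and set
\[
J := \set{y \in \K^{m'}: \phi_0(\K^{n+1},y) \text{ is closed and } \pi\Pa{\phi_0(\K^{n+1},y)} \text{ has empty interior}},
\]
$\phi(x,y) \equiv \phi_0(x,y) \et y \in J$. Then $\phi \in \Psi$ and $c \in J(\K')$, so $A_{i_0}(\K') \subseteq \pi\Pa{\phi(\K'^{n+1},c)}$, contradicting $i_0 \models q$.

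For the \ao version one modifies $\Psi$ to the set of \emph{pairs} of formulas $\pair{\psi(x,y),\chi(z,w)}$ such that $\psi(\K^{n+1},c)$ is always closed with $\pi$-projection of empty interior and $\chi(\K^n,d)$ is always open, and one requires in $q(i)$ that for no parameter pair $\pair{c,d}$ is $A_i \sdiff \chi(\K^n,d) \subseteq \pi\Pa{\psi(\K^{n+1},c)}$. The saturation argument then produces $i_0$ in $\K'$ whose $A_{i_0}(\K')$ cannot be \ao, by the same relativization trick applied to the formulas defining any supposed witnesses.

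The main obstacle I anticipate is purely bookkeeping: the condition "$\pi(C_t)$ has empty interior" needs to be definable in the parameter $t$ so that the auxiliary set $J$ above is definable. This is fine because "having non-empty interior" is expressible by $\exists x\,\exists r > 0: B(x,r) \subseteq \pi(C_t)$, so $J$ is a definable subset of $\K^{m'}$; the analogous check for the \ao case (definability of the pair conditions) is the same. Once that is verified the compactness step goes through exactly as in the $\Fs$ lemma.
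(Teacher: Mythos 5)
Your proposal is correct and follows essentially the same route as the paper: the paper also reduces to the contrapositive of $(3 \Rightarrow 1)$, forms the set $\Psi$ of parameter-free formulas whose instances are closed with $\pi$-projection of empty interior, realizes the resulting partial type $q(i)$ in an $\omega$-saturated extension, and derives the contradiction by relativizing the formula defining a supposed witness $D$ to the definable set $J$ of ``good'' parameters, exactly as in the $\Fs$ case. The only cosmetic difference is that the paper leaves $(1\Rightarrow 2)$, $(2\Rightarrow 3)$ and the \ao variant implicit, which you spell out.
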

\begin{proof}
Let us prove $(3 \Rightarrow 1)$ for meager sets.
Assume not.
By adding a finite number of constants to the language if necessary, we can
assume that either $\K$ is \aminimal, or there exists $N$ \pN subset of $\K$
definable \emph{without parameters}.
Let $\Psi$ be the set of formulae (without parameters) $\phi(\x,\y)$,
such that, for every $\y$, $\phi(\K, \y)$ is a closed subset of~$\K^{n + 1}$,
and $\pi(\phi(\K, y))$ has empty interior, where $\pi := \Pi^{n+1}_n$.
By hypothesis, for every $\phi \in \Phi$, there exists $i \in I$,
such that, for every $\cv$, $A_i \nsubseteq \pi(\phi(\K^n, \cv))$
Let $q(i)$ be the partial type 
$\set{\forall \cv\ A_i \nsubseteq \pi(\phi(\K^n, \cv)) : phi \in \Phi}$.
$q$~has a finite number of parameters (the parameters used in the definition
of~$\Afam$), and is consistent;
therefore, $q$ has a realization $i_0$ in~$\K'$.
Since $A_{i_0}$ is an~$\Fs$, 
there exists $D \subseteq \K'^{n+1}$ closed and
definable with parameters~$\cv$, such that $A(i_0) \subseteq \pi(D)$, and $\interior\Pa{\pi(D)} = \emptyset$.
Hence, $D = \phi(\K, \cv)$ for some $\phi \in \Phi$, contradiction.
\end{proof}

\begin{example}
Let $\K$ be \aminimal.
Then, every definable family of meager (resp.,~\ao) sets is strongly uniform.
\end{example}

\begin{lemma}
Let $\Afam :=  (A_i)_{i \in I}$
be a strongly uniform family of meager sets (resp.,~\ao),
and assume that either $I$ is at most pseudo-enumerable,
or that $\Afam$ is increasing.
Then, $D := \bigcup_{i \in I} A_i$ is meager (resp.,~\ao).
\end{lemma}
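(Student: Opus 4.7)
My plan is to follow the structural split used in the proof of Lemma~\ref{lem:uniform-U-Fs} (two cases according to whether $\K$ is \aminimal), prove the meager statement first, and then reduce the \ao statement to the meager one. The key observation driving the proof is that ``being nowhere dense'' is both additive and definable, so Lemma~\ref{lem:finite-union} will let me replace pseudo\hyph finite unions of nowhere dense closed sets by single nowhere dense closed sets.

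For the meager statement, if $\K$ is \aminimal, then meager sets coincide with nowhere dense sets by Thm.~\ref{thm:a-minimal}, and Lemma~\ref{lem:amin-U-meager} already gives the increasing case; the pseudo\hyph enumerable case reduces to it by indexing $\Afam$ via a pseudo\hyph$\Nat$ set $N$ and forming $B_r := \bigcup_{n \in N,\, n \leq r} A_n$, which is a pseudo\hyph finite union of nowhere dense sets, hence nowhere dense by Lemma~\ref{lem:finite-union}. If $\K$ is not \aminimal, fix a pseudo\hyph$\Nat$ subset $J \subseteq \K$ and, using the witness $C_i \subseteq \K^{n+1}$ of strong uniformity, set $E(i,r) := \{x \in C_i : \abs{x} \leq r\}$ for $i \in I$, $r \in J$. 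Each $E(i,r)$ is \dcompact, so $\pi(E(i,r))$ is closed, and since $\pi(E(i,r)) \subseteq \pi(C_i)$ has empty interior, it is nowhere dense. Moreover $A_i \subseteq \pi(C_i) = \bigcup_{r \in J} \pi(E(i,r))$, so $D \subseteq \bigcup_{(i,r) \in I \times J} \pi(E(i,r))$. In the pseudo\hyph enumerable case, $I \times J$ is pseudo\hyph enumerable by Lemma~\ref{enum:product}, so via Lemma~\ref{lem:enum-1} re\hyph index the collection as $(F_n)_{n \in N}$ with $N \subseteq \K^{\geq 0}$ a pseudo\hyph$\Nat$ set, put $G_r := \bigcup_{n \in N,\, n \leq r} F_n$ (a pseudo\hyph finite union of closed nowhere dense sets, hence closed and nowhere dense), and then $D_r := D \cap G_r$ gives a definable increasing family of nowhere dense sets with $D = \bigcup_r D_r$, proving $D$ meager. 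In the increasing case one indexes instead by the pairs $(i,r) \in I \times J$ using the order on $I$ together with $r \in \K$ to form $H_{s,r} := \bigcup_{i \leq s,\, r' \leq r} \pi(E(i,r'))$; the outer union over $r'$ is pseudo\hyph finite, and the inner one collapses to the increasing structure coming from $\Afam$, yielding the needed definable increasing family of nowhere dense sets whose union contains $D$.

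For the \ao statement, I reduce to the meager case. From the strong uniformity data we have open sets $U_i$ and closed sets $C_i$ with $A_i \sdiff U_i \subseteq \pi(C_i)$ and $\pi(C_i)$ of empty interior. Set $U := \bigcup_{i \in I} U_i$, which is open since each $U_i$ is. Then
\[
D \sdiff U = \Bigl(\bigcup_i A_i\Bigr) \sdiff \Bigl(\bigcup_i U_i\Bigr)
\subseteq \bigcup_i (A_i \sdiff U_i) \subseteq \bigcup_i \pi(C_i).
\]
The family $(\pi(C_i))_{i \in I}$ is a strongly uniform family of meager sets (witnessed by the same $C_i$), satisfying the same hypothesis (pseudo\hyph enumerable or increasing) as $\Afam$, so by the meager case $\bigcup_i \pi(C_i)$ is meager, hence $D \sdiff U$ is meager and $D$ is \ao. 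The main obstacle will be the increasing case when $\K$ is not \aminimal: one has to merge the increasing structure on $I$ with the pseudo\hyph$\Nat$ enumeration coming from $J$ in such a way as to produce a \emph{definable} increasing family of nowhere dense closed sets whose union still covers $D$; everything else is a routine adaptation of the proof of Lemma~\ref{lem:uniform-U-Fs}.
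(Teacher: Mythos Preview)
Your treatment of the \aminimal case and of the non-\aminimal pseudo-enumerable case is essentially the paper's argument, and your reduction of the \ao statement to the meager one via $D \sdiff U \subseteq \bigcup_i (A_i \sdiff U_i)$ is exactly what the paper does. (A small slip: in the \aminimal case there are no pseudo-$\Nat$ sets, so ``at most pseudo-enumerable'' simply means pseudo-finite; Lemma~\ref{lem:finite-union} then applies directly, without building an auxiliary increasing family.)

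The genuine gap is the non-\aminimal \emph{increasing} case. Your family $H_{s,r} := \bigcup_{i \leq s,\ r' \leq r} \pi(E(i,r'))$ need not be nowhere dense: the witnesses $C_i$ are only required to satisfy $A_i \subseteq \pi(C_i)$ with $\interior\bigl(\pi(C_i)\bigr) = \emptyset$, and nothing forces $C_i \subseteq C_j$ for $i \leq j$. So the inner union $\bigcup_{i \leq s} \pi(E(i,r'))$ does \emph{not} ``collapse to the increasing structure coming from $\Afam$''; it is a union over the whole interval $\{i \in I : i \leq s\}$ of sets with empty interior, which can easily be somewhere dense. The same issue recurs in your \ao reduction: even if $(A_i)_{i \in I}$ is increasing, the family $(\pi(C_i))_{i \in I}$ (or $(A_i \sdiff U_i)_{i \in I}$) need not be, so you cannot feed it back into the meager case under the ``increasing'' hypothesis.

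The paper avoids all of this by a one-line reduction you missed: when $\K$ is not \aminimal, fix a pseudo-$\Nat$ set $N \subseteq \K$; since $N$ is unbounded and $\Afam$ is increasing, $\bigcup_{i \in I} A_i = \bigcup_{n \in N} A_n$. The restricted family $(A_n)_{n \in N}$ is still strongly uniform (restrict the witnesses), and now the index set is pseudo-enumerable, so the case you already handled applies. With this reduction made up front, both the meager and the \ao arguments only ever need the pseudo-enumerable case in the non-\aminimal setting.
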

\begin{proof}
If $\K$ is \aminimal, use Lemma~\ref{lem:amin-U-meager}.
Otherwise, it suffices to treat the case when $I$ is pseudo-enumerable,
and \wloG we can reduce to the case when $I$ is a \pN subset of~$\K$.

If $\Afam$ is a strongly uniform family of meager set,
Let $(C_i)_{i \in I}$ be a definable family of closed subsets of 
$\K^{n+1}$, such that, for every$i \in I$, $A_i \subseteq \pi(C_i)$, 
and $\pi(C_i)$ has empty interior,
where $\pi := \Pi^{n+1}_n$.
Then,
\[
D \subseteq \bigcup_{i \in I} \pi(C_i) = \bigcup_{(i,j) \in I \times J}
\Pi^{n+1}_n (C_i \cap \cl{B(0; j)}),
\]
and the latter is the union of a pseudo-enumerable family of nowhere dense
sets. 

If $\Afam$ is a strongly uniform family of \ao sets, let $(U_i)_{i \in I}$ be a
definable family of open sets, such that $(A_i \Delta U_i)_{i \in I}$ is a
strongly uniform family of meager sets.
Let $V := \bigcup_{i \in I} U_i$.
Notice that
\[
D \Delta V = (\bigcup_i A_i) \Delta (\bigcup_j U_j)
\subseteq \bigcup_i (A_i \Delta U_i),
\]
and, by the previous point, the latter is a meager set.
\end{proof}

\begin{corollary}
Assume that either $\K$ be $\omega$-saturated and has definable choice.
or $\K$ is \aminimal.
Let $(A_i)_{i \in I}$ be a definable family.
Assume that either $I$ is at most pseudo-enumerable, or that
or $(A_i)_{i \in I}$ is increasing.
Assume moreover that each $A_i$ is an $\Fs$ (resp.,~meager, resp.~\ao).
Then, $\bigcup_{i \in I} A_i$ is an $\Fs$ (resp.,~meager, resp.~\ao).
\end{corollary}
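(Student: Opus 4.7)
The plan is to reduce, in both cases of the hypothesis on~$\K$, to the situation covered by Lemma~\ref{lem:uniform-U-Fs} and its meager and \ao analogues; that is, to show that under the assumptions, the family $(A_i)_{i \in I}$ is automatically strongly uniform (as a family of $\Fs$, meager, or \ao sets, respectively). Once strong uniformity is established, the corresponding lemma directly yields the conclusion, using either the hypothesis that $I$ is at most pseudo-enumerable or that the family is increasing.

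In the \aminimal case, nothing needs to be proved: the previous Example (for $\Fs$ sets) and its meager/\ao counterpart show that in an \aminimal structure every definable family of $\Fs$ (resp.\ meager, \ao) sets is strongly uniform, so Lemma~\ref{lem:uniform-U-Fs} (resp.\ its meager or \ao analogue) applies immediately.

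In the $\omega$-saturated case, first I would use the characterisation of uniform families of $\Fs$ (resp.\ meager, \ao) sets in terms of $\omega$-saturated elementary extensions: since $\K$ is already $\omega$-saturated and each $A_i$ is, by hypothesis, an $\Fs$ (resp.\ meager, \ao), the lemma characterising uniform families gives that the family $(A_i)_{i \in I}$ is uniform. Then the definable choice hypothesis is used to upgrade uniformity to strong uniformity: for each $i \in I$, uniformity provides \emph{some} parameter~$t(i)$ over which the required witnessing closed (or closed plus open) set is defined, and definable choice lets us select such a parameter definably as a function of~$i$. Feeding the resulting witnessing family into the strongly uniform definition produces the desired strong uniformity, and the conclusion again follows from the appropriate strongly uniform union lemma.

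The main obstacle, and really the only substantive point, is the application of definable choice in the $\omega$-saturated case: one has to make sure that the parameter space for the witnessing closed sets (and for the open sets in the \ao case) can be cut down to a definable set on which choice can be applied uniformly in~$i$. This is essentially bookkeeping, but it must be done carefully so that the selected parameters $t(i)$, $s(i)$ genuinely produce closed sets with empty-interior projection and the right relation to~$A_i$ for \emph{every} $i \in I$ simultaneously, rather than merely fiberwise.
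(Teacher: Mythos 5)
Your proposal is correct and follows essentially the same route as the paper: the paper's (one-line) proof is precisely that the hypotheses force the family to be strongly uniform, after which Lemma~\ref{lem:uniform-U-Fs} and its meager/\ao analogues apply. Your elaboration of the $\omega$-saturated case — uniformity via the characterisation lemma with $\K'=\K$, then definable choice to pass from uniform to strongly uniform — is exactly the argument the paper gestures at in the remark following the Question on uniform versus strongly uniform families.
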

\begin{proof}
The hypothesis on $\K$ implies that 
definable families of $\Fs$ (resp.,~meager, resp.~\ao) sets are strongly
uniform.
\end{proof}

\subsection{Is every definably complete structure Baire?}\label{sec:enumerable}

We are not able to answer the above question, even if we conjecture that the
answer is positive.

However, we shall give a partial results, where we prove that a strong failure
of the Baire property is not possible in a definably complete structure.

Note that if $\K$ is pseudo-enumerable, then $\K$ is not Baire.

\begin{thm}
$\K$ is not pseudo-enumerable.
\end{thm}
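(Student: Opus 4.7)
My plan is a definable Cantor nested-intervals argument. Suppose, for contradiction, that there is a \pN set $N \subseteq \K^{\geq 0}$ and a definable bijection $f \colon N \to \K$; let $s \colon N \to N$ be the successor $s(n) := \min\Pa{N \cap (n, +\infty)}$, well defined since $N$ is unbounded, closed and discrete. Define the trisection map $\tau \colon \K^2 \times \K \to \K^2$ by
\[
\tau\Pa{(a,b), y} :=
\begin{cases}
\Pa{a,\ a + (b-a)/3} & \text{if } y \geq (a+b)/2,\\
\Pa{b - (b-a)/3,\ b} & \text{otherwise;}
\end{cases}
\]
a direct computation shows that for any input $\tau((a,b), y)$ is a closed subinterval of $[a,b]$ of length $(b-a)/3$ that does not contain~$y$.

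The heart of the argument is to produce, by definable recursion along $N$, a definable function $h \colon N \to \K^2$ with $h(\min N) = (0,1)$ and $h(s(n)) = \tau\Pa{h(n), f(n)}$ for every $n \in N$. Writing $h(n) =: (\alpha(n), \beta(n))$, an induction on $N$---available because every non-empty definable subset of a \pN set has a minimum, so the usual induction principle on $N$ holds---yields $0 \leq \alpha(n) < \beta(n) \leq 1$, the nesting $[\alpha(s(n)), \beta(s(n))] \subseteq [\alpha(n), \beta(n)]$, and the avoidance $f(n) \notin [\alpha(s(n)), \beta(s(n))]$. Uniqueness of $h$ is again a one-line induction on~$N$; existence is definable primitive recursion along a \pN set, which in the field setting one justifies by iterating $\tau$ along each pseudo-finite initial segment $N \cap [0,r]$, coding the finite iterate by its graph (a pseudo-finite subset of $\K^3$), and gluing the compatible family of finite iterates.

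Now set $c := \sup_{n \in N} \alpha(n)$; this exists in $\K$ by definable completeness, since $\alpha$ is uniformly bounded above by~$1$. The nesting gives $\alpha(n) \leq \beta(m)$ for all $n, m \in N$ (split on whether $n \leq m$), so $c \leq \beta(m)$ and clearly $c \geq \alpha(m)$; hence $c \in [\alpha(m), \beta(m)]$ for every $m \in N$. In particular $c \in [0,1] \subseteq \K$, so by surjectivity of $f$ there is $n_0 \in N$ with $c = f(n_0)$. Taking $m := s(n_0)$ yields $c \in [\alpha(s(n_0)), \beta(s(n_0))]$, contradicting $f(n_0) \notin [\alpha(s(n_0)), \beta(s(n_0))]$. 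The only point that requires more than bare definable completeness is the definable primitive recursion producing~$h$; everything else is a direct application of the \pN structure of~$N$ and the supremum property of~$\K$.
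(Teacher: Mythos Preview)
Your nested-intervals idea is the natural one, and you correctly flag the delicate point: the definable primitive recursion producing $h$. That step is a genuine gap, and it is exactly the obstacle that forces the paper into a different construction.

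The difficulty is that $h(n)$ must be first-order definable from $n$ and the parameters, but $h(n)$ depends on $h(p(n))$, which depends on $h(p(p(n)))$, all the way back to $\min N$; and $N_{\leq n}$ is in general only pseudo-finite, not finite. Your sketch (``coding the finite iterate by its graph \dots and gluing'') needs to assert, for each $n$, that a partial solution on $N_{\leq n}$ exists; but such a partial solution is a function with pseudo-finite domain, its graph is a pseudo-finite subset of $\K^3$, and quantifying over such subsets is second-order. No uniform first-order coding of pseudo-finite sets by tuples is available in an arbitrary definably complete field. Induction along $N$ \emph{is} available (every non-empty definable subset of $N$ has a minimum), and you use it correctly for uniqueness and for the properties of $h$; but induction proves statements about an already-defined object---it does not manufacture one.

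The paper sidesteps recursion entirely. Rather than tracking one shrinking interval, it defines for each $n$ a set $F(n)$ \emph{directly} from the pseudo-finite set $X(n) := f(N_{\leq n})$: namely, $F(n)$ is the union over the connected components $I$ of $[0,1] \setminus X(n)$ of the central closed subinterval $C_{\lambda_n}(I)$, where the shrinking factors $\lambda_n$ are themselves given non-recursively via an order-reversing homeomorphism $\K^{\geq 0} \to (0,1]$. Then $G(n) := \bigcap_{m \leq n} F(m)$ is directly definable, and the work becomes an inductive \emph{proof} (not construction) that some connected component of $G(n)$ still occupies at least a fraction $d_n > 0$ of its ambient complementary interval. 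Each $G(n)$ is therefore non-empty and \dcompact, so $\bigcap_n G(n) \neq \emptyset$, producing the unlisted point. The extra bookkeeping with the $\lambda_n$ and the ratio estimate in Remark~\ref{rem:intersection-intervals} is precisely the price paid for avoiding your recursion.
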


The proof can be understood better if one considers the case when $\K =
\Real$, and considers it as a proof of the fact that $\Real$ is not countable.

We need some preliminary results and definitions.

For every $I = (a,b) \subseteq (0,1)$ open interval, and $0 < \lambda < 1/2$,
define $\abs I := b -a $, and
$C_\lambda(I) := [a + \lambda(b-a), b -\lambda(b-a) ]$.
Note that $C_\lambda(I)$ is a closed subset of~$I$, and that
$\abs{C_\lambda(I)} = (1 - 2\lambda) \abs I > 0$.

Fix such an open interval~$I$, and
let $J \subseteq I$ be a closed interval, and $0 < d < 1$, such that $\abs J
\geq d \abs I$.
Let $x \in I$, and consider the two intervals $I'_1 :=(a, x)$ and $I'_2 := (x,
b)$.
\begin{remark}\label{rem:intersection-intervals}
There exists $k \in \set{1,2}$ such that
\[
\frac{\abs{C_\lambda(I'_k) \cap J}}{\abs{I'_k}} \geq (d - 2 \lambda).
\]
\end{remark}
\begin{proof}
Let call the elements of $J$ ``good points''.
The fraction of good points in $I$ is at least~$d$.
Hence, for at least one~$k$, the fraction of good points in $I'_k$ is at
least~$d$.
Only a fraction of $2 \lambda$ points are not in $C_\lambda(I'_k)$,
and therefore at lest a fraction of $d -2 \lambda$ points are
good and in $C_\lambda(I'_k)$.
\end{proof}

Now, suppose, for contradiction, that $\K$ is pseudo-enumerable.
Hence, $X := [0, 1]$ is also pseudo-enumerable;
let $N$ be a \pN subset of~$\K$, and $f: N \to X$
be a definable surjective function.
\Wlog, $1$~is the minimum of~$N$.
We denote by $x_n := f(n)$, for every $n \in N$.


For every $n \in N$, let $s(n)$ be the successor of $n$ in~$N$.

Note that $N$ must be unbounded (otherwise, it would be pseudo-finite).
Let $\phi: \K^{\geq 0} \to (0,1]$ be an order-reversing homeomorphism, and
$D := \phi(N)$; denote by $d_n := \phi(n)$.
Let $\lambda_1 := (1 - d_1) / 4$, and, for every $n \in N$, let $\lambda_{s(n)} := (d_n - d_{s(n)}) / 4 > 0$.

For every $n \in N$, let $N_{\leq n} := \set{n' \in N: n' \leq n}$.
Let $X(n) := f(N_{\leq n})$.
Note that $X_{s(n)} = X(n) \cup \set{x_{s(n)}}$.
Note also that $N_n$ is pseudo-finite, and thus $X$ is also pseudo-finite.
Let $U(n) := X \setminus X(n)$: note that $U(n)$ is open, and, since $X(n)$ is
pseudo-finite, $U(n)$ is a pseudo-finite union of disjoint open intervals:
\[
U(n) = \bigsqcup_{i \in A(n)} I_{i,n},
\]
where each $I_{i,n}$ is a non-empty open interval, and $A(n)$ is
pseudo-finite.
Define
\[
F(n) := \bigcup_{i \in A(n)} C_{\lambda_n}(I_{i,n}).
\]
$F(n)$ is a pseudo-finite union of closed intervals, and therefore it is
definable and closed.

We claim that $F := \bigcap_n F(n) \neq \emptyset$.
If it is so, since $F(n) \subset U(n)$, we have that $X \neq \bigcup_n X(n)$,
and we have a contradiction.

For every $n \in N$, let $G(n) := \bigcap_{m \leq n} F(m)$.
Note that $G(n)$ is a definable decreasing family of \dcompact sets, and that
$F = \bigcap_n G(n)$.
Thus, to prove that $F$ is non-empty, it suffices to prove that each $G(n)$ is
non-empty.
We shall prove this by induction on~$n$; however, we will need to prove a
stronger statement.

\begin{lemma}
For each $n \in N$,
\begin{enumerate}
\item $G(n)$ is a pseudo-finite union of disjoint closed intervals, with
non-empty interior, $(J_{n,j})_{j \in B(n)}$;
\item the intervals $J_{n,j}$ are the definably connected components of~$G(n)$;
\item each $J_{n,j}$ is contained in a unique open interval $I_{n,i}$, where
$i = i_{n,j}$ depends in a definable way from $(n,j)$;
\item there exists at least one $j \in B(n)$ such that
\[
\frac {\abs{J_{n,j}}}{\abs{I_{n,i_{n,j}}}} \geq d_n.
\]
\end{enumerate}
\end{lemma}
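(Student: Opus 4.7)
The plan is to prove all four clauses simultaneously by definable induction on $n \in N$, exploiting the fact that passing from $n$ to $s(n)$ modifies the open set $U(n)$ in a very controlled way: since $X\Pa{s(n)} = X(n) \cup \set{x_{s(n)}}$, the family $\Pa{I_{i,s(n)}}_{i \in A(s(n))}$ is obtained from $\Pa{I_{i,n}}_{i \in A(n)}$ by (at most) splitting the unique interval $I_{i_0,n}$ that contains $x_{s(n)}$ into two open sub-intervals $I'_1, I'_2$, leaving all other intervals unchanged. Definable induction on $N$ is legitimate: the set of $n \in N$ at which the conclusion fails is definable, and if nonempty would have a minimum (by definable completeness applied to $N$), contradicting either the base or the inductive step.

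For the base case $n = 1$, the set $U(1)$ consists of at most two open intervals (since $X(1) = \set{x_1}$), $G(1) = F(1)$ is a pseudo-finite union of the corresponding $C_{\lambda_1}(I_{i,1})$, and each ratio equals $1 - 2\lambda_1 = (1+d_1)/2 > d_1$, so clauses (1)--(4) are immediate.

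For the inductive step, I would write $G\Pa{s(n)} = G(n) \cap F\Pa{s(n)}$ and analyse each component $J_{n,j} \subseteq I_{n,i_{n,j}}$ of $G(n)$ in two cases. If $x_{s(n)} \notin I_{n,i_{n,j}}$ (\emph{preserved case}), then $I_{n,i_{n,j}}$ persists as some $I_{s(n),i'}$, and $J_{n,j} \cap F\Pa{s(n)} = J_{n,j} \cap C_{\lambda_{s(n)}}(I_{s(n),i'})$ is a single closed sub-interval; if $x_{s(n)} \in I_{n,i_{n,j}}$ (\emph{split case}), then $J_{n,j} \cap F\Pa{s(n)}$ splits into at most two closed sub-intervals $J_{n,j} \cap C_{\lambda_{s(n)}}(I'_k)$, $k = 1,2$. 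Degenerate (empty or single-point) pieces are discarded from $B\Pa{s(n)}$. Since at most one $i_0 \in A(n)$ is split, $B\Pa{s(n)}$ is obtained from $B(n)$ by a bounded uniform modification, so $B\Pa{s(n)}$ is pseudo-finite and clauses (1)--(3) hold by inspection, the indexing $i_{s(n),\cdot}$ being definable in the obvious way.

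For clause (4), I would track the distinguished component $J_{n,j^*}$ with $\abs{J_{n,j^*}}/\abs{I_{n,i_{n,j^*}}} \geq d_n$. In the split case, applying Remark~\ref{rem:intersection-intervals} directly with $d = d_n$ and $\lambda = \lambda_{s(n)}$ yields some $k$ with $\abs{C_{\lambda_{s(n)}}(I'_k) \cap J_{n,j^*}}/\abs{I'_k} \geq d_n - 2\lambda_{s(n)}$. In the preserved case, the inclusion $J_{n,j^*} \subseteq F(n) \cap I_{n,i_{n,j^*}} = C_{\lambda_n}(I_{n,i_{n,j^*}})$ implies that shrinking from $C_{\lambda_n}$ to $C_{\lambda_{s(n)}}$ removes at most $2\max(\lambda_{s(n)} - \lambda_n, 0) \cdot \abs{I_{n,i_{n,j^*}}}$ from $J_{n,j^*}$, so the surviving ratio is at least $d_n - 2\lambda_{s(n)}$. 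In both cases the algebraic identity
\[
d_n - 2\lambda_{s(n)} \;=\; d_n - \frac{d_n - d_{s(n)}}{2} \;=\; \frac{d_n + d_{s(n)}}{2} \;\geq\; d_{s(n)}
\]
closes the induction. The main delicate point is purely bookkeeping: verifying that the relabelling $A(n) \to A\Pa{s(n)}$, the selection of the surviving sub-interval in the split case, and the choice of $j^*$ are all uniformly definable in $n$, so that the inductive hypothesis is a genuine definable property of $n \in N$ amenable to definable induction.
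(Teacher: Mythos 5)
Your proposal is correct and follows essentially the same route as the paper: induction along $N$, a case split on whether $x_{s(n)}$ lands in the distinguished interval $I_{n,i_{n,j^*}}$, an application of Remark~\ref{rem:intersection-intervals} in the split case, and the computation $d_n - 2\lambda_{s(n)} = (d_n + d_{s(n)})/2 \geq d_{s(n)}$. In fact you are slightly more careful than the paper in the preserved case, where the paper asserts that ``the same $J$ and $I$'' work but the component actually becomes $J \cap C_{\lambda_{s(n)}}(I)$ (since $\lambda_{s(n)}$ need not be $\leq \lambda_n$); your explicit bound on the loss there closes that small gap.
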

Note that the lemma implies that each $G(n)$ is non-empty.
\begin{proof}
If $n = 1$, then, by definition, $A(1) = B(1)$, and every interval $J_{1,i}$
satisfies $J_{1, i} = C_{\lambda_1}(I_{1, i_{1,j}})$, and
$\abs{J_{1,j}} = (1 - 2\lambda_1) \abs I_{1, i_{1,j}}$.

Suppose that the statement is true for~$n$; we want to prove it for $s(n)$.
Let $x := x_{s(n)}$, $\lambda := \lambda_{s(n)}$, 
and let $J := J_{n,j}$  and $(a,b) := I := I_{n, i_{n,j}}$ be the
intervals satisfying the condition for~$n$.
If $x \notin I$, then, since $d_{s(n)} < d_n$, the same $J$ and $I$ satisfy
the condition for $n + 1$.

If instead $x \in I$, let $I'_1 := (a,x)$ and $I'_2 := (x,b)$.
Note that $I$ is substituted by the pair $(I'_1, I'_2)$ in the expression of
$U_{s(n)}$ as disjoint union of open intervals.
Let $L'_k := C_{\lambda}(I'_k)$, $k = 1, 2$.
By Remark~\ref{rem:intersection-intervals}, at least one of the $L'_k$
satisfies
\[
\frac{\abs{L'_k \cap J}} {\abs{I'_k}} \geq d_n - 2 \lambda > d_{n+1}.
\]
Since $J'_k := L'_k \cap J$, $k = 1, 2$, are definably connected components of $G(n+1)$,
we are done.
\end{proof}

\begin{lemma}\label{lem:Q}
Assume that $\K$ is not Baire (but it is still definably complete and expands
a field). 
Then, there exists a dense pseudo-enumerable subset $Q \subset \K$.
\end{lemma}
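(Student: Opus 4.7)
The plan is to exploit the failure of the Baire property to cover $\K$ by a definable increasing family of closed nowhere dense sets, re-index that family along a \pN set (which must exist because not being Baire forces failure of \aminimality), and take midpoints of the bounded connected components of the complements to produce a dense pseudo-enumerable set.

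First I would use the hypothesis to write $\K = \bigcup_{t \in \K} X_t$ as a definable increasing union of nowhere dense sets; replacing each $X_t$ by $\cl{X_t}$ (still increasing and still nowhere dense) I may assume each $X_t$ is closed. By the \emph{moreover} clause of Thm.~\ref{thm:a-minimal}, \aminimality implies Baire, so $\K$ is not \aminimal and condition~\ref{en:a-dcf} of Thm.~\ref{thm:a-minimal} fails: there is a definable closed discrete subset $D \subseteq \K$ that is not pseudo-finite, hence unbounded, and after possibly replacing $D$ by $-D$ and intersecting with $\K^{\geq 0}$ I obtain a \pN set $N$. Because $N$ is unbounded and the family $(X_t)$ is increasing, the restriction $(X_n)_{n \in N}$ still satisfies $\bigcup_{n \in N} X_n = \K$.

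Next I would set, for each $n \in N$,
\[
Q_n := \set{\tfrac{c+d}{2} : (c,d) \text{ is a bounded connected component of } \K \setminus X_n},
\]
and $Q := \bigcup_{n \in N} Q_n$. Each $Q_n$ is a definable discrete subset of $\K$, so Lemma~\ref{lem:discrete-countable} decomposes it uniformly as $Q_n = \bigcup_{r \in N} Q_n(1/r)$ with each $Q_n(1/r)$ pseudo-finite (reciprocating $r$ so that arbitrarily small isolation radii are reached through the unbounded index $N$). Hence $Q = \bigcup_{(n,r) \in N \times N} Q_n(1/r)$ is the union of a strongly uniform family of pseudo-finite sets indexed by the pseudo-enumerable product $N \times N$ (Lemma~\ref{enum:product}), and so by the union lemma for strongly uniform at-most-pseudo-enumerable families $Q$ is itself at most pseudo-enumerable.

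For density, given any open interval $(a,b) \subseteq \K$, the covering $\bigcup_{n \in N} X_n = \K$ together with monotonicity produces $n_0 \in N$ with $a,b \in X_{n_0}$; since $X_{n_0}$ is closed and nowhere dense, $(a,b) \setminus X_{n_0}$ is non-empty, and any component $(c,d)$ of $\K \setminus X_{n_0}$ meeting it must satisfy $a \leq c < d \leq b$, because $a,b \in X_{n_0}$ while $(c,d) \cap X_{n_0} = \emptyset$; thus $(c+d)/2 \in (a,b) \cap Q_{n_0} \subseteq Q$. Density then prevents $Q$ from being discrete, hence from being pseudo-finite, so $Q$ is actually pseudo-enumerable. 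The main obstacle I anticipate is verifying strong uniformity for the family $(Q_n)_{n \in N}$ in the union step, which is why I pass through the uniform pseudo-finite decomposition of Lemma~\ref{lem:discrete-countable} rather than attempting to exhibit an enumeration of each $Q_n$ directly.
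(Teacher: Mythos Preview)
Your argument is correct and follows essentially the same route as the paper: re-index a covering family of closed nowhere dense sets along a \pN set (whose existence follows from the failure of \aminimality), take midpoints of complementary intervals, and assemble the resulting discrete sets into a pseudo-enumerable union. The only cosmetic differences are that the paper works on $[0,1]$ and then rescales to~$\K$, and it also throws in the isolated points of each $X_n$; your density argument (finding $n_0$ with $a,b \in X_{n_0}$ so that the relevant component is trapped inside $(a,b)$) shows that the midpoints alone already suffice.
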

Note that, by the previous theorem, $Q$~has empty interior.
\begin{proof}
$\K$~is not \aminimal, and therefore there exists
$D$ \pN subset of~$\K$.

Let $\Pa{X(n)}_{n \in D}$ be a definable increasing family of \dcompact
nowhere-dense subsets of~$[0,1]$, such that $[0,1] = \bigcup_{n \in D} X(n)$.
For every $n \in D$, let $Y(n) \subseteq X(n)$ be the set of isolated points
of~$X(n)$, and $Z(n) \subset [0,1]$ be the set of centres of the definable
connected components of $[0,1] \setminus X(n)$.
Note that both $Y(n)$ and $Z(n)$ are discrete; thus, $Y(n)$ and $Z(n)$ are
(at most) pseudo-enumerable.
Let $Q' :=  \bigcup_{n \in D} \Pa{Z(n) \cup Y(n)}$.
Since $Q'$ is the union of a pseudo-enumerable family of pseudo-enumerable
sets, $Q'$ is pseudo-enumerable.
Moreover, $Q'$ is dense in~$[0,1]$.
Using~$Q'$, it is trivial to obtain $Q \subseteq \K$ pseudo-enumerable and
dense in~$\K$.
\end{proof}

\begin{corollary}
If $\K$ is not Baire, then, for every $n \in \Nat$, $\K^n$ has a
pseudo-enumerable basis of open sets.
\end{corollary}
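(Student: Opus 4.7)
The plan is to deduce the corollary directly from Lemma~\ref{lem:Q}. Assume $\K$ is not Baire; then that lemma supplies a dense pseudo-enumerable subset $Q \subseteq \K$. I shall show that the family of open boxes with ``corners'' in $Q^{2n}$, namely
\[
\mathcal B_n := \set{ \prod_{i=1}^n (a_i, b_i) : (a_1, b_1, \dotsc, a_n, b_n) \in Q^{2n} \et a_i < b_i \text{ for every } i},
\]
is a basis for the topology on $\K^n$ and is indexed by a pseudo-enumerable parameter set.

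First, I would check that $\mathcal B_n$ is a pseudo-enumerable family. By Lemma~\ref{enum:product} (iterated), $Q^{2n}$ is pseudo-enumerable, since finite products of pseudo-enumerable sets are pseudo-enumerable. The index set of $\mathcal B_n$ is the definable subset of $Q^{2n}$ cut out by the conditions $a_i < b_i$, and a definable subset of a pseudo-enumerable set is at most pseudo-enumerable.

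Second, I would verify the basis property. Fix $U \subseteq \K^n$ open and $\x = (x_1, \dotsc, x_n) \in U$. Choose $r > 0$ so that the box $\prod_i (x_i - r, x_i + r)$ is contained in $U$. Since $Q$ is dense in $\K$, for each coordinate $i$ I can pick $a_i, b_i \in Q$ with $x_i - r < a_i < x_i < b_i < x_i + r$; then $\x \in \prod_i (a_i, b_i) \subseteq U$, and this box belongs to $\mathcal B_n$.

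The argument is essentially elementary once Lemma~\ref{lem:Q} is in hand; the only step requiring care is the appeal to Lemma~\ref{enum:product} to conclude that the index set $Q^{2n}$ (and hence the subset on which $a_i < b_i$) is pseudo-enumerable rather than merely a union of pseudo-enumerable sets. This is the only ``non-routine'' point, and it is covered by the product lemma already established.
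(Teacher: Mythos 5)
Your proof is correct, and it takes a mildly different route from the paper's. The paper also starts from Lemma~\ref{lem:Q}, but it first produces a dense pseudo-enumerable subset $(x_i)_{i\in I}$ of $\K^n$ (indexed by a \pN set $I$ with $0\notin I$) and then declares $\set{B(x_i;1/i): i\in I}$ to be the basis, so the whole basis is a one-parameter family indexed by $I$ itself, with the radii tied to the indices. You instead take boxes whose corners range over the definable subset of $Q^{2n}$ cut out by $a_i<b_i$, invoking Lemma~\ref{enum:product} for the pseudo-enumerability of $Q^{2n}$ and the remark that definable subsets of at most pseudo-enumerable sets are at most pseudo-enumerable (and, since $Q$ is dense and infinite, the index set is not pseudo-finite, hence genuinely pseudo-enumerable). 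Your version has the advantage that the verification of the basis property is the completely standard density argument, coordinate by coordinate; the paper's version is more economical in its index set but implicitly requires that for each target point and each $\varepsilon>0$ one can find $x_i$ close to the point with $1/i<\varepsilon$, i.e.\ that the dense family remains dense when restricted to large indices --- a point the paper does not spell out and which your construction sidesteps entirely. Both arguments ultimately rest on the same two ingredients (Lemma~\ref{lem:Q} and closure of pseudo-enumerability under finite products), so the difference is one of packaging rather than substance.
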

\begin{proof}
Let $I$ be a \pN subset of~$\K$, such that $0 \notin I$, 
and $(x_i)_{i \in I}$ be a dense pseudo-enumerable subset of~$\K^n$.
Then, $\set{B(x_i; 1/i): i \in I}$ is a pseudo-enumerable basis of open sets
for~$\K^n$.
\end{proof}

\subsection{Fixed point theorems}

\begin{lemma}[Banach fixed point theorem]
Let $X \subseteq \K^n$ be a definable non-empty closed set, and
$f: X \to X$ be a definable map.
Assume that:
\begin{enumerate}
\item either $X$~is \dcompact, and, for every $x \neq y \in X$, 
$\abs{f(x) - f(y)} < \abs{x - y}$;
\item or there exists $C \in \K$ such that $0 \leq C < 1$ and, 
for every $x, y \in Y$, $\abs{f(x) - f(y)} \leq C \abs{x - y}$.
Then, $f$~has a unique fixed point.
\end{enumerate}
\end{lemma}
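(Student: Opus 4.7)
The plan is to prove part~(1) directly via the extreme value theorem for definably complete structures, and then derive part~(2) by producing an $f$\hyph invariant d-compact ball on which (1) applies.

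For uniqueness in both cases, observe that in (2) the hypothesis $0 \le C < 1$ immediately implies the strict contraction of (1): $\abs{f(x) - f(y)} \le C\abs{x-y} < \abs{x-y}$ whenever $x \ne y$. Hence if $x^*, y^* \in X$ are both fixed, then $\abs{x^* - y^*} = \abs{f(x^*) - f(y^*)} \le \abs{x^* - y^*}$, with strict inequality unless $x^* = y^*$; uniqueness follows.

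For existence in~(1), first note that the strict contraction forces $f$ to be $1$-Lipschitz, hence (definably) continuous. Define $g : X \to \K^{\ge 0}$ by $g(x) := \abs{f(x) - x}$; this is continuous and definable. Since $X$ is \dcompact, standard definable completeness (inf/sup of a continuous definable function on a \dcompact set is attained, proved in the same style as Lemma~\ref{lem:uniform-cont}) gives a point $x^* \in X$ minimizing~$g$. If $g(x^*) > 0$, then $x^* \ne f(x^*)$, and since $f(x^*) \in X$, the strict contraction yields
\[
g\bigl(f(x^*)\bigr) = \abs{f(f(x^*)) - f(x^*)} < \abs{f(x^*) - x^*} = g(x^*),
\]
contradicting the minimality of $g(x^*)$. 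Therefore $g(x^*) = 0$, that is, $f(x^*) = x^*$.

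For existence in~(2), fix any $x_0 \in X$, set $\rho := \abs{f(x_0) - x_0}/(1 - C)$, and define
\[
A := \set{y \in X : \abs{y - x_0} \le \rho}.
\]
Then $A$ is a non-empty (it contains $x_0$), closed and bounded subset of~$\K^n$, hence \dcompact, and $f\rest A$ is a definable map into~$X$. To see that $f(A) \subseteq A$, for $y \in A$ use the triangle inequality and the Lipschitz hypothesis:
\[
\abs{f(y) - x_0} \le \abs{f(y) - f(x_0)} + \abs{f(x_0) - x_0}
\le C \abs{y - x_0} + (1-C)\rho \le C \rho + (1-C)\rho = \rho.
\]
Since the Lipschitz hypothesis with constant $C < 1$ entails the strict contraction, part~(1) applies to $f \rest A : A \to A$ and produces a fixed point~$x^* \in A \subseteq X$.

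The only non-trivial ingredient is the extreme value theorem for continuous definable functions on \dcompact sets; this is a routine consequence of definable completeness and is implicit in the preceding sections (\eg the proof of Lemma~\ref{lem:uniform-cont} uses exactly the same compactness mechanism). Aside from this, the argument is self-contained and the main conceptual point is the reduction of the a~priori unbounded case~(2) to the \dcompact case~(1) via the invariant ball~$A$, which avoids any appeal to Cauchy sequences or iterations indexed by~$\Nat$, neither of which is available in an arbitrary definably complete structure.
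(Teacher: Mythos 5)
Your proof is correct and follows essentially the same route as the paper: minimize the definable function $x \mapsto \abs{f(x)-x}$ on the \dcompact set for case (1), and reduce case (2) to case (1) via the $f$-invariant closed ball of radius $\abs{f(x_0)-x_0}/(1-C)$. You are in fact slightly more careful than the paper, which omits the uniqueness argument and does not remark that the continuity of $f$ (forced by the contraction hypothesis) is what guarantees the minimum of $g$ is attained.
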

\begin{proof}
(1) Let $g: X \to \K$, $g(x) := \abs{x - f(x)}$, and let
$d := \min(g)$.
If $f$ has no fixed point, then $d > 0$;
let $x_1 \in X$ such that $g(x_1) = d$, and $x_2 := f(x_1)$.
Then, $\abs{f(x_2) - x_2} < \abs {x_2 - x_1} = c$, absurd.

(2) Choose $x_1 \in X$; let $x_2 := f(x_1)$, 
$r_0 := \abs{x_1 - x_1}$, $r := r_0/(1-c)$, and $Y := X \cap \clB(x_0, r)$.
Then, $Y$ is \dcompact and $f(Y) \subseteq Y$.
Hence, by (1), $f$ has a fixed point in~$Y$.
\end{proof}

\begin{conjecture}
Brower fixed point theorem.
Kakutani fixed point theorem.
\end{conjecture}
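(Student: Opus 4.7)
The plan is to prove a definable Brouwer fixed point theorem, asserting that every definable continuous $f:[0,1]^n\to[0,1]^n$ has a fixed point, by adapting Sperner's combinatorial argument to the pseudo-finite setting. For each parameter $r>0$ I would first construct a pseudo-finite simplicial triangulation $T_r$ of $[0,1]^n$ of mesh at most $r$: the vertex set is a pseudo-finite ``grid'' $V_r\subset[0,1]^n$ (obtained from a pseudo-finite arithmetic progression in $[0,1]$ of step $r$, whose existence follows from the constructions in Section~\ref{sec:pf} together with Corollary~\ref{cor:product}), and the simplices form a pseudo-finite family via the standard Kuhn subdivision of each elementary box. I would then assign a Sperner-type label $L:V_r\to\set{0,\dots,n}$ using $f$: on interior vertices pick $L(v)=i$ for some $i$ with $f(v)_i\leq v_i$, and on the boundary impose the usual face convention so that the Sperner hypothesis holds on every face.

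The combinatorial heart is a definable Sperner's lemma for pseudo-finite triangulations of a simplex: any definable Sperner labeling admits at least one fully-labeled top-dimensional simplex. I would prove this by path-following rather than by parity counting. Starting from a complete $(n-1)$-door on the boundary (existence coming from an inductive Sperner in dimension $n-1$), iteratively walk across the unique other complete $(n-1)$-door of the next adjacent $n$-simplex. Since each $n$-simplex possesses $0$ or $2$ complete doors (exactly $2$ when fully labeled), no simplex is revisited; the walk is therefore definably injective, and pseudo-finiteness of the family of $n$-simplices forces termination, which can only occur at a fully-labeled simplex.

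With Sperner established, for each $r>0$ pick a fully-labeled $\sigma_r\in T_r$ and a vertex $x_r\in\sigma_r$. A multivariate version of Lemma~\ref{lem:uniform-cont} supplies uniform continuity of $f$ on the \dcompact cube $[0,1]^n$, and this together with the mesh bound yields $\abs{f(x_r)_i-(x_r)_i}\leq\eta(r)$ with $\eta(r)\to 0$ as $r\to 0^+$. The sets $B_s:=\cl{\set{x_r:0<r<s}}$ form a definable decreasing family of non-empty \dcompact subsets of $[0,1]^n$, so their intersection $\bigcap_{s>0}B_s$ is non-empty by the finite intersection property for \dcompact sets (a routine consequence of definable completeness); any $x^*$ in this intersection satisfies $f(x^*)=x^*$ by continuity of $f$.

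The main obstacle is the definable Sperner's lemma itself: the classical proof hinges on parity of finite sets, which does not translate cleanly to pseudo-finite cardinalities in a general definably complete field, forcing the path-following reformulation and careful verification that the selections (``next complete door'', ``simplex across a face'') and injectivity of the walk are definable uniformly in $n$. A secondary concern is the construction of the pseudo-finite grids $V_r$ in full generality (where $\K$ may not admit a definable $\Z$); in the \aminimal or \dminimal cases this is unproblematic, and in general one would proceed via a pseudo-$\Nat$ subset of $\K$. For Kakutani's theorem, once definable Brouwer is in hand, one would approximate an upper-hemicontinuous definable convex-valued set-valued map by definable continuous selections on increasingly fine pseudo-finite grids (via a definable partition of unity), apply definable Brouwer to each selection, and extract a fixed point of the original multivalued map using the same \dcompact intersection argument.
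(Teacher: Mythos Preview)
The paper does not prove this statement; it is recorded as a bare conjecture with no accompanying argument. So there is no ``paper's proof'' to compare against, and your proposal is an attempt at the open problem rather than a reconstruction.

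As an attack on the conjecture, the Sperner strategy has a genuine gap at its very first step. You assert that for each $r>0$ one can build a pseudo-finite arithmetic progression in $[0,1]$ of step~$r$, citing \S\ref{sec:pf}. Nothing in that section constructs such sets; it only records closure properties of pseudo-finite sets that already exist. And in fact such grids need \emph{not} exist: take $\K$ o-minimal and non-Archimedean (a perfectly good definably complete expansion of a field). Every pseudo-finite subset of $\K$ is then finite, so any grid in $[0,1]$ has mesh at least $1/N$ for some standard~$N$. For infinitesimal $r$ there is simply no pseudo-finite $V_r$ of mesh $\leq r$, and the compactness extraction at the end cannot produce a fixed point because the family $\{B_s\}$ is not indexed by all $s>0$. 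You flag this as a ``secondary concern'', but it is fatal in exactly the cases (o-minimal, a-minimal) that you describe as unproblematic. Even when $\K$ is not \aminimal and a \pN set $N$ exists, $N$ is merely closed, discrete and unbounded; it need not be evenly spaced, so $\{kr:k\in N\}\cap[0,1]$ need not have mesh~$\leq r$.

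A second gap lies in the path-following Sperner argument. Writing ``iteratively walk across the unique other door'' presupposes a definable iteration over a pseudo-finite index set: you need a definable function $k\mapsto\sigma_k$ satisfying a recursion $\sigma_{s(k)}=\text{next}(\sigma_k)$. Definably complete fields do not in general support definable recursion over pseudo-finite sets (this is closely related to the open Pigeonhole conjecture in \S\ref{sec:pf}), so ``pseudo-finiteness forces termination'' is not justified without further argument. The known proofs of definable Brouwer in the o-minimal setting proceed via o-minimal homology or degree theory precisely to avoid both of these obstacles; a Sperner-style proof at the generality of definably complete structures would require new ideas to manufacture grids and to carry out the recursion.
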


\subsection{Baire structures}
Let $\K$ be definably complete and \textbf{Baire}.

If $X$ and $Y$ are subsets of $\K^n$,
then $X - Y := \set{x - y: x \in X, y \in Y}$.

\begin{lemma}[Pettis' Theorem]
Let $A \subseteq \K^n$ be definable and  \ao.
If $A$ is non-meager, then $A - A$ contains a non-empty open neighbourhood
of~$0$.
If $\K$ is \iminimal with DSF
(see \S\ref{sec:i0min}) and $A - A$ is non-meager, then $A$ is non-meager.
\end{lemma}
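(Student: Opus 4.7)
The plan is to follow the classical Pettis argument, adapted to the definably complete Baire setting. For the first statement, write $A = U \sdiff M$, where $U \subseteq \K^n$ is a definable open set and $M$ is a definable meager set (by the definition of \ao). Since $A$ is non-meager while $M$ is meager, $U$ must be non-empty; otherwise $A \subseteq M$ would be meager. Choose $u \in U$ and $r > 0$ with $B(u, r) \subseteq U$. I will show that $B(0, r) \subseteq A - A$.

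Fix $v \in B(0, r)$. Since $\abs{v} < r$, we have $u - v \in B(u,r) \subseteq U$, so $u \in U \cap (U + v)$, which is therefore a non-empty definable open set. From $A \sdiff U = M$ we obtain $(A + v) \sdiff (U + v) = M + v$, which is again meager because translation is a definable homeomorphism preserving nowhere-dense sets. Hence $A \cap (A + v)$ and $U \cap (U + v)$ agree outside the meager set $M \cup (M + v)$. Using that $\K$ is Baire together with the product-Baire result already invoked in the paper (\cite{FS}, Prop.~2.7), $\K^n$ is Baire, so the non-empty open set $U \cap (U + v)$ is non-meager. Therefore $A \cap (A + v)$ is non-meager, in particular non-empty, which yields $a, a' \in A$ with $a - a' = v$, i.e.\ $v \in A - A$. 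This establishes $B(0, r) \subseteq A - A$.

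For the second statement I will argue by contrapositive: assuming $A$ is meager, I must show $A - A$ is meager. Write $A \subseteq M = \bigcup_{t \in \K} M_t$ for a definable increasing family of closed nowhere-dense sets $M_t$; then $A - A \subseteq M - M = \bigcup_t (M_t - M_t)$, where each $M_t - M_t$ is the image of the \dcompact set $M_t \times M_t$ under the definable continuous map $(x, y) \mapsto x - y$ and is therefore \dcompact and an $\Fs$. The strategy is to invoke the structural restrictions on definable sets that are available under \iminimality with DSF (from \S\ref{sec:i0min}): in particular, that each definable nowhere-dense set has dimension strictly less than $n$ and that the dimension of a definable continuous image is bounded by the dimension of the source. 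This will force $\dim(M_t - M_t) < n$, hence $M_t - M_t$ has empty interior and, being an $\Fs$, is meager; then Lemma~\ref{lem:uniform-U-Fs}-style results for increasing families give that the union $M - M$ is meager, so $A - A$ is meager.

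The main obstacle is the last step. Classically, the difference of a nowhere-dense closed set with itself need not be nowhere dense (the Cantor set minus itself is $[-1,1]$), so the argument must genuinely use the \iminimality and DSF assumptions rather than just the Baire property; the key input is that \iminimality forces nowhere-dense definable sets to be ``thin'' in a dimension-theoretic sense that is preserved under definable continuous maps between sets of equal dimension, and DSF ensures the family $(M_t - M_t)$ behaves uniformly enough for the meagerness of the union to follow. The first statement is the robust half of the theorem; the converse is the delicate one where the extra hypotheses are doing real work.
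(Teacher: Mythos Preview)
Your first half is correct and essentially identical to the paper's argument: write $A=U\sdiff P$, take a ball $B\subseteq U$ of radius~$\delta$, and observe that for $\abs{x}<\delta$ the set $(x+A)\cap A$ contains $(x+B)\cap B$ minus the meager set $P\cup(x+P)$, hence is non-empty; so $x\in A-A$.

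Your second half is only a sketch, and the route through a decomposition $A\subseteq\bigcup_t M_t$ into closed nowhere-dense pieces is a detour. The paper's argument (read through some evident typos) goes straight to dimension: by \iminimality, a definable subset of $\K^n$ is meager iff its dimension is~$<n$; and by Lemma~\ref{lem:dim-function}(1), the image $A-A$ of $A\times A$ under the definable map $(x,y)\mapsto x-y$ satisfies $\dim(A-A)\le\dim(A\times A)$. For $n=1$ this closes immediately: $A$ meager gives $\dim A=0$, hence $\dim(A\times A)=0$, hence $\dim(A-A)=0$, hence $A-A$ is meager. No family $(M_t)$ is needed, and no uniformity lemma is invoked.

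There is, however, a genuine obstruction you should be aware of: for $n\ge 2$ the converse statement is simply false, so neither your sketch nor any other argument can succeed there. Take $A=(\K\times\{0\})\cup(\{0\}\times\K)\subset\K^2$, which is definable, closed, nowhere dense (hence meager and \ao); yet $A-A=\K^2$, since $(a,0)-(0,-b)=(a,b)$ for arbitrary $a,b\in\K$. The paper's proof, with its ``$\dim(A-A)=1$'' and ``$\dim A=0$'', is tacitly treating the case $n=1$ only (consistent with the corollary immediately after, which is stated for $A\subseteq\K$). Your plan to control $\dim(M_t-M_t)$ for general~$n$ cannot work, for the same reason: a $1$-dimensional nowhere-dense set in $\K^2$ can already have a $2$-dimensional difference set.
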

\begin{proof}
Minor variation of~\cite[Thm.~4.8]{oxtoby}.
Let $A = U \sdiff P$, where $U$ is open and definable, and $G$ is meager.
$A$~is non-meager iff $U$ is non-empty.
If $A$ is non-meager, let $B \subseteq U$ be a non-empty open ball, of radius $\delta > 0$.
For any $x \in \K^n$, we have
\begin{multline*}
(x + A) \cap A = \Pa{(x + U) \sdiff (x + P)} \cap (U \sdiff P) = \\
\Pa{(x + U) \cap U} \sdiff \Pa{(x + U) \cap P} \sdiff \Pa{(x + P) \cap U}
\sdiff \Pa{(x + P) \cap P} 
\supseteq [(x + B) \cap B] - [P \cup (x + P)].
\end{multline*}
If $\abs x < \delta$, the right member represents a non-empty open set,
minus a meager set; it is therefore non-empty.
Thus, for every $x \in B(0;\delta)$, $(x + A) \cap A$ is non-empty,
and therefore $x \in A - A$.

Conversely, if $A - A$ is non-meager, then, by Thm.~\ref{thm:imin}, 
$\dim (A - A) = 1$; therefore, by Lemma~\ref{lem:dim-function},  $\dim A = 0$, 
and thus $A$ is meager.
\end{proof}

Define $F: \K^4 \to \K$ as $F(x_1, x_2, y_1, y_2) := (x_1 - x_2) / (y_1 - y_2)$
if $y_1 \neq y_2$, and $0$ otherwise.

\begin{corollary}
Let $A \subseteq \K$ be definable and \ao.
If $A$~is non-meager, then $F(A^4) = \K$.
If $\K$ is \iminimal with \DSF (see \S\ref{sec:i0min}) and $F(A^4)$ is
non-meager, then $A$ is non-meager.
\end{corollary}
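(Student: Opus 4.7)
The plan is to mimic the proof of Pettis' theorem above, with $F$ playing the role of the difference map. The first assertion will reduce to choosing a suitably small witness in $A - A$, and the second to the contrapositive via the dimension formula of Section~\ref{sec:i0min}.

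For the first assertion, I would apply Pettis to the non-meager \ao set $A$ to obtain $\delta > 0$ with $B(0;\delta) \subseteq A - A$. Given $c \in \K$ with $c \neq 0$, pick any nonzero $\epsilon \in \K$ with $|\epsilon| < \delta$ and $|c\epsilon| < \delta$ (for instance, $|\epsilon| < \delta/(1 + |c|)$). Then both $\epsilon$ and $c\epsilon$ lie in $B(0;\delta) \subseteq A - A$, so there exist $y_1, y_2, x_1, x_2 \in A$ with $y_1 - y_2 = \epsilon$ and $x_1 - x_2 = c\epsilon$; consequently $F(x_1, x_2, y_1, y_2) = c\epsilon/\epsilon = c$. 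For $c = 0$, since $A$~is non-meager it cannot be a singleton: choose $a \in A$ and $y_1 \neq y_2$ in $A$, and get $F(a, a, y_1, y_2) = 0$. Thus $F(A^4) = \K$.

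For the second assertion, I would argue by contrapositive, imitating the end of the Pettis proof. Suppose $A$ is meager. Under the \iminimal \DSF hypothesis, Thm~\ref{thm:imin} gives $\dim A = 0$. The key intermediate step is $\dim(A^4) = 0$; granted this, Lemma~\ref{lem:dim-function} applied to the definable map $F\rest{A^4}$ yields $\dim F(A^4) \leq 0$, and hence $F(A^4)$ is meager by another invocation of Thm~\ref{thm:imin}, contradicting the hypothesis on $F(A^4)$.

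The main obstacle is precisely the product step $\dim A = 0 \Rightarrow \dim(A^4) = 0$. In the \aminimal setting this is immediate from Cor.~\ref{cor:product} (a zero-dimensional definable subset of~$\K$ is pseudo-finite, and products of pseudo-finite sets are pseudo-finite), so the implicit risk is that the analogous compatibility in the broader \iminimal \DSF context must be taken from the dimension theory developed in Section~\ref{sec:i0min}. Once this product property is in hand, the corollary follows in one line from the preceding Pettis' theorem.
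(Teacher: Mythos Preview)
Your proof is correct and follows the paper's approach exactly: the paper applies Pettis' theorem for the forward direction and says the converse is proved ``as in the previous lemma,'' i.e., the same contrapositive dimension argument via Lemma~\ref{lem:dim-function}. Your worry about the product step is unfounded: $\dim A = 0$ just means $A$ has empty interior in~$\K$, and then every projection of $A^4$ onto a positive-dimensional coordinate space is some $A^k$, which also has empty interior (an open box in $A^k$ would project to an open interval in~$A$), so $\dim(A^4)=0$ follows immediately from the definition of~$\dim$ without any appeal to \aminimality or further machinery from \S\ref{sec:i0min}.
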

\begin{proof}
If $A$ is non-meager, then $A - A$ contains an open neighbourhood of~$0$, 
and therefore $F(A^4) = \K$.
The converse is proved as in the previous lemma.
\end{proof}


\section{I-minimal and constructible structures}\label{sec:i0min}
As usual, $\K$ is a definably complete structure, expanding a field.

\subsection{I-minimal structures}

\begin{definizione}
$\K$ is \intro{\iminimal{}} if, for every unary definable set~$X$, 
if $X$ has empty interior, then $X$ is nowhere dense.
\end{definizione}
See also~\cite{miller05} for the case when $\K$ is an expansion of~$\Real$.

\begin{lemma}
If $\K$ is \iminimal, then it is Baire
\end{lemma}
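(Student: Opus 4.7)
The strategy is to argue by contradiction. Suppose $\K$ is \iminimal but not Baire, so there is a definable increasing family $(X_t)_{t \in \K}$ of nowhere dense subsets of~$\K$ with $\bigcup_{t \in \K} X_t = \K$. Since the closure of a nowhere dense set is nowhere dense and $(\cl{X_t})_{t \in \K}$ remains increasing with $\bigcup_t \cl{X_t} \supseteq \K$, we may assume each $X_t$ is closed with empty interior.

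My plan is to produce, via a ``sup of a bounded increasing definable function'' trick provided by definable completeness, a single point $g^* \in \K$ lying outside every $X_t$, which contradicts $\bigcup_t X_t = \K$. I work inside $I_0 := [0,1] \subseteq \K$ and set
\[
m(t) := \sup\bigl\{b-a : (a,b) \subseteq I_0 \setminus X_t\bigr\},
\]
the length of the largest gap of $X_t$ in~$I_0$. By definable completeness this is well-defined; it is strictly positive (since $X_t$ has empty interior) and weakly decreasing in~$t$. Let $m^* := \inf_t m(t) \geq 0$.

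In the case $m^* > 0$, I fix $\delta := m^*/2$ and define
\[
g(t) := \inf\bigl\{x \in [0, 1-\delta] : [x, x+\delta] \cap X_t = \emptyset\bigr\}.
\]
Since $m(t) \geq m^* > \delta$ this infimum exists in $[0, 1-\delta]$; as $t$ grows and $X_t$ enlarges, the set of admissible $x$ shrinks, so $g$ is weakly increasing in~$t$. Definable completeness yields $g^* := \sup_t g(t) \in [0, 1-\delta]$. For any $t$ large enough that $g(t) > g^* - \delta/2$ one has $g^* \in [g(t), g(t)+\delta] \subseteq I_0 \setminus X_t$, so $g^* \notin X_t$; by monotonicity of the family, $g^* \notin X_s$ for every $s \in \K$, the desired contradiction.

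The main obstacle is the case $m^* = 0$, where no uniform positive gap size is available and the naive choice $\delta := m^*/2$ collapses. This is the step that genuinely requires \iminimality: when $m^* = 0$ the sets $X_t$ become arbitrarily finely-meshed in~$I_0$, so $X := \bigcup_t X_t$ is dense in~$I_0$, and being a unary definable set that is not nowhere dense, $X$ is forced by \iminimality to have non-empty interior. Combined with the freedom to rescale, this lets one pass to a sub-interval on which a modified version of the same sup argument---run with a variable gap size $\delta(t) := m(t)/2$ shrinking with $t$, together with a more delicate monotonicity analysis of the resulting $g(t)$---produces the desired $g^* \notin \bigcup_t X_t$, yielding the final contradiction.
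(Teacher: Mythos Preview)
Your case split on $m^* := \inf_t m(t)$ is natural, and the argument in the case $m^* > 0$ is essentially sound (modulo a small technical fix: since the set $S_t := \{x : [x,x+\delta]\cap X_t = \emptyset\}$ is open, one should work with $g^* + \delta/2$ rather than $g^*$ itself). Note, however, that this case uses \emph{no} hypothesis beyond definable completeness; it cannot be where \iminimality enters.

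The genuine gap is the case $m^* = 0$, which you do not actually prove. Your invocation of \iminimality there is vacuous: by hypothesis $X := \bigcup_t X_t = \K$, so the observation that ``$X$ is dense in~$I_0$'' and ``$X$ has non-empty interior'' carries no information. The remaining sentence --- promising a ``more delicate monotonicity analysis'' with variable gap size $\delta(t) = m(t)/2$ --- is not an argument. When $m(t)\to 0$ the intervals $[g(t), g(t)+\delta(t)]$ shrink to points, and nothing you have set up forces those points to stabilise or to avoid~$X_t$ for large~$t$. You have not identified any definable unary set with empty interior to which \iminimality could be meaningfully applied.

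The paper proceeds quite differently. It first proves (independently of \iminimality) that $\K$ is never pseudo-enumerable, and that if $\K$ fails to be Baire then it contains a definable \emph{dense} pseudo-enumerable subset $Q \subset \K$ (Lemma~\ref{lem:Q}). Such a $Q$ has empty interior (since $Q \neq \K$) but is not nowhere dense (being dense), and this single set already contradicts \iminimality. So the hard work is the construction of~$Q$, which relies on the machinery of pseudo-enumerable sets developed in \S\ref{sec:enumerable}; the final step is a one-line application of the definition of \iminimality. Your proposal tries to bypass that machinery with a direct nested-interval argument, but the crucial case is left open.
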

\begin{proof}
By Lemma~\ref{lem:Q}, if $\K$ were not Baire, it would contain a definable
dense and co-dense subset~$Q$.
However, $Q$ would have empty interior, but it would not be nowhere dense.
\end{proof}

\begin{examples}
If $\K$ is locally o-minimal, then it is \iminimal (trivial).

If $\K$ is d-minimal, then it is \iminimal.
In fact, by definition of d-minimality if $X \subseteq \K$ is definable and
with empty interior, then $X$ is a finite union of discrete sets~$X_1, \dotsc,
X_n$.
Every discrete subset of a definably complete structure is nowhere dense.
Thus, $X$~is a finite union of nowhere dense sets,
and thus it is nowhere dense.
\end{examples}

\begin{thm}\label{thm:imin}
The following are equivalent:
\begin{enumerate}
\item\label{en:i-imin-1}
$\K$ is \iminimal;
\item\label{en:i-imin-n} 
for every $n\in \Nat$, if $X$ is a definable subset of
$\K^n$ with empty interior, then $X$ is nowhere dense;
\item\label{en:i-bd}
for every definable set~$X$, $\bd(X)$ has empty interior;
\item\label{en:i-dim-1}
for every definable $X \subseteq \K$, $\dim X = \dim \cl X$;
\item\label{en:i-dim-n}
for every definable~$X$, $\dim X = \dim \cl X$;
\item\label{en:i-f} 
if $U \subseteq \K^n$ is definable and open, and $f : U \to \K$ is definable, then $\Dis(f)$ is nowhere dense;
\item\label{en:i-B} 
for every $n, m \in \Nat$, if $A \subseteq \K^{n+m}$ is definable,
then $\B_n(A)$~is nowhere dense, where
\[
\B_n(A) := \set{x \in \K^n: \cll(A)_x \setminus \cll(A_x) \neq \emptyset}.
\]
\item\label{en:i-fr}
for every $n, m \in \Nat$, if $A \subseteq \K^{n+m}$ is definable, then the
set
\[
\set{x \in \K^n: (\fr A)_x \neq \fr(A_x)}
\]
is nowhere dense;
\item\label{en:i-meager-1} 
$\K$ is Baire, and, for every definable $X \subseteq \K$, either $X$ has interior, or it is meager;
\item\label{en:i-meager-n}
$\K$ is Baire, and every definable set either has interior, or it is meager;
\item\label{en:i-dim-union-1}
for all definable~$A, B \subseteq \K$,
$\dim (A \cup B) = \max \set{\dim A,  \dim B}$;
\item\label{en:i-dim-union-n}
for all definable~$A, B \subseteq \K^n$,
$\dim (A \cup B) = \max \set{\dim A,  \dim B}$;
\item\label{en:i-dim-fiber}
for all definable~$A \subseteq \K^n$, if $\dim A = d$,
then $\set{x \in \K^d: \dim A_x > 0}$ is nowhere dense;
\item\label{en:i-dim-fiber-2}
let $d, k, m, n \in \Nat$, with $k \leq n$ and $d \leq m$;
let $A \subseteq \K^{n + m}$ be definable, and $\dim A \leq d + k$;
define $C:= \set{x \in \K^n: \dim A_x \geq d}$; then, $\dim(C) \leq k$;
\item\label{en:i-countable-1}
any pseudo-enumerable union of subsets of $\K$ with empty interior has empty
interior;
\item\label{en:i-countable-n}
for every $d \leq n \in \Nat$, any pseudo-enumerable union of subsets of
$\K^n$ of dimension less or equal to $d$ has dimension less or equal to~$d$;
\end{enumerate}
Moreover, if $\K$ is \iminimal, then:
\begin{enumerate}[I]
\item\label{en:i-meager}
every meager set is nowhere dense;
\item\label{en:i-increasing}
for every $d \leq n \in \Nat$, any increasing definable union of subsets of
$\K^n$ of dimension less or equal to $d$ has dimension less or equal to~$d$;
\item\label{en:i-monotonicity}
if $U \subseteq \K$ is open and definable, and $f: U \to \K$ is definable,
then there exists $D \subseteq \K$ definable, closed and with empty
interior, such that, for every definably connected component $I$ of
$U \setminus D$,  $f \rest I$ is continuous, and either constant or strictly
monotone; 
\item\label{en:i-continuous}
if $U \subseteq \K^n$ is open and definable, and $f: U \to \K$ is definable,
then there exists $D \subseteq \K$ definable, closed and with empty
interior, such that the restriction $f \rest U \setminus D$ is continuous.
\end{enumerate}
\end{thm}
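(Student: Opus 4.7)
The plan is to organize the sixteen equivalent conditions into clusters and show they all reduce to condition~\ref{en:i-imin-1} (\iminimality for unary sets). Several implications are immediate: $n$-ary versions obviously imply their unary specializations ($\ref{en:i-imin-n}\Rightarrow\ref{en:i-imin-1}$, $\ref{en:i-dim-n}\Rightarrow\ref{en:i-dim-1}$, $\ref{en:i-meager-n}\Rightarrow\ref{en:i-meager-1}$, $\ref{en:i-dim-union-n}\Rightarrow\ref{en:i-dim-union-1}$, $\ref{en:i-dim-fiber-2}\Rightarrow\ref{en:i-dim-fiber}$, $\ref{en:i-countable-n}\Rightarrow\ref{en:i-countable-1}$), and $\ref{en:i-bd}\Leftrightarrow\ref{en:i-imin-1}$ follows because $\bd(X)=\cl X\cap\cl(\K\setminus X)$, so ``$\bd(X)$ has empty interior'' for all unary~$X$ is exactly saying that no definable~$X\subseteq\K$ can be dense together with its complement in an open interval. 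Furthermore, $\ref{en:i-imin-1}\Rightarrow$ Baire by Lemma~\ref{lem:Q}: a non-Baire $\K$ admits a dense pseudo-enumerable $Q\subset\K$, which has empty interior but is not nowhere dense; hence~$\ref{en:i-imin-1}$ forces~$\K$ to be Baire, and a Baire-space argument gives $\ref{en:i-imin-1}\Leftrightarrow\ref{en:i-meager-1}$.

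Next, the main technical step is lifting to higher dimensions, namely $\ref{en:i-imin-1}\Rightarrow\ref{en:i-imin-n}$. I would induct on~$n$: if $X\subseteq\K^{n+1}$ has empty interior but $\cl X$ contains an open box $B\times I$, then applying Kuratowski--Ulam (already cited in Lemma~\ref{lem:dim-fiber}) to~$\cl X$ and using the inductive hypothesis plus $\ref{en:i-imin-1}$ on the fibers produces an open set in~$X$, a contradiction. More precisely I would first get $\ref{en:i-B}$ (that $\B_n(A)$ is nowhere dense) by a similar inductive argument on fibers: if $\B_n(A)$ had interior then Lemma~\ref{lem:bad-2} and Kuratowski--Ulam yield a point of~$\K^n$ where the fibers of~$\cl A$ and of~$\cl{A_x}$ disagree on an open set, contradicting the unary condition applied in each remaining variable. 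Condition~$\ref{en:i-fr}$ then follows from~$\ref{en:i-B}$ since $\fr A=\cl A\setminus A$ and frontier commutes with fibering up to the bad set. From $\ref{en:i-B}$ one recovers $\ref{en:i-f}$ by viewing the graph $\Gamma(f)$ and observing that $\Dis(f)$ is contained in the projection of~$\B_n(\Gamma(f))$ onto~$U$.

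The dimension cluster (\ref{en:i-dim-1}, \ref{en:i-dim-n}, \ref{en:i-dim-union-1}, \ref{en:i-dim-union-n}, \ref{en:i-dim-fiber}, \ref{en:i-dim-fiber-2}) follows once we have $\ref{en:i-imin-n}$: for $\ref{en:i-dim-n}$, a coordinate projection $\Pi^n_L(\cl X)$ is contained in $\cl{\Pi^n_L(X)}$, so if $\Pi^n_L(\cl X)$ has interior then by~$\ref{en:i-imin-n}$ applied to~$\Pi^n_L(X)$ the latter has interior too; this gives $\ref{en:i-dim-n}$. For $\ref{en:i-dim-union-n}$, use that $\cl(A\cup B)=\cl A\cup\cl B$ combined with~$\ref{en:i-dim-n}$ and the obvious additivity of dimension for closures. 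For $\ref{en:i-dim-fiber-2}$, which is the most involved, I would redo the proof of Lemma~\ref{lem:dim-fiber} inductively but now using Kuratowski--Ulam in conjunction with~$\ref{en:i-meager-n}$ (which replaces the $F_\sigma$ hypothesis that was needed in the Baire-only case). Conversely $\ref{en:i-dim-1}\Rightarrow\ref{en:i-imin-1}$: if $X\subseteq\K$ has empty interior but positive dimensional closure then $\dim X=0<1=\dim\cl X$ unless $X$ already contains an interval, handled by Baire-ness. Similar direct arguments close the cluster back to~$\ref{en:i-imin-1}$.

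The pseudo-enumerable cluster (\ref{en:i-countable-1}, \ref{en:i-countable-n}) is handled by reduction to the increasing-union case: a pseudo-enumerable family indexed by a \pN set~$N$ can be reindexed via its intersections with balls of pseudo-finite size, and then~$\ref{en:i-meager-n}$ together with consequence~$\ref{en:i-meager}$ (meager $\Rightarrow$ nowhere dense) finishes the job. Conversely, $\ref{en:i-countable-1}\Rightarrow\ref{en:i-imin-1}$ is immediate since an empty-interior definable set is the union of its intersections with balls, which form a pseudo-enumerable family of nowhere dense sets provided we already know Baire-ness. Finally, for the consequences: $\ref{en:i-meager}$ is immediate (in a Baire space, meager implies empty interior, hence by~$\ref{en:i-imin-n}$ nowhere dense); $\ref{en:i-increasing}$ follows from $\ref{en:i-meager-n}$ applied to projections of the~$X_t$ and from~$\ref{en:i-meager}$; $\ref{en:i-monotonicity}$ and $\ref{en:i-continuous}$ follow from the proofs of the monotonicity and continuity theorems in the locally o-minimal case (Section~\ref{sec:lmin}), replacing ``pseudo-finite'' everywhere by ``closed with empty interior'' and using~$\ref{en:i-f}$ in place of local o-minimality. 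The main obstacle is the lifting from $\ref{en:i-imin-1}$ to $\ref{en:i-imin-n}$, and in particular the strong fiber-dimension bound~$\ref{en:i-dim-fiber-2}$, both of which rest on a careful application of Kuratowski--Ulam to sets that are not a priori~$F_\sigma$; this is where the full power of~\ref{en:i-meager-n} is essential.
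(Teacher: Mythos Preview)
Your overall architecture mirrors the paper's: establish Baire-ness via Lemma~\ref{lem:Q}, then run an induction on~$n$ cycling through $(2_n)$, $(6_n)$, $(7_n)$, and use this to lift $(2_n)$ to~$(2_{n+1})$. The trivial implications, the reduction $(7)\Rightarrow(6)$ via $\Dis(f)=\B_n(\Gamma(f))$, and your treatment of the dimension cluster are essentially what the paper does (the paper, like you, passes to closures before invoking Kuratowski--Ulam for $(\ref{en:i-dim-fiber})$ and $(\ref{en:i-dim-fiber-2})$).

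However, there is a real gap in your argument for condition~(\ref{en:i-B}). You write that ``Lemma~\ref{lem:bad-2} and Kuratowski--Ulam yield a point of~$\K^n$ where the fibers of~$\cl A$ and of~$\cl{A_x}$ disagree on an open set''. But Lemma~\ref{lem:bad-2} only applies when $A$ is an~$\Fs$ or open; for an arbitrary definable~$A$ it gives you nothing. Nor is it clear what ``disagree on an open set'' should mean or how it would follow: the sets $(\cl A)_x$ and $\cll(A_x)$ are both closed with the latter contained in the former, and their difference can perfectly well be a single point. The paper's proof of $(2_n)\Rightarrow(7_n)$ is the technical heart of the theorem and is done by a direct construction: one fixes $r>0$, sets $C(r):=\{(x,y)\in\cl A: d(y,A_x)\geq r\}$, and compares the two functions $g(x):=\lexmin(\cl{C(r)}_x)$ (lower semi-continuous, hence continuous on an open set) and $f(x):=\lexinf(C(r)_x)$. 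A separate claim shows $\{x:f(x)>g(x)\}$ is nowhere dense using~$(2_n)$, so after shrinking one may take $f=g$ continuous; one then contradicts the definition of~$C(r)$ by approximating $(x,f(x))$ from within~$A$. This argument does not reduce to Kuratowski--Ulam or to the~$\Fs$ case, and you would need to supply something of comparable strength.

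A smaller point: your lifting $(2_n)\Rightarrow(2_{n+1})$ silently uses~$(7_n)$ (to pass from ``$(\cl A)_x$ has interior'' to ``$\cll(A_x)$ has interior'' off a nowhere dense set of~$x$), so the order of the induction matters: at each stage one must first close the loop $(2_n)\Leftrightarrow(6_n)\Leftrightarrow(7_n)$, and only then push to~$n+1$. Your plan is compatible with this but does not make the dependency explicit.
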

The proof is postponed to Section~\ref{sec:imin-proof};
cf.~\cite[Main Lemma]{miller05}.

\begin{example}
Let $(M', M)$ be o-minimal structures (expanding a field), such that $M$ is an
elementary substructure of $M'$ and it is dense in~$M'$.
Thus, the structure $N := (M, M')$ has o-minimal open core.
Therefore, if $X \subseteq N$ is meager, then $X$ is nowhere dense.
However, $N$ is not \iminimal, because $M$ is a definable dense subset of $N$
such that $\cl M = N$ (thus, clause~\ref{en:i-meager} in Thm.~\ref{thm:imin}
does not imply \iminimality).
\end{example}

\begin{lemma}
$\K$ is locally o-minimal iff it is \aminimal and \iminimal.
\end{lemma}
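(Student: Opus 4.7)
The plan is to use Lemma~\ref{lem:lmin}(3), which characterizes local o-minimality as the statement that every definable $X \subseteq \K$ either has non-empty interior or is pseudo-finite. With this characterization, both directions collapse to essentially one-line arguments using the results already established.

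For the forward direction, recall it has already been observed (just before the Examples subsection in \S\ref{sec:lmin}) that locally o-minimal structures are \aminimal. So I only need to show \iminimality. Let $X \subseteq \K$ be definable with empty interior. By Lemma~\ref{lem:lmin}(3), $X$ is pseudo-finite, hence discrete and closed, hence nowhere dense. This gives condition~\ref{en:i-imin-1} of Thm.~\ref{thm:imin}.

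For the reverse direction, assume $\K$ is \aminimal and \iminimal, and I verify Lemma~\ref{lem:lmin}(3). Let $X \subseteq \K$ be definable with empty interior. By \iminimality, $X$ is nowhere dense. By \aminimality, applying condition~(5) of Thm.~\ref{thm:a-minimal} (every definable nowhere-dense subset of $\K$ is pseudo-finite), we conclude that $X$ is pseudo-finite. Therefore, Lemma~\ref{lem:lmin}(3) holds and $\K$ is locally o-minimal.

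There is no real obstacle here: the work has all been done in Thm.~\ref{thm:a-minimal} and Lemma~\ref{lem:lmin}. The only thing to be careful about is to cite the right numbered clauses of Thm.~\ref{thm:a-minimal} --- specifically the equivalence between \aminimality and ``every definable nowhere-dense subset of $\K$ is pseudo-finite'' --- rather than inadvertently using a stronger consequence like the existence of an o-minimal open core.
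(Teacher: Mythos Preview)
Your proof is correct and follows essentially the same route as the paper's: for the ``if'' direction, both argue that a definable $X \subseteq \K$ with empty interior is nowhere dense by \iminimality and then pseudo-finite by \aminimality, invoking Lemma~\ref{lem:lmin}(3). The paper dismisses the ``only if'' direction as clear, while you spell it out, but the substance is identical.
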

\begin{proof}
The ``only if'' direction is clear.
Let us prove the ``if'' direction.
Let $X \subseteq \K$ be definable and with empty interior.
By \iminimality, $X$ is nowhere dense.
By \aminimality, $X$ is pseudo-finite.
Thus, $\K$ is locally o-minimal.
\end{proof}

\begin{corollary}\label{cor:i-o-min}
Assume that $\K$ is \iminimal.
Then, $\K$ is o-minimal iff every definable discrete set is finite.
\end{corollary}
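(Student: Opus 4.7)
The ``only if'' direction is immediate: in an o-minimal structure every definable subset of $\K$ is a finite union of points and open intervals, so every definable discrete subset of $\K$ is finite, and this transfers to $\K^n$ by cell decomposition.

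For the ``if'' direction, assume $\K$ is \iminimal and every definable discrete subset is finite. The plan is to route through the material already developed: first upgrade \iminimality to local o-minimality, then use Theorem~\ref{thm:lomin-constructible} to conclude that $\K$ coincides with its open core, and finally apply Corollary~\ref{cor:b-minimal} to conclude that this open core is o-minimal. Concretely, by the lemma immediately preceding the corollary, local o-minimality is equivalent to the conjunction of \iminimality and \aminimality, so it suffices to check that $\K$ is \aminimal. By Theorem~\ref{thm:a-minimal}\ref{en:a-dcf}, \aminimality is equivalent to every definable discrete closed subset of $\K$ being pseudo-finite; under our hypothesis every such set is in fact finite, hence \emph{a fortiori} bounded and closed, hence pseudo-finite, so $\K$ is \aminimal and therefore locally o-minimal.

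Now Theorem~\ref{thm:lomin-constructible} tells us that every definable subset of $\K^n$ is constructible, so $\K$ equals its own open core. On the other hand, condition~(8) of Corollary~\ref{cor:b-minimal}---``every definable discrete subset of $\K$ is finite''---holds by hypothesis, so $\K$ has o-minimal open core. Combining these two facts, $\K$ itself is o-minimal.

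I do not expect any real obstacle: the corollary is essentially a repackaging of Theorem~\ref{thm:a-minimal}, Corollary~\ref{cor:b-minimal}, Theorem~\ref{thm:lomin-constructible}, and the preceding lemma, and the only subtlety to watch is to keep straight which conditions are stated in terms of ``finite'' versus ``pseudo-finite''---but this is handled in one line by the observation that a finite subset of $\K$ is automatically pseudo-finite.
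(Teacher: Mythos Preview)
Your argument is correct, and since the paper states the corollary without proof, your reconstruction from the surrounding results is entirely in the intended spirit. One small remark: the detour through Theorem~\ref{thm:lomin-constructible} is heavier than necessary. Once you have established that $\K$ is locally o-minimal, you can conclude o-minimality directly in dimension~$1$: by Lemma~\ref{lem:lmin}, every definable $X\subseteq\K$ with empty interior is pseudo-finite, hence discrete, hence finite by hypothesis; applying this to $\bd(X)$ shows that every definable $X\subseteq\K$ is a finite union of points and open intervals. This avoids invoking the $n$-dimensional constructibility theorem and the open-core machinery, but the conclusion is of course the same.
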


\begin{lemma}
The following are equivalent:
\begin{enumerate}
\item\label{en:lm-lm} $\K$ is locally o-minimal; 
\item\label{en:lm-dim-fr-1} for every $X \subseteq \K$ definable and
non-empty $\dim(\fr X) < \dim X$;
\item\label{en:lm-dim-fr-n} for every $X \subseteq \K^n$ definable and
non-empty $\dim(\fr X) < \dim X$.
\end{enumerate}
\end{lemma}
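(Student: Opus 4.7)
The plan is to prove the cycle $(\ref{en:lm-lm}) \Rightarrow (\ref{en:lm-dim-fr-n}) \Rightarrow (\ref{en:lm-dim-fr-1}) \Rightarrow (\ref{en:lm-lm})$. The first implication is immediate from Lemma~\ref{lem:loc-min-dim}, which already contains the assertion $\dim(\fr C) < \dim C$ for every non-empty definable $C \subseteq \K^n$ under local o-minimality. The second implication is trivial, by specialising to $n = 1$.

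The interesting direction is $(\ref{en:lm-dim-fr-1}) \Rightarrow (\ref{en:lm-lm})$. I will deduce that $\K$ is both \iminimal and \aminimal; local o-minimality then follows from the lemma stated immediately before this one. For \iminimality, let $X \subseteq \K$ be definable and non-empty with $\inter X = \emptyset$. Then $\dim X = 0$, so (\ref{en:lm-dim-fr-1}) gives $\dim(\fr X) < 0$, forcing $\fr X = \emptyset$; thus $X$ is closed, and hence nowhere dense.

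For \aminimality, by Thm.~\ref{thm:a-minimal}(\ref{en:a-dcf}) it suffices to show that every definable closed discrete $D \subseteq \K$ is bounded (and therefore pseudo-finite). Suppose not; \wloG $D$ is unbounded above. Put $D' := D \cap [1, +\infty)$ and $Y := \set{1/d : d \in D'} \subseteq (0,1]$. Since $x \mapsto 1/x$ is a definable homeomorphism from $[1,+\infty)$ onto $(0,1]$, the set $Y$ is definable, bounded, and discrete, so $\inter Y = \emptyset$ and $\dim Y = 0$. On the other hand, because $D'$ is unbounded above, $1/d$ can be made arbitrarily close to $0$, giving $0 \in \cl Y \setminus Y \subseteq \fr Y$; hence $\dim(\fr Y) \geq 0 = \dim Y$, contradicting~(\ref{en:lm-dim-fr-1}).

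The only non-routine step is the boundedness argument for \aminimality: the trick is to transport the ``accumulation at $+\infty$'' to the concrete point $0 \in \K$ via the definable homeomorphism $x \mapsto 1/x$, so that hypothesis~(\ref{en:lm-dim-fr-1}) can actually witness it inside~$\K$.
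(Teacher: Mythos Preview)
Your proof is correct and follows essentially the same approach as the paper's. The paper's $(\ref{en:lm-dim-fr-1})\Rightarrow(\ref{en:lm-lm})$ also first obtains \aminimality via the same accumulation-at-a-finite-point trick, then observes that a $0$-dimensional set has empty frontier and is therefore closed and nowhere dense; you package the latter as ``\iminimality'' and invoke the characterisation locally o-minimal $\Leftrightarrow$ \aminimal $+$ \iminimal, whereas the paper inlines that final step. For $(\ref{en:lm-lm})\Rightarrow(\ref{en:lm-dim-fr-n})$ you cite Lemma~\ref{lem:loc-min-dim}, which is legitimate, while the paper redoes the argument by induction on~$n$ using \iminimality---but the content is the same.
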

\begin{proof}
($\ref{en:lm-dim-fr-n} \Rightarrow \ref{en:lm-dim-fr-1}$) is obvious.

($\ref{en:lm-dim-fr-1} \Rightarrow \ref{en:lm-lm}$).
First, let us prove that $\K$ is \aminimal.
Let $X \subseteq [0,1)$ definable, discrete and closed in $[0,1)$.
We have to prove that $X$ is pseudo-finite.
If not, $1$ is an accumulation point for~$X$.
Thus, $\fr X = \set 1$, and therefore $\dim(\fr X) = \dim X = 0$, absurd.

Then, we prove that $X$ is locally o-minimal.
It suffices to prove that if $X \subseteq \K$ has dimension~$0$, then it is
pseudo-finite.
Since $\dim X = 0$, $\fr X$ is empty, and therefore $X$ is closed; thus, $X$
is nowhere dense, and hence, by \aminimality, $X$ is pseudo-finite.

($\ref{en:lm-lm} \Rightarrow \ref{en:lm-dim-fr-1}$) is obvious, because, if
$A \subseteq \K$ is definable, then, if $\dim A =0$, then $A$ is
pseudo-finite, and hence closed, and hence $\fr A = \emptyset$; if instead
$\dim A = 1$, then $\fr A$ has empty interior, and thus $\dim(\fr A) \leq 0$.

($\ref{en:lm-lm} \Rightarrow \ref{en:lm-dim-fr-n}$).
Let $A \subseteq \K^n$ be definable, with $\dim A = d \geq 0$.
We have to prove that $\dim(\fr A) < d$.
If $n = 1$, we have already proved it above.
We proceed by induction on~$n$: we assume we have proved the conclusion for
every $n' < n$, and we want to prove it for~$n$.
If $d = n$, then $\fr A$ has empty interior, and thus $\dim(\fr A) < n$, and we
are done.
Thus, we can assume $0 \leq d < n$.
Assume, for contradiction, that $\dim(\fr A) \geq d$.
\Wlog, $U := \Pi^n_d(\fr A)$ has non-empty interior.
Since $\K$ is \iminimal, the set
\[
\set{u \in \K^d: (\fr A)_u \neq \fr(A_u)}
\]
is nowhere dense; thus, there exists $V \subseteq U$ open, definable and
non-empty, such that, for every $v \in V$, $(\fr A)_u = \fr(A_u)$.
By \iminimality, and since $\dim A = d$,
$\dim(A_u) = 0$ outside a nowhere dense set; thus, after shrinking~$V$, we can
assume that $\dim(A_u) = 0$ for every $u \in V$.
Thus, by the inductive hypothesis, $\fr(A_u)$ is empty for every $u \in V$,
contradicting the fact that $V \subseteq \Pi^n_d(\fr A)$.
\end{proof}
 Since there do exists \iminimal structures that are not locally o-minimal
(\eg, d-minimal not o-minimal expansions of the real field), we have that for
some \iminimal structure there is some definable non-empty set $X$ such that
$\dim(\fr X) = \dim X$ (notice that if $\K$ is \iminimal, then $\dim(\fr X)
\leq \dim X$, because $\dim X = \dim(\cl X) = \max\set{\dim(X), \dim(\fr X)}$).

\begin{proviso}
For the remainder of this subsection, we will assume that $\K$ is \iminimal.
\end{proviso}

\begin{lemma}\label{lem:imin-C1}
Let $f : U \to \K$ be definable, where $U \subseteq \K^n$ is open and definable.
Then, for every $p \in \Nat$, there exists $D \subset U$ closed, definable and
nowhere dense, such that $f$ is $C^p$ on $U \setminus D$.
\end{lemma}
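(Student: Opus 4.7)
The plan is to proceed by induction on $p$. The base case $p = 0$ is exactly clause~\ref{en:i-continuous} of Theorem~\ref{thm:imin}. For the inductive step, suppose the statement is proved for $p$, and let $f: U \to \K$ be definable. By the inductive hypothesis there is a closed nowhere dense $D_0 \subset U$ such that $f$ is $C^p$ on $V := U \setminus D_0$; we aim to produce a closed nowhere dense $D \supseteq D_0$ with $f \in C^{p+1}$ on $U \setminus D$.

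For each $i \in \{1, \dotsc, n\}$, let $A_i \subseteq V$ denote the definable set of points at which $\partial f / \partial x_i$ exists and is finite, and on $A_i$ put $g_i := \partial f / \partial x_i$. The key claim is that $V \setminus A_i$ has empty interior, so that by \iminimality (clause~\ref{en:i-imin-n} of Theorem~\ref{thm:imin}) it is nowhere dense. Granting this, set $W := \interior(A_1 \cap \dotsb \cap A_n)$; then $V \setminus W$ is nowhere dense. Apply the inductive hypothesis to each $g_i$ on the open set $W$, obtaining closed nowhere dense $E_i \subseteq W$ with $g_i \in C^p(W \setminus E_i)$; then $f \in C^{p+1}$ on $W \setminus (E_1 \cup \dotsb \cup E_n)$, and letting $D$ be the closure in $U$ of $D_0 \cup (V \setminus W) \cup E_1 \cup \dotsb \cup E_n$ finishes the induction (this $D$ is closed and nowhere dense by Remark~\ref{rem:boundary}).

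The key claim reduces directly to the one-dimensional case: if $V \setminus A_i$ contained a non-empty open box $B' \times B''$ with $B'' \subseteq \K$ its $i$-th factor, then for every fixed $x^\ast \in B'$ the one-variable definable function $t \mapsto f(x^\ast, t)$ would fail to be differentiable throughout $B''$. The monotonicity theorem (clause~\ref{en:i-monotonicity}) then reduces further, modulo a closed set of empty interior, to the case of a monotone continuous definable function on an open interval; the constant case is trivial, and for the monotone case one extracts differentiability on a dense open subset by analyzing the four Dini derivatives as definable functions on the interval and showing that each definable set $\{x : \text{two given Dini derivatives of } f \text{ at } x \text{ differ}\}$ has empty interior. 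To accomplish the latter, one applies Lemma~\ref{lem:first-class} to the continuous family $x \mapsto g_h(x) := (f(x+h)-f(x))/h$ parametrised by $h > 0$ (reparametrised as $t := 1/h \to +\infty$), deducing meagerness of its discontinuity set and hence, by \iminimality, nowhere-denseness. The main obstacle is precisely this one-dimensional differentiability step: as noted just after the first lemma of \S\ref{sec:definably-complete}, differentiability of monotone continuous definable functions can fail in a general definably complete structure; \iminimality, specifically the identification meager~$=$~nowhere dense (clause~\ref{en:i-meager}), is essential to upgrade the meagerness output of Lemma~\ref{lem:first-class} into the required nowhere denseness of the exceptional set.
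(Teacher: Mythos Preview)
Your overall induction on~$p$ and the reduction of the multi-variable partial-derivative existence to the one-variable case are correct and match the paper's sketch. The gap is in your one-variable differentiability step.

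Lemma~\ref{lem:first-class} applies only to a function that \emph{is} the pointwise limit of the continuous family; invoking it for the family $g_h(x) = (f(x+h)-f(x))/h$ presupposes that $\lim_{h\to 0^+} g_h(x)$ exists, which is exactly what you are trying to establish. Nor can the lemma be applied to a Dini derivative, since $D^+f$ and $D_+f$ are $\limsup$/$\liminf$ of the $g_h$, not pointwise limits of a continuous definable family. So the first-class lemma gives you nothing here.

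The argument the paper points to (the o-minimal one from~\cite{vdd}) is a mean-value-inequality argument, and it does go through in the \iminimal setting. The key elementary fact, valid in any definably complete structure, is: if $g:[c,d]\to\K$ is continuous and $D^+g(x)>0$ for all $x\in[c,d)$, then $g(d)\ge g(c)$ (take $z:=\sup\{t:g(t)\ge g(c)\}$; continuity gives $g(z)=g(c)$, and if $z<d$ then all right difference quotients at~$z$ are $\le 0$, contradicting $D^+g(z)>0$). Applying this to $g(x)=f(x)-rx$ and to $h(x)=sx-f(x)$ (noting $D^+h = s - D_+f$) shows that if $D_+f<s<r<D^+f$ throughout an interval $(c,d)$, then $r(d-c)\le f(d)-f(c)\le s(d-c)$, a contradiction. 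To reach that configuration from the assumption that $\{x:D_+f(x)<D^+f(x)\}$ contains an interval, apply clause~\ref{en:i-continuous} of Theorem~\ref{thm:imin} to the definable functions $D^+f$ and $D_+f$ (composed with a bounded homeomorphism to handle $+\infty$): on a subinterval where both are continuous, fix $s<r$ strictly between their values at one point and shrink. The same device handles the left Dini derivatives, the equality of one-sided derivatives, and finiteness of $D^+f$ (if $D^+f\equiv+\infty$ on an interval, the inequality above with $r$ arbitrary forces $f(d)-f(c)$ to be infinite).
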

\begin{proof}
If $p = 0$, we have already proved it in
Thm.~\ref{thm:imin}(\ref{en:i-continuous}).
If one proves the case $p = 1$, then an easy induction
on~$p$ proves the conclusion for every~$p$.

The case $n = 1$ can be done as for o-minimal structures
\cite[Proposition~7.2.5]{vdd}.

The case $n > 1$ can be done as in~\cite[Thm.~3.3]{miller05} (note that Miller
uses instead Lebesgue differentiability theorem for the case $n = 1$).
\end{proof}

\begin{lemma}
Let $d \leq n$, $A \subseteq \K^{n}$ be definable, $\pi := \Pi^{n}_d$, and
\[
Z := Z(A) := \set{a \in A: \exists U \text{ neighbourhood of } A:
\pi(A \cap U) \text{ is nowhere dense}}.
\]
Then, $Z$ is a definable open subset of~$A$, and $\pi(Z(A))$ is nowhere dense.
\end{lemma}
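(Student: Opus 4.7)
The statement to prove is essentially a strengthening of Lemma~\ref{lem:bad-1} under the \iminimality hypothesis: there we obtained that $\pi(Z)$ is meager, and here we want to upgrade this to nowhere dense. So my plan is to reduce to Lemma~\ref{lem:bad-1} and then invoke the \iminimality assumption.

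First I would verify the easy parts. Definability of $Z$ is immediate since the condition ``$\exists U$ neighbourhood of $a$ with $\pi(A\cap U)$ nowhere dense'' is expressible by a first-order formula (nowhere-density being a definable property, and one can quantify over, say, open balls around~$a$). For openness of $Z$ in $A$: if $a \in Z$ is witnessed by some neighbourhood $U$, then the very same $U$ witnesses $a' \in Z$ for every $a' \in A \cap U$, since $U$ is a neighbourhood of $a'$ as well. Hence $A \cap U \subseteq Z$, showing $Z$ is open in~$A$.

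The heart of the lemma is the nowhere-denseness of $\pi(Z)$. Here I would not redo the argument from scratch: Lemma~\ref{lem:bad-1} already establishes, with no \iminimality hypothesis, that $\pi(Z)$ is meager. Now invoke Thm.~\ref{thm:imin}(\ref{en:i-meager}), which says that in an \iminimal structure every meager definable set is nowhere dense (equivalently, by clause~\ref{en:i-meager-n} applied to $\pi(Z)$, either it has interior or is meager, but if $\pi(Z)$ had interior it would be non-meager, contradiction). Since $Z$ is definable, so is $\pi(Z)$, and the invocation is legitimate.

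There is essentially no obstacle here; the statement is a direct corollary of Lemma~\ref{lem:bad-1} together with the meager-implies-nowhere-dense feature of \iminimality. The only point that merits a moment's care is to note that we are applying the lemma to the definable set $Z \subseteq \K^n$ rather than to $A$ itself, so that we use meagerness of the \emph{image} $\pi(Z)$ under $\pi := \Pi^n_d$, which is a definable subset of~$\K^d$, and it is in $\K^d$ that Thm.~\ref{thm:imin}(\ref{en:i-meager}) is applied (this is fine since \iminimality holds in all dimensions by clauses \ref{en:i-imin-1} and \ref{en:i-imin-n} of Thm.~\ref{thm:imin}).
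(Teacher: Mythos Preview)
Your proposal is correct and is essentially the paper's own argument spelled out in detail: the paper's proof is the single line ``Follows immediately from Lemma~\ref{lem:bad-1},'' relying on the fact that in an \iminimal structure meager implies nowhere dense.
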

\begin{proof}
Follows immediately from Lemma~\ref{lem:bad-1}.
\end{proof}

\begin{definizione}
We define \intro{$\Pi$-good} sets as in~\cite[\S7]{miller05}.
That is, we say that a definable set $A \subseteq \K^{n+m}$ is $\pi$-good
(where $\pi := \Pi^{n + m}_m$) if:
\begin{itemize}
\item $\dim A = m$;
\item $\pi A$ is open;
\item $\pi(A \cap U)$ has interior for every $a \in A$ and open neighbourhood
$U$ of~$a$;
\item for all $x \in \pi A$, $\dim(A_x) = 0$ and $\cll(A_x) = \cll(A)_x$.
\end{itemize}
More generally, $A$ is $\mu$-good (where $\mu$ is a projection from
$\K^{n + m}$ to an $m$-dimensional coordinate space) if there is a permutation
of coordinates $\sigma$ such that $\mu = \pi \circ \sigma$, and $\sigma A$ is
$\pi$-good.
Finally, $A$ is $\Pi$-good, if it is $\mu$-good for some $\mu$ as above.
\end{definizione}

\begin{lemma}[Partition Lemma]
Let $\Afam$ be a finite collection of definable subsets of~$\K^n$.
Then, there exists a $\Pi$-good partition of $\K^n$ compatible with~$\Afam$.
\end{lemma}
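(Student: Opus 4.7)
The plan is to induct on the maximum dimension $d := \max\set{\dim A : A \in \Afam}$. First, I replace $\Afam$ by the finite collection of atoms of the Boolean algebra it generates, which is still a finite definable collection; since any $\Pi$-good partition refining the atoms automatically refines $\Afam$, it suffices to produce a $\Pi$-good partition of $\K^n$ compatible with the atoms, and in fact to $\Pi$-good-partition each atom separately.

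The base case $d=0$ is immediate: every atom $A$ has empty interior, so by Thm.~\ref{thm:imin}\eqref{en:i-imin-n} is nowhere dense, and the definition shows that a set of dimension~$0$ is $\mu$-good for the projection $\mu$ to the $0$-dimensional coordinate space; hence every atom is itself a $\Pi$-good piece. For the inductive step, fix an atom $A$ with $\dim A = d$; choose a $d$-dimensional coordinate plane $L$ with $\pi A$ having non-empty interior, where $\pi$ is the orthogonal projection onto~$L$. I trim $A$ down to a $\pi$-good subset in three steps. Let $A_1 := A \setminus Z(A)$ with $Z(A)$ as in Lemma~\ref{lem:bad-1}; then $\pi(Z(A))$ is nowhere dense by the lemma preceding the definition of $\Pi$-good. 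Let $N_1 := \set{y \in L : \dim(A_1)_y > 0}$: by Thm.~\ref{thm:imin}\eqref{en:i-dim-fiber}, $N_1$ is nowhere dense. Let $N_2 := \B_{\dim L}(A_1)$: by Thm.~\ref{thm:imin}\eqref{en:i-B}, $N_2$ is nowhere dense. Put
\[
V := \interior\!\bigl(\pi A_1 \setminus \cl{N_1 \cup N_2 \cup \pi(Z(A))}\bigr),
\]
which is a dense open subset of $\interior(\pi A_1)$ since $\K$ is Baire (by i-minimality), and set $A_2 := A_1 \cap \pi^{-1}(V)$. By construction $\pi A_2 = V$ is open, every point of $A_2$ has neighbourhoods with thick projection (no $Z$-points survive), every fibre $(A_2)_y$ is $0$-dimensional, and the closure condition $\cll((A_2)_y) = \cll(A_2)_y$ holds on~$V$; thus $A_2$ is $\pi$-good.

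The remainder $R := A \setminus A_2$ projects into $L \setminus V$, which is nowhere dense in~$L$; by Thm.~\ref{thm:imin}\eqref{en:i-dim-fiber-2}, $\dim R < d$. Performing this trimming simultaneously for every top-dimensional atom, I obtain a finite collection of $\Pi$-good pieces (the $A_2$'s) plus a finite definable family of remainders, each of dimension~$<d$. The inductive hypothesis applied to the union of these remainders together with the lower-dimensional atoms yields a compatible $\Pi$-good partition of the remaining portion of~$\K^n$; together with the $A_2$'s this gives the desired partition.

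The main obstacle is the closure condition in step three: when restricting $A_1$ to $\pi^{-1}(V)$, removing fibres can in principle change the closure of the whole set, and one must verify that the remaining $A_2$ still satisfies $\cll(A_2)_y = \cll((A_2)_y)$ for every $y \in V$. The fix is to shrink $V$ further so that it avoids $\cl{\B_{\dim L}(A_1)}$ and so that $\pi^{-1}(V)$ meets $\cl{A_1}$ only in $\cl{A_1 \cap \pi^{-1}(V)}$; this is a standard application of Thm.~\ref{thm:lc-fiber} to the locally closed piece $A_1 \cap \pi^{-1}(V) \subseteq \pi^{-1}(V)$, exploiting that $\B_{\dim L}$ is monotone in the inclusion $A_2 \subseteq A_1$ in the right direction (Remark~\ref{rem:bad-3}). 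Once this verification is in place, the inductive structure closes and the lemma follows.
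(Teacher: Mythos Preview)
Your overall strategy—trim each top-dimensional atom to a $\pi$-good piece using $Z(A)$, the positive-fibre locus, and the bad set $\B_d$, then recurse on the remainder—is exactly the approach of \cite[\S7]{miller05} that the paper invokes. The construction of $A_2$ and the verification that it is $\pi$-good are correct (and your worry about the closure condition is not a real obstacle: since $\pi^{-1}(V)$ is open, $\cll(A_2)_y = \cll(A_1)_y$ for every $y\in V$, and you already excised $\cl{\B_d(A_1)}$ from~$V$).

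The genuine gap is the claim that $\dim R < d$. This is false in general, and Thm.~\ref{thm:imin}\eqref{en:i-dim-fiber-2} does not yield it: you have only shown that \emph{one} projection of $R$ (namely $\pi$ onto $L$) has empty interior, but $R$ may still have dimension $d$ via a different coordinate projection. Concretely, take $n=2$, $A=([0,1]\times\{0\})\cup(\{0\}\times[0,1])$, and $\pi=\Pi^2_1$. Then $Z(A)=\{0\}\times(0,1]$, $A_1=[0,1]\times\{0\}$, $V=(0,1)$, $A_2=(0,1)\times\{0\}$, and $R=(\{0\}\times[0,1])\cup\{(1,0)\}$ has dimension~$1=d$.

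The fix is to induct on the \emph{full} dimension $\fdim(A)=(d,k)$ rather than on $d$ alone, exactly as in the proofs of Thm.~\ref{thm:lomin-constructible}, Thm.~\ref{thm:lmin-cell}, and Lemma~\ref{lem:amin-U-Fs}. After your trimming, $\pi(R)$ has empty interior while $\pi(A)$ did not, so $\fdim(R)<\fdim(A)$ even though possibly $\dim R=d$; the induction then closes. With that single change your argument is correct and coincides with the paper's.
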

\begin{proof}
The proof proceeds as in \cite[\S7, Partition Lemma]{miller05}, using  the
previous lemma.
\end{proof}

\begin{lemma}\label{lem:i-lc}
Let $A \subseteq \K^{n + m}$ be definable.
Then,
\begin{enumerate}
\item $\set{x \in \K^m: \lc(A_x) \neq (\lc A)_x}$ is nowhere dense;
\item for each $k \in \Nat$, $\set{x \in \K^m: (A^{(k)})_x \neq (A_x)^{(k)}}$
is nowhere dense;
\item if $\set{x \in \K^m: \lc(A_x) \neq \emptyset}$ is somewhere dense, then
$\lc(A) \neq \emptyset$.
\end{enumerate}
\end{lemma}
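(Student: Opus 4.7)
The plan is to prove the three items in order, deriving them from Theorem~\ref{thm:imin}, clause~(\ref{en:i-fr}), which tells us that for any definable $A \subseteq \K^{n+m}$, the ``bad set'' $\{x \in \K^m : (\fr A)_x \neq \fr(A_x)\}$ is nowhere dense. Everything else will follow by manipulating the identity $\nlc A = A \cap \fr(\fr A)$ (from Proposition following the definition of $\lc$) on fibers.

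For (1), note that $\lc(A_x) \neq (\lc A)_x$ iff $\nlc(A_x) \neq (\nlc A)_x$, and that
\[
(\nlc A)_x = A_x \cap (\fr(\fr A))_x, \qquad \nlc(A_x) = A_x \cap \fr(\fr(A_x)).
\]
So it suffices to show $(\fr(\fr A))_x = \fr(\fr(A_x))$ outside a nowhere dense set. Apply clause~(\ref{en:i-fr}) of Theorem~\ref{thm:imin} first to $A$ to get a nowhere dense set $E_1$ outside which $(\fr A)_x = \fr(A_x)$ (so automatically $\fr((\fr A)_x) = \fr(\fr(A_x))$), then apply the same clause to $\fr A$ to get a nowhere dense set $E_2$ outside which $(\fr(\fr A))_x = \fr((\fr A)_x)$. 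Combining, outside $E_1 \cup E_2$ the equality $(\fr(\fr A))_x = \fr(\fr(A_x))$ holds; and the union of two nowhere dense sets is nowhere dense (Remark~\ref{rem:boundary}).

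For (2), I proceed by induction on $k$. The base case $k=0$ is trivial and $k=1$ is part~(1) restated. For the inductive step, assume $(A^{(k)})_x = (A_x)^{(k)}$ off a nowhere dense set $F_k$. Applying (1) to the definable set $B := A^{(k)}$ gives a nowhere dense set $G$ outside which $\nlc(B_x) = (\nlc B)_x$, i.e., $\nlc((A^{(k)})_x) = (A^{(k+1)})_x$. Outside $F_k \cup G$ we therefore have $(A^{(k+1)})_x = \nlc((A^{(k)})_x) = \nlc((A_x)^{(k)}) = (A_x)^{(k+1)}$, which closes the induction.

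For (3), observe that $\{x : (\lc A)_x \neq \emptyset\} = \pi(\lc A)$ where $\pi = \Pi^{n+m}_m$. If $\lc(A) = \emptyset$, then $\pi(\lc A) = \emptyset$, so by part~(1) the set $\{x : \lc(A_x) \neq \emptyset\}$ differs from $\emptyset$ only by a nowhere dense set, hence is itself nowhere dense, contradicting the hypothesis. So $\lc(A) \neq \emptyset$. I expect no serious obstacle here; the only thing to double-check is that the two applications of clause~(\ref{en:i-fr}) in part~(1) really do compose correctly, which they do because on the common good set the \emph{topological} expressions $\fr((\fr A)_x)$ and $\fr(\fr(A_x))$ coincide as soon as the inner fibers agree.
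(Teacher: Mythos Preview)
Your proof is correct and supplies the details that the paper omits: the paper's own proof is simply ``The same as \cite[Lemma~8.1]{miller05}''. Your route via the identity $\nlc A = A \cap \fr(\fr A)$ together with two applications of Theorem~\ref{thm:imin}(\ref{en:i-fr}) is exactly the kind of argument that reference encapsulates, and parts~(2) and~(3) follow from~(1) just as you indicate.
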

\begin{proof}
The same as~\cite[Lemma~8.1]{miller05}.
\end{proof}


\subsection{Constructible structures}\label{sec:ipmin}

\begin{definizione}
$\K$ is and \intro{\ipminimal{}} structure if, for every $\K' \elem \K$, 
every $0$-dimensional set definable in $\K'$ is constructible.
$T$ is an \ipminimal theory if every model of $T$ is \ipminimal.
\end{definizione}

\begin{thm}
The following are equivalent:
\begin{enumerate}
\item $\K$ is \ipminimal;
\item for every $\emptyset$-definable $A \subseteq \K^{n + m}$ there exits $N
\in \Nat$ such that for all $x \in \K^m$, if $\dim A_x = 0$, then $(A_x)^{(N)}
= \emptyset$;
\item 
every $\emptyset$-definable set is a finite union of
$\emptyset$-definable locally closed sets;
\item every definable set is constructible;
\item
every definable subset of $\K^n$ is a finite union of sets of the form
\[
\set{x \in \K^m: f(b,x) = 0 \et g(b,x) = 0},
\]
where $f$ and $g$ are $\emptyset$-definable and continuous, and $b \in \K^m$.
\end{enumerate}
Moreover, if $\K$ is \ipminimal, then it is \iminimal.
\end{thm}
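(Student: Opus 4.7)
The plan is to establish $(1) \Leftrightarrow (2)$ via a compactness argument, to deduce the \emph{moreover} clause (``\ipminimal implies \iminimal'') from $(2)$, and then to close the remaining equivalences through the loop $(2) \Rightarrow (4) \Rightarrow (3) \Rightarrow (2)$, with $(4) \Leftrightarrow (5)$ a direct reformulation using the distance function.

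For $(1) \Leftrightarrow (2)$ I would argue via $\omega$-saturated elementary extensions. The implication $(2) \Rightarrow (1)$ is immediate, because ``$\dim A_x = 0$'' is first-order (``$A_x$ is nonempty and has empty interior''), so the uniform bound ``$\forall x,\ \dim A_x = 0 \to (A_x)^{(N)} = \emptyset$'' transfers to every $\K' \elem \K$, making every $0$-dimensional definable set constructible by the Proposition that identifies $\inlc A {m+1} = \emptyset$ with a decomposition of $A$ into $m$ locally closed pieces. Conversely, if $(2)$ fails for some $\emptyset$-definable $A$, then in an $\omega$-saturated $\K^* \succeq \K$ the partial type
\[
\set{\dim A_x = 0} \cup \set{(A_x)^{(n)} \neq \emptyset : n \in \Nat}
\]
is finitely, hence fully, realised, contradicting $(1)$. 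Next I would apply $(2)$ to a definable $X \subseteq \K$ of empty interior (so $\dim X = 0$): the resulting $X^{(N)} = \emptyset$ exhibits $X$ as a finite union of $0$-dimensional locally closed sets, each nowhere dense (a locally closed set with empty interior is nowhere dense), whence $X$ is nowhere dense; this gives the \emph{moreover} clause, and from here on the machinery of Theorem~\ref{thm:imin} and Lemma~\ref{lem:i-lc} is available.

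The core step is $(2) \Rightarrow (4)$, by induction on $\fdim A$ (with $A \subseteq \K^n$ definable, $n$ fixed). If $\dim A = n$, then $A = \inter A \sqcup (A \setminus \inter A)$; \iminimality makes the second piece of strictly smaller full dimension, so the induction applies. If $\dim A = 0$, a single invocation of $(2)$ applied to an $\emptyset$-definable set $B$ of which $A$ is a fibre yields $A^{(N)} = \emptyset$, and constructibility follows from the Proposition. For $0 < d := \dim A < n$, permute coordinates so that $\pi := \Pi^n_d$ has $\pi(A)$ containing a nonempty open set; Theorem~\ref{thm:imin}(\ref{en:i-dim-fiber}) then gives that $C := \set{u \in \K^d : \dim A_u > 0}$ is nowhere dense (so $\dim C < d$). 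Decompose $A = A_1 \sqcup A_2$ with $A_1 := A \cap \pi^{-1}(C)$ and $A_2 := A \cap \pi^{-1}(\K^d \setminus C)$: since $\pi(A_1) \subseteq C$ has empty interior, $\pi$ is no longer a good projection for $A_1$, so $\fdim A_1 < \fdim A$ and the induction handles $A_1$. For $A_2$, all fibres $(A_2)_u$ are $0$-dimensional, so $(2)$ supplies a uniform $N$ with $((A_2)_u)^{(N)} = \emptyset$; Lemma~\ref{lem:i-lc} then forces $(A_2^{(N)})_u = ((A_2)_u)^{(N)} = \emptyset$ off a nowhere dense subset of $\pi(A_2)$, so $\pi(A_2^{(N)})$ has empty interior and $\fdim A_2^{(N)} < \fdim A_2 \leq \fdim A$. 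By induction $A_2^{(N)}$ is constructible, hence $A_2^{(N+M)} = (A_2^{(N)})^{(M)} = \emptyset$ for some $M$, and the Proposition gives $A_2$ constructible, hence $A$ constructible.

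The remaining implications are routine. For $(4) \Rightarrow (3)$, iterate the decomposition $A = \lc A \sqcup \nlc A$: $\lc A$ is locally closed and $\emptyset$-definable whenever $A$ is, and $(4)$ forces the tower to terminate at some $\inlc A k = \emptyset$, yielding $A = \bigsqcup_{j < k} \lc(\inlc A j)$ as the desired finite $\emptyset$-definable union of locally closed sets. For $(3) \Rightarrow (2)$, specialise a $\emptyset$-definable decomposition $B = \bigcup_{i=1}^k C_i$ to each fibre: every $(C_i)_x$ is locally closed, so $(B_x)^{(k+1)} = \emptyset$ uniformly in $x$. Finally $(4) \Leftrightarrow (5)$ reduces to writing each locally closed set in the prescribed form using the continuous distance-to-a-closed-set functions $f(x) := d(x, C)$ and $g(x) := d(x, \K^n \setminus U)$, and noting that any set presented in the form of $(5)$ is manifestly locally closed. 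The main obstacle is the $0 < d < n$ case of $(2) \Rightarrow (4)$: promoting the fibre-wise vanishing $((A_2)_u)^{(N)} = \emptyset$ into a genuine full-dimension drop for $A_2^{(N)}$ requires the interchange of $\nlc$ with fibration provided by Lemma~\ref{lem:i-lc}, which is itself an \iminimality-level fact.
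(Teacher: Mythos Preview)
Your proof is correct and follows the same route as the paper, which simply defers to \cite[Thm.~3.2]{miller05} together with Lemma~\ref{lem:i-lc} for the equivalence of (1)--(4) and to \cite[Lemma~2.10]{vdd-dense} for $(4)\Leftrightarrow(5)$. Your explicit induction on $\fdim$ for $(2)\Rightarrow(4)$, and your care in deriving \iminimality from~$(2)$ \emph{before} invoking Lemma~\ref{lem:i-lc} (a point the paper's terse proof leaves implicit, since that lemma presupposes \iminimality), correctly unpack what the citation amounts to.
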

\begin{proof}
The equivalence of the first 4 points is proved in the same way
as~\cite[Thm.~3.2]{miller05}, using Lemma~\ref{lem:i-lc}.
$(5 \Rightarrow 4)$ is obvious.
$(4 \Rightarrow 5)$ is proved in the same way as \cite[Lemma~2.10]{vdd-dense}.
The ``moreover'' clause follows from the fact that a constructible set 
with empty interior is nowhere dense.
\end{proof}

\begin{remark}
\Iminimality is not equivalent to \ipminimality.
In fact, it is not difficult to build an ultra-product of \ipminimal
structures which is not \ipminimal.
\end{remark}

\cite{pillay87}, extending the work in~\cite{robinson74}, studies topological
structures $M$ satisfying a weaker version of \ipminimality; that is, Pillay's
condition~(A) asks that every definable subset of $M$ is constructible.
In this context, he defines the \intro{dimension rank} of closed definable
subsets of $M$, which we will denote by $\rkP$,  in the following way:
\begin{enumerate}
\item If $X$ is non-empty, then $\rkP(X) \geq 0$.
\item $\rkP(X) \geq \lambda$ iff $\rkP(X) \geq \alpha$ for all 
$\alpha < \lambda$, where $\lambda$ is limit.
\item $\rkP(X) \geq \alpha + 1$ iff $X$ contains subset $Y$ which is closed,
nowhere dense (in $X$), and with $\rkP(Y) \geq \alpha$.
\end{enumerate}
Notice that $\rkP$ might depend on the ambient space $M$.

Let $M$ be a Hausdorff topological structure, such that every $M'$ elementarily
equivalent to $M$ satisfies condition (A) (\eg, $M$ is \ipminimal),
and $X \subseteq M$ be definable and closed.

\begin{lemma}[Pillay]
\begin{enumerate}
\item $\rkP(X) = 0$ iff $X$ is discrete and non-empty;
\item $Y \subseteq X \Rightarrow \rkP(Y) \leq \rkP(X)$;
\item If $X = X_1 \cup \dots X_n$, where the $X_i$ are closed and definable,
then $\rkP(X) = \max_{1 \leq i \leq n} \rkP(X_i)$.
\end{enumerate}
\end{lemma}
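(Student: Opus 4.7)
The plan is to treat the three parts in order, with the main burden carried by transfinite induction on the ordinal rank.

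For part (1), I would simply chase the definition. Having $\rkP(X) = 0$ means $\rkP(X) \geq 0$ (i.e., $X \neq \emptyset$) together with the negation of $\rkP(X) \geq 1$, which unfolds as: $X$ contains no non-empty closed subset that is nowhere dense in $X$. If $X$ is discrete and non-empty, then every subset of $X$ is open in $X$, so the only closed nowhere-dense-in-$X$ subset is empty, giving $\rkP(X) = 0$. Conversely, if $X$ is non-empty but not discrete, pick a non-isolated point $x \in X$; then $\{x\}$ is closed in $M$, contained in $X$, and nowhere dense in $X$ (because every neighbourhood of $x$ in $X$ meets $X \setminus \{x\}$), hence witnesses $\rkP(X) \geq 1$.

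For part (2), I would do transfinite induction on $\alpha$, proving the contrapositive form: $\rkP(Y) \geq \alpha$ implies $\rkP(X) \geq \alpha$ whenever $Y \subseteq X$ are closed definable. The base and limit cases are immediate. In the successor case $\alpha = \beta + 1$, take a witness $Z \subseteq Y$ closed and nowhere dense in $Y$ with $\rkP(Z) \geq \beta$; since $Y$ is closed in $M$ so is $Z$, and the key topological step is that $Z$ remains nowhere dense in $X$. This is because any open set $U \subseteq M$ with $U \cap X \subseteq Z$ would satisfy $U \cap Y \subseteq Z$, forcing $U \cap Y = \emptyset$ by nowhere-density of $Z$ in $Y$, and hence $U \cap X \subseteq Z \subseteq Y$ is empty too. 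So $Z$ witnesses $\rkP(X) \geq \alpha$.

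For part (3), the $\geq$ direction is immediate from (2) applied to each $X_i \subseteq X$. For the $\leq$ direction, I would reduce to $n = 2$ by induction on $n$ (replacing $X_1 \cup \cdots \cup X_n$ by $X_1 \cup (X_2 \cup \cdots \cup X_n)$), and then do induction on $\alpha := \max(\rkP(X_1), \rkP(X_2))$. The base case $\alpha = 0$ uses that a finite union of closed discrete sets in a Hausdorff space is discrete. For the inductive step, fix $Y \subseteq X$ closed and nowhere dense in $X$ and aim to show $\rkP(Y) < \alpha$. Put $Y_i := Y \cap X_i$; each is closed and $Y = Y_1 \cup Y_2$.

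The main obstacle is that $Y_i$ need not be nowhere dense in $X_i$. The controlling observation is that if $V$ is open in $X_i$ and $V \subseteq Y_i$, then any point $v \in V \setminus X_j$ (for $j \neq i$) would lie in $X \setminus X_j$, which is open in $X$, and would give a neighbourhood of $v$ inside $Y$, contradicting nowhere-density of $Y$ in $X$. Hence $\operatorname{int}_{X_i}(Y_i) \subseteq X_1 \cap X_2 =: C$, and $C$ is closed with $\rkP(C) \leq \alpha$ by (2). So I would refine the decomposition by writing $Y_i = Y_i' \cup (Y_i \cap \overline{\operatorname{int}_{X_i}(Y_i)})$ where $Y_i' := Y_i \setminus \operatorname{int}_{X_i}(Y_i)$ is closed and nowhere dense in $X_i$, together with $Y \cap C$ which lies in $C$. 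Each $\rkP(Y_i') < \alpha$ by the hypothesis $\rkP(X_i) \leq \alpha$, and $\rkP(Y \cap C) \leq \rkP(C) \leq \alpha$; strictness of $Y \cap C$ rank follows because $Y \cap C$ is itself a closed nowhere-dense subset of $C$ (by the same interior-trapping argument applied inside $C$), so by the inner inductive hypothesis applied to the cover $Y = Y_1' \cup Y_2' \cup (Y \cap C)$, one gets $\rkP(Y) < \alpha$. This interior-trapping trick is the crux; the rest is routine transfinite-induction bookkeeping.
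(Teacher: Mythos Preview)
The paper does not prove this lemma itself; it is stated with attribution to Pillay \cite{pillay87} and no argument is given, so there is no proof in the paper to compare against. Your arguments for (1) and (2) are fine.

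For (3), the decomposition strategy is sound, but the step asserting that $Y\cap C$ is nowhere dense in $C = X_1\cap X_2$ is wrong in general. Take $X_1 = [0,1]\times\{0\}$ and $X_2 = \{0\}\times[0,1]$ in the plane; then $C = \{(0,0)\}$, and $Y := \{(0,0)\}$ is closed and nowhere dense in $X = X_1\cup X_2$, yet $Y\cap C = C$ is all of~$C$. So the ``interior-trapping argument applied inside~$C$'' does not go through as stated.

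The repair is that the third piece $Y\cap C$ is unnecessary: in fact $Y = Y_1'\cup Y_2'$ already. For suppose some $p$ lies in $\operatorname{int}_{X_1}(Y_1)\cap\operatorname{int}_{X_2}(Y_2)$; pick open $U_i\ni p$ in $M$ with $U_i\cap X_i\subseteq Y$, and observe that $(U_1\cap U_2)\cap X \subseteq (U_1\cap X_1)\cup(U_2\cap X_2)\subseteq Y$, so $p\in\operatorname{int}_X(Y)$, contradicting nowhere-density of $Y$ in~$X$. Hence $\operatorname{int}_{X_1}(Y_1)\cap\operatorname{int}_{X_2}(Y_2)=\emptyset$, which (together with $\operatorname{int}_{X_1}(Y_1)\subseteq Y_2$) gives $\operatorname{int}_{X_1}(Y_1)\subseteq Y_2\setminus\operatorname{int}_{X_2}(Y_2)=Y_2'$, and symmetrically; thus $Y = Y_1'\cup Y_2'$. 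Each $Y_i'$ is closed, definable, and nowhere dense in $X_i$ with $\rkP(X_i)\leq\alpha$, so $\rkP(Y_i')<\alpha$, and the inductive hypothesis applied at the ordinal $\max\bigl(\rkP(Y_1'),\rkP(Y_2')\bigr)<\alpha$ finishes.
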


\begin{lemma}[Pillay]
\Tfae:
\begin{enumerate}
\item $\rkP(X) = \infty$;
\item 
there is a decreasing sequence $(X_i)_{i < \omega}$ of definable closed
subsets of $X$, such that $X_{i + 1}$ is definable, closed, and nowhere dense
in $X_i$, for all $i < \omega$;
\item
$X$ contains a definable definable closed nowhere dense subset~$Y$,
such that $\rkP(Y) = \infty$;
\item
$\rkP(X) > 2^{\card M}$.
\end{enumerate}
\end{lemma}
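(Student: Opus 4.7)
The plan is to run a short cycle of implications: (2)$\Rightarrow$(1), then (1)$\Rightarrow$(3)$\Rightarrow$(1) and (1)$\Rightarrow$(2), and finally (1)$\Leftrightarrow$(4).

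For (2)$\Rightarrow$(1), I would prove by transfinite induction on $\alpha$ that $\rkP(X_i) \geq \alpha$ for every $i < \omega$, tacitly assuming each $X_i$ is non-empty (otherwise the sequence carries no information). The base $\alpha = 0$ uses non-emptiness; at successors $\alpha + 1$, since $X_{i+1}$ is closed and nowhere dense in $X_i$ and by induction $\rkP(X_{i+1}) \geq \alpha$, the definition yields $\rkP(X_i) \geq \alpha + 1$; limits are immediate from the definition. Taking $i = 0$ gives $\rkP(X) \geq \alpha$ for every ordinal~$\alpha$, i.e.\ $\rkP(X) = \infty$.

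The key implication is (1)$\Rightarrow$(3), a cardinal-counting argument. Assume for contradiction that every definable closed $Y \subseteq X$ which is nowhere dense in $X$ has ordinal rank. The collection $\mathcal F$ of such $Y$ has cardinality at most $2^{\card{M}}$, so $\set{\rkP(Y) : Y \in \mathcal F}$ is a set of ordinals and therefore admits an ordinal strict upper bound~$\beta$. By the successor clause of the definition of $\rkP$, this forces $\rkP(X) \leq \beta$, an ordinal, contradicting~(1). The converse (3)$\Rightarrow$(1) is immediate, since a nowhere-dense $Y \subseteq X$ with $\rkP(Y) = \infty$ shows $\rkP(X) \geq \alpha + 1$ for every ordinal~$\alpha$. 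Then (1)$\Rightarrow$(2) follows by iterating (1)$\Rightarrow$(3): apply it to $X$ to produce $X_1$, to $X_1$ to produce $X_2$, and so on.

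For (1)$\Leftrightarrow$(4), the direction (1)$\Rightarrow$(4) is trivial, since $\infty$ exceeds every ordinal. Conversely, if $\rkP(X)$ were an ordinal, the same counting argument shows that the range of $\rkP$ on definable closed subsets of~$M$ is a set of ordinals of cardinality $\leq 2^{\card{M}}$, hence bounded by an ordinal of cardinality $\leq 2^{\card{M}}$; therefore $\rkP(X) \leq 2^{\card{M}}$, contradicting~(4). The main obstacle is the counting step in (1)$\Rightarrow$(3): one must verify that ``definable'' here allows parameters from~$M$, so that $2^{\card{M}}$ bounds the class of closed definable subsets, and confirm (via Replacement) that a set of ordinals of cardinality $\kappa$ is genuinely bounded above by an ordinal, so that the supremum producing $\beta$ is legitimate.
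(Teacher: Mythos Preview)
The paper does not supply its own proof of this lemma; it is quoted as a result of Pillay \cite{pillay87} and left unproved. So there is nothing to compare against, and I evaluate your argument on its own merits.

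Your cycle $(2)\Rightarrow(1)$, $(1)\Rightarrow(3)\Rightarrow(1)$, $(1)\Rightarrow(2)$ is correct. In particular, the counting step in $(1)\Rightarrow(3)$ is fine as written: any \emph{set} of ordinals has an ordinal strict upper bound (Replacement), and if every closed definable nowhere-dense $Y\subseteq X$ has $\rkP(Y)<\beta$ then the successor clause immediately gives $\rkP(X)\le\beta$.

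There is, however, a genuine gap in your $\lnot(1)\Rightarrow\lnot(4)$. The inference ``a set of ordinals of cardinality $\le\kappa$ is bounded by an ordinal of cardinality $\le\kappa$'' is false: the countable set $\{\omega_n:n<\omega\}$ has supremum $\omega_\omega$, of cardinality $\aleph_\omega$. What is missing is the observation that the range of $\rkP$ on closed definable subsets of $X$ is an \emph{initial segment} of the ordinals. This holds because $\rkP$ is a foundation rank for the relation ``$Y$ is non-empty, closed, definable, and nowhere dense in $Z$'', and that relation is strictly rank-decreasing (if such a $Y$ exists then $\rkP(Z)\ge\rkP(Y)+1$); a short transfinite induction then shows every ordinal below $\rkP(X)$ is realised as $\rkP(Y)$ for some closed definable $Y\subseteq X$. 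Once the range is an ordinal~$\gamma$, surjectivity gives $\lvert\gamma\rvert$ at most the number of closed definable subsets of~$X$, which is bounded by $\max(\lvert L\rvert,\lvert M\rvert,\aleph_0)<2^{\lvert M\rvert}$ (there are only that many formulas with parameters from~$M$); hence $\gamma<2^{\lvert M\rvert}$ and $\rkP(X)\le 2^{\lvert M\rvert}$.

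A minor related point: in that same step you should restrict to closed definable subsets of~$X$, not of~$M$, so that monotonicity (the preceding Pillay lemma) guarantees all ranks in sight are ordinals once $\rkP(X)$ is.
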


\begin{lemma}
Let $\K$ be \ipminimal, and $C \subseteq \K$ be non-empty, definable, closed,
and with empty interior.
Then, $C$~has at least one isolated point; in particular,
no non-empty closed perfect subset of $\K$ with empty interior is definable.
\end{lemma}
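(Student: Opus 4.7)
The plan is to argue by contradiction: suppose $C \subseteq \K$ is non-empty, definable, closed, with empty interior, yet has \emph{no} isolated points (so $C$ is perfect). The aim is to exhibit a definable $0$-dimensional set $A$ with $\nlc A = A$, so that the iterates $A^{(k)} = A$ are non-empty for every $k$, contradicting the characterization of \ipminimal structures (item~2 of the preceding theorem), which provides a uniform $N$ with $A^{(N)} = \emptyset$ for every definable $0$-dimensional set.

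The definable sets
\[
L := \set{a \in C : \exists \delta > 0,\ (a, a + \delta) \cap C = \emptyset}, \qquad
R := \set{b \in C : \exists \delta > 0,\ (b - \delta, b) \cap C = \emptyset}
\]
will play the central role. I will show two things: first, that $L \cap R = \emptyset$, because any common point would be isolated in~$C$; second, that both $L$ and $R$ are dense in $C$. For the density of $L$, given $c \in C$ and $\varepsilon > 0$, either $(c, c+\varepsilon) \cap C = \emptyset$, in which case $c$ itself lies in $L$, or there is some $u \in (c, c+\varepsilon) \setminus C$; the maximal open gap of $C$ containing $u$ has left endpoint $a$, and since $c \in C$ cannot sit strictly inside this gap (which is disjoint from $C$) while also satisfying $u > c$, one checks that $c \le a < c + \varepsilon$, so $a \in L \cap (c - \varepsilon, c + \varepsilon)$. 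Density of $R$ is symmetric.

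Once these are in place, I would set $A := C \setminus L$. Since $L \cap R = \emptyset$ gives $R \subseteq A$, and $R$ is dense in $C$, the set $A$ is dense in $C$; as $A \subseteq C$ and $C$ is closed, $\cl A = C$ and $\cl A \setminus A = L$. For any $x \in A \subseteq C$, density of $L$ in $C$ provides points of $\cl A \setminus A$ arbitrarily close to $x$ in $\K$, so no open neighborhood $U$ of $x$ satisfies $A \cap U = \cl A \cap U$. Consequently $\lc A = \emptyset$, $\nlc A = A$, and $A^{(k)} = A$ for every $k$. Since $A$ is a non-empty subset of $C$ with empty interior, $\dim A = 0$; folding the parameters defining $A$ into a suitable $\emptyset$-definable family and invoking the characterization of \ipminimality then forces $A^{(N)} = \emptyset$ for some $N$, the desired contradiction.

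The key insight is the choice of $A = C \setminus L$: it is this set, rather than $C$ itself, that detects the perfect-set pathology, because removing the definable dense subset $L$ turns $C$ into a nowhere-locally-closed set, which the iterated $\nlc$ operator can never shrink. The only technical step requiring attention is the density of $L$ (and $R$) in $C$, which is a short case analysis once one uses that $c \in C$ cannot lie inside any gap of $C$; everything else is formal.
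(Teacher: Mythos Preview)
Your proof is correct. Both you and the paper work with the same definable sets $L$ and $R$ (the paper calls them $C^L$ and $C^R$, the left and right endpoints of the gaps of~$C$), and both observe that $L \cap R = \emptyset$ under the assumption that $C$ has no isolated points. The difference lies in how the contradiction is extracted. The paper argues more directly: since $C^L$ and $C^R$ are constructible, there is an open interval $I$ on which both are closed and $C^L \cap I \neq \emptyset$; taking $a \in C^L \cap I$, perfectness and empty interior force $C^R$ to accumulate at $a$ from the left, so $a \in C^R$ by closedness in~$I$, yielding $a \in C^L \cap C^R$. You instead prove full density of $L$ and $R$ in $C$ and then exhibit the auxiliary set $A = C \setminus L$, showing $\lc(A) = \emptyset$ and hence $A^{(k)} = A$ for all~$k$, contradicting constructibility. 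Your route is a bit more indirect but arguably more transparent: it produces an explicit witness to non-constructibility, whereas the paper's argument is shorter but relies on a one-sided accumulation observation. A minor simplification: rather than folding parameters into an $\emptyset$-definable family to invoke item~(2), you can cite item~(4) of the same theorem directly (every definable set is constructible), which immediately gives $A^{(N)} = \emptyset$ for some~$N$.
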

\begin{proof}
Assume, for contradiction, that $C$ is perfect.
Let $A := \K \setminus C$; $A$ is an open set;
let $C^L$ be the set of left end-points of the connected components of~$A$
and $C^R$ be the set of right end-points.
Notice that $C^L$ and $C^R$ are definable subsets of~$C$.
Since $\K$ is \ipminimal, there exists an open interval $I$ such that
$I \cap C^L$ and $I \cap C^R$ are closed in~$I$ 
and $I \cap C^L \neq \emptyset$.
Let $a \in C^L \cap I$.
Since $C$ is perfect and has empty interior, 
$a$~is an accumulation point for~$C^R$; hence, $a \in C^L \cap C^R$,
implying that $a$ is isolated in~$C$, absurd.
\end{proof}
In the above Lemma, the hypothesis ``$\K$ \ipminimal'' can be relaxed to
``for every non-empty definable subset $A$ of $\K$ there exists an open
interval~$I$, such that $I \cap A$ is non-empty and closed in~$I$''.

\begin{lemma}
Let $\K$ be \ipminimal, $C \subset \K$ be definable with empty interior, and
$D$ be the set of isolated points of~$C$.
Then, $D$~is discrete, definable, and dense in~$C$.
Moreover, $C' := C \setminus D$ is nowhere dense in~$C$.
\end{lemma}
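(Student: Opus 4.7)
The definability of $D$ is immediate: $x\in D$ iff $x\in C$ and there exists $r>0$ with $C\cap B(x,r)=\{x\}$. Discreteness of $D$ is equally direct: if $x\in D$ and $B(x,r)\cap C=\{x\}$, then \emph{a fortiori} $B(x,r)\cap D=\{x\}$.

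The substantive content is the density of $D$ in~$C$. I would argue by contradiction: suppose there is an open set $U$ with $U\cap C\neq\emptyset$ and $U\cap D=\emptyset$. Set $C_0:=U\cap C$. First observe that $C_0$ has no isolated points: if $x\in C_0$ had a neighbourhood $V\subseteq U$ with $V\cap C_0=\{x\}$, then $V\cap C=\{x\}$, making $x$ an isolated point of $C$ lying in~$U$, contradicting $U\cap D=\emptyset$. Next, form $E:=\cll(C_0)$. Every point of $C_0$ is an accumulation point of $C_0$, hence of $E$; and every point of $E\setminus C_0$ is by definition an accumulation point of $C_0\subseteq E$. So $E$ is perfect. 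Finally, since $C$ has empty interior, so does $C_0$, and because $\K$ is \ipminimal (hence \iminimal by the preceding theorem), \ipminimality via Thm.~\ref{thm:imin}(\ref{en:i-dim-1}) gives $\dim E=\dim C_0\leq 0$, so $E$ has empty interior as well. Thus $E$ is a non-empty definable closed perfect subset of $\K$ with empty interior, contradicting the previous lemma.

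For the final assertion, note that $D$ is open in~$C$: for each $d\in D$ one has $B(d,r)\cap C=\{d\}\subseteq D$ for suitable $r>0$. Therefore $C'=C\setminus D$ is closed in $C$, so $\cll_C(C')=C'$. The density of $D$ in $C$ means that every non-empty relatively open subset $V\cap C$ of $C$ (with $V\subseteq\K$ open and $V\cap C\neq\emptyset$) meets~$D$, which in turn means $V\cap C\not\subseteq C'$. Hence $C'$ has empty interior in~$C$, i.e.\ is nowhere dense in~$C$.

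The only delicate step is the density assertion; there the hard part is producing a closed, perfect, nowhere dense definable set so as to invoke the previous lemma. The key inputs are (i) the observation that passing from $C$ to $C_0=U\cap C$ preserves the absence of isolated points inside $U$, and (ii) the use of the \iminimality-consequence $\dim\cll(C_0)=\dim C_0$ to ensure the closure $E$ still has empty interior. Everything else is bookkeeping.
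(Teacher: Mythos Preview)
Your proof is correct and follows essentially the same route as the paper: both argue density by contradiction, manufacture a non-empty definable closed perfect subset of $\K$ with empty interior, and invoke the preceding lemma. The only cosmetic difference is that the paper first reduces to the case where $C$ itself is closed (using that isolated points of $\cl C$ are isolated in~$C$) and then intersects with a closed interval, whereas you keep $C$ as is and take $E=\cll(U\cap C)$, explicitly citing \iminimality (Thm.~\ref{thm:imin}(\ref{en:i-dim-1})) to see that $E$ has empty interior---a step the paper's reduction needs just as much but leaves implicit.
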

\begin{proof}
That $D$ is discrete and definable is clear.
\begin{claim}
It suffices to prove the conclusion for $\cl C$.
\end{claim}
In fact, the isolated points of $\cl C$ are isolated points of~$C$.
Thus, \wloG $C$ is closed.
Let $A := \K \setminus C$, $C^L$ be the set of left end-points of the
connected components of~$A$ and $C^R$ be the set of right end-points.
$C^L$ is dense in~$C$.
If, for contradiction, $D$ is not dense in~$C$, let $I$ be a closed interval,
such that $C \cap D$ has no isolated points and is non empty: but this
contradicts the previous lemma.

The fact that $C'$ is nowhere dense in $C$ follows immediately from the first part.
\end{proof}

From the above lemmata, it is easy to deduce the following.
\begin{lemma}
Assume that $\K$ is \ipminimal, and $X \subseteq \K$ is definable and closed.
\begin{enumerate}
\item If $X$ is a finite union of discrete sets, then $\rkCB(X) = \rkP(X)$.
\item $\K$ is locally o-minimal iff $\rkP(\K) = 1$.  
\item If $\K$ is not locally o-minimal, then $\rkP(\K) \geq \omega$.
\item If $\K$ is \dminimal but not locally o-minimal, 
then $\rkP(\K) = \omega$.
\item If $\K$ is $\omega$-saturated,  \ipminimal, but not \dminimal, 
then $\rkP(\K) = \infty$.
\end{enumerate}
\end{lemma}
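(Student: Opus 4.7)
The plan is to handle the five items in sequence. The core observation, borrowed from the preceding lemma, is that in an \ipminimal{} structure the isolated points of any closed definable $X \subseteq \K$ with empty interior form a discrete definable \emph{dense} subset of $X$.

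For (1), set $X' := \CBd{X}{1} = X \setminus \isol(X)$. Since $\isol(X)$ is open and dense in~$X$, $X'$ is closed and nowhere dense in~$X$, and any closed nowhere-dense definable $Y \subseteq X$ must be contained in~$X'$ (it misses the open dense $\isol X$). This yields $\rkP(X) = \rkP(X') + 1$. The \ipminimality{} characterization (the uniform bound $N$ on derived sets of $0$-dimensional fibers) makes $\rkCB(X)$ finite, and induction on $\rkCB(X)$ gives $\rkP(X) = \rkCB(X)$.

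For (2), the forward direction is direct: local o-minimality forces every definable closed nowhere-dense subset of $\K$ to be pseudo-finite, hence discrete with $\rkP = 0$; since $\K$ itself is not discrete, $\rkP(\K) = 1$. Conversely, assume $\rkP(\K) = 1$. By \iminimality, every definable $X \subseteq \K$ with empty interior is nowhere dense, so $\cl X$ is closed and nowhere dense, hence discrete by hypothesis, so $X$~is closed and discrete as well. The standard inversion $t \mapsto 1/t$ rules out unboundedness (it would produce a closed nowhere-dense subset of~$\K$ with $0$ as an accumulation point, contradicting discreteness), so $X$ is pseudo-finite. By Lemma~\ref{lem:lmin}, $\K$ is locally o-minimal.

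For (3) and (4), the lower bound $\rkP(\K) \geq \omega$ in the non-locally-o-minimal case is obtained by an explicit iterated construction. Since $\K$ is \iminimal{} but not \aminimal, a \pN{} set $N \subseteq \K$ exists. Define $Z_0 := \set 0$ and, for $k \in \Nat$, $Z_{k+1} := \set 0 \cup \bigcup_{n \in N,\, n \geq 1} \bigl(1/n + (1/n^2) Z_k\bigr)$. Each $Z_k$ is a definable bounded closed nowhere-dense subset of $\K$ with $0 \in \CBd{Z_k}{k}$, so $\rkCB(Z_k) \geq k$ and $\rkP(Z_k) \geq k$ by (1); hence $\rkP(\K) \geq \omega$. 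Under \dminimality, which implies \ipminimality, every definable closed nowhere-dense $Y \subseteq \K$ has finite CB rank (by the uniform bound) and hence $\rkP(Y) < \omega$ by (1), so no subset of $\K$ reaches $\rkP \geq \omega$ and $\rkP(\K) \leq \omega$, giving equality.

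The main obstacle is (5). The strategy is to exploit $\omega$-saturation together with the failure of \dminimality{} to realize, inside $\K$, an infinite strictly descending chain of definable closed subsets each nowhere-dense in the previous; by the second Pillay lemma above this forces $\rkP(\K) = \infty$. Extracting such a chain is delicate: the per-formula CB rank bound supplied by \ipminimality{} restricts how ``tall'' a chain can be inside a single fiber of a $\emptyset$-definable family, so one must use the absence of a uniform \dminimal{} bound (across fibers and higher arities) to glue together pieces coming from \emph{different} families into a single definable chain, realizing via $\omega$-saturation the type expressing that at every finite stage the chain can be extended. This compactness argument, and the reconciliation with the per-formula \ipminimality{} bound, is the technical heart of the proof.
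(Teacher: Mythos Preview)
Your treatment of (1)--(4) is essentially correct. Two minor remarks: in (1), the finiteness of $\rkCB(X)$ is the \emph{hypothesis} (a finite union of discrete sets has finite CB rank by definition), not a consequence of \ipminimality; and in (3), the explicit recursion needs the scaled copies $1/n + (1/n^2) Z_k$ to be pairwise disjoint, which you should verify or enforce by passing to a sparser \pN set.

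Your approach to (5), however, rests on a misconception. An \ipminimal structure does \emph{not} come with a per-formula Cantor--Bendixson bound: the uniform bound in the characterization of \ipminimality controls the iterated non-locally-closed part $\inlc{A_x}{N}$ (how many locally closed pieces the fiber needs), not the CB derivative $\CBd{A_x}{N}$. A per-formula CB bound is precisely the content of being \dminimal---and here we are assuming that fails. So there is no obstacle to finding, within a \emph{single} $\emptyset$-definable family, fibers of arbitrarily high CB rank, and no gluing across families is required. The paper's argument is accordingly direct: by failure of d-minimality there is a $\emptyset$-definable $A \subseteq \K^{m+1}$ such that for every $N$ some $c \in \K^m$ has $\rkCB\bigl(A_c \setminus \interior A_c\bigr) > N$; this condition is first-order in~$c$, so $\omega$-saturation produces a single $c$ with $\rkCB\bigl(A_c \setminus \interior A_c\bigr) \geq \omega$. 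Taking $X$ to be the closure gives a closed definable set with empty interior and $\rkCB(X) \geq \omega$, and the CB-derivative chain $X \supset \CBd{X}{1} \supset \CBd{X}{2} \supset \cdots$ is then strictly decreasing with each term closed and nowhere dense in the previous (by the preceding lemma on density of isolated points), so $\rkP(X) = \infty$. Your proposed alternative---realizing a type asserting that the chain extends at every finite stage---would involve infinitely many free variables and is not delivered by $\omega$-saturation.
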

\begin{proof}
The last point follows from the fact that, since $\K$ is not \dminimal, then,
by saturation, we can find $X \subset \K$ definable, closed, with empty
interior, and such that $\rkCB(X) \geq \omega$.
Hence, by the previous lemma, $X \supset X^{(1)} \supset X^{(2)} \supset
\dots$ is an infinite descending chain of definable sets, such that 
$X^{(i + 1)}$ is closed and nowhere-dense in $X^{(i)}$.
\end{proof}


\subsection{Proof of Thm.~\ref{thm:imin}} \label{sec:imin-proof}
\mbox{}\indent
($\ref{en:i-imin-1} \Leftrightarrow \ref{en:i-dim-1}$) and ($\ref{en:i-dim-n} \Rightarrow \ref{en:i-dim-1}$) are clear.

For every $0 < n \in \Nat$, and $K = 2, $, let $(K_n)$ be the instantiation at
$n$ of the $K$th statement.
For instance, $(\ref{en:i-imin-n}_n)$ is equal to $(\ref{en:i-imin-1})$.
We will prove that $(\ref{en:i-imin-n}_n)  \Rightarrow (\ref{en:i-B}_n)
\Rightarrow (\ref{en:i-f}_n) \Rightarrow (\ref{en:i-imin-n}_n)$, 
that $(\ref{en:i-imin-n}_n) \Rightarrow (\ref{en:i-imin-n}_{n+1})$,
that $(\ref{en:i-imin-n})_n \Leftrightarrow (\ref{en:i-bd})_n$,
and that $(\ref{en:i-imin-n}_n)$ implies that every meager $X \subset \K^n$ is nowhere dense.

By induction on~$n$, the above would imply
($\ref{en:i-imin-1} \Leftrightarrow \ref{en:i-imin-n} 
\Leftrightarrow \ref{en:i-bd}
\Leftrightarrow \ref{en:i-dim-1} \Leftrightarrow \ref{en:i-f} \Leftrightarrow \ref{en:i-B}$).

($(\ref{en:i-imin-n})_n \Leftrightarrow (\ref{en:i-bd})_n$) is clear.

($(\ref{en:i-f}_n) \Rightarrow (\ref{en:i-bd}_n)$).
Let $X \subseteq \K^n$ be definable and with empty interior.
Let $f:= 1_X$ be the characteristic function of~$X$;
then, $\Dis(f) = \bd(X)$; thus, $\bd(X)$ is nowhere dense.

($(\ref{en:i-B}_n) \Rightarrow (\ref{en:i-f}_n)$).
Let $f: U \to \K$ be definable, with $U \subseteq \K^n$ open.
We want to prove that $\Dis(f)$ is nowhere dense;
\wloG, $f$ is bounded.
Let $A := \Gamma(f)$; then, $\Dis(f) = \B_n(A)$.
Thus, $\Dis(A)$ is nowhere dense.

Assume now that we have $(\ref{en:i-imin-n}_n)$.

Note that if $Y \subseteq \K^n$ is meager and definable, then, since $\K$ is
Baire, $Y$~has empty interior, and thus
$Y$ is nowhere dense; therefore, we have proved ($\ref{en:i-meager}_n$).

We prove now ($\ref{en:i-B}_n$).
Let $A \subseteq \K^{n + m}$ be definable, and $B := \B_n(A)$.

If we prove that $B$ is meager, then, since $\K$ is Baire, $B$~has empty
interior, and thus, by inductive hypothesis, $B$~is nowhere dense.
\Wlog, $A$~is bounded (because, after using a definable homeomorphism from
$\K$ to $(0,1)$, $B$~can only become larger).

Let $\pi := \Pi^{n + m}_n$, and $U := \pi(A)$.
If $U$ has empty interior, then, by ($\ref{en:i-imin-n}_n$),
$U$~is nowhere dense; thus, $B$ is also nowhere dense, because $B \subseteq \cl{\pi(A)}$.
Therefore, we can assume that $U$ has non-empty interior, and hence, \wloG,
that $U$ is open.

Thus, for every $r > 0$, let
\[
C(r) := \set{(x,y) \in \cl A: d(y, A_x) \geq r},
\]
and $B(r) := \pi\Pa{C(r)}$.
Since $B = \bigcup_r B(r)$, and by ($\ref{en:i-imin-n}_n$),
it suffices to prove that each $B(r)$ has empty interior.

Fix $r > 0$, and assume, for contradiction, that $U' \subseteq B(r)$ is a
non-empty open definable set.
\Wlog, $U' = U$.
Let $C' := \cl{C(r)}$: notice that $C'$ is \dcompact.
Define
$g: U \to \K^m$, $x \mapsto \lexmin (C'_x)$.
Since $g$ is lower semi-continuous, there exists $U' \subseteq U$ open and
non-empty, such that $g$ is continuous on~$U'$; \wloG, $U = U'$.
Define also $f : U \to \K^m$; $x \mapsto \lexinf\Pa{C(r)_x}$.
Note that $f(x) \geq g(x)$ for every $x \in U$.

\begin{claim}\label{cl:disc-nd}
The set $D := \set{x \in U: f(x) > g(x)}$ is nowhere dense.
\end{claim}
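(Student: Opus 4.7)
The plan is to show that $D$ has empty interior, after which $(\ref{en:i-imin-n}_n)$ forces $D$ to be nowhere dense. I argue by contradiction: suppose $V \subseteq D$ is a non-empty open definable set, so $f(x) >_{\mathrm{lex}} g(x)$ for every $x \in V$.

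First I partition $V$ by the first coordinate of disagreement between $f$ and $g$: for $i = 1, \dots, m$ set $V_i := \set{x \in V : f_j(x) = g_j(x) \text{ for all } j < i \text{ and } f_i(x) > g_i(x)}$. These sets are disjoint, definable, and cover $V$, so by $(\ref{en:i-imin-n}_n)$ applied finitely many times at least one $V_i$ has non-empty interior (otherwise $V$ itself would be a finite union of nowhere dense sets, contradicting the openness of $V$). Replacing $V$ by the interior of this distinguished $V_i$, I may assume $f_j = g_j$ for $j < i$ and that $\phi := f_i - g_i > 0$ throughout $V$.

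The crux is the following estimate: for every $x_0 \in V$ and every $\delta > 0$ there exists $x \in V$ with $d(x, x_0) < \delta$ and $\phi(x) < \delta$. To prove it, fix $x_0 \in V$ and pick a sequence $(x_k, y_k) \in C(r)$ with $(x_k, y_k) \to (x_0, g(x_0))$, which exists because $g(x_0) \in \cl{C(r)}_{x_0}$; by openness of $V$, $x_k \in V$ eventually. Since $A$ is bounded, I extract a subsequence along which $f(x_k) \to z \in C'_{x_0}$; continuity of $g$ together with $f_j = g_j$ on $V$ forces $z_j = g_j(x_0)$ for $j < i$, while the lex-min characterization of $g$ on $C'_{x_0}$ gives $z_i \geq g_i(x_0)$. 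The reverse inequality $z_i \leq g_i(x_0)$, which yields $\phi(x_k) \to 0$, is obtained by induction on $m$: the base case $i = 1$ is immediate from $f_1(x_k) \leq y_{k,1} \to g_1(x_0)$, and for $i > 1$ one restricts attention to the slice $C(r) \cap \set{y_1 = g_1(x)}$ over $V$ (which is non-empty, since $f(x)$ lies in it) and applies the inductive hypothesis to a problem with $m - 1$ fiber coordinates.

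Granting the estimate, set $V_\epsilon := \set{x \in V : \phi(x) > \epsilon}$ for each $\epsilon > 0$. The estimate shows that $V_\epsilon$ has empty interior, hence it is nowhere dense by $(\ref{en:i-imin-n}_n)$. Then $V = \bigcup_{\epsilon > 0} V_\epsilon$ is a definable increasing family (in the parameter $1/\epsilon$) of nowhere dense sets, contradicting the Baire property of $V$ as an open subspace of the Baire space $\K^n$. The main obstacle is the lex analysis in the crux step, specifically the reduction of $i > 1$ to $i = 1$: the closure of the slice $C(r) \cap \set{y_1 = g_1(x)}$ can be strictly smaller than the analogous slice of $C'$, so the inductive step on $m$ carries the weight of closing the gap.
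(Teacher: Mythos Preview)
Your overall architecture matches the paper's: write $D$ as an increasing definable union of the sets where the gap between $f$ and $g$ exceeds a fixed threshold, show each such set has empty interior, apply $(\ref{en:i-imin-n}_n)$ to make each nowhere dense, and finish by Baire. The paper, however, explicitly restricts to $m = 1$, where lex order is the ordinary order and no inductive descent on the fibre coordinates is needed.

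The genuine gap is your use of sequences in the crux step. You ``pick a sequence $(x_k,y_k)\in C(r)$ converging to $(x_0,g(x_0))$'' and then ``extract a subsequence along which $f(x_k)\to z$''. Neither move is available in an arbitrary definably complete expansion of an ordered field: closures are not sequential closures, and \dcompact sets are not sequentially compact when the cofinality of $\K$ is uncountable (as in any $\aleph_1$-saturated model). This sequential extraction is exactly what carries the lexicographic induction you yourself flag as delicate, so the gap is load-bearing rather than cosmetic.

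The paper's argument for $m=1$ avoids limits entirely. Fix $s>0$ and suppose $D(s)=\set{x\in U: f(x)\ge g(x)+s}$ contains a non-empty open~$V$; pick $x_0\in V$ and, by continuity of~$g$, shrink $V$ so that $\abs{g-g(x_0)}<s/2$ on~$V$. Since $(x_0,g(x_0))\in C'=\cl{C(r)}$, there is a \emph{single} point $(x',y')\in C(r)$ with $x'\in V$ and $\abs{y'-g(x_0)}<s/2$; then $g(x')\le f(x')=\inf C(r)_{x'}\le y'$ squeezes $f(x')-g(x')<s$, contradicting $x'\in D(s)$. If you want to recover general~$m$, rewrite your crux estimate in this one-point style rather than via convergent subsequences; the limit point $z$ you try to produce is not needed.
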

We will do only the case $m = 1$.
It suffices to prove that $D$ is meager.
For every $s > 0$, let $D(s) := \set{x \in U: f(x) \geq g(x) + s}$.
If we prove that each $D(s)$ is nowhere dense, we have the claim.
By ($\ref{en:i-imin-n}_n$), it suffices to prove that $D(s)$ has empty interior.
Assume, for contradiction, that $V$ is a non-empty subset of~$U$, and let $x
\in V$.
Since $g$ is continuous, we can assume that $d\Pa{g(x'), g(x)} < s/2$ for
every $x' \in V$.
By definition of $f$ and~$g$,
there exists $x' \in V$ such that $d\Pa{f(x'), g(x)} < s/2$. 
Hence, $d\Pa{f(x'), g(x')} < s$, absurd.

Thus, after shrinking~$U$, we can assume that $f = g$.
Fix $x \in U$, and let $y := f(x)$.
Since $g$ is continuous on $U$, after shrinking $U$ we can assume that
$d\Pa{f(x'), y} < r/3$ for every $x' \in U$. 
Since $\Gamma(f) \subseteq \cl A$, there exists $(x', y') \in A$, such that
$(x', y')$ is near $(x, y)$; that is, $x' \in U$ and $d(y', y) < r/3$.
Moreover, by definition of~$f$, there exists $y'' \in C_{x'}$, such that
$d \Pa{f(x'), y''} < r/3$.
However, this imply that $d\Pa{y', f(x')} < r$, contradicting the definition
of $C(r)$.


We prove now ($\ref{en:i-imin-n}_{n+1}$):
let $A \subseteq \K^{n+1}$ is definable and has
empty interior; we want to show that $A$ is nowhere dense.
If not, let
\[
E' := \set{x \in \K^n: \cll(A_x) \text{ has non-empty interior}}.
\]
By assumption, $E'$ has non-empty interior.
\Wlog, $A$ is bounded.
Let $E := \set{x \in \K^n: \cll(A)_x \text{ has non-empty interior}}$.
Since, by ($\ref{en:i-f}_n$), $E \sdiff E'$ is nowhere dense,
$E$~has non-empty interior.
Since $\K$ is Baire, there exists $0 < r \in \K$ such that
\[
E(r) := \set{ x \in \K^n: A_x \text{ contains an interval of length } r}
\]
has non-empty interior; let $U \subseteq E(r)$ be a non-empty open set.

After shrinking $A$ if necessary, we can assume that, for every $x \in U$,
$A$ is bounded and $A_x$ is an open interval of length~$r$.
For every $x \in U$, let $h(x)$ be the centre of~$A_x$.

By ($\ref{en:i-f}_n$),
the set of points where $h$ is continuous has non-empty interior.
Thus, after shrinking~$U$, we can assume that $h$ is
continuous.
But then the set $\set{(x,y) \in U \times \K: h(x) - r < y < h(x) + r}$ is
open and contained in~$A$, absurd.

($\ref{en:i-B} \Leftrightarrow \ref{en:i-fr}$) is clear.

($\ref{en:i-imin-n} \Rightarrow \ref{en:i-meager-n} \Rightarrow
\ref{en:i-meager-1})$ are also clear.

($\ref{en:i-meager-1} \Rightarrow \ref{en:i-imin-1}$).
Let $X \subseteq \K$ be definable and have empty interior.
By hypothesis, $X$~is meager.
Moreover, $\fr X = \cl X \setminus X$ has also empty interior, and thus it is
meager.
Therefore, $\cl X$ is meager.
Since $\K$ is Baire, $\cl X$ has empty interior.

$(\ref{en:i-B} \Rightarrow \ref{en:i-dim-n}$).
Let $X \subseteq \K^n$ be definable, and let $d := \dim \cl X$.
We want to prove that $\dim X = d$.
\Wlog, $\pi(\cl X)$ contains an open subset of~$\K^d$, where $\pi := \Pi^n_d$.
If, for contradiction, $\dim X < d$, then, by ($\ref{en:i-imin-n}$),
$\pi(X)$~is nowhere dense.
Notice that $\pi(\cl X) \setminus \pi(X) \subseteq \B_d(X)$.
Since $\B_d(X)$ is nowhere dense, we get a contradiction.

($\ref{en:i-imin-n} \Rightarrow \ref{en:i-dim-fiber}$).
Let $X := \set{x \in \K^d: \dim A_x > 0}$, where $d := \dim A$, and
assume, for contradiction, that $\dim X > 0$.
If $d = 0$, then $X = \emptyset \subset \K^0 = \set{0}$,
and we have a contradiction.
Thus, \wloG, $A$~is closed (because $\dim \cl A = \dim A)$, 
and $Y := \Pi^n_{d + 1}(A)$ satisfies
\[
\forall x \in X\ \dim(Y_x) > 0.
\]
By Kuratowski-Ulam's theorem, this implies that $Y$ is not meager,
and thus has non-empty interior, contradicting $\dim A = d$.

($\ref{en:i-imin-n} \Rightarrow \ref{en:i-dim-union-n}$).
Let $A_1, A_2 \subseteq \K$ be definable, such that $\dim A_i < d$, $i = 1, 2$.
We have to prove that $\dim(A_1 \cup A_2) < d$.
Assume, for contradiction, that $B := \Pi^n_d(A_1 \cup A_2)$ has non-empty
interior. 
Let $B_i := \Pi^n_d(A_i)$;
notice that $\dim B_i < d$, and $B_1 \cup B_2 = B$.
By (\ref{en:i-imin-n}), the $B_i$ are nowhere dense in~$\K^d$;
thus, $B$~is nowhere dense, absurd.

($\ref{en:i-dim-union-n} \Rightarrow \ref{en:i-dim-union-1}$) is obvious.

($\ref{en:i-dim-union-1} \Rightarrow \ref{en:i-dim-1}$).
Let $A \subseteq \K$ be definable.
We have to prove that $\dim (\cl A) = \dim(A)$.
However, $\cl A = A \cup \fr A$.
Since $\fr A$ has empty interior, $\dim \fr A = 0$, and we are done.

($\ref{en:i-dim-fiber} \Rightarrow \ref{en:i-dim-union-1}$).
Let $A, B$ be definable subsets of $\K$ with empty interior.
We have to prove that $A \cup B$ has also empty interior.
Define $X := \Pa{(0,1) \times A} \cup \Pa{\fr B \times (0,1)} \subset \K^2$.
Notice that $\dim X = 1$.
By~$\ref{en:i-dim-fiber}$, the set
$Y := \set{y \in \K: \dim(X_y) > 0}$ is nowhere dense.
However, $Y = A \cup B$; thus, $\dim(A \cup B) = 0$.

($\ref{en:i-imin-n} \Rightarrow \ref{en:i-increasing}$).
Let $\Pa{A_x}_{x \in \K}$ be an increasing definable family of subsets of~$\K^n$, each of them of dimension less or equal to~$d$.
Let $A := \bigcup_x A_x$.
Assume, for contradiction, that $\dim A > d$; \wloG, $U := \Pi^n_{d+1}(A)$ has
non-empty interior.
However, $U = \bigcup_x \Pi^n_{d + 1}(A_x)$.
Since $\dim A_x \leq d$, each $\Pi^n_{d + 1}(A_x)$ is nowhere dense, and thus
$U$ is meager, contradicting the fact that $\K$ is Baire.

($\ref{en:i-dim-fiber-2} \Rightarrow \ref{en:i-dim-fiber}$) is obvious.

($\ref{en:i-dim-fiber} \Rightarrow \ref{en:i-dim-fiber-2}$).
Assume, for contradiction, than $\dim C > k$; \wloG, $U := \Pi^n_{k + 1}(C)$ has
non-empty interior.
Moreover, since $\dim(A) = \dim(\cl A)$, by property (\ref{en:i-increasing}),
\wloG $A$ is \dcompact.
By~\ref{en:i-dim-union-n}, \wloG the set
\[
C' := \set{x \in \K^n: \dim \Pa{\Pi^m_d(A_x)} \geq d }
\]
has dimension greater than~$k$, 
and $D' := \Pi^n_{k + 1}(C')$ has non-empty interior.
Let $B := \Pi^{n + m}_{d + k + 1}(A)$; by assumption, $B$~is nowhere dense.
Hence, by Kuratowski-Ulam's theorem, the set
\[
D := \set{u \in \K^{k + 1}: \dim(B_u) \geq d}
\]
has empty interior.
However, for every $u \in \K^{k + 1}$, $B_u = \Pi_d(A_u)$, and thus $D'
\subseteq D$, absurd.

($\ref{en:i-imin-1} \Rightarrow \ref{en:i-monotonicity}$)
and ($\ref{en:i-imin-1} \Rightarrow \ref{en:i-continuous}$)
have  the same proof as~\cite[Thm.~3.3]{miller05}.
%
\mbox{}\hfill \qedsymbol
\bigskip


\section{Definable choice}\label{sec:DSF}
As usual, $\K$~is some definably complete expansion of an ordered field.

Some version of the following lemma has been proved by C.~Miller.
\begin{lemma}[Definable Choice]\label{lem:skolem}
\begin{enumerate}
\item Let $P \subseteq \K$ be a set of parameters, and
$A \subset \K^n$ be $P$-definable, non-empty and constructible.
Then, there exists a $P$-definable point $a \in A$ .
\item Let $X \subseteq \K^{n + m}$ be definable and such that $X_b$ is
constructible for every $b \in \K^m$.
Then, $X$ has a definable $n$-choice function, that is a definable function 
$f : \Pi^{n + m}_m(X) \to X$, such that $f(a) \in X_a$ for every $a \in \Pi^{n + m}_m(X)$.
\item Let $X \subseteq \K^{n + m}$ be definable and constructible.
Then, $X$ has a definable $n$-choice function.
\item Suppose that every unary definable set contains a locally closed point.
Then, $\K$~has definable Skolem functions (\intro{\DSF}).
\end{enumerate}
\end{lemma}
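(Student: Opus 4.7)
The plan is to reduce (2), (3), and (4) to (1), and then prove (1). For (3), if $X \subseteq \K^{n+m}$ is a finite Boolean combination of definable open subsets of $\K^{n+m}$, the fibre $X_b$ is the corresponding finite Boolean combination of (open) fibres, hence constructible; so (3) is an instance of (2). For (2), given $X$ with each fibre $X_b$ constructible, I would set $L_b := \lc(X_b)$, a uniformly definable family of locally closed subsets of $X_b$. Each $L_b$ is non-empty when $X_b$ is: otherwise $\nlc(X_b) = X_b$ would give $\inlc{X_b}{k} = X_b \neq \emptyset$ for every~$k$, contradicting the Proposition in Section~\ref{sec:preliminary} which characterises constructibility of $X_b$ by $\inlc{X_b}{k} = \emptyset$ for some~$k$. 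A uniform-in-$b$ application of the locally closed case of (1) to the family $(L_b)_{b \in \Pi^{n+m}_m(X)}$ then produces the $n$-choice function. For (4), the hypothesis directly yields $\lc(A) \neq \emptyset$ for every non-empty unary definable $A$; uniformly applying the locally closed case of (1) to $\lc(A_b)$ handles families of unary sets, and iterating on the number of coordinates gives Skolem functions in arbitrary arity.

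For (1), given $A \subseteq \K^n$ constructible, non-empty, and $P$-definable, the first step replaces $A$ by $\lc(A)$, which is $P$-definable, locally closed, non-empty (same non-emptiness argument as above), and contained in $A$; so we may assume $A$ itself is locally closed. The second step reduces to the bounded case via the $\emptyset$-definable self-homeomorphism $\Psi : \K^n \to (-1, 1)^n$ given coordinatewise by $\eta(x) = x/(1 + |x|)$. Since $(-1, 1)^n$ is open in $\K^n$ and local closedness is a local property, $\Psi(A)$ is locally closed in $\K^n$ and bounded; a $P$-definable point of $\Psi(A)$ pulls back by $\Psi^{-1}$ to a $P$-definable point of $A$.

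Now assume $A$ is non-empty, bounded, locally closed, and $P$-definable. Then $\cl A$ is \dcompact, and by Lemma~\ref{lem:discrete-boundary} the frontier $\fr A = \cl A \setminus A$ is closed. If $\fr A = \emptyset$ then $A = \cl A$ is \dcompact and $a_0 := \lexinf A \in A$. Otherwise consider the $P$-definable continuous function $\psi : \cl A \to \K_{\geq 0}$ defined by $\psi(x) := d(x, \fr A)$: it vanishes on $\fr A$ and is strictly positive on $A$, so its maximum $\alpha$ on the \dcompact set $\cl A$ is strictly positive. The set $M := \set{x \in \cl A : \psi(x) = \alpha}$ is a non-empty \dcompact subset contained in $A$, and $a_0 := \lexinf M$ lies in $M \subseteq A$ by iterated coordinate projection (at each stage the projection of the current \dcompact slice is \dcompact in $\K$, hence attains its infimum, and the sub-slice above is again \dcompact and non-empty).

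The main technical point is uniformity in (2). The naive approach of applying (1) fibrewise via the canonical decomposition $X_b = \lc(\inlc{X_b}{0}) \sqcup \dotsb \sqcup \inlc{X_b}{m^*(b)}$ and choosing from the first non-empty stratum requires selecting $m^*(b) := \min\set{k : \inlc{X_b}{k+1} = \emptyset}$, which depends on $b$ and is in general not definable in $\K$ (since $\K$ need not define~$\Nat$); routing through $\lc(X_b)$ sidesteps this, because $\lc$ is a single uniformly definable operation that already supplies a non-empty locally closed subset.
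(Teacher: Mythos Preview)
Your proof is correct. For parts (2), (3), and (4) your reductions match the paper's essentially verbatim---including the key observation, which the paper leaves implicit (it just says ``the construction in (1) gives a definable way to choose''), that one should route through $\lc(X_b)$, a single uniformly definable operation, rather than through the whole $(\inlc{X_b}{k})_k$ tower.

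Where your argument genuinely diverges from the paper is in (1). After passing to $\lc(A)$, the paper proceeds by cases and by induction on~$n$: it first handles closed~$A$ (intersect with a ball of a $P$-definable radius chosen via $r_0 = \inf\{r : A \cap \clB(0,r) \neq \emptyset\}$, then take $\lexmin$), then discrete~$A$ (via a separation radius, reducing to the closed case), and finally treats general locally closed~$A$ by induction on~$n$; the base case $n=1$ exploits the interval structure of~$\K$ to isolate a $P$-definable connected component of an open $U$ with $A = \cl A \cap U$, and the step $n>1$ projects to $\K^{n-1}$ and recurses. Your route is more direct: compress into $(-1,1)^n$ via $x \mapsto x/(1+|x|)$, then use the continuous $\psi = d(\cdot, \fr A)$ on the \dcompact{} set $\cl A$ to carve out a \dcompact{} $M \subseteq A$ in one stroke, and take $\lexmin M$. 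This eliminates both the closed/discrete/general case split and the outer induction on~$n$; the only residual induction is the standard one implicit in $\lexmin$ of a \dcompact{} set. The paper's decomposition has the minor side benefit of producing canonical points in closed or discrete sets without first passing to a bounded model; your argument has the benefit of being shorter and manifestly uniform, which is precisely what (2) requires.
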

\begin{proof}
($1$).
Since $A$ is constructible, $A' := \lc(A)$ is non-empty; thus, since $A'$ is
also $P$-definable, it suffices to prove the conclusion for~$A'$;
therefore, \wloG $A$ is locally closed.

First, we will prove the case when $A$ is closed in~$\K^n$.
For every $r > 0$, let $A(r) := \set{a \in A: \abs a \leq }$, let $r_0 :=
\inf\set{r \in \K: A(r) \neq \emptyset}$, and let $A' := A(2r_0)$.
Notice that $A'$ is \dcompact, non-empty and $P$-definable.
Let $a := \lexmin(A')$: notice that $a$ is also $P$-definable, and in~$A$.

Next, we will treat the case when $A$ is discrete.
For every $r > 0$, let $A(r) := \set{a \in A: B(a,r) \cap A = \set a}$, 
let $r_0 := \inf\set{r \in \K: A(r) \neq \emptyset}$, and let $A' := A(2r_0)$.
Notice that $A'$ is closed, non-empty, and $P$-definable, and apply the
previous case.

We will prove the general case by induction on~$n$.
If $n = 1$, for every $a \in A$ let 
$r(a) := \sup \set{r > 0: A \cap B(a,r) = \cl A \cap B(a,r)}$.
Since $A$ is locally closed, $r(a) > 0$ for every $a \in A$.
Let $U := \bigcup_{a \in A} B\Pa{a, r(a)/2}$.
Notice that $U$ is open, $P$-definable, and $A = \cl A \cap U$.
By the ``open set'' case, there exists $u \in U$ $P$-definable.
Let $I$ be the definably connected component of $U$ containing $u$:
$I$ exists because $n = 1$, and it is a $P$-definable open interval;
let $A' := A \cap I$.
Therefore, $A'$ is closed in $I$, $P$-definable and non-empty;
since $I$ is definably homeomorphic to $\K$, via a $P$-definable map, we can
apply the ``closed set'' case to find a $P$-definable point in~$A'$.

If $n > 1$, we proceed by induction, and we assume we have already proved the
conclusion for $n - 1$.
let $A' := \Pi^{n}_{n - 1}(A)$.
By the inductive hypothesis, there exists $a' \in A'$ $P$-definable.
By the case $n = 1$, there exists $a'' \in A_{a'}$ also $P$-definable.
Let $A := (a', a'')$.

$(2)$.
The construction in ($1$) gives a definable way to choose $x_b \in X_b$ for
every $b \in \Pi^{n + m}_m(X)$.

$(3)$ is immediate from ($2$).

$(4)$.
Let $A \subseteq \K^{n + m}$ be definable.
We have to prove that there exists $f: B \to A$ definable, such that $f(b) \in
A_b$ for every $b \in B$, where $B := \Pi^{n + m}_m(A)$.
We proceed by induction on~$n$.

If $n = 0$, $f$ is the identity.
If $n = 1$, for every $b \in B$ let $C_b := \lc(A_b)$.
By hypothesis, $C_b \neq \emptyset$ for every $b \in B$, and it is
constructible. Thus, by ($2$), $C$ has a definable 1-choice function, and the
same function will work for~$A$.

Assume that $n > 1$ and we have already proved the conclusion for every
$n' < n$.
Let $C := \Pi^{n + m}_{m + 1}(A)$.
By inductive hypothesis, there exists a definable $(n-1)$-choice function
$g: C \to A$.
By the case $n = 1$, there exists a definable 1-choice function
$h: \Pi^{m + 1}_m(C) \to C$.
Let $f := g \circ h : \Pi^{n + m}_m(A) \to A$: $f$ is an n-choice function
for~$A$.
\end{proof}

Therefore, locally o-minimal, d-minimal and \ipminimal structures
have \DSF.
On the other hand, \aminimal structures might not have \DSF:
for instance, $(\Real, \Ralg)$ does not have \DSF~\cite[5.4]{DMS}.

For the same reason, \ipminimal structure have elimination of imaginaries.


\begin{lemma}
Let $X \subseteq \K^n$ be definable.
Assume that $X$ is both and $\Fs$ and a $\Gd$, and that $\cl X$ is Baire.
Then, $\lc(X) \neq \emptyset$.
\end{lemma}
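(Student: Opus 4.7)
The plan is to pass to the ambient space $Z := \cl X$, which is Baire by hypothesis, and argue that the $\Fs$-decomposition of $X$ inside $Z$ must contain a member with non-empty interior in $Z$; any interior point of such a member will automatically lie in $\lc(X)$.

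First I would restrict the witness of $X$ being $\Fs$ in $\K^n$ to $Z$, producing a definable increasing family of closed-in-$Z$ sets $(E_t)$ with $X = \bigcup_t E_t$. Similarly, using that $X$ is $\Gd$ in $\K^n$ (so $\K^n \setminus X$ is $\Fs$), I obtain a definable increasing family of closed-in-$Z$ sets $(G_t)$ with the frontier $Y := Z \setminus X = \bigcup_t G_t$. Density of $X$ in $Z$ forces $Y$ to have empty interior in $Z$, and so each $G_t \subseteq Y$, being closed in $Z$, is nowhere dense in $Z$; hence $Y$ is meager in $Z$. The Baire hypothesis on $Z$ then prevents $X$ from also being meager in $Z$: otherwise combining the two definable increasing families into $(N_t \cup G_t)$ and using Remark~\ref{rem:boundary} (union of two nowhere dense sets is nowhere dense) expresses $Z$ as a definable increasing union of nowhere-dense-in-$Z$ sets, contradicting Baire.

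Having shown $X$ is not meager in $Z$, the specific $\Fs$-witness $(E_t)$ cannot consist entirely of nowhere-dense-in-$Z$ sets; so some $E_{t_0}$, being closed in $Z$, has non-empty interior in $Z$. I then choose an open $W \subseteq \K^n$ with $\emptyset \neq W \cap Z \subseteq E_{t_0} \subseteq X$. For any $x \in W \cap Z$ and any open ball $B \subseteq W$ around $x$, the chain
\[
B \cap X \subseteq B \cap \cl X \subseteq W \cap Z \subseteq X
\]
collapses to $B \cap X = B \cap \cl X$, witnessing $x \in \lc(X)$.

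I do not foresee a genuine obstacle: the essential moves are the (routine) transfer of the $\Fs$/$\Gd$ properties from $\K^n$ to the closed subspace $Z$, and the verification that the meagerness arguments respect the paper's definable-increasing-family convention. The only subtlety worth flagging is the tacit assumption $X \neq \emptyset$, without which $\lc(X)$ is trivially empty.
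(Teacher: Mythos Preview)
Your proposal is correct and follows essentially the same approach as the paper's proof: both work inside $Z=\cl X$, use the $\Gd$ hypothesis to show $\fr X$ is meager in $Z$, invoke the Baire assumption on $Z$ to conclude $X$ is non-meager in $Z$, and then use the $\Fs$ hypothesis to extract a point of $\lc(X)$. The paper phrases this as a contradiction argument (if $X$ had empty interior in $\cl X$, both $X$ and its complement in $\cl X$ would be meager there), while you unfold it directly and keep more explicit track of the definable families, but the underlying idea is identical.
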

Cf.~\cite[\S34.VI]{kuratowski}.
\begin{proof}
Let $Y := \cl X$.
We have to prove that the interior of $X$ inside $Y$ is non-empty.
Otherwise, $X$ is both dense and co-dense in~$Y$.
However, since $X$ is an~$\Fs$, this implies that $X$ is meager in~$Y$.
For the same reason, $Y \setminus X$ is meager in~$Y$,
contradicting the fact that $Y$ is Baire.
\end{proof}


\subsection{Sard's Lemma and dimension inequalities}

For every $\Cone$ function $f : \K^m \to \K^n$, define
$\Lambda_f(k) := \set{x \in \K^n: \rkM \Pa{Df(x))} \leq k}$, and
$\Sigma_f(k) := f\Pa{\Lambda_f(k)}$.
The set of singular values of $f$ is 
$\Sigma_f := \bigcup_{k = 0}^{n - 1} \Sigma_f(k)$.

\begin{lemma}[Sard's Lemma]
Let $f : \K^m \to \K^n$ be definable and~$\Cone$.
If $\K$ is \iminimal and has \DSF (and, in particular, if $\K$ is \ipminimal),
then $\dim\Pa{\Sigma_f(d)} \leq d$.
\end{lemma}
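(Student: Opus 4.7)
The plan is to adapt the classical proof of Sard's lemma to the present setting, replacing the measure-theoretic arguments (Fubini, Lebesgue measure zero) by dimension arguments valid for \iminimal structures. Proceed by induction on~$m$. If $d \geq m$, then $\dim \Sigma_f(d) \leq \dim f(\K^m) \leq m \leq d$, so one may assume $d < m$; the case $m=0$ is vacuous.

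For the inductive step, first apply Lemma~\ref{lem:imin-C1} to obtain a closed definable nowhere-dense set $E \subset \K^m$ outside of which $f$ is~$\Cone$ (in fact the proof that follows will probably need $\Cone$ with continuous derivative, which Lemma~\ref{lem:imin-C1} supplies). Since $\dim E < m$, apply the Partition Lemma to decompose $E$ into $\Pi$-good pieces; each such piece is definably parametrized by an open subset of some~$\K^{m'}$, $m' < m$, where the parametrization is produced via \DSF. The induction hypothesis applied to $f$ composed with each such parametrization yields $\dim f(E \cap \Lambda_f(d)) \leq d$.

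On $U := \K^m \setminus E$, stratify $\Lambda_f(d) \cap U$ by the rank of~$Df$, obtaining definable pieces $A_k := \set{x \in U: \rkM Df(x) = k}$ for $k = 0, \dots, d$. Apply the Partition Lemma again to each $A_k$ and, by dimension additivity (Thm.~\ref{thm:imin}(\ref{en:i-dim-union-n})), reduce to controlling each $\Pi$-good piece. Pieces of dimension less than~$m$ are handled by the induction hypothesis, as above. Pieces of dimension~$m$ are open; on such a piece the \emph{constant rank theorem} applies locally around every point~$x_0$: by the Banach fixed point theorem (stated earlier in the paper), one obtains a definable local inverse function theorem for $\Cone$ definable maps, and a standard application gives a neighbourhood $V$ of~$x_0$ together with definable $\Cone$-diffeomorphisms $\phi,\psi$ satisfying $\psi \circ f \circ \phi^{-1} = \Pi^m_k$ on~$\phi(V)$. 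Hence $f(V)$ is contained in a definable $k$-dimensional submanifold and $\dim f(V) \leq k \leq d$.

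Finally, use \DSF to produce a definable covering $\set{V_i : i \in I}$ of the open piece by such coordinate neighbourhoods, with $I$ at most pseudo-enumerable (indexing the $V_i$ by, \eg, their centres taken from a definably selected discrete grid). Conclude by Thm.~\ref{thm:imin}(\ref{en:i-countable-n}) (and~(\ref{en:i-increasing}) after writing the union as an increasing family) together with dimension additivity that the image has dimension $\leq d$. The main obstacle is establishing the full constant rank theorem with the requisite uniformity: one needs the inverse function theorem to deliver a \emph{definably} varying local inverse, and a definable selection of coordinate charts, both of which hinge on \DSF rather than purely on \iminimality.
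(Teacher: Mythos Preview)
Your proposal follows the classical analytic route (constant rank theorem, local straightening, covering by coordinate patches), and while something like it could perhaps be pushed through, it is far heavier than necessary and leaves real gaps: you would have to establish a definable inverse function theorem and a definable constant rank theorem in this generality, and your pseudo-enumerable covering argument is only a sketch (the neighbourhoods have varying, non-uniform sizes, so indexing by a discrete grid does not obviously work).

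The paper's proof bypasses all of this machinery in a few lines. Suppose, for contradiction, that $\dim \Sigma_f(d) > d$; then for some coordinate projection $\pi: \K^n \to \K^{d+1}$ the image $\pi(\Sigma_f(d))$ contains a non-empty open box~$B$. Since $\Sigma_f(d) = f(\Lambda_f(d))$, \DSF yields a definable section $g: B \to \Lambda_f(d)$ with $\pi \circ f \circ g = \id_B$. By Lemma~\ref{lem:imin-C1} (\iminimality), after shrinking~$B$ one may assume $g$ is~$\Cone$. The chain rule then gives $D\pi \cdot Df(g(x)) \cdot Dg(x) = \idmatrix_{d+1}$ for all $x \in B$, forcing $\rkM Df(g(x)) \geq d+1$, which contradicts $g(x) \in \Lambda_f(d)$.

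The idea you are missing is that one does not need to straighten out~$f$ locally at all; it suffices to produce a single $\Cone$ right inverse to $\pi \circ f$ on a $(d+1)$-dimensional box and read off a lower bound on $\rkM Df$ from the chain rule. \DSF supplies the section, and \iminimality makes it smooth on a subbox. This replaces your entire induction, the constant rank theorem, and the covering argument by one differentiation.
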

\begin{proof}
If $d \geq n$, the conclusion is trivial.
Let $d < n$, and assume, for contradiction, that
$\Pi^n_d\Pa{\Sigma_f(d)}$ contains a non-empty open box~$B$.
By \DSF, there exists $g: B \to \Lambda_f(d)$, such that
$\Pi^n_d \circ f \circ g = \identity_B$.
Since $\K$ is \iminimal, we can apply Lemma~\ref{lem:imin-C1}, and,
by shrinking $B$ if necessary, we can assume that $g$ is~$\Cone$.
Hence, by differentiation, we have that, for every $x \in B$,
$\Pi^n_d(f(g(x))) \cdot (Df)(g(x)) \cdot Dg(x) = \idmatrix_d$.
Therefore, $\rkM\Pa{(Df)(g(x))} \geq d$, a contradiction.
\end{proof}

\begin{lemma}\label{lem:imin-dimension-fiber}
Assume that $\K$ is \iminimal with \DSF.
Let $l, m, n, d$ be natural numbers.
Let $X \subseteq \K^n$ be definable, such that $\dim X \geq m + d$, 
$Y \subseteq \K^l$ be definable of dimension~$m$,
and $f: X \to Y$ definable, 
Then there exists $a \in Y$ such that  $\dim(f^{-1}(a)) \geq d$.
\end{lemma}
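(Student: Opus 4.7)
The plan is to argue by contradiction: assume $\dim f^{-1}(a) < d$ for every $a \in Y$, and derive a contradiction via Sard's Lemma. First I would set $M := \dim X \geq m+d$ and pick a coordinate projection $\pi : \K^n \to \K^M$ such that $\pi(X)$ has non-empty interior $U_0 \subseteq \K^M$. Using \DSF, I obtain a definable section $\sigma : U_0 \to X$ of $\pi|_X$. Setting $g := f \circ \sigma : U_0 \to Y$, one has $g^{-1}(a) = \sigma^{-1}\Pa{f^{-1}(a)}$, and since $\sigma$ is injective with $\pi \circ \sigma = \id_{U_0}$, the projection $\pi$ restricts to a bijection from $f^{-1}(a) \cap \sigma(U_0)$ onto $g^{-1}(a)$; as coordinate projections cannot increase dimension, $\dim g^{-1}(a) \leq \dim f^{-1}(a) < d$ for every $a \in Y$. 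A further coordinate projection together with a second application of \DSF trims $U_0$ down to an open $U \subseteq \K^{m+d}$ without enlarging any fibre, so I may assume $M = m+d$.

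Second, Lemma~\ref{lem:imin-C1} supplies a definable closed nowhere-dense $D \subseteq U$ outside of which $g$ is $C^1$; since $U \setminus D$ is open and dense in $U$ and restricting $g$ only shrinks its fibres, I may assume $g$ is $C^1$ on all of~$U$. I then claim $\rkM\Pa{Dg(x)} \leq m$ for every $x \in U$: otherwise some $x_0 \in U$ has $\rkM\Pa{Dg(x_0)} = r \geq m+1$, and the implicit function theorem—available in our setting from the Banach fixed point lemma proved earlier by the classical contraction-mapping argument—makes $g$ a submersion from a neighbourhood of $x_0$ onto an $r$-dimensional open subset of $g(U) \subseteq Y$, forcing $\dim Y \geq r > m$, absurd. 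Now fix any $x_0 \in U$ and let $r := \rkM\Pa{Dg(x_0)} \leq m$; the implicit function theorem provides a neighbourhood of $x_0$ inside $g^{-1}(g(x_0))$ of dimension $(m+d) - r \geq d$, contradicting the standing assumption that every fibre of $g$ has dimension less than~$d$.

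The main obstacle is establishing an implicit function theorem in the \iminimal{} with \DSF{} setting; the Banach fixed point lemma yields IFT by the classical iteration argument, since $\K$ expands an ordered field so that matrix inversion over non-zero determinants is available, but one could alternatively bypass IFT by combining Sard's Lemma with Theorem~\ref{thm:imin}(\ref{en:i-dim-fiber-2}) applied to the graph of~$g \subseteq \K^{m+d} \times \K^l$ to propagate the fibre dimension estimate through the rank stratification. The \DSF{} hypothesis is essential in two places: in extracting the sections $\sigma$ used to reduce $f$ to a map from an open subset of $\K^{m+d}$ into~$Y$, and in the proof of Sard's Lemma itself, which is proved immediately before this lemma under precisely the same hypotheses.
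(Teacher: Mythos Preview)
Your overall strategy differs from the paper's in an interesting way. The paper proceeds by induction on $(l,m,n,d)$: it first reduces to $n=m+d$ via a section (as you do), then reduces to $l=m$ via the Partition Lemma, and finally, in the base case $f:\K^{m+d}\to\K^m$ with $f$ of class~$\Cone$, applies Sard's Lemma together with the implicit function theorem to show that if every fibre has dimension $<d$ then the image has dimension $<m$, closing with the inductive hypothesis on~$m$. You bypass both the reduction of $l$ and the induction by working directly with the rank of~$Dg$: bounding it above by $m$ from the image-dimension constraint, and then extracting a large fibre via the constant rank theorem. This is more direct and avoids the recursive appeal to the statement itself.

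There is, however, a genuine gap in your final step. You write ``fix any $x_0\in U$ and let $r:=\rkM\Pa{Dg(x_0)}\leq m$; the implicit function theorem provides a neighbourhood of $x_0$ inside $g^{-1}(g(x_0))$ of dimension $(m+d)-r$''. This is false as stated: the constant rank theorem requires the rank to be \emph{locally constant} near~$x_0$, not merely equal to $r$ at the single point. For a concrete failure, take $g(x,y)=(x^2,x^3):\K^2\to\K^2$; at the origin the rank is~$0$, yet the fibre $g^{-1}(0,0)=\{0\}\times\K$ has dimension~$1$, not $2-0=2$. The repair is straightforward: since the rank of $Dg$ is lower semicontinuous (the set where some $k\times k$ minor is nonzero is open) and you have already shown it is bounded above by~$m$, choose $x_0$ at which the rank attains its \emph{maximum} value $r_0\leq m$ over~$U$. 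Then the rank is constant equal to $r_0$ on a neighbourhood of~$x_0$, the constant rank theorem applies there, and the fibre through $x_0$ is locally of dimension $(m+d)-r_0\geq d$, giving the desired contradiction. With this correction your argument goes through.
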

\begin{proof}
By induction on $(l,m,n,d)$.
\begin{itemize}
\item 
If $m + d < n$, then \wloG $\pi(X)$ contains a non-empty open box~$B$,
where $\pi := \Pi^n_{m + d}$.  By \DSF, there exists a definable function 
$g: B \to X$ such that $g \circ \pi = \id_B$.
Let $\tilde f := f \circ g: B \to Y$.
By induction on~$n$, there exists $a \in Y$ such that
$\dim(\tilde f^{-1}(a)) \geq d$.
Therefore, \wloG $m + d = n$ and $X = \K^n$.
Moreover, by Theorem~\ref{thm:imin}(\ref{en:i-continuous}), \wloG
$f$ is continuous.
\item
If $m = 0$, since $f$ is continuous, $f(\K^n)$ is a connected subset of the
$0$-dimensional set~$Y$, and therefore $f(\K^n) = \set{a}$ for some
$a \in Y$.
\item
If $d = 0$, there si nothing to prove.
Thus, \wloG $m > 0$ and $d > 0$.
\item
If $l > m$, then, by the Partition Lemma, \wloG $Y$ is $\mu$-good,
where $\mu := \Pi^{l}_m$.
Let $g := \mu \circ f: \K^n \to \K^m$.
By induction on~$l$, there exists $b \in \K^m$ such that
$\dim(\tilde g^{-1}(b)) \geq d$.
Moreover, since $Y$ is $\mu$-good, $\dim(Y_b) = 0$.
Let $\tilde X := \rest {g^{-1}(b)}$ and
$\tilde := f \rest {g^{-1}(b)} : \tilde X \to Y_{[b]}$.
By the case $m = 0$, there exists $a \in Y_{[b]}$, such that
$\dim(\tilde f^{-1}(a)) \geq d$.
Hence, \wloG $l = m$ and $Y = \K^m$.
\item
Hence, we are reduced to the situation $f: \K^n \to \K^m$, with $n = m + d$.
By Lemma~\ref{lem:imin-C1}, \wloG $f$ is $\Cone$.
Let $Y' := f(\K^n)$ and $\Sigma_f \subseteq Y$ 
be the set of singular values for~$f$.
If, for contradiction, $\dim(f^{-1}(a)) < d$ for every $a \in \K^m$,
then $Y' = \Sigma_f$.
Thus by Sard's Lemma, $\dim Y' < m$.
Therefore, by induction on~$m$, there exists $a \in Y'$ such that
$\dim(f^{-1})(a) \geq d$, a contradiction.
\qedhere
\end{itemize}
\end{proof}

\begin{lemma}\label{lem:dim-function}
Let $\K$ be \iminimal with \DSF, $X \subseteq \K^n$, $Y \subseteq \K^m$, and
$f: X \to Y$ be definable.
Then:
\begin{enumerate}
\item if $f$ is surjective, then $\dim Y \leq \dim X$;
\item if $f$ is injective, then $\dim Y \geq \dim X$;
\item if $f$ is bijective, then $\dim Y = \dim X$.
\end{enumerate}
\end{lemma}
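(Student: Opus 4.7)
The plan is to derive all three statements from Lemma~\ref{lem:imin-dimension-fiber}, which is the only non-trivial ingredient needed. I would first dispatch~(2) essentially by inspection of that lemma, then reduce~(1) to~(2) by using \DSF to produce a definable section, and finally obtain~(3) as their immediate combination.

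For~(2), given an injective $f : X \to Y$, set $m := \dim Y$ (the cases where $X$ or $Y$ is empty are trivial, so assume both non-empty). Suppose for contradiction that $\dim X \geq m + 1$. Then Lemma~\ref{lem:imin-dimension-fiber}, applied with $d = 1$, produces some $a \in Y$ with $\dim(f^{-1}(a)) \geq 1$. Since $f$ is injective, each fibre $f^{-1}(a)$ is either empty or a singleton, hence of dimension at most $0$, which is the desired contradiction.

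For~(1), I would reduce to~(2) via a definable section. Consider the definable set $A := \set{(y,x) \in Y \times X : f(x) = y} \subseteq \K^{m+n}$. Surjectivity of $f$ gives $A_y \neq \emptyset$ for every $y \in Y$, so by \DSF there exists a definable $s : Y \to X$ with $f \circ s = \id_Y$. Since $s$ has $f$ as a left inverse it is injective, and part~(2) applied to $s$ yields $\dim X \geq \dim Y$. Part~(3) is then immediate by combining (1) and~(2).

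The conceptual content all lies in Lemma~\ref{lem:imin-dimension-fiber}; once that is in hand, the arguments above are essentially formal. The only substantive use of the hypotheses beyond that lemma is the appeal to \DSF in part~(1), so no serious obstacle is expected.
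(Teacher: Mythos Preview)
Your proof is correct and uses the same two ingredients as the paper's proof (Lemma~\ref{lem:imin-dimension-fiber} together with \DSF to produce a definable section); the only difference is that you establish~(2) first and deduce~(1) from it, whereas the paper does the reverse. Both organizations are equivalent, though yours has the minor advantage that your proof of~(2) does not invoke \DSF.
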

\begin{proof}
1)
Assume, for contradiction, that $k := \dim X < d := \dim Y$.
By \DSF, there exists a definable function $g : Y \to X$,
such that $f \circ g = \id_Y$.
By Lemma~\ref{lem:imin-dimension-fiber}, there exists
$a \in X$ such that $\dim(g^{-1}(a)) \geq d - k \geq 1$, contradicting the
fact that $g$ is injective.

2) follows from 1) and definable choice, and 3) follows from 1) and 2).
\end{proof}


\section{D-minimal structures}\label{sec:dmin}
\begin{definizione}
$\K$ is d-minimal if for every $\K' \elem \K$, every definable subset of~$\K$
is the union of an open set and finitely many discrete sets.
\end{definizione}

\begin{remark}\label{rem:discrete}
Let $A \subseteq \K^n$ be definable.
If $A$ is a union of $N$ discrete sets, then $A$ is a union of $N$ definable
and discrete sets.%
\end{remark}

\begin{remark}
Every d-minimal structure is \ipminimal.
\end{remark}

\begin{definizione}
Let $d \leq n \in \Nat$, $\Pi(n,d)$ be the set of projections form $\K^n$ onto
$d$-dimensional coordinate spaces, and $\mu \in \Pi(n,d)$.  For every $p \in
\Nat$, let $\reg^p_\mu(A)$ and $\reg^p(A)$ be defined as in~\cite[Def.~8.4 and
\S3.4]{miller05}.  As in the case when $\K$ is an expansion of~$\Rbar$,
$\reg^p_\mu(A)$ is definable, open in $A$, and a $\Cp$-submanifold of $\K^n$
of dimension~$d$.

For every $A \subseteq \K^n$, let $\isol(A)$ be the set of isolated points
of~$A$. Notice that $\isol(A)$ is discrete.
\end{definizione}

\begin{lemma}\label{lem:regular}
Suppose that $\K$ is \iminimal, and let $A \subseteq \K^{m + n}$ be definable,
such that $B := \set{x \in \K^m: \isol(A_x) \neq \emptyset}$ has interior.
Then, for every $p \in \Nat$, $\reg^p_\pi(A) \neq \emptyset$, 
where $\pi := \Pi^{n + m}_m$.
\end{lemma}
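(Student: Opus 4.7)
The plan is, after successive reductions, to exhibit on a non-empty open $V \subseteq \inter B$ a definable $\Cp$ function $f\colon V \to \K^n$ whose graph is open in $A$; then every point of $\Gamma(f)$ lies in $\reg^p_\pi(A)$, proving the claim.

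For the reduction, replace $B$ by an open subset. For each $r > 0$ set
\[
C_r := \set{(x,y) \in A : \abs{y} \leq r \et A_x \cap B(y; 1/r) = \set{y}}.
\]
Then $(C_r)_{r>0}$ is a definable increasing family, and $\bigcup_r C_r$ is exactly the set of bounded isolated points of fibres of~$A$, so $B \subseteq \bigcup_r \pi(C_r)$. By Thm.~\ref{thm:imin}(\ref{en:i-increasing}), if every $\pi(C_r)$ had dimension~$<m$ their union would too, contradicting $\dim B = m$; hence there is $r_0 > 0$ with $\dim \pi(C_{r_0}) = m$, which, since $\pi(C_{r_0}) \subseteq \K^m$, forces $\inter \pi(C_{r_0}) \neq \emptyset$. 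Shrink $B$ to lie inside this interior.

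Next, cover $\clB(0;r_0) \subseteq \K^n$ by finitely many open balls $W_1,\dotsc,W_N$ each of radius less than $1/(2r_0)$, and put $B_i := \set{x \in B : C_{r_0,x} \cap W_i \neq \emptyset}$. Then $B = \bigcup_i B_i$, and since in an \iminimal structure a definable set with empty interior is nowhere dense (Thm.~\ref{thm:imin}(\ref{en:i-imin-n})), some $B_i$ has non-empty interior; shrink $B$ inside it and fix that index~$i$. The radius condition on $W_i$ combined with the separation encoded in $C_{r_0}$ makes $C_{r_0,x} \cap W_i$ a singleton for every $x \in B$: if $y, y' \in C_{r_0,x} \cap W_i$, then $\abs{y - y'} < 1/r_0$, so $y' \in B(y; 1/r_0) \cap A_x = \set{y}$. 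Let $f(x)$ denote this unique point; the resulting $f\colon B \to W_i$ is definable with $\Gamma(f) \subseteq A$.

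By Lemma~\ref{lem:imin-C1} applied componentwise, $f$ is $\Cp$ on some non-empty definable open $V \subseteq B$. Fix $x_0 \in V$ and, by continuity of $f$ at $x_0$, pick $\epsilon > 0$ with $B(x_0;\epsilon) \subseteq V$ and $f(B(x_0;\epsilon)) \subseteq B(f(x_0);1/(2r_0))$. On the open box $U := B(x_0;\epsilon) \times B(f(x_0);1/(2r_0))$, any $(x,y) \in A \cap U$ satisfies
\[
\abs{y - f(x)} \leq \abs{y - f(x_0)} + \abs{f(x_0) - f(x)} < 1/(2r_0) + 1/(2r_0) = 1/r_0,
\]
so $y \in B(f(x);1/r_0) \cap A_x = \set{f(x)}$ by the defining property of $C_{r_0}$. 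Thus $A \cap U = \Gamma(f \rest B(x_0;\epsilon))$ is a $\Cp$ graph over $\pi(U)$, witnessing $(x_0, f(x_0)) \in \reg^p_\pi(A)$. The main obstacle is single-valuedness of the selector, which forces the cover to have diameter below $1/r_0$; promoting the selector from merely definable to $\Cp$ then requires Lemma~\ref{lem:imin-C1}, the place where \iminimality (rather than only Baireness) is used essentially.
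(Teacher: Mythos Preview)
Your argument fails at the step where you ``cover $\clB(0;r_0) \subseteq \K^n$ by finitely many open balls $W_1,\dotsc,W_N$ each of radius less than $1/(2r_0)$''. Such a cover needs on the order of $(r_0^2)^n$ balls and hence exists only when $r_0$ is bounded by a natural number; but the lemma is stated for arbitrary definably complete \iminimal expansions of ordered fields, which need not be Archimedean, and nothing in your construction forces $r_0$ to be finite. Concretely: with $n=1$ and $A_x = \{0,\varepsilon\}$ for all $x$, where $\varepsilon>0$ is infinitesimal in~$\K$, the isolation condition $A_x \cap B(y;1/r) = \{y\}$ forces $1/r \leq \varepsilon$, so $C_r = \emptyset$ unless $r \geq 1/\varepsilon$; you would then have to cover $[-1/\varepsilon, 1/\varepsilon]$ by finitely many intervals of length below~$\varepsilon$, which is impossible.

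The paper avoids this by producing the selector through Definable Choice (Lemma~\ref{lem:skolem}): each fibre $\isol(A_x)$ is discrete, hence locally closed and constructible, so a definable section $f$ exists with no compactness needed. It then controls the local picture by setting $f^{\pm}(x)$ to be the nearest points of $A_x$ on either side of $f(x)$ and making $f,f^+,f^-$ simultaneously $\Cp$ via Lemma~\ref{lem:imin-C1}. Your route can be repaired in the same spirit: once $\pi(C_{r_0})$ has interior, each $C_{r_0,x}$ is bounded and uniformly $1/r_0$-separated, hence \dcompact; take $f(x) := \lexmin C_{r_0,x}$, skip the cover entirely, and proceed directly to your smoothing and graph-isolation steps, which are correct as written.
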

\begin{proof}
Fix $p \in \Nat$; let $V \subseteq B$ be a non-empty open box, and
$C := \bigsqcup_{v \in V} \Pa{\set v \times \isol(A_v)}$.
Notice that $V \subseteq \pi(C)$.
By Definable Choice, there exists a definable function $f : V \to \K^n$ such
that $\pair{x, f(x)} \in C$ for every $x \in V$.
For every $x \in V$, define
\[\begin{aligned}
f^+(x) &:= \min \Pa{f(x) + 1, \inf\set{y \in A_x: y > f(x)}},\\
f^-(x) &:= \max \Pa{f(x) - 1, \sup\set{y \in A_x: y < f(x)}}.
\end{aligned}\]
Notice that $f^- < f < f^+$ on all~$V$.
By \iminimality, after shrinking~$V$, we can assume that $f$, $f^+$ and $f^-$
are $\Cp$ on~$V$.
It is easy to see that $\Gamma(f) \subseteq \reg^p_\pi(A)$.
\end{proof}

\begin{lemma}\label{lem:i-regular-2}
Suppose that every 0-dimensional definable subset of $\K$ has an isolated
point, and let $p \in \Nat$. Then:
\begin{enumerate}
\item $\K$ is \iminimal.
\item Let $\Afam$ be a finite collection of definable subsets of~$\K^n$.
Then, there is a $\Pi$-good partition  $\Part$ of~$\Afam$, compatible
with~$\Afam$, such that $P \setminus \reg^0_\mu(P)$ is
nowhere dense in $P$ for every projection $\mu$ and every $P \in \Part$ such
that $P$ is $\mu$-good.
\item $A \setminus \reg^p(A)$ is nowhere dense in~$A$, for every definable
set~$A$.
\end{enumerate}
\end{lemma}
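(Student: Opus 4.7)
For (1), I would argue directly. Let $X \subseteq \K$ be definable with empty interior; the goal is $\cl X$ has empty interior. If instead $\cl X$ contained a non-empty open interval $I$, then $Y := X \cap I$ would be a $0$-dimensional definable subset of $\K$ that is dense in $I$, so $Y$ has no isolated point, contradicting the standing hypothesis. Hence $\K$ is \iminimal, and in particular all the conclusions of Theorem~\ref{thm:imin} and Lemma~\ref{lem:imin-C1} become available.

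For (2), I would apply the Partition Lemma (proved in the \iminimal subsection) to $\Afam$ to obtain a $\Pi$-good partition $\Part$ of $\K^n$ compatible with $\Afam$, and then verify that the additional regularity condition holds automatically for each $\mu$-good $P \in \Part$. Since $\reg^0_\mu(P)$ is open in $P$, the set $P \setminus \reg^0_\mu(P)$ is closed in $P$, so ``nowhere dense in $P$'' reduces to ``empty interior in $P$''. Suppose for contradiction that some relatively open $V = P \cap W \neq \emptyset$ (with $W \subseteq \K^n$ open) is disjoint from $\reg^0_\mu(P)$. Shrinking $W$ if necessary, $V$ inherits $\mu$-goodness from $P$; in particular every fiber $V_x$ over $x \in \mu(V)$ is a non-empty definable $0$-dimensional subset of a coordinate space, which by part (1) together with Theorem~\ref{thm:imin}(\ref{en:i-imin-n}) is nowhere dense, hence discrete, so every point of $V_x$ is isolated. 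Thus the set $B$ of Lemma~\ref{lem:regular} applied to $V$ equals the open set $\mu(V)$, and the lemma yields $\reg^0_\mu(V) \neq \emptyset$; but by the definitions, $\reg^0_\mu(V) \subseteq \reg^0_\mu(P) \cap V = \emptyset$, a contradiction.

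For (3), I would proceed by induction on $d := \dim A$. Apply (2) to $\{A\}$ to get a $\Pi$-good partition $\Part$ compatible with $A$, and write $A = \bigsqcup_i P_i$ with each $P_i$ a $\mu_i$-good cell. The cells $P_i$ of dimension $d$ are relatively open in $A$; set $A'$ to be the union of the remaining cells, so $\dim A' < d$, and the inductive hypothesis gives that $A' \setminus \reg^p(A')$ is nowhere dense in $A'$. For a top-dimensional cell $P_i$, part (2) supplies a nowhere-dense-in-$P_i$ set off which $P_i$ is locally the continuous graph of a definable function $f$ over an open subset of $\mu_i(P_i)$; by Lemma~\ref{lem:imin-C1} applied to $f$, the locus where $f$ fails to be $\Cp$ is nowhere dense in the domain, so $P_i \setminus \reg^p_{\mu_i}(P_i)$ is nowhere dense in $P_i$. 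Since $\reg^p_{\mu_i}(P_i) \subseteq \reg^p(P_i) \subseteq \reg^p(A)$ (as $P_i$ is open in $A$), and since a finite union of sets, each nowhere dense in its piece of a partition of $A$ into sets that are either relatively open or of strictly smaller dimension, is nowhere dense in $A$, the conclusion follows.

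The main obstacle is the bookkeeping in (2): one must check that $\mu$-goodness really passes to relatively open subsets $V$ of a $\mu$-good $P$ in a way that makes the hypothesis of Lemma~\ref{lem:regular} available for $V$, and that $\reg^0_\mu$ behaves well under such restriction so that $\reg^0_\mu(V) \subseteq \reg^0_\mu(P)$. Everything else is routine once (1) (and hence all of Theorem~\ref{thm:imin} and Lemma~\ref{lem:imin-C1}) is in place.
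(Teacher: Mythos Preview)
Your argument for (1) is correct and is exactly the paper's proof. Your overall strategy for (2) and (3) also matches the paper's, which simply cites \cite[Prop.~8.4]{miller05} and names Thm.~\ref{thm:imin} and Lemma~\ref{lem:regular} as the two ingredients.

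There is, however, a genuine error in your argument for (2): the implication ``nowhere dense, hence discrete'' is false. A $0$-dimensional definable set is nowhere dense by \iminimality, but it need not be discrete --- in a \dminimal structure that is not locally o-minimal, such sets are in general only finite unions of discrete sets, and the present lemma's hypothesis is weaker still. Fortunately you do not need discreteness: to invoke Lemma~\ref{lem:regular} you only need that each non-empty fiber $V_x$ has an \emph{isolated} point, and this is precisely the standing hypothesis of the lemma when the fiber lies in~$\K$. For fibers in $\K^{k}$ with $k>1$, an easy induction on~$k$ (project onto the first coordinate, use the hypothesis to pick an isolated point $a$ of the $0$-dimensional image, then recurse on the fiber over~$a$) shows that every non-empty $0$-dimensional definable subset of $\K^{k}$ also has an isolated point. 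With this correction your argument for (2), and then (3), goes through as you outlined.
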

\begin{proof}
(1). Let $X \subseteq \K$ be definable and with empty interior.
Suppose, for contradiction, that $\cl X$ contains a non-empty open
interval~$I$, and let $Y := X \cap I$.
Notice that $\dim Y = 0$ and $Y$ is dense in $I$, and therefore it has no
isolated points.

(2) and (3) have the same proof as ~\cite[Prop.~8.4]{miller05}
(using Thm.~\ref{thm:imin} and Lemma~\ref{lem:regular}).
\end{proof}

\begin{lemma}
The following are equivalent:
\begin{enumerate}
\item $\K$ is d-minimal;
\item for every $\K' \elem \K$, every subset of~$\K$
is the union of a definable open set and finitely many definable discrete sets;
\item for every $m \in \Nat$ and every definable $A \subseteq \K^{m+1}$ there
exists $N \in \Nat$ such that, for all $x \in \K^m$, either $A_x$ has
interior or is a union of $N$ definable discrete sets;
\item for every $m, n \in \Nat$ and definable $A \subseteq \K^{n + m}$ there
exists $N \in \Nat$ such that for every $x \in \K^m$, either $\dim A_x > 0$,
or $A_x$ is a union of $N$ definable discrete sets.
\end{enumerate}
\end{lemma}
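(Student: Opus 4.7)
The plan is to handle the four easy-ish implications first and then the two compactness arguments. $(2) \Rightarrow (1)$ is immediate; $(1) \Rightarrow (2)$ uses Remark~\ref{rem:discrete} to replace the discrete pieces by definable discrete ones; $(4) \Rightarrow (3)$ is the case $n = 1$; and $(3) \Rightarrow (2)$ follows by elementarity, since the uniform bound in $(3)$ applied to the formula-defined family underlying any definable subset of $\K' \succeq \K$ transfers to $\K'$, decomposing its definable empty-interior subsets into finitely many definable discrete pieces. The heart of the proof is then the two implications $(1) \Rightarrow (3)$ and $(3) \Rightarrow (4)$.

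For $(1) \Rightarrow (3)$ I would argue by compactness. Suppose $(3)$ failed: there would exist a definable family $A \subseteq \K^{1 + m}$ such that, for every $N$, some fiber $A_{\bar c_N}$ has empty interior yet is not a union of $N$ definable discrete sets. The partial type $p(\bar y)$ over $\K$ expressing ``$A_{\bar y}$ has empty interior'' together with, for each $N \in \Nat$ and each $N$-tuple of formulae $\phi_1, \dots, \phi_N$, the negation of
\[
\exists \bar z_1 \dots \bar z_N\ \Bigl(A_{\bar y} = \bigcup_{i = 1}^N \phi_i(\K, \bar z_i) \et \bigwedge_{i = 1}^N \phi_i(\K, \bar z_i) \text{ is discrete}\Bigr)
\]
would be finitely satisfiable by hypothesis. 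Realising $p$ in a sufficiently saturated elementary extension $\K' \succeq \K$ produces $\bar c \in \K'^m$ with $A_{\bar c} \subseteq \K'$ of empty interior yet not a finite union of definable discrete sets, contradicting $(1)$ applied to $\K'$ combined with Remark~\ref{rem:discrete}.

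For $(3) \Rightarrow (4)$ I would induct on $n$, the base $n = 1$ being $(3)$. Given a definable family $A \subseteq \K^{n + m}$ and $\bar y \in \K^m$ with $\dim A_{\bar y} = 0$, form the projection $B \subseteq \K^{(n - 1) + m}$ onto the first $n - 1$ position coordinates (together with the parameter block) and the one-variable slice family $C \subseteq \K^{1 + (n - 1) + m}$ whose fiber at $\pair{\bar x', \bar y}$ is the section of $A_{\bar y}$ at $\bar x'$. Both $B_{\bar y}$ and each slice $C_{\pair{\bar x', \bar y}}$ are $0$-dimensional: the inductive hypothesis yields a uniform bound $N_1$ on the number of definable discrete pieces needed for $B_{\bar y}$, and $(3)$ applied to $C$ gives a uniform bound $N_2$ for each slice. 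Assembling presents $A_{\bar y}$ as a union of $N_1 N_2$ definable discrete subsets of $\K^n$, with the bound depending only on the defining formula of $A$.

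The main obstacle is making the assembled decomposition genuinely \emph{definable}, since the $N_2$-piece decomposition of each slice depends on the slice parameter $\bar x' \in B_{\bar y}$. Coherent choice of the defining parameters of the slice-pieces across the base is supplied by definable Skolem functions, available because every \dminimal structure is \ipminimal and hence enjoys \DSF by Lemma~\ref{lem:skolem}; with \DSF in hand, the slice decompositions patch into one uniformly definable in $\pair{\bar x', \bar y}$, closing the induction step.
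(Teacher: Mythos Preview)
Your overall structure matches the paper's: $(1)\Leftrightarrow(2)$ via Remark~\ref{rem:discrete}, $(2)\Leftrightarrow(3)$ by compactness/elementarity, $(4)\Rightarrow(3)$ trivially, and $(3)\Rightarrow(4)$ by induction on~$n$, slicing off one coordinate and combining a uniform bound on the base with a uniform bound on the fibres. The paper does the same (with the roles of base and fibre swapped: it takes a $1$-dimensional base and $(n-1)$-dimensional slices, while you take an $(n-1)$-dimensional base and $1$-dimensional slices; this is immaterial).

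The one point worth flagging is your ``main obstacle''. You invoke \DSF to choose the defining parameters of the slice decompositions coherently across the base. This detour is unnecessary, and as written it is not quite complete: \DSF lets you choose \emph{elements} definably, but a decomposition into discrete pieces is not a priori parametrised by a fixed formula, so it is not clear what definable family you would apply \DSF to. The clean fix---which is exactly what underlies Remark~\ref{rem:discrete}, and what the paper uses implicitly---is that ``$X$ is a union of $N$ discrete sets'' is equivalent to $\CBd{X}{N}=\emptyset$, and the Cantor--Bendixson stratification
\[
X = \bigcup_{j=0}^{N-1}\bigl(\CBd{X}{j}\setminus\CBd{X}{j+1}\bigr)
\]
is a \emph{canonical} decomposition into discrete pieces, definable uniformly from~$X$. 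Applying this both to the base $B_{\bar y}$ and to each slice $C_{(\bar x',\bar y)}$ gives pieces definable uniformly in $(\bar x',\bar y)$, with no choice needed; the product of a discrete base piece with discrete fibre pieces is then discrete in~$\K^n$ by an easy direct check. So your induction closes without \DSF (and without first cycling back through $(3)\Rightarrow(1)$ to know $\K$ is \dminimal).
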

\begin{proof}
($1 \Leftrightarrow 2$) follows from Remark~\ref{rem:discrete}.


$(2 \Leftrightarrow 3)$ is a routine compactness argument.

(3) is the case $n = 1$ of (4).

$(3 \Rightarrow 4)$.
Induction on~$n$.
The case $n = 1$ is the hypothesis.
Let $n > 1$, and assume we have already proved $(3)$ for each $n'< n$.
Let $C := \set{x \in \K^m: \dim(A_x) = 0}$.
Let $x \in C$.
For each $y \in K$, we have $\dim(A_{(x,y)}) = 0$, and therefore $A_{(x,y)}$
is a union of $N$ discrete definable sets (for some $N$ independent from $x$
and~$y$). 
Moreover, for each $x \in C$, the set
$D(x) := \set{y \in \K: A_{(x,y)} \neq  \emptyset}$ has empty interior,
because $\K$ is \iminimal. 
Thus, for every $x \in C$, $D(x)$~is a union of $M$ definable discrete sets
(for some $M$ independent from~$x$).
Hence, for every $x \in C$, $A_x$~is a union of $NM$ discrete sets, which can
be taken definable.
\end{proof}

\begin{definizione}[Cantor-Bendixson Rank]
Let $T$ be a Hausdorff topological space.
For every $X \subseteq T$, and every ordinal $\alpha$,
let $\CBd X 0 := X$,
$\CBd X {\alpha + 1}$ be the set of non-isolated points of~$\CBd X {\alpha}$,
and $\CBd X {\alpha} := \bigcap_{\beta < \alpha} \CBd X {\alpha}$ if $\alpha$
is a limit ordinal.
Let $\rkCB(X)$, the Cantor-Bendixson rank of~$X$,
be the smallest ordinal $\alpha$ such that
$\CBd X {\alpha} = \emptyset$ (or $\rkCB(X) = + \infty$ if such $\alpha$ does
not exist).
For every $a \in X$, let $\rkCB_X(a)$ be the supremum of ordinals $\alpha$ such
that $a \in \CBd X {\alpha}$.
\end{definizione}
Notice that each $\CBd X {\alpha} \setminus \CBd X {\alpha + 1}$ is discrete,
and that $X$ is a finite union of discrete sets iff $\rkCB(X) < \omega$.
Moreover,  $\CBd X {\rkCB(X)} = \emptyset$ (if $\rkCB (X) < + \infty)$.
Besides, $\rkCB(X) = 0$ iff $X$ is empty, $\rkCB(X) = 1$ iff $X$ is discrete
(and non-empty).

$\K$~is \aminimal iff every definable subset of $\K$ of dimension $0$
has rank at most 1 (to be proved).

\begin{remark}
If $X$ and $X'$ are subsets of a Hausdorff topological space~$T$, 
then $\rkCB(X \cup X') \leq \rkCB(X) \oplus \rkCB(X')$, where $\oplus$ is the
Cantor sum of ordinals.
Hence, a set $X$ is a union of $n$ discrete sets iff $\rkCB(X) \leq n$.
\end{remark}
\begin{proof}
By induction on $\alpha := \rkCB(X)$ and $\beta :=\rkCB(Y)$.
The basic case when $X$ is a singleton is obvious.
For contradiction, let $a \in \CBd{(X \cup Y)} {\alpha + \beta}$.
\Wlog, $a \in X$; let $\gamma := \rkCB_X(a) < \alpha$.

Let $V$ be an open neighbourhood of $a$ such that
$\CBd X{\gamma} \cap V = \set a$, and $X' := X \cap V \setminus \set a$.
Notice that $\alpha' := \rkCB(X') \leq \gamma < \alpha$.
Hence, by inductive hypothesis, $\rkCB(X' \cup Y) \leq \alpha' \oplus \beta <
\alpha \oplus \beta$.
Thus, $\rkCB(X' \cup Y \cup \set a) \leq \alpha' \oplus \beta \oplus 1 \leq
\alpha \oplus \beta$ .
Thus, $\rkCB_{X \cup Y}(a) < \alpha \oplus \beta$ for every $a \in X \cup Y$,
and we are done.
\end{proof}

\begin{corollary}
If $\K$ d-minimal iff for every $n \in \Nat$ and every definable
$X \subseteq \K^{n+1}$, there exists $d \in \Nat$ such that,
for all $a \in \K^n$, $\rkCB \Pa{X_a \setminus \interior(X_a)} \leq d$.
\end{corollary}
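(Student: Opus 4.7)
The plan is to prove this corollary as a direct translation between two equivalent characterizations of ``being a uniformly bounded union of discrete sets'', using the preceding Remark ($\rkCB(X) \leq n$ iff $X$ is a union of $n$ discrete sets) as the bridge, combined with the fiberwise characterization of d-minimality supplied by the lemma just proved.

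For the forward direction, assuming $\K$ is d-minimal, I would take a definable $X \subseteq \K^{n+1}$ and form the definable set
\[
A := \set{(a,y) \in \K^{n+1}: y \in X_a \et \forall r > 0\ (y-r, y+r) \nsubseteq X_a},
\]
so that $A_a = X_a \setminus \interior(X_a)$ for each $a \in \K^n$. Each fiber has empty interior in $\K$, so (invoking d-minimality $\Rightarrow$ \iminimality) $\dim A_a = 0$ for every $a$. Applying clause~(4) of the previous lemma to $A$ then yields a uniform $N \in \Nat$ such that every $A_a$ is a union of $N$ definable discrete sets, and the preceding Remark converts this into $\rkCB(A_a) \leq N$, so $d := N$ works.

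For the backward direction, I would verify the equivalent form of d-minimality given by clause~(3) of the previous lemma. Given a definable $A \subseteq \K^{m+1}$, apply the hypothesis with $X := A$ and $n := m$ to obtain $d$ such that $\rkCB(A_a \setminus \interior(A_a)) \leq d$ for every $a$. Fix such~$a$: either $A_a$ has nonempty interior (first case of the lemma), or $\interior(A_a) = \emptyset$, in which case $A_a$ itself has $\rkCB \leq d$ and so is a union of $d$ discrete sets by the Remark; these may be taken definable, as they are the level sets $\CBd{A_a}{k} \setminus \CBd{A_a}{k+1}$ of the successive Cantor-Bendixson derivatives, which are uniformly definable in the parameter~$a$.

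The only point requiring care is the uniformity of the witnesses across elementary extensions demanded by the definition of d-minimality. This will not be a genuine obstacle, because ``$\rkCB(Y) \leq d$'' for a fixed $d \in \Nat$ is the first-order condition $\CBd Y d = \emptyset$, so the integer $d$ produced in $\K$ (for each defining formula) witnesses the same bound in every $\K' \elem \K$. Hence the corollary is essentially a reformulation of the preceding Remark, combined with the equivalent characterizations of d-minimality.
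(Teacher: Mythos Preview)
Your proof is correct and follows exactly the approach the paper intends: the corollary is stated without proof because it is an immediate combination of the preceding Remark (a set is a union of $n$ discrete sets iff its Cantor--Bendixson rank is at most~$n$) with clause~(3) of the lemma characterizing d-minimality. Two minor remarks: the appeal to \iminimality in the forward direction is unnecessary, since for subsets of $\K$ ``empty interior'' already means dimension $\leq 0$ by definition; and your final paragraph on transfer to elementary extensions, while correct, is superfluous, since the equivalence $(3) \Leftrightarrow (1)$ in the lemma already absorbs the compactness argument.
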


\begin{definizione}
A definable $d$-dimensional $\Cp$-submanifold $M$ is weakly $\mu$-special if
for each $y \in \mu M$ and $x \in M_y$ there exist $U \subseteq \K^d$ open box
around $y$ and $W \subseteq \K^{n-d}$ open box around~$x$, such that
$A \cap(U \times W) = \Gamma(f)$ for some (definable) $\Cp$-map $f: U \to W$.
$M$ is $\mu$-special if the box $U$ in the above definition does \emph{not}
depend on~$x$ (but only on~$y$).
$M$~is special if it is $\mu$-special for some $\mu \in \Pi(n,d)$.
\end{definizione}

\begin{lemma}\label{lem:d-special}
Assume $\K$ is d-minimal, let $p \in\Nat$, and $\Afam$ be a finite collection
of definable subsets of $\K^n$.
Then, there exists a finite partition of $\K^n$ into \emph{weakly} special
definable $\Cp$-submanifolds compatible with~$\Afam$.
\end{lemma}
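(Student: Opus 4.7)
The plan is to induct on the full dimension using the Partition Lemma and the regular-point decomposition from Lemma~\ref{lem:i-regular-2}. The base case $\dim = 0$ follows immediately from d-minimality: a $0$-dimensional definable subset of $\K^n$ is a finite union of definable discrete sets, each of which is trivially a weakly special $\Cp$-submanifold of dimension~$0$ (with respect to the unique projection to a $0$-dimensional coordinate space).

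For the inductive step, d-minimality ensures that every $0$-dimensional definable subset of $\K$ has an isolated point, so Lemma~\ref{lem:i-regular-2}(2) furnishes a finite $\Pi$-good partition $\Part_0$ of $\K^n$ compatible with $\Afam$ such that, for each $\mu$-good $P \in \Part_0$, the set $P \setminus \reg^0_\mu(P)$ is nowhere dense in~$P$. A further refinement via Lemma~\ref{lem:imin-C1} (whose hypotheses are met since d-minimal structures have \DSF by Lemma~\ref{lem:skolem}) upgrades $C^0$ to $\Cp$, yielding $M_P := \reg^p_\mu(P)$ with $P \setminus M_P$ still nowhere dense in~$P$. By the very definition of $\reg^p_\mu$, around each $x \in M_P$ there exists a definable open box $V$ such that $M_P \cap V$ is the graph of a $\Cp$-map $\mu(V) \to \K^{n-d}$, where $d := \dim P$; this is exactly the condition that $M_P$ is a weakly $\mu$-special definable $\Cp$-submanifold of dimension~$d$.

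The residue $R_P := P \setminus M_P$ is nowhere dense in $P$, and hence by Thm.~\ref{thm:imin}(\ref{en:i-dim-n}) has strictly smaller full dimension than~$P$. Applying the inductive hypothesis to the finite collection $\set{R_P : P \in \Part_0}$ (together with any $0$-dimensional pieces of $\Part_0$, treated via the base case) yields a finite partition of $\bigcup_P R_P$ into weakly special $\Cp$-submanifolds; taking the union with $\set{M_P : P \in \Part_0 \text{ is } \mu\text{-good}}$ produces the required partition, which by construction refines $\Afam$. Finite termination is automatic from well-foundedness of the lex order on full dimensions $\pair{d,k}$ (bounded by the full dimension of $\K^n$), so the induction closes after boundedly many steps.

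The main obstacle is the uniform $\Cp$-refinement step: one must argue that Lemma~\ref{lem:imin-C1} can be applied uniformly in a definable family parametrized by the base of the $\mu$-good piece, so that the result is still weakly $\mu$-special (not just $\Cp$ point by point). This is where \DSF is essential, allowing one to select $\Cp$ local charts definably and patch them across $\mu$-fibers; without this uniformity, one would only obtain a $\Cp$-submanifold structure without the graph-over-$\mu$ description required by weak $\mu$-speciality.
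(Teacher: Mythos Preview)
The inductive scheme is the right one, but the key step fails: the claim ``$R_P$ is nowhere dense in $P$, hence $\fdim R_P < \fdim P$'' is not what Thm.~\ref{thm:imin}(\ref{en:i-dim-n}) says (that result concerns closures in $\K^n$, not the relative topology of~$P$), and the implication is false in general. In any d-minimal structure defining a $0$-dimensional set with an accumulation point---for instance $(\Real, 2^{\Z})$---let $E := \{0\} \cup \{2^{-n} : n \geq 0\}$, $P := (0,1) \times E \subset \K^2$, and $\pi := \Pi^2_1$. Then $P$ is $\pi$-good, $M_P = \reg^p_\pi(P) = (0,1) \times (E \setminus \{0\})$, and $R_P = (0,1) \times \{0\}$ is closed and nowhere dense in~$P$, yet $\pi(R_P) = (0,1)$ has interior, so $\fdim R_P = (1,1) = \fdim P$. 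Your induction does not close on this example.

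What is missing is a reduction, specific to d-minimality, to the case where every fiber $A_x$ is \emph{discrete} (not merely $0$-dimensional). The paper makes this reduction first, and then argues that $\pi(A \setminus M)$ is nowhere dense in $\K^d$ directly: if it contained an open box~$B$, then by Thm.~\ref{thm:imin}(\ref{en:i-B}) one may shrink $B$ so that $\cll(M_x) = (\cll M)_x$ for all $x \in B$; since $M$ is dense in~$A$ (this is where Lemma~\ref{lem:i-regular-2} enters), $M_x$ is then dense in~$A_x$; discreteness of $A_x$ forces $M_x = A_x$, contradicting $x \in \pi(A \setminus M)$. This yields $\fdim(A \setminus M) < \fdim A$ and the induction goes through. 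The discreteness of fibers is exactly where d-minimality (as opposed to mere \iminimality) is used, and it is the genuine obstacle---not the $\Cp$ upgrade you flag, which is already handled by Lemma~\ref{lem:i-regular-2}(3).
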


\begin{proof}
As in the proof of~\cite[Thm.~3.4.1]{miller05}, we are reduced to show that if
$A \in \Afam$, with $0 < d := \dim A < n$; then $A$ can be partitioned into
special $\Cp$-manifolds.
Moreover, we can further assume that $A$ is a $\pi$-good, where $\pi :=
\Pi^n_d$, that $A \setminus \reg^p_\pi(A)$ is nowhere dense in~$A$,
and each $A_x$ is discrete, for every $x \in \K^d$.

Let $M := \reg^p_\pi(A)$.
Notice that $M$ is a weakly $\pi$-special $\Cp$-manifold.
It suffices to prove that $\pi(A \setminus M)$ is nowhere dense (in~$\K^d$)
to conclude the proof (since then $\fdim(A \setminus M) < \fdim(A)$).
Assume, for contradiction, that $B \subset \pi(A \setminus M)$ is a non-empty
open box, and let $N := \reg^p_\pi(A \setminus M)$.
By shrinking~$B$, we might assume that $\cll(M_x) = (\cll M)_x$ for every
$x \in B$. 
By Lemma~\ref{lem:i-regular-2}, $M$~is dense in~$A$, and therefore, for every
$x \in B$,
$\cll(A_x) = (\cll A)_x = (\cll M)_x = \cll(M_x)$, that is $M_x$ is dense
in~$A_x$.
However, $A_x$~is discrete, and thus $A_x = M_x$.
%
%
%
\end{proof}

\begin{conjecture}
Assume $\K$ is d-minimal, let $p \in\Nat$, and $\Afam$ be a finite collection of definable
subsets of $\K^n$.
Then, there exists a finite partition of $\K^n$ into special definable
$\Cp$-submanifolds compatible with~$\Afam$.
\end{conjecture}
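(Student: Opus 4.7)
The plan is to upgrade Lemma~\ref{lem:d-special} from weakly $\mu$-special pieces to genuinely $\mu$-special ones. By that lemma, it suffices to treat a single weakly $\pi$-special $\Cp$-submanifold $M \subseteq \K^n$ of dimension $d$ with $\pi := \Pi^n_d$, and after composing coordinate-wise with a definable $\Cp$-homeomorphism $\K \cong (0,1)$ I may assume $M$ is bounded.

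For each $(y,x) \in M$ let
\[
\rho(y,x) := \sup\set{r > 0 : \exists W \ni x \text{ open box},\ M \cap (B(y;r) \times W) = \Gamma(f)\ \text{for some}\ \Cp\ f : B(y;r) \to W},
\]
a positive definable function on $M$, and set $r(y) := \inf\set{\rho(y,x) : x \in M_y}$ on $\pi(M)$. The key observation is that $r(y) > 0$ is \emph{exactly} the pointwise $\pi$-special condition at $y$: whenever $r(y) > 0$, the box $B(y; r(y)/2)$ serves uniformly for every $x \in M_y$. Hence the conjecture reduces to showing that $G := \set{y \in \pi(M) : r(y) > 0}$ contains a dense open subset of $\pi(M)$. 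Granting this, $\pi(M) \setminus \inter G$ is a union of two sets with empty interior and is thus nowhere dense by \iminimality, so $M \cap \pi^{-1}(\inter G)$ is $\pi$-special and the residue $M \cap \pi^{-1}(\pi(M) \setminus \inter G)$ has full dimension strictly less than that of $M$; induction on full dimension (reapplying Lemma~\ref{lem:d-special} to the residue) concludes.

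The main obstacle is proving $\inter G \neq \emptyset$. If every $M_y$ were pseudo-finite, the lemmata of \S\ref{sec:pf}---namely that the definable image $\rho(y, M_y)$ of a pseudo-finite set is pseudo-finite, hence closed with $\delta > 0$---would yield $r(y) > 0$ everywhere on $\pi(M)$ and the result would be immediate. The difficulty is that $M_y$ is merely discrete, not closed in $\K^{n-d}$: its points can accumulate at fiber points of $\cl M \setminus M$, forcing $\rho(y, x_n) \to 0$ along such escaping sequences. D-minimality should compensate: by \S\ref{sec:ipmin}, $\cl{M_y}$ has uniformly bounded Cantor-Bendixson rank across $y$, and outside the meager bad set $\B_d(M)$ from Lemma~\ref{lem:bad-2} one has $\cl{M_y} = (\cl M)_y$, a definable closed set. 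Stratifying $M_y$ according to CB-level of points in $\cl{M_y}$ should let one partition $M$ into finitely many pieces whose fibers genuinely are pseudo-finite, reducing back to the easy case. The delicate step is ensuring that this stratification is a \emph{finite} definable refinement compatible with weak $\pi$-specialness; it is presumably the simultaneous control of definability, finiteness, and specialness that has prevented this argument from closing and that has left the statement at the level of a conjecture.
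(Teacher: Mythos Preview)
The paper does not prove this statement: it is stated as a \emph{conjecture}, placed immediately after Lemma~\ref{lem:d-special} (the weakly-special version), and left open. There is therefore no proof in the paper to compare your attempt against. You evidently recognise this, since your closing sentence explicitly says the argument does not close and the statement remains conjectural.

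Your outline is a reasonable attack on the problem and correctly isolates the obstruction. Reducing to a single weakly $\pi$-special piece $M$, introducing the uniform radius $r(y) = \inf_{x \in M_y} \rho(y,x)$, and observing that $r(y) > 0$ is exactly the $\pi$-special condition at $y$ is the natural first move. The difficulty you name is real: fibers $M_y$ are discrete but not pseudo-finite, so nothing forces the infimum to be positive. Your proposed remedy via a Cantor--Bendixson stratification of $\cl{M_y}$ (using the uniform CB-bound from d-minimality) is plausible, but the gap you flag is genuine and is precisely why the paper leaves this open: slicing $M$ according to fiberwise CB-level need not produce pieces that remain weakly $\pi$-special (removing a non-closed-in-$M$ locus of points can destroy the local-graph condition at nearby base points), and even within a single CB-level the fibers are merely discrete, not pseudo-finite, so one has not obviously reduced to an easier case. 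Until that step is made to work, your proposal is a proof \emph{sketch} with an honest admission of the missing ingredient, not a proof.
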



\section{Dense pairs}\label{sec:dense}

\subsection{Cauchy completion}
For particular cases of some of the results in this section, see
also~\cite{LS}. 

\begin{definizione}
If $\Kb$ is an ordered field, we denote by $\KbC$ the \intro{Cauchy completion}
of~$\Kb$, that is, the maximal linearly ordered group such that $\Kb$ is dense
in~$\KbC$; notice that $\KbC$ is, in a canonical way, a real closed
field~\cite{scott}.

A \intro{cut} $\Lambda := (\Lambda^L, \Lambda^R)$ of $\K$ is a partition of
$\K$ into  two disjoint subsets $\Lambda^L, \Lambda^R$, such that $y < x$ for
every $y \in \Lambda^L$ and $x \in \Lambda^R$.
A cut $(\Lambda^L, \Lambda^R)$ is a \intro{gap} if $\Lambda^L$ 
is non-empty and has no maximum,
and $\Lambda^R$ is non-empty and has no minimum.
A cut $\Lambda$ is \intro{regular} if $\hat \Lambda = 0$, where
$\hat\Lambda := \inf \set {x - y: x, y \in \K \ \&\ y < \Lambda < x }$.  

\end{definizione}
The Cauchy completion of $\K$ is the disjoint union of $\K$ and the set of
regular gaps of~$\K$, with suitably defined order $<$ and operations $+$ and $\cdot$.

\begin{lemma}\label{lem:regular-type}
Let $\K$ be definably complete, $\K^* \succ \K$, $b \in \K^* \setminus \K$.
For every $X \subseteq \K^m$ definable, let $X^* \subseteq {\K^*}^m$ be the
interpretation of $X$ in $\K^*$.

Let $E \subset \K$ be closed, $\K$-definable, and with 
$n := \rkCB(E)  < \omega$, let $\Lambda$ be the cut of $\K$ determined by~$b$.
If $\Lambda$ is regular, then $b \notin E^*$.

If moreover $\K$ is d-minimal, and $D \subset \K$ is $\K$-definable and of
dimension~$0$, then $b \notin D$.
\end{lemma}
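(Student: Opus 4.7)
The plan is to handle the first part by exploiting the commutation of finite Cantor--Bendixson derivatives with elementary extension, together with the regularity of the cut $\Lambda$. Suppose for contradiction that $b \in E^*$. Since the formula ``$x$ is a non-isolated point of $E$'' is first-order with $E$ as a parameter, an easy induction on $k$ using elementarity of $\K \prec \K^*$ shows that $\Pa{E^{(k)}}^* = (E^*)^{(k)}$ for every $k < \omega$. Because $E^{(n)} = \emptyset$, we get $(E^*)^{(n)} = \emptyset$, so there is a smallest $k \in \set{1, \dotsc, n}$ with $b \notin \Pa{E^{(k)}}^*$. Setting $F := E^{(k-1)}$, it follows that $b \in F^*$ but $b \notin (F^*)^{(1)}$; that is, $b$ is isolated in $F^*$, so there exists $r \in {\K^*}$ with $r > 0$ such that $(b - r, b + r) \cap F^* = \set b$.

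Now I would invoke regularity of $\Lambda$ to pick $y, x \in \K$ with $y < b < x$ and $x - y < r$, forcing $(y, x) \cap F^* = \set b$. The assertion ``$F \cap (y, x)$ has exactly one element'' is a first-order formula with parameters $y, x \in \K$; by elementarity (downward) there exists a unique $c \in \K$ with $F \cap (y, x) = \set c$, and applying elementarity upward gives $F^* \cap (y, x) = \set c$ in $\K^*$, so $b = c \in \K$, contradicting $b \in \K^* \setminus \K$.

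For the second part (where the written conclusion ``$b \notin D$'' must mean $b \notin D^*$), observe that $\dim D = 0$ implies $D$ has empty interior; since d-minimality entails \iminimality, $\cl D$ also has empty interior, so $\dim(\cl D) = 0$. By d-minimality applied to the closed $\K$-definable set $\cl D$, it is a finite union of discrete sets; the Cantor-sum bound on $\rkCB$ of a finite union (Remark in \S\ref{sec:dmin}) then yields $\rkCB(\cl D) < \omega$. The first part applied to $E := \cl D$ gives $b \notin (\cl D)^*$, and since $D^* \subseteq (\cl D)^*$ by elementarity, we conclude $b \notin D^*$.

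The main obstacle I anticipate is the commutation $\Pa{E^{(k)}}^* = (E^*)^{(k)}$: it only works for $k < \omega$, which is exactly what the hypothesis $\rkCB(E) < \omega$ provides, and it is the step that lets us reduce the rank-$n$ closed set to the situation where $b$ is isolated in a $\K$-definable set --- at which point regularity of $\Lambda$ and elementarity together deliver the contradiction. The second part is then a routine reduction using \iminimality (to close up $D$) and the Cantor-sum bound (to control $\rkCB$).
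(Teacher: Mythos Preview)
Your reduction via commutation of the Cantor--Bendixson derivative with elementary extension is correct and clean, and the second part of your argument (passing to $\cl D$ and using that a finite union of discrete sets has finite CB rank) is fine and matches the paper's treatment.

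There is, however, a genuine gap in the first part. You produce an isolating radius $r > 0$ for $b$ in $F^*$, but this $r$ lives in $\K^*$, and nothing prevents it from being infinitesimal over~$\K$. Regularity of $\Lambda$ only guarantees that for every $\varepsilon > 0$ \emph{in $\K$} there exist $y,x \in \K$ with $y < b < x$ and $x - y < \varepsilon$; if $r$ is infinitesimal, no such $y,x$ satisfy $x - y < r$, and the step ``pick $y, x \in \K$ with $x - y < r$'' fails. Concretely, when $k < n$ the set $F = E^{(k-1)}$ has $\rkCB(F) \ge 2$, and points of $(F^{(1)})^*$ or other isolated points of $F^*$ may sit infinitesimally close to $b$ inside the same gap~$\Lambda$; your argument does not rule this out.

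The paper closes exactly this gap by induction on $n = \rkCB(E)$. In the base case $n = 1$, $E$ is discrete and (after bounding, using that the gap makes $b$ $\K$-bounded) pseudo-finite, so $\delta(E) > 0$ is an element of $\K$; regularity then yields $a' < b < a''$ in $\K$ with $a'' - a' < \delta(E)$, whence $(a',a'')^* \cap E^*$ is a singleton and $b$ is $\K$-definable. In the inductive step one applies the base case to the discrete closed set $G := E^{(n-1)}$ to obtain a $\K$-interval $[a',a'']$ around $b$ disjoint from~$G$, and then applies the inductive hypothesis to $E \cap [a',a'']$, whose CB rank has dropped. Your argument essentially reproduces the base case when $k = n$ (since then $F$ is discrete), but for $k < n$ you need this induction to manufacture a suitable parameter in~$\K$.
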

\begin{proof}
We will prove the lemma by induction on~$n := \rkCB(E)$.
\Wlog, $E$~is bounded (because $b$ is $\K$-bounded).
If $n = 0$, then $E = \emptyset$.
If $n = 1$, then $E$ is discrete; thus, $E$ is pseudo-finite; 
let $\delta := \delta(E)$; notice that $0 < \delta \in \K$. 
Since $\Lambda$ is regular, there exist $a', a'' \in \K$
such that $a' < b < a''$ and $a'' - a' < \delta$.
Thus, $(a', a'')^* \cap E^* = \set b$, and therefore $b$ is $\K$-definable, absurd.

If $n > 1$, let $G := \CBd E {n - 1}$; notice that $G$ is closed, discrete and
non-empty; thus, $\rkCB(G) = 1$, and $G$ is pseudo-finite.
By the case $n = 1$, $b \notin G^*$; let $a', a'' \in \K$ such that $a' < b <
a''$ and $G \cap [a', a''] = \emptyset$ ($a'$ and $a''$ exist by the proof of
the case $n = 1$).
Let $F := E \cap [a', a'']$.
Notice that $F$ is \dcompact.
Moreover, $\rkCB(F) < n$.
Therefore, by inductive hypothesis, $b \notin F^*$.
Hence, $b \notin E^*$.

If $\K$ is d-minimal, let $E$ be the closure of $D$ in~$\K$.
By d-minimality, $\rkCB(D) < \omega$; thus, $b \notin D^*$.
\end{proof}

From the proof of the above lemma, we can deduce the following.
\begin{lemma}\label{lem:regular-cut-dimension}
Let $\K$ be d-minimal, and $\K \prec \K^*$.
Let $c \in \K^* \setminus \K$, 
$\Lambda$~be the cut determined by $c$ over~$\K$,
and $X^*\subseteq \K^*$ be $\K$-definable.
If $\Lambda$ is a regular gap and $c \in X^*$, then there exists an open
interval $I$ with end-points in $\K$ such that $c \in I \subseteq X^*$.
\end{lemma}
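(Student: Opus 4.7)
My plan is to decompose $X^*$ via the boundary of the underlying $\K$-definable set, exploit the finite Cantor--Bendixson rank available from d-minimality, and finish using definable connectedness of intervals.

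Let $X \subseteq \K$ be the interpretation in $\K$ of the formula defining $X^*$, set $A := \interior_{\K}(X)$ and $B := \interior_{\K}(\K \setminus X)$, and let $W := \K \setminus (A \cup B)$. I would first verify that $W$ has empty interior: any open interval $J \subseteq W$ would satisfy $J = (J \cap X) \cup (J \cap (\K \setminus X))$ with both pieces having empty interior, and \iminimality of d-minimal structures (together with Baireness) rules this out. Since $W$ is definable with empty interior, d-minimality presents it as a finite union of discrete sets; being closed as well, $W$ has $\rkCB(W) < \omega$.

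The heart of the argument is a strengthening of Lemma~\ref{lem:regular-type}: for any closed $\K$-definable $E \subseteq \K$ with $\rkCB(E) < \omega$ and any regular gap $\Lambda$ realized by $c$, there exists an open interval $I$ with endpoints in $\K$ such that $c \in I^*$ and $I^* \cap E^* = \emptyset$. The proof is by induction on $n := \rkCB(E)$, following essentially the same path as Lemma~\ref{lem:regular-type}. In the base case $n = 1$, $E$ is pseudo-finite, and one picks $a' < c < a''$ in $\K$ with $a'' - a' < \delta(E)$; then $E \cap [a', a'']$ is either empty or a singleton $\{e\}$ with $e \in \K \setminus \set c$, and regularity of $\Lambda$ lets us slide one endpoint past $e$ on the appropriate side. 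For $n > 1$, the base case applied to $G := \CBd{E}{n-1}$ yields a $\K$-interval $J$ containing $c$ with $J^* \cap G^* = \emptyset$; then $F := E \cap \cl J$ satisfies $\rkCB(F) < n$, the inductive hypothesis gives an avoidance interval $I'$ for $F$, and $I := I' \cap J$ works for $E$.

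Applying this strengthening to $W$ produces an open interval $I$ with endpoints in $\K$, $c \in I^*$, and $I^* \cap W^* = \emptyset$. By transfer $I \cap W = \emptyset$ in $\K$, so $I = (I \cap A) \sqcup (I \cap B)$ is a disjoint union of two relatively open subsets of $I$. Since $\K$ is definably complete, the interval $I$ is definably connected, so one of these pieces is empty. Because $c \in X^* \cap I^*$ and $B \cap X = \emptyset$ transfers to $B^* \cap X^* = \emptyset$, we have $c \in A^*$; hence $I^* \cap A^* \neq \emptyset$, so by transfer $I \cap A \neq \emptyset$, forcing $I = I \cap A \subseteq A \subseteq X$ and thus $I^* \subseteq X^*$.

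The main obstacle is the strengthened form of Lemma~\ref{lem:regular-type}: the existing proof concludes only $c \notin E^*$, so I expect the bulk of the writing to go into showing that its induction in fact produces the avoidance interval at each step. The delicate point is in the base case, where both halves of the regularity of $\Lambda$ (no maximum of $\Lambda^L$ \emph{and} no minimum of $\Lambda^R$) must be used, depending on whether the possible $\K$-point $e$ lies to the left or to the right of $c$; everything else (the definable-connectedness collapse and the transfer steps) is routine.
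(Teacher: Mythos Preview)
Your proposal is correct and is exactly what the paper intends: the paper's only ``proof'' is the remark that the lemma follows from the proof of Lemma~\ref{lem:regular-type}, and you have correctly unpacked this by strengthening that induction to output a $\K$-interval disjoint from~$E^*$, then applying it to $W = \bd(X)$ and finishing via definable connectedness. Two incidental simplifications: your set $W$ is literally $\bd(X)$, so its having empty interior is immediate from Remark~\ref{rem:boundary} without invoking \iminimality or Baire; and in the base case no separate ``slide via regularity'' is needed, since once $a'' - a' < \delta(E)$ the at most one point $e \in E \cap [a',a'']$ lies in~$\K$, hence differs from~$c$, and $e$ itself can serve as the new endpoint.
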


\begin{lemma}\label{lem:regular-type-image}
Let $\K$ be definably complete, $\K^* \succ \K$, $b \in \K^* \setminus \K$,
$\Lambda$ be the cut of $\K$ determined by~$b$, 
$f: \K \to \K$ be definable,
$c := f^*(b) \in \K^*$, and $\Gamma$ be the cut determined by $c$ over~$\K$.
Assume that $f$ is continuous and  strictly monotone.
Then, $\Gamma$~is a regular gap, and $\K \gen c = \K \gen b$,
where $\K \gen b$ is the definable closure of $\K \cup \set b$.
\end{lemma}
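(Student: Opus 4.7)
The statement asserts two conclusions: (i) $\Gamma$ is a regular gap, and (ii) $\K\gen c = \K\gen b$. For the statement to hold, $\Lambda$ must itself be a regular gap (this is the running hypothesis inherited from the preceding lemmata, and the natural example $f=\id$ shows it is necessary). My plan is, first, to obtain the easy inclusion $\K\gen c\subseteq \K\gen b$ directly from $c=f^*(b)$ and the definability of~$f$; second, to produce a definable inverse $g$ for $f$ to get the other inclusion and to transfer regularity from $\Lambda$ to~$\Gamma$; and third, to bridge ``small $x_2-x_1$ near $b$'' with ``small $f(x_2)-f(x_1)$ near $c$'' by uniform continuity on a $d$-compact interval around~$b$.

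\Wlog we may assume $f$ is strictly increasing (replacing $f$ by $-f$ swaps left/right cuts but preserves regularity and definable closure). Since $f$ is strictly monotone and continuous, the image $f(\K)$ is an interval of $\K$ and the compositional inverse $g\colon f(\K)\to\K$ is definable, continuous and strictly increasing; this already yields $b=g^*(c)$, hence $\K\gen b\subseteq\K\gen c$ (the other inclusion being trivial), settling~(ii). For~(i), pick any $x_1',x_2'\in\K$ with $x_1'<b<x_2'$; the closed interval $[x_1',x_2']$ is \dcompact, so by (the standard extension of) Lemma~\ref{lem:uniform-cont} the restriction $f\rest[x_1',x_2']$ is uniformly continuous. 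By elementarity the same $\K$-uniform-continuity estimate holds in~$\K^*$.

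To prove $\Gamma$ is a regular gap, I first note that $c\notin\K$, because $f$ is injective and $b=g^*(c)\notin\K$, so $\Gamma$ is a cut neither attained nor approached from one side only. Given $\varepsilon>0$ in $\K$, choose $\delta>0$ in $\K$ from the uniform continuity estimate on $[x_1',x_2']$. Since $\Lambda$ is a regular gap, there are $x_1,x_2\in\K$ with $x_1<b<x_2$ and $x_2-x_1<\delta$; replacing $x_1$ by $\max(x_1,x_1')$ and $x_2$ by $\min(x_2,x_2')$ keeps the estimate and places both points in $[x_1',x_2']\cap\K$. Strict monotonicity gives $f(x_1)<f^*(b)=c<f(x_2)$, and uniform continuity (transferred to~$\K^*$, though both points are in $\K$) yields $f(x_2)-f(x_1)<\varepsilon$. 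Since $\varepsilon\in\K^{>0}$ was arbitrary, $\hat\Gamma=0$, i.e., $\Gamma$ is a regular gap.

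The only delicate point is arranging that the approximating pair $x_1,x_2$ lies in the fixed $\K$-interval $[x_1',x_2']$ on which we have uniform continuity, rather than floating inside $\K^*$; this is handled by the \emph{a posteriori} intersection with $[x_1',x_2']$ described above, after which all arguments take place in $\K$ itself and the elementary extension is used only to read off $f(x_1)<c<f(x_2)$. Everything else is routine once Lemma~\ref{lem:uniform-cont} is invoked, so there is no substantial obstacle beyond this bookkeeping.
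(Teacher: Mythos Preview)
Your proof is correct and follows essentially the same approach as the paper's: both use uniform continuity on a \dcompact interval around $b$ to push the regularity of $\Lambda$ to~$\Gamma$, and invertibility of $f$ to obtain $\K\gen b = \K\gen c$. You are simply more explicit than the paper about fixing the interval $[x_1',x_2']$ on which Lemma~\ref{lem:uniform-cont} applies and about why $c\notin\K$, but the underlying argument is the same.
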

\begin{proof}
Let $0 < \varepsilon \in \K$; by uniform continuity, 
there exists $0 < \delta \in \K$, such that, if
$x, y \in \K$ and $\abs{x - y} < \delta$, 
then $\abs{f(x) - f(y)} < \varepsilon$.
Let $y', y'' \in \K$
such that $y' < b < y''$ and $y'' - y' < \delta$ (they exist because $\Lambda$
is a regular cut).
Thus, $f(y') <  c < f(y'')$ and $f(y'') - f(y') < \varepsilon$, 
and therefore $\Gamma$ is  regular.
Moreover, $f$ is invertible, and therefore $b = f^{-1}(c) \in \K \gen c$.
\end{proof}

\begin{lemma}
Let $\K$ be a d-minimal structure, and $\K^* \succ \K$, such that $\K$ is
dense in $\K^*$.
Then, $\K^*$ has the exchange property relative to~$\K$;
that is, if $A \subset \K^*$, and $b, c \in \K^*$ satisfy
$c \in \K\gen{A,b} \setminus \K\gen A$, then $b \in \K\gen{A,c}$.
\end{lemma}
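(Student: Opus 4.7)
The plan is to adapt the classical o-minimal proof of exchange, using the monotonicity theorem available for \iminimal structures (Thm.~\ref{thm:imin}(\ref{en:i-monotonicity})) together with the two technical lemmata about regular cuts (Lemma~\ref{lem:regular-type} and Lemma~\ref{lem:regular-cut-dimension}) to get a monotone piece of $f$ around~$b$.

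First, by replacing $\K$ with the expansion naming each element of~$A$, we may assume $A = \emptyset$, so that the hypothesis becomes $c \in \K\gen b \setminus \K$. Pick a $\K$-definable function $f : \K \to \K$ with $f(b) = c$; since $c \notin \K$ we also have $b \notin \K$. Density of $\K$ in $\K^*$ implies cofinality of $\K$ in $\K^*$ (given $x \in \K^*$, pick $z \in \K$ with $x < z < x + 1$), so $b$ is bounded above and below by elements of~$\K$, and the cut $\Lambda$ of $\K$ determined by $b$ is a gap. Moreover, density forces $\hat \Lambda = 0$: if some positive $\varepsilon \in \K$ bounded the width of the cut from below, density would produce $z \in \K$ within $\varepsilon/4$ of~$b$, contradicting the definition of $\hat\Lambda$. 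Hence $\Lambda$ is a regular gap.

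Next, apply the monotonicity theorem for \iminimal structures to $f$: there is a $\K$-definable closed set $D \subseteq \K$ with empty interior such that on each definably connected component of $\K \setminus D$, the function $f$ is continuous and either constant or strictly monotone. Since $\K$ is d-minimal and $\dim D = 0$, Lemma~\ref{lem:regular-type} applies (its ``moreover'' clause) and gives $b \notin D^*$. The set $X := \K \setminus D$ is $\K$-definable and open, $b \in X^*$, and $\Lambda$ is a regular gap, so Lemma~\ref{lem:regular-cut-dimension} produces an open interval $I$ with endpoints in $\K$ such that $b \in I \subseteq X^* = \K^* \setminus D^*$. The definably connected component of $\K \setminus D$ containing $I \cap \K$ is a single interval, on which $f$ is continuous and either constant or strictly monotone; by elementarity the same holds for $f \rest I$ in $\K^*$.

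If $f \rest I$ is constant, then $c = f(b)$ lies in $\K$, contradicting $c \notin \K$. Hence $f \rest I$ is strictly monotone and continuous, so it has a continuous definable compositional inverse $g : f(I) \to I$ (also $\K$-definable, after restricting $I$ to an interval with $\K$-endpoints contained in $f(I)$ if needed). Then $b = g(c) \in \K\gen c$, as required. The main obstacle I expect is the verification that Lemmata~\ref{lem:regular-type} and~\ref{lem:regular-cut-dimension} apply as stated to the set $D$ coming out of monotonicity: this reduces to knowing that a closed definable subset of $\K$ with empty interior has finite Cantor--Bendixson rank, which is precisely the content of d-minimality, so no further effort is needed.
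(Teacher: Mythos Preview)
Your proof is correct and follows essentially the same approach as the paper's: reduce to $A = \emptyset$, apply the \iminimal monotonicity theorem, use Lemma~\ref{lem:regular-type} to place $b$ outside~$D^*$, and then invert~$f$ on a suitable interval (the paper packages this last step as a direct appeal to Lemma~\ref{lem:regular-type-image}, while you unfold it via Lemma~\ref{lem:regular-cut-dimension} and an explicit inverse). One small wording issue: your reduction by ``naming each element of~$A$'' is not quite right, since $A \subset \K^*$ need not lie in~$\K$; the paper instead replaces $\K$ by the elementary substructure $\K\gen A$ of~$\K^*$, which is what your argument is really using.
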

\begin{proof}
Let $b$ and $c$ be as in the hypothesis.
Let $\K' := \K \gen A$; \wloG, $\K = \K'$.
Then, since $\K$ has definable Skolem function,
$b = f^*(c)$ for some $K$-definable $f: \K \to \K$.
By Theorem~\ref{thm:imin}-\ref{en:i-monotonicity}
there exists $D \subseteq \K$ nowhere dense and
$K$-definable, such that $f$ is continuous and either constant or strictly
monotone on each sub-interval of $\K \setminus D$.
By Lemma~\ref{lem:regular-type}, $b \notin D^*$.
Therefore, by Lemma~\ref{lem:regular-type-image}, either $c \in \K$, 
or  $b \in \K \gen c$.
\end{proof}

Notice that the hypothesis that $\K$ is dense in $\K^*$ in the above lemma is
necessary: \cite[1.17]{DMS} shows that if $\K^*$ does not satisfy UF and it is
sufficiently saturated, then $\K^*$ does not satisfy the exchange property.

\begin{proposition}[Cauchy completion]\label{prop:dmin-Cauchy}
Let $\K$ be a d-minimal structure, expanding the  ordered field~$\Kb$,
$L = (0, 1, +, \cdot, <, \dotsc)$ be the language of~$\K$.
There exists a unique expansion of the Cauchy completion
$\KbC$ to an $L$-structure~$\KC$, such that
$\K$ is an elementary substructure of~$\KC$.
\end{proposition}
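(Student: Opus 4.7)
The plan is to construct $\KC$ inside a sufficiently saturated elementary extension of~$\K$. Fix $\K^* \succeq \K$ that is $\abs{\K}^+$-saturated, and set
\[
A := \set{a \in \K^* : a \in \K \text{ or } a \text{ realizes a regular gap of } \K}.
\]
The key technical step is the \emph{Closure Lemma}: for every $\K$-definable function $f : \K^n \to \K$ and every $\bar a \in A^n$, $f^*(\bar a) \in A$. The base case $n = 1$ combines the monotonicity theorem for d-minimal (hence \iminimal) structures, Thm.~\ref{thm:imin}(\ref{en:i-monotonicity}), with Lemmata~\ref{lem:regular-type} and~\ref{lem:regular-type-image}: monotonicity yields a closed definable $D \subseteq \K$ with $\rkCB(D) < \omega$ outside of which $f$ is continuous and either constant or strictly monotone; Lemma~\ref{lem:regular-type} forces $a \notin D^*$, placing $a$ in a connected component of $(\K \setminus D)^*$ with endpoints in $\K \cup \set{\pm \infty}$, and Lemma~\ref{lem:regular-type-image} concludes.

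For the inductive step, set $\K_k := \K\gen{a_1, \dotsc, a_k}$ for $k \leq n - 1$; each $\K_k \preceq \K^*$ by \DSF (Lemma~\ref{lem:skolem}, since d-minimal implies \ipminimal), is itself d-minimal, and by induction satisfies $\K_k \subseteq A$. The crucial observation is that $A$ contains no nonzero infinitesimal over~$\K$: such an element would realize the cut at~$0$, which has the representative~$0 \in \K$ and is therefore not a gap. Hence for $c_1 < c_2$ in $\K_{n-1}$, the difference $c_2 - c_1 \in A$ exceeds some positive element of~$\K$, and the regularity of cuts gives density of $\K$ in~$\K_{n-1}$. This density implies that an element of $\K^*$ realizes a regular cut over~$\K$ iff it realizes one over~$\K_{n-1}$, so $A$ coincides with the analogous set defined using $\K_{n-1}$ in place of~$\K$. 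Consequently, either $a_n \in \K_{n-1} \subseteq A$ and $f^*(\bar a) \in \K_{n-1}$, or $a_n$ realizes a regular gap over~$\K_{n-1}$; in the latter case, the $n = 1$ case over the d-minimal~$\K_{n-1}$ applied to the $\K_{n-1}$-definable function $g(x) := f^*(a_1, \dotsc, a_{n-1}, x)$ places $f^*(\bar a) = g^*(a_n)$ in~$A$.

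Once the Closure Lemma is established, Tarski--Vaught together with \DSF gives $A \preceq \K^*$. The map $\rho : A \to \KbC$ sending each element to the corresponding cut of~$\K$ preserves the ordered field operations, is injective (no nonzero infinitesimal lies in~$A$), and is surjective by saturation. Transporting the $L$-structure on~$A$ along~$\rho$ defines $\KC$, with $\K \preceq \KC$ inherited from $\K \preceq A$. For uniqueness, observe that the proof of the Closure Lemma actually computes $f^*(\bar a)$ explicitly in terms of the cuts realized by the~$a_i$ and of~$f$; hence in any $L$-structure $\KC'$ on $\KbC$ extending $\K$ elementarily, which is again d-minimal, the same inductive procedure performed internally forces the interpretation of every $L(\K)$-definable function on tuples from~$\KbC$, yielding $\KC' = \KC$.

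The main obstacle will be the inductive step of the Closure Lemma: both the density of $\K$ in each~$\K_k$ and the bidirectional coincidence of $\K$- and $\K_k$-regularity of cuts rest on the absence of nonzero infinitesimals in~$A$, which itself must be bootstrapped from the single-variable case and Lemmata~\ref{lem:regular-type} and~\ref{lem:regular-type-image}.
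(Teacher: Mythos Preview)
Your Closure Lemma is false as stated, and the error is in the inductive step. Take $n = 2$ and $f(x,y) = x - y$. In your $\abs{\K}^+$-saturated $\K^*$, any regular gap~$\Lambda$ of~$\K$ has many realizations; pick two, $a_1 < a_2$, both in~$A$ by definition. Then $a_2 - a_1$ is a nonzero positive infinitesimal over~$\K$, hence (as you yourself observe) not in~$A$. So $A$ is not closed under subtraction.

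The inductive argument breaks precisely at the claimed equivalence ``an element of~$\K^*$ realizes a regular gap over~$\K$ iff it realizes one over~$\K_{n-1}$''. With $a_1,a_2$ as above, $\K_1 = \K\gen{a_1}$ contains~$a_1$, and since $\K_1 \subseteq A$ contains no nonzero $\K$-infinitesimals, nothing in~$\K_1$ lies strictly between $a_1$ and~$a_2$. Hence $a_1$ is the maximum of the left part of the cut of~$a_2$ over~$\K_1$: this cut is $a_1^+$, which is regular but \emph{not a gap}. So $a_2 \notin \K_1$ and $a_2$ does not realize a regular gap over~$\K_1$, and you cannot invoke the $n=1$ case over~$\K_1$.

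The paper sidesteps this by never handling two realizations of the same gap simultaneously. It takes, via Zorn, a maximal $\K' \succeq \K$ with $\K$ dense in~$\K'$, and shows that if some regular gap of~$\K'$ is unfilled, then adjoining a \emph{single} realization~$b$ keeps $\K$ (hence~$\K'$) dense in~$\K'\gen{b}$; this needs only the $n=1$ analysis you already have (Thm.~\ref{thm:imin}(\ref{en:i-monotonicity}) together with Lemmata~\ref{lem:regular-type} and~\ref{lem:regular-type-image}). Uniqueness is handled analogously, by taking a maximal common elementary substructure of two candidate expansions. Your approach can be repaired along the same lines: build $A$ as an increasing union, at each step adjoining one realization of a regular gap of the current structure, rather than throwing in all realizations at once.
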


\begin{proof}
Let $\Sfam$ be set of elementary extensions $\K'$ of~$\K$, such that 
$\K$ is a dense in~$\K'$; order $\Sfam$ by elementary inclusion.
Let $\K'$ be a maximal element of~$\Sfam$ ($\K'$~exists by Zorn's lemma).
We claim that $\K' = \KC$ (and, thus, $\KC$~can be expanded to an $L$-structure).

Suppose not. \Wlog, $\K' = \K$.
Let $\Lambda$ be a regular gap of~$\K$, $\K^*$~be an elementary extension
of~$\K$, and $b \in \K^*$ filling the gap $\Lambda$.
Let $\K \gen b$ be the definable closure of $\K \cup \set b$ in~$\K^*$;
since $\K$ has definable Skolem functions (by definable choice),
$\K \prec \K \gen b \preceq \K^*$.

We claim that $\K$ is dense in $\K\gen b$, contradicting the maximality
of~$\K$.
In fact, let $c \in \K \gen b$; thus, $c = f(b)$ for some $\K$-definable
$f: \K^* \to \K^*$.
We have to prove that either $c \in \K$, or that $\Gamma$ is a regular gap,
where $\Gamma$ is the cut determined by $c$ over~$\K$.
By Theorem~\ref{thm:imin}-\ref{en:i-monotonicity}
there exists $D \subseteq \K$ nowhere dense and
definable, such that $f$ is continuous and either constant or strictly
monotone on each sub-interval of $\K \setminus D$.
By Lemma~\ref{lem:regular-type}, $b \notin D^*$.
Therefore, by Lemma~\ref{lem:regular-type-image}, either $c \in \K$, 
or $\Gamma$~is a regular gap.

It remains to prove that the $L$-structure on $\KbC$ is unique.
Let $\KC_1$ and $\KC_2$ be two expansion of $\KbC$ to elementary extensions
of~$\K$.
Let $\K'$ be a maximal common elementary substructure of $\KC_1$ and $\KC_2$
extending~$\K$.
Assume, for contradiction, that $\K' \neq \KC$; 
\wloG, we can assume that $\K = \K'$.
Let $b \in \KC \setminus \K$, and let $\phi(x)$ be an $L$-formula with
parameters in~$\K$.
In order to reach a contradiction, we must prove that $\KC_1 \models \phi(b)$
iff $\KC_2 \models \phi(b)$.
\Wlog, $\KC_1 \models \phi(b)$; let $X := \phi(\K)$.
Moreover, for every $Y \subseteq \K^n$ definable, 
let $Y^C_i$ be the interpretation of $Y$ in $\KC_i$, for $i = 1, 2$.
Notice that $X = U \cap D$, where $U := \inter X$ is open and definable, 
and $D := X \setminus \inter X$ is definable, with $\dim D = 0$.
Since $\K$ is d-minimal, $\rkCB(D) < \omega$.
Thus, by Lemma~\ref{lem:regular-type}, $b \notin D^C_1$.
Hence, $b \in U^C_1$; since $\K$ is dense in $\KC_1$, and $U^C_1$ is open,
there exist $y' < y'' \in \K$ such that $b \in (y', y'')^C_1 \subseteq U^C_1$.
Since $\K \preceq \KC_1$, $(y', y'') \subseteq U$,
and since $\K \preceq \KC_2$,
$b \in (y', y'')^C_2 \subseteq U^C_2 \subseteq X^C_2$.
\end{proof}

\begin{corollary}[of the proof]\label{cor:dmin-regular-cut}
Let $\K$ be a d-minimal structure, $\K^* \succ \K$,
$b \in \K^* \setminus \K$, and $\Lambda$ be the cut of $\K$ determined by~$b$.
If $\Lambda$ is a regular gap, then it uniquely determines the type of $b$
over~$\K$; moreover, $\K$ is dense in $\K \gen b$;
besides, for every $c \in \K \gen b \setminus \K$,
we have $\K \gen c = \K \gen b$. 
\end{corollary}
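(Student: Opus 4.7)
The plan is to derive the three assertions by re-examining the proof of Proposition~\ref{prop:dmin-Cauchy}, which already contains all the main ingredients. All three claims reduce to exploiting Lemmas~\ref{lem:regular-type} and~\ref{lem:regular-type-image} together with the monotonicity theorem for \iminimal structures, Theorem~\ref{thm:imin}(\ref{en:i-monotonicity}).

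First I would prove claim~(2), that $\K$ is dense in $\K\gen b$, by repeating verbatim the argument given in the proof of Proposition~\ref{prop:dmin-Cauchy}. Take any $c \in \K\gen b$; by definable Skolem functions (Lemma~\ref{lem:skolem}), $c = f^*(b)$ for some $\K$-definable $f \colon \K \to \K$. Applying Theorem~\ref{thm:imin}(\ref{en:i-monotonicity}) produces a nowhere-dense $\K$-definable closed set $D \subseteq \K$ off of which $f$ is continuous and either constant or strictly monotone on each definably connected component. By d-minimality $\rkCB(D) < \omega$, so Lemma~\ref{lem:regular-type} forces $b \notin D^*$. Hence $b$ lies in some component of $\K^* \setminus D^*$ on which $f$ is either constant (giving $c \in \K$) or continuous and strictly monotone. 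In the second case Lemma~\ref{lem:regular-type-image} shows that the cut of $c$ over $\K$ is a regular gap, so in particular $\K$ is dense around $c$. This proves~(2), and simultaneously yields~(3): in the nontrivial case the second assertion of Lemma~\ref{lem:regular-type-image} gives $\K\gen c = \K\gen b$.

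For claim~(1), I would argue as follows. Let $\phi(x)$ be an $L$-formula with parameters in $\K$ and set $X := \phi(\K) \subseteq \K$. By \iminimality (a consequence of d-minimality), the set $D := X \setminus \inter X$ has empty interior, hence dimension~$0$; let $U := \inter X$, so $X = U \sqcup D$. By d-minimality $\rkCB(\cl D) < \omega$, so Lemma~\ref{lem:regular-type} gives $b \notin D^*$; consequently $\K^* \models \phi(b)$ iff $b \in U^*$. Since $\K$ is definably complete, the definable open set $U$ is a (definable) disjoint union of open intervals with endpoints in $\K \cup \{\pm\infty\}$, and by elementarity $b \in U^*$ iff there exist $y', y'' \in \K \cup \{\pm\infty\}$ with $(y', y'') \subseteq U$ and $y' < b < y''$; this is a condition depending only on the cut $\Lambda$. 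Hence $\tp(b/\K)$ is determined by $\Lambda$, proving~(1).

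The main subtlety, which is however already handled by the earlier lemmas, is the decomposition $X = \inter X \sqcup (X \setminus \inter X)$ together with the bound $\rkCB(X \setminus \inter X) < \omega$; once that is in hand the proofs are essentially one-line applications of Lemmas~\ref{lem:regular-type} and~\ref{lem:regular-type-image}. No genuinely new argument is required beyond what already appears in the proof of Proposition~\ref{prop:dmin-Cauchy}, which is why the result is labelled a corollary ``of the proof''.
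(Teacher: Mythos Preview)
Your argument is correct and matches the paper's approach exactly: the corollary is stated there without separate proof, as a direct extraction from the proof of Proposition~\ref{prop:dmin-Cauchy}, and you have faithfully reproduced that extraction for each of the three claims.

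One small imprecision worth flagging: in your proof of~(1), the implication ``$b \in U^*$ implies there exist $y', y'' \in \K$ with $y' < b < y''$ and $(y',y'') \subseteq U$'' is not literally ``by elementarity''. The connected component of $b$ in $U^*$ could in principle have endpoints lying in $(\bd U)^* \setminus \K$. What actually gives you the interval with $\K$-endpoints is the regularity of $\Lambda$ together with $\rkCB(\bd U) < \omega$: this is precisely Lemma~\ref{lem:regular-cut-dimension} (itself extracted from the proof of Lemma~\ref{lem:regular-type}), which you could cite directly. The paper's uniqueness argument in Proposition~\ref{prop:dmin-Cauchy} handles this the same way, by first excluding $b$ from the nowhere-dense part via Lemma~\ref{lem:regular-type} and then using density of $\K$ near $b$ to find $y', y'' \in \K$.
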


\begin{corollary}
An Archimedean locally o-minimal structure is o-minimal
\end{corollary}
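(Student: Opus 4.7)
The plan is to combine Thm.~\ref{thm:lomin-constructible} (which says every definable subset of $\K$ is constructible, so $\K$ coincides with its open core) with the observation that in an Archimedean ordered field every pseudo-finite subset of $\K$ is genuinely finite. Concretely, for a definable $X \subseteq \K$, I write $X = \inter X \cup D$ where $D := X \setminus \inter X$ has empty interior. By Lemma~\ref{lem:lmin} applied to local o-minimality, $D$ is pseudo-finite. Moreover, $U := \inter X$ is an open definable subset of $\K$, so it is a disjoint union of its definably connected components (open intervals), and the set of finite endpoints of these intervals is contained in $\fr U = \cl U \setminus U$. Since $U$ is open, $\fr U$ has empty interior, and again by Lemma~\ref{lem:lmin}, $\fr U$ is pseudo-finite.

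The key auxiliary step will be the following claim: \emph{if $\K$ is Archimedean, then every pseudo-finite $P \subseteq \K$ is finite in the set-theoretic sense}. I would prove this directly. By definition $P$ is bounded and $\delta(P) > 0$; by the Archimedean property there exist $N, M \in \Nat$ with $P \subseteq [-N, N]$ and $\delta(P) > 1/M$. Since $P$ is \dcompact, $\min P$ exists; set $a_1 := \min P$ and inductively $a_{k+1} := \min\bigl(P \cap (a_k + 1/M, +\infty)\bigr)$ as long as this set is non-empty (each such minimum exists, because $P$ is closed and bounded). By construction $a_{k+1} \geq a_k + 1/M$, so $a_k \geq -N + (k-1)/M$; combined with $a_k \leq N$, this forces the process to terminate after at most $2NM + 1$ steps. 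Hence $P$ has at most $2NM + 1$ elements.

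Applying the claim, both $D$ and $\fr U$ are finite. Consequently, $U$ is the disjoint union of finitely many open intervals with endpoints in $\fr U \cup \set{-\infty, +\infty}$, and $X = U \cup D$ is a finite union of points and open intervals. This is precisely the o-minimality of $\K$.

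The only mildly subtle point is the finiteness claim in the second paragraph: one has to guard against confusing pseudo-finiteness (a first-order notion) with genuine finiteness. The enumeration argument works because we are constructing a strictly increasing sequence in the external set $P$, whose length is bounded by the \emph{standard} natural number $2NM + 1$ coming from the Archimedean hypothesis; so this is where the Archimedean assumption is essential and where the proof really uses more than mere local o-minimality. Everything else is a direct application of Thm.~\ref{thm:lomin-constructible} and Lemma~\ref{lem:lmin}.
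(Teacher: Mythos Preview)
Your proof is correct, but it takes a different route from the paper's. The paper argues via the Cauchy completion: since $\K$ is locally o-minimal (hence \dminimal), Proposition~\ref{prop:dmin-Cauchy} applies, and since $\K$ is Archimedean, its Cauchy completion is~$\Rbar$; thus $\K$ has an elementary extension $\tilde\Real$ expanding~$\Rbar$. A locally o-minimal expansion of $\Rbar$ is o-minimal (pseudo-finite subsets of $\Real$ are compact discrete, hence finite), and o-minimality transfers back to~$\K$.

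Your argument is more elementary: you bypass the Cauchy-completion machinery entirely and work directly in~$\K$, proving by hand that in any Archimedean ordered field a pseudo-finite set is finite (via the $\delta(P) > 1/M$ enumeration bound), and then applying Lemma~\ref{lem:lmin} directly. This is a genuine simplification---it needs neither Proposition~\ref{prop:dmin-Cauchy} nor any model-theoretic transfer. The paper's route, on the other hand, situates the corollary as an immediate dividend of the Cauchy-completion theory and makes the connection to expansions of $\Rbar$ explicit. One small remark: your invocation of Thm.~\ref{thm:lomin-constructible} in the opening sentence is not actually used; Lemma~\ref{lem:lmin} alone (together with your finiteness claim) suffices.
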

\begin{proof}
If $\K$ is Archimedean, then $\KbC = \Rbar$; thus, $\K$ has an elementary
extension $\tilde \Real$ that is an expansion of~$\Real$.
Therefore, $\tilde \Real$ is o-minimal, and thus $\K$ is o-minimal.
\end{proof}

\subsubsection{Polish structures and theories}

\begin{lemma}\label{lem:Polish}
Let $\F$ be an ordered field.
Assume that $\F$ contains a countable dense subset (not necessarily
definable) and that $\F$ is Cauchy complete.
Then:
\begin{enumerate}
\item 
$\F$ has cofinality~$\omega$.
\item 
$\F$ is a Polish space (\ie, a Cauchy complete separable metric space).
\item
$\card{F} = 2^{\aleph_0}$.
\item 
If $X \subset \F^n$ is perfect, non-empty, and a $\Gd$ (in the topological
sense), then $\card{X} = 2^{\aleph_0}$.
\item
If $X \subset \F^n$ is closed and $\card{X} < 2^{\aleph_0}$, 
then $\isol(X)$, the set of isolated points of~$X$, is dense in~$X$.
\item
If $X \subseteq \F^n$ is a non-empty $\Gd$ (in the topological sense),
then it is a Baire space (again, in the topological sense), and it is even
strong Choquet~\cite{kechris}.
\end{enumerate}
\end{lemma}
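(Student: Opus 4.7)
The plan is to observe that the hypotheses are exactly those making $\F$ a Polish space, and then to deduce all six clauses from standard descriptive set theory. With the metric $d(x,y):=|x-y|$, separability follows from $Q$ (dense in the order, hence in the metric) and completeness is the Cauchy-completeness assumption; $\F^n$ inherits both properties, proving~$(2)$. Part~$(1)$ is immediate: given $x\in\F$, density of $Q$ in the interval $(x,x+1)$ yields $q\in Q$ with $q>x$, so $Q$ is cofinal in the unbounded set~$\F$, giving $\cof(\F)=\omega$. For~$(3)$, every element of $\F$ is the metric limit of a Cauchy sequence in~$Q$, so $|\F|\le|Q|^{\aleph_0}=2^{\aleph_0}$; the lower bound is a special case of~$(4)$ applied to $\F$ itself, which is perfect since a densely ordered field has no isolated points.

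For~$(4)$ I would run the classical Cantor scheme. Write $X=\bigcap_{k\in\Nat}V_k$ with each $V_k$ open and build by recursion a binary tree of non-empty closed balls $(B_s)_{s\in 2^{<\omega}}$ in $\F^n$ centered at points of~$X$ with $\operatorname{radius}(B_s)\le 2^{-|s|}$, such that $B_{s0}$ and $B_{s1}$ are disjoint and contained in $B_s\cap V_{|s|}$. The inductive step uses perfectness of~$X$: inside the open ball of $B_s$ one finds a point $c\in X$ near the center and then a second point of~$X$ distinct from~$c$ (by non-isolation), and places small disjoint closed balls around them, shrunk to lie in~$V_{|s|}$. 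By completeness of~$\F^n$ each branch $\sigma\in 2^{\omega}$ yields a singleton $\{x_\sigma\}=\bigcap_k B_{\sigma\rest k}\subseteq\bigcap_k V_k=X$, and distinct branches yield distinct points, so $|X|\ge 2^{\aleph_0}$; the reverse inequality is $|X|\le|\F^n|=2^{\aleph_0}$ by~$(3)$.

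For~$(5)$, if $\isol(X)$ is not dense in~$X$, set $U:=X\setminus\cll(\isol(X))$, a non-empty relatively open subset of~$X$ in which no point is isolated in~$X$; since $U$ is open in~$X$, no point of $U$ is isolated in~$U$ either. Then $Y:=\cll(U)\subseteq X$ (since $X$ is closed) is closed, non-empty and perfect: points of $U$ are limits of~$U$, and points of $Y\setminus U$ are limits of~$U$ by definition of~$Y$. Being closed in a metric space, $Y$ is automatically a $G_\delta$, so clause~$(4)$ forces $|Y|=2^{\aleph_0}$, contradicting $|X|<2^{\aleph_0}$. For~$(6)$, Alexandrov's theorem gives that a $G_\delta$ in the Polish space~$\F^n$ is itself Polish, hence Baire by the Baire category theorem, and strong Choquet via the standard Player~II strategy of maintaining a decreasing sequence of closed balls of diameters tending to zero, whose intersection is non-empty by completeness and lies inside each open set played by Player~I. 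The subtlest step is~$(5)$: one must upgrade the scattered-versus-perfect dichotomy from mere non-emptiness of $\isol(X)$ to its density, and the closure trick with~$U$ accomplishes this without invoking the transfinite Cantor-Bendixson derivative.
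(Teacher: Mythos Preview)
Your argument for~(2) has a genuine gap: you take $d(x,y):=|x-y|$, but this takes values in~$\F$, not in~$\Real$. A Polish space requires a \emph{real-valued} metric, and the hypotheses do not force $\F$ to be Archimedean. Concretely, let $\F_0$ be any countable non-Archimedean ordered field (e.g.\ $\Raz(t)$ ordered so that $t$ is positive infinitesimal) and let $\F$ be its Cauchy completion; then $\F$ is Cauchy complete with countable dense subset~$\F_0$, yet $|x-y|$ can be infinitesimal or infinite relative to~$\Real$, so it is not a metric in the usual sense. Your Cantor scheme in~(4) inherits the same problem when you speak of radii~$2^{-|s|}$.

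The paper handles exactly this point: if $\F$ is Archimedean one is done ($\F\cong\Real$), and otherwise one passes to the natural valuation $v$ induced by the ordering. The value group~$G$ is countable (being a quotient of the countable dense subset under Archimedean equivalence), so one can choose an order-reversing embedding $\iota:G\to\Real_{>0}$ and set $d(x,y):=\iota(v(x-y))$ for $x\neq y$. This is a genuine ultrametric inducing the order topology, and Cauchy completeness for the order transfers to $d$-completeness. Once this real-valued metric is in hand, your treatments of (1), (3)--(6) are correct and essentially coincide with the paper's (which just cites the relevant facts from Kechris rather than spelling out the Cantor scheme or the Choquet strategy).
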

\begin{proof}
(1) is obvious.

(2) requires us to define a metric.
If $\F$ is Archimedean, then $\F$ is homeomorphic to the reals, and we are
done.
Otherwise, let $v$ be the natural valuation on $\F$ induced by the ordering,
and $G$ be the value group of~$\F$.
\begin{claim}
The topology induced by $v$ on $\F$ is the same as the order topology.
\end{claim}
Notice that the claim is false if $\F$ is  Archimedean.

\begin{claim}
$G$ is countable.
\end{claim}
Thus, there exists a coinitial order-reversing embedding  $\iota$ of $(G, >)$ 
in $(\Real_+, <)$ as ordered sets (notice that $\iota$ ignores the group
structure).
For every $x, y \in \F$, define
\[
d(x, y) :=
\begin{cases}
\iota(v(x - y)) & \text{if } x \neq y;\\
0 & \text{otherwise}.
\end{cases}\]
\begin{claim}
$(\F, d)$ is a metric space.
\end{claim}
Actually, $(\F, d)$ satisfies the ultra-metric inequality.
\begin{claim}
$(\F,d)$ is homeomorphic to $(\F, v)$ (and hence to $(\F, <)$).
\end{claim}
\begin{claim}
If $(a_n)_{n \in \Nat}$ is a Cauchy sequence in $(\F, d)$,
then $(a_n)_{n \in \Nat}$ is a Cauchy sequence in $(\F, v)$.
\end{claim}
Hence, assume that $(a_n)_{n \in \Nat}$ is a Cauchy sequences in $(\F, d)$.
Since $(\F, v)$ is Cauchy complete (by assumption), $a_n \to a$ for some 
$a \in  \F$, according to the topology induced by~$v$.
However, $v$ and $d$ induce the same topology, and therefore $a_n \to a$
according also to~$d$; thus, $(\F, d)$ is a complete metric space.
Finally, $(\F, d)$ is separable by assumption.

$\card{\F} \leq 2^{\aleph_0}$ is easy.
The opposite inequality follows from (3).

Notice that a $\Gd$ non-empty subset of $\F^n$ is a Polish space~\cite[3.11]{kechris}.

(3) $X$ itself is a non-empty perfect polish space.
The conclusion follows from~\cite[6.2]{kechris}.

(4) Assume, for contradiction, that $\isol(X)$ is not dense in $X$;
let $B \subseteq \K^n$ be a closed box, such that $\inter{B} \cap X \neq
\emptyset$, and $Y := X \cap B$ contains no isolated points.
Hence, $Y$~satisfies the hypothesis of (3), absurd.

(5) Every Polish space is Baire and strong Choquet~\cite[8.17]{kechris}.
\end{proof}

\begin{definizione}
Let $T$ be a complete theory expanding the theory of ordered fields,
in a language~$\Lang$, expanding the language $\Langf$ of ordered fields.
We say that $T$ is a \intro{Polish theory} if, for every finite
language~$\Lang'$,  
such that $\Langf \subseteq \Lang' \subseteq \Lang$, the restriction of
$T$ to $\Lang'$ has a model which is separable and Cauchy complete.
If $T$ is not complete, we say that $T$ is a Polish theory if every completion
of $T$ is Polish.
\end{definizione}

\begin{lemma}
Let $T$ be a definably complete theory (expanding RCF).
If $T$ is Polish, $\K \models T$ and $X \subseteq \K^n$ is definable
and $\Gd$ (in the definable sense), then $X$ is definably Baire.
\end{lemma}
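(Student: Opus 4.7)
The plan is to argue by contradiction via transfer to a Polish reduct. Suppose $X\neq\emptyset$ and $X=\bigcup_{t\in\K}A_t$, where $(A_t)_{t\in\K}$ is a definable increasing family of subsets of $X$, each nowhere dense in $X$ (with respect to the subspace topology); let $\phi(x,\bar a)$ define $X$, let $\psi(x,t,\bar b)$ define the family $(A_t)$, and let $\chi(x,t,\bar c)$ define an increasing family of closed sets whose union is $\K^n\setminus X$, witnessing $X$ as a definable $\Gd$. The key observation is that ``$A$ is nowhere dense in $X$'' unfolds to the first-order condition
\[
\forall y\in X\ \forall r>0\ \exists z\in X\cap B(y,r)\ \exists s>0\ \forall w\in A\ |z-w|\geq s,
\]
so if we let $\Lang'$ be the finite sublanguage of $\Lang$ generated by $\Langf$ together with the symbols occurring in $\phi,\psi,\chi$, then the existential closure (over $\bar a,\bar b,\bar c$) of the conjunction of (i)~$\exists x\,\phi(x,\bar a)$, (ii)~``$(\chi(\cdot,t,\bar c))_t$ is an increasing family of closed sets with union $\K^n\setminus\phi(\K^n,\bar a)$'', and (iii)~``$(\psi(\cdot,t,\bar b))_t$ is increasing, its union is $\phi(\K^n,\bar a)$, and each $\psi(\cdot,t,\bar b)$ is nowhere dense in $\phi(\K^n,\bar a)$'' is a single $\Lang'$-sentence $\sigma$ which is true in $\K$.

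Since $\Th(\K)$ is a completion of the Polish theory $T$, it is itself Polish, and by the defining property applied to $\Lang'$ there exists a Polish (\ie, separable and Cauchy complete, as an ordered field) model $\K'$ of $\Th(\K)\rest\Lang'$. Elementary equivalence yields $\K'\models\sigma$, producing a non-empty $X'\subseteq(\K')^n$, a definable increasing family of closed sets $(F'_t)_{t\in\K'}$ with $(\K')^n\setminus X'=\bigcup_t F'_t$, and a definable increasing family $(A'_t)_{t\in\K'}$ with $X'=\bigcup_t A'_t$ and each $A'_t$ nowhere dense in $X'$. By Lemma~\ref{lem:Polish}(1), $\cof(\K')=\omega$; choosing a cofinal increasing sequence $(t_k)_{k\in\Nat}$ in $\K'$ collapses both unions to countable ones: $(\K')^n\setminus X'=\bigcup_k F'_{t_k}$ and $X'=\bigcup_k A'_{t_k}$.

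The first of these displays $X'$ as a non-empty topological $\Gd$ subset of the Polish space $(\K')^n$, so by Lemma~\ref{lem:Polish}(6), $X'$ is a Baire space in its subspace topology. The second, together with the fact (transferred via $\sigma$) that each $A'_{t_k}$ is nowhere dense in $X'$, exhibits $X'$ as a countable union of sets nowhere dense in itself, hence as meager in itself---contradicting $X'$ being a non-empty Baire space. The main subtlety is packaging non-emptiness, the $\Gd$ witness, and the ``nowhere dense in $X$'' clauses into a \emph{single} first-order $\Lang'$-sentence so that the Polish reduction is valid, and invoking the Polish hypothesis on the completion $\Th(\K)$ rather than on $T$ itself; once this is set up, the remainder is the Baire category theorem inside $\K'$ combined with the countable cofinality furnished by separability.
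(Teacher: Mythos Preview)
Your proof is correct and follows the same approach as the paper's. The paper's argument is extremely terse---it simply says ``W.l.o.g., $\K$ is Cauchy complete and separable. Hence, by Lemma~\ref{lem:Polish}, $X$ is topologically Baire, and \emph{a fortiori} definably Baire''---and your proposal is exactly the unpacking of that ``w.l.o.g.'': you make explicit that a putative failure of definable Baireness is witnessed by a single sentence in a finite sublanguage, transfer it to a separable Cauchy-complete model of the reduct (using the definition of Polish theory), and then use countable cofinality to convert the definable $\Gd$ and the definable increasing union into genuine topological ones, so that Lemma~\ref{lem:Polish}(6) applies.
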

\begin{proof}
\Wlog, $\K$ is Cauchy complete and separable.
Hence, by Lemma~\ref{lem:Polish}, $X$ is topologically Baire, 
and \emph{a fortiori} definably Baire.
\end{proof}

\begin{lemma}
A \dminimal theory $T$ is Polish.
In particular, if $\K$ is \dminimal and $X \subseteq \K^n$ is definable,
then $X$ is Baire.
\end{lemma}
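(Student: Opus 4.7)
The plan is to reduce the lemma to two ingredients already at hand: Proposition~\ref{prop:dmin-Cauchy} on the Cauchy completion of d-minimal structures, and the immediately preceding lemma stating that definable $\Gd$ sets over Polish theories are definably Baire.

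\emph{Showing $T$ is Polish.} Fix a completion $T'$ of $T$ and a finite language $L'$ with $\Langf \subseteq L' \subseteq \Lang$. Since d-minimality is preserved under completion, $T'$ is again d-minimal. By downward L\"owenheim--Skolem applied to $T'$ in the full language $\Lang$, there is a countable model $\K_0 \models T'$. By Proposition~\ref{prop:dmin-Cauchy}, the Cauchy completion of the underlying ordered field of $\K_0$ admits a unique $\Lang$-expansion $\K_0^C$ with $\K_0 \preceq \K_0^C$; in particular $\K_0^C \models T'$. The set $\K_0$ is countable and dense in $\K_0^C$, so $\K_0^C$ is separable, and it is Cauchy complete by construction. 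Its $L'$-reduct is then a separable Cauchy-complete model of $T' \rest L'$, as required.

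\emph{Deducing definable Baireness of every definable $X$.} Since d-minimality implies \ipminimality, every definable $X \subseteq \K^n$ is constructible, i.e.\ a finite union of locally closed sets. I would next observe that every constructible definable set is $\Gd$ in the definable sense: open sets are trivially $\Gd$, closed sets are $\Gd$ (as the intersection of their open $\varepsilon$-neighbourhoods), and the class of definable $\Fs$ subsets of $\K^n$ is stable under finite intersections as well as finite unions, because for definable increasing families $(A_t)$ and $(B_t)$ of closed sets one has $A \cap B = \bigcup_t (A_t \cap B_t)$, again an increasing union of closed sets. Dualizing, $\Gd$ is closed under finite unions and intersections, so every finite union of locally closed sets is $\Gd$. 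Since $T = \Th(\K)$ is Polish by the first step, the previous lemma applied to $X$ yields that $X$ is definably Baire.

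There is no genuine obstacle: once the closure of the $\Fs$ class under finite intersections is verified, the proof is a short consequence of Proposition~\ref{prop:dmin-Cauchy} together with the \ipminimality of d-minimal structures.
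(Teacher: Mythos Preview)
Your proof is correct and follows the paper's line: take a countable model, pass to its Cauchy completion via Proposition~\ref{prop:dmin-Cauchy}, and for the ``in particular'' clause invoke the preceding lemma on definable $\Gd$ sets---you helpfully spell out why every definable set in a \dminimal structure is $\Gd$ (via constructibility and closure of the $\Fs$ class under finite unions and intersections), a step the paper leaves entirely implicit. One minor point: downward L\"owenheim--Skolem in the full language $\Lang$ only yields a countable model when $\Lang$ is countable; the paper's ``\wloG the language of $T$ is countable'' handles this (d-minimality passes to reducts, since the uniform Cantor--Bendixson bound is a first-order scheme in the smaller language), and you should insert the same reduction before invoking L\"owenheim--Skolem.
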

\begin{proof}
\Wlog, the language of $T$ is countable.
Let $\F'$ be a countable model of~$T$, and $\F$ be the Cauchy completion
of~$\F'$. 
$\F$ is a model of~$T$.
\end{proof}


\subsection{The Z-closure}
\label{sec:zclosure}

\begin{definizione}
Let $A \subseteq \K$., and $c \in \K$.
We define the \Zclosure of $A$ inside~$\K$
\[
\zclK(A) := \bigcup\set{C \subset \K: C \text{ nowhere dense and definable with parameters from } A}.
\]
If $\K$ is clear from the context, we drop the subscript~$\K$.
$A \subseteq \K$ is \Zclosed in $\K$ if $\zclK(A) = A$.
\end{definizione}
The notion above is most interesting when $A$ is an elementary substructure 
of~$\K$.

\begin{remark}
If $\K$ is o-minimal, then $\zcl = \dcl$.
\end{remark}

\begin{remark}
If $\K$ is o-minimal and $A \subseteq \K$, then $A$ is \Zclosed in $\K$ if
and only if $A$ is an elementary substructure of~$\K$.
\end{remark}

\begin{remark}
For every $A \subseteq \K$, $\dcl(A) \subseteq \zcl(A)$, and
$\zcl(\dcl(A)) = \zcl(A) = \zcl(\dcl(A))$.
\end{remark}

\begin{remark}
If $A$ has \DSF, then $\zclK(A) \preceq \K$.
\end{remark}

\begin{lemma}\label{lem:Z-interior-generator}
If $A \subseteq \K$ has non-emtpy interior, then $\dcl(A) = \K$, and therefore
$\zcl(A) = \K$.
\end{lemma}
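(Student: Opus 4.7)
The plan is to extract a nondegenerate open interval from $A$ and then exhibit every element of $\K$ as the image of a point in that interval under a definable bijection whose parameters lie in $A$.

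First I would use that $A$ has non-empty interior to pick $a < b$ in $\K$ with $(a,b) \subseteq A$; in particular $a, b \in \dcl(A)$ and every $s \in (a,b)$ lies in $A$ (hence in $\dcl(A)$). The core step is to produce, with parameters only from $\{a,b\}$, a definable bijection $\psi : (a,b) \to \K$. A clean choice is
\[
\psi(s) := \frac{1}{s - a} + \frac{1}{s - b},
\]
which is $\{a,b\}$-definable by a rational formula. A short check shows $\psi$ is strictly decreasing on $(a,b)$ (its derivative is $-(s-a)^{-2} - (s-b)^{-2} < 0$), continuous, with $\psi(s) \to +\infty$ as $s \to a^+$ and $\psi(s) \to -\infty$ as $s \to b^-$. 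By definable completeness of $\K$ (applied to the continuous monotone image), the range of $\psi$ is all of $\K$, so $\psi$ is a bijection.

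Now for any $x \in \K$, set $s := \psi^{-1}(x) \in (a,b) \subseteq A$, so $s \in \dcl(A)$. Then $x$ is the unique solution of the $\{a,b,s\}$-definable equation $y \cdot (s-a)(s-b) = (s-b) + (s-a)$, so $x \in \dcl(\{a,b,s\}) \subseteq \dcl(A)$. Hence $\dcl(A) = \K$, and since every $\dcl$-element is trivially in $\zcl$, also $\zcl(A) = \K$.

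There is no real obstacle here; the one point that deserves care is justifying surjectivity of $\psi$, which is where definable completeness of $\K$ (via the intermediate value property for definable continuous monotone functions) enters, and noting that the language contains $+, -, \cdot, {}^{-1}$ so that $\psi$ is genuinely $\{a,b\}$-definable in $\K$.
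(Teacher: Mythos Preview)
Your argument is correct, with one small caveat: from ``pick $a < b$ in $\K$ with $(a,b) \subseteq A$'' it does not follow that $a, b \in \dcl(A)$, since the endpoints of an interval contained in $A$ need not themselves lie in $A$. The fix is immediate: first choose any open interval $(p,q) \subseteq A$, then take $a, b$ with $p < a < b < q$; now $a, b \in (p,q) \subseteq A \subseteq \dcl(A)$ and $(a,b) \subseteq A$ still holds. With that adjustment the proof goes through.

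Your route differs from the paper's. The paper first replaces $A$ by $\dcl(A)$, picks a ball $B(c,\varepsilon) \subseteq A$, translates by $-c$ to obtain $(-\varepsilon, \varepsilon) \subseteq A$, inverts to obtain $(1/\varepsilon, +\infty) \subseteq A$, and then writes every positive element as a difference of two elements of that half-line. Your single explicit bijection $\psi : (a,b) \to \K$ compresses this chain of field manipulations into one step and makes the role of definable completeness (via the intermediate value property for the continuous monotone $\psi$) explicit; the paper's argument, by contrast, uses only closure of $\dcl(A)$ under the field operations and needs no appeal to continuity or the IVP.
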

\begin{proof}
Since $A \subseteq \dcl A = \dcl(\dcl(A))$, \wloG $A = \dcl A$.
Let $\varepsilon > 0$ and $a \in \K$
such that $B(a; \varepsilon) \subseteq A$.
Hence, $(-\varepsilon, \varepsilon) \subseteq A$.
Thus, $(1\varepsilon, +\infty) \subseteq A$.
Let $b \in \K$; we want to prove that $b \in A$; \wloG, $b > 0$.
Let $a := (1/\varepsilon)$ and $a' := b + 1/\varepsilon$;
notice that $a$ and $a'$ are in~$A$, and therefore $b = a' - a \in A$.
\end{proof}

\begin{remark}
Let $\K \prec \F$ be a dense substructure.
If $\K$ is \dminimal, then $\K$ is \Zclosed in~$\F$.
\end{remark}
\begin{proof}
By Lemma~\ref{lem:regular-type}.
\end{proof}

\begin{remark}
Given $A \subseteq \K$, $\dcl(A)$ does not depend on~$\K$:
that is, if $\K \preceq \K'$, then the definable closure of $A$ inside $\K$
and the definable closure of $A$ inside $\K'$ are the same set.
Instead,  $\zclK(A)$ may depend on~$\K$:
for instance, an infinite nowhere dense subset of $\K$ is $A$-definable
and if $\K'$ is a $\kappa$-saturated elementary extension of~$\K$,
then $\card{\zcl^{\K'}(A)} \geq \kappa$.
If $\K$ is \iminimal but not o-minimal, then $\dcl$ depends on~$\K$:
for instance there exists some $\K' \succ \K$ such that
$\zcl^{\K'}(\emptyset) \neq \zclK(\emptyset)$.
\end{remark}

\begin{remark}\label{rem:zcl-extension}
If $A \subseteq \K \preceq \K'$, then $\zclK(A) = \zcl^{\K'}(A) \cap \K$.
\end{remark}

\begin{definizione}
Let $f: X \app Y$ be a definable application
(\ie, a multi-valued partial function), with graph~$F$.
Assume that $\K$ is \iminimal.
For every $x \in X$, let $f(x) := \set{y\in Y: \pair{x,y} \in F}\subseteq Y$.
Such an application $f$ is a \intro{\Zapplication{}} if, for every $x \in X$, 
$\dim\Pa{f(x)} = 0$ (thus, the domain of $f$ is all~$X$);
it is a partial \Zapplication if for every $x \in X$, 
$\dim\Pa{f(x)} \leq 0$.
\end{definizione}

\begin{remark}
Let $A \subseteq \K$, and $b \in \K$.
Then, $b \in \zcl A$ iff there exists an $\emptyset$-definable
\Zapplication $f: \K^n \app \K$ and $\av \in A$,
such that $b \in f(\av)$.
Moreover, if $\cv \in \K^n$, then $b \in \zcl(A\cv)$ iff there exists
an $A$-definable \Zapplication $f: \K^n \to \K$, such that $b \in f(\cv)$.
\end{remark}
\begin{proof}
The ``if'' direction is clear: $f(\av)$ is nowhere-dense.
For the converse, let $Z \subset \K$ be nowhere-dense and $A$-definable,
such that $b \in Z$.
Let $\phi(x,\av)$ be the formula defining~$Z$.
Let $\psi(x,\y)$ be the formula ``($\psi(\x,\y)$ and $\psi(\K, \y)$ is 
nowhere-dense) or ($x = 0$ and $\phi(\K, \y)$ is somewhere-dense)''.
Then, $\psi$ defines a \Zapplication $f: \K^n \app \K$,
and $b \in f(\av)$.

The ``moreover'' part is clear.
\end{proof}

\begin{proviso}
For the rest of this subsection, $\K$ is \iminimal with \DSF.
\end{proviso}
We will show that, under the above condition, $\zclK$ is a matroid
(\aka combinatorial pregeometry); we will write $\zcl$ for $\zclK$.

\begin{definizione}
A formula $\phi(x,\y)$ is \xnarrow if, for every $\cv \in \K^n$,
$\phi(\K,\cv)$ is nowhere-dense.
\end{definizione}

\begin{remark}
For every $A \subseteq \K$,
\[\begin{aligned}
\zcl(A) &= 
\bigcup\set{C \subset \K: C \text{ is 0-dimensional and $A$-definable}} =\\
& = \bigcup\set{\phi(\K,\av): \phi(x,\y) \text{ is \xnarrow and } 
\av \subseteq A}.
\end{aligned}\]
If $\K$ is \dminimal, then
\[
\zcl(A) = 
\bigcup\set{C \subset \K: C \text{ is discrete and $A$-definable}}.
\]
\end{remark}

\begin{remark}\label{rem:Z-multi}
Let $A \subseteq \K$, and $D \subseteq \K^n$ be $A$-definable and
$0$-dimensional.
If $\bv \in D$, then each coordinate of $\bv$ is in $\zcl A$.
\end{remark}
\begin{proof}
It is enough to show that $b_1 \in \zcl A$.
Let $D_1 := \Pi^n_1 D$.
Then, $D_1$ is 0-dimensional, and $b_1 \in D_1$.
\end{proof}

\begin{lemma}
$\zcl$ is transitive.
That is, $\zcl(\zcl A) = \zcl A$.
\end{lemma}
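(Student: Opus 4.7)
The plan is to use the characterization $\zcl A = \bigcup\set{\phi(\K,\av) : \phi(x,\y) \text{ is \xnarrow and } \av \subseteq A}$ together with the two dimension inequalities available for \iminimal structures with \DSF. Take $c \in \zcl(B)$ where $B := \zcl A$. Pick an \xnarrow formula $\phi(x,\y)$ and $\bv \in B^n$ with $c \in \phi(\K,\bv)$. (One may assume $\phi$ is \xnarrow by replacing it with $\psi(x,\y) := \phi(x,\y) \wedge \Pa{\phi(\K,\y) \text{ has empty interior}}$: the latter condition is first-order, and by Thm.~\ref{thm:imin} empty interior is equivalent to nowhere-density; since $\phi(\K,\bv)$ witnesses $c \in \zcl B$ it is already nowhere-dense, so $\psi(\K,\bv) = \phi(\K,\bv)$.) For each coordinate $b_i$ choose an $A$-definable nowhere-dense $D_i \subseteq \K$ with $b_i \in D_i$, and put $D := D_1 \times \dots \times D_n \subseteq \K^n$. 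Then $D$ is $A$-definable, and $\dim D = 0$: any projection of $D$ onto a non-trivial coordinate subspace is a product of some of the $D_i$'s, and such a product has empty interior as soon as a single factor does.

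Now consider the $A$-definable sets
\[
F := \set{(z,\y) \in \K \times \K^n : \y \in D \et \phi(z,\y)}, \qquad
E := \set{z \in \K : \exists \y \in D,\ \phi(z,\y)} = \Pi^{n+1}_1(F).
\]
Since $(c,\bv) \in F$, we have $c \in E$, so the proof reduces to showing $\dim E \leq 0$: by \iminimality this forces $E$ to be nowhere-dense, so $E \subseteq \zcl A$ directly from the definition of $\zcl$.

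For the dimension bound, every fiber $F_\y = \phi(\K,\y)$ has $\dim \leq 0$ (by \xnarrowness), while the projection of $F$ onto the $\y$-coordinates sits inside $D$ and therefore also has dimension $\leq 0$. Applying the contrapositive of Lemma~\ref{lem:imin-dimension-fiber} to the projection $F \to \pi_\y(F)$ with $m = 0$, $d = 1$ (so $\dim F \geq 1$ would produce a fiber of dimension $\geq 1$) gives $\dim F \leq 0$; then Lemma~\ref{lem:dim-function}(1) applied to the surjection $F \twoheadrightarrow E$ yields $\dim E \leq \dim F \leq 0$, as required. The only step that is at all subtle is this fiber-counting; everything else is bookkeeping, and the hypotheses ``\iminimal'' and ``\DSF'' are used only through Lemmas~\ref{lem:imin-dimension-fiber} and~\ref{lem:dim-function}, which are precisely the tools that make $\zcl$ behave like a pregeometry.
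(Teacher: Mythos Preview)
Your proof is correct and follows essentially the same route as the paper: form the $A$-definable set $F = \set{(z,\y): \y \in D_1 \times \dots \times D_n \et \phi(z,\y)}$, use Lemma~\ref{lem:imin-dimension-fiber} to see it has dimension~$0$, and project. The only cosmetic difference is that where you invoke Lemma~\ref{lem:dim-function}(1) to bound $\dim E$, the paper instead quotes Remark~\ref{rem:Z-multi} (whose one-line proof is exactly that projection argument); and your parenthetical about forcing $\phi$ to be \xnarrow is unnecessary, since the characterization of $\zcl$ you start from already hands you an \xnarrow formula.
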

\begin{proof}
Let $b \in \zcl(\zcl A)$.
Then, there exists $\cv \in (\zcl A)^n$ and an \xnarrow formula $\phi(x,\y)$,
such that $\K \models \phi(b, \cv)$.
For every $i \leq n$, let $Y_i$ be an $A$-definable nowhere-dense set
containing~$c_i$.
Let $\psi(x, \y)$ be the $L(A)$-formula
\[
\phi(x, \y) \wedge \bigwedge_{i \leq n} y_i \in Y_i
\]
and $Z := \phi(\K^{n + 1})$.

\begin{claim}
$\dim Z = 0$.
\end{claim}
By Lemma~\ref{lem:imin-dimension-fiber}.

Since $Z$~is $A$-definable, and $\pair{b, \cv} \in Z$,
Remark~\ref{rem:Z-multi} implies that  $b \in \zcl A$.
\end{proof}

\begin{lemma}
$\zcl$ has the exchange property.
That is, if $a \in \zcl(Bc) \setminus \zcl (B)$, then $c \in \zcl(Ba)$.
\end{lemma}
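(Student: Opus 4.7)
The plan is to fix an \xnarrow $L(B)$-formula $\phi(x,y)$ witnessing $a\in\zcl(Bc)$ (such exists by the Remark characterising $\zcl(A)$ via \xnarrow formulas, absorbing the $B$-parameters into $\phi$), so that $\phi(a,c)$ holds and $\phi(\K,y)$ is nowhere dense for every $y\in\K$. The goal then is to show that the $L(Ba)$-definable set $\set{y\in\K:\phi(a,y)}$ is nowhere dense, for this set contains $c$ and would give $c\in\zcl(Ba)$.

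Consider $X:=\set{(x,y)\in\K^2:\phi(x,y)}$, an $L(B)$-definable set. For each fixed $y$ the vertical slice $\set{x:\phi(x,y)}$ is nowhere dense, so $X$ cannot contain an open box and hence has empty interior in~$\K^2$; by \iminimality this forces $\dim X\le 1$. Now apply Thm.~\ref{thm:imin}(\ref{en:i-dim-fiber-2}) with $n=m=1$, $d=1$, $k=0$: the set
\[
C:=\set{x\in\K:\dim X_x\ge 1}
 =\set{x\in\K:\interior\set{y:\phi(x,y)}\neq\emptyset}
\]
has $\dim C\le 0$. Because $\K$ is \iminimal, $C$~is nowhere dense, and being $L(B)$-definable it lies entirely inside~$\zcl(B)$.

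The exchange step is now immediate: by hypothesis $a\notin\zcl(B)$, so $a\notin C$, which means $\set{y:\phi(a,y)}$ has empty interior. Invoking \iminimality once more turns empty interior into nowhere density, and this $L(Ba)$-definable nowhere dense set contains~$c$, so $c\in\zcl(Ba)$, as required.

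The one mildly delicate point is the passage from ``$\phi(\K,c)$ is nowhere dense'' to ``$\phi$ may be chosen \xnarrow''; in general, ``nowhere dense'' is not a definable property, but under \iminimality it coincides with ``has empty interior'' and the latter is $\emptyset$-definable for one-variable fibers, so one replaces $\phi(x,y)$ by the conjunction of $\phi(x,y)$ with the definable condition that $\phi(\K,y)$ has empty interior. Apart from this bookkeeping, everything reduces to the fibered dimension inequality already established, so no further machinery is needed.
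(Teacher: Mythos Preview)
Your proof is correct and takes a genuinely different route from the paper's.

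The paper argues by contraposition: assuming $c \notin \zcl(Ba)$, it passes from $\phi$ to the formula $\psi(x,y)$ asserting $y \in \interior(\phi(x,\K))$, notes that $\psi(a,c)$ holds because $\phi(a,\K)\setminus\psi(a,\K)$ is $0$-dimensional, and then builds a new formula $\theta$ picking out the \emph{centres} of the connected components of $\psi(x,\K)$. The set $Z := \theta(\K^2)$ is shown to be $0$-dimensional (here \DSF is invoked to produce definable functions $g < h$ witnessing an open strip inside $\psi(\K^2)$, reaching a contradiction with $\phi$ being \xnarrow), and since $(a,d) \in Z$ for a suitable~$d$, Remark~\ref{rem:Z-multi} gives $a \in \zcl(B)$.

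You instead go straight at the conclusion: once $X := \phi(\K^2)$ has $\dim X \le 1$, the fibered dimension inequality Thm.~\ref{thm:imin}(\ref{en:i-dim-fiber-2}) with $n=m=d=1$, $k=0$ immediately gives that $C := \{x : \dim X_x \ge 1\}$ is $0$-dimensional, hence contained in $\zcl(B)$; since $a \notin C$, the horizontal fiber $X_a$ has empty interior and witnesses $c \in \zcl(Ba)$. This is shorter and, notably, never appeals to \DSF: the fibered inequality is proved in \S\ref{sec:imin-proof} from \iminimality and Kuratowski--Ulam alone. The paper's argument is more self-contained and makes the geometric picture (open strips, centres of components) explicit, at the cost of invoking definable choice for the selector step.

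Your handling of the bookkeeping point is also correct: under \iminimality, ``nowhere dense'' coincides with ``empty interior'' for subsets of~$\K$, and the latter is a first-order property of the parameter, so the passage to an \xnarrow formula is harmless --- this is exactly the content of the Remark preceding the lemma.
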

\begin{proof}
Assume $a \in \zcl(Bc)$ and $c \notin \zcl(Ba)$;
we want to conclude that $a \in \zcl(B)$.
Let $\phi(x,y)$ be an \xnarrow $L(B)$-formula, such that
$\K \models \phi(a,c)$.
Define $\psi(x,y)$ as
\[
\exists r > 0\ \forall y'\ \Pa{\abs{y - y'} < r \ \rightarrow \phi(x,y')};
\]
that is, $\K \models \phi(x,y)$ iff $y \in \interior(\phi(x,\K))$.
Notice that $\K \models \psi(a, c)$, because 
$\phi(a,\K) \setminus \psi(a,\K)$ has dimension~0.
Moreover, for every $a' \in \K$, $\psi(a', \K)$ is open:
let $\theta(x,y)$ be the formula
\[
\psi(x,y) \et y \text{ is the centre of a definably connected component of } \psi(x, \K).
\]
Notice that, for every $a' \in \K$, $\theta(a', \K)$ is discrete.
Let $d \in D$ be the centre of the connected component of $\psi(a, \K)$
containing~$c$, and $Z := \theta(\K^2)$.

\begin{claim}
$\dim Z = 0$.
\end{claim}
Let $\pi := \Pi^2_1$, and $\mu: \K^2 \to \K$ be the projection onto the second
coordinate.
If both $\pi(Z)$ and $\mu(Z)$ have dimension~0, then the claim is true.
If $\dim(\pi(Z)) = 0$, then, since $\dim(Z_x) \leq 0$ for every~$x$,
Lemma~\ref{lem:imin-dimension-fiber} implies that $\dim Z = 0$.
If, for contradiction, $\dim(\pi(Z)) > 0$, 
then, since $Z \subseteq \psi(\K^2)$, and $\psi(x, \K)$ is open for every~$x$,
\DSF and Theorem~\ref{thm:imin}(\ref{en:i-monotonicity}) imply that
there exist $g < h: I \to \K$ $B$-definable and continuous,
such  that, for every $x \in I$, $(g(x), h(x)) \subseteq \psi(x,\K)$.
Hence, $\psi(\K^2)$ contains an open set, and is contained in $\phi(\K^2)$,
contradicting the fact that $\phi(x,y)$ is \xnarrow.

Since $\pair{a,d} \in Z$ and $Z$ is $B$-definable, the claim and 
Remark~\ref{rem:Z-multi}  imply that $a \in \zcl(B)$.
\end{proof}

The above two lemmata imply that $\zcl$ is a matroid.
Hence, we can speak about \Zgenerating and \Zindependent sets, \Zbasis and
\Zdimension: $E$~is \Zgenerating set of $B/A$ if $\zcl(A E) = \zcl(B)$;
$E$~is \Zindependent over $A$ if, for every $e_1, \dotsc, e_{n+1} \in E$,
$e_{n+1} \notin \zcl(A e_1 \dots e_n)$;
$E$ is a \Zbasis if it is both \Zgenerating and \Zindependent.
The \Zdimension of $B/A$ is $\Zdim(B/A)$, 
the cardinality of some \Zbasis of $B/A$.
It is important to notice that the above notions do not depend on the ambient
space~$\K$: by Remark~\ref{rem:zcl-extension}
if $A \subseteq B \subseteq \K \preceq \K'$, and $E \subseteq \K$,
then $E$ is a \Zgenerating set of $B/A$ (\resp \Zindependent over~$A$,
\resp a \Zbasis of $B/A$) in $\K$ iff it is a \Zgenerating set of $B/A$ (\resp
\Zindependent over~$A$, \resp a \Zbasis of $B/A$) in $\K'$.
Hence, $\Zdim(B/A)$ is also independent from~$\K$.

Let $\monster \succ \K$ be ``the'' monster model.
Let $\kappa$ be an infinite cardinal, such that $\kappa > \card{T}$,
where $T$ is the theory of~$\K$; we will assume that
 assume that $\card{\K} < \kappa$ and $\monster$ is $\kappa$-saturated
and strongly $\kappa$-homogeneous.
We will say that $A$ is a proper subset of $\monster$ if
$A \subset \monster$ and $\card A < \kappa$.

\begin{lemma}
Let $A$ be a proper subset of~$\monster$, and $c \in \monster$.
Let $\Xi(c/A)$ be the set of conjugates of $c$ over~$A$.
\Tfae:
\begin{enumerate}
\item $c \in \zcl(A)$;
\item $\Xi(c/A)$ has empty interior;
\item $\Xi(c/A)$ is nowhere dense.
\end{enumerate}
If moreover $\monster$ is \dminimal, then $c \in \zcl(A)$ iff $\Xi(c/A)$ is discrete.
\end{lemma}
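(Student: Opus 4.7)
The plan is to establish $(1) \Rightarrow (3) \Rightarrow (2) \Rightarrow (1)$, with the last implication carrying the real content. For $(1) \Rightarrow (3)$: if $c \in \zcl(A)$, pick an $A$-definable nowhere dense $N$ containing $c$; since $N$ is fixed setwise by every $\sigma \in \aut(\monster/A)$, every conjugate of $c$ lies in $N$, so $\Xi(c/A) \subseteq N$ is nowhere dense. The implication $(3) \Rightarrow (2)$ is immediate. The hard part will be $(2) \Rightarrow (1)$, which I will prove by a saturation argument once the topological geometry of $A$-definable sets at $c$ is pinned down using \iminimality.

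For $(2) \Rightarrow (1)$ I argue contrapositively: assuming $c \notin \zcl(A)$, I will produce $r_0 > 0$ with $B(c; r_0) \subseteq \Xi(c/A)$. The key preliminary observation is that for every $A$-definable set $X$ containing $c$ one has $c \in \interior(X)$: Thm.~\ref{thm:imin}(\ref{en:i-bd}) gives that $\bd(X)$ has empty interior, and by \iminimality it is therefore nowhere dense; since $\bd(X)$ is also $A$-definable, the assumption $c \notin \zcl(A)$ forces $c \notin \bd(X)$, and $c \in X \subseteq \cl X$ then yields $c \in \interior(X)$. With this in hand, the partial type over $A \cup \set c$
\[
q(r) := \set{r > 0} \cup \set{\forall y\,(\abs{y-c} < r \to y \in X) : X \text{ is $A$-definable and } c \in X}
\]
is finitely satisfiable, since the intersection of finitely many $A$-definable sets containing $c$ is still $A$-definable, contains $c$, and hence by the observation contains some $B(c; r)$. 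As $\abs{A \cup \set c} < \kappa$ and $\monster$ is $\kappa$-saturated, $q$ is realized by some $r_0 > 0$ in $\monster$, and then $B(c; r_0) \subseteq X$ for every $A$-definable $X$ with $c \in X$. By strong $\kappa$-homogeneity, $\Xi(c/A)$ equals the intersection of all such $X$, whence $B(c; r_0) \subseteq \Xi(c/A)$ and $\Xi(c/A)$ has non-empty interior.

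For the ``moreover'' clause, assume $\monster$ is \dminimal. If $c \in \zcl(A)$, then $c$ lies in some $A$-definable set of dimension~$0$, which by d-minimality is a finite union of $A$-definable discrete sets $D_1, \ldots, D_N$; thus $c \in D_i$ for some~$i$, $\Xi(c/A) \subseteq D_i$ by the conjugation argument from $(1) \Rightarrow (3)$, and a subset of a discrete set is discrete, so $\Xi(c/A)$ is discrete. Conversely, a discrete subset of $\monster$ has empty interior, so if $\Xi(c/A)$ is discrete the main equivalence gives $c \in \zcl(A)$. The only real obstacle throughout is $(2) \Rightarrow (1)$, and the central point is that \iminimality upgrades the inclusion ``$c \in X$'' to ``$c \in \interior(X)$'', which is exactly what makes the type $q(r)$ finitely satisfiable.
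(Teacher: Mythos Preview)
Your proof is correct and follows essentially the same route as the paper's: both argue $(1)\Rightarrow(3)\Rightarrow(2)$ trivially and prove $(2)\Rightarrow(1)$ by the same mechanism---use \iminimality to see that $c\notin\zcl(A)$ forces $c\in\interior(X)$ for every $A$-definable $X\ni c$, then saturate over $A\cup\set{c}$ to obtain an open interval of conjugates. The only cosmetic differences are that the paper realizes a $2$-type in endpoints $(d,d')$ rather than your $1$-type in a radius~$r$, and for the \dminimal clause it invokes directly the remark that $\zcl(A)$ is the union of $A$-definable discrete sets, whereas you recover the discrete witness via a Cantor--Bendixson decomposition.
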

\begin{proof}
$(1 \Rightarrow 3 \Rightarrow 2)$ is clear.

$(2 \Rightarrow 1)$.
Let $\phi(x)$ be any $\Lang(A)$-formula, such that $\monster \models \phi(c)$.
Since $c \notin \zcl(A)$, $c \in \interior(\phi(\monster))$,
and therefore there exist $d, d' \in \monster$, such that
$d < c < d'$ and $(d, d') \subseteq \phi(\monster)$.
Let $\Gamma(v, v')$ be the set of $\Lang(A c)$-formulae
\[
v < c < v' \et (v, v') \subseteq \phi(\monster),
\]
where $\phi(x)$ varies in $\tp(c/A)$.
By what we said above, $\Gamma$ is consistent: hence, by saturation,
there exist $d, d' \in \monster$ satisfying $\Gamma$.
We claim that $(d, d') \subseteq \Xi(c/A)$.
In fact, if $c' \in (d, d')$, then by definition, $c'$ satisfies all
the $\Lang(A)$-formula satisfied by~$c$: therefore, $\tp(c'/A) = \tp(c/A)$,
and, by homogeneity, $c' \in \Xi(c/A)$.

Assume now that $\monster$ is \dminimal.
If $c \in \zcl(A)$, then there exists a discrete set~$X$, definable with
parameters from~$A$, such that $c \in X$; therefore, $\Xi(c/A) \subseteq X$,
and thus $\Xi(c/A)$ is discrete.
The converse is clear.
\end{proof}

\begin{corollary}
$\zclM$ is an existential matroid.
\Ie, let $\av \in \monster^n$, $\bv \in \monster^m$, $c \in \monster$,
and $\phi(x,\y,\z)$ be \xnarrow.
Assume that, for every conjugate $c'$ of $c$ over $\av$, 
$\monster \models \phi(c', \av, \bv)$.
Then, $c$ (and all its conjugates over~$\av$) is in $\zclM(\av)$.
\end{corollary}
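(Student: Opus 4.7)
The plan is to derive this immediately from the preceding lemma characterizing $\zclM(A)$ in terms of the size of the set of conjugates $\Xi(c/A)$. The key observation is that the hypothesis ``for every conjugate $c'$ of $c$ over $\av$, $\monster \models \phi(c', \av, \bv)$'' says precisely that
\[
\Xi(c/\av) \subseteq \phi(\monster, \av, \bv).
\]
Since $\phi(x,\y,\z)$ is \xnarrow, the set $\phi(\monster, \av, \bv)$ is, by definition, nowhere dense in $\monster$ (we fix the parameters $(\av,\bv)$ and read off the slice in the variable $x$). Therefore $\Xi(c/\av)$ is contained in a nowhere dense set, hence is itself nowhere dense (equivalently, has empty interior).

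Invoking the previous lemma, which gives the equivalence $c \in \zclM(\av) \Leftrightarrow \Xi(c/\av)$ has empty interior, we conclude $c \in \zclM(\av)$. For the ``(and all its conjugates over~$\av$)'' clause, I would simply note that if $c'$ is any conjugate of $c$ over~$\av$, then $\Xi(c'/\av) = \Xi(c/\av)$, so the same argument applies; alternatively, one can apply an automorphism fixing $\av$ pointwise to the statement $c \in \zclM(\av)$ and use that $\zclM(\av)$ is setwise invariant under such automorphisms.

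I do not expect any real obstacle here; the corollary is essentially a restatement of the preceding lemma once one recognizes that the hypothesis exhibits a single nowhere-dense $\av\bv$-definable set containing every conjugate of $c$ over~$\av$. The only care needed is to keep the parameters straight: the conjugacy is over $\av$ (not over $\av\bv$), but this is harmless since we only need the inclusion into one fixed nowhere-dense set, not a uniform family indexed by varying~$\bv$.
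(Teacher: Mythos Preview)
Your argument is correct. The paper's own proof simply cites external results (Theorem~9.8 and Lemma~3.22 of \cite{fornasiero-matroids}), whereas you derive the statement directly from the lemma immediately preceding the corollary, which characterizes $c \in \zclM(A)$ via $\Xi(c/A)$ having empty interior. Your route is more self-contained: it makes explicit that the hypothesis is exactly the inclusion $\Xi(c/\av) \subseteq \phi(\monster, \av, \bv)$, and that \xnarrow{}ness of $\phi$ then forces $\Xi(c/\av)$ to be nowhere dense. The paper's citations presumably encode the same idea in the general framework of existential matroids; your version avoids the external dependency at no cost.
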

\begin{proof}
It is \cite[Theorem~9.8]{fornasiero-matroids}.
The corollary is also a direct consequence of 
\cite[Lemma~3.22]{fornasiero-matroids}.
\end{proof}

\begin{definizione}
Given $A$, $B$, $C$ proper subsets of $\monster$, we say that $A$ and $C$
are \Zfree over~$B$, written $A \zind_B C$,
if some (every) \Zbasis of $A$ over $B$ remains \Zindependent over $BC$.
\end{definizione}

The above three lemmata imply the following result.

\begin{thm}\label{thm:Z-matroid}
$\zcl$ is an existential matroid.
Hence, $\zind$~is an independence relation in the sense of~\cite{adler}
(and in particular it is symmetric), and
satisfies $a \zind_B a$ iff $a \in \zclK(B)$, for every $a \in \K$,
$B \subseteq \K$.
\end{thm}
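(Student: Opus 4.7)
The plan is to assemble the three preceding lemmata together with the corollary, and then translate into the language of independence relations. The theorem splits into three claims: that $\zcl$ is an existential matroid, that $\zind$ satisfies the axioms of~\cite{adler}, and that self\hyph independence characterises membership in~$\zcl(B)$.

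For the matroid axioms I~would check them directly. Extensivity, $A \subseteq \zcl(A)$, holds because for any $a \in A$ the singleton $\set a$ is discrete and hence nowhere dense, so $a \in \zcl(A)$; monotonicity and finite character are immediate from the definition, the latter because every definable set uses only finitely many parameters. Idempotence is the transitivity lemma stated two lemmata above, and exchange is the content of the exchange lemma stated immediately above. That $\zcl$ is moreover \emph{existential} is exactly the preceding corollary, whose justification goes through~\cite[Theorem~9.8]{fornasiero-matroids}.

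For the consequences on $\zind$, I~would invoke the general machinery of~\cite{fornasiero-matroids}: once $\zcl$ is known to be an existential matroid, the associated relation $\zind$ automatically satisfies the Adler axioms (extension, monotonicity, base monotonicity, finite character, transitivity and normality), the only non\hyph trivial one being symmetry, which reduces to exchange in the standard matroid\hyph theoretic way. Finally, unfolding the definition, $a \zind_B a$ means that a \Zbasis of $\set a$ over $B$ remains \Zindependent over $B \cup \set a$: if $a \in \zcl(B)$ the basis is empty and the condition is vacuous, while if $a \notin \zcl(B)$ the basis is $\set a$ itself, which manifestly fails to remain \Zindependent over $B \cup \set a$. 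The hard part would have been establishing the existential property, but this is already discharged by the corollary via~\cite[Thm.~9.8]{fornasiero-matroids}; what is left is matroid bookkeeping, with exchange doing the heavy lifting for symmetry of~$\zind$ exactly as in the o\hyph minimal or strongly minimal case.
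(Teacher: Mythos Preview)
Your proposal is correct and matches the paper's approach: the paper gives no proof block at all, merely stating that ``the above three lemmata imply the following result,'' so you have simply unpacked what the paper leaves implicit (the trivial matroid axioms, the appeal to transitivity and exchange, the existential property via the corollary, and the translation to~$\zind$ through~\cite{fornasiero-matroids}). Your explicit verification of $a \zind_B a \Leftrightarrow a \in \zcl(B)$ by case analysis on the basis is a welcome addition that the paper omits.
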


Notice that if $\K$ is o-minimal, then $\zcl = \dcl$, and therefore
$\zind = \tind$.
The converse is also true 
(remember the assumption that $\K$ is \iminimal with \DSF).
\begin{lemma}
\Tfae:
\begin{enumerate}
\item $\zclM = \dcl$;
\item $\zind = \tind$ (in the monster model);
\item $\K$ is o-minimal.
\end{enumerate}
\end{lemma}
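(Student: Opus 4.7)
The plan is to prove the cycle $(3) \Rightarrow (1) \Rightarrow (3)$ and $(3) \Rightarrow (2) \Rightarrow (1)$, relying on Corollary~\ref{cor:i-o-min}, Theorem~\ref{thm:Z-matroid}, the standard fact that $a \tind_B a$ iff $a \in \acl(B)$, and the elementary observation that $\acl = \dcl$ in any linearly ordered structure.

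For $(3) \Rightarrow (1)$, I would simply invoke the remark from \S\ref{sec:zclosure} that in an o-minimal structure $\zcl = \dcl$: in that setting every nowhere dense definable set is finite, and each element of a finite definable set in a linearly ordered structure is definable as its $i$-th smallest element. For the converse $(1) \Rightarrow (3)$, I would argue contrapositively. If $\K$ is not o-minimal, Corollary~\ref{cor:i-o-min} produces an infinite definable discrete set $D \subseteq \K$, with finitely many parameters~$A_0$. Then $D^\monster$ is discrete, hence nowhere dense, so $D^\monster \subseteq \zclM(A_0)$. By $\kappa$-saturation of $\monster$, $\card{D^\monster} \geq \kappa > |T| \geq \card{\dcl(A_0)}$, so some $c \in D^\monster$ sits in $\zclM(A_0) \setminus \dcl(A_0)$, violating~(1).

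For $(2) \Rightarrow (1)$, the key point is that each independence relation determines its matroid closure via self-independence. By Theorem~\ref{thm:Z-matroid}, $a \zind_B a$ iff $a \in \zclM(B)$; and for thorn-independence one always has $a \tind_B a$ iff $a \in \acl(B)$. If $\zind = \tind$, then $\zclM = \acl$, and since $\monster$ expands an ordered field, $\acl = \dcl$ (enumerating any finite definable set in increasing order), giving $\zclM = \dcl$. Finally, for $(3) \Rightarrow (2)$, I would use that o-minimal theories are rosy of \th-rank one, so that $\tind$ coincides with matroid independence from~$\acl$; combined with $\acl = \dcl = \zclM$ (from~(1)), both $\zind$ and $\tind$ become the matroid independence relation associated to the same closure, hence coincide.

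The main obstacle is not technical within the paper: every internal ingredient is already available from Theorem~\ref{thm:Z-matroid} and Corollary~\ref{cor:i-o-min}. The only delicate point is the external appeal to the standard facts about thorn-dividing (self-independence characterising $\acl$, and rosyness of o-minimal theories with $\tind = \ind^{\acl}$), which must be cited rather than proved here.
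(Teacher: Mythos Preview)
Your proposal is correct and tracks the paper's proof closely for $(3)\Rightarrow(1)$, $(1)\Rightarrow(3)$, and $(2)\Rightarrow(1)$: the cardinality argument via saturation and the self-independence characterisation are exactly what the paper does (the paper phrases $(1)\Rightarrow(3)$ with ``infinite definable set with empty interior'' rather than going through Corollary~\ref{cor:i-o-min}, but this is cosmetic).

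The one genuine difference is how condition~(2) is obtained from the others. You argue $(3)\Rightarrow(2)$ by citing rosyness of o-minimal theories, so that $\tind$ agrees with $\acl$-independence, and then identify this with $\zind$ via $(1)$. The paper instead proves $(1)\Rightarrow(2)$ directly: from $\zclM=\dcl$ it gets $\zind=\mind$, and since $\zind$ is symmetric (Theorem~\ref{thm:Z-matroid}), so is $\mind$; then it invokes Adler's result that a symmetric $\mind$ coincides with~$\tind$. Both routes rely on an external fact about thorn-forking of comparable depth, so neither is more elementary; your version has the mild advantage that the cited fact (o-minimal theories are rosy with $\tind=\ind^{\acl}$) is perhaps more widely known, while the paper's route has the advantage of deriving $(2)$ straight from $(1)$ without the detour through~$(3)$.
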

\begin{proof}
($3 \Rightarrow 1$) is clear.
If (1) holds, then $\zind = \mind$; moreover, since $\zind$ is symmetric,
$\mind$ is also symmetric, and therefore $\mind = \tind$ \cite{adler}.

Assume that (2) holds.
Let $a \in \zcl(B)$.
Then, $a \zind_B a$, therefore $a \tind_B a$, and thus
$a \in \dcl B$: hence, (1) also holds.
We have to prove that $\monster$ is o-minimal.
Assume, for contradiction, that $A \subset \monster$ is definable with 
parameters~$\bv$, infinite and with empty interior.
Then, $A \subseteq \zclM(\bv) = \dcl(\bv)$.
However, since $A$ is infinite, $\card{A} \geq \kappa$, and therefore
$\card{\dcl(\bv)} \geq \kappa > \card T$, which is impossible.
\end{proof}

\begin{lemma}
The dimension induced by $\zind$ and the geometric notion of dimension
coincide.
That is, if $X \subseteq \monster^n$ is definable, then
$\dim X = \max \set{\RK^{Z}(\x): \x \in X}$.
\end{lemma}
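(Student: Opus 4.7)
The plan is to fix a proper parameter set $B$ over which $X$ is definable (so $\card B < \kappa$) and interpret $\RK^Z(\x)$ as the cardinality of a Z-basis of $\x$ over $B$ (the set independent of $B$ follows by absorbing $B$ into the language); the equality will then be obtained from two matching inequalities proved separately.

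For $\dim X \geq \max_{\x \in X} \RK^Z(\x/B)$, I would set $d := \dim X$, permute coordinates so that $\pi(X)$ contains a non-empty open box $U = I_1 \times \cdots \times I_d$ with $\pi := \Pi^n_d$ and $U$ taken $B$-definable, and build a point $\bar a \in U$ with Z-independent coordinates by a step-by-step choice. At stage $i$, the partial type $\{x \in I_i\} \cup \{x \notin C : C \text{ is } Ba_1\cdots a_{i-1}\text{-definable and nowhere dense}\}$ is finitely satisfiable, since a finite union of nowhere-dense sets is nowhere dense while the open interval $I_i$ is not, so $\kappa$-saturation produces the realization $a_i$. Lifting $\bar a \in U \subseteq \pi(X)$ to some $\x \in X$ yields $\RK^Z(\x/B) \geq \RK^Z(\bar a/B) = d$.

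For the reverse inequality, given $\x \in X$ with $k := \RK^Z(\x/B)$ and, after reindexing, a Z-basis $(x_1, \dots, x_k)$, it suffices to show that $Y := \Pi^n_k(X)$ has non-empty interior in $\monster^k$, whence $\dim X \geq \dim Y \geq k$. Assuming otherwise, \Iminimality forces $Y$ to be nowhere dense; the key claim, proved by induction on $k$, is then: \emph{no $B$-definable nowhere-dense $C \subseteq \monster^k$ contains a tuple whose entries are Z-independent over $B$}. The base $k=1$ is immediate from the definition of $\zcl$. For the inductive step with $C \subseteq \monster^{k+1}$, which has $\dim C \leq k$ by \Iminimality, Theorem~\ref{thm:imin}(\ref{en:i-dim-fiber-2}) (applied with $d=1$) yields that $D := \{\bar y \in \monster^k : \dim C_{\bar y} \geq 1\}$ has dimension strictly less than $k$ and so is itself nowhere dense and $B$-definable. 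For $(a_1, \dots, a_{k+1}) \in C$: either $(a_1, \dots, a_k) \in D$, in which case the induction hypothesis gives Z-dependence among $a_1, \dots, a_k$; or $C_{(a_1, \dots, a_k)}$ is a nowhere-dense $Ba_1\cdots a_k$-definable subset of $\monster$ containing $a_{k+1}$, forcing $a_{k+1} \in \zcl(Ba_1\cdots a_k)$. Either outcome contradicts the assumed Z-independence of $(x_1, \dots, x_k)$, closing the argument.

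The main difficulty is the inductive step of the $(\geq)$ direction: it requires the sharp fiber-dimension estimate of Theorem~\ref{thm:imin}(\ref{en:i-dim-fiber-2}) and careful bookkeeping of the ambient base so that the nowhere-dense witnesses produced live over exactly the parameters one has in hand at each stage. The $(\leq)$ direction, by contrast, is essentially a standard ``generic point in an open box'' construction enabled by $\kappa$-saturation and the additivity of nowhere-denseness.
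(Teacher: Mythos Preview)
The paper states this lemma without proof, implicitly deferring to the general theory of existential matroids developed in \cite{fornasiero-matroids} (once Theorem~\ref{thm:Z-matroid} is in hand, the coincidence of the matroid rank with the topological dimension is a general fact about existential matroids). Your argument, by contrast, is a direct self-contained proof using only the \iminimality results already proved in this paper; it is correct and is a perfectly good alternative.

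Two small remarks. First, your inequality labels are swapped: the paragraph you introduce with ``For $\dim X \geq \max_{\x \in X} \RK^Z(\x/B)$'' actually constructs a point $\x$ with $\RK^Z(\x/B) \geq d = \dim X$, hence establishes $\max \geq \dim X$; the paragraph you call ``the reverse inequality'' is the one showing $\dim X \geq \RK^Z(\x/B)$ for every~$\x$. The mathematics is fine, only the signposting is reversed. Second, you do not actually need the box $U$ to be $B$-definable: the partial type $\{x \in I_i\} \cup \{x \notin C\}$ uses only $<\kappa$ parameters even if the endpoints of $I_i$ are thrown in, and the sets $C$ you must avoid are those definable over $Ba_1\cdots a_{i-1}$ alone, so saturation still yields $a_i \notin \zcl(Ba_1\cdots a_{i-1})$. (If you do want $U$ to be $B$-definable, Lemma~\ref{lem:skolem} supplies a $B$-definable point of $\interior(\pi(X))$ around which one can take a $B$-definable box, so this is harmless.)

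The inductive step of your key claim is the only place where real care is needed, and your use of Theorem~\ref{thm:imin}(\ref{en:i-dim-fiber-2}) with $d=1$ is exactly right: from $\dim C \leq k$ one gets $\dim D \leq k-1$, hence $D$ is nowhere dense in $\monster^k$, and the dichotomy on $(a_1,\dots,a_k)\in D$ versus $\notin D$ goes through as written.
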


Contrast the situation of $\zind$ to the notion of $M$-dividing independence
(defined in~\cite{adler}), where, 
$A \mind_B C$ iff, 
for every $\cv \subset \dcl(B C)$, 
\[
\dcl(A B \cv) \cap \dcl(B C) = \dcl(B \cv).
\]
\begin{lemma}\label{lem:asymmetry}
Assume that $T$ is \dminimal, but \emph{not} o-minimal.
Then, $\mind$~is not symmetric
(and therefore $\dcl$ does not have the Exchange Property).
However, $\mind$~does satisfy the existence and extension axioms, 
and therefore coincides with~$\tind$, the \th-forking relation.
Hence, $T$~is not rosy,
and in particular $\tind$ is not symmetric.
\end{lemma}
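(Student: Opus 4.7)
The proof proceeds in three stages.

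The first stage establishes both the parenthetical failure of $\dcl$-Exchange and the main claim that $\mind$ is not symmetric. By the previous lemma, d-minimality together with non-o-minimality of $T$ forces $\zclM \neq \dcl$. Concretely, there is an infinite discrete $\emptyset$-definable set $D \subset \monster$; using \DSF one produces a definable function $f$ from an open region of $\monster$ into $D$ that is locally constant on a non-empty open interval. Taking a generic element $b$ (with $b \notin \zclM(\emptyset)$) and setting $c := f(b)$ yields $c \in D \cap \dcl(b) \setminus \dcl(\emptyset)$ together with $b \notin \dcl(c)$, since $f^{-1}(c)$ has non-empty interior around $b$. This is the canonical witness to failure of Exchange for $\dcl$.

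To translate this failure into asymmetry of $\mind$, one unwinds the definition. The matroid structure of $\zcl$ (Thm.~\ref{thm:Z-matroid}) together with the Monotonicity Theorem (Thm.~\ref{thm:imin}(\ref{en:i-monotonicity})) implies that for $b$ generic, $\dcl(b) \cap \zclM(\emptyset) = \dcl(\emptyset)$. One then constructs a configuration of parameters $A, B, C$ built from $b, c$ and suitable auxiliary elements witnessing $A \mind_B C$ while $C \not\mind_B A$; equivalently, symmetry of $\mind$ over arbitrary bases would induce a pregeometry structure on $\dcl$, contradicting the Exchange failure exhibited above.

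For the second stage, I invoke Adler~\cite{adler} for the abstract fact that $\mind$ satisfies the existence and extension axioms in every complete theory. For the third stage, Adler's framework shows that $\mind$ always refines $\tind$, and when $\mind$ additionally satisfies extension, the refinement is an equality; hence $\mind = \tind$. Combining with the asymmetry from the first stage, $\tind$ is not symmetric, and therefore $T$ is not rosy.

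The main obstacle is the first stage, specifically the translation from Exchange-failure for $\dcl$ to honest asymmetry of $\mind$: the quantification over tuples $\cv$ in the definition of $\mind$ requires careful bookkeeping, and one genuinely needs the $\zcl$-matroid structure to isolate a witness showing that one direction of $\mind$-independence holds while the other fails, rather than merely demonstrating that symmetry fails on both sides simultaneously.
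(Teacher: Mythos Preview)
Your first stage has a genuine gap, and in fact an internal contradiction. You claim that for $b \notin \zclM(\emptyset)$ one has $\dcl(b) \cap \zclM(\emptyset) = \dcl(\emptyset)$; but your own element $c = f(b)$ lies in $\dcl(b) \cap D \subseteq \dcl(b) \cap \zclM(\emptyset)$ while $c \notin \dcl(\emptyset)$, so the claim is false. (The Monotonicity Theorem only puts the endpoints of the interval on which $f$ is constant into $\zclM(\emptyset)$, not into $\dcl(\emptyset)$.) More fundamentally, the implication you need --- Exchange fails for $\dcl$, hence $\mind$ is asymmetric --- is exactly the hard direction, and the naive candidate pair does not work: from $c \in \dcl(b) \setminus \dcl(\emptyset)$ one gets \emph{both} $b \notind[M]_\emptyset c$ and $c \notind[M]_\emptyset b$, since $c \in \dcl(b) \cap \dcl(c) \setminus \dcl(\emptyset)$ witnesses failure for the empty tuple on either side. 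Your phrase ``symmetry of $\mind$ would induce a pregeometry on $\dcl$'' is the converse of the easy implication and is left unjustified.

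The paper's construction is substantially different. It passes to a prime model $\K$ (after naming constants) that is not Cauchy complete, and takes $a$ realising a \emph{regular gap} of~$\K$. By Corollary~\ref{cor:dmin-regular-cut}, every element of $\K\gen{a} \setminus \K$ is interdefinable with $a$ over~$\K$; thus $\dcl$ restricted to $A := \dcl(a)$ \emph{does} satisfy Exchange, and $\dcl(A') \cap A \in \{\K, A\}$ for every $A' \subseteq A$. One then chooses a finite set $C$ and $b \in \zcl(C) \setminus \dcl(C)$ with $b \in \dcl(Ca)$ and $a \notin \zcl(Cb)$. The verification of $Cb \mind_\emptyset a$ reduces to the two cases for $\dcl(A') \cap A$, the nontrivial one handled by $a \notin \zcl(Cb) \supseteq \dcl(Cb)$; meanwhile $a \notind[M]_\emptyset Cb$ is witnessed by the tuple~$C$, since $b \in \dcl(aC) \cap \dcl(Cb) \setminus \dcl(C)$. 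The regular-gap device is what you are missing: it manufactures a definably closed set on which Exchange holds, so that one side of $\mind$ can actually be verified.

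A smaller point: for stages two and three the paper cites \cite[Lemma~3.22]{fornasiero-matroids}, which uses the existential-matroid structure of $\zcl$ from Theorem~\ref{thm:Z-matroid}, rather than Adler's abstract framework alone.
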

\begin{proof}
The fact that $\mind$ satisfy existence and extension and coincides with 
$\tind$ is immediate from \cite[Lemma~3.22]{fornasiero-matroids}.

Let $\K \prec \monster$ such that $\K$ is not Cauchy complete, and $\card{\K}
< \kappa$.
By expanding the language with less than $\kappa$ new constants, \wloG we can
assume that $\K$ is the prime model of~$T$.
Let $\pi$ be a regular gap of~$\K$; choose $c_0$ and $c_1$
such that $c_1 > \K$ and $c_0 \models \pi$.
\begin{claim}
There exist $a_0$ and $b$, and a finite set $C$ such that:
\begin{enumerate}
\item $c_0, c_1 \in C$;
\item $b \in \zcl(C)$;
\item $b \in \dcl(C a_0) \setminus \dcl(C)$;
\item $a_0 \notin \zcl(Cb)$;
\item $a_0 > 0$ and $a_0$ is infinitesimal \wrt~$\K$.
\end{enumerate}
\end{claim}
Let $X$ be a definable (with parameters) subset of $\monster$ which is
discrete and infinite (such a set exists by Corollary~\ref{cor:i-o-min},
because $T$ is \iminimal but not o-minimal), and let $C$ be any finite set
containing $c_0$, $c_1$, and the parameters of~$X$. 
By saturation, there exists $b \in X \setminus \dcl(C)$.
Let $I$ be an open interval containing~$b$, such that $I \cap X = \set b$.
By Lemma~\ref{lem:Z-interior-generator}, 
since $\Zrk(\monster /\K) \geq \kappa$, 
there exists $a_0' \in I \setminus \zcl(Cb)$.
\Wlog, $a_0' > 0$.
If $a_0'$ is infinitesimal (\wrt~$\K$), let $a_0 := a_0'$;
if $a_0'$ is finite but not infinitesimal, let $a_0 := a_0' / c_1$;
if $a_0'$ is infinite, let $a_0 := 1/ a_0'$.
Notice that $\zcl(C b) = \zcl(C)$.
\begin{claim}
With $b$ and $C$ as in the above Claim, there exists $a \in \monster$ such
that:
\begin{enumerate}
\item $a \models \pi$ (and therefore $a$ is in a Cauchy completion of~$\K$);
\item $a \notin \zcl(C b)$;
\item $b \in \dcl(Ca) \setminus \dcl(C)$.
\end{enumerate}
\end{claim}
Let $a := c_0 + a_0$.
Since $a_0$ is infinitesimal, $a \models \pi$.
Since $a_0 \notin \zcl(C)$ and $c_0 \in C$, the second point follows.
Since $\dcl(C a) = \dcl(C a_0) \ni b$, the third points follows.
\begin{claim}
$a \notind[M] C b$.
\end{claim}
In fact, let $C' := C$; then, $b \in \dcl(C' a) \cap \dcl(C b)$,
but $b \notin \zcl(C')$.
\begin{claim}
$C b \mind a$.
\end{claim}
In fact, let $A := \dcl(a) = \K \gen a$, and $A' \subseteq A$.
Define $Y := \dcl(C A' b) \cap A$; we have to prove that $Y = A'$.
Since $a$ satisfies a regular gap over~$\K$, $\dcl(\emptyset) = \K$ is dense
in~$A$; therefore, $\dcl^A$ satisfies EP.
Hence, either $A' = \K$, or $A' = A$.
If $A' = A$, the conclusion is obvious.
If $A' = \K$, then $a \notin Y$, because $Y \subset \zcl(C b)$, 
and $a \notin \zcl(C b)$; therefore, since $\dcl^A$ satisfies EP, $Y = \K$.
\end{proof}

We do not know if the above lemma extends to \iminimal theories with \DSF,
or to \ipminimal theories.

\begin{lemma}
Let $f: \K \app \K$ be a \Zapplication, definable with parameters~$\cv$.
Let $b \in \K$ and $U \subseteq \K$ be non-empty and open, such that, for
every $a \in U$, $b \in g(a)$.
Then, $b \in \zcl(\cv)$.
\end{lemma}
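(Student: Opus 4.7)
\medskip
\noindent\textbf{Proof proposal.}
I will produce an explicit $\cv$-definable nowhere dense subset of $\K$ containing $b$. Consider
\[
B := \set{y \in \K : \exists c < d \in \K,\ (c,d) \times \set{y} \subseteq F},
\]
where $F \subseteq \K^2$ is the graph of~$f$. Clearly $B$ is $\cv$-definable. Since $U$ is non-empty and open, it contains some non-empty open interval $(c,d)$, and by hypothesis $(c,d) \times \set b \subseteq F$; hence $b \in B$. Since $\K$ is \iminimal, to conclude $B$ is nowhere dense (hence $b \in \zcl(\cv)$) it suffices to show $\interior B = \emptyset$.

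Suppose for contradiction that $B$ contains a non-empty open interval~$I_0$. By \DSF (Lemma~\ref{lem:skolem}) there exist definable functions $\alpha, \beta : I_0 \to \K$ with $\alpha(y) < \beta(y)$ and $(\alpha(y), \beta(y)) \times \set y \subseteq F$ for every $y \in I_0$. By Theorem~\ref{thm:imin}(\ref{en:i-continuous}) (or its consequence Theorem~\ref{thm:imin}(\ref{en:i-monotonicity})), there is a non-empty open sub-interval $I \subseteq I_0$ on which both $\alpha$ and $\beta$ are continuous. Define the open set
\[
W := \set{(x,y) \in \K^2 : y \in I,\ \alpha(y) < x < \beta(y)}.
\]
Then $W$ is open in~$\K^2$, contained in~$F$, and non-empty (pick any $y_0 \in I$ and any $x_0 \in (\alpha(y_0), \beta(y_0))$).

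Now I extract a contradiction with $f$ being a \Zapplication. By continuity of $\alpha$ and $\beta$, there is an open sub-interval $J \ni x_0$ such that, for every $x \in J$, the slice $W_x := \set{y \in I : \alpha(y) < x < \beta(y)}$ is a non-empty open neighbourhood of~$y_0$ in~$I$. Pick any $x_1 \in J$: then $W_{x_1} \subseteq F_{x_1} = f(x_1)$ has non-empty interior, contradicting $\dim f(x_1) = 0$. Hence $B$ has empty interior, is therefore nowhere dense by \iminimality, and witnesses $b \in \zcl(\cv)$.

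The only mildly delicate step is the passage from "$B$ has non-empty interior" to the open set $W \subseteq F$; this requires both \DSF (to choose $\alpha, \beta$) and the \iminimal "continuity almost everywhere" result. Everything else is bookkeeping.
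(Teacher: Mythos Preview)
The paper states this lemma without proof, so there is nothing to compare against directly. Your argument is correct and, in spirit, mirrors the technique the paper uses a few lines earlier in the proof that $\zcl$ has the exchange property: use \DSF to select witnesses, use \iminimality (Thm.~\ref{thm:imin}(\ref{en:i-continuous})) to make them continuous on a subinterval, and conclude that the graph $F$ contains a non-empty open set, contradicting that the fibres $f(x)$ are $0$-dimensional.

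One small simplification: once you know $W$ is open in~$\K^2$ (which your continuity argument for $\alpha,\beta$ already establishes), every vertical slice $W_x$ is automatically open in~$\K$; so you do not need the auxiliary interval~$J$---just pick any $x_0$ with $W_{x_0}\neq\emptyset$ and you are done. An alternative, shorter route (and probably what the paper has in mind, given the placement of the lemma) is to use the exchange property just proved: pass to the monster model, choose $a^*\in U(\monster)$ with $a^*\notin\zclM(\cv b)$ (possible since $U$ is open and $\zclM(\cv b)$ is a union of $\lvert T\rvert$ many nowhere-dense sets), note $b\in f(a^*)\subseteq\zclM(\cv a^*)$, and apply exchange to get $b\in\zclM(\cv)$, hence $b\in\zclK(\cv)$ by Remark~\ref{rem:zcl-extension}. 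Your explicit construction has the advantage of staying inside~$\K$ and producing a concrete $\cv$-definable witness.
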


\subsection{Dense pairs of d-minimal structures}
\label{subsec:dense}

Dense pairs of o-minimal structures were studied in~\cite{vdd-dense}.
\begin{proviso}
$\K$ is an \ipminimal structure with \DSF, and $T := \Th(\K)$.
\end{proviso}

We have seen that the \Zclosure is an existential matroid on~$\K$.
Moreover, $A \subseteq \K$ is topologically dense iff it s dense \wrt to the
matroid $\zcl$, that is iff $X$ intersects every definable subset of $\K$ of
dimension~$1$.

We can apply the results in~\cite{fornasiero-matroids} to~$T$, and obtain
the following results.

\begin{thm}
Let $\Td$ be the theory of pairs $\Am \prec \Bm \models T$, 
such that $\Am$ is dense in $\Bm$ and $\zcl(\Am) = \Am$.
Then, $T$ is consistent and complete.
Besides, $\Bm$ is the open core of $\pair{\Bm, \Am}$.
If moreover $T$ is \dminimal, then $\Td$ is the theory of 
pairs $\Am \prec \Bm \models T$, such that $\Am$
is dense in $\Bm$ (the fact that $\Am$ is $\zcl$-closed in $\Bm$ follows).
\end{thm}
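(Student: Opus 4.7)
The plan is to apply the general machinery of dense pairs of structures carrying an existential matroid (developed in \cite{fornasiero-matroids}) to the matroid $\zcl$ on models of $T$, which is an existential matroid by Theorem~\ref{thm:Z-matroid}. Four things need to be verified: consistency of $\Td$, completeness, identification of the open core with $\Bm$, and the fact that in the \dminimal case the $\zcl$-closed condition is automatic from density.

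For consistency, I would construct an explicit pair. Starting from any $\Am \models T$, pick an elementary extension $\Am \prec \Bm$ in which $\Am$ is dense: if $T$ is \dminimal take $\Bm := \Am^C$ (Proposition~\ref{prop:dmin-Cauchy}); in the general \ipminimal with \DSF case, use saturation to realize a regular cut of $\Am$, then iterate, closing under definable functions via \DSF and invoking Lemma~\ref{lem:regular-type-image} to propagate density through each added element. By Lemma~\ref{lem:regular-type} (which goes through assuming \ipminimality because every 0-dimensional definable set is constructible, hence has finite Cantor--Bendixson rank along the proof of that lemma), every $b \in \Bm \setminus \Am$ realizes a regular gap over $\Am$, so $b \notin \zcl^{\Bm}(\Am)$, giving $\zcl^{\Bm}(\Am) = \Am$ by Remark~\ref{rem:zcl-extension}.

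For completeness I would run a standard back-and-forth between two $\aleph_1$-saturated models $\pair{\Bm_1, \Am_1}$ and $\pair{\Bm_2, \Am_2}$ of $\Td$. A partial isomorphism is a bijection between finite tuples $\av \subset \Bm_1$ and $\bv \subset \Bm_2$ respecting the predicate (\ie $a_i \in \Am_1 \iff b_i \in \Am_2$) and the $\Lang$-type. To extend by a new element $b \in \Bm_1$, split cases. If $b \in \Am_1$, then $\tp^{\Bm_1}(b/\av) = \tp^{\Am_1}(b/\av \cap \Am_1)$ by elementarity, and saturation of $\Am_2$ together with density of $\Am_2$ in $\Bm_2$ yields a realization $b' \in \Am_2$ of the image type. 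If $b \notin \Am_1$, then either $b \in \zcl(\av)$, in which case a conjugate in $\Bm_2$ exists by saturation and the matroid exchange principle applied to the matched $\zcl$-generators, or $b$ is $\zcl$-independent of $\av$ over $\av \cap \Am_1$; in this second case, we must realize the corresponding ``generic outside $\Am_2$'' type, which is possible because $\Am_2$ is dense but $\zcl$-closed in $\Bm_2$, so regular-cut realizations over $\av$ lying outside $\Am_2$ exist by saturation. The back-and-forth then goes through symmetrically. The open-core claim follows from this same back-and-forth: any $\pair{\Bm, \Am}$-definable open set $U$ is definable in $\Bm$ alone, because an automorphism of $\Bm$ fixing $\av$ that permutes $\Am$-elements of $\Bm$ (which exist by the completeness argument) must fix every open $\av$-definable set pointwise, and then a standard reduction via \DSF writes $U$ as a projection of a $\Bm$-definable set.

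Finally, the \dminimal addendum is immediate from Lemma~\ref{lem:regular-type}: if $\Am \prec \Bm$ are \dminimal and $\Am$ is dense in $\Bm$, then every $b \in \Bm \setminus \Am$ realizes a regular cut over $\Am$, hence $b \notin \zcl^{\Bm}(\Am)$, so $\Am$ is automatically $\zcl$-closed. The main obstacle is the back-and-forth step in completeness, specifically the generic-outside-$\Am_2$ case: one must know that the partial type asserting ``realize the regular cut determined by $b$ over $\av$, and stay outside $\Am_2$'' is consistent, which is where the combination of density, $\zcl$-closedness, and the existential-matroid property (via Theorem~\ref{thm:Z-matroid} and the corollary on existential extension) does the essential work.
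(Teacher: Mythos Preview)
Your high-level plan matches the paper exactly: the paper simply invokes \cite{fornasiero-matroids}, using that $\zcl$ is an existential matroid (Theorem~\ref{thm:Z-matroid}) together with the observation, stated just before the theorem, that topological density coincides with $\zcl$-density. The \dminimal addendum via Lemma~\ref{lem:regular-type} is also how the paper handles it (see the Remark in \S\ref{sec:zclosure} that a dense \dminimal elementary substructure is automatically \Zclosed).

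There is, however, a genuine error in your sketch of consistency for the non-\dminimal case. You assert that Lemma~\ref{lem:regular-type} ``goes through assuming \ipminimality because every 0-dimensional definable set is constructible, hence has finite Cantor--Bendixson rank.'' This is false: constructibility does \emph{not} imply finite Cantor--Bendixson rank, and that implication is precisely what separates \dminimality from mere \ipminimality (cf.\ the discussion at the end of \S\ref{sec:ipmin}, where an $\omega$-saturated \ipminimal non-\dminimal structure is shown to have $\rkP(\K) = \infty$, witnessed by an infinite descending chain of closed nowhere-dense definable sets). Without the finite-rank hypothesis you cannot conclude that a regular-gap realization avoids every $\Am$-definable nowhere-dense set, nor push such a realization through an arbitrary definable function (the monotonicity step needs $b \notin D^*$ for the exceptional set~$D$, which again appeals to the finite-rank lemma). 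So your explicit Cauchy-style construction of a model of $\Td$ does not go through beyond the \dminimal case; consistency in the general \ipminimal setting must come from the abstract dense-pair machinery of \cite{fornasiero-matroids}, which needs only the existential-matroid axioms and does not pass through regular cuts. Your open-core sketch is also too loose to stand on its own---the automorphism heuristic as written does not yield $\Bm$-definability of open sets---but since the paper defers that entirely to the cited reference, this is a lesser issue.
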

Similar results can be shown for dense tuples of models of $T$
\cite[\S13]{fornasiero-matroids}.

More results can be proved for $\Td$, \eg a form of elimination of quantifiers~\cite{fornasiero-matroids}.
If moreover $T$ is \dminimal, then also the results in~\cite[\S9]{fornasiero-matroids} apply to~$T$.

We will give some additional results and conjectures that are more psecific to
our situation.

\begin{thm}[{\cite[Theorem~2]{vdd-dense}}]
Let $\pair{\Bm, \Am} \models \Td$.
Given a set $Y \subset \Am^n$, \tfae:
\begin{enumerate}
\item 
$Y$ is definable in $\pair{\Bm, \Am}$;
\item 
$Y = Z \cap \Am^n$ for some set $Z \subseteq \Bm^n$ that is definable in~$\Bm$.
\end{enumerate}
If moreover $T$ is \ipminimal, then the above two conditions are equivalent
to:
\begin{enumerate}
\setcounter{enumi}{2}
\item 
$Y$ is definable in the structure $\pair{\Am, (\Am \cap (0,b))_{b \in \Bm}}$.
\end{enumerate}
\end{thm}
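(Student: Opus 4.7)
The implication $(2) \Rightarrow (1)$ is immediate, since $\Am$ is a predicate in $\pair{\Bm, \Am}$, so any $\Bm$-definable set intersected with $\Am^n$ is $\pair{\Bm,\Am}$-definable. The plan for $(1) \Rightarrow (2)$ is to invoke the ``form of elimination of quantifiers'' for $\Td$ provided by \cite{fornasiero-matroids} (cited in the text just before this theorem): every $\Td$-formula $\phi(\x)$ is equivalent in $\Td$ to a Boolean combination of $L$-formulas with parameters in $\Bm$ and formulas of the shape $\exists \z \in \Am^k\ \psi(\x,\z,\bv)$ where $\psi$ is quantifier-free $L$ and $\bv$ are parameters in~$\Bm$. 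The task then reduces to a single key claim: for $\x \in \Am^n$,
\[
\exists \z \in \Am^k\ \psi(\x,\z,\bv) \iff \exists \z \in \Bm^k\ \psi(\x,\z,\bv).
\]
The left-to-right direction is trivial. For the converse, fix $\x \in \Am^n$ and suppose some $\z \in \Bm^k$ witnesses $\psi(\x,\z,\bv)$. Let $S := \psi(\x, \Bm^k, \bv) \subseteq \Bm^k$; this is $\Bm$-definable over $\x\bv$. By Lemma~\ref{lem:imin-dimension-fiber} and Theorem~\ref{thm:Z-matroid}, we do induction on the coordinates of $\z$: if $\dim S > 0$ we may choose a coordinate along which the projection has positive dimension and replace that coordinate by an element of $\Am$ (using density of $\Am$ in $\Bm$, and the fact that the open set obtained by restricting to a positive-dimensional slice meets $\Am^j$); if $\dim S = 0$, then every $\z \in S$ lies in $\zclM(\x\bv)^k$, but since $\Am$ is $\zcl$-closed in $\Bm$ and $\x \in \Am^n$, we have $\zclM(\x\bv) \cap \Bm \subseteq \zclM(\Am \cup \bv) \cap \Bm$, which combined with the parameter $\bv$ staying fixed allows us to pass from a $\Bm$-witness to an $\Am$-witness by an automorphism argument in a sufficiently saturated extension. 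This is precisely the content of the density/matroid analysis in \cite{fornasiero-matroids}, and it is the main obstacle: the zero-dimensional case cannot be handled by pure topology and requires the existential matroid structure of $\zcl$.

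For $(1) \Rightarrow (3)$ in the \ipminimal case, we start from $Y = Z \cap \Am^n$ with $Z$ a $\Bm$-definable subset of $\Bm^n$ defined by some $L$-formula $\theta(\x, \bv)$, $\bv \in \Bm^m$. By \ipminimality (Thm on constructible structures), $Z$ is a finite Boolean combination of sets of the form $\set{\x : f_i(\x,\bv) = 0}$ for $\emptyset$-definable continuous functions $f_i$. It suffices to translate the single condition $f(\x,\bv) = 0$ (and likewise $f(\x,\bv) \neq 0$) into the structure $\pair{\Am, (\Am \cap (0,b))_{b \in \Bm}}$, for $\x$ ranging over $\Am^n$. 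By continuity of $f$ and density of $\Am$ in $\Bm$, we have
\[
f(\x,\bv) = 0 \iff \forall 0 < \varepsilon \in \Am\ \exists \bar b', \bar b'' \in \Am^m
\ \bar b' < \bv < \bar b''\ \&\ \forall \z \in \prod(b_j',b_j'')\ \abs{f(\x,\z)} < \varepsilon,
\]
where the inequalities $b_j' < b_j < b_j''$ (with $b_j', b_j'' \in \Am$) are expressible via the cut predicates, since $b_j' < b_j$ is equivalent to $b_j' \in \Am \cap (0,b_j)$ (after a shift making parameters positive), and $b_j'' > b_j$ is equivalent to $b_j'' \notin \Am \cap (0,b_j)$. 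The inner universal quantifier ranges over $\Am$ by density (and continuity transfers the estimate to all of $\Bm$). Thus the condition is expressible in $\pair{\Am, (\Am \cap (0,b))_{b \in \Bm}}$, and the Boolean combination yielding $Y$ is as well.

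The hardest part of the whole argument is the zero-dimensional case of $(1) \Rightarrow (2)$, which is why the hypothesis $\zcl(\Am) = \Am$ in $\Bm$ (automatic under \dminimality, as recorded in the preceding theorem) is essential: without it, an $L$-formula over $\Bm$ that happens to have a witness in $\zclM(\Am\bv) \setminus \Am$ would produce a set definable in the pair but not of the form $Z \cap \Am^n$. The rest of the proof is routine: the translations in $(1) \Leftrightarrow (2)$ respect Boolean combinations and bounded quantification, and the reformulation in $(3)$ is a direct unfolding of the structure theorem for constructible sets afforded by \ipminimality together with continuity arguments of the kind used in~\cite[Lemma~2.10]{vdd-dense}.
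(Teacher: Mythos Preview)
Your argument for $(1)\Rightarrow(2)$ has a genuine gap: the displayed equivalence
\[
\exists \z \in \Am^k\ \psi(\x,\z,\bv) \iff \exists \z \in \Bm^k\ \psi(\x,\z,\bv)
\qquad (\x \in \Am^n)
\]
is simply false in general, and your treatment of the zero-dimensional case does not repair it. Take $n=0$, $k=1$, and $\psi(z,b)\equiv (z=b)$ with $b\in\Bm\setminus\Am$: the right-hand side holds (witness $z=b$) while the left-hand side fails. Your ``automorphism argument in a sufficiently saturated extension'' cannot help here: any automorphism fixing the parameter $\bv$ fixes $\zcl(\bv)$ pointwise, so it cannot move a witness $\z\in\zcl(\x\bv)^k$ into $\Am^k$ when the witness genuinely depends on~$\bv$. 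The observation that $\zclM(\x\bv)\subseteq\zclM(\Am\cup\bv)$ is correct but useless, since $\zclM(\Am\cup\bv)$ is not contained in~$\Am$. The hypothesis $\zcl(\Am)=\Am$ is essential, as you say, but it controls $\zcl(\Am)$, not $\zcl(\Am\cup\bv)$.

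The paper's proof of $(1)\Rightarrow(2)$ (following \cite{vdd-dense}) does not proceed via your equivalence. The route there is a type-comparison/back-and-forth argument: one shows that two tuples $\x,\x'\in\Am^n$ with the same $L$-type over the parameters $\bv$ already have the same $\Ltwo$-type over~$\bv$; this immediately yields that every $\Ltwo$-definable subset of $\Am^n$ is the trace of a $\Bm$-definable set. The quantifier-elimination from \cite{fornasiero-matroids} is the input to that argument, but it is used to set up the back-and-forth system, not to reduce to an equivalence of the shape you wrote. Your handling of $(2)\Rightarrow(3)$ via the constructible-structure theorem and continuity is along the right lines and matches the approach indicated (the reference to \cite[Lemma~2.10]{vdd-dense}); it is $(1)\Rightarrow(2)$ that needs to be redone.
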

Remember that \ipminimal structures are \iminimal with \DSF. 

\begin{proof}
$(1 \Rightarrow 2)$ and, under the extra condition on~$T$, $(2 \Rightarrow 3)$
are as in \cite[Theorem~2]{vdd-dense}.
$(3 \Rightarrow 1)$ and $(2 \Rightarrow 1)$ are obvious (and true without the
extra condition on~$T$).
\end{proof}
We do not know whether we do really need the extra condition that $T$ is
\ipminimal to prove that (2) implies (3) in the above Theorem.

\begin{question}
Let $T'$ be a complete \aminimal theory.
Is there an existential matroid on~$T'$?
\cite[6.2]{DMS} and \cite{fornasiero-matroids} 
prove that if $T$ is equal to either
$T^d$ or $T^g$ (see \cite{DMS} for the definition of $T^g$) 
for some o-minimal theory~$T$, then $T'$ admits such a matroid
(in the case of $T^g$, the matroid is~$\acl$).
\end{question}

\begin{example}
We show that $\zcl$ does not satisfy Existence on~$\monster := \pair{\Bm, \Am}$, a monster model of~$\Td$. 
Write $x' \elem^2_C x$ if the $\Ltwo$-type of $x$ and $x'$ over $C$ are the
same, and let $\Xi^2(x / C) := \set{ x' \in \monster: x' \elem^2_C x}$.

Choose $a_1, a_2 \in \Am$ and $b \in \Bm \setminus \Am$, such that
$a_1$ and $a_2$ are \Zindependent over~$b$.
Let $c := a_1 b + a_2$, and $f: \Am^2 \to \Bm$ be the definable function
$f(x_1, x_2) := x_1 \cdot c + x_2$.
By hypothesis, $\zrk(a_1 a_2/ b) = 2$, and therefore
$\zrk(a_1, a_2 / b c) \geq 1$.
Thus, either $a_1 \notin \zcl(b c)$, or $a_2 \notin \zcl(b c)$;
\wloG, $a_1 \notin \zcl(b c)$.
However, $f$ is injective, and therefore 
$a_1$ and $a_2$ are $\Ltwo$-definable over $b c$,
hence, $\Xi^2(a_1/ b c) = \set{a_1} \subseteq \zcl(b c a_1)$.
If $\zcl$ did satisfy existence, then $a_1 \in \zcl(b c)$, absurd.
\end{example}

\subsubsection{The open core}
\begin{proviso}
For this subsection, we assume that $T$ is \dminimal.  
Let $\monster := \pair{\Bm, \Am} \models \Td$.
\end{proviso}

We have seen that $\Bm$ is the open core of~$\monster$.
Hence, since every $\Fs$ subset of $\monster^n$ is definable in the open core
of $\monster$, every such set is constructible.
We will prove some additional results about this topic.
$\scl$ is the \intro{small closure} on $\monster$ and $\sdim$ is the
corresponding dimension function, as defined in~\cite{fornasiero-matroids}.

\begin{lemma}\label{lem:cl-U}
Let $(X_t)_{t \in \monster}$ be a definable increasing family of subsets of $\monster^n$, and $X := \bigcup_t X_t$.
Let $d \leq n$ and assume that, for every $t \in \monster$,
$\sdim(X_b) \leq d$.
Then, $\sdim(X) \leq d$.
\end{lemma}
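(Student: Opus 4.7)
The plan is to reduce the statement, via a coordinate projection, to a Baire-category argument carried out in the open core $\Bm$, which we have seen coincides with $\monster$'s definable open sets and is itself \dminimal (in particular \aminimal and Baire).

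First I would use that $\scl$ is an existential matroid on $\monster$ (as established in~\cite{fornasiero-matroids}) to characterize $\sdim$ by coordinate projections: for a definable $Y \subseteq \monster^n$, $\sdim(Y) \leq d$ if and only if for every projection $\pi$ onto a $(d+1)$-dimensional coordinate subspace the image $\pi(Y)$ is $\scl$-small in $\monster^{d+1}$, and in the present dense-pair setting this is equivalent to $\pi(Y)$ having empty interior in $\Bm^{d+1}$. (Indeed, a nonempty open ball of $\Bm^{d+1}$ has $\sdim = d+1$, because it contains a $\zcl$-independent, and hence $\scl$-independent, $(d+1)$-tuple from $\Bm$ by density; conversely, an $\scl$-small set cannot contain such a tuple.)

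Now suppose, for contradiction, that $\sdim(X) \geq d+1$. After a permutation of coordinates, fix a projection $\pi := \Pi^n_{d+1}$ with $\pi(X)$ of nonempty interior in $\Bm^{d+1}$. For every $t \in \monster$ we have $\sdim(X_t) \leq d$ and hence $\sdim(\pi(X_t)) \leq d$, so by the characterization above $\pi(X_t)$ has empty interior in $\Bm^{d+1}$. Since $\Bm$ is the open core of $\monster$, each $\pi(X_t)$ is an $\Fs$-set of the open-core structure $\Bm^{d+1}$; being an $\Fs$ with empty interior in an \aminimal structure, it is nowhere dense by Thm.~\ref{thm:a-minimal}. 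Thus $(\pi(X_t))_{t \in \monster}$ is a definable increasing family of nowhere dense subsets of $\Bm^{d+1}$, and Lemma~\ref{lem:amin-U-meager} applied inside the \aminimal structure $\Bm$ gives that $\pi(X) = \bigcup_t \pi(X_t)$ is meager, hence has empty interior in $\Bm^{d+1}$. This contradicts the choice of $\pi$.

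The main obstacle is the characterization of $\sdim(Y) \leq d$ in step one as the topological statement that every $(d+1)$-dimensional projection of $Y$ has empty interior in the open core. This depends on linking the matroid-theoretic $\scl$-rank in $\monster$ to the topology of $\Bm^k$, which should follow directly from the theory of existential matroids in~\cite{fornasiero-matroids} together with the density of $\Am$ in $\Bm$; once it is in hand, the rest of the argument is a routine Baire-category reduction to the \aminimal open core.
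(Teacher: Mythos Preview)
Your characterization ``$\sdim(Y) \leq d$ iff every $(d+1)$-dimensional coordinate projection of $Y$ has empty interior in $\Bm^{d+1}$'' is false, and this breaks the argument at the very first step. Take $n = 1$, $d = 0$, and $Y = \Bm \setminus \Am$. Since $\scl(\emptyset) = \zcl(\Am) = \Am$, any $b \in \Bm \setminus \Am$ has $\scl$-rank~$1$, so $\sdim(Y) = 1 > d$; yet $Y$ has empty interior because $\Am$ is dense. Thus the implication you need for the contradiction, namely ``$\sdim(X) \geq d+1 \Rightarrow$ some $(d+1)$-projection of $X$ has nonempty interior'', simply does not hold in the dense pair. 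Your justification only covers the other, easy direction (a nonempty open ball has full $\sdim$).

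Even granting the characterization, the Baire step would fail for a second reason: the sets $\pi(X_t)$ are definable in $\monster = \pair{\Bm, \Am}$, not in~$\Bm$, so they are in general neither $\Bm$-definable nor $\Fs$ in the open core (again, $\Am$ itself is a counterexample). Consequently you cannot invoke Thm.~\ref{thm:a-minimal} to get nowhere-density, and Lemma~\ref{lem:amin-U-meager} does not apply because the family $(\pi(X_t))_t$ is not $\Bm$-definable. Note also that you cannot assume $\monster$ is Baire, since that is precisely what this lemma is used to establish in the next corollary.

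The paper avoids topology entirely: it cites \cite[Lemma~3.71]{fornasiero-matroids}, a general fact about any existential matroid~$\mat$. The idea there is purely model-theoretic: if $\bar a \in X$ has $\mat$-rank $> d$ over the parameters~$C$, use the extension axiom for the associated independence relation to find $t_0$ with $\bar a \in X_{t_0}$ and $t_0 \ind[$\mat$]_C \bar a$; then $\bar a$ still has $\mat$-rank $> d$ over $C t_0$, contradicting $\dimat(X_{t_0}) \leq d$. Since $\scl$ is an existential matroid on $\monster$, this applies directly.
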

\begin{proof}
\cite[Lemma~3.71]{fornasiero-matroids}, applied to $\scl$.
\end{proof}

\begin{corollary}
$\monster$ is Baire.
\end{corollary}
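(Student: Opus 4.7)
The plan is to derive Baire from Lemma~\ref{lem:cl-U} via a contradiction argument, using that the open core of $\monster$ is $\Bm$ (which is \dminimal) to force nowhere dense definable sets to have small dimension zero.

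First, I would assume for contradiction that $\monster$ is not Baire, so that there exists a definable increasing family $(X_t)_{t \in \monster}$ of nowhere dense subsets of $\monster$ with $\bigcup_t X_t = \monster$. By Lemma~\ref{lem:cl-U} (applied with $n = 1$, $d = 0$), it suffices to show that $\sdim(X_t) \leq 0$ for each $t$, for then $\sdim(\monster) \leq 0$, contradicting the fact that the top space has full small dimension $\sdim(\monster) = 1$.

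Next, I would verify the key claim: \emph{every nowhere dense definable subset $Y$ of $\monster$ has $\sdim(Y) \leq 0$}. Since the closure operation produces definable closed sets, and closed definable subsets of $\monster$ belong to the open core (which coincides with $\Bm$), the set $\cl(Y)$ is definable in $\Bm$. Because $Y$ is nowhere dense, $\cl(Y)$ has empty interior in $\Bm$; d-minimality of $T$ then forces $\cl(Y)$ to be a finite union of discrete $\Bm$-definable sets (with some parameters $\bar c \subseteq \Bm$). Each such discrete definable set is contained in $\zcl(\bar c)$, so $\cl(Y) \subseteq \zcl(\bar c)$, giving $\zdim(\cl(Y)) = 0$. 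Since $\scl$ refines $\zcl$, we have $\sdim \leq \zdim$, so $\sdim(Y) \leq \sdim(\cl(Y)) \leq 0$, as desired.

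Applying the claim to each $X_t$ and combining with Lemma~\ref{lem:cl-U} completes the proof. The main obstacle is the cleanness of invoking the small-closure framework from~\cite{fornasiero-matroids}: one needs that $\sdim \leq \zdim$ on definable sets and that $\sdim(\monster) = 1$ (equivalently, that $\monster$ contains elements of $\scl$-rank $1$ over $\emptyset$, i.e., a generic point of $\Bm$ over a small base). Both facts are standard properties of existential matroids, but they are the load-bearing references to~\cite{fornasiero-matroids}; apart from these, the argument is a short topological reduction exploiting that the open core is d-minimal.
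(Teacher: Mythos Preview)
Your proposal is correct and follows essentially the same route as the paper: establish that every nowhere dense definable subset of $\monster$ has $\sdim \leq 0$, and then invoke Lemma~\ref{lem:cl-U} to reach a contradiction with $\sdim(\monster) = 1$. The only difference is that the paper outsources the first step entirely to~\cite{fornasiero-matroids}, whereas you unpack it via the open core being $\Bm$ and the \dminimality of~$T$; structurally the two proofs are the same.
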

\begin{proof}
By~\cite{fornasiero-matroids}, if $X \subseteq S$ is definable in
$\monster$ and nowhere-dense, then $\sdim X = 0$.

The conclusion then follows from Lemma~\ref{lem:cl-U}.
\end{proof}

\begin{lemma}
$\Td$ is a Polish theory.
\end{lemma}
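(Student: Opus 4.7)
The plan is to construct, in one shot, a single model $\pair{\Bm, \Am} \models \Td$ which is both separable and Cauchy complete as an ordered field; any restriction of such a model to a finite sublanguage $\Lang'' \subseteq \Ltwo$ with $\Langf \subseteq \Lang''$ then witnesses separability and Cauchy completeness for $\Td \rest \Lang''$, which is what the definition of a Polish theory demands. Note that $\Td$ is already known to be complete, so we do not need to worry about completions.

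First I would pick a countable $\Fm \models T$; such a model exists by Löwenheim-Skolem, and it is useful to use the fact that the earlier lemma establishes that any \dminimal theory is Polish, which produces a countable $\Fm$. I would then apply Proposition~\ref{prop:dmin-Cauchy} (Cauchy completion for \dminimal structures) to $\Fm$, producing $\Fm^C$, the unique expansion of the Cauchy completion of the underlying ordered field to an $L$-structure making $\Fm \prec \Fm^C$. By construction $\Fm$ is dense in $\Fm^C$, and $\Fm^C$ is Cauchy complete as an ordered field.

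Next I would form the pair $\pair{\Fm^C, \Fm}$ in the language $\Ltwo$, and verify that it is a model of $\Td$. Density of $\Fm$ in $\Fm^C$ is automatic from the definition of the Cauchy completion. The missing piece is that $\Fm$ be $\zcl$-closed in $\Fm^C$; this follows from the remark in \S\ref{sec:zclosure} stating that a \dminimal dense substructure is always $\zcl$-closed (itself a direct consequence of Lemma~\ref{lem:regular-type}, since an element realising a regular gap cannot lie in any $\Fm$-definable $0$-dimensional set). Hence $\pair{\Fm^C, \Fm} \models \Td$.

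Finally I would check the two Polish-theoretic properties. Cauchy completeness of the underlying ordered field is built into $\Fm^C$. Separability comes for free: $\Fm$ is countable and dense in $\Fm^C$, so $\Fm$ itself is a countable dense subset (not necessarily definable) of the ordered field reduct of $\pair{\Fm^C, \Fm}$. Both properties survive restriction to any sublanguage containing $\Langf$, so every finite $\Lang'' \subseteq \Ltwo$ extending $\Langf$ has a separable Cauchy complete model of $\Td \rest \Lang''$, namely the $\Lang''$-reduct of $\pair{\Fm^C, \Fm}$. The only non-trivial step is showing that the natural candidate $\pair{\Fm^C, \Fm}$ really satisfies $\Td$, and that is handled by combining Proposition~\ref{prop:dmin-Cauchy} with the $\zcl$-closedness remark, so no real obstacle remains.
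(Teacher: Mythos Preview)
Your proof is correct and follows essentially the same approach as the paper: take a countable model $\Am'$ of $T$, pass to its Cauchy completion $\Bm'$, and observe that $\pair{\Bm', \Am'}$ is a separable, Cauchy complete model of $\Td$. The only point you leave implicit that the paper makes explicit is that $\Fm^C \neq \Fm$ (needed so the pair is proper), which follows since a countable ordered field cannot be Cauchy complete by Lemma~\ref{lem:Polish}.
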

\begin{proof}
\Wlog, the language of $T$ is countable.
Let $\Am'$ be a countable model of $T$ and $\Bm'$ be its Cauchy completion.
Notice that $\Bm' \neq \Am'$ and therefore $\pair{\Bm', \Am'}$
is a Cauchy complete and separable model of~$\Td$.
\end{proof}

\section{Types in locally o-minimal structures}
As usual, $\K$ is a definably complete structure.
Let $+ \infty$ be the partial 1-type over~$\K$, 
given by $\set{x > a: a \in \K}$.

\begin{remark}
\Tfae:
\begin{itemize}
\item $\K$ is locally o-minimal;
\item $+ \infty$ is a complete type;
\item for every cut $\Lambda$ of~$\K$,
if $\hat \Lambda = 0^+$, then $\Lambda$ is a complete type over $\K$.
\end{itemize}
\end{remark}
\begin{proof}
See Lemma~\ref{lem:lmin} and Corollary~\ref{cor:dmin-regular-cut}.
\end{proof}

\begin{lemma}
Let $\K$ be locally o-minimal, and $p \in S_1(\K)$.
$p$ is a definable type iff either $p = \pm \infty$,
 or $p = a^\pm$, or $p = a$, for some $a \in \K$.
\end{lemma}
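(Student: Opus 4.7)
The plan is to prove both directions separately, with the forward direction using local o-minimality to rewrite each $d_p(\phi)$ as an explicit formula, and the backward direction exploiting definable completeness applied to the two sides of the cut defined by $p$.

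For the \emph{if} direction, I would check that each of the listed types is definable. The realized types $\tp(a)$ are trivially definable. For $p = +\infty$, I would show that for any formula $\phi(x,\bar y)$ and parameters $\bar c$, we have $\phi(x,\bar c) \in p$ if and only if there exists $r \in \K$ such that $\phi(x,\bar c)$ holds for all $x > r$; this equivalence is exactly the content of the previous Remark (\ie the fact that $+\infty$ is a complete 1-type, which holds by local o-minimality). The defining set $d_p(\phi) := \set{\bar c : \exists r \, \forall x > r \, \phi(x,\bar c)}$ is manifestly definable. The case $p = -\infty$ is symmetric. For $p = a^+$, by Lemma~\ref{lem:lmin}(2), for each $\bar c$ there exists $r > a$ such that either $(a,r) \subseteq \phi(\K,\bar c)$ or $(a,r) \cap \phi(\K,\bar c) = \emptyset$; hence $\phi(x,\bar c) \in p$ iff $\exists r > a \, \forall x \in (a,r) \, \phi(x,\bar c)$, which defines $d_p(\phi)$ with parameter~$a$. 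The case $p = a^-$ is symmetric.

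For the \emph{only if} direction, let $p \in S_1(\K)$ be definable. If $p$ is realized, we are done. Otherwise $p$ determines a proper cut $\Lambda = (\Lambda^L,\Lambda^R)$ of~$\K$ via $\Lambda^L := \set{y \in \K : (y < x) \in p}$ and $\Lambda^R := \set{y \in \K : (x < y) \in p}$. Applying the definability of $p$ to the formulas $\phi_1(x,y) \equiv y < x$ and $\phi_2(x,y) \equiv x < y$, both $\Lambda^L$ and $\Lambda^R$ are definable subsets of~$\K$. If $\Lambda^R = \emptyset$ then $p = +\infty$; if $\Lambda^L = \emptyset$ then $p = -\infty$. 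Otherwise $\Lambda^L$ is definable, non-empty, and bounded above by any element of~$\Lambda^R$, so by definable completeness $a := \sup \Lambda^L$ exists in~$\K$. Since $\K = \Lambda^L \sqcup \Lambda^R$, either $a \in \Lambda^L$ (whence $a = \max \Lambda^L$, $\Lambda^R = (a,+\infty)$, and $p = a^+$) or $a \in \Lambda^R$ (whence $a = \min \Lambda^R$, $\Lambda^L = (-\infty,a)$, and $p = a^-$).

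There is no real obstacle; the only subtle point is verifying that the candidate formulas in the \emph{if} direction really define $d_p(\phi)$ rather than some weaker or stronger approximation. This is precisely where local o-minimality enters, via the preceding Remark and Lemma~\ref{lem:lmin}: it guarantees that each of the prescribed cuts is a complete 1-type, so that $\phi(x,\bar c) \in p$ is equivalent to $\phi$ holding on a whole one-sided interval. In particular, this implicitly shows that the ``missing'' types in a locally o-minimal but non-o-minimal~$\K$, namely the regular gaps filled only in the Cauchy completion~$\KC$ (compare Prop.~\ref{prop:dmin-Cauchy}), fail to be definable precisely because definable completeness forces the sup of their left cut to lie in~$\K$.
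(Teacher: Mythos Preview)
Your proposal is correct and follows essentially the same approach as the paper. The paper dismisses the ``only if'' direction as clear (your definable-completeness argument on the cut is exactly what is intended), and for the ``if'' direction the paper's $\liminf$ set $G = \set{\bar a : \exists t_0\, \forall t > t_0\ \phi(t,\bar a)}$ is precisely your defining formula for $d_p(\phi)$ at $+\infty$; your presentation is if anything more direct, since you avoid passing through a realization in an elementary extension.
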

\begin{proof}
By the above remark, the listed types are complete types.
The ``only if'' direction is clear.
For the other direction, let $p = + \infty$; we have to prove that $p$ is
definable (the other cases are similar).
Fix $c$ a realization of $p$ in some $\K' \succ \K$.
Let $\phi(x, \y)$ be a formula, 
and $D := \set{\y \in \K^n: \K \models \phi(c, \y)}$.
We have to prove that $D$ is a definable subset of~$\K$.
Let $F := \phi(\K')$ and $F_c := \phi(c, \K')$;
notice that $D = F_c \cap \K^n$.
Define
\[
G := \liminf F_t = \set{\av \in \K'^n: \exists t_0 \forall t > t_0\ \av \in F_t}.
\]
Notice that $G$ is definable without parameters; 
let $G' := G(\K) = G \cap \K^n$.
\begin{claim}
$D = G'$.
\end{claim}
The claim implies the conclusion.
Let us prove that $d \subseteq D$; let $d \in D$.
Thus, $d \in F_a \cap \K$, and therefore ``$d \in F_x$'' 
is in the type of $c$ over~$\K$.
Thus, $d \in F_t$ eventually, and therefore $d \in G$.
The opposite inclusion is trivial.
\end{proof}

\begin{question}
Give a characterization of definable $n$-types in locally o-minimal structure,
along the lines of Marker-Steinhorn's theorem.
\end{question}


\section{D-minimal open core}

We have seen that if $T$ is \dminimal, then $\Td$ has \dminimal open core.
We want to give some characterization of when a structure $\K$ as \dminimal
open core. 
Unfortunately, we were not able to prove what we wanted, so here is a
conjecture. 

\begin{conjecture}
Assume that, for every $\K' \succ \K$, every $\Fs$ subset of $\K'$ is the
union of an open set and finitely many discrete sets.
Then, for every $n \in \Nat$, every $\Fs$ subset of $\K^n$ is constructible.
Moreover, $\K$~is Baire.
\end{conjecture}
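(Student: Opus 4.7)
The plan is to establish the conjecture in two stages: first show $\K$ is Baire, which is quick, then prove constructibility of $\Fs$ sets in $\K^n$ by induction on $n$ and on full dimension, following the template of the proof of Theorem~\ref{thm:a-minimal}. For Baireness, if $\K$ failed to be Baire then by Lemma~\ref{lem:Q} there would exist a dense pseudo-enumerable subset $Q \subseteq \K$ with empty interior; $Q$ is an $\Fs$ (being an increasing union of pseudo-finite, hence closed, sets), so the hypothesis applied to $\K$ itself writes $Q = U \cup D_1 \cup \dots \cup D_N$ with $U$ open and each $D_i$ discrete. Since $Q$ has empty interior, $U = \emptyset$, so $Q$ is a finite union of nowhere dense sets and therefore nowhere dense, contradicting density.

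The base case $n=1$ is immediate from the hypothesis: open sets and discrete sets are locally closed, hence constructible, and finite unions of constructible sets are constructible. For the inductive step, fix $n \geq 2$, assume the result for smaller $n$, and induct on $(d,k) = \fdim X$. If $d = n$, split $X = \inter X \sqcup (X \setminus \inter X)$ and apply inner induction on full dimension to the second summand. If $0 < d < n$, WLOG $\pi(X)$ has non-empty interior where $\pi := \Pi^n_d$; by Lemma~\ref{lem:dim-fiber} together with Baireness just established, the set $A := \set{y \in \K^d : \dim X_y > 0}$ is an $\Fs$ of dimension strictly less than $d$, so by induction on $n$ it is constructible. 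Setting $Y := \pi^{-1}(A) \cap X$, we have $\fdim Y < \fdim X$, so $Y$ is constructible by induction, and it suffices to treat $X \setminus Y$; thus reduce to the case where every fiber $X_y$ with $y \in \pi(X)$ has dimension~$0$. The goal now is to show $\fdim(\nlc X) < \fdim X$: then $\nlc X$ is constructible by induction and $X = \lc(X) \sqcup \nlc X$ is a disjoint union of a locally closed set and a constructible set, hence constructible.

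The critical step is the analog of Lemma~\ref{lem:amin-constructible-dense}: writing $X = \bigcup_{t \in \K} X(t)$ as an increasing union of \dcompact sets, find an open box $B' \subseteq \pi(X)$ and $t_0 \in \K$ with $X \cap (B' \times \K^{n-d}) \subseteq X(t_0)$; from this, $X \cap (B' \times \K^{n-d}) \subseteq \lc(X)$, so $\pi(\nlc X)$ has empty interior and hence $\fdim(\nlc X) < \fdim X$. The approach to this lemma is to pass to an $\omega$-saturated elementary extension $\K' \succ \K$, apply the hypothesis in $\K'$, and use a compactness argument to extract a uniform $N \in \Nat$ such that every fiber $X_y$ (even in $\K'$) is a union of an open set and at most $N$ discrete sets; this uniform bound descends to $\K$. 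Then, for each $y \in \pi(X)$, $X_y$ has dimension $0$ (the open part vanishes) and can be exhausted by the compact increasing family $(X(t)_y)_{t \in \K}$; setting $Z(t) := X \setminus X(t)$ and $Y(t) := B \setminus \pi Z(t)$ (a $\Gd$ that is constructible by inductive hypothesis applied to $\K^d$), one obtains $B = \bigcup_t Y(t)$ and applies Baireness of $\K^d$ to produce a non-empty open box inside some $Y(t_0)$.

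The principal obstacle — and almost certainly the reason the paper leaves this statement as a conjecture — is this last Baire-category step. In the \aminimal case one exploits that constructible sets of empty interior in $\K^d$ are automatically nowhere dense (Corollary~\ref{cor:constructible-nowhere-dense}), which propagates \iminimality free of charge. Under our weaker hypothesis this is not available: $\Fs$ sets of empty interior in $\K$ are finite unions of discrete sets (hence nowhere dense) by the hypothesis itself, but the analogous statement in $\K^d$ is precisely what the induction is trying to prove, and the uniform-$N$ argument via saturation must be orchestrated carefully to avoid circularity. Successful resolution seems to require either a separate proof that $\K$ is \iminimal under the conjecture's hypothesis (which I do not see how to establish directly), or a more delicate weakly-special-manifold decomposition along the lines of Lemma~\ref{lem:d-special}, performed only on the class of $\Fs$ sets and powered by the uniform fiber bound. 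This is where the plan becomes genuinely hard.
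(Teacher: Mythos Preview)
This statement is a \emph{conjecture} in the paper; what follows it is explicitly labelled ``Idea of proof'' and contains a step marked ``\textbf{To be done}'', so you are comparing your plan against another incomplete sketch, not a finished argument.

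Your Baire argument via Lemma~\ref{lem:Q} is essentially correct (strictly, the hypothesis is stated only for $\K' \succ \K$, so you should interpret $Q$ in some proper extension; density and empty interior are first-order and transfer). The paper argues differently: a failure of Baire gives a \pN-indexed cover of~$\K$ by \dcompact nowhere-dense sets; the hypothesis in elementary extensions plus compactness yields a uniform bound~$M$ on the number of discrete pieces in each member of the cover, whence $\K$ is pseudo-enumerable, contradicting the theorem of~\S\ref{sec:enumerable}. Both work; yours is shorter.

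The induction on~$n$ and~$\fdim$, and the reduction to $0$-dimensional fibres, are shared. Where you diverge from the paper is the endgame, and your gap is earlier than you locate it. To obtain $B = \bigcup_t Y(t)$ you need, for each $y \in B$, some $t$ with $X_y \subseteq X(t)$. In the \aminimal proof this holds because $X_y$ is pseudo-finite, hence \dcompact, and the $X(t)$ form an increasing \dcompact cover. Under the present hypothesis $X_y$ is only a finite union of discrete sets, which need not be closed, so in general no single $X(t)_y$ captures all of~$X_y$. The Baire step you flag as the obstacle actually goes through once $B = \bigcup_t Y(t)$ is granted: $Y(t)$ is constructible by induction on~$n$, and Corollary~\ref{cor:constructible-nowhere-dense} is a general topological fact, not specific to \aminimality.

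The paper's sketch takes a different route at this point. After passing to an $\omega$-saturated model and obtaining a uniform bound on $\rkCB$ of the fibres, it attempts to reduce to the case where every $A_x$ is \emph{discrete} (this is precisely the step marked ``To be done''). Granting that, it shows $\lc(A)$ is dense in~$A$ directly: set $f(x) := \min(A(0)_x)$, show $f$ is continuous off a nowhere-dense set via~\cite{FS}; define $f^{\pm}$ from neighbouring points of the fibre; write $f^+$ as a pointwise limit of a definable family of continuous functions and invoke Lemma~\ref{lem:first-class} to get continuity off a nowhere-dense set. This, together with Thm.~\ref{thm:lc-fiber}, Lemma~\ref{lem:bad-2}, and Remark~\ref{rem:bad-3}, forces $A = \lc(A)$ after shrinking~$B$. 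So the paper's gap is the reduction from $\rkCB \leq M$ to discrete fibres; yours is the covering $B = \bigcup_t Y(t)$. Neither is complete, but the obstructions sit in different places, and the paper's first-class-function machinery is the ingredient your plan is missing.
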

In the above situation, we say that $\K$ has \dminimal open core.
\begin{proof}[Idea of proof]
If $\K$ is \aminimal, then $\K$ has locally o-minimal open core, and \textit{a
  fortiori} \dminimal open core.
Hence, we may assume that $\K$ is not \aminimal, and therefore there exists a
\pN subset of~$\K$.

First, we prove that $\K$ is Baire.
If not, let $N \subseteq \K$ be a \pN subset of~$\K$, and
$\Cfam := \Pa{C_t}_{t \in N}$ be a definable family of \dcompact nowhere dense
subsets of $\K$, such that $\K = \bigcup_{t \in N} C_t$.
By assumption, there exists $M \in \Nat$ such that each $C_t$ is a finite
union of $M$ discrete sets.
Therefore, $\Cfam$ is a uniform family of (at most) pseudo-enumerable sets,
and therefore $\K$ is pseudo-enumerable, absurd.

Let $A \subseteq \K^n$ be an $\Fs$ set.
We have to prove that $A$ is constructible.
\Wlog, $\K$ is $\omega$-saturated.
We proceed by induction on $n$ and on $(d, k) := \fdim(A)$.
If $n = 1$, $A$ is constructible by hypothesis.
Hence, \wloG $n > 1$.
If $d = n$, let $B := A \setminus \inter A$.
Thus, $A = \inter A \cup B$.
Since $\dim(B) < d$, by induction on~$d$, $B$~is constructible, and
therefore $A$ is constructible.
If $d = 0$, for $i = 1, \dotsc, n$, let $A_i$ be the projection of $A$ onto
the $i$th coordinate axis.
By assumption on~$A$, $A_i$ is an $\Fs$ with empty interior, and therefore, by
hypothesis, $A_i$ is a finite union of discrete sets.
Moreover, $A \subseteq A_1 \times \dots \times A_n$,therefore $A$ is
a finite union of discrete sets, and hence $A$ is constructible.

Hence, we may assume that $0 < d < n$.
Let $\pi := \Pi^n_d$; \wloG, $\pi(A)$ has non-empty interior.
Let $C := \nlc A = A \setminus \lc(A)$.
\begin{claim}
\Wlog, $A$ is bounded.
\end{claim}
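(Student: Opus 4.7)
The plan is to justify the boundedness reduction by transporting $A$ via a definable homeomorphism of $\K^n$ onto a bounded open subset, and checking that every hypothesis and desired conclusion in play (being $\Fs$, being constructible, full dimension, projection having non-empty interior) is preserved.

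First I would fix the standard definable order-preserving homeomorphism $\phi : \K \bij (-1,1)$ given by $\phi(x) := x/(1+|x|)$, and let $\Phi := \phi \times \cdots \times \phi : \K^n \bij (-1,1)^n$. Since $\Phi$ is a definable homeomorphism of $\K^n$ onto a bounded definable open set, and since constructibility, local closedness, $\lc$, $\nlc$, dimension and full dimension are all preserved by definable homeomorphisms, $A$ is constructible iff $\Phi(A)$ is constructible, and $\fdim(A) = \fdim(\Phi(A))$. Moreover, because $\Phi$ acts coordinatewise, $\pi \circ \Phi = \phi^d \circ \pi$, so $\pi(\Phi(A)) = \phi^d(\pi(A))$ has non-empty interior iff $\pi(A)$ does; the same is true for every other coordinate projection. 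Thus replacing $A$ by $\Phi(A)$ preserves the inductive setup $0 < d < n$, $\pi(A)$ open in $\K^d$, and $\fdim(A) = (d,k)$.

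The one point requiring attention is that we must still have $\Phi(A)$ realized as an $\Fs$ subset of $\K^n$, in order to invoke the inductive hypothesis and the ``\dminimal open core'' assumption. Write $A = \bigcup_{t \in \K} C_t$ with $\Pa{C_t}_{t\in\K}$ a definable increasing family of closed subsets of $\K^n$, and set
\[
D_t := \Phi(C_t) \cap \clB(0, 1 - 1/t) \qquad (t > 1).
\]
Each $D_t$ is closed in $\K^n$ (intersection of $\Phi(C_t)$, which is closed in $(-1,1)^n$, with a closed ball strictly inside $(-1,1)^n$), the family $\Pa{D_t}_{t>1}$ is definable and increasing, and any $y = \Phi(x) \in \Phi(A)$ lies in some $D_t$ (choose $t$ with $x \in C_t$ and $|y| \leq 1 - 1/t$). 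Hence $\Phi(A) = \bigcup_{t>1} D_t$ is an $\Fs$ in $\K^n$, contained in the bounded set $(-1,1)^n$.

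Finally, after replacing $A$ by $\Phi(A)$ we may assume $A$ is bounded, which is the content of the claim. The step I expect to be the only mildly delicate point is the $\Fs$-preservation: one might na\"ively write $\Phi(A) = \bigcup_t \Phi(C_t)$, but the sets $\Phi(C_t)$ need not be closed in $\K^n$ (they are only closed in $(-1,1)^n$); the inner approximation by closed balls of radius $1 - 1/t$ is the fix, and it works precisely because $\K$ expands an ordered field and hence carries the required definable distance function.
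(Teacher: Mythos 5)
Your argument is correct and is exactly the paper's reduction: the paper disposes of this claim in a single line (``it suffices to prove that $\phi(A)$ is constructible, where $\phi$ is a homeomorphism between $\K^n$ and $(0,1)^n$''), leaving every verification implicit. Your explicit check that the transported set is still an $\Fs$ subset of $\K^n$ --- via the inner approximation $D_t := \Phi(C_t) \cap \clB(0, 1-1/t)$, which repairs the fact that $\Phi(C_t)$ is only closed in $(-1,1)^n$ --- supplies the one genuinely non-trivial detail the paper omits, and the remaining preservation statements (constructibility, full dimension, projections) are routine as you say.
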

In fact, it suffices to prove that $\phi(A)$ is constructible, where $\phi$ is
a homeomorphism between $\K^n$ and $(0,1)^n$.

Given $X \subseteq \K^d$, let $A_X := A \cap \pi^{-1}(X)$.

\begin{claim}
\Wlog, $\pi(C) = \pi(A) = B$ for some closed box~$B$ with non-empty interior.
\end{claim}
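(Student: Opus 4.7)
The plan is to peel off the locally closed part of $A$ and then localize to a closed box inside the image of the non-locally-closed part, using the inductive hypothesis to absorb everything of smaller full dimension.

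Since $\lc(A)$ is locally closed (hence constructible), $A$ is constructible iff $C := \nlc A$ is; and $C$ is again an $\Fs$ by the Remark that $\nlc$ preserves being $\Fs$. If $\fdim(C) < \fdim(A) = (d,k)$, the induction on full dimension gives that $C$ is constructible and we are done; so I may assume $\fdim(C) = (d,k)$. Since $C \subseteq A$ and both sets have the same number $k$ of $d$-dimensional coordinate projections whose image has non-empty interior, those collections of projections must coincide; in particular $\pi(C)$ has non-empty interior in $\K^d$, for the fixed projection $\pi = \Pi^n_d$.

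Choose a closed box $B$ with non-empty interior contained in $\interior(\pi(C))$. The reduction is to replace $A$ by $A \cap \pi^{-1}(B)$: since $B \subseteq \pi(C) \subseteq \pi(A)$, this yields $\pi(A) = \pi(C) = B$, as desired. The delicate point is identifying $\nlc$ of the restricted set with $C \cap \pi^{-1}(B)$; this holds on the open preimage $\pi^{-1}(\interior B)$ by the Remark that $\nlc(X \cap U) = \nlc X \cap U$ for $U$ open, and the contribution on $\pi^{-1}(\partial B)$ projects into $\partial B$, which has dimension $< d$, so its full dimension is strictly below $(d,k)$ and it is constructible by induction.

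What remains is to explain why the reduction is genuinely WLOG, i.e.\ constructibility of every such restricted piece implies constructibility of the original $A$. Because $A$ is bounded and $\Fs$, $\pi(A)$ is a bounded $\Fs$ subset of $\K^d$, hence an increasing definable union of d-compact sets; each such d-compact set is covered by a pseudo-enumerable family of closed boxes contained in $\interior(\pi(C))$, together with a residual piece lying in $\pi(A)\setminus \interior(\pi(C))$ of strictly smaller dimension, handled by the inductive hypothesis. The main obstacle is assembling the pseudo-enumerable family of local constructible pieces into a single constructible set: this is where the standing hypothesis that $\K$ has d-minimal open core enters, via the same Baire-type / pseudo-enumerable-union techniques used in the proof of the \aminimal case (cf.\ Cor.~\ref{cor:finite-constructible} and Lemma~\ref{lem:amin-U-Fs}).
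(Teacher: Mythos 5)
Your opening move matches the paper's: dispose of the case $\fdim(\nlc A)<\fdim(A)$ by induction, so that $\pi(C)$ acquires non-empty interior and one can fix a closed box $B\subseteq\interior(\pi(C))$. The divergence, and the problem, is in how you justify that restricting to $\pi^{-1}(B)$ is really ``without loss of generality''. You propose to cover $\pi(A)$ by a pseudo-enumerable family of closed boxes inside $\interior(\pi(C))$ plus a lower-dimensional residue, prove constructibility of each piece $A\cap\pi^{-1}(B')$, and then glue. The gluing step is a genuine gap: a pseudo-enumerable union of constructible sets need not be constructible unless there is a \emph{uniform} bound on the number of locally closed pieces of the members (constructibility of $X$ means $\inlc{X}{M}=\emptyset$ for some finite $M$, and with unboundedly many pieces over the index set no single $M$ works). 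The lemmas you invoke to absorb the union (Corollary~\ref{cor:finite-constructible} and Lemma~\ref{lem:amin-U-Fs}) are proved only for \aminimal structures and for pseudo-finite or increasing families, whereas at this point of the argument one has explicitly reduced to the case where $\K$ is \emph{not} \aminimal (otherwise the conjecture is already settled), so they are not available.

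The paper closes exactly this gap with $\omega$-saturation, which was arranged earlier in the proof: once $A_{B'}$ is known to be constructible for \emph{every} closed box $B'\subseteq\pi(C)$ with non-empty interior, saturation yields a single $M\in\Nat$ with $\inlc{A_{B'}}{M}=\emptyset$ for all such $B'$; since $\inlc{A}{M}\cap\pi^{-1}(\interior B')=\inlc{A_{B'}}{M}\cap\pi^{-1}(\interior B')$, the set $\pi(\inlc{A}{M})$ then misses $\interior(\pi(C))$, so $\fdim(\inlc{A}{M})<\fdim(A)$ and the inductive hypothesis applies to $\inlc{A}{M}$, giving constructibility of $A$ without any covering or gluing. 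Your proposal never uses saturation and has no substitute mechanism for producing the uniform bound $M$; as written, the reduction to a single box is not justified.
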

If $\fdim(C) < \fdim(A)$, then, by inductive hypothesis, $C$ is constructible,
and therefore $A$ is also constructible.
Thus, $\pi(C)$ contains a closed box~$B' \subseteq B$ with non-empty interior.
If we prove that $A_{B'}$ is constructible for any such~$B'$, then, 
since $\K$ is $\omega$-saturated, there exists $M \in \Nat$ such that
$\inlc {A_B'} M = \emptyset$ for every such box~$B'$.
Hence, $\inlc A M \cap \pi^{-1}(B') = \emptyset$ for every closed box 
$B' \subseteq \pi(C)$ with non-empty interior, 
and therefore $\fdim(\inlc A M) < \fdim A$.
Thus, by inductive hypothesis, $\inlc A M$ is constructible, and thus $A$ is
also constructible. 

By hypothesis, there exists an increasing definable family
$\Pa{A(t)}_{t \in \K}$ of \dcompact subsets of~$\K^n$, such that 
$A = \bigcup_t A(t)$.
\begin{claim}
\Wlog, $\pi(A(t)) = B$ for every $t \in \K$.
\end{claim}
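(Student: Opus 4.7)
The plan is to use definable Baire category on $\K^d$ together with the increasing nature of the family $\Pa{A(t)}_{t\in \K}$, and then to shrink the box $B$ and reparametrize so that the projection stabilizes at the smaller box for every parameter.

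First I would recall that since $\K$ is Baire (established at the start of the proof), $\K^d$ is also Baire by \cite[Prop.~2.7]{FS}; hence any closed box $B \subseteq \K^d$ with non-empty interior is non-meager in $\K^d$, and consequently non-meager in itself. Then I would observe that for each $t \in \K$, the set $\pi(A(t))$ is \dcompact (it is the image of the \dcompact set $A(t)$ under the continuous map $\pi$), and that
\[
B = \pi(A) = \pi\Pa{\bigcup_{t \in \K} A(t)} = \bigcup_{t \in \K} \pi(A(t)),
\]
where $\Pa{\pi(A(t))}_{t \in \K}$ is a definable increasing family of closed subsets of $B$. If every $\pi(A(t))$ had empty interior in $B$, then by local o-minimality of the open core (or directly from Baireness) the set $B$ would be the union of a definable increasing family of nowhere dense subsets, contradicting the fact that $B$ is not meager in itself. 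Thus there exists $t_0 \in \K$ such that $\pi(A(t_0))$ has non-empty interior in $B$.

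Next I would pick a closed sub-box $B_0 \subseteq B$ with non-empty interior such that $B_0 \subseteq \pi(A(t_0))$. I would replace $A$ by $A \cap \pi^{-1}(B_0)$ and the family $\Pa{A(t)}_{t \in \K}$ by the family $\Pa{A'(t)}_{t \in \K}$ defined by
\[
A'(t) := A\Pa{\max(t, t_0)} \cap \pi^{-1}(B_0).
\]
This is still a definable increasing family of \dcompact subsets of $\K^n$ whose union is $A \cap \pi^{-1}(B_0)$. For every $t \in \K$, since $\max(t,t_0) \geq t_0$, we have $\pi(A(\max(t,t_0))) \supseteq \pi(A(t_0)) \supseteq B_0$, and therefore $\pi(A'(t)) = \pi(A(\max(t,t_0))) \cap B_0 = B_0$.

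Finally I would justify that this reduction is legitimate for the purpose of the current inductive step: from the previous claim, it suffices to prove constructibility of $A_{B'}$ for each non-empty closed box $B' \subseteq \pi(C)$ with non-empty interior, so we may pass from $B$ to $B_0$ (which still satisfies $B_0 \subseteq \pi(A) = \pi(C)$ and has non-empty interior), replace $B$ by $B_0$ in notation, and assume $\pi(A(t)) = B$ for every $t \in \K$. The main obstacle is really only verifying that the reparametrization $t \mapsto \max(t,t_0)$ preserves definability, increasingness, and the $\dcompact$ property of the fibers --- all of which are immediate --- so once the Baire argument yields the existence of $t_0$, the reduction is essentially automatic.
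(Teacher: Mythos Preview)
Your proposal is correct and follows essentially the same approach as the paper: use Baireness of $\K^d$ to find $t_0$ and a sub-box $B_0 \subseteq \pi(A(t_0))$, shrink $B$ to $B_0$, and reparametrize the family so that every $\pi(A'(t))$ equals the new box. The paper writes the reparametrization as $A'(t) := A(t_0)$ for $t \leq t_0$ and $A'(t) := A(t)$ for $t > t_0$, which is exactly your $A(\max(t,t_0))$; and where you justify the shrinking by pointing back to the mechanism of the previous claim, the paper simply repeats that mechanism (the $\omega$-saturation argument producing a uniform bound~$M$, together with the observation that such boxes are dense in~$B$) in-line. These are presentational differences only.
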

In fact, since $\K$ is Baire, there exists $t_0 \in \K$ and $B' \subseteq B$
closed box with non-empty interior, such that $B' \subseteq \pi(A(t_0))$.
If we prove that $A_{B'}$ is constructible for any such box~$B'$ contained in
some~$A(t)$,
then, since $\K$ is $\omega$-saturated, there exists $M \in \Nat$ such that
$\inlc {A_B'} M = \emptyset$ for every such box~$B'$.
Since every $x \in B$ is contained in some $B'$ as above,
$\fdim(\inlc A M) < \fdim A$, and therefore, by inductive hypothesis, $A$ is
constructible. 
Thus, \wloG $B \subseteq \pi(A(t_0))$ for some $t_0 \in \K$.
Define $A'(t) := A(t_0)$ if $t \leq t_0$, and $A'(t) = A(t)$ if $t > t_0$.
Then, $\pi(A'(t)) = B$, each $A'(t)$ is \dcompact, and 
$A = \bigcup_t A'(t)$.

\begin{claim}
\Wlog, $\dim(A_x) = 0$ for every $x \in B$.
\end{claim}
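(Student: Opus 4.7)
The plan is to split $A$ along the locus of positive-dimensional fibers, dispose of that piece by the outer induction on full dimension, and then rename the complement. Define $E := \set{x \in B : \dim(A_x) > 0}$. Since $A$ is an $\Fs$ of dimension $d$ and $\K$ is Baire (established earlier in the proof), Lemma~\ref{lem:dim-fiber} gives that $E$ is itself an $\Fs$ with empty interior in $\K^d$, so $\dim E \leq d - 1$. Write $A = A_1 \sqcup A_2$, where $A_1 := A \cap \pi^{-1}(E)$ and $A_2 := A \setminus A_1$; both are $\Fs$ sets.

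The image $\pi(A_1) \subseteq E$ has empty interior, so $\pi$ no longer contributes to $\fdim A_1$; any other $d$-dimensional coordinate projection whose image of $A_1$ has non-empty interior is automatically such a projection for $A$, so the count of such projections drops. Hence $\fdim A_1 < (d,k) = \fdim A$, and the outer induction on full dimension yields that $A_1$ is constructible. It therefore suffices to prove that $A_2$ is constructible. Since $\dim (A_2)_x \leq 0$ for every $x \in B$ by construction, replacing $A$ with $A_2$ is exactly the desired reduction, provided we can restore the three preceding WLOG reductions. Boundedness is inherited from $A$; for the two box reductions we repeat the argument of the previous claims almost verbatim on~$A_2$, shrinking to a closed sub-box $B' \subseteq \pi(C_2)$ with non-empty interior and invoking $\omega$-saturation to extract a uniform bound on the locally-closed stratification length of $A_2$ over~$B'$. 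Passing to sub-boxes and to sub-families can only remove fibers, so the property $\dim(A_2)_x \leq 0$ survives every further restriction.

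The main obstacle is the degenerate case in which $\pi(A_2) = B \setminus E$ has itself empty interior (possible when $E$ is a meager but not nowhere-dense $\Fs$, and hence dense in $B$). In that eventuality $\pi$ is not a good projection for $A_2$; but then $\fdim A_2 \leq (d, k-1) < \fdim A$, and the outer induction closes the case at once. Otherwise $B \setminus E$ contains an open subset of~$B$, and the standard re-application of the preceding claims succeeds. Either way the WLOG reduction goes through, so we may henceforth assume $\dim A_x = 0$ for every $x \in B$.
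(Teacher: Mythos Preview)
Your approach is essentially the paper's: define the locus $E$ of positive-dimensional fibers, invoke Lemma~\ref{lem:dim-fiber} to get $\dim E < d$, split $A$ over $E$, and dispose of $A \cap \pi^{-1}(E)$ by the outer induction on full dimension. The paper's argument is two lines and omits the bookkeeping you supply.

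One remark: the ``degenerate case'' you isolate cannot occur. Since $d < n$, the inductive hypothesis on $n$ applies to~$\K^d$, so the $\Fs$ set $E$ is constructible; a constructible set with empty interior is nowhere dense (Corollary~\ref{cor:constructible-nowhere-dense}), hence $B \setminus E$ is open and dense in~$B$ and in particular has non-empty interior. This also justifies your assertion that $A_2$ is an~$\Fs$: once $E$ is constructible, so is $\pi^{-1}(\K^d \setminus E)$, and the intersection of an~$\Fs$ with a constructible set is again an~$\Fs$.
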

In fact, let $D := \set{x \in \K^d: \dim(A_x) > 0}$.
By Lemma~\ref{lem:dim-fiber}, $D$ is an $\Fs$ of dimension less than~$d$.
Hence, by induction on $\fdim(A)$, $A_D$~is constructible.
Thus, it suffices to prove that $A \setminus A_D$ is constructible.

By assumption, and since $\K$ is $\omega$-saturated,
there exists $M \in \Nat$, such that, for every $x \in \K^d$, $A_x$ is the
union of at most $M$ discrete sets.

\begin{claim}
\Wlog, for every $x \in B$, $A_x$ is discrete.
\end{claim}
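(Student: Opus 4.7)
The plan is to induct on $M$. The base case $M = 1$ says every $A_x$ is already discrete, which is the conclusion of the claim. For the inductive step $M > 1$, consider
\[
A^\circ := \set{(x,y) \in A : y \in \isol(A_x)}, \qquad A^* := A \setminus A^\circ.
\]
Each $A^\circ_x = \isol(A_x)$ is discrete, while $A^*_x$ is the first Cantor--Bendixson derivative of $A_x$; since $\rkCB(A_x) \leq M$, one has $\rkCB(A^*_x) \leq M - 1$, so $A^*_x$ is a union of at most $M-1$ discrete sets.

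If both $A^\circ$ and $A^*$ can be realized as $\Fs$ sets, with $\fdim(A^*) \leq \fdim(A)$, then the inductive hypothesis on $M$ handles $A^*$ and reduces the constructibility of $A$ to that of $A^\circ$, whose fibers are discrete as required. Using the writing $A = \bigcup_t A(t)$ as a definable increasing union of \dcompact{} sets (each $A(t)$ has pseudo-finite fibers, since $\dim A_x = 0$), one has
\[
A^* \;=\; A \cap \bigcap_{\nu \in \Nat} \pi_{12}\Pa{\set{(x,y,y') : (x,y), (x,y') \in A,\ y \neq y',\ \abs{y - y'} < 1/\nu}},
\]
where $\pi_{12}$ is the projection onto the first two coordinate groups; this exhibits $A^*$ a priori only as a countable intersection of $\Fs$ sets inside $A$.

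The main obstacle is therefore upgrading this description of $A^*$ to an actual $\Fs$ representation. The plan is to exploit $\omega$-saturation jointly with the uniform Cantor--Bendixson bound $M$: a compactness argument should collapse the countable intersection over $\nu$ to a uniformly finite one, expressing $A^*$ as a finite Boolean combination of projections of closed sets indexed by the parameters $t$ in the exhaustion $A = \bigcup_t A(t)$. Absent a local compactness trick of the kind used in Lemma~\ref{lem:bad-2}, it is this interaction between saturation and the absolute fiber-complexity bound that must supply the needed finiteness; once $A^*$ is an $\Fs$, the dual representation of $A^\circ = A \setminus A^*$ as an $\Fs$ follows, and the induction closes.
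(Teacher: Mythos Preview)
The paper does not prove this claim: in the ``Idea of proof'' for the conjecture on \dminimal open core, this step is explicitly marked \textbf{To be done}. There is nothing to compare against; you are attempting to fill a gap the author left open.

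Your strategy---induct on the uniform Cantor--Bendixson bound~$M$ and peel off the fiberwise isolated points---is the natural one, and you correctly locate the obstacle: neither $A^\circ$ nor $A^*$ is visibly an~$\Fs$. There is, however, a concrete error and then a genuine gap. First, your displayed formula for $A^*$ is wrong in this setting: by an earlier reduction $\K$ is $\omega$-saturated, hence non-Archimedean, so the external set $\{1/\nu:\nu\in\Nat\}$ is not coinitial at~$0$ and the intersection $\bigcap_{\nu\in\Nat}$ does not recover the fiberwise non-isolated points (a two-point fiber $\{0,\epsilon\}$ with $\epsilon$ infinitesimal already breaks it). The correct description is the definable condition $\forall r>0\ \exists y'\in A_x\ (0<\abs{y-y'}<r)$, which is a $\Pi^0_2$-type object rather than an external countable intersection of~$\Fs$ sets. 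Second, and more seriously, the proposed rescue via saturation does not work as stated: saturation realizes types and finds witnesses, but it does not collapse a $\forall\exists$ definable condition into an~$\Fs$ description, and the uniform bound $\rkCB(A_x)\le M$ constrains each fiber individually without controlling how the isolated/non-isolated split varies with~$x$. No compactness argument of the kind you gesture at yields an $\Fs$ representation of~$A^*$, so the induction does not close---which is consistent with the paper itself leaving this step open.
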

\textbf{To be done}.

\begin{claim}
$\lc(A) \neq \emptyset$.
\end{claim}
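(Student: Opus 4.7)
My plan is to construct a continuous definable section of $\pi\rest A\colon A \to B$ on some non-empty open sub-box of $B$ and to wrap its graph in an open tubular neighborhood on which $A$ reduces to that graph, making $A$ locally closed there.

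First, I invoke Definable Choice. Each fiber $A_y$ is discrete in $\K^{n-d}$ and therefore constructible (discrete sets are unions of locally closed singletons), so Lemma~\ref{lem:skolem} supplies a definable $\sigma\colon B \to \K^{n-d}$ with $\pair{y,\sigma(y)} \in A$ for every $y \in B$. Concretely, since $\pi(A(t_0)) = B$ and $A(t_0)$ is \dcompact, one may take $\sigma(y) := \lexmin\Pa{A(t_0)_y}$; then $\Gamma(\sigma) \subseteq A(t_0)$ is closed. Set
\[
\rho(y) := \inf\set{\abs{x - \sigma(y)} : x \in A_y \setminus \set{\sigma(y)}},
\]
with $\rho(y) := 1$ if $A_y = \set{\sigma(y)}$. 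Since $\sigma(y) \in A_y$ and $A_y$ is discrete at $\sigma(y)$, $\rho(y) > 0$ for every $y \in B$.

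Next, I restrict to an open sub-box on which both $\sigma$ and $\rho$ are continuous. By Lemma~\ref{lem:function-fs} applied in the Baire space $\K^d$, the discontinuity sets $\Dis(\sigma)$ and $\Dis(\rho)$ are meager $\Fs$ subsets of $\K^d$; since $\K^d$ is Baire, these sets have empty interior. By the outer induction on $n$ in the conjecture (applied to $d < n$), every $\Fs$ subset of $\K^d$ is constructible, and by Corollary~\ref{cor:constructible-nowhere-dense} a constructible set with empty interior is nowhere dense. Hence $\Dis(\sigma) \cup \Dis(\rho)$ is nowhere dense in $B$, and a non-empty open box $B_1 \subseteq B$ lies inside the common continuity locus.

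Finally, I exhibit a tubular neighborhood. Define
\[
W := \set{\pair{y, z} \in B_1 \times \K^{n-d} : \abs{z - \sigma(y)} < \rho(y)/2},
\]
which is open in $\K^n$ by the continuity and positivity of $\rho$ and the continuity of $\sigma$ on $B_1$. For $\pair{y, z} \in W \cap A$, one has $z \in A_y \cap B\Pa{\sigma(y), \rho(y)/2} = \set{\sigma(y)}$ (by definition of $\rho$), so $z = \sigma(y)$; conversely $\pair{y, \sigma(y)} \in W$ for every $y \in B_1$. Hence $W \cap A = \Gamma(\sigma \rest B_1)$, which is the graph of a continuous function on $B_1$, hence closed in $B_1 \times \K^{n-d}$, and therefore closed in $W$. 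Thus every point of the non-empty set $W \cap A$ lies in $\lc(A)$, so $\lc(A) \neq \emptyset$. The main delicate point is the second step: one must verify both that the hypothesis (via the outer induction on $n$) truly yields constructibility of $\Fs$'s in $\K^d$, and that the graphs or discontinuity loci of $\sigma$ and $\rho$ are themselves $\Fs$ so that Lemma~\ref{lem:function-fs} applies; closedness of $\Gamma(\sigma)$ inside the \dcompact set $A(t_0)$ is straightforward, but the $\Fs$ structure around $\rho$ requires more careful bookkeeping.
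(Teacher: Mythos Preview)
Your strategy—build a continuous section $\sigma$ and an open tube of radius $\rho/2$ about its graph meeting $A$ only in $\Gamma(\sigma)$—is exactly the paper's. The substantive difference is in how one establishes that $\rho$ (equivalently, the paper's $f^+$ and $f^-$) is continuous on a non-empty open subset of~$B$, and this is precisely the point you flag as delicate.

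Invoking Lemma~\ref{lem:function-fs} for $\rho$ requires $\Gamma(\rho)$ to be an~$\Fs$, and I do not see how to get that: $\rho(y)$ is an infimum over the fibre $A_y\setminus\{\sigma(y)\}$ of an $\Fs$~set, and while the sublevel sets $\{\rho<r\}$ are projections of $\Fs$~sets (hence~$\Fs$), the graph itself is in general only a~$\Gd$. (A smaller issue: your assertion that $\Gamma(\sigma)\subseteq A(t_0)$ ``is closed'' is also not correct—$\lexmin$ over \dcompact fibres is only lower semi-continuous—so even for $\sigma$ one should appeal to the result from~\cite{FS} used in the paper rather than to Lemma~\ref{lem:function-fs} directly.)

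The paper closes the gap with an additional layer of argument. Working with $n=d+1$ and having already made $f=\sigma$ continuous on a sub-box, it forms the $\Fs$~set $A^+:=\{(x,y):y\in A_x,\ y>f(x)\}\cup\Gamma(f+1)$ and writes it as an increasing union $\bigcup_{t\in N}C(t)$ of \dcompact sets indexed by a \pN set~$N$. For each~$t$, $f_t(x):=\min C(t)_x$ has $\Dis(f_t)$ meager and~$\Fs$ (again by~\cite{FS}), hence constructible by the inductive hypothesis in dimension~$d$, hence nowhere dense; the pseudo-enumerable union $\bigcup_t\Dis(f_t)$ is then meager and, again by the inductive hypothesis, nowhere dense. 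On its open complement~$B'$ every $f_t$ is continuous, so $f^+=\lim_t f_t$ is of first class there, and Lemma~\ref{lem:first-class} yields that $\Dis(f^+\rest{B'})$ is meager, hence nowhere dense. The same reasoning for $f^-$ produces the open set on which your~$\rho$ is continuous. In short: rather than verifying an $\Fs$~hypothesis for~$\rho$ directly, one must first force the approximants~$f_t$ to be continuous on a common open set and only then apply the Baire-category machinery, via the first-class-function lemma; this two-step descent is the missing ingredient in your argument.
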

Assume, for contradiction, that $\lc(A)$ is empty.
Let $\tilde A := A(0)$; remember that $\tilde A$ is \dcompact and $\pi(\tilde
A) = B$; let $f: B \to \K$, $f(x) := \min(\tilde A_x)$.
To simplify the notation, assume that $n = d + 1$.
By~\cite{FS}, $\Dis(f)$ is a meager $\Fs$ subset of~$\K^d$;
thus, by inductive hypothesis, $\Dis(f)$ is nowhere dense.
Hence, after shrinking~$B$, we may assume that $f$ is continuous.
We want to prove that, after possibly further shrinking~$B$, 
$\Gamma(f) \subseteq \lc(A)$.
Let $A^+ := \set{\pair{x, y}: (y \in A_x \et y > f(x)) \vee y = f(x) + 1}$.
Since $f$ is continuous and $A$ is an~$\Fs$, $A^+$ is an $\Fs$~sets. 
Define $f^+: B \to \K$ as $f^+(x) := \min (A^+_x)$, and define in a symmetric
fashion $f^-: B \to \K$.
Notice that $f^- < f < f^+$ (because each $A_x$ is discrete).
We claim that $f^+$ is continuous outside a nowhere dense subset of~$B$.
Let $A_+ := \bigcup_{t \in N} C(t)$, where $N$ is a \pN subset of~$\K$,
and $\Pa{C(t)}_{t \in N}$ is a definable increasing family of \dcompact sets, 
such that $B \times \set 1 \subseteq C(t)$ for every~$t$.
Let $f_t(x) := \min(C(t))$, and $D(t) := \Dis(f_t)$.
Each $D(t)$ is a meager $\Fs$ subset of~$\K^d$, hence, by inductive
hypothesis, each $D(t)$ is nowhere dense.
Hence, $D := \bigcup_{t \in N} D(t)$ is meager, and therefore, by inductive
hypothesis, nowhere dense; let $B' := B \setminus \cl D$, 
and $g := f^+ \rest {B'}$.
Since $B'$ is open and definable, $B'$ is Baire; moreover, $g$ is the
pointwise limit of the continuous functions $f_t \rest{B'}$;
thus, by Lemma~\ref{lem:first-class} and inductive hypothesis, 
$g$~is continuous outside a nowhere dense subset of~$B'$.
Reasoning in the same way for $f^-$, we see that there exists $B''$ open dense
subset of~$B$, such that both $f^+$ and $f^-$ are continuous on~$B''$.
Hence, $\Gamma(f) \cap \pi^{-1}(B'') \subseteq \lc(A)$ 
and therefore the latter is non-empty.

\begin{claim}
$\lc(A)$ is dense in~$A$.
\end{claim}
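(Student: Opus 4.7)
The strategy is to deduce density from the non-emptiness claim just proved by applying it to arbitrary open restrictions of~$A$. Fix a non-empty definable open $W \subseteq \K^n$ with $A \cap W \neq \emptyset$; it suffices to show $\lc(A) \cap W \neq \emptyset$. By the Remark that $\lc(A) \cap W = \lc(A \cap W)$ for open~$W$, I need only prove $\lc(A') \neq \emptyset$, where $A' := A \cap W$.

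First, $A'$~is an~$\Fs$: writing $A = \bigcup_t A(t)$ with $\Pa{A(t)}_{t \in \K}$ an increasing definable family of \dcompact sets, the sets $A'(t) := A(t) \cap \set{x \in \K^n:\ d(x, \K^n \setminus W) \geq 1/t\ \et\ \abs x \leq t}$ form an increasing \dcompact exhaustion of~$A'$. Each fibre $A'_x = A_x \cap W_x$ is discrete, being a subset of the discrete fibre~$A_x$, and bounded by the same number $M$ of discrete pieces as~$A_x$.

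I would then split into cases on $\fdim(A')$. If $\fdim(A') < \fdim(A) = (d,k)$, the outer induction on full dimension yields that $A'$~is constructible; any non-empty constructible set contains a locally closed point (it is a finite union of locally closed sets, and around any point of a non-empty piece one may shrink the neighbourhood to avoid the closures of the other pieces), so $\lc(A') \neq \emptyset$ as required. Otherwise $\fdim(A') = (d,k)$, and after relabelling coordinates if necessary the projection of $A'$ onto some $d$-dimensional coordinate space has non-empty interior; I then apply to $A'$ the same sequence of \Wlog reductions performed for $A$ in the proof of the preceding claim — restricting to a bounded subset whose projection is a closed box~$B'$ with non-empty interior, extracting a \dcompact exhaustion with constant projection~$B'$ (using that $\K$, and hence $\K^d$, is Baire, which was established at the start of this proof), and discarding the nowhere dense locus of non-discrete fibres. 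The argument of the preceding claim then runs verbatim for the reduced set, producing a locally closed point inside~$A'$.

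The main technical obstacle is verifying that the \Wlog reductions going from an arbitrary $\Fs$ set to the normal form (bounded, projection equal to a closed box, constant \dcompact exhaustion, discrete fibres) transfer cleanly from $A$ to~$A'$. The only delicate point is the Baire step producing an exhaustion with constant projection: this requires $\pi(A')$ to be non-meager, which is guaranteed by $\fdim(A') = (d,k)$ together with the fact that $\pi(A')$ is an $\Fs$ and $\K^d$ is Baire. Once that reduction goes through, the previous claim supplies the required locally closed point and completes the argument.
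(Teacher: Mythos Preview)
Your approach is correct and matches the paper's exactly: the paper's proof of this claim is the single sentence ``For every $U \subseteq \K^n$ open box, such that $A \cap U$ is non-empty, the above claim implies that $\lc(A) \cap U = \lc(A \cap U)$ is non-empty.'' You have simply spelled out what the paper leaves implicit---namely, that the standing hypotheses on $A$ (bounded $\Fs$, discrete fibres, \dcompact exhaustion with full projection onto a box) can be re-established for $A \cap W$ after the appropriate shrinking, so that the argument of the preceding non-emptiness claim runs again.
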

For every $U \subseteq \K^n$ open box, such that $A \cap U$ is non-empty, the
above claim implies that $\lc(A) \cap U = \lc(A \cap U)$ is non-empty.

Let $A' := \lc(A)$.
By definition, $A' = \cl{A'} \cap U$, for some open definable set~$U$.
By Lemma~\ref{lem:bad-2}, $\B_d(U)$ is an $\Fs$ meager subset of~$\K^d$, and
therefore, by inductive hypothesis, $\B_d(U)$ is constructible, and therefore
nowhere dense.
%
%
Hence, after shrinking~$B$, we may assume that $\B_d(U)$ is empty.


\begin{claim}
$\B_d(A')$ is empty.
\end{claim}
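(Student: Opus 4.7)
The plan is to obtain the claim as an immediate application of Theorem~\ref{thm:lc-fiber}. By construction $A' = \lc(A)$ is locally closed in $\K^n$; hence there is a definable open set $U \subseteq \K^n$ with $A' = \cl{A'} \cap U$, and in fact this is precisely the set $U$ introduced in the preceding paragraph. The preceding reduction has already arranged, by shrinking $B$ if necessary, that $\B_d(U)$ is empty on the relevant domain: Lemma~\ref{lem:bad-2} shows that $\B_d(U)$ is a meager $\Fs$ subset of $\K^d$, the inductive hypothesis (applied in the lower dimension $d < n$) shows that this $\Fs$ is constructible, and a constructible set with empty interior is nowhere dense.

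Given these ingredients, Theorem~\ref{thm:lc-fiber} applied to the pair $(A', U)$ yields the inclusion $\B_d(A') \subseteq \B_d(U)$. Since the right-hand side is empty, so is the left, which is the content of the claim. The main obstacle has been dealt with before arriving at this claim, namely arranging $\B_d(U) = \emptyset$ by the inductive bootstrap; the claim itself is then a one-line transfer of ``badness'' from the locally closed set $A'$ to the ambient open set $U$ via Theorem~\ref{thm:lc-fiber}, with no further intrinsic difficulty.
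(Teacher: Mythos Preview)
Your proposal is correct and matches the paper's own argument: the paper's proof of this claim is the single line ``Again, by Thm.~\ref{thm:lc-fiber},'' which is exactly the inclusion $\B_d(A') \subseteq \B_d(U) = \emptyset$ you spell out. Your added context about how the emptiness of $\B_d(U)$ was arranged beforehand is accurate and helpful, but the core of the claim is just this one-line transfer, as you identify.
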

Again, by Thm.~\ref{thm:lc-fiber}.

Since $A' \subseteq A \subseteq \cl{A'}$, Remark~\ref{rem:bad-3} implies that
$\B_d(A) \subseteq \B_d(A') = \emptyset$.

Therefore, $A' = \lc(A) \subseteq A$, $\cl{A'} = \cl{A}$, and
$\cll(A'_x) = (\cl{A'})_x = (\cl A)_x = \cll(A_x)$ for every $x \in B$;
hence, $A'_x$ is dense in $A_x$ for every $x \in B$.
Since, for every $x \in B$, $A_x$~is discrete, this means that $A_x = A'_x$;
thus, $A' = A$, and hence $A$ is constructible.
\end{proof}




\section*{Acknowledgments}
Some of the people to whom I asked questions and have provided useful answers
(more or less relating to this article):
H.~Adler, A.~Berarducci, D.~Ikegami, C.~Miller, T.~Servi, K.~Tent, M.~Ziegler.


\bibliographystyle{alpha}	
\bibliography{tame}		

\begin{thebibliography}{vdD98b}

\bibitem[Adl05]{adler}
Hans Adler.
\newblock {\em Explanation of Independence}.
\newblock PhD thesis, Albert-Ludwigs-Universit\"at, Freiburg im Breisgau, June
  2005.

\bibitem[All96]{allouche96}
Jean-Paul Allouche.
\newblock Note on the constructible sets of a topological space.
\newblock In {\em Papers on general topology and applications ({G}orham, {ME},
  1995)}, volume 806 of {\em Ann. New York Acad. Sci.}, pages 1--10. New York
  Acad. Sci., New York, 1996.

\bibitem[DM01]{DM}
Randall Dougherty and Chris Miller.
\newblock Definable {B}oolean combinations of open sets are {B}oolean
  combinations of open definable sets.
\newblock {\em Illinois J. Math.}, 45(4):1347--1350, 2001.

\bibitem[DMS10]{DMS}
Alfred Dolich, Chris Miller, and Charles Steinhorn.
\newblock Structures having o-minimal open core.
\newblock {\em Trans. Amer. Math. Soc.}, 362:1371--1411, 2010.

\bibitem[For10]{fornasiero-matroids}
Antongiulio Fornasiero.
\newblock Dimensions, matroids, and dense pairs of first-order structures.
\newblock Submitted, 2010.

\bibitem[FS09]{FS}
Antongiulio Fornasiero and Tamara Servi.
\newblock Definably complete and baire structures.
\newblock Submnitted, 2009.

\bibitem[Kec95]{kechris}
Alexander~S. Kechris.
\newblock {\em Classical descriptive set theory}, volume 156 of {\em Graduate
  Texts in Mathematics}.
\newblock Springer-Verlag, New York, 1995.

\bibitem[Kur66]{kuratowski}
Kazimierz Kuratowski.
\newblock {\em Topology. {V}ol. {I}}.
\newblock New edition, revised and augmented. Translated from the French by J.
  Jaworowski. Academic Press, New York, 1966.

\bibitem[LS95]{LS}
Michael~C. Laskowski and Charles Steinhorn.
\newblock On o-minimal expansions of {A}rchimedean ordered groups.
\newblock {\em J. Symbolic Logic}, 60(3):817--831, 1995.

\bibitem[Mil96]{miller96}
Chris Miller.
\newblock A growth dichotomy for o-minimal expansions of ordered fields.
\newblock In {\em Logic: from foundations to applications ({S}taffordshire,
  1993)}, Oxford Sci. Publ., pages 385--399. Oxford Univ. Press, New York,
  1996.

\bibitem[Mil01]{miller}
Chris Miller.
\newblock Expansions of dense linear orders with the intermediate value
  property.
\newblock {\em J. Symbolic Logic}, 66(4):1783--1790, 2001.

\bibitem[Mil05]{miller05}
Chris Miller.
\newblock Tameness in expansions of the real field.
\newblock In {\em Logic {C}olloquium '01}, volume~20 of {\em Lect. Notes Log.},
  pages 281--316. Assoc. Symbol. Logic, Urbana, IL, 2005.

\bibitem[MS99]{MS99}
Chris Miller and Patrick Speissegger.
\newblock Expansions of the real line by open sets: o-minimality and open
  cores.
\newblock {\em Fund. Math.}, 162(3):193--208, 1999.

\bibitem[Oxt71]{oxtoby}
John~C. Oxtoby.
\newblock {\em Measure and category. {A} survey of the analogies between
  topological and measure spaces}.
\newblock Springer-Verlag, New York, 1971.
\newblock Graduate Texts in Mathematics, Vol. 2.

\bibitem[Pil87]{pillay87}
Anand Pillay.
\newblock First order topological structures and theories.
\newblock {\em J. Symbolic Logic}, 52(3):763--778, 1987.

\bibitem[Poi85]{poizat85}
Bruno Poizat.
\newblock {\em Cours de th\'eorie des mod\`eles}.
\newblock Bruno Poizat, Lyon, 1985.
\newblock Une introduction {\`a} la logique math{\'e}matique contemporaine.

\bibitem[Rob74]{robinson74}
Abraham Robinson.
\newblock A note on topological model theory.
\newblock {\em Fund. Math.}, 81(2):159--171, 1973/74.
\newblock Collection of articles dedicated to Andrzej Mostowski on the occasion
  of his sixtieth birthday, II.

\bibitem[Sco69]{scott}
Dana Scott.
\newblock On completing ordered fields.
\newblock In {\em Applications of {M}odel {T}heory to {A}lgebra, {A}nalysis,
  and {P}robability ({I}nternat. {S}ympos., {P}asadena, {C}alif., 1967)}, pages
  274--278. Holt, Rinehart and Winston, New York, 1969.

\bibitem[vdD98a]{vdd-dense}
Lou van~den Dries.
\newblock Dense pairs of o-minimal structures.
\newblock {\em Fund. Math.}, 157(1):61--78, 1998.

\bibitem[vdD98b]{vdd}
Lou van~den Dries.
\newblock {\em Tame topology and o-minimal structures}, volume 248 of {\em
  London Mathematical Society Lecture Note Series}.
\newblock Cambridge University Press, Cambridge, 1998.

\end{thebibliography}

\end{document}